\DeclareFontFamily{OT1}{pzc}{}
\DeclareFontShape{OT1}{pzc}{m}{it}{<-> s * [1.10] pzcmi7t}{}
\DeclareMathAlphabet{\mathscr}{OT1}{pzc}{m}{it}
\renewcommand{\subsection}[1]{\hspace{-\parindent}\refstepcounter{subsection}{\bf
(\arabic{section}\alph{subsection}) #1.}}
\numberwithin{equation}{section}
\newcommand{\bZ}{\mathbb{Z}}
\newcommand{\bR}{\mathbb{R}}
\newcommand{\bQ}{\mathbb{Q}}
\newcommand{\bC}{\mathbb{C}}
\newcommand{\htp}{\simeq}
\newcommand{\iso}{\cong}
\newcommand{\half}{{\textstyle\frac{1}{2}}}
\newcommand{\pabla}{\nabla\mkern-12mu/\mkern2mu}
\renewcommand{\AA}{\mathscr{A}}
\newcommand{\BB}{\mathscr{B}}
\newcommand{\CC}{\mathscr{C}}
\newcommand{\DD}{\mathscr{D}}
\newcommand{\FF}{\mathscr{F}}
\newcommand{\Fuk}{\mathscr{Fuk}}
\newcommand{\GG}{\mathscr{G}}
\newcommand{\HH}{\mathscr{H}}
\newcommand{\JJ}{\mathscr{J}}
\newcommand{\KK}{\mathscr{K}}
\newcommand{\MM}{\mathscr{M}}
\newcommand{\OO}{\mathscr{O}}
\newcommand{\PP}{\mathscr{P}}
\newcommand{\QQ}{\mathscr{Q}}
\renewcommand{\SS}{\mathscr{S}}
\newcommand{\NN}{\mathscr{N}}
\newcommand{\RR}{\mathscr{R}}
\newcommand{\XX}{\mathscr{X}}
\newcommand{\YY}{\mathscr{Y}}
\renewcommand{\aa}{\mathscr{a}}
\newcommand{\bb}{\mathscr{b}}
\newcommand{\cc}{\mathscr{c}}
\newcommand{\ff}{\mathscr{f}}
\newcommand{\mm}{\mathscr{m}}
\newcommand{\pp}{\mathscr{p}}
\theoremstyle{plain}
\newtheorem{lemma}{Lemma}[section]
\newtheorem{definition}[lemma]{Definition}
\newtheorem{proposition}[lemma]{Proposition}
\newtheorem{theorem}[lemma]{Theorem}
\newtheorem{remark}[lemma]{Remark}
\newtheorem{addendum}[lemma]{Addendum}
\newtheorem{example}[lemma]{Example}
\newtheorem{corollary}[lemma]{Corollary}
\newtheorem*{theorem*}{Theorem}
\newtheorem*{question*}{Question}
\newtheorem{assumption}[lemma]{Assumption}
\newtheorem{workingdefinition}[lemma]{Working Definition}
\newtheorem*{claim*}{Claim}
\begin{document}

\title{\bf Abstract analogues of flux\\ as symplectic invariants}
\author{Paul Seidel}
\date{}
\maketitle
\begin{abstract}\noindent
We study families of objects in Fukaya categories, specifically ones whose deformation behaviour is prescribed by the choice of an odd degree cohomology class. This leads to invariants of symplectic manifolds, which we apply to blowups along symplectic mapping tori.
\end{abstract}

\section*{Introduction}

{\bf Motivation.}
An interesting invariant of a closed symplectic manifold $M$ is its flux group, a subgroup of $H^1(M;\bR)$ obtained from the topology of loops of symplectic automorphisms \cite[Section 10.2]{salamon-mcduff-intro}. This can be effectively studied using Floer cohomology, one of the notable insights being that the flux group is always discrete \cite{ono06}. Now consider the following question:
\begin{quote} \it
Are there flux-type subgroups in $H^{2k-1}(M;\bR)$, for $k>1$, which can be nontrivial for manifolds with $H^1(M;\bR) = 0$?
\end{quote}
The last clause excludes one obvious direction, which is to take the subgroup formed by the image of $[\omega_M^k]$ under all the maps $H^{2k}(M;\bR) \rightarrow H^{2k-1}(M;\bR)$ induced by loops of symplectic automorphisms (this reproduces the flux group for $k = 1$, but it vanishes if $H^1(M;\bR) = 0$, by the rigidity theorem \cite{lalonde-mcduff-polterovich97}). Really, what the question is aiming for is a formalism in which higher degree differential forms replace the closed one-forms in their usual relation to symplectic vector fields, so anything related to symplectic automorphism groups can't really be the answer. This clarification may make the whole endeavour seem quixotic. Still, if one looks at it from the point of view of quantum cohomology $\mathit{QH}^*(M)$, the situation is less clear-cut. Passage to quantum cohomology generally reduces the grading to $\bZ/2$, putting all odd degree cohomology formally on equal footing (but degree one classes retain a more direct connection to geometry, because the quantum product with such a class remains equal to the classical cup product; this is by a version of the divisor axiom). In that vein, it turns out that one can give a partially positive answer to the question above, at least if one is willing to settle for an invariant which is somewhat more obscure, lacking the simplicity and geometric elegance of the flux group.

{\bf The examples.} As an application, we consider a particular pair of $28$-dimensional simply-connected symplectic manifolds (the following is only an outline of the construction, omitting many details and assumptions). Let $K$ be a $K3$ surface, and $T \subset K$ a symplectically embedded two-torus. Take $K^7$, the product of seven copies of $K$, and blow up the $12$-dimensional submanifold $T^2 \times K^2$. Denote the outcome by $B^{\mathit{triv}}$. This has a more interesting cousin $B$, defined in the same way but where the blowup locus is a product of $T$ and the symplectic mapping torus of a certain automorphism of $K \times K$ (embedded into $K^7$ by using the $h$-principle). It is known that the symplectic automorphism group of $K$ has many connected components which are not detected by classical topological means (see for instance \cite{seidel04b}, or \cite{seidel-thomas99} for a mirror symmetry viewpoint). Based on that, one can ensure that $B^{\mathit{triv}}$ is diffeomorphic to $B$, and that their symplectic structures are deformation equivalent. Nevertheless, for a specific choice of automorphism, we will show:
\begin{quote} \it
$B$ and $B^{\mathit{triv}}$ are not symplectically isomorphic.
\end{quote}
The construction of these manifolds is similar to that of the first known examples of distinct but deformation equivalent symplectic structures \cite{mcduff87}, which were also based on blowing up. That paper used (roughly speaking) a bordism-valued refinement of Gromov-Witten theory as an invariant. Because such refinements are hard to define and compute, we can't say how they would behave in our situation. In any case, the approach taken in this paper is quite different.

{\bf The invariant.} Let's temporarily go back to the simpler case of symplectic mapping tori. The symplectic mapping torus of an automorphism $f$ is a symplectic fibration over $T$ which has trivial monodromy in one direction, and monodromy $f$ in the other direction. Let's say for concreteness that $T = \bR^2/\bZ^2$ has coordinates $(p,q)$, and that the monodromy is trivial in $q$-direction, and $f$ in $p$-direction. The symplectic mapping torus contains plenty of Lagrangian submanifolds fibered over trivial circles $\{p\} \times S^1$. If one then moves such a Lagrangian submanifold by the time-one map of the symplectic vector field $\partial_p$, the effect is the same as applying $f$ fibrewise. For suitable examples of $f$, this allows one to show that $[dq]$ does not lie in the flux group, which distinguishes the mapping torus from the trivial one (a similar approach was used in \cite{seidel02b}). To make a more abstract version of the argument, we consider families of objects in the Fukaya category whose deformation is driven by the class $[dq]$. The idea of introducing families of Lagrangian submanifolds into Floer cohomology theory is due to Fukaya \cite{fukaya02c,fukaya09}. Generally speaking, it shows much potential for leading to fundamental insights as well as applications, but also encounters considerable foundational difficulties. Here, we bypass these issues by choosing a more constrained version of the notion of family, which has a straightforward basis in algebraic geometry, but is somewhat harder to connect to symplectic geometry.

The outcome is an invariant of a symplectic manifold $M$, which also depends on the following auxiliary data. Take an element $\lambda$ of the (universal single-variable) Novikov field, and an idempotent $z$ in $\mathit{QH}^0(M)$. To these we associate an appropriate version of the Fukaya category $\mathit{Fuk}(M)_{\lambda,z}$ and its completion (split-closed triangulated envelope) $\mathit{Fuk}(M)_{\lambda,z}^{\mathit{perf}}$. Additionally, choose an elliptic curve $\bar{\SS}$ over the Novikov field, together with a nonzero algebraic one-form on it. Given a class in odd degree quantum cohomology, $x \in \mathit{QH}^1(M)$, one can then ask whether it is {\em periodic}, which means whether objects of the Fukaya category can be extended to families over $\bar{\SS}$ with deformation behaviour prescribed by $x$. In particular, we can apply this idea to symplectic mapping tori and recover some of the results ordinarily proved using flux. More interestingly, we can exploit existing ideas about the behaviour of Fukaya categories under blowups \cite{smith10}, and thereby arrive at the result stated above. The main point is that the relevant part of the Fukaya category of $B^{\mathit{triv}}$ is well-understood, allowing us to prove that certain classes $x$ are periodic. The Fukaya category of $B$ is not known to the same extent, but partial computations are enough to determine that certain classes $x$ are not periodic, since that only requires finding a specific Lagrangian submanifold which can serve as a counterexample.

To conclude this discussion, we should mention that the basic idea is by no means new in homological algebra. From that elevated vantage point, what we are doing (in a rather ad hoc way) is to study algebraic one-parameter subgroups (the elliptic curves $\bar{\SS}$ which appeared above) inside the derived Picard group \cite{yekutieli04,keller04} of the Fukaya category. The idea is to think of elements of Hochschild cohomology as vector fields on an abstract ``moduli space of objects'', and that we are asking which vector fields integrate to ``periodic flows''. On an informal level, it is clear that this provides an algebraic counterpart to the geometric ideas underlying flux.

{\bf Structure of the paper.}
Section \ref{sec:theory} sets up the algebraic theory of families of objects, much of it straightforward. The key to uniqueness results for families is the discussion surrounding Lemma \ref{th:its-projective}, which is then further developed for our intended applications in Section \ref{subsec:unique}. An abstract version of our invariant is introduced in Definition \ref{th:per}. Section \ref{sec:elliptic} discusses the simplest example of the two-torus $T$. Of course, its Fukaya category has already been studied exhaustively, starting with \cite{polishchuk-zaslow98}. Still, we devote some energy to it in order to prepare for the case of symplectic mapping tori, which is the topic of Section \ref{sec:mapping-tori} (following some preliminaries on Floer cohomology and Fukaya categories, in Section \ref{sec:automorphisms}). At first, it will seem that our computations lead us further from the intended goal, since we choose Lagrangian submanifolds whose Floer cohomology is largely independent of the choice of automorphism used to construct the mapping torus. However, we eventually do manage to recover some information about that automorphism, by a double covering trick which appears in Section \ref{subsec:covering-trick}. Finally, most of Section \ref{sec:blowup} is general discussion of blowups. The detailed construction of the manifolds $B$ and $B^{\mathit{triv}}$ is carried out in Section \ref{subsec:final}.

{\bf Acknowledgments.}
Section \ref{sec:blowup} of this paper relies crucially on results of \cite{abouzaid10, abouzaid-fukaya-oh-ohta-ono11, smith10}. I am particularly indebted to Mohammed Abouzaid and Ivan Smith for explaining their insights to me. Conversations with Denis Auroux, Ludmil Katzarkov, Davesh Maulik, Tim Perutz, and Nick Sheridan were also helpful. I could not have carried out this project without the generous support of the Simons Center for Geometry and Physics. Further support was received through NSF grant DMS-1005288, and through a Simons Investigator grant from the Simons Foundation. Finally, I would like to thank the referees for doing a remarkably thorough job on the manuscript.

\section{Families of objects\label{sec:theory}}

Suppose that we are given an $A_\infty$-category $A$, and a class in its degree $1$ Hochschild cohomology  $\mathit{HH}^1(A,A)$. This class determines one in $H^1(\mathit{hom}_A(X,X))$ for every object $X$, hence an infinitesimal first order deformation of $X$. Deformations coming from Hochschild cohomology have additional properties, for instance (in characteristic $0$) they can be extended to arbitrarily high orders in a formal parameter. However, instead of looking at the infinitesimal theory, we want to consider global deformations. For the sake of illustration, take the parameter space to be the affine line. One then looks for families $\XX = \{\XX_s\}$ depending on one algebraic variable $s$, whose fibre at the origin is fixed, $\XX_0 \iso X$, and whose first order deformation behaviour at any value of $s$ is the element of $H^1(\mathit{hom}_A(\XX_s,\XX_s))$ induced by our Hochschild class. We are mainly concerned with the uniqueness of such families. This, while not totally straightforward, turns out to be much easier than existence issues. An elementary parallel would be the question of integrating a vector field. Indeed, one might think of the original Hochschild cohomology class as determining a vector field on the ``moduli spaces of objects in $A$'' (making this rigorous requires machinery far beyond that deployed here; interested readers are referred to \cite{toen-vaquie07}).

%
%

We work over an algebraically closed field $R$ of characteristic $0$. While the condition of algebraic closedness is perhaps mostly for the sake of familiarity, the restriction on the characteristic is crucial, since we will be using differentiation (and in particular Lemma \ref{th:locally-free}). Sign conventions for $A_\infty$-algebras and associated structures usually follow \cite{seidel04}. Those for twisted complexes are specifically as in \cite[Remark 3.26]{seidel04}. All categories are assumed to be small.

\subsection{$A_\infty$-categories}
Fix an $A_\infty$-category $A$ over $R$. This is assumed to be $\bZ$-graded, strictly unital, and proper. The units (identity endomorphisms) are denoted by $e_X \in \mathit{hom}_A^0(X,X)$. Write $H(A)$ for the associated cohomology level category, and $H^0(A)$ for the version where only morphisms of degree $0$ are allowed. Properness means that the morphisms in $H(A)$ are graded vector spaces of finite total dimension. Two objects of $A$ are called quasi-isomorphic if they become isomorphic in $H^0(A)$.

There are various canonical formal enlargements of $A$. Possibly the simplest one is the $A_\infty$-category of twisted complexes $A^{\mathit{tw}}$, introduced in \cite{bondal-kapranov91, kontsevich94}. One can carry out this enlargement in two steps. First, consider the additive envelope $A^{\oplus}$, whose objects are formal expressions
\begin{equation} \label{eq:formal-sum}
X = \bigoplus_{i \in I} F^i \otimes X^i[-\sigma^i]
\end{equation}
where $I$ is a finite set, the $X^i$ are objects of $A$, formally shifted by degrees $\sigma^i \in \bZ$, and the $F^i$ are finite-dimensional vector spaces. Morphisms in $A^{\oplus}$ can be thought of as matrices, whose entries are morphisms in $A$ tensored with maps of vector spaces. Correspondingly, the $A_\infty$-products combine those of $A$ with composition of linear maps and matrix multiplication (with auxiliary signs due to the shifts). In the second step, one defines a twisted complex to be an object $X \in \mathit{Ob}\,A^{\oplus}$ equipped with an additional differential. This differential $\delta_X \in \mathit{hom}_{A^{\oplus}}^1(X,X)$ is an endomorphism which is strictly decreasing with respect to some filtration of $X$ by sub-objects, and which satisfies the generalized Maurer-Cartan equation
\begin{equation} \label{eq:generalized-maurer-cartan}
\mu^1_{A^{\oplus}}(\delta_X) + \mu^2_{A^{\oplus}}(\delta_X,\delta_X) + \cdots = 0.
\end{equation}
$A^{\mathit{tw}}$ is an $A_\infty$-category with the same general properties as $A$, and which contains $A$ as a full subcategory. It is closed under shifts and mapping cones, and is characterized up to quasi-equivalence as the minimal enlargement with this property. In our formulation, it also admits a canonical operation of tensoring a given object with a finite-dimensional vector space.

\begin{remark}
Suppose that we allow only trivial one-dimensional spaces $F^i = R$. The resulting objects, which can be written more concisely as $X = \bigoplus_{i \in I} X^i[-\sigma^i]$, form a full $A_\infty$-subcategory, which is quasi-equivalent to all of $A^\oplus$ (and if one equips them with a differential, the same holds for $A^{\mathit{tw}}$). We prefer the form \eqref{eq:formal-sum} since it is better suited to later generalizations.
\end{remark}

There is a different approach to formal enlargements, through $A_\infty$-modules \cite{keller99}. Write $C$ for the differential graded category of complexes of $R$-vector spaces whose cohomology is of finite total dimension. A (right) $A$-module with finite cohomology is an $A_\infty$-functor $A^{\mathit{opp}} \rightarrow C$. Concretely, such a module $M$ assigns to each $X \in \mathit{Ob}\,A$ a graded vector space $M(X)$, together with structure maps
\begin{equation} \label{eq:module}
\begin{aligned}
& \mu^1_M: M(X_0) \longrightarrow M(X_0)[1], \\
& \mu^2_M: M(X_1) \otimes \mathit{hom}_A(X_0,X_1) \longrightarrow M(X_0), \\
& \mu^3_M: M(X_2) \otimes \mathit{hom}_A(X_1,X_2) \otimes \mathit{hom}_A(X_0,X_1) \longrightarrow M(X_0)[-1], \\
& \dots
\end{aligned}
\end{equation}
satisfying the $A_\infty$-module equations (\cite{keller99} or \cite[Section 1j]{seidel04}), and such that $(M(X),\mu^1_M) \in \mathit{Ob}\,C$ for all $X$. We require $M$ to be strictly unital, which means that $\mu^2_M(m,e_{X_0}) = m$, and $\mu^d_M(m,a_{d-1},\dots,a_1) = 0$ whenever $d \geq 3$ and one of the $a_i$ is a unit. Such modules form an $A_\infty$-category $A^{\mathit{mod}}$ (this is in fact a differential graded category, but we prefer to view it as an $A_\infty$-category with trivial higher order products, which entails slightly different sign conventions). A morphism $b \in \mathit{hom}_{A^{\mathit{mod}}}(M_0,M_1)$ consists of
\begin{equation} \label{eq:module-map}
\begin{aligned}
& b^1: M_0(X_0) \longrightarrow M_1(X_0)[|b|], \\
& b^2: M_0(X_1) \otimes \mathit{hom}_A(X_0,X_1) \longrightarrow M_1(X_0)[|b|-1], \\
& \dots
\end{aligned}
\end{equation}
Again, we require strict unitality, which means that $b^d(m,a_{d-1},\dots,a_1) = 0$ whenever one of the $a_i$ is a unit.

The Yoneda embedding (\cite{fukaya01b} or \cite[Section 1l]{seidel04}) is a canonical $A_\infty$-functor $A \rightarrow A^{\mathit{mod}}$. On objects, it maps $Y$ to the module $Y^{\mathit{yon}}$ with $Y^{\mathit{yon}}(X) = \mathit{hom}_A(X,Y)$ and $\mu^d_{Y^{\mathit{yon}}} = \mu^d_A$. The first level map on morphisms is
\begin{equation} \label{eq:yoneda1}
\begin{aligned}
& \mathit{hom}_A(Y_0,Y_1) \longrightarrow \mathit{hom}_{A^{\mathit{mod}}}(Y_0^{\mathit{yon}},Y_1^{\mathit{yon}}), \\
& a \longmapsto a^{\mathit{yon}}, \quad a^{\mathit{yon},d}(a_d,\dots,a_1) = \mu^{d+1}_A(a,a_d,\dots,a_1).
\end{aligned}
\end{equation}

\begin{lemma} \label{th:yoneda}
The map \eqref{eq:yoneda1} is a quasi-isomorphism.
\end{lemma}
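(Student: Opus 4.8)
The plan is to realise \eqref{eq:yoneda1} as a chain-level section of a quasi-isomorphism, which forces it to be a quasi-isomorphism itself. Everything will follow from the $A_\infty$ version of the Yoneda lemma for modules, which I would state and prove first in the generality needed: for any right $A$-module $M$ with finite cohomology and any object $Y$, the evaluation map
\begin{equation*}
\mathit{ev}_M : \mathit{hom}_{A^{\mathit{mod}}}(Y^{\mathit{yon}}, M) \longrightarrow M(Y), \qquad b \longmapsto b^1(e_Y)
\end{equation*}
is a quasi-isomorphism (here $e_Y \in \mathit{hom}_A^0(Y,Y) = Y^{\mathit{yon}}(Y)$, and a sign may enter depending on conventions). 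That $\mathit{ev}_M$ is a chain map is immediate: in the first-level part of $\mu^1_{A^{\mathit{mod}}}(b)$ only the terms $\mu^1_M \circ b^1$ and $b^1 \circ \mu^1_{Y^{\mathit{yon}}}$ survive, and $\mu^1_{Y^{\mathit{yon}}}(e_Y) = \mu^1_A(e_Y) = 0$ by strict unitality.

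For the reverse direction, given $n \in M(Y)$ I would define $\phi_n \in \mathit{hom}_{A^{\mathit{mod}}}(Y^{\mathit{yon}}, M)$ by
\begin{equation*}
\phi_n^d(m, a_{d-1}, \dots, a_1) = \mu^{d+1}_M(n, m, a_{d-1}, \dots, a_1),
\end{equation*}
where $m \in Y^{\mathit{yon}}(X_{d-1}) = \mathit{hom}_A(X_{d-1}, Y)$ is read as a morphism into $Y$. Strict unitality of $M$ forces $\phi_n^d$ to vanish whenever some $a_i$ is a unit, so $\phi_n$ is a genuine module morphism, and the $A_\infty$-module relations for $M$ make $n \mapsto \phi_n$ a chain map $M(Y) \to \mathit{hom}_{A^{\mathit{mod}}}(Y^{\mathit{yon}}, M)$. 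One computes $\mathit{ev}_M(\phi_n) = \phi_n^1(e_Y) = \mu^2_M(n, e_Y) = n$ by strict unitality, so $\phi$ is a section of $\mathit{ev}_M$; what remains is to nullhomotope $\id - \phi \circ \mathit{ev}_M$. Here I would use the operator inserting the unit $e_Y$ into the leading slot,
\begin{equation*}
(Kb)^d(m, a_{d-1}, \dots, a_1) = \pm\, b^{d+1}(e_Y, m, a_{d-1}, \dots, a_1),
\end{equation*}
with $e_Y$ now occupying the module input of $b^{d+1}$ and $m$ demoted to a morphism input; this $K$ is again strictly unital and, once the shift conventions are unwound, has degree $-1$. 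Expanding $\mu^1_{A^{\mathit{mod}}} K + K \mu^1_{A^{\mathit{mod}}}$, almost all terms cancel in pairs, exactly as in the contraction of a bar resolution; the surviving contributions are the one in which $\mu^2_A(e_Y, m)$ collapses to $\pm m$, returning $\pm b$, and the one in which $b^1$ is applied to the slot $e_Y$ and the output is run through $\mu^{d+1}_M$, returning $\pm \phi_{b^1(e_Y)} = \pm(\phi \circ \mathit{ev}_M)(b)$. Reconciling signs gives $\id - \phi \circ \mathit{ev}_M = \pm(\mu^1_{A^{\mathit{mod}}} K + K \mu^1_{A^{\mathit{mod}}})$, proving the module Yoneda lemma. (A spectral-sequence argument filtering $\mathit{hom}_{A^{\mathit{mod}}}$ by word length is an alternative, but it reduces to the same bar-complex contraction.)

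To conclude, I would specialise to $M = Y_1^{\mathit{yon}}$, so that $M(Y_0) = \mathit{hom}_A(Y_0, Y_1)$ and $\mathit{ev}_M$ is a quasi-isomorphism. By the $d = 1$ case of \eqref{eq:yoneda1}, namely $a^{\mathit{yon},1}(a_1) = \mu^2_A(a, a_1)$, the composite of $\mathit{ev}_M$ with the Yoneda map sends $a$ to $\mu^2_A(a, e_{Y_0}) = a$; hence \eqref{eq:yoneda1} is a chain-level section of the quasi-isomorphism $\mathit{ev}_M$ and is therefore itself a quasi-isomorphism. The one genuinely laborious step is the verification that $K$ is the claimed nullhomotopy --- a sign-heavy expansion of the $A_\infty$-module morphism equations, carried out in \cite[Section 1l]{seidel04}, whose conventions we follow. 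It is harmless that $\mathit{hom}_{A^{\mathit{mod}}}$ is an infinite product over word length, because the formulas for $\mathit{ev}_M$, $\phi$ and $K$ are all local in that length.
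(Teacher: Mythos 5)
Your proposal is correct and takes essentially the same route as the paper: the inverse map $b \mapsto b^1(e_{Y_0})$, the observation that one composite is the identity by strict unitality, and the unit-insertion homotopy operator (your $K$) are exactly the paper's argument, with $h(b)^d(a_d,\dots,a_1) = b^{d+1}(e_{Y_0},a_d,\dots,a_1)$ being your $K$ specialized to $M = Y_1^{\mathit{yon}}$. The only cosmetic difference is that you first prove the module Yoneda lemma for an arbitrary $M$ and then set $M = Y_1^{\mathit{yon}}$, whereas the paper works directly with that case.
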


\begin{proof}
Consider the map in inverse direction, taking a module homomorphism $b$ to the element $a = b^1(e_{Y_0})$. Composing the two in one way yields the identity map on $\mathit{hom}_A(Y_0,Y_1)$. The other composition is chain homotopic to the identity: an explicit homotopy is
\begin{equation} \label{eq:h-homotopy}
\begin{aligned}
& h: \mathit{hom}_{A^{\mathit{mod}}}(Y_0^{\mathit{yon}},Y_1^{\mathit{yon}}) \longrightarrow \mathit{hom}_{A^{\mathit{mod}}}(Y_0^{\mathit{yon}},Y_1^{\mathit{yon}})[-1], \\
& h(b)^d(a_d,\dots,a_1) = b^{d+1}(e_{Y_0},a_d,\dots,a_1).
\end{aligned}
\end{equation}
\end{proof}

In other words, the Yoneda embedding is cohomologically full and faithful. Moreover, it canonically extends to a cohomologically full and faithful $A_\infty$-functor $A^{\mathit{tw}} \rightarrow A^{\mathit{mod}}$. The easiest way to see this is to think of it as the composition
\begin{equation} \label{eq:yoneda-restriction}
A^{\mathit{tw}} \longrightarrow (A^{\mathit{tw}})^{\mathit{mod}} \longrightarrow A^{\mathit{mod}}.
\end{equation}
where the first arrow is the Yoneda embedding for $A^{\mathit{tw}}$, and the second is restriction of modules from $A^{\mathit{tw}}$ to $A$. Since objects of $A$ generate $A^{\mathit{tw}}$ by definition, the restriction functor is a quasi-equivalence, so \eqref{eq:yoneda-restriction} is again cohomologically full and faithful.

\subsection{Idempotent splittings\label{subsec:splittings}}
Suppose that we have an object $Y$ of $A$ together with an endomorphism in the category $H^0(A)$ which is idempotent. One can always lift it to a {\em homotopy idempotent}, which is a sequence $p = \{p^d\}$ of elements $p^d \in \mathit{hom}_A(Y,Y)^{1-d}$ ($d \geq 1$) satisfying the equations
\begin{equation} \label{eq:homotopy-idempotent}
\sum_r \sum_{k_1 + \cdots + k_r = d} \mu_A^r(p^{k_r},\dots,p^{k_1}) =
\begin{cases} p^{d-1} & \text{$d$ even,} \\ 0 & \text{$d$ odd} \end{cases}
\end{equation}
for any $d \geq 1$, and such that $p^1$ represents our original idempotent. This is proved in \cite[Section 4]{seidel04}, but it is maybe useful to summarize the argument in more elementary language. The choice of $p^1,p^2$ is straightforward, and the remaining process is inductive. Suppose that $p^1,\dots,p^{d-1}$ have been chosen satisfying the respective equations. Take the sum of all the terms on left hand side of \eqref{eq:homotopy-idempotent} which have $r \geq 2$. These give rise to a cocycle of bidegree $(d,2-d)$ in the following periodic complex of graded vector spaces:
\begin{multline} \label{eq:trivial-hh}
\cdots \rightarrow \mathit{Hom}_{H(A)}(Y,Y) \xrightarrow{[a] \mapsto [(-1)^{|a|}\mu^2_A(p^1,a) - \mu^2_A(a,p^1)] } \mathit{Hom}_{H(A)}(Y,Y) \longrightarrow \\ \xrightarrow{[a] \mapsto [(-1)^{|a|+1}\mu^2_A(p^1,a) - \mu^2_A(a,p^1) + a]} \mathit{Hom}_{H(A)}(Y,Y) \rightarrow \cdots \qquad
\end{multline}
Since that complex is acyclic, we can modify $p^{d-1}$ by adding a $\mu^1_A$-cocycle, so that the same sum of terms represents the zero class in $\mathit{Hom}_{H(A)}^{2-d}(Y,Y)$, and then choose $p^d$ so that \eqref{eq:homotopy-idempotent} holds.

The homotopy idempotent can then be used to define an $A_\infty$-module $M = (Y,p)^{\mathit{yon}}$, which in the category $H^0(A^{\mathit{mod}})$ is the direct summand of $Y^{\mathit{yon}}$ associated to the Yoneda image of $[p^1]$ (hence, independent of the choice of the homotopy idempotent up to quasi-isomorphism). It consists of the spaces
\begin{equation} \label{eq:q-splitting}
M(X) = \mathit{hom}_A(X,Y)[q]
\end{equation}
where $q$ is a formal variable of degree $-1$, and has differential
\begin{equation}
\mu^1_M(aq^j) = \sum_{r \geq 0} \sum_{k_1 + \cdots + k_r \leq j} \mu^{r+1}_A(p^{k_r},\dots,p^{k_1},a)\, q^{j-k_1-\cdots-k_r}
+ \begin{cases} 0 & \text{$j$ even,} \\ aq^{j-1} & \text{$j$ odd.}
\end{cases}
\end{equation}
We refer to \cite[Section 4]{seidel04} for full details, including the definition of the remaining maps $\mu^d_M$. By the same kind of argument as in \eqref{eq:yoneda-restriction}, idempotent summands of objects in $A^{\mathit{tw}}$ (or indeed in $A^{\mathit{mod}}$) can also be represented in $A^{\mathit{mod}}$. One defines $A^{\mathit{perf}}$, the category of {\em perfect modules}, to be the full subcategory of $A^{\mathit{mod}}$ consisting of all objects that are quasi-isomorphic to an idempotent summand of an object in $A^{\mathit{tw}}$. It is easy to see that $A^{\mathit{perf}}$ is again proper.

\subsection{Hochschild cohomology}
Let $\mathit{CC}(A,A)$ be the (reduced) Hochschild complex of $A$, and $\mathit{HH}(A,A)$ its cohomology. A Hochschild cochain $g$ is a sequence of multilinear maps
\begin{equation}
g^d: \mathit{hom}_A(X_{d-1},X_d) \otimes \cdots \otimes \mathit{hom}_A(X_0,X_1) \longrightarrow
\mathit{hom}_A(X_0,X_d)[|g|-d],
\end{equation}
$d \geq 0$, which vanish if one of the inputs is an identity morphism. The Hochschild differential is
\begin{equation}
\begin{aligned}
(\partial g)^d(a_d,\dots,a_1) = \sum_{i,j} (-1)^{(|g|-1)(|a_1|+\cdots+|a_i|-i)}
\mu^{d-j+1}_A(a_d,\dots,g^j(a_{i+j},\dots,a_{i+1}),\dots,a_1) \\
+ \sum_{i,j} (-1)^{|g|+|a_1|+\cdots+|a_i|-i} g^{d-j+1}(a_d,\dots,\mu_A^j(a_{i+j},\dots,a_{i+1}),\dots,a_1).
\end{aligned}
\end{equation}
We need to remind the reader briefly of the (partial) functoriality properties of Hochschild cohomology. Let $G: A \rightarrow \tilde{A}$ be a (strictly unital) $A_\infty$-functor. Then there are canonical chain maps
\begin{equation} \label{eq:functoriality}
\mathit{CC}(A,A) \xrightarrow{G_*} \mathit{CC}(A,\tilde{A}) \xleftarrow{G^*} \mathit{CC}(\tilde{A},\tilde{A}),
\end{equation}
where the middle term is the Hochschild cochain complex of $A$ with coefficients in the $A$-bimodule $\tilde{A}$. If $G$ is cohomologically full and faithful, the left hand map is a quasi-isomorphism. Less obviously (it is a form of Morita invariance), if $G$ is a quasi-equivalence, the right hand map is also a quasi-isomorphism. Informally one can think of these two maps as follows. If we deform the $A_\infty$-structure on either $A$ or $\tilde{A}$ infinitesimally, there will be a term measuring the failure of $G$ to be an $A_\infty$-functor for the deformed structure, which is the image of the corresponding deformation classes in $\mathit{HH}(A,\tilde{A})$.

We also need to know about the behaviour of Hochschild cohomology under formal enlargement. Thinking again in terms of deformation theory, one expects deformations of $A$ to induce ones of $A^{\mathit{tw}}$. This can indeed be made rigorous, leading to a canonical map
\begin{equation} \label{eq:twisted-gamma}
\Gamma^{\mathit{tw}}: \mathit{CC}(A,A) \longrightarrow \mathit{CC}(A^{\mathit{tw}},A^{\mathit{tw}}).
\end{equation}
To make this (partially) more explicit, take a Hochschild cochain $g$, and extend it in the obvious way to a cochain $g^{\oplus}$ on $A^{\oplus}$. Then the image $g^{\mathit{tw}} = \Gamma^{\mathit{tw}}(g)$ has leading term
\begin{equation} \label{eq:twisted-gamma-0}
g^{\mathit{tw},0} = g^{\oplus,0} + g^{\oplus,1}(\delta_X) + g^{\oplus,2}(\delta_X,\delta_X) + \cdots \in \mathit{hom}_{A^{\mathit{tw}}}^{|g|}(X,X).
\end{equation}
Maybe more obviously, restriction to the full subcategory $A \subset A^{\mathit{tw}}$ yields a map in reverse direction to \eqref{eq:twisted-gamma}. These two maps are inverse quasi-isomorphisms (strict inverses in one order, and inverses up to homotopy in the other), which is one form of the derived invariance of Hochschild cohomology.

Let's look at the analogous question for $A_\infty$-modules. As before, a deformation of $A$ induces a deformation of $A^{\mathit{mod}}$, but one which remains within the class of dg categories with curvature. In fact, the product $\mu^2_{A^{\mathit{mod}}}$ does not actually change, since its definition does not involve $\mu^*_A$. Concretely, this means that we have a map
\begin{equation} \label{eq:gamma-for-modules}
\Gamma^{\mathit{mod}}: \mathit{CC}(A,A) \longrightarrow \mathit{CC}(A^{\mathit{mod}},A^{\mathit{mod}})
\end{equation}
such that $g^{\mathit{mod}} = \Gamma^{\mathit{mod}}(g)$ has only two nontrivial components
\begin{equation} \label{eq:gamma-mod}
\begin{aligned}
& g^{\mathit{mod},0} \in \mathit{hom}_{A^{\mathit{mod}}}^{|g|}(M_0,M_0), \\
& g^{\mathit{mod},1}: \mathit{hom}_{A^{\mathit{mod}}}(M_0,M_1) \longrightarrow \mathit{hom}_{A^{\mathit{mod}}}(M_0,M_1)[|g|-1].
\end{aligned}
\end{equation}
These are given by
\begin{equation} \label{eq:gamma-formula}
\begin{aligned}
(g^{\mathit{mod},0})^d(m,a_{d-1},\dots,a_1) & = \sum_{ij} (-1)^\ast \, \mu_{M_0}^{d-j+1}(m,\dots, g^j(a_{i+j},\dots,a_{i+1}),\dots,a_1), \\
(g^{\mathit{mod},1})(b)^d(m,a_{d-1},\dots,a_1) & = \sum_{ij} (-1)^\ast \, b^{d-j+1}(m,\dots,g^j(a_{i+j},\dots,a_{i+1}),\dots,a_1), \\
& \ast = (|g|-1)(|a_{i+1}|+\cdots+|a_{d-1}|+|m|+d-i-1)+|g|.
\end{aligned}
\end{equation}


There is a restriction map from the Hochschild complex of $A^{\mathit{mod}}$ to that of its full subcategory $A^{\mathit{perf}}$. One can accordingly restrict \eqref{eq:gamma-for-modules} and get a map $\Gamma^{\mathit{perf}}$. Another manifestation of derived invariance of Hochschild cohomology says that $\Gamma^{\mathit{perf}}$ is a quasi-isomorphism. Next, $A^{\mathit{perf}}$ contains a full subcategory quasi-isomorphic to $A^{\mathit{tw}}$, and by restriction and the argument from \eqref{eq:functoriality}, one gets a further map from the Hochschild cohomology of $A^{\mathit{perf}}$ to that of $A^{\mathit{tw}}$. The situation can be summarized in the commutative diagram
\begin{equation}
\xymatrix{
\mathit{HH}(A,A)
\ar@/_1pc/[drr]^-{\Gamma^{\mathit{tw}}}_{\iso} \ar@/_3pc/[ddrr]^-{\Gamma^{\mathit{perf}}}_{\iso} \ar@/_6pc/[dddrr]^-{\Gamma^{\mathit{mod}}} \ar[rr]^-{\mathrm{id}} && \mathit{HH}(A,A) \\
&& \mathit{HH}(A^{\mathit{tw}},A^{\mathit{tw}}) \ar[u]_{\iso} \\
&& \mathit{HH}(A^{\mathit{perf}},A^{\mathit{perf}}) \ar[u]_{\iso} \\
&& \mathit{HH}(A^{\mathit{mod}},A^{\mathit{mod}}) \ar[u]
}
\end{equation}
where the vertical arrows are restriction maps. Other than derived invariance (which we will not discuss further, but see \cite[Section 5.4]{keller06}), the only nontrivial point in this diagram is that the map $\Gamma^{\mathit{mod}}$ is compatible with restriction. To understand that, we have to look at \eqref{eq:functoriality} for the Yoneda embedding:
\begin{equation} \label{eq:yoneda-functoriality}
\mathit{CC}(A,A) \longrightarrow \mathit{CC}(A,A^{\mathit{mod}}) \longleftarrow \mathit{CC}(A^{\mathit{mod}},A^{\mathit{mod}}).
\end{equation}
Take $g \in \mathit{CC}(A,A)$, and consider its image under the first map in \eqref{eq:yoneda-functoriality}. For simplicity let's look only at the constant term of this, which consists of an endomorphism $b \in \mathit{hom}_{A^{\mathit{mod}}}(Y^{\mathit{yon}},Y^{\mathit{yon}})$ for each $Y$, given by
\begin{equation}
b^d(a_d,\dots,a_1) = \mu^{d+1}_A(g^0,a_d,\dots,a_1). \label{eq:1-cc}
\end{equation}
On the other hand, we can take $g^{\mathit{mod}} \in \mathit{CC}(A^{\mathit{mod}},A^{\mathit{mod}})$ and pull it back to $\mathit{CC}(A,A^{\mathit{mod}})$ as in the second map in \eqref{eq:yoneda-functoriality}, which leads to another cochain $\mathit{CC}(A,A^{\mathit{mod}})$ with constant term
\begin{equation} \label{eq:2-cc}
\begin{aligned}
\tilde{b}^d(a_d,\dots,a_1) = & -\sum_{i+j<d} (-1)^\ast \mu^{d-j+1}_A(a_d,\dots,g^j(a_{i+j},\dots,a_{i+1}),\dots,a_1), \\ & \ast = (|g|-1)(|a_{i+1}|+\cdots+|a_d|+d-i).
\end{aligned}
\end{equation}
Assuming that $g$ is a Hochschild cocycle, we can write
\begin{equation}
\begin{aligned} &
\tilde{b}^d(a,a_{d-1},\dots,a_1) - b^d(a,a_{d-1},\dots,a_1) = \\ &
\qquad \sum_{i<d} (-1)^{(|g|-1)(|a_{i+1}| + \cdots + |a_d| + d - i)} \mu^{i+1}_A(g^{d-i}(a_d,\dots,a_{i+1}),\dots,a_1) \\  & \qquad + \sum_{i,j} (-1)^{|g| + |a_1| + \cdots + |a_i| - i + (|g|-1)(|a_1|+\cdots+|a_d|-d)} g^{d-j+1}(a_d,\dots,\mu_A^j(a_{i+j},\dots,a_{i+1}),\dots,a_1).
\end{aligned}
\end{equation}
This difference is the coboundary of another endomorphism of $Y^{\mathit{yon}}$ of degree $|g|-1$, given by $(a_d,\dots,a_1) \mapsto (-1)^{|g|(|a_1|+ \cdots + |a_d|-d)+1}g^d(a_d,\dots,a_1)$. The general computation is similar.

\subsection{Bimodules\label{subsec:bimodules}}
$A_\infty$-bimodules have already made a brief appearance before, but we will now consider them in a little more detail. Let $A$ and $\tilde{A}$ be $A_\infty$-categories over $R$, with the usual unitality and properness assumptions. An {\em $(A,\tilde{A})$-bimodule with finite cohomology} $P$ assigns to any pair of objects $(X,\tilde{X}) \in \mathit{Ob}\, A \times \mathit{Ob}\,\tilde{A}$ a graded vector space $P(\tilde{X},X)$, which comes with structure maps
\begin{equation} \label{eq:bimodule-structure}
\begin{aligned}
& \mu_P^{s|1|t}: \mathit{hom}_A(X_{s-1},X_s) \otimes \cdots \otimes \mathit{hom}_A(X_0,X_1) \otimes P(\tilde{X}_t,X_0) \\ & \qquad \qquad \qquad \qquad \otimes \mathit{hom}_{\tilde{A}}(\tilde{X}_{t-1},\tilde{X}_t) \otimes \cdots \otimes \mathit{hom}_{\tilde{A}}(\tilde{X}_0,\tilde{X}_1) \longrightarrow P(\tilde{X}_0,X_s)[1-s-t]
\end{aligned}
\end{equation}
for all $s,t \geq 0$, satisfying analogues of the $A_\infty$-module equation. We assume that the cohomology groups $H(P(\tilde{X},X),\mu_P^{0|1|0})$ are of finite total dimension, and also impose strict unitality properties. $A_\infty$-bimodules form an $A_\infty$-category $(A,\tilde{A})^{\mathit{mod}}$ (just like in the case of modules, this has vanishing higher order compositions, hence could be considered a dg category). In the special case $A = \tilde{A}$, we use the terminology $A$-bimodule instead of $(A,A)$-bimodule.

\begin{example}
The standard example is the diagonal $A$-bimodule, which has $P(X,Y) = \mathit{hom}_A(X,Y)$ with $\mu^{s|1|t}_P = \mu^{s+1+t}_A$ (this also indirectly illustrates our sign conventions). We usually just write $P = A$. Another example is the one which appeared in \eqref{eq:functoriality}: if $G: A \rightarrow \tilde{A}$ is an $A_\infty$-functor, one can define an $A$-bimodule $P(X,Y) = \mathit{hom}_{\tilde{A}}(GX,GY)$, with structure maps similar to the diagonal one but plugging in multiple copies of $G$. We again denote this by $P = \tilde{A}$, but always make sure to mention that the pullback to an $A$-bimodule is intended.
\end{example}

One traditional use of bimodules is as ``kernels'' defining ``convolution functors'' between categories of modules. The tensor product of an $A$-module $M$ and an $(A,\tilde{A})$-bimodule $P$ is an $\tilde{A}$-module $\tilde{M} = M \otimes_A P$, given by a bar construction
\begin{equation} \label{eq:ainfty-tensor}
\tilde{M}(\tilde{X}) = \bigoplus M(X_r) \otimes \mathit{hom}_A(X_{r-1},X_r)[1] \otimes \cdots \mathit{hom}_A(X_0,X_1)[1] \otimes P(\tilde{X},X_0),
\end{equation}
where the sum is over all $r \geq 0$ and objects $(X_0,\dots,X_r)$. The induced differential is
\begin{equation}
\begin{aligned}
& \mu^1_{\tilde{M}}(m \otimes a_r \otimes \cdots \otimes a_1 \otimes p) = \\
& \qquad \sum_i (-1)^{|p|+|a_1|+\cdots+|a_i|-i} \mu^{r-i+1}_M(m,a_r,\dots,a_{i+1}) \otimes a_i \otimes \cdots \otimes a_1 \otimes p \\ &
\qquad + \sum_{i,j} (-1)^{|p|+|a_1|+\cdots+|a_i|-i} m \otimes \cdots \otimes \mu_A^j(a_{i+j},\dots,a_{i+1}) \otimes a_i \otimes \cdots \otimes a_1 \otimes p \\ &
\qquad + \sum_i m \otimes \cdots \otimes a_{i+1} \otimes \mu_P^{i|1|0}(a_i,\dots,a_1,p).
\end{aligned}
\end{equation}
There are similar formulae for $\mu^d_{\tilde{M}}$, $d > 1$, which are in fact simpler since they involve only $\mu_P^{*|1|d-1}$. Naturality of the tensor product with a fixed $P$ is expressed by a map $\mathit{hom}_{A^{\mathit{mod}}}(M_0,M_1) \rightarrow \mathit{hom}_{\tilde{A}^{\mathit{mod}}}(\tilde{M}_0,\tilde{M}_1)$ for $\tilde{M}_k = M_k \otimes_A P$. This takes $b$ to $b \otimes_A e_P$, given by
\begin{equation}
\begin{aligned}
& (b \otimes_A e_P)^1(m_0 \otimes a_r \cdots \otimes a_1 \otimes p) = \\
& \qquad \qquad \qquad \sum_i (-1)^{|p|+|a_1|+\cdots+|a_i|-i} b^{r-i+1}(m_0,a_r,\dots,a_{i+1}) \otimes a_i \otimes \cdots \otimes p,
\end{aligned}
\end{equation}
with vanishing higher order terms. The resulting convolution $A_\infty$-functor (in fact a dg functor, since it has no higher order terms) between module categories will be denoted by $K_P$. So far, we have skirted the issue of whether $M \otimes_A P$ is really an object of $A^{\mathit{mod}}$ as defined, meaning whether it satisfies the cohomological finiteness condition. This fails in general, but it will hold if $M$ is perfect (based on the fact that $X^{\mathit{yon}} \otimes_A P$ is quasi-isomorphic to the $\tilde{A}$-module $P(\cdot,X)$, which has finite cohomology by assumption on $P$). Hence, we always get a functor $K_P: A^{\mathit{perf}} \rightarrow \tilde{A}^{\mathit{mod}}$. If $P$ is itself {\em right perfect}, which means that $P(\cdot,X)$ is itself a perfect $\tilde{A}$-module for any $X$, then $K_P$ takes $A^{\mathit{perf}}$ to $\tilde{A}^{\mathit{perf}}$.

Let's suppose, for the sake of concreteness, that $P$ is right perfect. There are natural chain maps
\begin{equation} \label{eq:one-sided-deformation}
\mathit{CC}(A,A) \longrightarrow \mathit{hom}_{(A,\tilde{A})^{\mathit{mod}}}(P,P) \longleftarrow
\mathit{CC}(\tilde{A},\tilde{A}).
\end{equation}
Informally, one can think of these as follows. Given an infinitesimal deformation of either $A$ or $\tilde{A}$, they measure the failure of $P$ to remain a bimodule with respect to the deformed structure. This may remind the reader of \eqref{eq:functoriality}, and indeed one can define a map (represented by the dashed arrow below) which fits into a homotopy commutative diagram
\begin{equation} \label{eq:grrr}
\xymatrix{
\mathit{CC}(A,A) \ar[d]^{\Gamma^{\mathit{perf}}} \ar[r] & \mathit{hom}_{(A,\tilde{A})^{\mathit{mod}}}(P,P) \ar@{-->}[d] & \ar[l] \ar[d]^{\Gamma^{\mathit{perf}}} \mathit{CC}(\tilde{A},\tilde{A}) \\
\mathit{CC}(A^{\mathit{perf}},A^{\mathit{perf}}) \ar[r]^-{(K_P)_*} &
\mathit{CC}(A^{\mathit{perf}},\tilde{A}^{\mathit{perf}}) & \ar[l]_-{(K_P)^*}
\mathit{CC}(\tilde{A}^{\mathit{perf}},\tilde{A}^{\mathit{perf}}).
}
\end{equation}

\subsection{Connections}
Fix a smooth affine algebraic curve $\SS$ over $R$, with $\RR = R[\SS]$ its ring of functions. Recall that $\RR$-modules correspond to quasi-coherent sheaves on $\SS$; finitely generated modules to coherent sheaves; projective modules to vector bundles; and rank $1$ projective modules to line bundles. We will mostly use the algebraic language as it is more elementary, but the reader is encouraged to keep the geometric viewpoint in mind.

Denote by $\Omega^1_\RR$ the module of K{\"a}hler differentials, which is a rank $1$ projective module, and comes with its canonical derivation $d: \RR \rightarrow \Omega^1_\RR$. A connection on an $\RR$-module $\FF$ is a map $\nabla_\FF: \FF \rightarrow \Omega^1_\RR \otimes \FF$ satisfying the Leibniz identity with respect to $d$. An equivalent viewpoint is as follows. For any $\FF$ there is a canonical short exact sequence of modules
\begin{equation} \label{eq:pullback-sequence}
0 \rightarrow \Omega^1_\RR \otimes \FF \longrightarrow J^1(\FF) \longrightarrow \FF \rightarrow 0,
\end{equation}
where $\JJ^1(\FF)$ is the one-jet module \cite{atiyah57}. Connections correspond to splittings of this sequence. $\FF$ admits a connection if and only if its Atiyah class $\mathit{At}(\FF) \in \mathit{Ext}^1_\RR(\FF, \Omega^1_\RR \otimes \FF)$, which is the extension class of \eqref{eq:pullback-sequence}, vanishes. In particular, projective modules always admit connections (this can also be proved directly).
In the other direction, one has \cite[Lecture 2]{bernstein-dmodules}:

\begin{lemma} \label{th:locally-free}
Let $\FF$ be a finitely generated $\RR$-module which admits a connection. Then it is necessarily projective. \qed
\end{lemma}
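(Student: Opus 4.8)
The plan is to reduce to a local statement over a discrete valuation ring, where a short differentiation argument (and here $\mathrm{char}(R)=0$ enters essentially) rules out torsion. First I would recall the standard commutative algebra: since $\SS$ is a smooth affine curve, $\RR = R[\SS]$ is a one-dimensional regular Noetherian domain, hence a Dedekind domain; for a finitely generated $\RR$-module, being projective is equivalent to being torsion-free, and projectivity can be checked locally, i.e. $\FF$ is projective iff $\FF_\pp := \FF \otimes_\RR \RR_\pp$ is free over the DVR $\RR_\pp$ for every closed point $\pp$. So it suffices to show that the torsion submodule $T \subseteq \FF_\pp$ vanishes for each $\pp$. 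I would also note that connections localize: since localization is exact and commutes with $\Omega^1_\RR$, the given connection $\nabla_\FF$ induces one on $\FF_\pp$ over $\RR_\pp$. Choosing a uniformizer $\pi$ of $\RR_\pp$, so that $d\pi$ generates the free rank one module $\Omega^1_{\RR_\pp}$, a connection on $\FF_\pp$ is the same as an $R$-linear operator $\partial$ with $\partial(\pi m) = m + \pi\,\partial m$.

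The main step is then the local claim: a nonzero finite length module over $\RR_\pp$ admits no connection. I would first check that $\partial$ preserves the torsion submodule $T$ of $\FF_\pp$: if $\pi^k t = 0$ then $0 = \partial(\pi^k t) = k\,\pi^{k-1} t + \pi^k\,\partial t$, so $\pi^{k+1}\partial t = -k\,\pi^k t = 0$ and $\partial t$ is again torsion. Thus $T$ carries an operator $\partial$ with $\partial(\pi m) = m + \pi\,\partial m$, and an immediate induction gives $\partial(\pi^j m) = j\,\pi^{j-1} m + \pi^j\,\partial m$ for all $j \geq 1$ and all $m \in T$. Now suppose $T \neq 0$ and let $n \geq 1$ be minimal with $\pi^n T = 0$; pick $m \in T$ with $\pi^{n-1}m \neq 0$. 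Then $\pi^n m = 0$, so $0 = \partial(\pi^n m) = n\,\pi^{n-1}m + \pi^n \partial m = n\,\pi^{n-1}m$; since $n$ is invertible in $R \subseteq \RR_\pp$, this forces $\pi^{n-1}m = 0$, a contradiction. Hence $T = 0$, so $\FF_\pp$ is torsion-free, hence free, for every $\pp$, and $\FF$ is projective.

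The genuine content is the displayed local computation, and the use of $\mathrm{char}(R)=0$ is exactly the invertibility of the integer $n$ — this is the step a reader should focus on, and the reason the hypothesis on the characteristic cannot be dropped. The remaining points are routine bookkeeping: the stability of the torsion submodule under the connection (handled above via the Leibniz rule) and the quoted facts that over a Dedekind domain finitely generated torsion-free modules are projective and that projectivity of a finitely generated module is a local property. I do not expect any serious obstacle beyond organizing these reductions cleanly; one could alternatively phrase the argument globally, splitting $\FF$ as torsion $\oplus$ projective via the structure theory of modules over a Dedekind domain and then killing the finite length torsion summand by the same computation.
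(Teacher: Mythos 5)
Your proof is correct and is essentially the standard argument found in the cited reference \cite{bernstein-dmodules} (the paper itself gives no proof, only the citation); the paper's remark that characteristic zero ``is crucial, since we will be using differentiation'' refers precisely to the step you isolate, where the integer $n$ must be invertible to conclude $\pi^{n-1}m = 0$. The localization-to-a-DVR reduction and the Leibniz-rule computation killing finite-length torsion are both exactly as one would expect.
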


Let $R(\SS)$ be the field of rational functions on our curve, which is the quotient field of $\RR$. Tautologically, all projective modules of the same rank become isomorphic after tensoring with $R(\SS)$. However, it is intuitively clear that the situation for modules with connections is quite different, which is indeed confirmed by:

\begin{lemma} \label{th:rational-section}
Let $\FF$ be a finitely generated $\RR$-module with a connection. Then, if $f \in \FF \otimes_\RR R(\SS)$ is a rational solution of $\nabla_\FF(f) = 0$, it automatically lies in $\FF$ itself.
\end{lemma}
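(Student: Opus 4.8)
The plan is to work locally around an arbitrary point $s_0 \in \SS$ and show that $f$ cannot have a pole there. Since $\FF$ is projective by Lemma \ref{th:locally-free} (it has a connection), after localizing at $s_0$ we may trivialize it: choose a local coordinate $t$ vanishing at $s_0$ and a local basis $e_1,\dots,e_n$ of $\FF$ over the local ring $\RR_{s_0}$. Writing $\nabla_\FF$ in this basis gives a connection matrix, i.e. $\nabla_\FF e_j = \sum_i \omega_{ij}\, e_i$ with $\omega_{ij}$ (locally) regular one-forms, say $\omega_{ij} = \Gamma_{ij}(t)\, dt$ with $\Gamma_{ij}$ regular at $t=0$. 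Expanding $f = \sum_j f_j(t)\, e_j$ with $f_j \in R(\SS)$, the equation $\nabla_\FF(f)=0$ becomes the first-order linear ODE system $f_j'(t) + \sum_k \Gamma_{jk}(t) f_k(t) = 0$ for all $j$.

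The core step is then a valuation (order-of-pole) estimate on this ODE system. Let $m = -\min_j \mathrm{ord}_{t}(f_j) \geq 0$ be the maximal pole order among the components; suppose for contradiction $m > 0$. Among the $f_j$ achieving this pole order, the leading Laurent coefficient of $f_j'$ contributes a term of pole order exactly $m+1$ with a nonzero coefficient (this is where characteristic $0$ enters: differentiating $c\, t^{-m}$ gives $-m\, c\, t^{-m-1}$, and $-m \neq 0$). Meanwhile every term $\Gamma_{jk}(t) f_k(t)$ has pole order at most $m$, since $\Gamma_{jk}$ is regular. Hence in the equation for such an index $j$ the $t^{-m-1}$-coefficient reads simply $-m\, c_j = 0$ with $c_j$ the leading coefficient of $f_j$, forcing $c_j = 0$ — contradicting the choice of $j$. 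Therefore $m = 0$, i.e. every $f_j$ is regular at $s_0$, so $f \in \FF_{s_0}$. Since $s_0$ was arbitrary and $\FF = \bigcap_{s_0} \FF_{s_0}$ inside $\FF \otimes_\RR R(\SS)$ (a standard fact for finitely generated modules over a Dedekind domain, or one can check it affine-locally), we conclude $f \in \FF$.

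The main obstacle is really just bookkeeping: making sure the local trivialization and the passage to the ODE system is clean, and that the valuation argument is stated correctly when several components share the maximal pole order (one must pick an index realizing the maximum and argue componentwise, rather than hoping a single scalar equation does the job). One should also be slightly careful that $\SS$ is a smooth curve, so its local rings are discrete valuation rings and "order of pole at $s_0$" is well-defined; this is exactly what lets the argument run. I expect no genuine difficulty beyond this — the differentiation-lowers-nothing-but-raises-the-pole phenomenon, valid precisely because $\mathrm{char}\, R = 0$, does all the work, which is consistent with the remark in the paper that the characteristic hypothesis is essential.
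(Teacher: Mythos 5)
Your proof is correct and takes essentially the same route as the paper, which applies the Leibniz identity $\nabla_\FF(r^m f) = m(r^{-1}dr)\,r^m f$ directly for a local uniformizer $r$ — a more compact intrinsic way of reading off the same $t^{-m-1}$-coefficient you extract from the componentwise ODE system. (One harmless slip: $m = -\min_j \mathrm{ord}_t(f_j)$ need not be $\geq 0$ a priori, but that is irrelevant since only the case $m > 0$ is at issue.)
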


\begin{proof}
Suppose that $f \neq 0$, and choose a point on our curve. Take a function $r \in \RR$ which vanishes (to order $1$) at this point. There is a unique $m \in \bZ$ such that $r^m f$ takes a nonzero finite value at our point. By assumption
\begin{equation}
\nabla_\FF( r^m f) = m (r^{-1} dr) r^m f.
\end{equation}
Since connections are local operations, the left hand side is regular locally around our point; but the right hand one has a pole, unless $m = 0$.
\end{proof}

One can extend the notion of connection to the derived category (see for instance \cite{markarian09,huybrechts-thomas10}), as follows. Any complex of modules $\FF$ sits in a short exact sequence generalizing \eqref{eq:pullback-sequence}. Define a {\em homotopy connection} to be a splitting $\FF \rightarrow J^1(\FF)$ in the derived category. If this exists, it induces connections on all the cohomology modules $\HH^i(\FF)$. The obstruction to the existence of a homotopy connection is the morphism completing the short exact sequence to an exact triangle in the derived category, which we again call the Atiyah class $\mathit{At}(\FF) \in \mathit{Hom}_{D(\RR)}(\FF, \Omega^1_\RR \otimes \FF[1])$. To get a more hands-on description, assume that each of the modules $\FF^i$ forming our complex already comes equipped with a connection. We call this a {\em pre-connection} on $\FF$, and denote it by $\pabla_\FF$. Its failure to commute with the differential $d_\FF$ gives rise to a chain map
\begin{equation} \label{eq:cochain-atiyah}
\begin{aligned}
& \mathit{at}(\pabla_\FF): \FF \longrightarrow \Omega^1_\RR \otimes \FF[1], \\
& \mathit{at}(\pabla_\FF)(\ff) = (\mathit{id}_{\Omega^1_\RR} \otimes d_\FF)(\pabla_{\FF} \ff) - \pabla_{\FF}(d_\FF \ff),
\end{aligned}
\end{equation}
which is the boundary homomorphism for \eqref{eq:pullback-sequence}, hence represents $\mathit{At}(\FF)$. We call $\mathit{at}(\pabla_\FF)$ the {\em Atiyah cocycle} of $\pabla_\FF$. Suppose that $\FF_0,\FF_1$ are complexes of modules, with $\FF_0$ consisting of projective modules. Then we have a short exact sequence \cite[Theorem VI.10.11]{dold}
\begin{equation} \label{eq:dold}
0 \rightarrow \mathit{Ext}^1(H(\FF_0),H(\FF_1))[-1] \longrightarrow H(\mathit{hom}(\FF_0,\FF_1)) \longrightarrow \mathit{Hom}(H(\FF_0),H(\FF_1)) \rightarrow 0.
\end{equation}
In particular:

\begin{lemma} \label{th:proj}
If $\FF_0$ is a complex of projective $\RR$-modules, and $\FF_1$ an acyclic complex of $\RR$-modules, then $\mathit{hom}(\FF_0,\FF_1)$ is again acyclic. \qed
\end{lemma}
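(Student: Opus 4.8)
The plan is to read the statement off directly from the exact sequence \eqref{eq:dold}. Because $\FF_0$ is by hypothesis a complex of projective $\RR$-modules, \eqref{eq:dold} applies to the pair $(\FF_0,\FF_1)$ and presents $H(\mathit{hom}(\FF_0,\FF_1))$ as an extension of $\mathit{Hom}(H(\FF_0),H(\FF_1))$ by $\mathit{Ext}^1(H(\FF_0),H(\FF_1))[-1]$. Saying that $\FF_1$ is acyclic is exactly saying $H(\FF_1)=0$; since the functors $\mathit{Hom}(H(\FF_0),-)$ and $\mathit{Ext}^1(H(\FF_0),-)$ both kill the zero module, the two outer terms of \eqref{eq:dold} vanish, and by exactness so does the middle term. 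Hence $\mathit{hom}(\FF_0,\FF_1)$ is acyclic.

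It is worth recalling why \eqref{eq:dold} has this short shape in the present context: $\RR$ is the coordinate ring of a smooth affine curve, hence a regular ring of Krull dimension $1$, so it has global dimension $1$ and $\mathit{Ext}^i_\RR(-,-)$ vanishes for $i \geq 2$. This is what makes the universal-coefficient type sequence terminate after the $\mathit{Ext}^1$ term, and it also guarantees that the filtration argument behind it converges even for unbounded complexes, so no additional boundedness hypothesis on $\FF_0$ or $\FF_1$ is needed.

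There is essentially no obstacle here. If one prefers not to quote \eqref{eq:dold}, one can argue directly: filter $\mathit{hom}(\FF_0,\FF_1)$ by the stupid (column) filtration coming from $\FF_0$; each functor $\mathit{hom}_\RR(\FF_0^i,-)$ is exact since $\FF_0^i$ is projective, so the associated graded complex carries the differential induced by $d_{\FF_1}$ alone and is therefore acyclic; the resulting spectral sequence, convergent because of the finite cohomological amplitude just noted, has vanishing $E_1$-page, which again forces $H(\mathit{hom}(\FF_0,\FF_1)) = 0$. Either route is routine.
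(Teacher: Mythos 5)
Your main argument — vanishing of the outer terms of the universal-coefficient sequence \eqref{eq:dold} when $H(\FF_1) = 0$ forces the middle term to vanish — is exactly the paper's intended proof, which is why the lemma carries a \qed immediately after \eqref{eq:dold} is displayed. Your remark that $\RR$ is regular of Krull dimension $1$, hence hereditary, is also the right explanation of why \eqref{eq:dold} is available for unbounded complexes, matching the paper's citation of Dold.

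One caution on the optional spectral-sequence digression: $E_1 = 0$ does not force the abutment of a column-filtration spectral sequence to vanish unless the filtration actually converges, and finite global dimension of $\RR$ does not by itself secure convergence when $\FF_0$ is unbounded in both directions. (The classic example $\cdots \to \Z/4 \xrightarrow{\,2\,} \Z/4 \xrightarrow{\,2\,} \cdots$ mapped to itself has $E_1 = 0$ for that filtration yet $\mathit{hom}$ has nonzero cohomology; of course $\Z/4$ is not hereditary, but the example shows that the vanishing of $E_1$ alone is not decisive.) What actually exploits the hereditary hypothesis is Dold's mechanism: over a hereditary ring, cycles and boundaries of a complex of projectives are themselves projective, so $\FF_0$ splits as a complex into a direct sum of two-term pieces $0 \to B^i \hookrightarrow Z^i \to 0$, and the lemma then reduces to the bounded case together with exactness of products. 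Since your primary argument stands on its own, this is a side comment rather than a gap.
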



In the terminology of \cite{spaltenstein88}, this means that (unbounded) complexes of projective $\RR$-modules are $K$-projective.

\begin{lemma} \label{th:homotopy-idempotent}
Let $\FF$ be a complex of projective $\RR$-modules, and $c: \FF \rightarrow \FF$ a chain map which is chain homotopic to its square, and which induces an isomorphism on cohomology. Then $c$ is chain homotopic the identity.
\end{lemma}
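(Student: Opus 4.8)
The plan is to push everything into the endomorphism ring $E := H^0(\mathit{hom}(\FF,\FF))$ of $\FF$ in the homotopy category and reduce the lemma to an identity there. Tautologically, a chain map $\FF\to\FF$ is chain homotopic to the identity if and only if its class in $E$ equals $[\id_\FF]$, so it is enough to prove $[c]=[\id_\FF]$. Note that $E$ is a unital associative ring under composition and that passing to the induced map on cohomology defines a ring homomorphism $\rho\colon E\to \prod_n \mathit{End}_\RR(H^n(\FF))$.

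First I would read off the cohomological content of the two hypotheses. Since $c$ is chain homotopic to $c^2$, the element $\rho([c])$ satisfies $\rho([c])^2=\rho([c^2])=\rho([c])$, so it is idempotent; it is also invertible, because $c$ induces an isomorphism on cohomology. An invertible idempotent in any ring is the identity (multiply $e^2=e$ on the right by $e^{-1}$), so $\rho([c])$ is the identity; equivalently $\epsilon := [c]-[\id_\FF]$ lies in $\ker(\rho)$.

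Next I would identify $\ker(\rho)$ and show it is square-zero. Applying \eqref{eq:dold} with $\FF_0=\FF_1=\FF$ (a complex of projective modules), $\rho$ is surjective with kernel $I\cong\bigoplus_n \mathit{Ext}^1_\RR(H^n(\FF),H^{n-1}(\FF))$. As the kernel of a ring homomorphism, $I$ is a two-sided ideal, and the crucial point is $I\cdot I=0$. This is where the hypothesis that $\SS$ is a curve enters: $\RR=R[\SS]$ is then a Dedekind ring, so it has global dimension $1$ and $\mathit{Ext}^{\geq 2}_\RR$ vanishes. Concretely, over such an $\RR$ every complex of modules is quasi-isomorphic to the direct sum of its shifted cohomology modules; since $\FF$ is moreover $K$-projective (Lemma \ref{th:proj}), $E=\mathit{Hom}_{D(\RR)}(\FF,\FF)$ becomes block-triangular, with the ``diagonal blocks'' $\mathit{Hom}_\RR(H^n(\FF),H^n(\FF))$ forming the image of $\rho$ and the ``first off-diagonal'' $\mathit{Ext}^1_\RR(H^n(\FF),H^{n-1}(\FF))$ forming $I$; hence the composition of two elements of $I$ has components in $\mathit{Ext}^2_\RR=0$, so $\epsilon^2=0$.

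The conclusion is then immediate: expanding $[c]^2=[c]$ with $[c]=[\id_\FF]+\epsilon$ gives $[\id_\FF]+2\epsilon+\epsilon^2=[\id_\FF]+\epsilon$, and $\epsilon^2=0$ forces $2\epsilon=\epsilon$, i.e. $\epsilon=0$; so $[c]=[\id_\FF]$ and $c$ is chain homotopic to the identity. The step I expect to demand the most care is the vanishing $I\cdot I=0$ — that is, making rigorous that the composition of two cohomology-trivial endomorphisms of $\FF$ is controlled by $\mathit{Ext}^2_\RR$ and therefore dies over a curve. (One can also sidestep it by a less elementary argument: $[c]$ is an idempotent in $E=\mathit{End}_{D(\RR)}(\FF)$, which therefore splits $\FF$ as a direct sum in the idempotent-complete unbounded derived category $D(\RR)$; the complementary summand is acyclic by the isomorphism hypothesis, hence zero, so $[c]=[\id_\FF]$ directly. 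This version uses nothing about $\dim\SS$, but relies on less elementary facts about $D(\RR)$.)
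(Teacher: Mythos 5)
Your proof is correct, but it takes a genuinely different route from the paper's. Both start from the short exact sequence \eqref{eq:dold}, but after that the arguments diverge. The paper's proof is a one-move cancellation: left composition with $c$ acts on each of the three terms of \eqref{eq:dold}, and since $H(c)$ is an isomorphism, it acts by isomorphisms on the outer two; the five lemma then makes left composition with $[c]$ an isomorphism (in particular injective) on $H(\mathit{hom}(\FF,\FF))$, and $[c]\cdot[c]=[c]=[c]\cdot[\id_\FF]$ forces $[c]=[\id_\FF]$. Your argument instead unpacks $[c]$ into diagonal and off-diagonal parts: $\rho([c])$ is an invertible idempotent, hence the identity, so $\epsilon=[c]-[\id_\FF]$ lies in the kernel $I\cong\bigoplus_n\mathit{Ext}^1(H^n(\FF),H^{n-1}(\FF))$; then you prove $I^2=0$ via the splitting of objects of $D(\RR)$ over the hereditary ring $\RR$, and read off $\epsilon=0$ from the idempotent relation.

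The trade-offs are worth noting. The paper's argument is shorter and requires nothing beyond the existence of \eqref{eq:dold} and the five lemma; in particular it transports without change to the higher-dimensional setting of Remark \ref{th:higher-dimensions}, where \eqref{eq:dold} is replaced by a spectral sequence and the same ``left composition with $c$ is an isomorphism page by page'' argument applies. Your main argument is genuinely tied to $\dim\SS=1$, since $I^2=0$ fails once $\mathit{Ext}^2\neq 0$, and the block-decomposition $\FF\cong\bigoplus_n H^n(\FF)[-n]$ in $D(\RR)$ needs a little care for unbounded complexes (the inductive truncation argument handles each $\tau^{\leq n}\FF$, but passing to the homotopy colimit introduces a $\lim^1$ issue you'd want to address or avoid). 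Your parenthetical ``sidestep'' — split the idempotent $[c]$ in the idempotent-complete $D(\RR)$ and observe the complement is acyclic — is actually the cleanest of the three: it uses no hypothesis on $\SS$ at all, and since $\FF$ is $K$-projective the conclusion in $D(\RR)$ is the conclusion in the homotopy category. It does, however, import idempotent-completeness of the unbounded derived category (B\"okstedt--Neeman), which the paper evidently wanted to avoid in favor of a purely elementary five-lemma argument.
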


\begin{proof}
Consider \eqref{eq:dold} with $\FF_0 = \FF_1 = \FF$. Left composition with $c$ induces isomorphisms on the left and right terms of that sequence, hence also on the middle one. Since the identity and $c$ become homotopic after composition with $c$, they must have been homotopic in the first place.
\end{proof}

Lemma \ref{th:proj} has implications for homotopy connections, as follows. Take a complex of projective modules $\FF$, and assume that $\mathit{At}(\FF) = 0$. Then the map $\mathit{at}(\pabla_\FF)$ must be nullhomotopic, which means that one can modify the given pre-connection $\pabla_\FF$ so that it becomes compatible with the differential. The result should then be properly called a connection on the complex $\FF$, and we reserve the notation $\nabla_{\FF}$ for those.

\begin{remark} \label{th:geometric}
We want to briefly consider the extension of the theory to non-affine bases. Let $\SS$ be a smooth quasi-projective curve over $R$, and $\Omega^1_\SS$ the line bundle of differentials. For any quasi-coherent sheaf $\FF$ we have an analogue of \eqref{eq:pullback-sequence}, which can be used to define connections and Atiyah classes $\mathit{At}(\FF) \in \mathit{Ext}^1_{\SS}(\FF, \Omega^1_\SS \otimes \FF)$. The same holds for complexes, except that projective resolutions do not exist, and need to be replaced by injective quasi-coherent ones, which do. The analogue of Lemma \ref{th:proj} says the following: if $\FF_1$ is a complex of injective quasi-coherent sheaves, and $\FF_0$ an acyclic complex of quasi-coherent sheaves, the complex of vector spaces $\mathit{hom}(\FF_0,\FF_1)$ is again acyclic.

Let $\FF_0,\FF_1$ be complexes of injective quasi-coherent sheaves, and $\mathscr{hom}(\FF_0,\FF_1)$ the complex of hom sheaves. By choosing pre-connections as in \eqref{eq:cochain-atiyah}, one sees that $\mathit{At}(\mathscr{hom}(\FF_0, \FF_1))$ is the difference between left multiplication with $\mathit{At}(\FF_1)$ and right multiplication with $\mathit{At}(\FF_0)$ (compare \cite{markarian09}, which gives a similar formula for the Atiyah class of a tensor product). Now fix some $\gamma \in H^1(\SS, \Omega^1_\SS)$, and restrict attention to complexes $\FF$ with bounded coherent cohomology, and such that
\begin{equation} \label{eq:central}
\mathit{At}(\FF) = \gamma \otimes \mathit{id}_\FF.
\end{equation}
For any two such complexes, $\mathit{At}(\mathscr{hom}(\FF_0,\FF_1)) = 0$. By applying Lemma \ref{th:locally-free}, it then follows that the cohomology sheaves $\HH(\mathscr{hom}(\FF_0,\FF_1))$ are vector bundles. This is a simple illustration of the ideas that will play an important role later on (starting with Lemma \ref{th:its-projective}).
\end{remark}

\begin{remark} \label{th:higher-dimensions}
All we have said so far generalizes to higher-dimensional smooth varieties. The higher-dimensional analogue of Lemma \ref{th:locally-free} can be derived from the case of curves, which is indeed what happens in \cite{bernstein-dmodules}. The generalization of Lemma \ref{th:proj} to higher-dimensional affine varieties is \cite[Satz 3.1]{dold60} (there is a spectral sequence which replaces \eqref{eq:dold}, and which can be used to generalize Lemma \ref{th:homotopy-idempotent}). For injective quasi-coherent sheaves on affine quasi-projective varieties, one has \cite[Example 3.10]{krause05}. However, we have no real use for higher-dimensional bases in the present paper.
\end{remark}


\subsection{Families of objects\label{subsec:families}}
Take an $A_\infty$-category $A$ as before, and denote by $\AA$ the constant family of $A_\infty$-categories over $\SS$ with fibre $A$. This has the same objects as $A$, and its morphisms and $A_\infty$-structure are obtained by extending constants to $\RR$ in the obvious way:
\begin{equation}
\mathit{hom}_\AA(X_0,X_1) = \RR \otimes_R hom_A(X_0,X_1).
\end{equation}

Objects of $\AA$ can be thought of as constant families. To get more interesting ones, we again have to introduce formal enlargements. First, there is an additive enlargement $\AA^{\oplus}$, whose objects are finite formal sums
\begin{equation} \label{eq:xx}
\XX = \bigoplus_{i \in I} \FF^i \otimes X^i[-\sigma^i]
\end{equation}
where the $\FF^i$ are finitely generated projective $\RR$-modules, the $X^i$ are objects of $A$, and the $\sigma^i$ integers. The $A_\infty$-structure is extended to such sums exactly as for $A^{\oplus}$. One then defines a {\em family of twisted complexes} to be a pair $(\XX,\delta_\XX)$ where $\XX \in \mathit{Ob}\, \AA^{\oplus}$, and $\delta_\XX \in \mathit{hom}_{\AA^{\oplus}}^1(\XX,\XX)$ is strictly decreasing with respect to some filtration of \eqref{eq:formal-sum}, and satisfies the analogue of \eqref{eq:generalized-maurer-cartan}. The $A_\infty$-category $\AA^{\mathit{tw}}$ of twisted complexes obtained in this way allows the operations of shifts, mapping cones, and tensoring with a finitely generated projective $\RR$-module.

Let $\CC$ be the dg category of complexes of projective $\RR$-modules with bounded finitely generated cohomology. A {\em family of $A$-modules with finite cohomology} is an $A_\infty$-functor $A^{\mathit{opp}} \rightarrow \CC$. Concretely, such a family is given by a graded projective $\RR$-module $\MM(X)$ for each $X \in \mathit{Ob}(A)$, with structure maps as in \eqref{eq:module} but which are $\RR$-linear, hence extend to
\begin{equation}
\mu^d_\MM: \MM(X_d) \otimes \mathit{hom}_\AA(X_{d-1},X) \otimes \cdots \otimes \mathit{hom}_\AA(X_0,X_1) \longrightarrow \MM(X_0)[2-d].
\end{equation}
We impose the same strict unitality conditions as before. Such modules form an $A_\infty$-category over $\RR$, denoted by $\AA^{\mathit{mod}}$. The following statement is well-known in the case of $A_\infty$-categories over a field, see for instance \cite[Section 4]{keller99}, but slightly less so in the current framework:

\begin{lemma} \label{th:acyclic-modules}
If the chain complexes $(\MM(X),\mu^1_\MM)$ are acyclic for all $X$, $\MM$ is quasi-isomorphic to zero in $\AA^{\mathit{mod}}$.
\end{lemma}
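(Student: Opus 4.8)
The plan is to follow the standard argument for $A_\infty$-modules over a field (as in \cite[Section 4]{keller99}), taking care that the coefficient ring is now $\RR$ rather than a field. The strategy is to exhibit an explicit contracting homotopy for the identity endomorphism $e_\MM \in \mathit{hom}_{\AA^{\mathit{mod}}}(\MM,\MM)$, built out of homotopies contracting the individual complexes $(\MM(X),\mu^1_\MM)$. First I would observe that each $\MM(X)$ is a complex of projective $\RR$-modules (by definition of $\CC$) which is acyclic, hence by Lemma \ref{th:proj} (the $K$-projectivity statement, applied with $\FF_0 = \FF_1 = \MM(X)$) the identity on $\MM(X)$ is chain homotopic to zero; choose such a contracting homotopy $h_X: \MM(X) \to \MM(X)[-1]$ with $\mu^1_\MM h_X + h_X \mu^1_\MM = \mathit{id}_{\MM(X)}$. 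This is precisely the place where the affine, projective-coefficients setup is doing work: over a general ring an acyclic complex need not be contractible, but Lemma \ref{th:proj} guarantees it here.

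Next I would upgrade the collection $\{h_X\}$ to a morphism of $\AA^{\mathit{mod}}$. The issue is that $h_X$ is only a map of $\RR$-modules, not a priori compatible with the higher module structure maps $\mu^d_\MM$ for $d \geq 2$. The remedy is a standard homological perturbation / order-by-order construction: one sets $g^1 = h_X$ and inductively defines the higher components $g^d: \MM(X_d) \otimes \mathit{hom}_\AA(X_{d-1},X_d) \otimes \cdots \otimes \mathit{hom}_\AA(X_0,X_1) \to \MM(X_0)[-d]$ so that the pair $(e_\MM, \{g^d\})$ witnesses $e_\MM \simeq 0$ in $\mathit{hom}_{\AA^{\mathit{mod}}}(\MM,\MM)$. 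Concretely, at each stage the obstruction to extending is a cocycle valued in $\mathit{Hom}_\RR\big(\MM(X_d) \otimes \mathit{hom}_A(X_{d-1},X_d) \otimes \cdots, \MM(X_0)\big)$; since the target complex $(\MM(X_0),\mu^1_\MM)$ is acyclic and $\mathit{hom}_A(\cdot,\cdot)$ is a bounded complex of finite-dimensional $R$-vector spaces hence free as an $R$-module, the relevant $\mathit{hom}$-complex is again acyclic, so the obstruction is a coboundary and $g^d$ can be chosen to kill it. (One can also phrase the whole thing in one stroke: $\mathit{hom}_{\AA^{\mathit{mod}}}(\MM,\MM)$ is computed by a totalization whose columns are acyclic by the above, so it is acyclic, and $e_\MM$, being a cocycle in an acyclic complex, is a coboundary — this is cleaner but hides the recursion.)

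Finally I would conclude: a nullhomotopy of $e_\MM$ in $\mathit{hom}_{\AA^{\mathit{mod}}}(\MM,\MM)$ says exactly that $\MM \cong 0$ in $H^0(\AA^{\mathit{mod}})$, i.e. $\MM$ is quasi-isomorphic to zero in $\AA^{\mathit{mod}}$. The main obstacle, and the only place requiring genuine care beyond the field case, is the inductive construction of the higher homotopy components $g^d$ over $\RR$: one must check that at every stage the obstruction lives in an acyclic complex, which rests on the acyclicity of $\MM(X)$ together with flatness (here freeness over $R$, extended $\RR$-linearly) of the morphism spaces of $A$, so that acyclicity is preserved under tensoring and under $\mathit{Hom}$ into the acyclic target. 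Once this bookkeeping is in place the argument is formally identical to the classical one.
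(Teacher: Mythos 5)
Your proposal is correct and is essentially the paper's argument recast: the paper uses the length filtration of $\mathit{hom}_{\AA^{\mathit{mod}}}(\MM,\MM)$ and observes that each $E_1$-column vanishes by Lemma~\ref{th:proj} (with $\FF_0 = \MM(X_p)\otimes \mathit{hom}_A(\cdots)$, a complex of projective $\RR$-modules, and $\FF_1 = \MM(X_0)$ acyclic), while your inductive construction of the higher homotopy components $g^d$ is exactly the explicit version of that spectral-sequence vanishing, and your parenthetical totalization remark is the paper's ``comparison and vanishing'' step spelled out.

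One small remark: your invocation of Lemma~\ref{th:proj} is slightly compressed at the inductive stage (you appeal to finite-dimensionality of $\mathit{hom}_A$, which isn't needed and isn't assumed on the chain level — only freeness over the field $R$ matters, which is automatic), but the substance is the same as the paper's, and the construction is well-defined termwise without any convergence subtlety since each component of $g$ is determined by the preceding ones. Both your route and the paper's reduce to the single key input that $\mathit{hom}_\RR$ from a complex of projective $\RR$-modules into an acyclic complex is acyclic.
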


\begin{proof}
The length filtration of $\mathit{hom}_{\AA^{\mathit{mod}}}(\MM,\MM)$ gives rise to a spectral sequence, whose starting page is
\begin{equation} \label{eq:length-sequence}
E_1^{p\bullet} = \prod_{X_0,\dots,X_p} H(\mathit{Hom}(\MM(X_p) \otimes \mathit{hom}_A(X_{p-1},X_p) \otimes \cdots \otimes \mathit{hom}_A(X_0,X_1),\MM(X_0))).
\end{equation}
Even though that spectral sequence does not converge in general, one can apply comparison and vanishing arguments to it. Since $\MM(X_p) \otimes \mathit{hom}_A(X_{p-1},X_p) \otimes \cdots \otimes \mathit{hom}_A(X_0,X_1)$ is a complex of projective $\RR$-modules, and $\MM(X_0)$ is acyclic, it follows from Lemma \ref{th:proj} that the $E_1$ page vanishes.
\end{proof}

One has a Yoneda functor $\AA \rightarrow \AA^{\mathit{mod}}$ as well as its extension $\AA^{\mathit{tw}} \rightarrow \AA^{\mathit{mod}}$, which are cohomologically full and faithful for the same reason as before. Moreover, given $\YY \in \mathit{Ob}\,\AA^{\mathit{tw}}$ and an idempotent endomorphism on the cohomology level, one can find an object of $\AA^{\mathit{mod}}$ representing the associated direct summand of the Yoneda image $\YY^{\mathit{yon}}$. To see that, one goes through the construction in Section \ref{subsec:splittings}, which defines a homotopy idempotent $\pp$ over $\RR$ as well as an associated family of modules $\MM = (\YY,\pp)^{\mathit{yon}}$. A noteworthy technical point is that $\MM(X) = \mathit{hom}_{\AA^{\mathit{tw}}}(X,\YY)[q]$ is still a complex of projective $\RR$-modules for any $X$, and has finitely generated cohomology since $H(\MM(X))$ is a direct summand of $H(\mathit{hom}_{\AA^{\mathit{tw}}}(X,\YY))$. One then defines the full $A_\infty$-subcategory of {\em perfect families of modules}, $\AA^{\mathit{perf}} \subset \AA^{\mathit{mod}}$, to consist of all objects quasi-isomorphic to direct summands of families of twisted complexes. This category is proper, in the sense that $\mathit{hom}_{\AA^{\mathit{perf}}}(\MM_0,\MM_1)$ is a chain homotopy retract of a bounded complex of finitely generated projective $\RR$-modules. In particular, the cohomology $H(\mathit{hom}_{\AA^{\mathit{perf}}}(\MM_0,\MM_1))$ is itself bounded and finitely generated over $\RR$ in each degree.

For any point of $\SS$, with associated map $\RR \rightarrow R$, we can define restriction $A_\infty$-functors
\begin{equation} \label{eq:3-functors}
\begin{aligned}
& \AA^{\mathit{tw}} \longrightarrow A^{\mathit{tw}}, \\
& \AA^{\mathit{mod}} \longrightarrow A^{\mathit{mod}}, \\
& \AA^{\mathit{perf}} \longrightarrow A^{\mathit{perf}}.
\end{aligned}
\end{equation}
The first one takes an $\XX$ as in \eqref{eq:xx} and passes to the fibres $F^i = R \otimes_\RR \FF^i$ to get an ordinary twisted complex. The second one is similarly given by $M(X) = R \otimes_\RR \MM(X)$. In either case, the morphism spaces again get specialized to the given point, which is unproblematic from a homological algebra viewpoint since they consist of projective $\RR$-modules for $\AA^{\mathit{tw}}$, and at least of flat $\RR$-modules for $\AA^{\mathit{mod}}$ (see for instance \cite[p.\ 122, Exercise 4]{cartan-eilenberg}). Clearly, the first two functors in \eqref{eq:3-functors} are compatible with Yoneda embeddings, which ensures that the third one is well-defined.

\subsection{Twisted complexes with connections\label{subsec:twisted-connections}}
Let $\XX$ be an object of $\AA^{\mathit{tw}}$, written as in \eqref{eq:xx}. A {\em pre-connection on $\XX$} is a pair $\pabla_\XX = (\{\nabla_{\FF^i}\},\alpha_\XX)$ consisting of an ordinary connection $\nabla_{\FF^i}$ on each $\RR$-module $\FF^i$, together with an element $\alpha_\XX \in \mathit{hom}_{\AA^{\mathit{tw}}}^0(\XX,\Omega^1_\RR \otimes \XX) = \Omega^1_\RR \otimes \mathit{hom}_{\AA^{\mathit{tw}}}^0(\XX,\XX)$. This becomes more meaningful if one writes it as a formal expression
\begin{equation} \label{eq:pre-connection-on-tw}
\pabla_\XX = \bigoplus_i \nabla_{\FF^i} \otimes e_{X^i[-\sigma^i]} + \alpha_\XX
\end{equation}
(recall that due to sign conventions, the identity for the shifted object $X^i[-\sigma^i]$ is $e_{X^i[-\sigma^i]} = (-1)^{\sigma^i} e_{X^i}$). There is some redundancy in this description: given elements $\ff_i \in \mathit{Hom}(\FF_i,\Omega^1_\RR \otimes \FF_i)$, one can change $\nabla_{\FF^i} \rightarrow \nabla_{\FF^i} + \ff_i$, and simultaneously $\alpha_\XX \rightarrow \alpha_{\XX} - \bigoplus_i \ff_i \otimes e_{X^i[-\sigma^i]}$, and the result is still considered to be the same pre-connection. With that in mind, pre-connections form an affine space over $\mathit{hom}_{\AA^{\mathit{tw}}}^0(\XX,\Omega^1_\RR \otimes \XX)$.

Now we include the differential $\delta_\XX$ in our discussion. Its compatibility with a pre-connection is measured by the {\em deformation cocycle}
\begin{equation} \label{eq:deformation-cocycle}
\begin{aligned}
& \mathit{def}(\pabla_\XX) \in \mathit{hom}_{\AA^{\mathit{tw}}}^1(\XX,\Omega^1_\RR \otimes \XX), \\
& \mathit{def}(\pabla_\XX) = -\Big( \textstyle\bigoplus_{i,j} \nabla_{\mathit{Hom}(\FF^i,\FF^j)} \otimes \mathit{id}_{\mathit{hom}_A(X^i[-\sigma^i],X^j[-\sigma^j])} \Big) (\delta_\XX) + \mu^1_{\AA^{\mathit{tw}}}(\alpha_\XX).
\end{aligned}
\end{equation}
Here, $\nabla_{\mathit{Hom}(\FF^i,\FF^j)}$ is the connection induced by $\nabla_{\FF^i}$ and $\nabla_{\FF^j}$. The formula becomes clearer if one thinks in terms of components
\begin{equation}
\begin{aligned}
& \alpha_{\XX}^{ji} \in \mathit{Hom}(\FF^i,\Omega^1_\RR \otimes \FF^j) \otimes \mathit{hom}_A(X^i,X^j)[\sigma^i-\sigma^j], \\
& \delta_{\XX}^{ji} \in \mathit{Hom}(\FF^i,\FF^j) \otimes \mathit{hom}_A(X^i,X^j)[\sigma^i-\sigma^j]
\end{aligned}
\end{equation}
Then, the components of the deformation cocycle are
\begin{equation} \label{eq:def-components}
\begin{aligned}
\mathit{def}(\pabla_\XX)^{ji} = & -(\nabla_{\mathit{Hom}(\FF^i,\FF^j)} \otimes \mathit{id})(\delta_\XX^{ji}) + (-1)^{\sigma^i} \mu^1_{\AA}(\alpha_\XX^{ji}) \\
& + (-1)^{\sigma^i} \sum_k \mu^2_{\AA}(\alpha_\XX^{jk},\delta_\XX^{ki}) + \mu^2_{\AA}(\delta_\XX^{jk},\alpha_\XX^{ki}) + \cdots
\end{aligned}
\end{equation}

\begin{lemma} \label{th:def-cocycle}
$\mathit{def}(\pabla_\XX)$ is a cocycle, whose cohomology class $\mathit{Def}(\XX)$ (the {\em deformation class} of $\XX$) is independent of the choice of pre-connection.
\end{lemma}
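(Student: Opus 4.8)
The plan is to verify the two claims separately, both by direct computation with \eqref{eq:def-components}, keeping in mind that we work in the strictly unital $A_\infty$-category $\AA^{\mathit{tw}}$ where $\mu^1_{\AA^{\mathit{tw}}}$ and $\mu^2_{\AA^{\mathit{tw}}}$ encode both the $A_\infty$-products of $A$ and the twisted differential $\delta_\XX$ via the standard formulas (e.g.\ $\mu^1_{\AA^{\mathit{tw}}}(a) = \mu^1_{\AA^{\oplus}}(a) + \mu^2_{\AA^{\oplus}}(\delta_\XX,a) \pm \mu^2_{\AA^{\oplus}}(a,\delta_\XX) + \cdots$). For the cocycle property, one computes $\mu^1_{\AA^{\mathit{tw}}}(\mathit{def}(\pabla_\XX))$ and shows it vanishes. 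The term $\mu^1_{\AA^{\mathit{tw}}}(\mu^1_{\AA^{\mathit{tw}}}(\alpha_\XX)) = 0$ by $(\mu^1_{\AA^{\mathit{tw}}})^2 = 0$. The remaining contribution is $\mu^1_{\AA^{\mathit{tw}}}$ applied to the term built from $\nabla \otimes \mathit{id}$ acting on $\delta_\XX$; here one uses the Leibniz rule, namely that the induced connections $\nabla_{\mathit{Hom}(\FF^i,\FF^j)}$ satisfy a derivation identity with respect to the matrix-multiplication/$A_\infty$-composition structure, together with the Maurer--Cartan equation \eqref{eq:generalized-maurer-cartan} for $\delta_\XX$. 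Schematically, differentiating \eqref{eq:generalized-maurer-cartan} gives $\mu^1_{\AA^{\mathit{tw}}}\big((\nabla\otimes\mathit{id})(\delta_\XX)\big)$ in terms of the connection acting on $\mu^1_{\AA^{\oplus}}(\delta_\XX) + \mu^2_{\AA^{\oplus}}(\delta_\XX,\delta_\XX) + \cdots = 0$, which is $\nabla\otimes\mathit{id}$ applied to zero, hence zero; the cross-terms that appear from the Leibniz rule are exactly the ones that cancel against the $\mu^2$ and higher terms one gets from expanding $\mu^1_{\AA^{\mathit{tw}}}$ on the $\delta$-part of $\mathit{def}$. So the cocycle identity is the derivative of the Maurer--Cartan equation, interpreted correctly.

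For independence of the pre-connection: by the discussion following \eqref{eq:pre-connection-on-tw}, the space of pre-connections is affine over $\mathit{hom}_{\AA^{\mathit{tw}}}^0(\XX,\Omega^1_\RR\otimes\XX)$, so any two pre-connections differ by some $\beta \in \mathit{hom}_{\AA^{\mathit{tw}}}^0(\XX,\Omega^1_\RR\otimes\XX)$; changing $\pabla_\XX$ by $\beta$ means adding $\beta$ to $\alpha_\XX$ (after absorbing the $\nabla_{\FF^i}$-ambiguity, which by construction does not change the pre-connection). Then from \eqref{eq:deformation-cocycle}, $\mathit{def}$ changes by $\mu^1_{\AA^{\mathit{tw}}}(\beta)$, which is exact. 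Hence $\mathit{Def}(\XX) = [\mathit{def}(\pabla_\XX)]$ is well-defined. One should double-check that the freedom of choosing the connections $\nabla_{\FF^i}$ on the individual modules (the "redundancy" in \eqref{eq:pre-connection-on-tw}) really is subsumed: replacing $\nabla_{\FF^i}$ by $\nabla_{\FF^i}+\ff_i$ while subtracting $\bigoplus_i \ff_i\otimes e_{X^i[-\sigma^i]}$ from $\alpha_\XX$ leaves $\mathit{def}(\pabla_\XX)$ literally unchanged, because the two modifications cancel term by term in \eqref{eq:def-components} (the induced-connection term shifts by $\ff$ acting on $\delta_\XX$, and $\mu^1_{\AA^{\mathit{tw}}}$ of the compensating change in $\alpha_\XX$ produces the same expression with opposite sign). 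This confirms that the pre-connection, rather than its presentation, is what matters.

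The main obstacle is purely bookkeeping: getting the signs right when expanding $\mu^1_{\AA^{\mathit{tw}}}$ in terms of $\mu^{\geq 1}_{\AA^{\oplus}}$ and $\delta_\XX$, and tracking the shift-related signs $(-1)^{\sigma^i}$ in \eqref{eq:def-components} through the Leibniz rule for the induced connections $\nabla_{\mathit{Hom}(\FF^i,\FF^j)}$. The conceptual content—that the cocycle equation is the differentiated Maurer--Cartan equation and that the ambiguity is an exact coboundary—is immediate; the work is to confirm that the sign conventions of \cite[Remark 3.26]{seidel04} for twisted complexes make all the cancellations go through. I would organize the computation by grouping terms in $\mu^1_{\AA^{\mathit{tw}}}(\mathit{def}(\pabla_\XX))$ according to how many copies of $\delta_\XX$ they contain, and match each group against the connection applied to the corresponding homogeneous piece of the Maurer--Cartan equation.
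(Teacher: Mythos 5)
Your outline is correct, and the conceptual core you identify---the cocycle equation is the differentiated Maurer--Cartan equation, and a change of pre-connection shifts $\mathit{def}$ by an exact term $\mu^1_{\AA^{\mathit{tw}}}(\beta)$---is exactly what is going on. However, you stop at what you yourself call the main obstacle, ``purely bookkeeping,'' and the paper's proof is precisely a device for making that bookkeeping disappear. The trick is to temporarily drop the requirement that morphisms be $\RR$-linear: then $\pabla_\XX$ itself becomes a degree-$0$ ``morphism'' $\XX \to \Omega^1_\RR \otimes \XX$, and one checks that \eqref{eq:deformation-cocycle} is literally
\begin{equation*}
\mathit{def}(\pabla_\XX) \;=\; \mu^2_{\AA^{\oplus}}(\delta_{\Omega^1_\RR \otimes \XX},\pabla_\XX-\alpha_\XX) + \mu^2_{\AA^{\oplus}}(\pabla_\XX-\alpha_\XX,\delta_\XX) + \mu^1_{\AA^{\mathit{tw}}}(\alpha_\XX) \;=\; \mu^1_{\AA^{\mathit{tw}}}(\pabla_\XX),
\end{equation*}
the collapse of the higher $\mu^k_{\AA^{\oplus}}$ terms coming from the fact that the components of $\pabla_\XX - \alpha_\XX$ are multiples of the identities $e_{X^i[-\sigma^i]}$. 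After this reinterpretation the cocycle property is just $\mu^1_{\AA^{\mathit{tw}}}\circ\mu^1_{\AA^{\mathit{tw}}} = 0$ (the $A_\infty$-relation, valid formally for non-$\RR$-linear inputs), and the independence of pre-connection follows since replacing $\pabla_\XX$ by $\pabla_\XX + \beta$ with $\beta$ a genuine morphism changes the right-hand side by the coboundary $\mu^1_{\AA^{\mathit{tw}}}(\beta)$. Your plan---differentiating \eqref{eq:generalized-maurer-cartan}, applying the Leibniz rule for $\nabla_{\mathit{Hom}(\FF^i,\FF^j)}$, and matching cross-terms against the $\mu^{\geq 2}$ expansion---is morally the same computation and would close with enough care, but it requires exactly the sign-tracking that the reformulation as $\mu^1_{\AA^{\mathit{tw}}}(\pabla_\XX)$ renders automatic. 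Your independence argument and your verification that the $\nabla_{\FF^i}$-redundancy in \eqref{eq:pre-connection-on-tw} leaves $\mathit{def}(\pabla_\XX)$ unchanged are both correct and match the paper.
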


\begin{proof}
There is a simple trick which facilitates these computations, namely to temporarily forget that morphisms are supposed to be $\RR$-linear. In this weakened sense, one can consider $\pabla_\XX$ itself as a morphism from $\XX$ to $\Omega^1_\RR \otimes \XX$, and then the formula \eqref{eq:deformation-cocycle} actually represents
\begin{equation}
\mu^2_{\AA^{\oplus}}(\delta_{\Omega^1_\RR \otimes \XX},\pabla_\XX-\alpha_\XX) + \mu^2_{\AA^{\oplus}}(\pabla_\XX-\alpha_\XX,\delta_\XX) + \mu^1_{\AA^{\mathit{tw}}}(\alpha_\XX) = \mu^1_{\AA^{\mathit{tw}}}(\pabla_\XX)
\end{equation}
(this takes into accounts cancellations which arise from the fact that the components of $\pabla_\XX - \alpha_\XX$ are multiples of the identity $e_{X^i[-\sigma^i]}$). The desired statements now follow directly.
\end{proof}

Take two families of twisted complexes $\XX_k = \bigoplus_{i \in I_k} \FF^i_k \otimes X^i_k[-\sigma^i_k]$ ($k = 0,1$) equipped with pre-connections $\pabla_{\XX_k}$, written in the analogous way. This induces a pre-connection on the chain complex $\mathit{hom}_{\AA^{\mathit{tw}}}(\XX_0,\XX_1)$, namely
\begin{equation} \label{eq:hom-pre-connection}
\begin{aligned}
\pabla_{\mathit{hom}_{\AA^{\mathit{tw}}}(\XX_0,\XX_1)}(\aa) = & \bigoplus_{i,j} \Big(\nabla_{\mathit{Hom}(\FF^i_0,\FF^j_1)} \otimes \mathit{id}_{\mathit{hom}_A(X^i_0,X^j_1)} \Big)(\aa) \\ & + (-1)^{|\aa|} \mu^2_{\AA^{\mathit{tw}}}(\alpha_{\XX^1},\aa) - \mu^2_{\AA^{\mathit{tw}}}(\aa,\alpha_{\XX^0}).
\end{aligned}
\end{equation}
By a computation similar to the one in Lemma \ref{th:def-cocycle}, this gives a formula for the Atiyah cocycle \eqref{eq:cochain-atiyah} of the chain complex $\mathit{hom}_{\AA^{\mathit{tw}}}(\XX_0,\XX_1)$:
\begin{equation} \label{eq:atiyah-def}
\begin{aligned}
& \mathit{at}(\pabla_{\mathit{hom}_{\AA^{\mathit{tw}}}(\XX_0,\XX_1)})(\aa) =
\mu^1_{\AA^{\mathit{tw}}}(\pabla_{\mathit{hom}_{\AA^{\mathit{tw}}}(\XX_0,\XX_1)}(\aa)) -  \pabla_{\mathit{hom}_{\AA^{\mathit{tw}}}(\XX_0,\XX_1)}(\mu_{\AA^{\mathit{tw}}}^1(\aa)) \\ & \qquad \qquad
= \mu^2_{\AA^{\mathit{tw}}}(\mathit{def}(\pabla_{\XX^1}),\aa) + \mu^2_{\AA^{\mathit{tw}}}(\aa,\mathit{def}(\pabla_{\XX^0})).
\end{aligned}
\end{equation}
%
%
From here on, the obvious development would be the following one. Define a {\em connection} on $\XX$ to be a pre-connection for which \eqref{eq:deformation-cocycle} vanishes. If $\XX_0$ and $\XX_1$ carry connections, \eqref{eq:atiyah-def} implies that $\mathit{hom}_{\AA^{\mathit{tw}}}(\XX_0,\XX_1)$ carries a connection. However, families with vanishing deformation class are close to constant ones and therefore not terribly interesting.

Instead, we want to introduce a relative version, as follows. Let $\mathit{CC}(\AA, \AA)$ be the (reduced) Hochschild complex of the constant family $\AA$, and $HH(\AA,\AA)$ its cohomology. More relevant for us is a twisted version, with coefficients in the bimodule $\Omega^1_\RR \otimes \AA$. The effect of twisting is fairly trivial, both on the cochain and cohomology level:
\begin{equation}
\begin{aligned}
& \mathit{CC}(\AA,\Omega^1_\RR \otimes \AA) \iso \Omega^1_\RR \otimes \mathit{CC}(\AA,\AA), \\
& \mathit{HH}(\AA,\Omega^1_\RR \otimes \AA) \iso \Omega^1_\RR \otimes \mathit{HH}(\AA,\AA).
\end{aligned}
\end{equation}
As before, there is a chain map $\Gamma^{\mathit{tw}}$ between the twisted Hochschild chain complex and its analogue for $\AA^{\mathit{tw}}$.

\begin{definition} \label{th:deformation-field}
A cohomology class $[\gamma] \in \mathit{HH}^1(\AA,\Omega^1_\RR \otimes \AA)$ is called a {\em deformation field}. Let $\XX$ be a family of twisted complexes. We say that $\XX$ {\em follows the deformation field} if the image $\gamma^{\mathit{tw}} = \Gamma^{\mathit{tw}}(\gamma)$ satisfies
\begin{equation}
\mathit{Def}(\XX) = [\gamma^{\mathit{tw},0}] \in \mathit{H}^1(\mathit{hom}_{\AA^{\mathit{tw}}}(\XX,\Omega^1_\RR \otimes \XX)).
\end{equation}
\end{definition}

In almost every situation where we use deformation fields, a choice of cocycle representative $\gamma$ is assumed to have been made (the resulting theory is always independent of that choice up to quasi-isomorphism). If $\XX$ follows $[\gamma]$, we can choose a pre-connection whose deformation cocycle is exactly $\gamma^{\mathit{tw,0}}$. Call these {\em relative connections}, and denote them by $\nabla_\XX$. Given two objects $\XX_k$ with relative connections, we can introduce a modified version of \eqref{eq:hom-pre-connection}, namely
\begin{equation} \label{eq:hom-connection}
\begin{aligned}
\nabla_{\mathit{hom}_{\AA^{\mathit{tw}}}(\XX_0,\XX_1)}(\aa) = \pabla_{\mathit{hom}_{\AA^{\mathit{tw}}}(\XX_0,\XX_1)}(\aa) + \gamma^{\mathit{tw},1}(\aa).
\end{aligned}
\end{equation}
This is an actual connection, since the cocycle equation for $\gamma^{\mathit{tw}}$ says that
\begin{equation}
\mu^1_{\AA^{\mathit{tw}}}(\gamma^{\mathit{tw},1}(\aa)) -
\gamma^{\mathit{tw},1}(\mu^1_{\AA^{\mathit{tw}}}(\aa)) =
- \mu^2_{\AA^{\mathit{tw}}}(\gamma^{\mathit{tw},0},\aa) -
\mu^2_{\AA^{\mathit{tw}}}(\aa,\gamma^{\mathit{tw},0}),
\end{equation}
which one can add to \eqref{eq:atiyah-def} to get the desired property.

We would like to study the behaviour of \eqref{eq:hom-connection} under composition of morphisms in $\AA^{\mathit{tw}}$. Let's temporarily forget about Hochschild cohomology and just assume that we have families of twisted complexes $\XX_k$ ($k = 0,1,2$) equipped with pre-connections. Along the same lines as in \eqref{eq:atiyah-def} one finds that for any $\mu^1_{\AA^{\mathit{tw}}}$-cocycles $\aa_k \in \mathit{hom}_{\AA^{\mathit{tw}}}(\XX_{k-1},\XX_k)$ ($k=1,2$),
\begin{equation}
\begin{aligned}
& - \pabla_{\mathit{hom}_{\AA^{\mathit{tw}}}(\XX_0,\XX_2)}(\mu^2_{\AA^{\mathit{tw}}}(\aa_2,\aa_1)) \\ & \qquad \qquad +
 \mu^2_{\AA^{\mathit{tw}}}(\pabla_{\mathit{hom}_{\AA^{\mathit{tw}}}(\XX_1,\XX_2)}(\aa_2),\aa_1)
+ \mu^2_{\AA^{\mathit{tw}}}(\aa_2,\pabla_{\mathit{hom}_{\AA^{\mathit{tw}}}(\XX_0,\XX_1)}(\aa_1)) \\ &
= \mu^3_{\AA^{\mathit{tw}}}(\mathit{def}(\pabla_{\XX_2}),\aa_2,\aa_1) +
\mu^3_{\AA^{\mathit{tw}}}(\aa_2,\mathit{def}(\pabla_{\XX_1}),\aa_1) +
\mu^3_{\AA^{\mathit{tw}}}(\aa_2,\aa_1,\mathit{def}(\pabla_{\XX_0})) \\ & \qquad \qquad + \text{\it (coboundary)}.
\end{aligned}
\end{equation}
Assuming that the $\XX_k$ follow $[\gamma]$ and come with relative connections, one adds the correction terms from \eqref{eq:hom-connection} and gets
\begin{equation} \label{eq:added-product}
\begin{aligned}
& -\nabla_{\mathit{hom}_{\AA^{\mathit{tw}}}(\XX_0,\XX_2)}(\mu^2_{\AA^{\mathit{tw}}}(\aa_2,\aa_1)) \\ & \qquad \qquad +
\mu^2_{\AA^{\mathit{tw}}}(\nabla_{\mathit{hom}_{\AA^{\mathit{tw}}}(\XX_1,\XX_2)}(\aa_2),\aa_1)
+ \mu^2_{\AA^{\mathit{tw}}}(\aa_2,\nabla_{\mathit{hom}_{\AA^{\mathit{tw}}}(\XX_0,\XX_1)}(\aa_1)) \\ &
= -\mu^1_{\AA^{\mathit{tw}}}(\gamma^{\mathit{tw},2}(\aa_2,\aa_1)) + \text{\it (same coboundary as before)},
\end{aligned}
\end{equation}
which means that the cohomology level connections act as derivations with respect to the product.

We have to confess that the framework introduced above, even though natural and accessible, is not satisfactory, for two reasons. The first (technical) reason is that the definition of pre-connection \eqref{eq:pre-connection-on-tw} makes sense only under the assumption of strict unitality, which we have imposed so far but will want to relax eventually. The second (conceptual) reason is that families of twisted complexes are far too restrictive to be useful in general -- as one can see by observing that if $\XX$ is such a family, its fibre at any point of $\SS$ represents the same class in the $K$-theory of $A^{\mathit{tw}}$. With this in mind, we will now switch to $A_\infty$-modules and carry out the corresponding developments there.

\subsection{Modules with connections}
Let $\MM$ be a family of $A$-modules. A {\em pre-connection} $\pabla_\MM$ is a sequence of maps
\begin{equation} \label{eq:pre-connection-components}
\begin{aligned}
& \pabla_\MM^1: \MM(X_0) \longrightarrow \Omega^1_\RR \otimes \MM(X_0), \\
& \pabla_\MM^2: \MM(X_1) \otimes \mathit{hom}_A(X_0,X_1) \longrightarrow \Omega^1_\RR \otimes \MM(X_0)[-1], \\
& \pabla_\MM^3: \MM(X_2) \otimes \mathit{hom}_A(X_1,X_2) \otimes \mathit{hom}_A(X_0,X_1) \longrightarrow \Omega^1_\RR \otimes \MM(X_0)[-2], \\
& \dots
\end{aligned}
\end{equation}
where: the maps $\mm \mapsto (-1)^{|\mm|}\pabla_\MM^1(\mm)$ are connections in the standard sense; and the higher order terms $\pabla_\MM^d$, $d>1$, are $\RR$-linear. Clearly, pre-connections form an affine space over $\mathit{hom}_{\AA^{\mathit{mod}}}^0(\MM,\Omega^1_\RR \otimes \MM) \iso \Omega^1_\RR \otimes \mathit{hom}_{\AA^{\mathit{mod}}}^0(\MM,\MM)$. The {\em deformation cocycle}
\begin{equation} \label{eq:module-cocycle}
\mathit{def}(\pabla_\MM) \in \mathit{hom}_{\AA^{\mathit{mod}}}^1(\MM, \Omega^1_\RR \otimes \MM)
\end{equation}
of a pre-connection is obtained by applying $\mu^1_{\mathit{\AA}^{\mathit{mod}}}$ to \eqref{eq:pre-connection-components}. This makes sense even though $\pabla_\MM^1$ is not $\RR$-linear. The same observation shows that:

\begin{lemma}
$\mathit{def}(\pabla_\MM)$ is closed, and its cohomology class $\mathit{Def}(\MM)$ is independent of the choice of pre-connection. \qed
\end{lemma}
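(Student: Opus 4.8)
The plan is to mimic, in the module setting, the proof of Lemma \ref{th:def-cocycle}, using the same formal device of temporarily forgetting $\RR$-linearity. A pre-connection $\pabla_\MM = (\pabla_\MM^1,\pabla_\MM^2,\dots)$ is a sequence of maps of exactly the shape of a degree $0$ morphism in $\AA^{\mathit{mod}}$ from $\MM$ to $\Omega^1_\RR \otimes \MM$; the only defect is that $\pabla_\MM^1$ is not $\RR$-linear but merely a connection (up to the sign twist $\mm \mapsto (-1)^{|\mm|}\pabla_\MM^1(\mm)$), while $\pabla_\MM^d$ for $d>1$ is genuinely $\RR$-linear. Allowing non-$\RR$-linear ``morphisms'' in $\AA^{\mathit{mod}}$, the element $\pabla_\MM$ becomes such a morphism, and by definition $\mathit{def}(\pabla_\MM) = \mu^1_{\AA^{\mathit{mod}}}(\pabla_\MM)$ computed in that enlarged sense.

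First I would verify the assertion, implicit in the statement, that $\mathit{def}(\pabla_\MM)$ is actually $\RR$-linear, hence a bona fide element of $\mathit{hom}^1_{\AA^{\mathit{mod}}}(\MM,\Omega^1_\RR\otimes\MM)$. The only non-$\RR$-linear ingredient entering any component of $\mu^1_{\AA^{\mathit{mod}}}(\pabla_\MM)$ is $\pabla_\MM^1$; in each component it occurs once post-composed with some $\mu^d_\MM$ (which is $\RR$-linear, acting on the $\MM$-factor of the target) and once with some $\mu^d_\MM$ pre-composed (again $\RR$-linear on $\MM$). Evaluating the difference of these two occurrences on $f\mm$, $f \in \RR$, the Leibniz rule produces from each a term proportional to $df$, and since the relevant $\mu^d_\MM$ commute with multiplication by $f$ these terms coincide and cancel (the signs match precisely because of the $(-1)^{|\mm|}$ twist in the definition of a pre-connection). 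Hence $\mathit{def}(\pabla_\MM)$ is $\RR$-linear, and applying $\mu^1_{\AA^{\mathit{mod}}}$ to it gives the same result in the honest and in the enlarged sense.

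Closedness is then immediate: $\mu^1_{\AA^{\mathit{mod}}}(\mathit{def}(\pabla_\MM)) = \mu^1_{\AA^{\mathit{mod}}}(\mu^1_{\AA^{\mathit{mod}}}(\pabla_\MM)) = 0$, since $(\mu^1_{\AA^{\mathit{mod}}})^2 = 0$ is a consequence of the $A_\infty$-module equations for $\MM$ and $\Omega^1_\RR\otimes\MM$ alone and holds for the chosen (possibly non-$\RR$-linear) ``morphism'' $\pabla_\MM$ as well. For independence of the choice of pre-connection: the difference $\beta = \pabla'_\MM - \pabla_\MM$ of two pre-connections is an honest element of $\mathit{hom}^0_{\AA^{\mathit{mod}}}(\MM,\Omega^1_\RR\otimes\MM)$, because the $df$-anomalies of $\pabla_\MM^1$ and $(\pabla'_\MM)^1$ cancel (two connections on a module differ by an $\RR$-linear map) and the higher components are already $\RR$-linear; this is exactly the affine space structure noted just before the Lemma. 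Since $\mu^1_{\AA^{\mathit{mod}}}$ is additive in its argument, $\mathit{def}(\pabla'_\MM) - \mathit{def}(\pabla_\MM) = \mu^1_{\AA^{\mathit{mod}}}(\beta)$ is a coboundary, so $\mathit{Def}(\MM) \in H^1(\mathit{hom}_{\AA^{\mathit{mod}}}(\MM,\Omega^1_\RR\otimes\MM))$ is well-defined.

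The only point that needs genuine care, and the only substantive content beyond bookkeeping, is the cancellation of the $df$-terms that places $\mathit{def}(\pabla_\MM)$ in the honest morphism complex; once that is in hand, the closedness and well-definedness are purely formal, exactly as in Lemma \ref{th:def-cocycle}.
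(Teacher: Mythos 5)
Your proposal is correct and matches the paper's intended argument exactly: the paper states that $\mathit{def}(\pabla_\MM) = \mu^1_{\AA^{\mathit{mod}}}(\pabla_\MM)$ ``makes sense even though $\pabla_\MM^1$ is not $\RR$-linear'' and asserts that ``the same observation'' (the trick from Lemma~\ref{th:def-cocycle} of temporarily dropping $\RR$-linearity) proves the lemma, leaving the details to the reader. You have spelled out precisely those details — the cancellation of the $df$-anomalies that makes $\mathit{def}(\pabla_\MM)$ $\RR$-linear, the formal $(\mu^1)^2=0$ for closedness, and the observation that the difference of two pre-connections is an honest morphism for well-definedness.
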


\begin{example}
Suppose that our pre-connection has vanishing higher order terms $\pabla_\MM^d = 0$, $d>1$, hence is just given by a family of connections $\nabla_{\MM(X)}(\mm) = (-1)^{|\mm|} \pabla_\MM^1(\mm)$ on the graded $\RR$-modules $\MM(X)$. Then the deformation cocycle is
\begin{equation}
\mathit{def}(\pabla_\MM)^d(\mm,a_{d-1},\dots,a_1) = (\mathit{id}_{\Omega^1_\RR} \otimes \mu^d_\MM)(\pabla_{\MM(X_d)}(\mm),\dots,a_1) - \pabla_{\MM(X_0)}(\mu^d_\MM(\mm,a_d,\dots,a_1))
\end{equation}
for $\mm \in \MM(X_d)$, $a_k \in \mathit{hom}_A(X_{k-1},X_k)$. This is just the covariant derivative of the module structure of $\MM$, measuring its failure to be compatible with the connections.
\end{example}

Given two families of modules $\MM_k$ ($k = 0,1$) equipped with pre-connections $\pabla_{\MM_k}$, consider the map \begin{equation} \label{eq:hom-pre-connection-2}
\begin{aligned}
& \pabla_{\mathit{hom}_{\AA^{\mathit{mod}}}(\MM_0,\MM_1)}: \mathit{hom}_{\AA^{\mathit{mod}}}(\MM_0,\MM_1) \longrightarrow \Omega^1_\RR \otimes \mathit{hom}_{\AA^{\mathit{mod}}}(\MM_0,\MM_1), \\
& \pabla_{\mathit{hom}_{\AA^{\mathit{mod}}}(\MM_0,\MM_1)}(\bb) = (-1)^{|\bb|} \mu^2_{\mathit{\AA}^{\mathit{mod}}}(\pabla_{\MM_1},\bb) - \mu^2_{\mathit{\AA}^{\mathit{mod}}}(\mathit{id}_{\Omega^1_\RR} \otimes \bb,\pabla_{\MM_0}).
\end{aligned}
\end{equation}
Spelled out, this means that $\cc = \pabla_{\mathit{hom}_{\AA^{\mathit{mod}}}(\MM_0,\MM_1)}(\bb)$ is given by
\begin{equation}
\begin{aligned}
& \cc^1(\mm) = (-1)^{|\bb^1(\mm)|} \pabla_{\MM_1}^1(\bb^1(\mm)) - (-1)^{|\mm|} (\mathit{id}_{\Omega^1_\RR} \otimes \bb^1)(\pabla_{\MM_0}^1(\mm)), \\
& \cc^2(\mm,a) = (-1)^{|\bb^2(\mm,a)|} \pabla_{\MM_1}^1(\bb^2(\mm,a)) + (-1)^{|\bb^1(\mm)|} \pabla_{\MM_1}^2(\bb^1(\mm),a) \\ & \qquad \qquad - (-1)^{|\mm|+|a|-1}
(\mathit{id}_{\Omega^1_\RR} \otimes \bb^1)(\pabla_{\MM_0}^2(\mm,a)) - (-1)^{|\mm|}
(\mathit{id}_{\Omega^1_\RR} \otimes \bb^2)(\pabla_{\MM_0}^1(\mm),a), \\
& \dots \\
& \cc^d(\mm,a_{d-1},\dots,a_1) = (-1)^{|\bb^d(\mm,a_{d-1},\dots,a_1)|} \pabla_{\MM_1}^1(\bb^d(\mm,a_{d-1},\dots,a_1)) \\ & \qquad \qquad - (-1)^{|\mm|} (\mathit{id}_{\Omega^1_\RR} \otimes \bb^d(\pabla_{\MM_0}^1(\mm),a_{d-1},\dots,a_1) \\ & \qquad \qquad + \text{\it (terms involving the higher order parts of the pre-connections on $\MM_0,\MM_1$)}.
\end{aligned}
\end{equation}
This shows that \eqref{eq:hom-pre-connection-2} is a pre-connection on the chain complex of $\mathit{hom}$'s. It follows from the definition and the $A_\infty$-equations on $\AA^{\mathit{mod}}$ that
\begin{equation} \label{eq:atiyah-def-2}
\begin{aligned}
& \mu^1_{\AA^{\mathit{mod}}}(\pabla_{\mathit{hom}_{\AA^{\mathit{mod}}}(\MM_0,\MM_1)}(\bb)) - \pabla_{\mathit{hom}_{\AA^{\mathit{mod}}}(\MM_0,\MM_1)}(\mu_{\AA^{\mathit{mod}}}^1(\bb)) = \\ & \qquad \qquad
\mu^2_{\AA^{\mathit{mod}}}(\mathit{def}(\pabla_{\MM^1}),\bb) + \mu^2_{\AA^{\mathit{mod}}}(\bb,\mathit{def}(\pabla_{\MM^0})).
\end{aligned}
\end{equation}

Given a deformation field represented by $\gamma \in \mathit{CC}^1(\AA,\Omega^1_{\RR} \otimes \AA)$, write $\gamma^{\mathit{mod}} = \Gamma^{\mathit{mod}}(\gamma)$. In parallel with Definition \ref{th:deformation-field}, we say that a family of modules $\MM$ {\em follows} $[\gamma]$ if
\begin{equation}
\mathit{Def}(\MM) = [\gamma^{\mathit{mod},0}] \in H^1(\mathit{hom}_{\AA^{\mathit{mod}}}(\MM,\Omega^1_\RR \otimes \MM)).
\end{equation}
If this holds, one can equip $\MM$ with a {\em relative connection} $\nabla_\MM$, by which we again mean a pre-connection whose deformation cocycle is exactly $\gamma^{\mathit{mod},0}$. Given two modules equipped with relative connections, one can modify \eqref{eq:hom-pre-connection-2} to get an actual connection on that chain complex, as in \eqref{eq:hom-connection}:
\begin{equation} \label{eq:hom-connection-2}
\nabla_{\mathit{hom}_{\AA^{\mathit{mod}}}(\MM_0,\MM_1)} =
\pabla_{\mathit{hom}_{\AA^{\mathit{mod}}}(\MM_0,\MM_1)} + \gamma^{\mathit{mod},1}.
\end{equation}
Moreover, by essentially the same computation as in \eqref{eq:added-product}, these connections satisfy
\begin{equation} \label{eq:multiplicative-connections}
\begin{aligned}
& -\nabla_{\mathit{hom}_{\AA^{\mathit{mod}}}(\XX_0,\XX_2)}(\mu^2_{\AA^{\mathit{tw}}}(\bb_2,\bb_1)) \\ & \qquad \qquad +
\mu^2_{\AA^{\mathit{mod}}}(\nabla_{\mathit{hom}_{\AA^{\mathit{mod}}}(\XX_1,\XX_2)}(\bb_2),\bb_1)
+ \mu^2_{\AA^{\mathit{mod}}}(\bb_2,\nabla_{\mathit{hom}_{\AA^{\mathit{mod}}}(\XX_0,\XX_1)}(\bb_1)) \\ &
\qquad \qquad = \text{\it (coboundary)}
\end{aligned}
\end{equation}
for any cocycles $\bb_k \in \mathit{hom}_{\AA^{\mathit{mod}}}(\MM_{k-1},\MM_k)$.

\begin{addendum}
Unsurprisingly, all these notions are compatible with their counterparts from Section \ref{subsec:twisted-connections} via the Yoneda embedding. If $\YY = \bigoplus_i \FF^i \otimes Y^i[-\sigma^i]$ is a family of twisted complexes with a pre-connection $\pabla_\YY$ as in \eqref{eq:pre-connection-on-tw}, then the family of modules $\YY^{\mathit{yon}}$ inherits a pre-connection:
\begin{equation}
\begin{aligned}
& \pabla_{\YY^{\mathit{yon}}}^1(\aa) =  (-1)^{|\aa|} (\textstyle\bigoplus_i \nabla_{\FF^i} \otimes \mathit{id})(\aa) + \mu^2_\AA(\alpha_{\YY},\aa)  \\
& \qquad \qquad \text{for $\textstyle\aa \in \YY^{\mathit{yon}}(X) = \mathit{hom}_{\AA^{\mathit{tw}}}(X,\YY) = \bigoplus_i \FF^i \otimes \mathit{hom}_A(X,Y^i)[-\sigma^i]$, and } \\
& \pabla_{\YY^{\mathit{yon}}}^d(\aa_d,a_{d-1},\dots,a_1) = \mu^{d+1}_\AA(\alpha_{\YY},\aa_d,a_{d-1},\dots,a_1) \\
& \qquad \qquad \text{where $\aa_d \in \mathit{hom}_{\AA^{\mathit{tw}}}(X_d,\YY)$, and $a_k \in \mathit{hom}_A(X_{k-1},X_k)$ for $k =1,\dots,d-1$.}
\end{aligned}
\end{equation}
The deformation cocycle of $\pabla_{\YY^{\mathit{yon}}}$ is the image of that of $\pabla_\YY$ under the Yoneda functor. One can take this comparison further to relative connections, but we will not need that.
\end{addendum}

We want to highlight one simple consequence:

\begin{lemma} \label{th:its-projective}
Suppose that $\MM_0,\MM_1$ are families of modules (as always, with finite cohomology) following $[\gamma]$, and where $\MM_0$ is perfect. Then $H(\mathit{hom}_{\AA^{\mathit{mod}}}(\MM_0,\MM_1))$ is a finitely generated graded projective $\RR$-module.
\end{lemma}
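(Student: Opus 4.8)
The plan is to put an honest connection on the complex $\mathit{hom}_{\AA^{\mathit{mod}}}(\MM_0,\MM_1)$, pass to cohomology, and then invoke Lemma~\ref{th:locally-free}; the finite generation required for that last step will come separately from the perfectness of $\MM_0$. So the first task is the following. Since $\MM_0$ and $\MM_1$ both follow $[\gamma]$, choose relative connections $\nabla_{\MM_0}$ and $\nabla_{\MM_1}$, meaning pre-connections whose deformation cocycles both equal $\gamma^{\mathit{mod},0}$, and form $\nabla_{\mathit{hom}_{\AA^{\mathit{mod}}}(\MM_0,\MM_1)} = \pabla_{\mathit{hom}_{\AA^{\mathit{mod}}}(\MM_0,\MM_1)} + \gamma^{\mathit{mod},1}$ as in \eqref{eq:hom-connection-2}. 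Substituting $\mathit{def}(\pabla_{\MM_k}) = \gamma^{\mathit{mod},0}$ into the Atiyah-cocycle identity \eqref{eq:atiyah-def-2} and adding the $\mu^1$--$\mu^2$ relation which is the Hochschild cocycle equation for $\gamma^{\mathit{mod}}$ (exactly the module counterpart of the identity displayed after \eqref{eq:hom-connection}, using $|\gamma| = 1$) makes the Atiyah cocycle of this modified pre-connection vanish, so $\nabla_{\mathit{hom}_{\AA^{\mathit{mod}}}(\MM_0,\MM_1)}$ commutes with $\mu^1_{\AA^{\mathit{mod}}}$ and is a genuine connection on the complex.

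Because $\Omega^1_\RR$ is projective, tensoring by it is exact, so it commutes with cohomology; hence a differential-compatible connection preserves cocycles and coboundaries and, via the Leibniz rule on representatives, descends to a connection on each cohomology module $H^i(\mathit{hom}_{\AA^{\mathit{mod}}}(\MM_0,\MM_1))$. It remains to show these modules are finitely generated, and this is where perfectness of $\MM_0$ enters. Writing $\MM_0$ as a homotopy retract of a family of twisted complexes $\YY = \bigoplus_i \FF^i \otimes X^i[-\sigma^i]$ with each $\FF^i$ finitely generated projective over $\RR$, the complex $\mathit{hom}_{\AA^{\mathit{mod}}}(\MM_0,\MM_1)$ becomes a homotopy retract of $\mathit{hom}_{\AA^{\mathit{mod}}}(\YY^{\mathit{yon}},\MM_1)$; using the identification $\mathit{hom}_{\AA^{\mathit{mod}}}((\FF^i \otimes X^i[-\sigma^i])^{\mathit{yon}},\MM_1) \simeq \mathit{Hom}_\RR(\FF^i,\MM_1(X^i))[\sigma^i]$ and the finite filtration coming from $\delta_\YY$, the latter is an iterated extension of these finitely many complexes, each of which has bounded, degreewise finitely generated cohomology since $\MM_1(X^i)$ lies in $\CC$ and $\FF^i$ is finitely generated projective over the Noetherian ring $\RR$. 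Taking a homotopy retract preserves this, so $H^i(\mathit{hom}_{\AA^{\mathit{mod}}}(\MM_0,\MM_1))$ is finitely generated over $\RR$ for every $i$ and vanishes for all but finitely many $i$.

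Combining the two steps: each $H^i(\mathit{hom}_{\AA^{\mathit{mod}}}(\MM_0,\MM_1))$ is a finitely generated $\RR$-module carrying a connection, hence projective by Lemma~\ref{th:locally-free}; summing over the finitely many nonzero degrees shows that $H(\mathit{hom}_{\AA^{\mathit{mod}}}(\MM_0,\MM_1))$ is finitely generated graded projective, which is the assertion.

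I expect the main obstacle to be the flatness verification in the first step, i.e.\ checking that the single correction term $\gamma^{\mathit{mod},1}$ exactly cancels the Atiyah cocycle of $\pabla_{\mathit{hom}_{\AA^{\mathit{mod}}}(\MM_0,\MM_1)}$. This is the one place where the Hochschild cocycle condition on $\gamma$ is genuinely used, and it requires careful bookkeeping of signs and of the higher-order components $\pabla^d_{\MM_k}$ ($d > 1$) of the two pre-connections entering \eqref{eq:hom-pre-connection-2}. Everything else is either standard homological algebra or a direct appeal to results already established above.
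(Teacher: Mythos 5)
Your argument is correct and matches the paper's proof, which is the terse two-line version: ``Finite generation follows from the fact that $\MM_0$ is perfect. On the other hand, the module carries a connection, hence Lemma~\ref{th:locally-free} applies.'' The flatness check you flag as a potential obstacle --- that $\gamma^{\mathit{mod},1}$ cancels the Atiyah cocycle of $\pabla_{\mathit{hom}_{\AA^{\mathit{mod}}}(\MM_0,\MM_1)}$ --- is already carried out in the text immediately preceding the lemma (the discussion surrounding \eqref{eq:hom-connection-2} and \eqref{eq:atiyah-def-2}), so the paper is treating it as established rather than omitting it; your expansion of the finite-generation step is likewise just unpacking the properness statement for $\AA^{\mathit{perf}}$ given earlier in Section~\ref{subsec:families}.
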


\begin{proof}
Finite generation follows from the fact that $\MM_0$ is perfect. On the other hand, the module carries a connection, hence Lemma \ref{th:locally-free} applies.
\end{proof}

By construction, relative connections on a given family $\MM$ form an affine space over the space of cocycles inside $\mathit{hom}^0_{\mathit{\AA}^{\mathit{perf}}}(\MM,\Omega^1_\RR \otimes \MM)$. If we change the relative connections on $\MM_k$ ($k =0,1$) to $\nabla_{\MM_k}' = \nabla_{\MM_k} + \cc_k$, the induced connection on the morphism spaces changes to
\begin{equation} \label{eq:modify-connection}
\nabla_{\mathit{hom}_{\AA^{\mathit{mod}}}(\MM_0,\MM_1)}'(\bb) =
\nabla_{\mathit{hom}_{\AA^{\mathit{mod}}}(\MM_0,\MM_1)}(\bb) + (-1)^{|\bb|} \mu^2_{\mathit{\AA}^{\mathit{mod}}}(\cc_1,\bb) - \mu^2_{\mathit{\AA}^{\mathit{mod}}}(\mathit{id}_{\Omega^1_\RR} \otimes \bb,\cc_0).
\end{equation}
In particular, if we are only interested in the connection on the cohomology level, relative connections which differ by coboundaries yield the same result, so the space of relevant choices is an affine space over $\mathit{Hom}_{H^0(\mathit{\AA}^{\mathit{perf}})}(\MM,\Omega^1_\RR \otimes \MM)$.


If $\MM$ is a family of modules with a pre-connection $\pabla_\MM$, and $\FF$ a finitely generated projective $\RR$-module with its own connection $\nabla_\FF$, the tensor product $\FF \otimes \MM$ inherits a pre-connection, defined by
\begin{equation}
\begin{aligned}
& \pabla_{\FF \otimes \MM}^1(\ff \otimes \mm) = (-1)^{|\mm|} (\nabla_\FF \ff) \otimes \mm + \ff \otimes \pabla_\MM^1(\mm), \\
& \pabla_{\FF \otimes \MM}^d(\ff \otimes \mm,a_{d-1},\dots,a_1) = \ff \otimes \pabla_\MM^d(\mm,a_{d-1},\dots,a_1) \quad \text{for $d>1$.}
\end{aligned}
\end{equation}
The associated deformation cocycle is
\begin{equation} \label{eq:tensor-deformation}
\mathit{def}(\pabla_{\FF \otimes \MM}) = \mathit{id}_\FF \otimes \mathit{def}(\pabla_\MM).
\end{equation}
In particular, if $\MM$ follows a given deformation field $[\gamma]$, then so does $\FF \otimes \MM$. One could generalize this slightly by allowing $\FF$ to be a complex of projective $\RR$-modules with a pre-connection, in which case the associated Atiyah cocycle would appear in an additional summand in \eqref{eq:tensor-deformation}.

\subsection{Functoriality\label{subsec:functor}}
Let $G: A \rightarrow \tilde{A}$ be a (strictly unital) $A_\infty$-functor. We want to study the action of $G$ on families of objects, over a fixed base space $\SS$. For expository reasons, we temporarily return to the framework of twisted complexes. It is well-known (to the man on the street) that $G$ induces an $A_\infty$-functor $G^{\mathit{tw}}: A^{\mathit{tw}} \rightarrow \tilde{A}^{\mathit{tw}}$. The same formulae applied to families define an $A_\infty$-functor $\GG^{\mathit{tw}}: \AA^{\mathit{tw}} \rightarrow \tilde{\AA}^{\mathit{tw}}$.

A pre-connection \eqref{eq:pre-connection-on-tw} on $\XX \in \mathit{Ob}\,\AA^{\mathit{tw}}$ induces one on its image $\tilde{\XX} = \GG^{\mathit{tw}}(\XX)$:
\begin{equation} \label{eq:pre-connection-image}
\begin{aligned}
& \pabla_{\tilde\XX} = \bigoplus_i \nabla_{\FF^i} \otimes e_{G(X^i)[-\sigma^i]} + \GG^{\mathit{tw},1}(\alpha_\XX), \\
& \mathit{def}(\pabla_{\tilde\XX}) = \GG^{\mathit{tw},1}(\mathit{def}(\pabla_\XX)).
\end{aligned}
\end{equation}
If $\XX_k$ ($k = 0,1$) are families with pre-connections, and $\tilde{\XX}_k$ their images under $\GG^{\mathit{tw}}$ equipped with the induced pre-connections, then for any cocycle $\aa$ we have
\begin{equation} \label{eq:functoriality-fails}
\begin{aligned}
& \pabla_{\mathit{hom}_{\tilde{\AA}^{\mathit{tw}}}(\tilde{\XX}_0,\tilde{\XX}_1)}(\GG^{\mathit{tw},1}(\aa)) =
\GG^{\mathit{tw},1}(\pabla_{\mathit{hom}_{\AA^{\mathit{tw}}}(\XX_0,\XX_1)}(\aa)) \\ & \qquad \qquad
- \GG^{\mathit{tw},2}(\mathit{def}(\pabla_{\XX_1}),\aa)
- \GG^{\mathit{tw},2}(\aa,\mathit{def}(\pabla_{\XX_0})) + \text{\it (coboundary)}.
\end{aligned}
\end{equation}

\begin{assumption} \label{th:matching-deformations}
Let $[\gamma]$ and $[\tilde\gamma]$ be deformation fields for $\AA$ and $\tilde{\AA}$, respectively. Suppose that there is a $\beta \in \mathit{CC}^0(\AA,\Omega^1_\RR \otimes \tilde\AA)$ such that
\begin{equation} \label{eq:beta-diff}
\partial \beta= \GG^*(\tilde\gamma) - \GG_*(\gamma).
\end{equation}
\end{assumption}

In the definition of $\beta$, we consider $\tilde\AA$ as an $\AA$-bimodule by pullback through $\GG$. As in \eqref{eq:twisted-gamma}, $\beta$ induces a cochain $\beta^{\mathit{tw}} \in \mathit{CC}^0(\AA^{\mathit{tw}},\Omega^1_\RR \otimes \tilde{\AA}^{\mathit{tw}})$, which satisfies the analogue of \eqref{eq:beta-diff}. Concretely, this means that
\begin{equation} \label{eq:relative-field}
\begin{aligned}
& \mu^1_{\tilde{\AA}^{\mathit{tw}}}(\beta^{\mathit{tw},0}) = \tilde\gamma^{\mathit{tw},0} - \GG^{\mathit{tw},1}(\gamma^{\mathit{tw},0}) \in \mathit{hom}_{\tilde{\AA}^{\mathit{tw}}}(\GG^{\mathit{tw}}(\XX_0),\GG^{\mathit{tw}}(\XX_0)), \\
& \mu^1_{\tilde{\AA}^{\mathit{tw}}}(\beta^\mathit{tw,1}(\aa)) + \mu^2_{\tilde{\AA}^{\mathit{tw}}}(\GG^{\mathit{tw},1}(\aa),\beta^{\mathit{tw},0}) +
(-1)^{|\aa|-1} \mu^2_{\tilde{\AA}^{\mathit{tw}}}(\beta^{\mathit{tw},0},\GG^{\mathit{tw},1}(\aa)) \\ & \qquad \qquad +
\beta^{\mathit{tw},1}(\mu^1_{\AA^{\mathit{tw}}}(\aa)) = \tilde\gamma^{\mathit{tw},1}(\GG^{\mathit{tw},1}(\aa)) - \GG^{\mathit{tw},1}(\gamma^{\mathit{tw},1}(\aa)) \\ & \qquad \qquad \qquad \qquad
- \GG^{\mathit{tw},2}(\gamma^{\mathit{tw},0},\aa) - \GG^{\mathit{tw},2}(\aa,\gamma^{\mathit{tw},0}), \\
& \dots
\end{aligned}
\end{equation}

A first consequence of \eqref{eq:relative-field} is that if $\XX$ follows $[\gamma]$, then $\tilde\XX = \GG^{\mathit{tw}}(\XX)$ follows $[\tilde{\gamma}]$. Indeed, if $\nabla_\XX = \pabla_\XX$ is a relative connection with respect to $\gamma$, then
\begin{equation} \label{eq:image-relative-connection}
\nabla_{\tilde\XX} = \pabla_{\tilde\XX} + \beta^{\mathit{tw},0}
\end{equation}
is a relative connection for $\tilde\gamma$. Suppose that $\XX_k$ ($k = 0,1$) are families with relative connections, and we equip their images $\tilde{\XX}_k$ with the induced relative connections as in \eqref{eq:image-relative-connection}. From \eqref{eq:functoriality-fails} and \eqref{eq:relative-field} it then follows that for any cocycle $\aa$,
\begin{equation} \label{eq:functoriality-of-connections}
\begin{aligned}
& \nabla_{\mathit{hom}_{\tilde{\AA}^{\mathit{tw}}}(\tilde{\XX}_0,\tilde{\XX}_1)}(\GG^{\mathit{tw},1}(\aa)) =
\GG^{\mathit{tw},1}(\nabla_{\mathit{hom}_{\AA^{\mathit{tw}}}(\XX_0,\XX_1)}(\aa)) + \text{\it (coboundary)}.
\end{aligned}
\end{equation}
This explains the sense in which, under Assumption \ref{th:matching-deformations}, the cohomology level connections on $\mathit{hom}$ spaces are functorial.

Let's turn to the corresponding question for $A_\infty$-modules. Take $\tilde{A}$, considered as an $(A,\tilde{A})$-bimodule by $G$-pullback on the left side only. This is right perfect, since $\tilde{A}(\cdot,X) = G(X)^{\mathit{yon}}$, hence gives rise to a convolution functor
\begin{equation}
G^{\mathit{perf}} = K_{\tilde{A}}: A^{\mathit{perf}} \longrightarrow \tilde{A}^{\mathit{perf}}.
\end{equation}
In the same way, one can define an analogue $\GG^{\mathit{perf}} = \KK_{\tilde{A}}$ acting on families. Suppose that $\MM$ is a perfect family of modules carrying a pre-connection $\pabla_\MM$. Then there is an induced pre-connection on $\tilde{\MM} = \GG^{\mathit{perf}}(\MM) = \MM \otimes_A \tilde{A}$:
\begin{equation}
\begin{aligned}
& \pabla_{\tilde{\MM}}^0(\mm \otimes a_r \otimes \cdots \otimes a_1 \otimes \tilde{a}) =
\textstyle\sum_i \pabla_{\tilde{\MM}}^{r-i+1}(\mm,a_r,\dots,a_{i+1}) \otimes a_i \otimes \cdots \otimes \tilde{a}, \\
& \pabla_{\tilde{\MM}}^d = 0 \quad \text{for all $d>0$,} \\
& \mathit{def}(\pabla_{\tilde{\MM}}) = \GG^{\mathit{perf},1}(\mathit{def}(\pabla_\MM)).
\end{aligned}
\end{equation}
If $\MM_k$ ($k = 0,1$) are families with pre-connections, and $\tilde{\MM}_k$ their images under $\GG^{\mathit{perf}}$ equipped with the induced pre-connections, the following simpler analogue of \eqref{eq:functoriality-fails} holds:
\begin{equation}
\pabla_{\mathit{hom}_{\tilde{\AA}^{\mathit{perf}}}(\tilde{\MM}_0,\tilde{\MM}_1)}(\GG^{\mathit{perf},1}(\bb)) = \GG^{\mathit{perf},1}(\pabla_{\mathit{hom}_{\AA^{\mathit{perf}}}(\MM_0,\MM_1)}(\bb)).
\end{equation}

Now suppose again that Assumption \ref{th:matching-deformations} holds. Write $\gamma^{\mathit{perf}} \in \mathit{CC}^1(\AA^{\mathit{perf}}, \Omega^1_\RR \otimes \AA^{\mathit{perf}})$ for the element induced by $\gamma$ as in \eqref{eq:gamma-for-modules}, and similarly for $\tilde\gamma^{\mathit{perf}}$. Then, there is a corresponding element $\beta^{\mathit{perf}} \in \mathit{CC}^0(\AA^{\mathit{perf}},\Omega^1_\RR \otimes \tilde{\AA}^{\mathit{perf}})$ which satisfies
\begin{equation} \label{eq:beta-perf}
\partial \beta^{\mathit{perf}} = (\GG^{\mathit{perf}})^*(\tilde\gamma^{\mathit{perf}}) - \GG^{\mathit{perf}}_*(\gamma^{\mathit{perf}}).
\end{equation}
Instead of attempting to define $\beta^{\mathit{perf}}$ by a direct formula, it seems more reasonable to argue by restriction to the images of the Yoneda embeddings $\AA^{\mathit{tw}} \rightarrow \AA^{\mathit{perf}}$, $\tilde{\AA}^{\mathit{tw}} \rightarrow \tilde{\AA}^{\mathit{perf}}$. This restriction induces quasi-isomorphisms on the relevant Hochschild complexes, and it essentially reduces this situation to the previously discussed case of twisted complexes. \eqref{eq:beta-perf} also has similar consequences as before: if $\nabla_\MM = \pabla_\MM$ is a relative connection for $\gamma$, then
\begin{equation} \label{eq:image-relative-connection-for-modules}
\nabla_{\tilde\MM} = \pabla_{\tilde\MM} + \beta^{\mathit{perf},0}
\end{equation}
is a relative connection for $\tilde\gamma$, and moreover these relative connections satisfy a simplified version of \eqref{eq:functoriality-of-connections}:
\begin{equation} \label{eq:strict-functoriality-of-connections}
\nabla_{\mathit{hom}_{\tilde{\AA}^{\mathit{perf}}}(\tilde{\MM}_0,\tilde{\MM}_1)}(\GG^{\mathit{perf},1}(\bb)) =
\GG^{\mathit{perf},1}(\nabla_{\mathit{hom}_{\AA^{\mathit{perf}}}(\MM_0,\MM_1)}(\bb)).
\end{equation}

Thinking in terms of modules naturally accomodates a generalization, in which we do not start with a functor $G$, but instead with a general right perfect $(A,\tilde{A})$-bimodule $P$, and its convolution functor $\KK_P$ for families.
As in \eqref{eq:grrr}, we then have a homotopy commutative diagram
\begin{equation} \label{eq:grrr-2}
\xymatrix{
\mathit{CC}(\AA,\Omega^1_\RR \otimes \AA) \ar[d] \ar[r] & \mathit{hom}_{(\AA,\tilde{\AA})^{\mathit{mod}}}(P,\Omega^1_\RR \otimes P) \ar[d] & \ar[l] \ar[d] \mathit{CC}(\tilde{\AA},\Omega^1_\RR \otimes \tilde{\AA}) \\
\mathit{CC}(\AA^{\mathit{perf}},\Omega^1_\RR \otimes \AA^{\mathit{perf}}) \ar[r] &
\mathit{CC}(\AA^{\mathit{perf}},\Omega^1_\RR \otimes \tilde{\AA}^{\mathit{perf}}) & \ar[l]
\mathit{CC}(\tilde{\AA}^{\mathit{perf}},\Omega^1_\RR \otimes \tilde{\AA}^{\mathit{perf}}),
}
\end{equation}
where $P$ is considered as a constant family of bimodules over $\RR$ (the general theory of such families will be our next topic of discussion, but it is easy to see what we mean in this special case). The natural analogue of Assumption \ref{th:matching-deformations} in this context is therefore:

\begin{assumption} \label{th:matching-deformations-2}
Suppose that we have deformation fields $[\gamma]$ and $[\tilde\gamma]$ for $\AA$ and $\tilde{\AA}$, respectively, whose images in $H^1(\mathit{hom}_{(\AA,\tilde{\AA})^{\mathit{mod}}}(P,\Omega^1_\RR \otimes P))$ agree.
\end{assumption}

If this is the case, one can apply the same argument as before to $\KK_P$, meaning that relative connections on perfect families of modules can be pushed forward, and the analogue of \eqref{eq:strict-functoriality-of-connections} will hold.

\subsection{Existence\label{subsec:existence}}
In our discussion of functoriality, we have used the tensor product of a family of modules and a fixed bimodule. The other combination, where the module is fixed but the bimodule varies, is also useful. A {\em family of bimodules with finite cohomology} over $\SS$ associates to any $(X,\tilde{X}) \in \mathit{Ob}\, A \times \mathit{Ob} \, \tilde{A}$ a complex $\PP(\tilde{X},X)$ of projective $\RR$-modules, which comes with structure maps as in \eqref{eq:bimodule-structure}, and such that the cohomology of $(\PP(\tilde{X},X),\mu^{0|1|0}_{\PP})$ is bounded and finitely generated in each degree. Such bimodules form an $A_\infty$-category over $\RR$, denoted by $(\AA,\tilde\AA)^{\mathit{mod}}$. The elementary theory of families of modules, as developed in Section \ref{subsec:families}, carries over to this situation without any complications.

Let $\PP$ be a family of $(A,\tilde{A})$-bimodules. A pre-connection $\pabla_\PP$ is a sequence of maps
\begin{equation} \label{eq:bimod-pre-connection-components}
\begin{aligned}
& \pabla_\PP^{s|1|t}: \mathit{hom}_A(X_{s-1},X_s) \otimes \cdots \otimes \mathit{hom}_A(X_0,X_1) \otimes \PP(\tilde{X}_t,X_0) \\ & \qquad \qquad \qquad \qquad \otimes \mathit{hom}_{\tilde{A}}(\tilde{X}_{t-1},\tilde{X}_t) \otimes \cdots \otimes \mathit{hom}_{\tilde{A}}(\tilde{X}_0,\tilde{X}_1) \longrightarrow \Omega^1_\RR \otimes \PP(\tilde{X}_0,X_s)[1-s-t],
\end{aligned}
\end{equation}
where the maps $\pp \mapsto (-1)^{|\pp|}\pabla_\PP^{0|1|0}(\pp)$ are connections in the standard sense, while all the other terms are $\RR$-linear. Each pre-connection has a deformation cocycle
\begin{equation} \label{eq:bimodule-cocycle}
\mathit{def}(\pabla_\PP) \in \mathit{hom}_{(\AA,\tilde{\AA})^{\mathit{mod}}}^1(\PP, \Omega^1_\RR \otimes \PP),
\end{equation}
obtained by applying $\mu^1_{(\AA,\tilde{\AA})^{\mathit{mod}}}$ to \eqref{eq:bimod-pre-connection-components}. As usual, the cohomology class $\mathit{Def}(\PP)$ represented by \eqref{eq:bimodule-cocycle} is independent of the choice of pre-connection.

Take a perfect $A$-module $M$. Then $\tilde{\MM} = M \otimes_A \PP$, defined as in \eqref{eq:ainfty-tensor}, is a family of $\tilde{A}$-modules with finite cohomology. If we assume in addition that $\PP$ is right perfect (in the appropriate sense for families), then $\tilde{\MM}$ is again a perfect family. Moreover, a pre-connection on $\PP$ defines one on $\tilde{\MM}$, formally defined by taking the identity on $M$ and tensoring it with $\pabla_\PP$. We have an obvious correspondence between deformation cocycles:
\begin{equation} \label{eq:def-of-a-tensor}
\mathit{def}(\pabla_{\tilde{\MM}}) = e_M \otimes_A \mathit{def}(\pabla_\PP),
\end{equation}
where $e_M$ is the identity endomorphism. Now suppose that our target category $\tilde{A}$ comes with a deformation field represented by $\tilde{\gamma} \in \mathit{CC}^1(\tilde\AA,\Omega^1_{\RR} \tilde\otimes \AA)$. In a slight generalization of \eqref{eq:grrr-2}, we have a canonical chain map
\begin{equation} \label{eq:bimodule-induced}
\mathit{CC}(\tilde{\AA},\Omega^1_\RR \otimes\tilde{\AA}) \longrightarrow
\mathit{hom}_{(\AA,\tilde{\AA})^{\mathit{mod}}}(\PP,\Omega^1_\RR \otimes \PP).
\end{equation}
As usual, we say that $\PP$ follows $\tilde{\gamma}$ if its deformation class is the image of $[\tilde{\gamma}]$ under \eqref{eq:bimodule-induced}, and define the notion of relative connection by requiring equality on the cocycle level. It follows from \eqref{eq:def-of-a-tensor} and the explicit formula for \eqref{eq:bimodule-induced} that if $\PP$ follows $[\tilde{\gamma}]$, then so do all the families $\tilde{\MM} = M \otimes_\AA \PP$. The induced connection on the space of morphisms between two such families is given by \eqref{eq:hom-connection-2}, which one can write as
\begin{equation}
\nabla_{\mathit{hom}_{\tilde\AA^{\mathit{mod}}}(\tilde\MM_0,\tilde\MM_1)}(\tilde\bb) =
(-1)^{|\tilde{\bb}|} \mu^2_{\tilde\AA^{\mathit{mod}}}(e_{M_1} \otimes_A \pabla_\PP, \tilde\bb) - \mu^2_{\tilde\AA^{\mathit{mod}}}(\mathit{id}_{\Omega^1_\RR} \otimes_A \tilde\bb, e_{M_0} \otimes_A \pabla_\PP) + \gamma^{\mathit{mod},1}(\tilde{\bb}).
\end{equation}
In particular, if $\tilde{\bb} = b \otimes_A e_\PP$, then the first two terms cancel, while the last one vanishes by inspection of \eqref{eq:gamma-formula}. The application we are aiming for is this:

\begin{corollary} \label{th:universal-family}
Take an $A_\infty$-category $A$ with a deformation field $[\gamma]$. Suppose that there is a family of $A$-bimodules $\PP$ which is right perfect, follows $[\gamma]$, and whose fibre at some base point $s \in \SS$ is quasi-isomorphic to the diagonal bimodule. Then, for any $M \in \mathit{Ob}\,A^{\mathit{perf}}$, there is a perfect family of modules $\MM$ which follows $[\gamma]$, and with $\MM_s$ quasi-isomorphic to $M$. Moreover, any two such families satisfy
\begin{equation}
H^0(\mathit{hom}_{\AA^{\mathit{perf}}}(\MM_0,\MM_1)) \iso \RR \otimes H^0(\mathit{hom}_{A^{\mathit{perf}}}(M_0,M_1)),
\end{equation}
and (for a suitable choice of relative connection) the induced connections on these morphism spaces are trivial.
\end{corollary}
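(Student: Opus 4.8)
The plan is to realise the desired family explicitly as $\MM := M \otimes_\AA \PP$, and then to extract both the uniqueness statement and the triviality of the connection from the ``zero fibre'' principle that stands behind Lemma \ref{th:its-projective}.

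First I would dispose of existence. Since $\PP$ is right perfect and $M$ is perfect, the tensor product $\MM = M \otimes_\AA \PP$ is a perfect family of $A$-modules, and because $\PP$ follows $[\gamma]$ so does $\MM$, by the discussion immediately preceding the statement (the induced pre-connection $e_M \otimes_A \pabla_\PP$ has deformation cocycle $e_M \otimes_A \mathit{def}(\pabla_\PP)$). Its fibre at the base point is $\MM_s = M \otimes_A \PP_s$, and since $\PP_s$ is quasi-isomorphic to the diagonal bimodule this is quasi-isomorphic to $M \otimes_A A \simeq M$.

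Second, and this is the substance of the ``moreover'' clause, I would prove a uniqueness statement: \emph{any} perfect family $\MM$ following $[\gamma]$ with $\MM_s \simeq M$ is quasi-isomorphic, as a family, to $M \otimes_\AA \PP$. Consider $\NN := H^0(\mathit{hom}_{\AA^{\mathit{perf}}}(M \otimes_\AA \PP, \MM))$. By Lemma \ref{th:its-projective} this is a finitely generated projective $\RR$-module, and since \emph{all} cohomology modules of $\mathit{hom}_{\AA^{\mathit{perf}}}(M\otimes_\AA\PP,\MM)$ are projective, this complex is formal; hence restriction to $s$ yields $\NN \otimes_\RR R(s) \iso H^0(\mathit{hom}_{A^{\mathit{perf}}}(M,M))$, which contains $\mathit{id}_M$. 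Lifting through the surjection $\NN \to \NN \otimes_\RR R(s)$ produces a class $\eta \in \NN$ restricting at $s$ to an isomorphism, and then $\CC := \mathit{Cone}(\eta)$ is a perfect family following $[\gamma]$ (the class of families following $[\gamma]$ is closed under mapping cones of degree-zero cocycle morphisms) with $\CC_s \simeq \mathit{Cone}(\mathit{id}_M) \simeq 0$. Applying Lemma \ref{th:its-projective} once more, $H^0(\mathit{hom}_{\AA^{\mathit{perf}}}(\CC,\CC))$ is finitely generated projective with vanishing fibre at $s$, hence — its rank being locally constant on the connected curve $\SS$ — is zero. Thus $e_\CC = 0$, so $\CC \simeq 0$ and $\eta$ is a quasi-isomorphism.

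Third, with this uniqueness it suffices to prove the last two assertions when $\MM_k = M_k \otimes_\AA \PP$ for $k = 0,1$. Choose a relative connection $\pabla_\PP$ for $\PP$ and equip each $\MM_k$ with the induced relative connection $e_{M_k} \otimes_A \pabla_\PP$; then, as recorded just before the statement, the induced connection on $\mathit{hom}_{\AA^{\mathit{perf}}}(\MM_0,\MM_1)$ annihilates every morphism of the form $b \otimes_A e_\PP$. Consequently $b \mapsto b \otimes_A e_\PP$, extended $\RR$-linearly, is a chain map landing in flat sections, giving on cohomology an $\RR$-linear map $\phi : \RR \otimes_R V \to \HH$ with $V = H^0(\mathit{hom}_{A^{\mathit{perf}}}(M_0,M_1))$ and $\HH = H^0(\mathit{hom}_{\AA^{\mathit{perf}}}(\MM_0,\MM_1))$. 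By Lemma \ref{th:its-projective}, $\HH$ is finitely generated projective and carries a connection $\nabla_\HH$, and $\phi$ intertwines $\nabla_\HH$ with the trivial connection on $\RR \otimes_R V$; at $s$, $\phi$ is identified with $b \mapsto b \otimes_A e_A$ under $\PP_s \simeq A$, hence is an isomorphism there. To globalise: $\mathrm{coker}\,\phi$ is finitely generated and inherits a connection, because $\nabla_\HH$ preserves $\mathrm{im}\,\phi$, so it is projective by Lemma \ref{th:locally-free}, and it vanishes at $s$, hence vanishes; then the sequence $0 \to \ker\phi \to \RR \otimes_R V \to \HH \to 0$ splits (as $\HH$ is projective), exhibiting $\ker\phi$ as a projective module vanishing at $s$, hence zero. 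So $\phi$ is an isomorphism $\RR \otimes_R H^0(\mathit{hom}_{A^{\mathit{perf}}}(M_0,M_1)) \iso \HH$ carrying the trivial connection to $\nabla_\HH$, which gives both remaining claims.

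The main obstacle is the uniqueness in Step two, i.e. seeing that a family following $[\gamma]$ with prescribed fibre is genuinely equivalent to $M \otimes_\AA \PP$ and not merely some deformation of it: this is exactly where the strength of Lemma \ref{th:its-projective} — that the cohomology of $\mathit{hom}$ between perfect families following $[\gamma]$ is \emph{projective}, hence faithfully detected by its fibres — is indispensable, since for a general connection on a bundle over $\SS$ there need be no flat sections at all. The remaining ingredients (formality of the relevant $\mathit{hom}$-complexes, closure of ``following $[\gamma]$'' under cones, and compatibility of restriction functors with $\mathit{hom}$-complexes) are routine consequences of the constructions of the preceding subsections.
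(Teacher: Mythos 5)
Your Step 1, defining $\MM = M \otimes_\AA \PP$, coincides with the paper's; and your Step 3, the projective-module-with-connection argument showing that $b \mapsto [b \otimes_A e_\PP]$ gives an isomorphism $\RR \otimes_R H^0(\mathit{hom}_{A^{\mathit{perf}}}(M_0,M_1)) \iso H^0(\mathit{hom}_{\AA^{\mathit{perf}}}(\MM_0,\MM_1))$ carrying the trivial connection to the induced one, is a correct and more detailed version of what the paper's terse proof leaves to the reader. The problem is Step 2.

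Step 2 asserts a uniqueness that is false, and the argument for it has a concrete gap. The parenthetical claim that the class of families following $[\gamma]$ is closed under mapping cones of \emph{arbitrary} degree-zero cocycle morphisms is not correct: if one computes the deformation class of $\mathrm{Cone}(\eta)$ using the direct-sum pre-connection built from relative connections on $M \otimes_\AA \PP$ and $\MM$, the diagonal blocks vanish, but the off-diagonal block of $\mathit{def}(\pabla_{\mathrm{Cone}(\eta)}) - \gamma^{\mathit{mod},0}$ is (up to sign and coboundary) the covariant derivative $\nabla_{\mathit{hom}}(\eta)$ from \eqref{eq:hom-connection-2}. So the cone follows $[\gamma]$ only when $[\eta]$ is covariantly constant. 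A lift of $\mathit{id}_M$ through $\NN \to \NN \otimes_\RR R(s)$ need not be covariantly constant, and an algebraic connection on a bundle over an affine curve generically admits no global flat sections, so you cannot fix this by choosing $\eta$ more carefully. Worse, the claim you are trying to establish is actually false: for any nontrivial line bundle $\FF$ on $\SS$ equipped with a connection (such connections always exist over affine curves, and $\mathrm{Pic}(\SS)$ is nontrivial for the punctured elliptic curves used in this paper), the family $\FF \otimes (M \otimes_\AA \PP)$ follows $[\gamma]$ and has fibre quasi-isomorphic to $M$ at $s$, yet is not quasi-isomorphic to $M \otimes_\AA \PP$ as a family. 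This is precisely why Section \ref{subsec:unique}, even under the additional Assumption \ref{th:augmented}, proves only the much weaker Proposition \ref{th:uniqueness-1}. The phrase ``any two such families'' in the statement should therefore be read as referring to the families $M_k \otimes_\AA \PP$ constructed in the first sentence, which is consistent with how this Corollary feeds into Working Definition \ref{th:per} (where only an existential ``one can choose'' triviality is required). With Step 2 deleted, your Steps 1 and 3 give a correct proof.
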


\begin{proof}
Define $\MM = M \otimes_A \PP$. By our previous discussion, this follows $[\gamma]$ and has the required behaviour at the fibre over $s$. Moreover, if we make the obvious choice of relative connections, for each morphism $[b] \in H^0(\mathit{hom}_{A^{\mathit{perf}}}(M_0,M_1))$ we have a covariantly constant section $[b \otimes e_{\PP}] \in H^0(\mathit{hom}_{\AA^{\mathit{perf}}}(\MM_0,\MM_1))$, which specializes to $[b]$ at the point $s$. This establishes the remaining part of the statement.
\end{proof}

\subsection{Uniqueness\label{subsec:unique}}
As before, we work with a fixed deformation field $[\gamma]$. As an aid to intuition, we will increasingly use geometric language. Take two points $s,s' \in \SS$. One can envisage a process of {\em moving objects of $A^{\mathit{perf}}$ along the deformation field} from $s$ to $s'$. Namely, start with some object $M$, and suppose that there is a perfect family of modules $\MM$ following $[\gamma]$, whose fibre at $s$ is quasi-isomorphic to $M$. Then, take the fibre $M' = \MM_{s'}$. Generally speaking, no such family may exist, making it impossible to carry out the process at all. However, assuming existence, there is a good uniqueness statement at least for a certain class of objects $M$. Suppose from now on that the following holds:

\begin{assumption} \label{th:augmented}
$H^0(\mathit{hom}_{A^{\mathit{perf}}}(M,M))$ is a commutative ring. Moreover, the ideal of nilpotent elements in that ring has codimension $1$.
\end{assumption}

\begin{proposition} \label{th:uniqueness-1}
If $M$ satisfies Assumption \ref{th:augmented} and can be moved along the deformation field from $s$ to $s'$, the outcome $M'$ is unique up to quasi-isomorphism (which means independent of the family $\MM$).
\end{proposition}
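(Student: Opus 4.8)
The plan is to build a covariantly constant morphism of families connecting the two candidates for $M'$, and then to recognize it as a quasi‑isomorphism.

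\emph{Strategy.} Let $\MM_0,\MM_1$ be two perfect families following $[\gamma]$ whose fibres at $s$ are quasi‑isomorphic to $M$; we must show $\MM_{0,s'}\simeq\MM_{1,s'}$. I would construct a covariantly constant morphism of families $u\colon\MM_0\to\MM_1$ which is a quasi‑isomorphism at $s$, and then argue that it is one everywhere. For the latter: if $u$ is covariantly constant then $\mathrm{Cone}(u)$ again follows $[\gamma]$ (the extra contribution to its deformation cocycle is a coboundary measuring $\nabla u$), so by Lemma~\ref{th:its-projective} the cohomology of $\mathit{hom}_{\AA^{\mathit{perf}}}(\mathrm{Cone}(u),\mathrm{Cone}(u))$ is a finitely generated projective $\RR$‑module carrying a connection; it vanishes at $s$ because $\mathrm{Cone}(u_s)$ is the cone of a quasi‑isomorphism, and, being locally free of rank zero at a point of the connected curve $\SS$, it vanishes identically. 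Hence $\mathrm{Cone}(u)\simeq 0$, so $u$ is an isomorphism, and restriction to $s'$ finishes. Throughout I use that a bounded complex of projective $\RR$‑modules with projective cohomology splits off its cohomology (over the Dedekind ring $\RR$), so $H^\ast$ commutes with specialization to a point; in particular $H^0(\mathit{hom}_{\AA^{\mathit{perf}}}(\MM_i,\MM_j))\otimes_\RR R\cong H^0(\mathit{hom}_{A^{\mathit{perf}}}(\MM_{i,s},\MM_{j,s}))$.

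\emph{Endomorphism rings.} Set $\EE_k=H^0(\mathit{hom}_{\AA^{\mathit{perf}}}(\MM_k,\MM_k))$; by Lemma~\ref{th:its-projective} this is finitely generated projective over $\RR$, by \eqref{eq:multiplicative-connections} its connection is a derivation of the ring structure, and its fibre at $s$ is $H^0(\mathit{hom}_{A^{\mathit{perf}}}(M,M))$. I claim $\EE_k=\RR\,e_k\oplus N_k$ with $N_k$ a nilpotent ideal and $\nabla e_k=0$. Commutativity of $\EE_k$: the image of the commutator bracket is a finitely generated submodule closed under $\nabla$, hence projective (Lemma~\ref{th:locally-free}) hence torsion‑free, and it vanishes at $s$ by Assumption~\ref{th:augmented}, so it is $0$. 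Next, the nilradical $N_k$ is a differential ideal (characteristic $0$), so the reduced quotient $\bar\EE_k$ carries a connection and is therefore $\RR$‑projective; the resulting finite flat cover $\mathrm{Spec}\,\bar\EE_k\to\SS$ must be étale, since at a ramification point or a singular point a local vector field would lift with a pole there (a variant of the computation in Lemma~\ref{th:rational-section}), and its fibre over $s$ is a single point because $H^0(\mathit{hom}_{A^{\mathit{perf}}}(M,M))$ is local (its nilradical has codimension one). An étale cover of the connected curve $\SS$ whose fibre over $s$ is a single point has degree one, so $\bar\EE_k=\RR$. Finally $\nabla e_k=\nabla(e_k^2)=2\nabla e_k$ on cohomology forces $\nabla e_k=0$, and then $\nabla$ respects the splitting.

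\emph{The flat isomorphism, and the main difficulty.} Put $\HH=H^0(\mathit{hom}_{\AA^{\mathit{perf}}}(\MM_0,\MM_1))$, a finitely generated projective $\RR$‑module with connection and an $(\EE_1,\EE_0)$‑bimodule compatibly with $\nabla$. The $N_k$ being differential ideals, the finite filtration of $\HH$ by the submodules $\sum_{a+b=j}N_1^a\HH N_0^b$ is by submodules closed under $\nabla$, with top quotient $\bar\HH=\HH/(N_1\HH+\HH N_0)$ a line bundle with connection whose fibre at $s$ is $\cong R$. Restrict everything to a connected Zariski‑open $\SS^\circ\ni s,s'$ on which $\bar\HH$ and all associated graded pieces of this filtration become trivial bundles (possible, since deleting points only shrinks $\mathrm{Pic}$ and we may keep $s$ and $s'$). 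By \eqref{eq:modify-connection}, changing the relative connection on $\MM_k|_{\SS^\circ}$ by $\cc_k=\omega_k e_k+\nu_k$ (with $\omega_k$ a one‑form and $\nu_k\in\Omega^1\otimes N_k$) changes $\nabla_\HH(\bb)$ into $\nabla_\HH(\bb)+(\omega_1-\omega_0)\bb+\nu_1\bb-\bb\nu_0$. Using the freedom in $\omega_1-\omega_0$ one first makes $\nabla_{\bar\HH}$ the trivial connection, producing a nowhere‑vanishing flat section of $\bar\HH$; lifting it to $\HH$ one filtration step at a time, and at each step using the remaining $\nu_0,\nu_1$ (whose action surjects onto the relevant graded piece) to kill the de~Rham obstruction modulo the next level, one obtains after finitely many steps a covariantly constant $u\in\HH$ which is nowhere zero in $\bar\HH$; then $u_s$ generates $H^0(\mathit{hom}_{A^{\mathit{perf}}}(M,M))$ over itself, hence is an isomorphism. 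Applying the endgame of the first paragraph over $\SS^\circ$ concludes. The delicate part — where I expect the real work — is exactly this construction of the covariantly constant isomorphism: controlling the connection on the line bundle $\bar\HH$ (and the higher filtration quotients) using only the modifications of relative connections permitted by \eqref{eq:modify-connection}, after a base restriction that still contains $s$ and $s'$. The remainder is a fairly mechanical use of Lemmas~\ref{th:locally-free}, \ref{th:rational-section}, \ref{th:its-projective} together with the nilpotent/local structure of $H^0(\mathit{hom}_{A^{\mathit{perf}}}(M,M))$ supplied by Assumption~\ref{th:augmented}.
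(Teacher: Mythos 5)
Your route is genuinely different from the paper's, and the ``delicate part'' you flag — constructing a \emph{covariantly constant} morphism $u\colon\MM_0\to\MM_1$ by filtration-by-filtration de~Rham lifting after shrinking $\SS$ — is precisely what the paper arranges to avoid. The paper never looks for flat sections of $\HH_{-+}=H^0(\mathit{hom}(\MM_+,\MM_-))$. Instead it only uses the connection to establish local freeness: it defines $\HH_{-+}^{\mathit{nil}}$ as the image of multiplication by the nilradicals, shows $\HH_{-+}/\HH_{-+}^{\mathit{nil}}$ is a $\nabla$-invariant locally free quotient of rank $1$, and proves that the multiplication maps $(\HH_{+-}/\HH_{+-}^{\mathit{nil}})\otimes(\HH_{-+}/\HH_{-+}^{\mathit{nil}})\to\HH_\pm/\HH_\pm^{\mathit{nil}}$ are isomorphisms of line bundles (again because both sides carry compatible connections and the map is an iso at $s$). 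After tensoring $\MM_+$ by the line bundle $(\HH_{-+}/\HH_{-+}^{\mathit{nil}})^\vee$, both $\HH_{-+}/\HH_{-+}^{\mathit{nil}}$ and $\HH_{+-}/\HH_{+-}^{\mathit{nil}}$ become trivial, so one just picks arbitrary lifts of trivializing sections over the affine curve (no flatness of the lift is needed); their products in $\HH_\pm$ project to the units of the trivial line bundles $\HH_\pm/\HH_\pm^{\mathit{nil}}$, hence are invertible because the nilradicals are nilpotent ideals. This gives a quasi-isomorphism $\MM_-\htp\FF\otimes\MM_+$ outright, and specialization at $s'$ finishes. By contrast, your plan still has to control de~Rham cohomology obstructions at every filtration level and to track how each modification of the relative connections propagates through the whole filtration; nothing appears to be outright false, but you would need to actually carry out that lifting argument, whereas the paper's reformulation makes the issue disappear. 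As a secondary remark, your étale-cover argument that $\bar\EE_k=\RR$ is more elaborate than necessary — the paper simply observes that the identity endomorphism is a nowhere-vanishing section of the rank-$1$ locally free quotient, which trivializes it directly.
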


The proof is based on a number of elementary observations. For the sake of brevity, let's write $\HH = H^0(\mathit{hom}_{\AA^{\mathit{perf}}}(\MM,\MM))$.  Choose a relative connection $\nabla_{\MM}$ on our family, and write $\nabla_{\HH}$ for the induced connection on $\HH$.

\begin{claim*}
$\HH$ is a commutative $\RR$-algebra.
\end{claim*}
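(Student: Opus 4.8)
The plan is to show that the commutative $R$-algebra structure on $H^0(\mathit{hom}_{A^{\mathit{perf}}}(M,M))$ propagates to the whole family $\HH$. The structure and commutativity of the product on $\HH$ are not the issue: $\HH = H^0(\mathit{hom}_{\AA^{\mathit{perf}}}(\MM,\MM))$ is automatically an associative unital $\RR$-algebra, being the degree-zero cohomology of an $A_\infty$-algebra over $\RR$; so the only thing to establish is commutativity, i.e. that the commutator $[\aa,\bb] = \aa\bb - \bb\aa$ vanishes for all $\aa,\bb \in \HH$. First I would invoke Lemma \ref{th:its-projective}: since $\MM$ is perfect (it must be, for $\HH$ to be a reasonable object — indeed the statement implicitly assumes $\MM$ perfect) and follows $[\gamma]$, the module $\HH$ is finitely generated and projective over $\RR$. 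Hence it embeds into $\HH \otimes_\RR R(\SS)$, and it suffices to show commutativity after base change to the function field $R(\SS)$, or even to argue fibrewise.

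The key mechanism is the multiplicativity of the relative connection established in \eqref{eq:multiplicative-connections}: the induced connection $\nabla_\HH$ on $\HH$ acts as a derivation with respect to the product, i.e. $\nabla_\HH(\aa\bb) = \nabla_\HH(\aa)\,\bb + \aa\,\nabla_\HH(\bb)$ (with appropriate signs, all trivial in degree zero). Consequently the commutator map $\aa,\bb \mapsto [\aa,\bb]$ is a covariantly constant section of $\mathit{Hom}_\RR(\HH \otimes_\RR \HH, \HH)$ for the connection induced by $\nabla_\HH$ on all three factors: differentiating $[\aa,\bb]$ and using the derivation property on both $\aa\bb$ and $\bb\aa$ gives $\nabla([\aa,\bb]) = [\nabla\aa,\bb] + [\aa,\nabla\bb]$, which says exactly that the bilinear commutator pairing is flat. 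Now I would specialise: at the base point $s \in \SS$ the fibre of $\HH$ is (quasi-isomorphic to) $H^0(\mathit{hom}_{A^{\mathit{perf}}}(M,M))$, which is commutative by Assumption \ref{th:augmented}, so the commutator pairing vanishes at $s$. A flat section of a vector bundle (here the bundle $\mathit{Hom}_\RR(\HH^{\otimes 2},\HH)$, which carries a connection because it is built from projective modules with connections) which vanishes at one point vanishes identically: this is precisely the content of Lemma \ref{th:rational-section} applied to the rational section that is the commutator pairing, or more directly it follows since a covariantly constant section is determined by its value at any point on a connected curve. Hence $[\aa,\bb] = 0$ for all $\aa,\bb$, proving $\HH$ is commutative.

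I should be careful about one point: for this argument I need the commutator pairing to be genuinely covariantly constant, not merely so up to coboundary. The connections \eqref{eq:hom-connection-2} are honest connections on the chain complexes, but they satisfy the Leibniz rule with respect to the product only up to coboundary (see \eqref{eq:multiplicative-connections}); however, passing to cohomology kills the coboundary, so on $\HH$ itself the Leibniz rule holds strictly. Similarly, $\nabla_\HH$ depends on the choice of relative connection $\nabla_\MM$, but changing that choice modifies $\nabla_\HH$ only by an $\RR$-linear endomorphism (by \eqref{eq:modify-connection} at the cohomology level), which does not affect whether a given section is flat for the purposes of the vanishing argument — or, more robustly, one simply fixes one choice of $\nabla_\MM$ once and for all, as the proposition's preamble does.

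The main obstacle, such as it is, is bookkeeping rather than conceptual: one must verify that the connection induced on $\mathit{Hom}_\RR(\HH \otimes_\RR \HH, \HH)$ from $\nabla_\HH$ is well-defined (which needs $\HH$ projective — supplied by Lemma \ref{th:its-projective}) and that the commutator is flat for it (which is the degree-zero shadow of \eqref{eq:multiplicative-connections}). Once those are in place, the vanishing of a flat section that vanishes at $s$ is immediate from connectedness of the curve $\SS$, or from Lemma \ref{th:rational-section} if one prefers to phrase it via the generic fibre. I do not anticipate any genuine difficulty here; the claim is really a warm-up, isolating the algebraic consequence of the multiplicativity of relative connections that will be used in the proof of Proposition \ref{th:uniqueness-1}.
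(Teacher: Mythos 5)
Your argument is correct and follows essentially the same route as the paper: use the derivation property \eqref{eq:multiplicative-connections} to make the commutator compatible with the induced connection $\nabla_\HH$, then conclude from the vanishing of the commutator at the fibre over $s$. The paper phrases the endgame slightly differently, observing that the image of the commutator map $\HH \otimes_\RR \HH \to \HH$ is a $\nabla_\HH$-invariant finitely generated submodule, hence locally free by Lemma \ref{th:locally-free}, and then zero because its fibre at $s$ vanishes; you instead treat the commutator pairing as a covariantly constant section of $\mathit{Hom}_\RR(\HH\otimes_\RR\HH,\HH)$ and invoke rigidity of flat sections, which is equally valid. One small inaccuracy in your write-up: Lemma \ref{th:rational-section} as stated asserts that a \emph{rational} flat section is automatically regular (no poles), which is not literally the fact you need — what you need is that a \emph{regular} flat section vanishing at one point of a connected curve vanishes identically. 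Your backup justification is the right one, and it is indeed a consequence of the same order-of-vanishing argument used in the proof of Lemma \ref{th:rational-section} (take $m>0$ there and derive the same contradiction), or alternatively of Krull's intersection theorem together with torsion-freeness of the projective module $\mathit{Hom}_\RR(\HH\otimes_\RR\HH,\HH)$.
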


\begin{proof}
By \eqref{eq:multiplicative-connections}, the image of the commutator map
\begin{equation}
\HH \otimes_\RR \HH \longrightarrow \HH, \;\; x \otimes y \longmapsto xy-yx
\end{equation}
is a subsheaf of $\HH$ invariant under $\nabla_\HH$, which therefore must be locally free. By looking at the point $s$, one sees that this sheaf must be zero.
\end{proof}

Note that, because of the commutativity and \eqref{eq:modify-connection}, $\nabla_{\HH}$ is actually independent of the choice of relative connection on $\MM$.

\begin{claim*}
Consider the ideal $\HH^{\mathit{nil}} \subset \HH$ of nilpotent elements. Then $\HH^{\mathit{nil}}$ is preserved by $\nabla_\HH$, and $\HH/\mathit{\HH}^{\mathit{nil}}$ is the trivial line bundle.
\end{claim*}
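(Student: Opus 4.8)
The plan is to play the same game as in the previous Claim, one level down: exhibit a $\nabla_\HH$-invariant subsheaf of $\HH$, deduce it is a subbundle, and pin down its rank by restricting to $s$. Two preliminary remarks. First, since $\mathit{hom}_{\AA^{\mathit{mod}}}(\MM,\MM)$ carries the connection \eqref{eq:hom-connection-2} in every degree, all of its cohomology modules are finitely generated projective over $\RR$ (Lemma \ref{th:locally-free} together with properness), so cohomology commutes with the specialisation $\RR \to R$ at $s$; hence the fibre $\HH_s$ equals $H^0(\mathit{hom}_{A^{\mathit{perf}}}(M,M))$, which by Assumption \ref{th:augmented} is a local ring with residue field $R$. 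Second, $\HH$ is $\RR$-torsion free (being projective), so $\HH^{\mathit{nil}} = \HH \cap \mathrm{nil}(\HH_\eta)$ where $\HH_\eta := \HH \otimes_\RR R(\SS)$, and $\HH_\eta^{\mathrm{red}} := \HH_\eta/\mathrm{nil}(\HH_\eta) = \prod_j L_j$ with each $L_j/R(\SS)$ a finite, hence (as $\mathrm{char}\,R = 0$) separable, field extension.

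\emph{$\HH^{\mathit{nil}}$ is $\nabla_\HH$-invariant.} By \eqref{eq:multiplicative-connections}, $\nabla_\HH$ is a derivation for the commutative product on $\HH$, and correspondingly on $\HH_\eta$; using $\mathrm{char}\,R = 0$ it preserves $\mathrm{nil}(\HH_\eta)$ (a classical fact about derivations of Noetherian rings), so it descends to an $R$-derivation of $\HH_\eta^{\mathrm{red}}$. But on a finite separable extension of $R(\SS)$ the only $R$-derivation extending the canonical derivation of $R(\SS)$ is the canonical derivation $d$ itself; hence the composite $\HH \xrightarrow{\nabla_\HH} \Omega^1_\RR \otimes_\RR \HH \to \Omega^1_{R(\SS)} \otimes_{R(\SS)} \HH_\eta^{\mathrm{red}}$ is $d$ precomposed with the projection $\HH \twoheadrightarrow \HH_\eta^{\mathrm{red}}$, whose kernel is $\HH^{\mathit{nil}}$. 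Therefore $\nabla_\HH(\HH^{\mathit{nil}}) \subseteq \Omega^1_\RR \otimes \HH^{\mathit{nil}}$, so $\HH^{\mathit{nil}}$ inherits a connection, is projective by Lemma \ref{th:locally-free}, and so is $\bar\HH := \HH/\HH^{\mathit{nil}}$ --- a reduced commutative $\RR$-algebra, projective as a module, carrying a connection, with generic fibre $\prod_j L_j$. To finish it suffices to show $\mathrm{rank}_\RR \bar\HH = 1$: then the unit $1 \in \bar\HH$, being a unit of the ring $\bar\HH$, is a nowhere-vanishing section and trivialises this line bundle.

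\emph{$\mathrm{rank}_\RR \bar\HH = 1$.} Each primitive idempotent $e_j$ of $\prod_i L_i$ satisfies $(2e_j - 1)\nabla(e_j) = 0$ with $2e_j-1$ a unit, hence is covariantly constant, so $e_j \in \bar\HH$ by Lemma \ref{th:rational-section}. As $\HH^{\mathit{nil}}$ and $\HH$ are flat, $\bar\HH_s = \HH_s/(\HH^{\mathit{nil}})_s$ is a quotient of the local ring $\HH_s$, hence local. If there were two or more factors, $e_1 \in \bar\HH$ would be a nontrivial idempotent; it reduces to $0$ or $1$ in $\bar\HH_s$, so (replacing it by $1-e_1$ if necessary) $e_1 = e_1^n \in \mathfrak{m}_s^n\bar\HH$ for all $n$, and Krull's intersection theorem forces $e_1 = 0$, a contradiction. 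Thus $\bar\HH \hookrightarrow L := L_1$ is a domain, finite over $\RR$ of rank $d := [L:R(\SS)]$, and $\bar\HH_s$ is local Artinian of $R$-dimension $d$ and residue field $R$. Now $\nabla_\HH$ is generically the canonical derivation $d_L$ of $L$ (same uniqueness argument), so localising and completing at the unique prime $\mathfrak m$ over $s$ yields an $R$-derivation $D$ of the reduced complete local ring $\hat B := \bar\HH \otimes_\RR R[[t]]$ (where $R[[t]] = \widehat{\OO}_{\SS,s}$, $t$ a local parameter) with $D(t) = 1$ and $D(\hat B) \subseteq \hat B$. Since derivations of a Noetherian domain extend to its integral closure (char $0$), $D$ extends to $\hat B^\nu = \prod_i R[[\pi_i]]$, a finite product of complete DVRs; writing $t = \pi_i^{e_i}\cdot(\mathrm{unit})$, the relation $D(t) = 1$ forces $\mathrm{ord}_{\pi_i}(D\pi_i) = 1 - e_i \geq 0$, so every $e_i = 1$, i.e.\ $\hat B^\nu \cong R[[t]]^d$ with $D$ acting coordinatewise as $d/dt$ (using $D\epsilon_i = 0$ for the idempotents $\epsilon_i$). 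The conductor $\mathfrak c = (\hat B :_{\hat B^\nu}\hat B^\nu)$ is $D$-stable --- for $x \in \mathfrak c$, $y \in \hat B^\nu$ one has $(Dx)y = D(xy) - x\,Dy$ with $xy \in \hat B$ and $x\,Dy \in \mathfrak c\hat B^\nu \subseteq \hat B$ --- hence is an ideal $\prod_i t^{c_i}R[[t]]$ of $R[[t]]^d$, and $D(t^{c_i}\epsilon_i) = c_i t^{c_i-1}\epsilon_i \in \mathfrak c$ forces all $c_i = 0$ (char $0$). So $\hat B = \hat B^\nu = R[[t]]^d$, whence $\bar\HH_s = \hat B/(t) = R^d$; being local, $d = 1$.

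\emph{Expected main difficulty.} The last paragraph, specifically the step $d = 1$. It is intuitively clear that a non-reduced local fibre $\bar\HH_s$ (which appears as soon as $d \geq 2$) cannot coexist with $\bar\HH$ being reduced and carrying a connection --- morally, a connection rules out ramification or non-normality over $s$ --- but making this rigorous requires the descent to the completed local ring, the theorem that derivations lift to the normalisation, and the conductor computation, all of which use $\mathrm{char}\,R = 0$ essentially, through valuation counts of the shape $\mathrm{ord}(D\pi^e) = e-1$.
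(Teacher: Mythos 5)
Your proof is correct, and it takes a substantially different route from the paper in both halves. For $\nabla_\HH$-invariance, the paper uses the elementary identity $\nabla_{\HH,\xi}^m(h^m) \in m!\,(\nabla_{\HH,\xi}h)^m + h\HH$: taking $h^m=0$ shows $(\nabla_{\HH,\xi}h)^m \in h\HH$ is nilpotent, hence so is $\nabla_{\HH,\xi}h$; you instead pass to the generic fibre and invoke uniqueness of derivations on separable extensions. Both work, the paper's being more self-contained. The bigger divergence is in the rank-one statement. The paper only says ``by the same reasoning as before, $\HH/\HH^{\mathit{nil}}$ must be a line bundle,'' which establishes that $\bar\HH := \HH/\HH^{\mathit{nil}}$ is a vector bundle, but leaves tacit exactly the point you flag as the main difficulty: its rank equals $\dim_R \HH_s/(\HH^{\mathit{nil}})_s$, which is one only when $(\HH^{\mathit{nil}})_s$ exhausts the nilradical of $\HH_s$, an absence-of-ramification statement that is not automatic for a finite flat reduced algebra. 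Your argument --- covariantly constant idempotents descending via Lemma \ref{th:rational-section} and then vanishing by locality and Krull, reduction to a domain, completion at $s$, extension of the derivation to the normalisation, and the conductor computation --- proves this rigorously, at the cost of fairly heavy commutative-algebra machinery. A lighter alternative, more in the spirit of the argument for the preceding Claim, is to note that $\mathrm{tr}_{\bar\HH/\RR}$ commutes with $\nabla$ (because $\nabla_\HH$ acts by ring derivations), so the trace pairing $\bar\HH \to \bar\HH^\vee$ is covariantly constant; being generically an isomorphism by separability in characteristic zero, its kernel and cokernel carry connections, are projective and torsion, hence vanish; thus $\bar\HH$ is finite \'etale over $\RR$, so $\bar\HH_s$ is reduced, and locality forces $\bar\HH_s = R$. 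Either route legitimately supplies what the paper's ``same reasoning'' leaves implicit.
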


\begin{proof}
Choose a tangent vector field $\xi$ on $\SS$. For any element $h \in \HH$ and any $m>0$, we have
\begin{equation}
\nabla_{\HH,\xi}^m (h^m) \in m! (\nabla_{\HH,\xi} h)^m + h\HH.
\end{equation}
Choosing $h \in \HH^{\mathit{nil}}$ and $m$ large, one sees that $\HH^{\mathit{nil}}$ is closed under $\nabla_{\HH}$. By the same reasoning as before, $\HH/\HH^{\mathit{nil}}$ must be a line bundle. But the identity endomorphism yields a nowhere vanishing section, which provides a trivialization.
\end{proof}

Now suppose that we have two perfect families $\MM_+$ and $\MM_-$ both following $[\gamma]$ and whose fibres at $s$ are quasi-isomorphic to the same object $M$, which still satisfies Assumption \ref{th:augmented}. Working on the cohomology level as before, we denote by $\HH_{\pm}$ the endomorphism rings of these objects, by $\HH_{\pm}^{\mathit{nil}}$ the ideals of nilpotent endomorphisms, and by $\HH_{-+}$, respectively $\HH_{+-}$, the space of morphisms from $\MM_+$ to $\MM_-$, and vice versa. We choose relative connections on $\MM_{\pm}$, equipping all these morphism spaces with the induced connections.

\begin{claim*}
The multiplication maps
\begin{align}
\label{eq:compose-1} & \HH_{-+} \otimes \HH_{+}^{\mathit{nil}} \longrightarrow \HH_{-+}, \\
\label{eq:compose-2} & \HH_{-}^{\mathit{nil}} \otimes \HH_{-+} \longrightarrow \HH_{-+}
\end{align}
both have the same image, which we denote by $\HH_{-+}^{\mathit{nil}}$. This is preserved by the connection, and $\HH_{-+}/\HH_{-+}^{\mathit{nil}}$ is a line bundle.
\end{claim*}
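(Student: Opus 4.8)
The plan is to realise both images as $\nabla$-invariant subsheaves of $\HH_{-+}$, compute their fibres at $s$, and then apply the same rigidity argument already used in the two preceding claims. Write $\GG_1 \subseteq \HH_{-+}$ for the image of \eqref{eq:compose-1} and $\GG_2 \subseteq \HH_{-+}$ for that of \eqref{eq:compose-2}. The cohomological form of \eqref{eq:multiplicative-connections} says that $\nabla$ acts as a derivation for composition, $\nabla(xy) = (\nabla x)y + x(\nabla y)$; combined with the fact, established in the previous claim, that $\HH_\pm^{\mathit{nil}}$ is preserved by $\nabla$, this shows that $\GG_1$ and $\GG_2$ are preserved by $\nabla_{\HH_{-+}}$ — e.g.\ for $x \in \HH_{-+}$ and $y \in \HH_+^{\mathit{nil}}$ both summands of $\nabla(xy)$ lie in $\Omega^1_\RR \otimes \GG_1$. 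Since $\HH_{-+}$ is finitely generated (Lemma \ref{th:its-projective}) and $\HH_\pm^{\mathit{nil}}$ are finitely generated (being subbundles of the finitely generated $\HH_\pm$), $\GG_1$ and $\GG_2$ are finitely generated, hence locally free by Lemma \ref{th:locally-free}; applying the same reasoning to the quotients $\HH_{-+}/\GG_i$, which inherit connections, shows the $\GG_i$ are in fact subbundles.

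The heart of the argument is the computation at the fibre over $s$. Choose quasi-isomorphisms $\MM_\pm|_s \simeq M$ and set $E = H^0(\mathit{hom}_{A^{\mathit{perf}}}(M,M))$. Because every cohomology module of $\mathit{hom}_{\AA^{\mathit{perf}}}(\MM_\pm,\MM_\mp)$ is projective (Lemma \ref{th:its-projective}), taking cohomology commutes with restriction to $s$, so these quasi-isomorphisms identify $\HH_+|_s$, $\HH_-|_s$ and $\HH_{-+}|_s$ with $E$, compatibly with all compositions. Under this identification $\HH_\pm^{\mathit{nil}}|_s$ is a codimension-one subspace (since $\HH_\pm/\HH_\pm^{\mathit{nil}}$ is a line bundle by the previous claim) consisting of nilpotent elements, hence equals the nilradical $E^{\mathit{nil}}$, which has codimension one by Assumption \ref{th:augmented}. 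As each $\GG_i$ is a subbundle, $\GG_i|_s$ is precisely the image of the corresponding multiplication map over the fibre; for both values of $i$ this image is the ideal $E^{\mathit{nil}}$, using $E \cdot E^{\mathit{nil}} = E^{\mathit{nil}} = E^{\mathit{nil}} \cdot E$. Hence $\GG_1|_s = \GG_2|_s = E^{\mathit{nil}}$.

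To conclude, note that the composite $\GG_2 \hookrightarrow \HH_{-+} \twoheadrightarrow \HH_{-+}/\GG_1$ is compatible with connections, so its image is a $\nabla$-invariant subsheaf of the bundle $\HH_{-+}/\GG_1$, hence locally free of constant rank by Lemma \ref{th:locally-free}; by the preceding paragraph its fibre at $s$ vanishes, so it is zero, i.e.\ $\GG_2 \subseteq \GG_1$. By symmetry $\GG_1 \subseteq \GG_2$, so $\GG_1 = \GG_2$; call the common value $\HH_{-+}^{\mathit{nil}}$. It is a $\nabla$-invariant subbundle of $\HH_{-+}$, and the quotient $\HH_{-+}/\HH_{-+}^{\mathit{nil}}$ — finitely generated with an induced connection, hence locally free of constant rank — has one-dimensional fibre $E/E^{\mathit{nil}}$ at $s$ by Assumption \ref{th:augmented}, so it is a line bundle.

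I expect the main obstacle to be the fibre identification in the middle paragraph: one needs that taking cohomology of the $\mathit{hom}$-complexes commutes with restriction to $s$ (which rests on the projectivity of the $\mathit{hom}$-cohomology in every degree), and that the subsheaves $\GG_i$, being $\nabla$-invariant and finitely generated, are genuine subbundles (so that forming images commutes with restriction to the fibre). Once these points are in place, the remainder is the rigidity argument already used for the previous two claims — $\nabla$-invariant, therefore locally free, therefore of constant rank, therefore determined by its fibre at $s$.
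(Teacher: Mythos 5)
Your proof is correct and follows essentially the same approach as the paper: define the image, note it is $\nabla$-invariant (via the derivation property \eqref{eq:multiplicative-connections}), conclude the quotient is locally free, identify the fibre at $s$ with $E/E^{\mathit{nil}}$, and show the composite with the other multiplication map vanishes because it vanishes at $s$. You supply more intermediate detail than the paper (the verification that the $\GG_i$ are subbundles, and the explicit fibre identification), but the logic and the key ingredients are the same.
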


\begin{proof}
Let's first define $\HH_{-+}^{\mathit{nil}}$ to be the image of the first map \eqref{eq:compose-1}. By compatibility with connections, $\HH_{-+}/\HH^{\mathit{nil}}_{-+}$ must be locally free, hence in view of the behaviour at the point $s$ a line bundle. Now take \eqref{eq:compose-2} and compose it with projection to $\HH_{-+}/\HH_{-+}^{\mathit{nil}}$. Again by the same argument, the composition vanishes identically, hence the image of \eqref{eq:compose-2} is contained in that of \eqref{eq:compose-1}. Running the argument the other way yields the required equality.
\end{proof}

Let's define $\HH_{+-}^{\mathit{nil}} \subset \HH_{+-}$ in the same way. It follows directly from the definition that the composition $\HH_{+-} \otimes \HH_{-+} \rightarrow \HH_+$ takes $\HH_{+-} \otimes \HH_{-+}^{\mathit{nil}}$ and $\HH_{+-}^{\mathit{nil}} \otimes \HH_{-+}$ to $\HH_+^{\mathit{nil}}$, and the same is true in the other order.

\begin{claim*}
Multiplication induces isomorphisms
\begin{equation} \label{eq:multiplication-mod-nilpotents}
\begin{aligned}
& (\HH_{+-}/\HH_{+-}^{\mathit{nil}}) \otimes (\HH_{-+}/\HH_{-+}^{\mathit{nil}}) \longrightarrow \HH_{+}/\HH_{+}^{\mathit{nil}}, \\
& (\HH_{-+}/\HH_{-+}^{\mathit{nil}}) \otimes (\HH_{+-}/\HH_{+-}^{\mathit{nil}}) \longrightarrow \HH_{-}/\HH_{-}^{\mathit{nil}}.
\end{aligned}
\end{equation}
\end{claim*}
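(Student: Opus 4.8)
The plan is to notice that both sides of \eqref{eq:multiplication-mod-nilpotents} are line bundles equipped with connections, that the multiplication maps are horizontal for those connections, and then to rerun the ``invariant subsheaf is locally free, then check at $s$'' argument that was used in the previous claims. Only the first map needs to be treated; the second is obtained by exchanging the roles of $+$ and $-$.

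First I would record the relevant structures. The connections $\nabla_{\HH_\pm}$ and $\nabla_{\HH_{\pm\mp}}$ descend to the quotients $\HH_\pm/\HH_\pm^{\mathit{nil}}$ and $\HH_{\pm\mp}/\HH_{\pm\mp}^{\mathit{nil}}$, because the nilpotent subsheaves were just shown to be invariant, and all four quotients are line bundles. By \eqref{eq:multiplicative-connections}, composition on the cohomology level is a derivation for these connections, so the multiplication map $\HH_{+-} \otimes_\RR \HH_{-+} \rightarrow \HH_+$ is horizontal for the tensor product connection, and hence so is the induced map $\phi\colon (\HH_{+-}/\HH_{+-}^{\mathit{nil}}) \otimes_\RR (\HH_{-+}/\HH_{-+}^{\mathit{nil}}) \rightarrow \HH_+/\HH_+^{\mathit{nil}}$. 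Consequently $\operatorname{im}(\phi)$ and $\operatorname{coker}(\phi)$ are, respectively, a subsheaf and a quotient sheaf of a line bundle, are invariant under (resp.\ inherit) the connection, are finitely generated, and hence admit connections; by Lemma \ref{th:locally-free} both are locally free.

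Next I would identify everything after restriction $(-)|_s = R \otimes_\RR (-)$ to the base point $s$. Since $\MM_{\varepsilon, s}$ is quasi-isomorphic to $M$ for $\varepsilon = \pm$, the chain-level restriction identifies $\mathit{hom}_{A^{\mathit{perf}}}(M,M)$ with $R \otimes_\RR \mathit{hom}_{\AA^{\mathit{perf}}}(\MM_\varepsilon, \MM_{\varepsilon'})$; the latter is a complex of projective $\RR$-modules whose cohomology is projective in every degree (Lemma \ref{th:its-projective}), so over the one-dimensional ring $\RR$ it is homotopy equivalent to the direct sum of its cohomology modules, and restriction to $s$ therefore commutes with taking cohomology. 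This gives $\HH_\varepsilon|_s \iso \HH_{\varepsilon\varepsilon'}|_s \iso H^0(\mathit{hom}_{A^{\mathit{perf}}}(M,M))$, compatibly with all the products. Because $\HH_\pm/\HH_\pm^{\mathit{nil}}$ and $\HH_{\pm\mp}/\HH_{\pm\mp}^{\mathit{nil}}$ are line bundles, their nilpotent subsheaves restrict at $s$ to codimension one subspaces of $H^0(\mathit{hom}_{A^{\mathit{perf}}}(M,M))$ that consist of nilpotent elements (for $\HH_\pm^{\mathit{nil}}$ directly, for $\HH_{\pm\mp}^{\mathit{nil}}$ because of its description as the image of a product with $\HH_\pm^{\mathit{nil}}$); by Assumption \ref{th:augmented} the ideal of nilpotents itself already has codimension one, so these restrictions are precisely that ideal, and $(\HH_\bullet/\HH_\bullet^{\mathit{nil}})|_s \iso R$ in each case. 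Under these identifications $\phi|_s$ is the multiplication of the reduced quotient ring $H^0(\mathit{hom}_{A^{\mathit{perf}}}(M,M))/(\text{nilpotents}) \iso R$ with itself, which is an isomorphism. In particular $\operatorname{coker}(\phi)|_s = 0$; a locally free sheaf with vanishing fibre at a point is zero, so $\phi$ is surjective, and a surjection of line bundles on an integral scheme is an isomorphism. This proves the first isomorphism in \eqref{eq:multiplication-mod-nilpotents}, and the second follows in the same way.

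The step I expect to be the main obstacle is the base-change statement: passing from $\MM_{\varepsilon,s} \simeq M$ to the identification $\HH_{\varepsilon\varepsilon'}|_s \iso H^0(\mathit{hom}_{A^{\mathit{perf}}}(M,M))$ together with $\HH_{\varepsilon\varepsilon'}^{\mathit{nil}}|_s$ being the full nilpotent ideal. This is where projectivity of the cohomology sheaves (Lemma \ref{th:its-projective}), the one-dimensionality of $\RR$, and the codimension clause of Assumption \ref{th:augmented} are all used together; the remaining steps are a routine repetition of the line-bundle arguments already employed.
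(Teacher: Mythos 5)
Your proposal is correct and takes essentially the same approach as the paper: both arguments observe that the map in \eqref{eq:multiplication-mod-nilpotents} is a morphism of line bundles compatible with the connections, restrict to the fibre at $s$ to see an isomorphism, and conclude by local freeness. Your write-up merely spells out the base-change step (that $\HH_{\varepsilon\varepsilon'}|_s$ recovers $H^0(\mathit{hom}_{A^{\mathit{perf}}}(M,M))$ and that the fibre of the nilpotent subsheaf is the full nilpotent ideal), which the paper leaves implicit from the earlier claims.
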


\begin{proof}
We already established the well-definedness of these maps. Both sides are line bundles and carry connections, which are compatible with the maps, and at the fibre at $s$ we get isomorphisms.
\end{proof}

\begin{claim*}
There is a line bundle $\FF$ such that $\MM_-$ is quasi-isomorphic to $\FF \otimes \MM_+$.
\end{claim*}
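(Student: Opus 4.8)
The plan is to take $\FF := \HH_{-+}/\HH_{-+}^{\mathit{nil}}$, which is a line bundle by the preceding Claim, and to exhibit directly an isomorphism $\MM_- \iso \FF \otimes \MM_+$ in $H^0(\AA^{\mathit{perf}})$. First I would set up the bookkeeping. By Lemma \ref{th:its-projective} the $\RR$-modules $\HH_{-+}$, $\HH_{+-}$, and any tensor product of them with a line bundle, are finitely generated projective; and since $\FF$ is a line bundle there are natural identifications $H^0(\mathit{hom}_{\AA^{\mathit{perf}}}(\FF\otimes\MM_+,\MM_-)) \iso \FF^\vee\otimes\HH_{-+}$, $H^0(\mathit{hom}_{\AA^{\mathit{perf}}}(\MM_-,\FF\otimes\MM_+)) \iso \FF\otimes\HH_{+-}$, and $H^0(\mathit{hom}_{\AA^{\mathit{perf}}}(\FF\otimes\MM_+,\FF\otimes\MM_+)) \iso \HH_+$, all compatible with composition and with reduction modulo the nilpotent submodules (using the compatibility of composition with the nilpotent submodules stated just before the preceding Claim). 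Finally I record the elementary fact that, since $\HH_+$ and $\HH_-$ are module-finite, hence Noetherian, commutative $\RR$-algebras, their nil ideals $\HH_\pm^{\mathit{nil}}$ are nilpotent; in particular an element of $\HH_\pm$ whose image in $\HH_\pm/\HH_\pm^{\mathit{nil}} \iso \RR$ is a nowhere vanishing function is itself invertible.

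Next I would construct the two morphisms. By the preceding Claim, $\FF^\vee \iso \HH_{+-}/\HH_{+-}^{\mathit{nil}}$, so both projective modules $\FF^\vee\otimes\HH_{-+}$ and $\FF\otimes\HH_{+-}$ reduce modulo nilpotents to the trivial line bundle $\RR$. Each such reduction is a surjection onto a line bundle, hence splits, so one can lift nowhere vanishing sections to morphisms $\psi \in H^0(\mathit{hom}_{\AA^{\mathit{perf}}}(\FF\otimes\MM_+,\MM_-))$ and $\phi \in H^0(\mathit{hom}_{\AA^{\mathit{perf}}}(\MM_-,\FF\otimes\MM_+))$. Now consider the composites $\psi\circ\phi \in \HH_-$ and $\phi\circ\psi \in \HH_+$. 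On reductions modulo nilpotents, after cancelling the $\FF^\vee\otimes\FF$ factor, the induced pairings are exactly the multiplication isomorphisms of the preceding Claim, which are isomorphisms of line bundles; since $\phi$ and $\psi$ reduce to nowhere vanishing sections, so do $\psi\circ\phi$ and $\phi\circ\psi$. By the remark above, $\psi\circ\phi$ and $\phi\circ\psi$ are therefore invertible, so $\phi$ has the left inverse $(\psi\circ\phi)^{-1}\circ\psi$ and the right inverse $\psi\circ(\phi\circ\psi)^{-1}$, which must coincide; hence $\phi$ is an isomorphism in $H^0(\AA^{\mathit{perf}})$, giving $\MM_- \iso \FF\otimes\MM_+$.

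The only genuinely substantive step is the identification of the reduced composition pairings with the multiplication isomorphisms of the preceding Claim: one has to unwind the hom–tensor adjunctions and check that they intertwine composition of morphisms with those pairings, so that nowhere vanishing sections are carried to nowhere vanishing sections (this is where the hypotheses packaged in the earlier Claims, and through them the connections, get used). Everything else — the splitting and lifting of sections along surjections onto line bundles, and the passage from ``unit modulo the nil ideal'' to ``unit'' — is routine.
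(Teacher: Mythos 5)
Your proposal is correct and follows essentially the same route as the paper: set $\FF = \HH_{-+}/\HH_{-+}^{\mathit{nil}}$, lift nowhere-vanishing sections from the quotients by nilpotents (which is where affineness/projectivity enters), and use the multiplication isomorphisms together with the ``unit modulo nilpotents implies unit'' fact to see that the two compositions are invertible. The only cosmetic difference is that the paper first tensors $\MM_+$ by a line bundle so that $\HH_{-+}/\HH_{-+}^{\mathit{nil}}$ is trivial and then works with trivializations, whereas you keep $\FF$ around and bookkeep through the hom--tensor identifications; these are equivalent.
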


\begin{proof}
Since we are free to tensor $\MM_+$ with a line bundle, we may assume without loss of generality that $(\HH_{-+}/\HH_{-+}^{\mathit{nil}})$ is the trivial line bundle. By \eqref{eq:multiplication-mod-nilpotents}, the same must then be true for $(\HH_{+-}/\HH_{+-}^{\mathit{nil}})$. Choose trivializations and lift them to sections of $\HH_{-+}$ and $\HH_{+-}$, respectively (recall that we are working over an affine curve, so there is no problem in doing this). The product of these in either order yields invertible elements of $\HH_+$ and $\HH_-$.
\end{proof}

The last-mentioned claim clearly establishes Proposition \ref{th:uniqueness-1}.

\begin{remark}
Here is a slightly weaker uniqueness statement, which does not require Assumption \ref{th:augmented}. Suppose that $\MM_+$ and $\MM_-$ are perfect families following $[\gamma]$, and whose fibres at $s$ are quasi-isomorphic to $M$. The composition maps $\HH_{+-} \otimes \HH_{-+} \rightarrow \HH_+$ and $\HH_{-+} \otimes \HH_{+-} \rightarrow \HH_-$ are onto. Specializing to any other fibre $s'$, one finds that $\MM_{+,s'}$ is quasi-isomorphic to a direct summand of a finite direct sum of copies of $\MM_{-,s'}$, and vice versa.
\end{remark}

We want to take a similar approach to morphisms. Let $M_k$ ($k = 0,1$) be objects of $A^{\mathit{perf}}$, and $B \subset \mathit{Hom}_{H^0(A^{\mathit{perf}})}(M_0,M_1)$ a one-dimensional subspace. Suppose that $\MM_k$ ($k = 0,1$) are perfect families which follow $[\gamma]$, and with $\MM_{k,s} \iso M_k$. Choose relative connections on them. Suppose also that there is a line bundle $\BB \subset \mathit{Hom}_{H^0(\AA^{\mathit{perf}})}(\MM_0, \MM_1)$ whose fibre at $s$ equals $B$, and which is invariant under the induced connection (if such a $\BB$ exists, it is unique). We can then restrict the given data to the fibre $s'$, yielding $B' \subset \mathit{Hom}_{H^0(A^{\mathit{perf}})}(M_0',M_1')$. We say that $B'$ is obtained from $B$ {\em by parallel transport}. Obviously, this can't be unique unless Proposition \ref{th:uniqueness-1} applies to both objects $M_k$, but in fact we will need more than that:

\begin{assumption} \label{th:augmented-plus}
Both $M_k$ satisfy Assumption \ref{th:augmented}, and the two multiplication maps
\begin{equation} \label{eq:swap-sides}
\begin{aligned}
& B \otimes \mathit{Hom}_{H^0(A^{\mathit{perf}})}(M_0,M_0) \longrightarrow \mathit{Hom}_{H^0(A^{\mathit{perf}})}(M_0,M_1), \\
& \mathit{Hom}_{H^0(A^{\mathit{perf}})}(M_1,M_1) \otimes B \longrightarrow \mathit{Hom}_{H^0(A^{\mathit{perf}})}(M_0,M_1)
\end{aligned}
\end{equation}
have the same image.
\end{assumption}

\begin{proposition} \label{th:uniqueness-2}
Suppose that $(M_0,M_1,B)$ satisfy Assumption \ref{th:augmented-plus}, and that parallel transport to $s'$ yields $(M_0',M_1',B')$. Then, this is unique up to quasi-isomorphism (independent of the choice of families, and of the relative connections).
\end{proposition}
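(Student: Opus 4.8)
The plan is to imitate the proof of Proposition~\ref{th:uniqueness-1}, now for the triple $(M_0,M_1,B)$, with the decisive modification that the line $B$ is replaced by its \emph{saturation} $\hat B$, namely the subspace $B\cdot\mathit{Hom}_{H^0(A^{\mathit{perf}})}(M_0,M_0)=\mathit{Hom}_{H^0(A^{\mathit{perf}})}(M_1,M_1)\cdot B$ of $\mathit{Hom}_{H^0(A^{\mathit{perf}})}(M_0,M_1)$, which is well-defined precisely because of the ``same image'' clause of Assumption~\ref{th:augmented-plus}. Since both $M_k$ satisfy Assumption~\ref{th:augmented}, the rings $\mathit{Hom}_{H^0(A^{\mathit{perf}})}(M_k,M_k)$ are commutative local Artinian with residue field $R$ (by the Claims in the proof of Proposition~\ref{th:uniqueness-1}); hence $\hat B$ is a sub-bimodule over them, and conjugation of $\hat B$ by any pair of ``unipotent'' automorphisms (scalar times identity, plus nilpotent) of the $M_k$ leaves it fixed. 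Throughout I will use the two devices recurring in Section~\ref{subsec:unique}: a subsheaf of a bundle-with-connection that is preserved by the connection is automatically a sub-bundle of locally constant rank, hence determined by its fibre at any one point; and images of products of such invariant pieces are invariant, because the cohomology-level connections on $\mathit{hom}$ complexes act as derivations for composition, by~\eqref{eq:multiplicative-connections}.

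First I globalise $\hat B$. For a perfect family $(\MM_0,\MM_1)$ following $[\gamma]$ with $\MM_{k,s}\iso M_k$, relative connections, and the connection-invariant line bundle $\BB\subset\GG := H^0(\mathit{hom}_{\AA^{\mathit{perf}}}(\MM_0,\MM_1))$ of fibre $B$, put $\HH_k := H^0(\mathit{hom}_{\AA^{\mathit{perf}}}(\MM_k,\MM_k))$ and $\hat\BB := \HH_1\cdot\BB$. This subsheaf and $\BB\cdot\HH_0$ are both connection-invariant, and both have fibre $\hat B$ at $s$ by Assumption~\ref{th:augmented-plus}, so they coincide. Changing the relative connections alters the induced connection on $\GG$ only by the terms in~\eqref{eq:modify-connection}, which land in $\Omega^1_\RR\otimes\hat\BB$, so $\hat\BB$ is independent of that choice; this already settles the ``independence of the relative connections'' part of the statement, taking the comparison isomorphisms below to be identities in that case.

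Now let $(\MM_0^\pm,\MM_1^\pm,\BB^\pm)$ be two families of data, with saturations $\hat\BB^\pm = \HH_1^\pm\BB^\pm = \BB^\pm\HH_0^\pm$. Applying Proposition~\ref{th:uniqueness-1} to $M_0$ and to $M_1$, and tensoring the ``$+$'' family by suitable line bundles (which transports all the accompanying data covariantly), we obtain quasi-isomorphisms of families $u_k\colon\MM_k^+\to\MM_k^-$ whose restrictions $\theta_k := u_{k,s}$ are unipotent automorphisms of $M_k$, by the construction in the proof of that proposition. Conjugating yields a line bundle $\widetilde\BB := u_1\BB^+u_0^{-1}\subset\GG^- := H^0(\mathit{hom}(\MM_0^-,\MM_1^-))$. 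The crucial point is that $\HH_1^-\widetilde\BB$ is connection-invariant even though $u_k$ need not be flat: expanding $\nabla(u_1\bb u_0^{-1})$ by the Leibniz rule and using that $\BB^+$ is connection-invariant, that $u_k$ is a quasi-isomorphism (so composing with it identifies the various $\mathit{hom}$ bundles it touches), and the family version of Assumption~\ref{th:augmented-plus} (which gives $\HH_1^+\BB^+=\BB^+\HH_0^+$, hence $\HH_1^-\widetilde\BB=\widetilde\BB\HH_0^-$), each of the three resulting terms lies in $\Omega^1_\RR\otimes\HH_1^-\widetilde\BB$. Its fibre at $s$ equals $\mathit{Hom}_{H^0(A^{\mathit{perf}})}(M_1,M_1)\cdot\theta_1 B\theta_0^{-1}=\theta_1\hat B\theta_0^{-1}=\hat B$, the last step because $\theta_k$ is unipotent. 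Hence $\HH_1^-\widetilde\BB=\hat\BB^-$, since both are connection-invariant sub-bundles of $\GG^-$ with fibre $\hat B$ over $s$.

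It remains to specialise over $s'$ and upgrade. Restricting, the $u_{k,s'}$ are quasi-isomorphisms between the objects $N_k^\pm := \MM_{k,s'}^\pm$ appearing in the statement, and the identity $\hat\BB^-=\HH_1^-\widetilde\BB=\HH_1^-\BB^-$ says, over $s'$, that the two lines $(\widetilde\BB)_{s'}=u_{1,s'}(\BB^+)_{s'}u_{0,s'}^{-1}$ and $(\BB^-)_{s'}$ each generate $\mathit{Hom}_{H^0(A^{\mathit{perf}})}(N_1^-,N_1^-)\cdot(\BB^-)_{s'}$ as a module over the local ring $\mathit{Hom}_{H^0(A^{\mathit{perf}})}(N_1^-,N_1^-)$. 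By Nakayama, the element taking a generator of the first line to one of the second is a unit, i.e. an automorphism $c$ of $N_1^-$ with $c\cdot(\widetilde\BB)_{s'}=(\BB^-)_{s'}$; then $(u_{0,s'},\,c\circ u_{1,s'})$ is a quasi-isomorphism of triples carrying the ``$+$'' data to the ``$-$'' data, which is the assertion. I expect the main obstacle to be the third paragraph: one must check that passing $\BB^+$ through the object-level quasi-isomorphisms of Proposition~\ref{th:uniqueness-1} — which are controlled only modulo nilpotents — still carries the \emph{saturated} bundle $\hat\BB^+$ to the correct sub-bundle of $\GG^-$. This is exactly where Assumption~\ref{th:augmented-plus} is indispensable, both in making $\hat B$ a conjugation-stable sub-bimodule and in supplying the Nakayama step of the last paragraph.
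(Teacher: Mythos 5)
Your proof is correct and runs on the same engine as the paper's: both replace the line $B$ by its saturation under the two endomorphism rings (the paper's $\JJ$, your $\hat\BB$), show that this saturated subsheaf is preserved by the induced connections and is independent of all the choices, and then specialise to $s'$ and use commutativity plus the local-ring structure of the endomorphism rings (your Nakayama step, the paper's ``non-nilpotent $\Rightarrow$ invertible'') to upgrade inclusion to equality up to an automorphism. The only organisational difference is that the paper first reduces to a single pair of families by a tensoring-and-quasi-isomorphism argument (quoting the last Claim of Proposition \ref{th:uniqueness-1} rather tersely) and then varies only the relative connections, whereas you keep the two families around and propagate $\hat\BB$ explicitly through the quasi-isomorphisms produced by that Claim; this fills in a detail the paper leaves implicit but does not change the substance. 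One small imprecision: the restrictions $u_{k,s}$ need not be \emph{unipotent}, merely units of the local rings $\mathit{Hom}_{H^0(A^{\mathit{perf}})}(M_k,M_k)$ --- but since $\hat B$ is a sub-bimodule this is all you actually use, so nothing breaks.
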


We can apply some preliminary simplifications. One can change any of the families $\MM_k$ by tensoring it with a line bundle $\FF_k$ (equipped with a connection), and then take the corresponding line bundle $\FF_1 \otimes \FF_0^\vee \otimes \BB \subset \mathit{Hom}_{H^0(\AA^{\mathit{perf}})}(\FF_0 \otimes \MM_0, \FF_1 \otimes \MM_1)$. This does not affect the outcome of parallel transport. With this and the results of the previous proof in mind, the choice of families is indeed irrelevant, so we can consider some fixed choices $\MM_k$.

\begin{claim*}
The two multiplication maps
\begin{equation} \label{eq:swap-sides-2}
\begin{aligned}
& \BB \otimes \mathit{Hom}_{H^0(\AA^{\mathit{perf}})}(\MM_0,\MM_0) \longrightarrow \mathit{Hom}_{H^0(\AA^{\mathit{perf}})}(\MM_0,\MM_1), \\
& \mathit{Hom}_{H^0(\AA^{\mathit{perf}})}(\MM_1,\MM_1) \otimes \BB \longrightarrow \mathit{Hom}_{H^0(\AA^{\mathit{perf}})}(\MM_0,\MM_1)
\end{aligned}
\end{equation}
have the same image. \qed
\end{claim*}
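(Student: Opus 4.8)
The plan is to run the same connection-invariance/rigidity mechanism that was already used in this section for $\HH^{\mathit{nil}}$ and $\HH_{-+}^{\mathit{nil}}$: exhibit the two images as subsheaves of $\mathit{Hom}_{H^0(\AA^{\mathit{perf}})}(\MM_0,\MM_1)$ that are invariant under the induced connection, conclude that they and their quotients are vector bundles, and then reduce the claimed equality to the single fibre over $s$, where it is precisely Assumption \ref{th:augmented-plus}.

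Concretely, I would first record that each of $\HH_{00} := H^0(\mathit{hom}_{\AA^{\mathit{perf}}}(\MM_0,\MM_0))$, $\HH_{11} := H^0(\mathit{hom}_{\AA^{\mathit{perf}}}(\MM_1,\MM_1))$ and $\HH_{01} := H^0(\mathit{hom}_{\AA^{\mathit{perf}}}(\MM_0,\MM_1))$ is a finitely generated projective $\RR$-module by Lemma \ref{th:its-projective}, and carries the connection $\nabla_\HH$ induced by the chosen relative connections on $\MM_0,\MM_1$ as in \eqref{eq:hom-connection-2}. Since $\BB$ is a $\nabla$-invariant sub-bundle of $\HH_{01}$, the tensor products $\BB\otimes_\RR\HH_{00}$ and $\HH_{11}\otimes_\RR\BB$ inherit connections, and by \eqref{eq:multiplicative-connections} the two multiplication maps in \eqref{eq:swap-sides-2} are $\RR$-linear and compatible with these connections (the defect in \eqref{eq:multiplicative-connections} is a coboundary, hence dies on $H^0$). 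Therefore the images $\II_1,\II_2\subseteq\HH_{01}$ of the two maps are $\nabla$-invariant $\RR$-submodules; being submodules of a finitely generated module over the Noetherian ring $\RR$ they are finitely generated, hence projective by Lemma \ref{th:locally-free}, and the quotients $\HH_{01}/\II_1$, $\HH_{01}/\II_2$, which again carry connections, are projective for the same reason. So $\II_1$ and $\II_2$ are sub-bundles of the vector bundle $\HH_{01}$.

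To finish I would show $\II_1=\II_2$ by comparing fibres at $s$. The sum $\II_1+\II_2$ is again $\nabla$-invariant, so $(\II_1+\II_2)/\II_1$ is a $\nabla$-invariant sub-bundle of $\HH_{01}/\II_1$ with projective quotient $\HH_{01}/(\II_1+\II_2)$, hence a direct summand. As $\SS$ is an irreducible smooth (connected) curve, a vector bundle with vanishing fibre at one point is zero; thus $\II_1=\II_2$ as soon as $(\II_1)_s=(\II_2)_s$. Now the base-change functor $-\otimes_\RR R$ at the point $s$ is right exact, so $(\II_j)_s$ is the image of the corresponding multiplication map on fibres; and since the hom-complexes $\mathit{hom}_{\AA^{\mathit{perf}}}(\MM_k,\MM_l)$ are perfect with projective cohomology in each degree (Lemma \ref{th:its-projective}), the base-change spectral sequence degenerates (all higher $\mathrm{Tor}$'s vanish), giving $(\HH_{kl})_s\iso\mathit{Hom}_{H^0(A^{\mathit{perf}})}(M_k,M_l)$ via $\MM_{k,s}\iso M_k$, with $\BB_s=B$. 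Under these identifications the two fibre maps are exactly the maps \eqref{eq:swap-sides}, which by Assumption \ref{th:augmented-plus} have the same image. Hence $(\II_1)_s=(\II_2)_s$ and therefore $\II_1=\II_2$.

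The only point that is not a verbatim repetition of earlier arguments is the base-change compatibility $(\HH_{kl})_s\iso\mathit{Hom}_{H^0(A^{\mathit{perf}})}(M_k,M_l)$ together with the compatibility of ``taking the image'' with specialization; this is where I expect the only real care is needed, and it is handled by the remark that these hom-complexes are perfect with \emph{projective} (not merely finitely generated) cohomology in each degree, so that the standard base-change spectral sequence collapses. Everything else is the rigidity-of-$\nabla$-invariant-subsheaves argument already deployed for the nilpotent ideals above.
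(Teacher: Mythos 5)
Your proof is correct and takes exactly the approach the paper intends: the paper dismisses this as ``routine,'' expecting the reader to repeat the rigidity-of-$\nabla$-invariant-subsheaves argument already spelled out for $\HH_{-+}^{\mathit{nil}}$ in the proof of Proposition \ref{th:uniqueness-1} (image is $\nabla$-invariant by \eqref{eq:multiplicative-connections}, hence projective by Lemma \ref{th:locally-free}, so a quotient or sub-bundle vanishing at $s$ vanishes everywhere, and at $s$ the two images coincide by Assumption \ref{th:augmented-plus}). The extra care you take with base change at $s$ (using that the hom-complexes are perfect with projective cohomology, so the SES \eqref{eq:dold} degenerates and $(\HH_{kl})_s\iso \mathit{Hom}_{H^0(A^{\mathit{perf}})}(M_k,M_l)$) is exactly the point the paper leaves implicit, and it is the right way to justify it.
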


The proof is routine. We denote the image by $\JJ$.

\begin{claim*}
$\JJ$ is preserved by the connection, and this remains true if we change the relative connections on $\MM_k$.
\end{claim*}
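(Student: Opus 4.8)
The plan is to combine two facts: first, that the connections induced on the morphism complexes act as derivations for composition on the cohomology level, which is the content of \eqref{eq:multiplicative-connections}; and second, the description obtained in the previous claim, namely that $\JJ$ is simultaneously the image of $\BB \circ \mathit{Hom}_{H^0(\AA^{\mathit{perf}})}(\MM_0,\MM_0)$ and of $\mathit{Hom}_{H^0(\AA^{\mathit{perf}})}(\MM_1,\MM_1) \circ \BB$. Throughout I would work on cohomology, since \eqref{eq:multiplicative-connections} only holds up to coboundary; there is nothing deep here, and the only real care needed is in keeping track of which of the two descriptions of $\JJ$ is used on which side.

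For the first assertion, I would note that $\JJ$ is generated as an $\RR$-submodule by composites $\bb \circ a$ with $\bb$ a local section of $\BB$ and $a$ a local section of $\mathit{Hom}_{H^0(\AA^{\mathit{perf}})}(\MM_0,\MM_0)$. Applying the derivation property,
\[
\nabla(\bb \circ a) = (\nabla \bb)\circ a \;\pm\; \bb \circ (\nabla a).
\]
The first term lies in $\Omega^1_\RR \otimes \JJ$ because $\BB$ is invariant under $\nabla$, so $\nabla \bb$ is a section of $\Omega^1_\RR \otimes \BB$; the second lies in $\Omega^1_\RR \otimes \JJ$ because $\nabla a$ is a section of $\Omega^1_\RR \otimes \mathit{Hom}_{H^0(\AA^{\mathit{perf}})}(\MM_0,\MM_0)$ and $\BB \circ \mathit{Hom}_{H^0(\AA^{\mathit{perf}})}(\MM_0,\MM_0) = \JJ$ by definition. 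Since $\JJ$ is generated over $\RR$ by such composites and $\nabla$ obeys the Leibniz rule over $\RR$, this yields $\nabla(\JJ) \subseteq \Omega^1_\RR \otimes \JJ$.

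For the independence of the choice of relative connections, I would replace $\nabla_{\MM_k}$ by $\nabla_{\MM_k} + \cc_k$ and invoke \eqref{eq:modify-connection}: the induced connection on $\mathit{Hom}_{H^0(\AA^{\mathit{perf}})}(\MM_0,\MM_1)$ then changes by the operator sending $\bb$ to $\pm [\cc_1]\circ \bb \mp \bb \circ [\cc_0]$, composition with the classes $[\cc_k] \in \mathit{Hom}_{H^0(\AA^{\mathit{perf}})}(\MM_k, \Omega^1_\RR \otimes \MM_k)$. Rather than re-checking that $\BB$ stays invariant under the new connection --- which is awkward --- I would bound this operator directly: using $\JJ = \mathit{Hom}_{H^0(\AA^{\mathit{perf}})}(\MM_1,\MM_1)\circ\BB$ gives $[\cc_1]\circ\JJ \subseteq \Omega^1_\RR\otimes\JJ$, and using $\JJ = \BB\circ\mathit{Hom}_{H^0(\AA^{\mathit{perf}})}(\MM_0,\MM_0)$ gives $\JJ\circ[\cc_0] \subseteq \Omega^1_\RR\otimes\JJ$. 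Hence the difference of the two induced connections carries $\JJ$ into $\Omega^1_\RR\otimes\JJ$, so if one of them preserves $\JJ$ so does the other. The main (very mild) obstacle is precisely this last bookkeeping point: one must use the two-sided description of $\JJ$ to see that the correction terms on both sides land back in $\JJ$.
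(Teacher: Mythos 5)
Your argument is correct and follows the paper's own proof in both steps: the first part comes from the $\nabla$-invariance of $\BB$ together with the derivation property \eqref{eq:multiplicative-connections}, and the second from \eqref{eq:modify-connection} together with the observation that left and right composition with endomorphisms of the $\MM_k$ (here, the $\Omega^1_\RR$-valued classes $[\cc_k]$) carries $\JJ$ into $\Omega^1_\RR \otimes \JJ$, using both descriptions of $\JJ$. You have simply spelled out what the paper states tersely.
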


\begin{proof}
The first statement is obvious from the definition and the corresponding property of $\BB$. The second one follows from this and \eqref{eq:modify-connection}, because left and right multiplication with endomorphisms of $\MM_k$ preserves $\JJ$.
\end{proof}

Now, suppose that we have made different choices of relative connections $\nabla_{\MM_0,\pm}$ and $\nabla_{\MM_1,\pm}$, leading to two different line bundles $\BB_\pm$, and associated subbundles $\JJ_{\pm}$.

\begin{claim*}
In fact, $\JJ_+ = \JJ_-$.
\end{claim*}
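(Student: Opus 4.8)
The plan is to show that $\JJ_+$ and $\JJ_-$ agree by exhibiting each as a sub-line-bundle of $\mathit{Hom}_{H^0(\AA^{\mathit{perf}})}(\MM_0,\MM_1)$ that is invariant under \emph{some} fixed connection, and then using the rigidity of such invariant subbundles together with their common value at the base point $s$. Concretely, fix once and for all the relative connections $\nabla_{\MM_k,+}$ and the resulting induced connection $\nabla$ on $\mathit{Hom}_{H^0(\AA^{\mathit{perf}})}(\MM_0,\MM_1)$. The previous claim already tells us that $\JJ_+$ is preserved by $\nabla$. The point is to argue that $\JJ_-$ is \emph{also} preserved by this same connection $\nabla$ (not just by the one induced from the ``$-$'' connections).

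First I would record that the difference between the ``$+$'' and ``$-$'' choices of relative connection on each $\MM_k$ is a cocycle $\cc_k \in \mathit{hom}^0_{\AA^{\mathit{perf}}}(\MM_k, \Omega^1_\RR \otimes \MM_k)$, so that $\nabla_{\MM_k,-} = \nabla_{\MM_k,+} + \cc_k$; correspondingly, by \eqref{eq:modify-connection}, the two induced connections on the $\mathit{Hom}$ space differ by left multiplication with $\cc_1$ and right multiplication with $\cc_0$. Since $\JJ_-$ is preserved by the ``$-$''-induced connection by the preceding claim, and since left and right multiplication by endomorphisms of the $\MM_k$ preserve $\JJ_-$ — here one needs that $\cc_k$, viewed modulo the connection-twist, acts as an endomorphism, which is exactly the content of the final remark in the proof of the previous claim applied to $\JJ_-$ in place of $\JJ$ — the correction term also preserves $\JJ_-$. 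Therefore $\JJ_-$ is invariant under $\nabla$ as well.

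Now both $\JJ_+$ and $\JJ_-$ are rank-$1$ subbundles of a fixed bundle, each invariant under the single connection $\nabla$. Restricting to the fibre over $s$: by construction $\BB_+$ and $\BB_-$ both have fibre $B$ at $s$, hence so do $\JJ_+$ and $\JJ_-$ (they are the images of the multiplication maps \eqref{eq:swap-sides-2}, and at $s$ these reduce to \eqref{eq:swap-sides}, whose common image is the fibre of both). A $\nabla$-invariant subbundle is determined by its fibre at any one point — equivalently, $\JJ_+$ and $\JJ_-$ generate the same submodule because $(\JJ_+ + \JJ_-)/\JJ_+$ is a $\nabla$-invariant quotient of a coherent sheaf, hence locally free by Lemma \ref{th:locally-free}, and it vanishes at $s$, so it is zero; symmetrically $\JJ_+ \subset \JJ_-$. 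Hence $\JJ_+ = \JJ_-$.

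The main obstacle I anticipate is the bookkeeping in the second paragraph: one must check carefully that the correction terms from \eqref{eq:modify-connection} genuinely act on $\mathit{Hom}_{H^0(\AA^{\mathit{perf}})}(\MM_0,\MM_1)$ by honest left/right multiplication operators that preserve $\JJ_\pm$ — in particular that passing to cohomology kills the $\Omega^1_\RR$-twisting ambiguity in a way compatible with the ideal structure defining $\JJ$. Everything after that (local freeness of invariant quotients, comparison of fibres at $s$) is the same rigidity argument already used repeatedly above and should be routine.
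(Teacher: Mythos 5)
Your proposal is correct and takes the same route as the paper: fix one of the two induced connections on $\mathit{Hom}_{H^0(\AA^{\mathit{perf}})}(\MM_0,\MM_1)$, observe via \eqref{eq:modify-connection} that both $\JJ_+$ and $\JJ_-$ are invariant under that single connection (which is exactly the second part of the preceding claim), note that they have the same fibre at $s$, and conclude by the usual rigidity of invariant subbundles. You have merely spelled out in more detail the ``invariant subbundle determined by one fibre'' step (via Lemma \ref{th:locally-free}) that the paper leaves implicit.
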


\begin{proof}
Choose one of the two connections on $\mathit{Hom}_{H^0(\AA^{\mathit{perf}})}(\MM_0,\MM_1)$ arising from our choices. Both $\JJ_+$ and $\JJ_-$ are invariant under this connection, and they agree at one point.
\end{proof}

Specializing to the fibres at $s'$, this means that $B_-'$ is contained in the image of the multiplication map $\mathit{Hom}_{H^0(A^{\mathit{perf}})}(M_1',M_1') \otimes B_+' \rightarrow \mathit{Hom}_{H^0(A^{\mathit{perf}})}(M_0',M_1')$, and vice versa. Hence, one can write $B_-' = x_{-+}' B_+'$ and $B_+' = x_{+-}' B_-'$ for some $x_{-+}',x_{+-}' \in \mathit{Hom}_{H^0(A^{\mathit{perf}})}(M_1',M_1')$. Since the endomorphism ring is commutative and $B_+' = x_{+-}'x_{-+}' B_+'$, none of the $x_{-+}',x_{+-}'$ can be nilpotent. Arguing as in the proof of Proposition \ref{th:uniqueness-1}, they must then be invertible. What we have shown is that $B_-'$ can be obtained from $B_+'$ by applying automorphisms of the $M_k'$, which is indeed what was claimed in Proposition \ref{th:uniqueness-2}.

\subsection{Periods\label{subsec:flux}}
Fix a smooth elliptic curve $\bar{\SS}$ over $R$, together with a differential $\bar\theta \in H^0(\bar{\SS},\Omega^1_{\bar{\SS}})$. Given any nonempty affine open subset $\SS \subset \bar{\SS}$, we consider the one-form $\theta = \bar\theta|\SS$.

\begin{workingdefinition} \label{th:per}
A class $[g] \in \mathit{HH}^1(A,A)$ is called {\em periodic} if the following holds. For every $X \in \mathit{Ob}\,A^{\mathit{perf}}$ there is a subset $\SS$ as before, as well as a perfect family $\MM$ over $\SS$ which follows $[\gamma] = \theta \otimes [g]$, and whose fibre at some point $s \in \SS$ is quasi-isomorphic to $X$. Moreover, given any two objects $(X_0,X_1)$, one can choose families $(\MM_0,\MM_1)$ as before with quasi-isomorphisms $\MM_{k,s} \iso X_k$, so that the bundle of cohomology level morphisms is trivial:
\begin{equation} \label{eq:triv-h0}
H^0(\mathit{hom}_{\AA^{\mathit{perf}}}(\MM_0,\MM_1)) \iso \RR \otimes_R H^0(\mathit{hom}_A(X_0,X_1)).
\end{equation}
In addition, one should be able to choose relative connections on the families so that the induced connection is trivial, which means compatible with a trivialization \eqref{eq:triv-h0}. The subset of periodic classes is denoted by
\begin{equation} \label{eq:periodic-set}
\mathit{Per}(A,\bar{\SS},\bar\theta) \subset \mathit{HH}^1(A,A).
\end{equation}
\end{workingdefinition}

Note that this is really an invariant of $A^{\mathit{perf}}$ up to quasi-isomorphism. By pulling back a given family $\XX$ by the $n$-th power map $\bar{\SS} \rightarrow \bar{\SS}$ (defined by taking $s$ to be the origin) for some $n \in \bZ$, one gets a family which follows the restriction of $n[(\bar\theta|\SS) \otimes g]$. This proves that \eqref{eq:periodic-set} is closed under multiplication by integers.

\begin{remark} \label{th:downsides}
The notion introduced above is called ``working definition'' in view of its numerous shortcomings. These deserve some discussion, even though they do not stand in the way of our immediate application.

The first and most obvious point is the object-by-object approach we've taken. This violates categorical common sense and manners, and is likely to be the reason why we can't prove that the set of periodic classes is an abelian group. However, it is not difficult to envisage a more universal approach, based on Corollary \ref{th:universal-family}; it is maybe more appropriate to think of it as an $A_\infty$-version of the {\em derived Picard group} \cite{yekutieli99,yekutieli04,keller04}.

The next point is the use of a priori undetermined open affine subsets $\SS \subset \bar{\SS}$, which essentially means that we are working (Zariski) locally around the base point $s$. This reflects the insufficient technical sophistication of our definitions of families, which are not local over the base. This is a major inconvenience, but does not lead to a decisive loss of information (see Lemma \ref{th:rational-section}).

There is one more issue which is conceptually by far the most important one. Asking for families parametrized by an elliptic curve amounts to a double periodicity requirement, but single periodicity seems a more fundamental notion. Instead of a ``torus'' one would then want a ``thin annulus'' as a parameter space. This makes sense in analytic geometry, either over $\bC$ or over a non-archimedean field. Such a theory would require extensive reworking of the foundations. On the other hand, if successful, it might allow substantial simplifications and extensions of the main arguments in this paper, bringing them in line with existing ideas about convergence in Floer cohomology \cite{fukaya02c,fukaya09}.
\end{remark}

The techniques from Section \ref{subsec:existence} can be used to show that a given Hochschild class is periodic. On the other hand, if one wants to show that $[g]$ is not periodic, the arguments from Section \ref{subsec:unique}, in combination with the following trick, can be useful.

\begin{assumption}
Suppose that for some $\SS \subset \bar{\SS}$ we have a smooth affine curve $\tilde{\SS}$ together with a morphism $\tilde\SS \rightarrow \SS$, and two points $\tilde{s}_\pm \in \tilde{\SS}$ whose image is the same $s \in \SS$. Let $\tilde\theta$ be the pullback of $\theta$. Suppose also that we have perfect families $\tilde{\MM}_0,\tilde{\MM}_1$ on $\tilde{\SS}$ following $\tilde\theta \otimes [g]$. Equip them with relative connections, and suppose further that $\tilde\BB \subset \mathit{Hom}_{H^0(\AA^{\mathit{perf}})}(\tilde{\MM}_0, \tilde{\MM}_1)$ is a line bundle preserved by the induced connection. By restriction to the fibres at $\tilde{s}_{\pm}$, we get objects $M_{0,\pm}$ and $M_{1,\pm}$, as well as morphism subspaces $B_{\pm}$. We require that $(M_{0,+},M_{1,+},B_+)$ should satisfy Assumption \ref{th:augmented-plus}.
\end{assumption}

\begin{lemma} \label{th:contradiction}
In the situation set up above, assume additionally that $(M_{0,+},M_{1,+},B_+)$ is not isomorphic to $(M_{0,-},M_{1,-},B_-)$. Then $[g]$ is not periodic (for the original $\bar{\SS}$ and $\bar{\theta}$).
\end{lemma}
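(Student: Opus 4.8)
The plan is to argue by contradiction. Assume that $[g]$ is periodic. Applying the definition of periodicity to the pair of objects $(X_0,X_1) = (M_{0,+},M_{1,+})$ of $A^{\mathit{perf}}$, I obtain a nonempty affine open $\SS' \subset \bar{\SS}$, a point $s' \in \SS'$, and perfect families $\MM_0, \MM_1$ over $\SS'$ which follow $(\bar\theta|\SS') \otimes [g]$, with $\MM_{k,s'} \iso M_{k,+}$, with trivial bundle of cohomology level morphisms, and with relative connections making the induced connection on that bundle trivial. Since the invariant differential $\bar\theta$ on the elliptic curve $\bar{\SS}$ is translation invariant, I may apply the translation of $\bar{\SS}$ carrying $s'$ to $s$, and so assume from the start that $s \in \SS'$ and $s' = s$.

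Next I put $\SS'' = \SS \cap \SS'$, a nonempty affine open containing $s$, and let $\tilde{\SS}^{\circ} \subset \tilde{\SS}$ be the preimage of $\SS''$ under $\tilde{\SS} \rightarrow \SS$, which is an open (hence affine) subcurve containing both points $\tilde{s}_{\pm}$. Pulling $\MM_k|\SS''$ back along $\tilde{\SS}^{\circ} \rightarrow \SS''$ produces perfect families $\MM_0', \MM_1'$ over $\tilde{\SS}^{\circ}$; by functoriality of the pullback they follow $(\tilde\theta|\tilde{\SS}^{\circ}) \otimes [g]$, keep a trivial bundle of morphisms with trivial induced connection, and, because $\tilde{s}_+$ and $\tilde{s}_-$ both map to $s$, satisfy $\MM'_{k,\tilde{s}_+} \iso \MM'_{k,\tilde{s}_-} \iso M_{k,+}$. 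I also restrict the given data $\tilde{\MM}_0, \tilde{\MM}_1, \tilde{\BB}$ to $\tilde{\SS}^{\circ}$, which alters nothing over $\tilde{s}_{\pm}$.

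This leaves two instances of parallel transport along $\tilde{\SS}^{\circ}$ from $\tilde{s}_+$ to $\tilde{s}_-$, both with starting datum $(M_{0,+},M_{1,+},B_+)$. The first uses the restrictions of $\tilde{\MM}_0, \tilde{\MM}_1, \tilde{\BB}$ and, by the construction of $M_{k,-}$ and $B_-$, has outcome $(M_{0,-},M_{1,-},B_-)$. The second uses $\MM_0', \MM_1'$ together with the constant sub-line-bundle of $\mathit{Hom}_{H^0(\AA^{\mathit{perf}})}(\MM_0',\MM_1')$ corresponding to $B_+$ under the trivialization; this bundle is preserved by the trivial induced connection and has fibre $B_+$ at $\tilde{s}_+$, so it is an admissible choice, and its outcome is $(M_{0,+},M_{1,+},B_+)$. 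Since $(M_{0,+},M_{1,+},B_+)$ satisfies Assumption \ref{th:augmented-plus}, Proposition \ref{th:uniqueness-2} forces the two outcomes to agree up to quasi-isomorphism, so $(M_{0,-},M_{1,-},B_-) \iso (M_{0,+},M_{1,+},B_+)$, contradicting the hypothesis of the lemma. Hence $[g]$ is not periodic.

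I expect the only real friction to be bookkeeping, namely reconciling the curve $\SS$ and base point $s$ of the given double-covering setup with the a priori unrelated affine open and base point produced by the definition of periodicity. That is exactly where the group law on $\bar{\SS}$ and the translation invariance of $\bar\theta$ enter, to normalize $s' = s$; once that is done, passing to $\tilde{\SS}^{\circ}$ and invoking functoriality of pullback (for families, for the property of following a deformation field, and for triviality of morphism bundles and connections) reduces the statement to a single application of Proposition \ref{th:uniqueness-2}.
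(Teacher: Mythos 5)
Your proof takes essentially the same approach as the paper: use the group law on $\bar{\SS}$ together with the translation invariance of $\bar\theta$ to normalize the base point supplied by the periodicity definition to $s$, shrink the opens and pull back the families along $\tilde{\SS}\to\SS$, and then compare the two resulting parallel transports from $\tilde{s}_+$ to $\tilde{s}_-$ (one via $(\tilde\MM_0,\tilde\MM_1,\tilde\BB)$ yielding $(M_{0,-},M_{1,-},B_-)$, the other via the pulled-back families and the constant line bundle yielding $(M_{0,+},M_{1,+},B_+)$), contradicting the uniqueness in Proposition \ref{th:uniqueness-2}. The only difference is that you spell out the bookkeeping with $\SS'' = \SS\cap\SS'$ and its preimage $\tilde{\SS}^{\circ}$ slightly more explicitly than the paper does.
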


\begin{proof}
Assume that $[g]$ is in fact periodic. Then we can find families $\MM_0$ and $\MM_1$ over some open subset $\SS$, whose fibre at some point $s$ is $M_{0,+}$ and $M_{1,+}$, respectively. A priori, the open subset and the point do not  have to coincide with those that appeared in the statement of the Lemma. However, that discrepancy can be removed by using the group structure of the elliptic curve $\bar{\SS}$ (which yields translations acting transitively on points, and preserving $\bar\theta$), and by making the open subsets smaller if necessary. Having resolved that issue, we continue the discussion: by definition, the families can be chosen so that $\mathit{Hom}_{H^0(\AA^{\mathit{perf}})}(\MM_0,\MM_1)$ is the trivial bundle and carries the trivial connection. This allows one to find a line bundle $\BB$ inside that morphism space, which is compatible with the connection and has fibre $B_+$ at $s$. Now pull back those families to $\tilde{\SS}$. Comparison with $(\tilde\MM_0,\tilde\MM_1,\tilde\BB)$ shows that Proposition \ref{th:uniqueness-2} is violated, since the two choices of families are isomorphic at $s_+$ but not at $s_-$.
\end{proof}

\subsection{Relaxing the assumptions}
To conclude the abstract part of our discussion, we want to enlarge the existing framework in two minor ways. The first one is to pass from $\bZ$-gradings to $(\bZ/2)$-gradings. We then need a version of Lemma \ref{th:proj} for $\bZ/2$-graded complexes $\FF_0$ and $\FF_1$, but that is unproblematic: one passes to the associated $\bZ$-graded periodic complexes and applies the original form of the Lemma to those, thereby deriving the desired result.

The other generalization is to allow $A_\infty$-categories $A$ which are only cohomologically unital (but still proper). The first effect of this is on twisted complexes, where we have to prove:

\begin{lemma}
$A^{\mathit{tw}}$ and $\AA^{\mathit{tw}}$ are cohomologically unital.
\end{lemma}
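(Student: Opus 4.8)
The plan is to reduce everything to a statement about the underlying cohomology-level category together with the observation that being cohomologically unital is detected by the existence of, for each object, a degree-$0$ morphism acting as a two-sided identity on cohomology. Write $H(A)$ for the cohomology category; by hypothesis it is a genuine (graded, unital, associative) category. First I would recall that $A^{\oplus}$ is cohomologically unital whenever $A$ is: an object $X = \bigoplus_{i\in I} F^i\otimes X^i[-\sigma^i]$ has the ``diagonal'' candidate identity built from units of the $X^i$ tensored with $\mathrm{id}_{F^i}$, and since the cohomology of $\mathit{hom}_{A^{\oplus}}(X,Y)$ is just the matrix of cohomology groups $\bigoplus_{i,j}\mathit{Hom}_{F^i,F^j}\otimes \mathit{Hom}_{H(A)}(X^i,X^j)$, the identity axioms hold on cohomology by the corresponding axioms in $H(A)$ together with matrix multiplication. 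So it suffices to treat the passage from $A^{\oplus}$ to $A^{\mathit{tw}}$, i.e. the effect of adding the differential $\delta_X$; and since $\AA$, $\AA^{\oplus}$, $\AA^{\mathit{tw}}$ are obtained from $A$, $A^{\oplus}$, $A^{\mathit{tw}}$ by base change $R\rightsquigarrow \RR$ in a flat way (morphism spaces are free $\RR$-modules on the morphism spaces of $A$), cohomology commutes with this base change and the case of $\AA^{\mathit{tw}}$ follows formally from that of $A^{\mathit{tw}}$.

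The core step is therefore: given a twisted complex $(X,\delta_X)$, produce a cocycle in $\mathit{hom}^0_{A^{\mathit{tw}}}(X,X)$ that is a two-sided identity on $H(A^{\mathit{tw}})$. The natural candidate is $e_X$, the ``diagonal'' unit of $X$ regarded as an object of $A^{\oplus}$; the issue is that $e_X$ need not be a $\mu^1_{A^{\mathit{tw}}}$-cocycle, nor act as a strict identity, because of the curvature terms $\mu^{r}_{A^{\oplus}}(\delta_X,\dots,\delta_X,-,\dots)$. Here I would use the filtration: $\delta_X$ is strictly decreasing with respect to a finite filtration of $X$, so on the associated graded the twisted complex degenerates to an honest object of $A^{\oplus}$, for which $e_X$ works. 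The strategy is then an inductive correction along the filtration, exactly parallel to the construction of homotopy idempotents in Section \ref{subsec:splittings} (and to the standard argument that cohomological unitality is inherited by twisted complexes): one starts with $e_X$ on the top graded piece and successively adds lower-order corrections $f^{(k)}\in\mathit{hom}_{A^{\oplus}}(X,X)$ so that $e_X + \sum_k f^{(k)}$ becomes a $\mu^1_{A^{\mathit{tw}}}$-cocycle. The obstruction at each stage lives in a filtration quotient of $H(\mathit{hom}_{A^{\oplus}}(X,X))$, and it vanishes because on that quotient the candidate is cohomologous to the genuine identity of the associated-graded object of $A^{\oplus}$; the same bookkeeping shows the resulting cocycle is a two-sided identity on $H(A^{\mathit{tw}})$ after a further finite sequence of corrections. (Alternatively, one can quote that $A^{\mathit{tw}}$ is quasi-equivalent to a strictly unital model — e.g.\ via a minimal model or via the unital replacement of $A$ — but I prefer the direct filtration argument since it keeps everything inside $A^{\mathit{tw}}$ as defined.)

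The main obstacle is purely organizational: keeping track of the filtration and the signs (the conventions of \cite[Remark 3.26]{seidel04}) while running the induction, and checking that the process terminates — which it does because the filtration is finite, so only finitely many correction terms $f^{(k)}$ are needed and the resulting sum is a well-defined element of $\mathit{hom}^0_{A^{\mathit{tw}}}(X,X)$ rather than a formal series. Once the $\Z$-graded, field case $A^{\mathit{tw}}$ is settled, the $\RR$-linear case $\AA^{\mathit{tw}}$ requires no new idea: apply the same construction with $R$ replaced by $\RR$ throughout, or observe that $\mathit{hom}_{\AA^{\mathit{tw}}}(X,Y) = \RR\otimes_R \mathit{hom}_{A^{\mathit{tw}}}(X,Y)$ as complexes (the $\FF^i$ being finitely generated projective, hence flat), so a cohomology-level identity over $R$ base-changes to one over $\RR$. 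I would also remark that the $\Z/2$-graded variant is immediate by the reduction to periodic $\Z$-graded complexes already invoked for Lemma \ref{th:proj}.
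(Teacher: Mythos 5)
Your filtration induction is, in spirit, the same device the paper uses (a spectral sequence coming from the finite filtration on $\mathit{hom}_{A^{\mathit{tw}}}(X,X)$), but two steps in your write-up are genuinely problematic.

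First, the reduction of $\AA^{\mathit{tw}}$ to $A^{\mathit{tw}}$ by flat base change is false as stated. An object of $\AA^{\mathit{tw}}$ is $\XX=\bigoplus_i \FF^i\otimes X^i[-\sigma^i]$ with $\FF^i$ arbitrary finitely generated projective $\RR$-modules, together with an $\RR$-linear differential $\delta_\XX$; these are not of the form $\RR\otimes_R(\text{something over }R)$, so $\mathit{hom}_{\AA^{\mathit{tw}}}(\XX,\YY)$ is not in general $\RR\otimes_R\mathit{hom}_{A^{\mathit{tw}}}(X,Y)$. The correct statement (and what the paper means by ``this argument extends to families'') is that the spectral-sequence argument can be run over $\RR$ directly, using projectivity of the $\FF^i$ to keep the $E_1$ page well-behaved — not that the result for $\AA^{\mathit{tw}}$ follows formally from the one for $A^{\mathit{tw}}$.

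Second, and more importantly, the final sentence of your inductive step — that ``the same bookkeeping shows the resulting cocycle is a two-sided identity... after a further finite sequence of corrections'' — glosses over precisely the subtlety that the paper's proof is designed to handle. What your filtration argument actually produces is a degree-zero cohomology class $[u_X]$ whose leading term in the associated graded is the identity. That is not the same as $[u_X]$ being the cohomological unit: a filtered map whose associated graded is the identity need not be the identity. The paper does not try to correct $[u_X]$ further; instead it observes (spectral sequence comparison theorem) that left and right multiplication by $[u_X]$ are \emph{bijections} on all $H(\mathit{hom}_{A^{\mathit{tw}}}(Y,X))$ and $H(\mathit{hom}_{A^{\mathit{tw}}}(X,Y))$, and then extracts the actual unit $[e_X]$ algebraically as the unique class with $[e_X]\cdot[u_X]=[u_X]$ (from which $[u_X]\cdot[e_X]=[u_X]$ and the unit axioms follow by cancelling $[u_X]$). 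Your proposal needs this extra step; it is not a matter of ``further corrections'' of the same kind. Relatedly, your stated reason for the vanishing of the obstruction at each stage (``the candidate is cohomologous to the genuine identity on the associated graded'') is not a justification: the correct reason, used in the paper, is multiplicativity of the spectral sequence, which forces the identity class on the $E_1$ page to survive.
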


\begin{proof}
Let $X$ be a twisted complex written as in \eqref{eq:formal-sum}. There is a spectral sequence (convergent after finitely many steps) which leads to $H(\mathit{hom}_{A^{\mathit{tw}}}(X,X))$ and starts with
\begin{equation}
E_1 = \bigoplus_{ij} \mathit{Hom}(F^i,F^j) \otimes H(\mathit{hom}_A(X^i,X^j))[\sigma^i-\sigma^j].
\end{equation}
Moreover, this spectral sequence is multiplicative, which implies that the identity element in the $E_1$ page always survives. 
This yields a degree zero endomorphism $[u_X]$, with the property (by a spectral sequence comparison theorem) that the maps
\begin{equation}
\begin{aligned}
& [a] \longmapsto [u_X] \cdot [a]: H(\mathit{hom}_{A^{\mathit{tw}}}(Y,X)) \longrightarrow H(\mathit{hom}_{A^{\mathit{tw}}}(Y,X)), \\
& [a] \longmapsto [a] \cdot [u_X]: H(\mathit{hom}_{A^{\mathit{tw}}}(X,Y)) \longrightarrow H(\mathit{hom}_{A^{\mathit{tw}}}(X,Y))
\end{aligned}
\end{equation}
are isomorphisms for any $Y$. In particular, there is an $[e_X]$ which satisfies $[e_X] \cdot [u_X] = [u_X]$, and then automatically also $[u_X] \cdot [e_X] = [u_X]$. One easily checks that this is the required cohomological identity. This argument extends to families without any problems.
\end{proof}

\begin{remark} \label{th:unit-h-unit}
In the case of families of twisted complexes, $\mu^2_{\AA^{\mathit{tw}}}(\cdot,e_\XX): \mathit{hom}_{\AA^{\mathit{tw}}}(\XX,\YY) \rightarrow \mathit{hom}_{\AA^{\mathit{tw}}}(\XX,\YY)$ induces the identity on cohomology, and is chain homotopic to its own square (by the $A_\infty$-equations). Therefore, it is actually chain homotopic to the identity, by Lemma \ref{th:homotopy-idempotent}, and the same holds on the other side. This is a slightly stronger property than cohomological unitality, and generally more appropriate for $A_\infty$-categories defined over a ring.
\end{remark}

Unfortunately, the theory of pre-connections on twisted complexes does not generalize to the cohomologically unital context in an obvious way, so we'll have to be careful to use that only for strictly unital $A_\infty$-categories.

The situation for modules is slightly different, since unitality requirements enter into the definition of the objects and morphisms themselves. Given a cohomologically unital $A$, one defines $A^{\mathit{mod}}$ by taking cohomologically unital modules with finite cohomology as objects, and arbitrary module homomorphisms as morphisms. If $A$ was strictly unital, this would yield a category quasi-equivalent to the previously considered version using strictly unital modules and homomorphisms \cite[Section 3.3]{lefevre}. The same construction for families defines $\AA^{\mathit{mod}}$, and Lemma \ref{th:acyclic-modules} still holds.

\begin{remark}
The cohomological unitality condition on a family of modules $\MM$ says that if $e_X \in \mathit{hom}_A^0(X,X)$ is a representative for the cohomology unit in $A$, then $\mu^2_\MM(\cdot,e_X): \MM(X) \longrightarrow \MM(X)$ induces the identity on cohomology. Arguing as in Remark \ref{th:unit-h-unit}, one sees that this map is in fact chain homotopic to the identity, which would again be the more natural condition in general (but as we've seen, turns out to be equivalent in our context).
\end{remark}

\begin{lemma}
The Yoneda embeddings $A \rightarrow A^{\mathit{mod}}$ and $\AA \rightarrow \AA^{\mathit{mod}}$ are quasi-isomorphisms.
\end{lemma}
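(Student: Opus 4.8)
The plan is to deduce the cohomologically unital case from the strictly unital case of Lemma~\ref{th:yoneda} (and, for $\AA$, its $\RR$-linear counterpart, which is the form of the Yoneda lemma already established above for the constant family over a strictly unital category) by passing to a strictly unital model. By \cite{lefevre} there is a strictly unital $A_\infty$-category $A'$ and a quasi-equivalence $\phi\colon A \to A'$; for the family version one uses the constant family $\AA'$ on $A'$, which is strictly unital and quasi-equivalent to $\AA$ over $\RR$ by base change (flatness of $\RR$ over $R$). Restriction of modules along $\phi$ gives a functor $\phi^*\colon (A')^{\mathit{mod}} \to A^{\mathit{mod}}$, which by the standard derived invariance of module categories \cite{keller99} is again a quasi-equivalence; the $\RR$-linear version of this statement rests on Lemma~\ref{th:acyclic-modules} exactly as its field analogue does. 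Moreover $\phi^*$ is compatible with Yoneda embeddings: assembling the components of $\phi$ produces, for each object $X$, a module homomorphism $X^{\mathit{yon}} \to \phi^*\big((\phi X)^{\mathit{yon}}\big)$ whose first-order term is the map $\mathit{hom}_A(Z,X) \to \mathit{hom}_{A'}(\phi Z,\phi X)$, hence a quasi-isomorphism since $\phi$ is cohomologically full and faithful.

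Given objects $Y_0,Y_1$ of $A$, one then combines: the quasi-isomorphism $\mathit{hom}_A(Y_0,Y_1) \xrightarrow{\sim} \mathit{hom}_{A'}(\phi Y_0,\phi Y_1)$ induced by $\phi$; the strictly unital Yoneda quasi-isomorphism $\mathit{hom}_{A'}(\phi Y_0,\phi Y_1) \xrightarrow{\sim} \mathit{hom}_{(A')^{\mathit{mod}}}\big((\phi Y_0)^{\mathit{yon}},(\phi Y_1)^{\mathit{yon}}\big)$ of Lemma~\ref{th:yoneda}; the quasi-isomorphism induced by the cohomologically full and faithful $\phi^*$; and finally the quasi-isomorphisms coming from $Y_k^{\mathit{yon}} \xrightarrow{\sim} \phi^*((\phi Y_k)^{\mathit{yon}})$ together with the fact that pre- and post-composition with a quasi-isomorphism induce quasi-isomorphisms of $\mathit{hom}$-complexes in a cohomologically unital $A_\infty$-category. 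Since every arrow in this zigzag is canonical, the resulting isomorphism on cohomology coincides with the one induced by the Yoneda map \eqref{eq:yoneda1}, which is therefore a quasi-isomorphism. The argument for $\AA \to \AA^{\mathit{mod}}$ is word-for-word the same over $\RR$.

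I expect the main work to be the bookkeeping needed to check that the composite of the zigzag really is the Yoneda map on cohomology---that is, naturality of the whole comparison with respect to $\phi$---together with confirming that the Morita-type input (restriction along a quasi-equivalence is a quasi-equivalence of module categories, with cohomologically unital modules and arbitrary homomorphisms) is available in the $\RR$-linear framework; both are routine but must be spelled out. An alternative that sidesteps the strictly unital model is to run the length filtration spectral sequence of $\mathit{hom}_{A^{\mathit{mod}}}(Y_0^{\mathit{yon}},M)$ for an arbitrary cohomologically unital $M$ with finite cohomology: its $E_1$-page is the (reduced) bar complex of the ordinary graded category $H(A)$ with coefficients in $H(M)$, and since $H(Y_0^{\mathit{yon}})$ is the representable $\mathit{Hom}_{H(A)}(-,Y_0)$---a projective object in the category of modules over $H(A)$---the page $E_2$ collapses to $H(M)(Y_0)$; taking $M=Y_1^{\mathit{yon}}$ and using cohomological unitality to identify $\mu^2_A(-,e_{Y_0})$ with a quasi-isomorphism then gives the claim. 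In that approach the delicate point is the convergence of the spectral sequence, to be handled by the same comparison and vanishing arguments as in the proof of Lemma~\ref{th:acyclic-modules}.
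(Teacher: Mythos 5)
Your first route---reduction to a strictly unital model---has a genuine gap for the family case $\AA \rightarrow \AA^{\mathit{mod}}$, and it is exactly the gap the paper's own argument is designed to avoid. Passing to a strictly unital $A'$ over $R$ with $\phi\colon A \to A'$ a quasi-equivalence and forming the constant family $\AA'$ over $\RR$ is all fine; but Lemma~\ref{th:yoneda} for $\AA'$ is a statement about \emph{strictly unital} module homomorphisms, while $\Phi^*$ lands in the category of cohomologically unital modules with arbitrary homomorphisms. Bridging the two requires the comparison between strictly unital and cohomologically unital module categories over a strictly unital $A_\infty$-category, which is the result from \cite[Section 3.3]{lefevre}; and the paper flags, in the remark immediately following this lemma, that this result rests on minimal models for modules, which exist only over a field, and hence is not available over $\RR$. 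So your reduction works for $A$ but does not descend to $\AA$ without new input.

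Your alternative route is closer to the paper's, but the paper implements the length-filtration idea more economically, and in a way that bypasses the convergence issue you worry about. Instead of computing the $E_2$ page of $\mathit{hom}_{A^{\mathit{mod}}}(Y_0^{\mathit{yon}},M)$ as derived $\mathit{Hom}$ over $H(A)$ (which would require identifying $d_1$ with the bar differential and establishing absolute convergence), the paper reuses the explicit maps from Lemma~\ref{th:yoneda} together with the homotopy $h$ of \eqref{eq:h-homotopy}, now with $e_{Y_0}$ any cocycle representing the cohomological unit. One composite is $a \mapsto \mu^2_A(a,e_{Y_0})$, a quasi-isomorphism by definition of cohomological unitality; the other composite, after adding the coboundary of $h$, becomes the explicit endomorphism $k$ of \eqref{eq:k-homotopy}. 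Because $e_{Y_0}$ is a cocycle, the length-preserving term of $k(b)^d$ is $b^d(\mu^2_A(e_{Y_0},a_d),a_{d-1},\dots,a_1)$, so $k$ induces the identity on the $E_1$ page of the length filtration and is therefore a quasi-isomorphism by the same comparison argument as in Lemma~\ref{th:acyclic-modules}; since both round-trip compositions are quasi-isomorphisms, so is the Yoneda map. This argument is $\RR$-linear as it stands and thus applies verbatim to $\AA$, which is the whole point.
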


\begin{proof}
Take the same maps
\begin{equation}
\mathit{hom}_A(Y_0,Y_1) \longrightarrow \mathit{hom}_{A^{\mathit{mod}}}(Y^{\mathit{yon}}_0,Y_1^{\mathit{yon}}) \longrightarrow \mathit{hom}_A(Y_0,Y_1)
\end{equation}
as in Lemma \ref{th:yoneda}, using any representative $e_{Y_0}$ for the cohomological unit. Composition in the given order is the endomorphism $a \mapsto \mu^2_A(a,e_{Y_0})$, which by definition acts as the identity on cohomology. Take the composition in the opposite order and add the coboundary of the homotopy \eqref{eq:h-homotopy}. The outcome is the map
\begin{equation} \label{eq:k-homotopy}
\begin{aligned}
& k: \mathit{hom}_{A^{\mathit{mod}}}(Y_0^{\mathit{yon}},Y_1^{\mathit{yon}}) \longrightarrow \mathit{hom}_{A^{\mathit{mod}}}(Y_0^{\mathit{yon}},Y_1^{\mathit{yon}}), \\
& k(b)^d(a_d,\dots,a_1) = \sum_i (-1)^{|a_{i+1}| + \cdots + |a_d| + d - i + 1} b^{i+1}(\mu_A^{d-i+1}(e_{Y_0},a_d,\dots, a_{i+1}),a_i,\dots,a_1).
\end{aligned}
\end{equation}
By looking at the length filtration \eqref{eq:length-sequence}, one sees that this is a quasi-isomorphism. The same thing applies to constant families.
\end{proof}

\begin{remark}
As before, for a strictly unital $A$ we now have a two version of $\AA^{\mathit{mod}}$, one defined in a strictly unital context, and the other by treating $A$ as cohomology unital. Unfortunately, the previously quoted result from \cite{lefevre} does not immediately extend to this context, since it relies on minimal models for modules, which only exist over a field (and the alternative approach from \cite[Section 2]{seidel04} has not been extended to modules so far). It seems highly plausible that the two versions are still quasi-equivalent, but we will allow ourselves to sweep the issue under the rug. In fact, in all our applications what counts are the subcategories of perfect families $\AA^{\mathit{perf}}$, where this problem does not arise (because in both contexts they are split-generated by constant families).
\end{remark}

Finally, in the cohomologically unital context, one similarly wants to adjust the notion of bimodule, and use the full Hochschild complex instead of the reduced one.


\section{The two-torus\label{sec:elliptic}}

In this section we consider a specific finite-dimensional algebra, together with its $A_\infty$-deforma\-tions. The algebra occurs geometrically in connection with degree $2$ line bundles on elliptic curves, and its $A_\infty$-deformations yield one possible model for the derived category of coherent sheaves on such curves. In view of homological mirror symmetry (see \cite{kontsevich94} for the general statement, and \cite{polishchuk-zaslow98, kontsevich-soibelman00, fukaya02b, abouzaid-smith09, lekili-perutz11} for the case relevant here), the same structure describes the Fukaya category of the two-torus. Our aim is to construct a particular family of objects, which in terms of the elliptic curve is the tautological family of structure sheaves of its points, and in terms of the Fukaya category is a family of parallel lines on the torus (the connection between the two could be made directly via SYZ transformations, as in \cite{kontsevich-soibelman00, fukaya02c}). The point of the exercise is to see how this fits in with the technical notions of family given in the previous section. This is not entirely straightforward, since the $K$-theory class varies, which precludes a description as family of twisted complexes.

Initially, we will work over an arbitrary field $R$ of characteristic $0$. Later on, when considerations become more geometric, we will re-introduce the added assumption that $R$ be algebraically closed. In the last parts, we will specialize this further to the simplest (one-variable) Novikov field from Floer theory, namely
\begin{equation} \label{eq:novikov-field}
R = \left\{
\begin{aligned} & u = c_0 \hbar^{m_0} + c_1 \hbar^{m_1} + \cdots, \\
& \text{where } c_k \in \bC, \;\; m_k \in \bR, \text{ and } \lim_{k \rightarrow \infty} m_k = +\infty \end{aligned}\right\}.
\end{equation}
Note that, since the coefficient field $\bC$ is algebraically closed, so is $R$ \cite[Appendix]{fukaya-oh-ohta-ono10}. The sign conventions used in constructing the Fukaya category of the two-torus follow \cite[Section 13]{seidel04}.

\subsection{Koszul algebras\label{subsec:koszul}}
Let $W$ be a finite-dimensional graded $R$-vector space. A {\em quadratic algebra} is an associative unital graded $R$-algebra of the form $A = TW/J$, where $TW$ is the tensor algebra, and $J \subset W \otimes W$ is a graded linear subspace. Let $J^\perp \subset W^\vee \otimes W^\vee$ be the orthogonal complement of $J$ with respect to the canonical pairing
\begin{equation}
\begin{aligned}
& W^\vee \otimes W^\vee \otimes W \otimes W \longrightarrow R, \\
& w_2^\vee \otimes w_1^\vee \otimes w_1 \otimes w_2 \longmapsto (-1)^{|w_2|} w_2^\vee(w_2)w_1^\vee(w_1).
\end{aligned}
\end{equation}
The {\em quadratic dual} of $A$ is defined as $A^! = T(W^\vee[-1])/J^\perp[-2]$. The Koszul complex is the graded vector space $A^! \otimes A$ with differential
\begin{equation}
x^! \otimes x \longmapsto \sum_r (-1)^{|x|} x^! w_r^\vee \otimes w_r x,
\end{equation}
where $\{w_r\}$ is a basis of $W$, and $\{w_r^\vee\}$ the dual basis. One says that $A$ is a {\em Koszul algebra} if the Koszul complex is acyclic.

There is also a more abstract formulation. Consider the abelian category of graded left $A$-modules, and in it the simple module $R$. We then have a bigraded group $\mathit{Ext}^i_A(R,R[j])$, where $i$ is the cohomological grading and $j$ the internal one, inherited from the grading of $A$ itself. For instance,
\begin{equation} \label{eq:low-ext}
\begin{aligned}
& \mathit{Ext}^0_A(R,R[j]) = \begin{cases} R & j = 0, \\ 0 & \text{otherwise,} \end{cases} \\
& \mathit{Ext}^1_A(R,R[j]) \iso (W^\vee)^j, \\
& \mathit{Ext}^2_A(R,R[j]) \iso (J^\vee)^j.
\end{aligned}
\end{equation}
In our case $A$ has an extra grading by pathlength, and this induces another grading on each $\mathit{Ext}^i_A(R,R[j])$. For low values of $i$, one sees from \eqref{eq:low-ext} that $\mathit{Ext}^i$ is concentrated in path length $i$. Then, $A$ is Koszul if and only if the same holds for all $i$ 
(the original reference is \cite{priddy70}; for more recent expositions in slightly varying degrees of generality, see \cite{beilinson-ginsburg-schechtman88, froberg89, beilinson-ginzburg-soergel96}).

\begin{addendum} \label{th:double-grading}
Even though we have kept track of some signs arising from the grading of $W$, these are actually irrelevant for the purpose of determining whether $A$ is Koszul or not, as the following trick shows. Let $\tilde{A}$ be the algebra obtained from $A$ by multiplying the given grading of $W$ by 2. The category of graded $A$-modules embeds fully and faithfully into that of graded $\tilde{A}$-modules in the same way, which we denote by $M \mapsto \tilde{M}$. This doubles the amount of shift,
\begin{equation} \label{eq:double-shift}
\widetilde{M[j]} \iso \widetilde{M}[2j].
\end{equation}
Using that and any projective resolution, one sees that
\begin{equation}
\mathit{Ext}^i_{\tilde{A}}(\tilde{M},\tilde{N}[j]) = \begin{cases} \mathit{Ext}^i_{A}(M,N[j/2]) & \text{if $j$ is even}, \\
0 & \text{otherwise.}
\end{cases}
\end{equation}
Moreover, the isomorphism is compatible with path length. Hence, $A$ is Koszul if and only if $\tilde{A}$ is. Once one has done this change, it is clear that the same will hold even if one changes the grading to be trivial (concentrated in degree zero), since the Koszul complex is not affected by even changes in the grading.
\end{addendum}

The Hochschild cohomology of a Koszul algebra $A$ can be computed \cite{green-hartman-marcos-solberg05} as the cohomology of $A^! \otimes A$ with a modified differential
\begin{equation} \label{eq:koszul-hochschild}
x^! \otimes x \longmapsto \sum_r (-1)^{|x|} x^! w_r^\vee \otimes w_r x - (-1)^{(|w_r|+1)(|x|+|x^!|)} w_r^\vee x^! \otimes x w_r.
\end{equation}
More precisely, the Hochschild cohomology of any graded algebra $A$ is a bigraded vector space $\mathit{HH}^i(A,A[j])$, where again $i$ is the cohomological grading, and $j$ the internal one. For Koszul algebras, $i+j$ corresponds to the natural grading of $A^! \otimes A$, whereas $i$ measures path lengths in the $A^!$ factor of \eqref{eq:koszul-hochschild}.

\begin{example} \label{th:truncated}
Take a free algebra $A = R\langle w \rangle$, where $w$ has odd degree $d$. The Koszul dual is a truncated polynomial algebra $A^! = R[w^\vee]/(w^\vee)^2$, where $w^\vee$ has even degree $1-d$. The Hochschild cohomology has a basis consisting of $1 \otimes w^i$ with $i$ even, and $w^\vee \otimes w^i$ with $i$ odd. This contrasts with the case of even $d$, where the differential \eqref{eq:koszul-hochschild} vanishes.

It is useful to consider this example in the context of Addendum \ref{th:double-grading}. In the category of graded $A$-bimodules, we have
\begin{equation} \label{eq:hh-ext}
\mathit{HH}^i(A,A[j]) = \mathit{Ext}_{A \otimes A^{\mathit{opp}}}^i(A,A[j])
\end{equation}
where the shifted space $A[j]$ has the bimodule structure which is given by ordinary multiplication on the left side, and twisted multiplication $(-1)^{j|x|} xy$ on the right side. If we take $A$ and double its given grading to $\tilde{A}$, the category of graded $A$-bimodules embeds into that of $\tilde{A}$-bimodules in the obvious way. However, this embedding fails to be compatible with shifts, so that the analogue of \eqref{eq:double-shift} for the groups \eqref{eq:hh-ext} holds only if $j$ is even. 
\end{example}

\subsection{A Hochschild cohomology computation\label{subsec:hh}}
Consider the graded path algebra associated to the quiver
\begin{equation} \label{eq:quiver}
\xymatrix{\stackrel{1}{\bullet} \ar@<.5em>@/^2pc/[rr]^-{w_1} \ar@/^2pc/[rr]_-{w_2} && \ar@/^2pc/[ll]_-{w_3} \ar@<.5em>@/^2pc/[ll]^-{w_4} \stackrel{2}{\bullet}}
\end{equation}
where $w_1,w_2$ have degree $0$, and $w_3,w_4$ degree $1$. Composition of paths will be written from right to left, so the path $w_3w_1$ means going first along $w_1$ and then $w_3$.

\begin{definition} \label{th:q-algebra}
The graded algebra $Q$ is the quotient of the path algebra of \eqref{eq:quiver} obtained by imposing the relations
\begin{equation} \label{eq:quadratic-relations}
w_3w_2 + w_4w_1 = 0, \quad w_1w_4 + w_2w_3 = 0, \quad w_3w_1 = w_4w_2 = 0.
\end{equation}
\end{definition}

Here's an alternative description. Let $e_1,e_2 \in Q$ be the idempotents associated to the length $0$ paths. Take $V = R^2$. One can identify
\begin{equation}\label{eq:identifications} \left\{
\begin{aligned}
& e_2 Q e_1 = V && \text{using $w_1,w_2$ as a basis,} \\
& e_1 Q e_2 = V && \text{using $w_3,w_4$ as a basis,} \\
& e_1 Q e_1 = \Lambda^{\mathit{even}}V = \Lambda^0 V \oplus \Lambda^2 V && \text{using $e_1,q_1 = w_3w_2$ as a basis,} \\
& e_2 Q e_2 = \Lambda^{\mathit{even}}V = \Lambda^0 V \oplus \Lambda^2 V && \text{using $e_2,q_2 = w_1w_4$ as a basis.}
\end{aligned} \right.
\end{equation}
where of course $\Lambda^0 V = R$. With respect to these identifications, both nontrivial multiplications
\begin{equation} \label{eq:wedge}
\begin{aligned}
& e_1 Q e_2 \otimes e_2 Q e_1 \longrightarrow e_1 Q e_1, \\
& e_2 Q e_1 \otimes e_1 Q e_2 \longrightarrow e_2 Q e_2
\end{aligned}
\end{equation}
equal the ordinary wedge product $V \otimes V \rightarrow \Lambda^2 V$.

The entire theory of Koszul algebras can be carried out over any semisimple base algebra, such as $R_2 = R e_1 \oplus R e_2$ (as already noticed in \cite{beilinson-ginsburg-schechtman88}). With respect to the exposition in Section \ref{subsec:koszul}, the main change needed is that all tensor products should be taken over the base algebra. Our original description \eqref{eq:quadratic-relations} shows that $Q$ is quadratic in these terms, and in fact:

\begin{lemma}
$Q$ is Koszul.
\end{lemma}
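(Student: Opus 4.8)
The plan is to peel off everything that Addendum~\ref{th:double-grading} tells us is irrelevant, and then recognize what remains as a classical Koszul algebra. The argument of Addendum~\ref{th:double-grading} applies verbatim with the ground field replaced by the semisimple base $R_2 = R e_1 \oplus R e_2$, so the Koszul property of $Q$ depends neither on the chosen internal grading nor (as noted there) on signs. I may therefore equip $Q$ with its path-length grading, in which $w_1,\dots,w_4$ have degree $1$ and $q_1,q_2$ have degree $2$.

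The key observation is that, with this grading, the description \eqref{eq:identifications}--\eqref{eq:wedge} presents $Q$ as a vertex-separated form of the exterior algebra on $V = R^2$. Concretely, let $\Lambda = \Lambda^{\bullet}(R^2)$, $\Z/2$-graded by the parity of exterior degree; then a graded $Q$-module is the same thing as a $\Z/2$-graded $\Lambda$-module, with the vertex-$1$ part being the even component and the vertex-$2$ part the odd one (the wedge multiplications \eqref{eq:wedge} are exactly the $\Lambda^1\!\otimes\!\Lambda^{\bullet}\to\Lambda^{\bullet}$ action, and the vanishing of $Q$ in path-lengths $\geq 3$ matches $\Lambda^{\geq 3}(R^2)=0$). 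Equivalently, $Q \iso \mathit{End}_{\Lambda}(\Lambda \oplus \Lambda[1])^{\mathit{opp}}$ up to signs, with $Q e_1 \leftrightarrow \Lambda$, $Q e_2 \leftrightarrow \Lambda[1]$, and path-length on $Q$ matching exterior degree on $\Lambda$. Under this dictionary the two simple $Q$-modules $S_1,S_2$ become the residue field $R$ placed in even, respectively odd, $\Z/2$-degree.

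Now I would invoke the characterization of Koszulness recalled after \eqref{eq:low-ext}: $Q$ is Koszul iff $\bigoplus_{a,b}\mathit{Ext}^i_Q(S_a,S_b)$ sits in path-length $i$ for all $i$. Through the equivalence of categories above this group becomes $\mathit{Ext}^i_{\Lambda}(R,R)$, computed among $\Z/2$-graded modules, with path-length replaced by internal (exterior) degree. Since $\Lambda^{\bullet}(R^2)$ is one of the standard Koszul algebras, with quadratic dual the polynomial algebra $R[x_1,x_2]$ (see e.g.\ \cite{priddy70, beilinson-ginzburg-soergel96}), $\mathit{Ext}^i_{\Lambda}(R,R[j])$ vanishes unless $j=i$, and imposing the $\Z/2$-grading merely records the parity of $i$ and changes nothing. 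Hence $\mathit{Ext}^i_Q(S_a,S_b)$ is concentrated in path-length $i$, so $Q$ is Koszul.

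The one place where real (though routine) work is needed is the identification in the second paragraph: one must check that \eqref{eq:identifications}--\eqref{eq:wedge} does describe $\Z/2$-graded $\Lambda$-modules and that path-length transports to exterior degree; everything else is then imported from the exterior algebra. If one prefers to stay entirely self-contained, the same conclusion can be reached inside the framework of Section~\ref{subsec:koszul}: the quadratic dual $Q^{!}$ is the path algebra of the quiver \eqref{eq:quiver} with ``commutativity'' relations (an $R_2$-linear avatar of $R[x_1,x_2]$, whose treatment over a semisimple base is legitimate by \cite{beilinson-ginsburg-schechtman88}), and one verifies directly that the Koszul complex $Q^{!}\otimes_{R_2}Q$ is acyclic --- vertex by vertex this is the classical exactness of the Koszul complex on two commuting generators.
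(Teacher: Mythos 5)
Your argument is correct, and the core observation is the same as the paper's: use Addendum~\ref{th:double-grading} (over the semisimple base $R_2$) to discard the grading/sign mismatch, and then recognize $Q$ as a $\Z/2$-twist of the exterior algebra $\Lambda(V)$. Where you part ways from the paper is in how that twist is packaged and how Koszulness is then concluded. The paper writes down an explicit algebra isomorphism $Q \iso \Lambda(V) \rtimes \Z/2$ over $R_2 \iso R[\Z/2]$ and then appeals to the known Koszulness of skew group algebras of Koszul algebras (citing \cite{gordon08} for the dual statement about $\mathit{Sym}(V^\vee)\rtimes G$). You instead phrase the same structural fact as a Morita/module-category equivalence between (graded) $Q$-modules and $\Z/2$-graded $\Lambda$-modules, under which the two simples go to $R$ and $R[1]$, and then transport the $\mathit{Ext}$-characterization of Koszulness across the equivalence, so that the heavy lifting reduces to the textbook Koszulness of $\Lambda(V)$ itself. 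These two packagings are really the same algebra in disguise (the smash product $\Lambda \rtimes \Z/2$ is exactly the degree-zero graded endomorphism algebra of $\Lambda \oplus \Lambda[1]$), but your route is arguably more elementary, since it does not rely on the skew-group-algebra result. The alternate argument you sketch at the end --- checking acyclicity of the Koszul complex $Q^! \otimes_{R_2} Q$ vertex by vertex and reducing to exactness of the classical two-variable Koszul complex --- is also sound and fully self-contained; the paper does not give it.
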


\begin{proof}
Consider the $\bZ/2$ action on $V$, where the nontrivial element acts by $-\mathit{Id}$. It follows from \eqref{eq:wedge} that
\begin{equation} \label{eq:lambda-q}
Q \iso \Lambda(V) \rtimes \bZ/2
\end{equation}
as an algebra over $R[\bZ/2] \iso R_2$. Of course, this isomorphism is not compatible with the grading of $Q$ and the natural grading of the exterior algebra. However, by Addendum \ref{th:double-grading} the discrepancy is irrelevant for deciding whether $Q$ is Koszul or not. On the other hand, the Koszulness of $\Lambda(V) \rtimes \bZ/2$ is well-known, being a minor variation on the basic case of the exterior algebra itself (for the dual case of polynomial algebras twisted by finite groups, see \cite[Section 2.7]{gordon08}).
\end{proof}

The quadratic dual $Q^!$ of $Q$ (again, taken over $R_2$) is based on the quiver
\begin{equation}
\label{eq:dual-quiver}
\xymatrix{\stackrel{1}{\bullet} \ar@<.5em>@/^2pc/[rr]^-{w_3^\vee} \ar@/^2pc/[rr]_-{w_4^\vee} && \ar@/^2pc/[ll]_-{w_1^\vee} \ar@<.5em>@/^2pc/[ll]^-{w_2^\vee} \stackrel{2}{\bullet}}
\end{equation}
where $w_1^\vee,w_2^\vee$ have degree $0$, and $w_3^\vee,w_4^\vee$ degree $1$. This is actually isomorphic to \eqref{eq:quiver} but we avoid making the identification, and in any case the relations defining $Q^!$ are different:
\begin{equation}
w_3^\vee w_2^\vee - w_4^\vee w_1^\vee = 0, \quad w_1^\vee w_4^\vee - w_2^\vee w_3^\vee = 0.
\end{equation}
As in \eqref{eq:lambda-q}, if one forgets the grading then $Q^! \iso \mathit{Sym}(V^\vee) \rtimes \bZ/2$.

\begin{lemma} \label{th:hh-computation-1}
$\mathit{HH}^i(Q,Q[2-i])$ vanishes for $i \geq 5$, and $\mathit{HH}^i(Q,Q[3-i])$ vanishes for $i \geq 7$.
\end{lemma}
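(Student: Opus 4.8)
The plan is to compute through the Koszul model for Hochschild cohomology. Since $Q$ is Koszul over the semisimple base $R_2 = Re_1 \oplus Re_2$, the group $\mathit{HH}^i(Q,Q[j])$ is the degree-$i$ cohomology of the complex $(Q^! \otimes_{R_2} Q, d)$ with differential \eqref{eq:koszul-hochschild}, bigraded so that $i$ is the path length in the $Q^!$-factor while $i+j$ is the total internal degree of an element $x^! \otimes x$, with $Q^!$ given the internal grading coming from $Q^! = T(W^\vee[-1])/J^\perp[-2]$ (so that $d$ preserves $j$, equivalently $|w_r^\vee|+|w_r|=1$). Thus the two statements concern the ``antidiagonal strips'' $i+j=2$ and $i+j=3$.

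The first step is a count of the available internal degrees. By \eqref{eq:identifications} the algebra $Q$ is concentrated in internal degrees $0$ and $1$. Dually, $Q^!$ is generated over $R_2$ by the duals of the four generators of $Q$, and in the grading above the duals of $w_3,w_4$ have internal degree $0$ while the duals of $w_1,w_2$ have internal degree $1$; since the quiver \eqref{eq:dual-quiver} is bipartite, a path of length $i$ alternates between the two kinds, so its internal degree is $\lfloor i/2\rfloor$ or $\lceil i/2\rceil$. Hence a nonzero $x^! \otimes x$ contributing to $\mathit{HH}^i(Q,Q[j])$ has $g^! + g = i+j$ with $g = |x| \in \{0,1\}$ and $g^! = |x^!| \in \{\lfloor i/2\rfloor,\lceil i/2\rceil\}$. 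For the strip $j = 2-i$ this forces $\{1,2\}$ to meet $\{\lfloor i/2\rfloor,\lceil i/2\rceil\}$, which fails for $i \geq 6$; for $j = 3-i$ it forces $\{2,3\}$ to meet $\{\lfloor i/2\rfloor,\lceil i/2\rceil\}$, which fails for $i \geq 8$. This proves vanishing for $i \geq 6$, resp.\ $i \geq 8$.

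The two borderline degrees $i=5$ (first strip, weight $j=-3$) and $i=7$ (second strip, weight $j=-4$) need a genuine argument. In each case the same count shows that in the fixed-weight complex $C^\bullet_j = (Q^! \otimes_{R_2}Q)_j$ the term one degree below, $C^{i-1}_j$, already vanishes, so $\mathit{HH}^i(Q,Q[j]) = \ker\bigl(d\colon C^i_j \to C^{i+1}_j\bigr)$ with $C^{i+1}_j \neq 0$, and the point is that this differential is injective. Here $C^i_j$ is spanned by the $x^! \otimes x$ with $x^!$ a length-$i$ path of minimal internal degree $\lfloor i/2\rfloor$ and $x \in Q^0 = \langle e_1,e_2,w_1,w_2\rangle$. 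I would verify injectivity directly from \eqref{eq:koszul-hochschild}, tracking which of its two terms (appending a dual generator on the right, resp.\ on the left) survives the idempotent matching into $C^{i+1}_j$. An alternative organisation uses the identifications $Q \cong \Lambda(V)\rtimes\Z/2$, $Q^! \cong \mathit{Sym}(V^\vee)\rtimes\Z/2$ together with the decomposition $\mathit{HH}^*(Q,Q) \cong \bigl(\mathit{HH}^*(\Lambda V) \oplus \mathit{HH}^*(\Lambda V,{}_\sigma\Lambda V)\bigr)^{\Z/2}$ attached to the $\Z/2$-symmetry, $\sigma$ the nontrivial element: the $\sigma$-twisted summand is supported in a bounded range of cohomological degree (since $\sigma = -\mathrm{id}$ fixes only the origin), so for $i \geq 3$ the computation reduces to a single weight in the classically known $\mathit{HH}^*(\Lambda(R^2))$.

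I expect the only real work to be this last low-degree verification — in substance it is just the assertion that there are no nonzero Hochschild cocycles in bidegree $(5,-3)$ or $(7,-4)$ — and the one delicate point is the interplay of the signs in \eqref{eq:koszul-hochschild} with the bipartite $R_2$-module structure of $Q$ and $Q^!$. Everything preceding it is routine grading bookkeeping.
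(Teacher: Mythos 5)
Your approach is the same as the paper's — compute the bidegree-$(i,j)$ piece of the Koszul complex $(Q^!\otimes_{R_2}Q)_{\mathrm{diag}}$ directly — and your degree bookkeeping for even $i$ is correct. But you have a genuine gap at the odd values $i=5,7$, and the way you propose to close it (an injectivity argument on the differential) is not what happens: the relevant chain groups are already zero, by an observation you are missing.

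The complex is
\begin{equation*}
(Q^!\otimes_{R_2}Q)_{\mathrm{diag}} \;=\; \bigoplus_{a,b\in\{1,2\}} e_aQ^!e_b\otimes_R e_bQe_a,
\end{equation*}
so the two tensor factors are not independent: the idempotent-matching condition forces the $Q$-factor into the ``opposite'' summand. Now a path of odd length $i$ in the bipartite quiver \eqref{eq:dual-quiver} must join vertex $1$ to vertex $2$ or vice versa, and (with the degree assignment you yourself use, which is the correct one) the paths of minimal internal degree $\lfloor i/2\rfloor=(i-1)/2$ are exactly those from $1$ to $2$, i.e.\ those in $e_2Q^!e_1$; the paths in $e_1Q^!e_2$ all have the larger degree $(i+1)/2$. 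If $x^!\in e_2Q^!e_1$, the idempotent constraint forces $x\in e_1Qe_2$, which by \eqref{eq:identifications} is spanned by $w_3,w_4$ and so concentrated in internal degree $1$, not $0$. Hence for odd $i$ one never realizes $(g^!,g)=((i-1)/2,0)$: either $g^!=(i-1)/2$ with $g=1$, or $g^!=(i+1)/2$ with $g\in\{0,1\}$, so in all cases $i+j=g^!+g\geq (i+1)/2$. For $i+j=2$ this gives vanishing of the chain group for all odd $i\geq 5$, and for $i+j=3$ for all odd $i\geq 7$, matching the even-$i$ bounds you already have.

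In particular, your assertion that $C^i_j$ ``is spanned by $x^!\otimes x$ with $x^!$ of minimal internal degree $\lfloor i/2\rfloor$ and $x\in Q^0$'' is false precisely at the borderline cases: for minimal-degree $x^!$ the companion $x$ cannot lie in $Q^0$. Consequently $C^5_{-3}=C^7_{-4}=0$, there is no nonzero $\ker(d)$ to compute, and the whole lemma is, as the paper treats it, pure degree bookkeeping. Both your fallback plans (the explicit injectivity check and the $\Z/2$-decomposition into $\mathit{HH}^*(\Lambda V)$ plus a twisted summand) would, if carried out, just rediscover that the chain group is empty, but they are more machinery than the statement requires.
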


\begin{proof}
We need to adapt the previous discussion slightly to the framework over $R_2$. The relevant complex computing the Hochschild cohomology is now
\begin{equation}
(Q^! \otimes_{R_2} Q)_{diag} = \bigoplus_{i,j=1,2} e_i Q^! e_j \otimes_R e_j Q e_i,
\end{equation}
where the differential \eqref{eq:koszul-hochschild} remains the same as before. {\em If $i$ is even}, any path in $Q^!$ of length $i$ has degree $i/2$, which implies that $\mathit{HH}^i(Q,Q[j]) = 0$ for $i+j < i/2$. {\em If $i$ is odd}, the minimal degree of a path of length $i$ in $Q^!$ is $(i-1)/2$, but the paths having this minimal degree all lie in $e_2 Q^! e_1$, whereas $e_1 Q e_2$ is concentrated in degree $1$. Therefore, $\mathit{HH}^i(Q,Q[j]) = 0$ for $i+j < (i+1)/2$.
\end{proof}

\begin{lemma} \label{th:hh-computation-2}
$\mathit{HH}^3(Q,Q[-1]) = 0$, and $\mathit{HH}^4(Q,Q[-2]) \iso \mathit{Sym}^4(V^\vee)$.
\end{lemma}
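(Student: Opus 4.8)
The strategy is to compute the relevant pieces of Hochschild cohomology using the Koszul complex model \eqref{eq:koszul-hochschild}, adapted to the base $R_2$ as in the proof of Lemma \ref{th:hh-computation-1}. Recall that, forgetting the grading, $Q^! \iso \mathit{Sym}(V^\vee) \rtimes \Z/2$ and $Q \iso \Lambda(V) \rtimes \Z/2$, so the diagonal part $(Q^! \otimes_{R_2} Q)_{\mathit{diag}} = \bigoplus_{i,j} e_i Q^! e_j \otimes_R e_j Q e_i$ breaks up according to path length in the $Q^!$-factor and wedge-degree in the $Q$-factor. For the group $\mathit{HH}^i(Q,Q[j])$ the path length in $Q^!$ equals $i$, and the internal degree is $i+j$. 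So for $\mathit{HH}^3(Q,Q[-1])$ we need $i+j = 2$, length $3$ in $Q^!$ tensored against wedge-degree in $Q$ so that degrees add to $2$: this means either a length-$3$ path of $Q^!$-degree $1$ (so necessarily in $e_2 Q^! e_1$ by the path-length/degree bookkeeping in the proof of Lemma \ref{th:hh-computation-1}) tensored with a degree-$1$ element of $Q$, or a length-$3$ path of $Q^!$-degree $2$ paired with a degree-$0$ element of $Q$ — but the latter is impossible since all length-$3$ paths have $Q^!$-degree at least $1$ and equality forces the $e_2 Q^! e_1$ component, while degree $2$ would force an extra odd generator which lands in the wrong corner. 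I would enumerate these contributions explicitly and show the differential \eqref{eq:koszul-hochschild} identifies the surviving cocycles with coboundaries; the cleanest route is to note that after forgetting the grading the Hochschild cohomology of $\mathit{Sym}(V^\vee) \rtimes \Z/2 \simeq \Lambda(V)\rtimes \Z/2$ is the $\Z/2$-invariant part of $\mathit{HH}^*(\Lambda(V))$ together with a contribution from the nontrivial component of the group, and then track which internal degrees survive.

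For $\mathit{HH}^4(Q,Q[-2])$ we need $i+j = 2$ again but now path length $4$ in $Q^!$. A length-$4$ path in $Q^!$ has $Q^!$-degree at least $2$, with equality exactly for paths built from the four degree-$0$ generators $w_1^\vee,w_2^\vee$ (and their counterparts in the other direction), and such a path necessarily lies in $e_i Q^! e_i$. Pairing with a degree-$0$ element $e_i$ of $Q$ gives internal degree $2$, as required. The degree-$0$ length-$4$ paths in $Q^!$, modulo the commutation relations $w_1^\vee w_4^\vee = w_2^\vee w_3^\vee$ etc., are precisely the degree-$4$ part of $\mathit{Sym}(V^\vee) \rtimes \Z/2$ sitting in the identity component, i.e. the $\Z/2$-invariants of $\mathit{Sym}^4(V^\vee)$; but the nontrivial element of $\Z/2$ acts by $-1$ on $V^\vee$, hence by $+1$ on $\mathit{Sym}^4(V^\vee)$, so all of $\mathit{Sym}^4(V^\vee)$ is invariant. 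One then checks that these cocycles are not coboundaries: the differential \eqref{eq:koszul-hochschild} out of the $(3,\cdot)$-piece into the $(4,\cdot)$-piece produces only terms of internal degree $\geq 3$ in the relevant corner, or is otherwise seen to vanish on the appropriate graded pieces, so $\mathit{HH}^4(Q,Q[-2]) \iso \mathit{Sym}^4(V^\vee)$. Conversely, the incoming differential from length $4$ to length $5$ does not kill anything in this internal degree by a similar count.

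The main obstacle I anticipate is not conceptual but combinatorial: keeping the $R_2$-bimodule bookkeeping straight, since the two idempotents $e_1,e_2$ mean every path has a prescribed source and target, and the $\Z/2$-twist in $Q \iso \Lambda(V)\rtimes\Z/2$ mixes the two corners in a way that must be tracked through the differential \eqref{eq:koszul-hochschild}. Concretely, one must verify that the terms $-(-1)^{(|w_r|+1)(|x|+|x^!|)} w_r^\vee x^! \otimes x w_r$ in \eqref{eq:koszul-hochschild}, which move a generator from the $Q$-side to the $Q^!$-side, do not create unexpected cancellations or extra cocycles in the two internal degrees of interest. I would organize this by first handling the underlying $\Z/2$-free algebra $\Lambda(V)$ (where $\mathit{HH}^*(\Lambda(V),\Lambda(V))$ is classical and the internal-degree-$2$ part is easy to isolate), then passing to the smash product via the standard decomposition $\mathit{HH}^*(\Lambda(V)\rtimes\Z/2) \iso (\bigoplus_{g\in\Z/2} \mathit{HH}^*(\Lambda(V); \Lambda(V)_g))^{\Z/2}$, and finally re-introducing the nonstandard grading of $Q$ using Addendum \ref{th:double-grading} to pin down exactly which internal degrees appear. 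The vanishing of $\mathit{HH}^3(Q,Q[-1])$ then falls out because the only candidate contributions live in the "wrong" corner $e_2 Q^! e_1$ paired against $e_1 Q e_2$, and the commutation relations defining $Q^!$ force them to be coboundaries.
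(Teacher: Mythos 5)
Your proposed strategy is the right one in outline, and the fallback route you sketch at the end (reduce the ungraded algebra to $\Lambda(V)\rtimes\Z/2$, use the crossed-product decomposition of Hochschild cohomology, and re-introduce the nonstandard grading via Addendum \ref{th:double-grading}) is essentially what the paper does for $\mathit{HH}^4$; the paper passes additionally through Koszul duality to $\mathit{Sym}(V^\vee)\rtimes\Z/2$ and identifies the answer with the length-$4$ piece of its \emph{center}, which is of course $\mathit{Sym}^4(V^\vee)$ since the nontrivial element of $\Z/2$ acts by $+1$ on $\mathit{Sym}^4$. So far so good. However, there is a genuine gap in applying the same route to $\mathit{HH}^3(Q,Q[-1])$: the doubling trick of Addendum \ref{th:double-grading}, as spelled out in Example \ref{th:truncated}, is only valid when the internal degree $j$ is \emph{even}, and here $j=-1$ is odd. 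This is precisely why the paper declines to give a conceptual argument for $\mathit{HH}^3$ and instead resorts to an explicit (software-assisted) calculation. Your plan as written does not notice this restriction and so would stall at exactly the step where it matters.

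The direct combinatorial strand also has internal inconsistencies that would need repair before it could be carried through. In $Q^!$ the arrows necessarily alternate between the two vertices, so there is no such thing as a length-$4$ path ``built from the four degree-$0$ generators $w_1^\vee,w_2^\vee$'' — any length-$4$ path uses two arrows of each degree. Likewise your reason for dismissing the second contribution to $\mathit{HH}^3(Q,Q[-1])$ (``degree $2$ would force an extra odd generator which lands in the wrong corner'') is not an argument: after fixing the grading of $Q^!$ consistently with $A^! = T(W^\vee[-1])/J^\perp[-2]$ and with the proof of Lemma \ref{th:hh-computation-1}, this bidegree sits exactly at the borderline of the vanishing range in that lemma, both corners of the diagonal Koszul complex contribute nontrivially, and the vanishing of $\mathit{HH}^3(Q,Q[-1])$ really is a finite linear-algebra computation rather than a degree count. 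For $\mathit{HH}^4$ similarly, the cochain group in bidegree $(4,-2)$ is $e_1Q^!e_1\big|_4 \oplus e_2Q^!e_2\big|_4$, i.e.\ two copies of $\mathit{Sym}^4(V^\vee)$ rather than one, so the differential out of it cannot simply ``be seen to vanish'' — identifying the kernel with a single $\mathit{Sym}^4(V^\vee)$ requires actually computing it (or reverting to the conceptual argument). In short, the plan names the correct tools but the bookkeeping with the $R_2$-bimodule structure and the internal grading is not yet under control, and the one place it silently assumes the doubling trick applies is precisely where it fails.
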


\begin{proof}
The $\mathit{HH}^3$ case can be carried out by an explicit calculation, which we omit (software for doing such calculations is available from the author's homepage). For $\mathit{HH}^4$ we can follow a more conceptual path. The argument from Example \ref{th:truncated} shows that
\begin{equation} \label{eq:hh-lambda}
\mathit{HH}^4(Q,Q[-2]) \iso \mathit{HH}^4(\Lambda(V) \rtimes \bZ/2, \Lambda(V) \rtimes \bZ/2[-4]).
\end{equation}
The Hochschild cohomology of $\Lambda(V) \rtimes \bZ/2$ is isomorphic (with a suitable adjustment in the bigrading) to that of its Koszul dual $Sym(V^\vee) \rtimes \bZ/2$. The particular group \eqref{eq:hh-lambda} corresponds to the length $4$ piece of the center of the Koszul dual, which is just $Sym^4(V^\vee)$.
\end{proof}

\subsection{Deformations}
An $A_\infty$-deformation of $Q$ is an $A_\infty$-structure $\{\mu^*\}$ which respects the grading and $R_2$-bimodule structure, and whose starting terms are
\begin{equation}
\mu^1(x) = 0, \;\; \mu^2(x_2,x_1) = (-1)^{|x_1|} x_2 x_1.
\end{equation}
In particular, $\{\mu^*\}$ is necessarily cohomologically unital. As part of the higher order product structure, we then have maps
\begin{equation} \label{eq:mu4}
\begin{aligned}
& \mu^4_{[12121]}: e_1 Q e_2 \otimes e_2 Q e_1 \otimes e_1 Q e_2 \otimes e_2 Q e_1 \iso V^{\otimes 4} \longrightarrow R e_1 \iso R, \\
& \mu^4_{[21212]}: e_2 Q e_1 \otimes e_1 Q e_2 \otimes e_2 Q e_1 \otimes e_1 Q e_2 \iso V^{\otimes 4} \longrightarrow R e_2 \iso R.
\end{aligned}
\end{equation}
where the identifications \eqref{eq:identifications} have been applied. Suppose that $\mu^3 = 0$. In that case, because of the $A_\infty$-equations, the two order $4$ expressions in \eqref{eq:mu4} must have the same symmetric part, which we denote by
\begin{equation} \label{eq:p-poly}
p(v) = \mu^4_{[12121]}(v,v,v,v) = \mu^4_{[21212]}(v,v,v,v) \in Sym^4(V^\vee).
\end{equation}

\begin{proposition} \label{th:classify-deformations}
(i) $A_\infty$-deformations of $Q$ satisfying $\mu^3 = 0$ are classified up to isomorphism by \eqref{eq:p-poly}: any polynomial can occur, and it determines the isomorphism class of the deformation. (ii) Any $A_\infty$-deformation of $Q$ is isomorphic to one which is strictly unital, and has $\mu^3 = 0$, $\mu^5 = 0$.
\end{proposition}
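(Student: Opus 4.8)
The plan is to treat both parts via the standard obstruction-theoretic (deformation-theoretic) machinery, using the Hochschild cohomology computations of Section~\ref{subsec:hh} as the input that makes the process terminate. Recall that an $A_\infty$-deformation of $Q$ with parameter in the internal degree is controlled order-by-order: given $\mu^1,\dots,\mu^{d-1}$ satisfying the $A_\infty$-equations up to order $d$, the obstruction to extending to order $d$ is a Hochschild cocycle, and since our deformation preserves the grading, that cocycle lives in $\mathit{HH}^{d}(Q,Q[?-d])$ in an appropriate internal degree. The key observation is bookkeeping on internal degrees: the product $\mu^2$ drops internal degree by nothing, $\mu^3$ by $1$, $\mu^4$ by $2$, and in general $\mu^d$ contributes to $\mathit{HH}^{d}(Q,Q[2-d])$ for the ``length-raising'' part relevant here. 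By Lemma~\ref{th:hh-computation-1}, $\mathit{HH}^i(Q,Q[2-i])=0$ for $i\geq 5$, so all obstructions and all freedom to modify the structure by gauge transformations vanish above order $4$; combined with $\mathit{HH}^i(Q,Q[3-i])=0$ for $i\geq 7$ (needed because a nonzero $\mu^4$ feeds into higher equations), one gets that only $\mu^3$ and $\mu^4$ carry genuine information, with everything of order $\geq 5$ being forced or gaugeable to zero.

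For part (i), I would argue as follows. With $\mu^3=0$ assumed, the first potentially nonzero higher product is $\mu^4$, living in $\mathit{HH}^4(Q,Q[-2])\iso \mathit{Sym}^4(V^\vee)$ by Lemma~\ref{th:hh-computation-2}. The $A_\infty$-equation at order $5$ forces the two components $\mu^4_{[12121]}$ and $\mu^4_{[21212]}$ to have equal symmetric part, namely the polynomial $p(v)$ of \eqref{eq:p-poly}; the antisymmetric part is a coboundary (it corresponds to modifying by a degree-$(-1)$ Hochschild cochain), so up to isomorphism $\mu^4$ is determined by $p$. Next, the obstruction to extending past order $4$ lives in $\mathit{HH}^5(Q,Q[-3])$, which vanishes by Lemma~\ref{th:hh-computation-1}, so the deformation extends; and the choices at each higher order are parametrized by $\mathit{HH}^i(Q,Q[2-i])=0$ and the ambiguity by gauge transformations, so the extension is unique up to isomorphism. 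Realizing every $p$: given $p\in\mathit{Sym}^4(V^\vee)$ one writes down the minimal candidate $\mu^4$ with that symmetric part and $\mu^{\geq 5}=0$, checks the $A_\infty$-equations hold (the only nontrivial ones are at orders $5,6,7$, and they vanish since the relevant $\mathit{HH}$ groups are zero so any cocycle obstruction must already be zero), and that it is a genuine deformation. That $p$ determines the isomorphism class: two deformations with the same $p$ have $\mu^4$'s differing by a coboundary, hence are related by a gauge transformation to first nontrivial order, and then the vanishing of higher $\mathit{HH}$ lets one continue the gauge equivalence to all orders.

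For part (ii), start with an arbitrary $A_\infty$-deformation $\{\mu^*\}$. First make it strictly unital: a cohomologically unital $A_\infty$-structure can be replaced by a strictly unital one within its quasi-isomorphism class by the standard argument (see \cite[Section 2]{seidel04}), and this does not disturb $\mu^2$. Next kill $\mu^3$: it represents a class in $\mathit{HH}^3(Q,Q[-1])=0$ by Lemma~\ref{th:hh-computation-2}, so after a gauge transformation we may take $\mu^3=0$ (exactly as in \cite[Section 3]{seidel04}, where one adjusts higher terms accordingly). Having reduced to $\mu^3=0$, part (i) puts us in the situation where $\mu^4$ is governed by $p$ and everything of order $\geq 5$ is gaugeable to zero because $\mathit{HH}^i(Q,Q[2-i])=0$ for $i\geq 5$ and, crucially, $\mathit{HH}^i(Q,Q[3-i])=0$ for $i\geq 7$ controls the interaction of the surviving $\mu^4$ with the $A_\infty$-equations at orders $5,6,7$ — so in particular one can arrange $\mu^5=0$ (and inductively all higher $\mu^d=0$, though the statement only asks for $\mu^5$).

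The main obstacle I expect is part~(ii), specifically the bookkeeping needed to see that a nonzero $\mu^4$ does not force a nonzero $\mu^5$ or higher terms that cannot be removed. Once $\mu^4$ is nonzero, the $A_\infty$-equations at orders $5,6,7,\dots$ are no longer homogeneous, and one must verify that the inhomogeneous terms built from $\mu^2$ and $\mu^4$ are exact in the relevant Hochschild complexes; the vanishing statements $\mathit{HH}^{\geq 5}(Q,Q[2-i])=0$ and $\mathit{HH}^{\geq 7}(Q,Q[3-i])=0$ are precisely calibrated for this, but matching each inhomogeneous $A_\infty$-equation to the correct internal degree, and checking that the relevant cocycle actually lands in the degree where vanishing is known (rather than an adjacent one where it is not), requires care. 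This is where the precise form of Lemma~\ref{th:hh-computation-1} — two separate vanishing ranges for two internal degrees — does the real work.
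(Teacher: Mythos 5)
Your overall strategy is the same as the paper's: the Hochschild computations of Lemmas~\ref{th:hh-computation-1} and \ref{th:hh-computation-2} drive a standard order-by-order obstruction argument. However, two concrete steps are wrong as written, and they are not mere bookkeeping slips.

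First, you conflate the group where $\mu^d$ itself lives with the group where the obstruction to extending lives. An $A_\infty$-product $\mu^d$ is a Hochschild cochain of total degree $2$, namely in $\mathit{HH}^d(Q,Q[2-d])$; the groups of total degree $2$ therefore control the freedom in choosing the $\mu^d$ (and the gauge ambiguity, since a gauge generator $F^{d-1}\in \mathit{CC}^{d-1}(Q,Q[2-d])$ has differential landing there). Obstructions to solving the $A_\infty$-equations, on the other hand, have total degree $3$: with $\mu^3=0$, the $A_\infty$-relation at level $6$ just says $\mu^5$ is a cocycle, and the first genuine obstruction appears at level $7$, where $[\mu^4,\mu^4]$ must be exact; that class lives in $\mathit{HH}^7(Q,Q[-4]) = \mathit{HH}^7(Q,Q[3-7])$, not in $\mathit{HH}^5(Q,Q[-3])$ as you claim. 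This is exactly why Lemma~\ref{th:hh-computation-1} records two separate vanishing ranges: the paper's proof uses $\mathit{HH}^i(Q,Q[2-i])=0$ for $i\geq 5$ for uniqueness/gauge-fixing, and $\mathit{HH}^{3+2d}(Q,Q[-2d])=0$ for $d\geq 2$ (covered by $\mathit{HH}^i(Q,Q[3-i])=0$, $i\geq 7$) for the existence obstructions coming from the Gerstenhaber bracket and the higher $L_\infty$-operations on Hochschild cohomology. You eventually invoke the second clause, but your identification of the obstruction group is incorrect, and your phrase ``all obstructions and all freedom...vanish'' attributed to $\mathit{HH}^i(Q,Q[2-i])=0$ misattributes where the obstructions live.

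Second, in the existence argument you propose to set $\mu^{\geq 5}=0$, and in part~(ii) you further assert that ``inductively all higher $\mu^d=0$.'' This is unjustified and the paper deliberately does not claim it. Vanishing of $\mathit{HH}^7(Q,Q[-4])$ means the cocycle $[\mu^4,\mu^4]$ is a coboundary, not that it is zero; solving the level-$7$ equation then requires choosing $\mu^6$ with $\mu^1_{\mathit{CC}}\mu^6 = -\frac{1}{2}[\mu^4,\mu^4]$, and there is no reason this $\mu^6$ can be taken to vanish. The proposition is precisely calibrated: $\mu^3$ can be killed (constraint: $\mathit{HH}^3(Q,Q[-1])=0$) and $\mu^5$ can be killed (constraint: $\mathit{HH}^5(Q,Q[-3])=0$), because in those two cases the $A_\infty$-equations force the relevant $\mu^d$ to be a cocycle whose class lies in a vanishing group; the same reasoning does not apply to $\mu^6,\mu^8,\dots$. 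You notice this potential issue in your final paragraph, but the proof as written directly contradicts it by taking $\mu^{\geq 5}=0$.
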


\begin{proof}
Most of this follows from Lemmas \ref{th:hh-computation-1} and \ref{th:hh-computation-2}, together with the general classification theory of $A_\infty$-deformations \cite{kadeishvili88} (compare also the discussion in \cite[Section 3]{seidel03b}). The vanishing of $\mathit{HH}^3(Q,Q[-1])$ tells us that any $A_\infty$-deformation is equivalent to one with $\mu^3 = 0$. For general reasons, $\mu^4$ then defines a class in $\mathit{HH}^4(Q,Q[-2])$. We know that this group is isomorphic to $\mathit{Sym}^4(V^\vee)$, and one can check (by explicitly comparing the standard Hochschild complex with the one coming from Koszul duality) that this isomorphism takes $[\mu^4]$ to the polynomial $p$ defined above. Since $\mathit{HH}^i(Q,Q[2-i])$ vanishes for all $i>4$, $[\mu^4]$ determines the isomorphism class of the $A_\infty$-deformation completely. The obstructions to existence lie in $\mathit{HH}^i(Q,Q[3-i])$ for $i \geq 7$, which vanish in our case. The last statement follows by inspection of the inductive procedure in which the previously mentioned obstruction groups appear: the first component beyond $\mu^4$ which appears is $\mu^6$, which is introduced to solve the $A_\infty$-associativity equation
\begin{multline}
\mu^2(x_7,\mu^6(x_6,\dots,x_1)) + (-1)^{|x_1|-1} \mu^2(\mu^5(x_7,\dots,x_2),x_1) +
\mu^6(x_7,\dots,x_3,\mu^2(x_2,x_1)) \\ + \cdots + (-1)^{|x_1|+\cdots+|x_5|-5} \mu^6(\mu^2(x_7,x_6),\dots,x_1) \\ =
-\mu^4(x_7,\dots,x_5,\mu^4(x_4,\dots,x_1)) \cdots - (-1)^{|x_1|+|x_2|+|x_3|-3}
\mu^4(\mu^4(x_7,\dots,x_4),x_3,\dots,x_1)).
\end{multline}
\end{proof}

\begin{definition}
For a given $p$, we denote by $Q_p$ the $A_\infty$-structure obtained by equipping $Q$ with the higher order products from Proposition \ref{th:classify-deformations}.
\end{definition}

$Q_p$ is an $A_\infty$-algebra over $R_2$, or equivalently an $A_\infty$-category with two objects $X_1,X_2$. By construction, it is strictly unital and has $\mu^3_{Q_p} = \mu^5_{Q_p} = 0$. Of course, it is unique only up to $A_\infty$-isomorphism.

\begin{addendum}
There is also an $A_\infty$-isomorphism $Q_p \iso Q_{\gamma^2 p}$ for any $\gamma \in R^\times$, obtained (for suitable choices on both sides) by multiplying the degree $k$ part of the algebra with $\gamma^k$. In particular, if $R$ is algebraically closed (or at least contains square roots), knowing $p$ up to nonzero multiples is sufficient to determine the $A_\infty$-algebra.
\end{addendum}

Let's briefly consider the Hochschild cohomology $\mathit{HH}^*(Q_p,Q_p)$ (unlike that of a graded algebra like $Q$, this carries a single grading). The length filtration of the Hoch\-schild complex yields a spectral sequence, starting with $E_2^{ij} = \mathit{HH}^i(Q,Q[j])$. Here is a picture of all the nonzero entries in the lines $i+j = 1$ and $i+j = 2$ of the $E_2$ page, obtained by the same techniques as Lemmas \ref{th:hh-computation-1} and \ref{th:hh-computation-2}:
\begin{equation} \label{eq:hh-table}
\begin{array}{r|ccccc}
j = 2 & 0 \\
j = 1 & \Lambda^2(V) \oplus \Lambda^2(V) & 0 \\
j = 0 && \mathit{End}(V) & \mathit{Sym}^2(V^\vee) \\
j = -1 &&& 0 & 0 \\
j = -2 &&&& 0 & \mathit{Sym}^4(V^\vee) \\
\hline
& i = 0 & i = 1 & i = 2 & i = 3 & i = 4
\end{array}
\end{equation}
Since $\mu^3_{Q_p}$ vanishes, the first potentially nontrivial differential is $d^3: E_1^{ij} \rightarrow E_1^{i+3,j-2}$, which is the Gerstenhaber bracket with $\mu^4_{Q_p}$. In \eqref{eq:hh-table}, this occurs as $d^3: \mathit{End}(V) \rightarrow \mathit{Sym}^4(V^\vee)$, which can be interpreted as the action of linear vector fields on the polynomial $p$, $d^3(Z) = L_Zp$. Moreover, since $\mu^5_{Q_p} = 0$, the next nontrivial differential is $d^5$, which vanishes in \eqref{eq:hh-table} (this requires a bit of thought, since the higher differentials are related to the $\mu^k_{Q_p}$ by a nonlinear ``zigzagging'' procedure: for instance, $\mu^4$ itself could yield a nontrivial contribution to $d^5$). One concludes in particular:

\begin{lemma} \label{th:generic}
Suppose that there is no nontrivial linear vector field which acts trivially on $p$. Then $\mathit{HH}^1(Q_p,Q_p) \iso \Lambda^2(V) \oplus \Lambda^2(V)$ is two-dimensional. \qed
\end{lemma}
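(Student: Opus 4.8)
The plan is to read $\mathit{HH}^1(Q_p,Q_p)$ directly off the length spectral sequence whose $E_1$-page and first differentials are recorded in the table preceding the statement. On the total-degree-one antidiagonal ($i+j=1$) the only nonzero entries of $E_1^{ij}=\mathit{HH}^i(Q,Q[j])$ are $E_1^{0,1}=\Lambda^2(V)\oplus\Lambda^2(V)$ and $E_1^{1,0}=\mathit{End}(V)$; since $\dim_R V=2$ the first of these is two-dimensional, so it suffices to show $E_\infty^{0,1}=E_1^{0,1}$ and $E_\infty^{1,0}=0$. (The spectral sequence is eventually stationary on each fixed total degree: by the vanishing ranges of Lemma \ref{th:hh-computation-1} only finitely many columns are nonzero on each antidiagonal, so all differentials in and out of a given slot vanish once $r$ is large.)

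First I would track which differentials can touch these two slots. Since $d^r$ raises total degree by one, an incoming differential on the $i+j=1$ antidiagonal would originate on the $i+j=0$ antidiagonal in a column $i-r<0$; there are none. An outgoing differential lands on the $i+j=2$ antidiagonal, whose only nonzero entries are $E_1^{2,0}=\mathit{Sym}^2(V^\vee)$ and $E_1^{4,-2}=\mathit{Sym}^4(V^\vee)$. From the $(0,1)$ slot the possible $d^3$-target is $E^{3,-1}=0$, and every later $d^r$ either vanishes ($d^4=0$ because $\mu^5_{Q_p}=0$) or lands in a column $\ge 5$ of the $i+j=2$ antidiagonal, hence in zero; so $E^{0,1}$ is a permanent cycle. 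From the $(1,0)$ slot the only nonzero target is $E^{4,-2}$, reached by $d^3$, and---as already identified in the text---this $d^3$ is the action of linear vector fields on the quartic $p$, namely $Z\mapsto L_Zp$; all later differentials out of $(1,0)$ again land in columns $\ge 5$ and so vanish.

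Finally I would invoke the hypothesis: no nonzero linear vector field annihilates $p$, so $Z\mapsto L_Zp$ is injective on $\mathit{End}(V)$, whence $E_4^{1,0}=\ker(d^3)=0$ and therefore $E_\infty^{1,0}=0$. Combining this with $E_\infty^{0,1}=\Lambda^2(V)\oplus\Lambda^2(V)$, the associated graded of the length filtration on $\mathit{HH}^1(Q_p,Q_p)$ is $\Lambda^2(V)\oplus\Lambda^2(V)$, which gives the stated isomorphism and in particular dimension two.

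The step I expect to need the most care is the bookkeeping that confines every possibly nonzero differential affecting $\mathit{HH}^1$ to the single map $d^3\colon\mathit{End}(V)\to\mathit{Sym}^4(V^\vee)$. In particular one must rule out the higher differentials $d^r$ for $r\ge 5$ on the two relevant slots, bearing in mind that these need not vanish a priori since $Q_p$ may carry nonzero products $\mu^{\ge 6}$. This is purely a comparison of column indices with the support of the $i+j=1$ and $i+j=2$ antidiagonals (columns $\{0,1\}$ and $\le 4$ respectively, as recorded in the table and by Lemma \ref{th:hh-computation-1}), together with the observation that any incoming differential would have to start in a negative column. Everything else is the standard (multiplicative) length spectral sequence together with the identification of $d^3$ with the Lie-derivative action, which the text has already supplied.
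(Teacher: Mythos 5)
Your argument is correct and matches what the paper intends; the paper's own "proof" is entirely absorbed into the preamble (the $E_1$-table, the observations that $d^2 = d^4 = 0$ because $\mu^3_{Q_p} = \mu^5_{Q_p} = 0$, and the identification $d^3(Z) = L_Z p$), with a bare \texttt{\textbackslash qed}. You have simply supplied the column bookkeeping that the paper leaves implicit — in particular the careful treatment of the possibly nonzero higher $\mu^{\geq 6}$ via the support bounds on the $i+j=2$ antidiagonal — and your conclusion that $E_\infty^{0,1} = \Lambda^2(V)\oplus\Lambda^2(V)$ survives while $E_\infty^{1,0}=\ker d^3=0$ under the hypothesis is exactly the intended reasoning.
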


\begin{addendum} \label{th:quasi-generators}
Inspection of the argument above allows one to approximately determine the form of two generators $[g_1]$, $[g_2]$ of $\mathit{HH}^1(Q_p,Q_p)$. Each of them is represented by a Hochschild cochain whose leading order term $g_k^0$ is a nontrivial element of $(e_k Q e_k)^1 \iso \Lambda^2(V)$, let's say the standard elements $q_1 = w_3w_2 = -w_4w_1$ (for $k = 1$) and $q_2 = w_1w_4 = -w_2w_3$ (for $k = 2$). Moreover, the next order term $g_k^1$ can be chosen to be zero.
\end{addendum}

\subsection{Some twisted complexes\label{subsec:twisted}}
$Q$ can be thought of as a linear graded category with two objects $X_i$ corresponding to the vertices of the quiver, so that for instance $\mathit{hom}_Q(X_1,X_2) = e_2 Q e_1 \iso V$. The $A_\infty$-deformation $Q_p$ can the be viewed as an $A_\infty$-category with the same objects. The aim of the following discussion is to understand how the choice of $p$ affects the structure of the formal enlargement $Q_p^{\mathit{tw}}$ (and $Q_p^{\mathit{perf}}$). For that, it is useful to require $R$ to be algebraically closed, which we will do from now on.

For any nonzero $v \in V$, consider the twisted complex $C_v = \mathit{Cone}(v: X_1 \rightarrow X_2)$.
We have
\begin{equation} \label{eq:endomorphism-of-c}
\mathit{hom}_{Q_p^{\mathit{tw}}}(C_v,C_v) = \begin{pmatrix} \Lambda^0(V) \oplus \Lambda^2(V)[-1] & V \\
V[-1] & \Lambda^0(V) \oplus \Lambda^2(V)[-1] \end{pmatrix}.
%
\end{equation}
The matrix notation here stands for taking the direct sum of the four graded vector spaces involved. Taking into account the fact that $\mu^1_{Q_p}$ and $\mu^3_{Q_p}$ vanish, the differential on \eqref{eq:endomorphism-of-c} is
\begin{equation}
\begin{pmatrix} x_{11} & x_{12} \\ x_{21} & x_{22}
\end{pmatrix} \stackrel{\mu^1_{Q_p^{\mathit{tw}}}}{\longmapsto}
\begin{pmatrix}
 x_{12} \wedge v & 0 \\
 -v \wedge x_{11} - x_{22} \wedge v & v \wedge x_{12}
\end{pmatrix}
\end{equation}
and the product is
\begin{equation}
\begin{aligned} &
\begin{pmatrix} y_{11} & y_{12} \\ y_{21} & y_{22}
\end{pmatrix} \otimes
\begin{pmatrix} x_{11} & x_{12} \\ x_{21} & x_{22}
\end{pmatrix} \stackrel{\mu^2_{Q_p^{\mathit{tw}}}}{\longmapsto}
\begin{pmatrix}
y_{11} \wedge x_{11} + y_{12} \wedge x_{21} & y_{11} \wedge x_{12} + y_{12} \wedge x_{22} \\
y_{21} \wedge x_{11} + y_{22} \wedge x_{21} & y_{22} \wedge x_{22} + y_{21} \wedge x_{12}
\end{pmatrix} \\
& + \begin{pmatrix} \mu^4_{Q_p}(y_{12},v,x_{12},v) & \mu^4_{Q_p}(v,y_{12},v,x_{11}) \\
\mu^4_{Q_p}(y_{22},v,x_{12},v) + \mu^4_{Q_p}(v,y_{12},x_{22},v) + \mu^4_{Q_p}(v,y_{11},x_{12},v) & \mu^4_{Q_p}(v,y_{12},v,x_{12})
\end{pmatrix}.
\end{aligned}
\end{equation}
Representative cochains for a basis in the cohomology of $\mu^1_{Q_p^{\mathit{tw}}}$ are
\begin{equation} \label{eq:endomorphism}
e = \begin{pmatrix} -1 & 0 \\ 0 & 1 \end{pmatrix}, \;\;
t = \begin{pmatrix} 0 & v \\ 0 & 0 \end{pmatrix}, \;\;
q = \begin{pmatrix} v \wedge v^* & 0 \\ 0 & v \wedge v^* \end{pmatrix}, \;\;
u = \begin{pmatrix} 0 & 0 \\ v^* & 0 \end{pmatrix},
\end{equation}
where $v^* \in V$ satisfies $v^* \wedge v \neq 0$. The first generator is the identity element, with the sign due to convention. Some explicit products of the other generators are
\begin{equation} \label{eq:qp-products}
\begin{aligned}
& \mu^2_{Q_p^{\mathit{tw}}}(u,t) = \begin{pmatrix} -v \wedge v^* & 0 \\ 0 & 0 \end{pmatrix} =
 \half \mu^1_{Q_p^{\mathit{tw}}}\begin{pmatrix} 0 & v^* \\ 0 & 0 \end{pmatrix} - \half q,
\\ &
\mu^2_{Q_p^{\mathit{tw}}}(t,u) = \begin{pmatrix} 0 & 0 \\ 0 & v \wedge v^* \end{pmatrix}
 = \half \mu^1_{Q_p^{\mathit{tw}}}\begin{pmatrix} 0 & v^* \\ 0 & 0 \end{pmatrix} + \half q,
\\
& \mu^2_{Q_p^{\mathit{tw}}}(u,u) = 0, \\ &
\mu^2_{Q_p^{\mathit{tw}}}(t,t) = p(v)e.
\end{aligned}
\end{equation}
On the cohomology level, this implies that
\begin{equation} \label{eq:cv-endomorphism}
H^0(\mathit{hom}_{Q_p^{\mathit{tw}}}(C_v,C_v)) \iso R[t]/(t^2-p(v)).
\end{equation}
We have therefore shown:

\begin{lemma} \label{th:cone-splitting}
$C_v$ splits into two summands in $Q_p^{\mathit{perf}}$ if and only if $p(v) \neq 0$. \qed
\end{lemma}

This allows one to reconstruct $p$ up to a scalar multiple from categorical data.

\begin{addendum} \label{th:hochschild-cohomology-for-cones}
Just like the $A_\infty$-structure itself, a Hochschild cochain $g \in \mathit{CC}(Q_p,Q_p)$ has components $g^d_{[i_d\dots i_0]}$ for $d \geq 0$ and $i_0,\dots,i_d \in \{1,2\}$. The induced cochain $g^{\mathit{tw}}$ as in \eqref{eq:twisted-gamma} has in particular
\begin{equation} \label{eq:explicit-gamma}
\begin{aligned}
& g^{\mathit{tw},0} \in \mathit{hom}_{Q_p^{\mathit{tw}}}(C_v,C_v), \\
& g^{\mathit{tw},0} = \begin{pmatrix} -g^0_{[1]} & 0 \\ -g^1_{[21]}(v) & g^0_{[2]} \end{pmatrix}.
\end{aligned}
\end{equation}
Suppose that $p$ satisfies the assumptions of Lemma \ref{th:generic}, and consider the generators $g_1,g_2$ from Addendum \ref{th:quasi-generators}. The notation here is potentially confusing: $g_k$ is a whole Hochschild cocycle, whose components would be written as $(g_k)^d_{[i_d\dots i_0]}$. Suppose that $v^*$ is chosen in such a way that $v \wedge v^* = (g_k)^0_{[k]}$. By comparing \eqref{eq:explicit-gamma} with \eqref{eq:qp-products}, one sees that
\begin{equation} \label{eq:anti-sign}
(g_1)^{\mathit{tw},0} = -(g_2)^{\mathit{tw},0} = \mu^2_{Q_p^{\mathit{tw}}}(u,t).
\end{equation}
\end{addendum}

By carefully inspecting the argument leading to Lemma \ref{th:cone-splitting}, we can sharpen it to a criterion that determines $p$ on the nose, and also works in slightly more general circumstances. Suppose that $\tilde{Q}$ is an $A_\infty$-category with objects $\tilde{X}_1,\tilde{X}_2$, together with a fixed isomorphism $H(\tilde{Q}) \iso Q$ on the cohomology level. We will use the triangulated structure of $H^0(\tilde{Q}^{\mathit{tw}})$, following \cite[Section 3]{seidel04} for the sign conventions used in establishing exact triangles.

\begin{lemma} \label{th:exactly-complete-triangle}
Given $v \in V \iso \mathit{Hom}_{H^0(\tilde{Q}^{\mathit{tw}})}(\tilde{X}_1,\tilde{X}_2)$, complete it to an exact triangle
\begin{equation} \label{eq:tilde-exact-triangle}
\tilde{X}_1 \stackrel{v}{\longrightarrow} \tilde{X}_2 \longrightarrow \tilde{C}_v \longrightarrow \tilde{X}_1[1].
\end{equation}
There is a unique (degree $0$) endomorphism $\tilde{t}$ of $\tilde{C}_v$ with the following two properties. The composition
\begin{equation} \label{eq:recover-v}
\tilde{X}_2 \longrightarrow \tilde{C}_v \stackrel{\tilde{t}}{\longrightarrow} \tilde{C}_v \longrightarrow \tilde{X}_1[1]
\end{equation}
(where the first and last map are taken from the exact triangle) equals $v$; and $\tilde{t}^2$ is a multiple of the identity endomorphism. Moreover, that multiple is necessarily given by $\tilde{t}^2 = p(v)$, where $Q_p$ is the $A_\infty$-deformation of $Q$ quasi-isomorphic to $\tilde{Q}$.
\end{lemma}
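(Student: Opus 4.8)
The plan is to reduce everything to the explicit model $Q_p$ and then simply read off the answer from the computations in Section \ref{subsec:twisted}. By Proposition \ref{th:classify-deformations}, $\tilde{Q}$ is quasi-equivalent to $Q_p$ for some polynomial $p$; I would fix such a quasi-equivalence, arranging it to be compatible with the given identification $H(\tilde{Q}) \iso Q$ (which is what makes the resulting $p$ the one referred to in the statement). Since quasi-equivalences of $A_\infty$-categories preserve all the structures occurring in the lemma — formation of twisted complexes and of cones, the triangulated structure on $H^0((-)^{\mathit{tw}})$, composition of morphisms, and endomorphism rings — it will suffice to prove the statement when $\tilde{Q} = Q_p$, $\tilde{X}_i = X_i$, and $\tilde{C}_v = C_v$ is the cone built in Section \ref{subsec:twisted}. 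I would also note that one may assume $v \neq 0$.

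Next I would invoke \eqref{eq:cv-endomorphism}: $H^0(\mathit{hom}_{Q_p^{\mathit{tw}}}(C_v,C_v)) \iso R[t]/(t^2 - p(v))$ is a free $R$-module of rank two with basis $[e], [t]$, where $[e]$ is the identity and $[t]$ is the class of the cochain \eqref{eq:endomorphism}. So an arbitrary degree $0$ endomorphism of $\tilde{C}_v$ has the form $a[e] + b[t]$ with $a,b \in R$. Then I would identify the first and last maps in \eqref{eq:recover-v} with the structure maps $\iota\colon X_2 \to C_v$ and $\pi\colon C_v \to X_1[1]$ of the cone and compute $\pi \circ (a[e] + b[t]) \circ \iota$: the identity term contributes $a\,(\pi\iota) = 0$, while inspection of the matrix \eqref{eq:endomorphism} gives $\pi\circ[t]\circ\iota = v$. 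Hence the requirement that \eqref{eq:recover-v} equal $v$ forces $b = 1$, so $\tilde{t} = a[e] + [t]$.

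It then remains to impose the second condition. Using $[e]^2 = [e]$, $[e][t] = [t]$, and $[t]^2 = p(v)[e]$ — which are exactly the relations in \eqref{eq:cv-endomorphism}, also visible from \eqref{eq:qp-products} — one computes $\tilde{t}^2 = (a^2 + p(v))[e] + 2a[t]$. This lies in $R\cdot[e]$ iff $2a = 0$, i.e. $a = 0$ because $\mathrm{char}\,R = 0$. Thus $\tilde{t} = [t]$ is the unique endomorphism satisfying both conditions, it visibly exists, and $\tilde{t}^2 = p(v)[e] = p(v)\cdot\mathit{id}$, which is the asserted value.

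The algebra here is routine once \eqref{eq:cv-endomorphism} is in hand, so the main obstacle I anticipate is the bookkeeping in the opening reduction: one must be sure that a quasi-equivalence $\tilde{Q} \simeq Q_p$ compatible with the fixed cohomology isomorphism exists (so that $p$, and hence the number $p(v)$, is the one named in the statement), and that this quasi-equivalence genuinely transports the exact triangle \eqref{eq:tilde-exact-triangle} together with its structure maps to the explicit model. The sign conventions for cones and exact triangles from \cite[Section 3]{seidel04} are precisely what should make $\pi\circ[t]\circ\iota = v$ hold on the nose rather than merely up to a nonzero scalar; verifying that normalization is the one place where I would expect to have to be careful.
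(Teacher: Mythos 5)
Your proof is correct and follows essentially the same route as the paper's: reduce to the explicit model $Q_p$ via the reduction to \eqref{eq:archetypal-triangle}, then read everything off from the computation of $H^0(\mathit{hom}_{Q_p^{\mathit{tw}}}(C_v,C_v))$ in \eqref{eq:cv-endomorphism}. In fact you spell out the uniqueness step slightly more carefully than the paper does — the paper's phrase ``$t$ is the unique endomorphism satisfying \eqref{eq:recover-v}'' is terse shorthand, since \eqref{eq:recover-v} alone only pins down $\tilde t = a[e]+[t]$ up to $a$, and it is precisely the second condition together with $\mathrm{char}\,R=0$ that kills the $2a[t]$ cross-term and forces $a=0$, exactly as you observe.
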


\begin{proof}
The object $\tilde{C}_v$ is unique up to (non-canonical) isomorphisms which commute with the maps from $\tilde{X}_2$ and to $\tilde{X}_1[1]$. Hence, the statement is independent of the specific choice of $\tilde{C}_v$. Without loss of generality, we can assume that $\tilde{Q} = Q_p$ for some $p$. By definition,
\begin{equation} \label{eq:archetypal-triangle}
X_1 \stackrel{v}{\longrightarrow} X_2 \xrightarrow{(0,e_{X_2})} C_v \xrightarrow{(-e_{X_1},0)} X_1[1]
\end{equation}
is an exact triangle. With this and \eqref{eq:cv-endomorphism}, one checks easily that $t$ is the unique endomorphism satisfying \eqref{eq:recover-v}.
\end{proof}

%
%
%

\subsection{A perfect family\label{subsec:tautological}}
It is a natural next step to let the parameter $v$ vary. For simplicity, we will use the affine line (rather than the projective line) as a parameter space, setting $v = (1,s_2)$ with $s_2 \in R$. Take the double cover of the affine line ramified at the zero-locus of $p(1,s_2)$, and then remove the branch points. The outcome is a smooth curve $\SS$ whose ring of functions is
\begin{equation} \label{eq:punctured-rr}
\RR = R[s_1,s_1^{-1},s_2]/(s_1^2 - p(1,s_2)).
\end{equation}
We equip this with the nowhere vanishing one-form
\begin{equation} \label{eq:punctured-rr-form}
\theta = -\half s_1^{-1} ds_2 \in \Omega^1_\RR.
\end{equation}

Let $\QQ_p$ be the constant family of $A_\infty$-structures over $\SS$ associated to $Q_p$. Consider the object of $\QQ_p^{\mathit{tw}}$ given by $\CC = \mathit{Cone}((1,s_2): X_1 \rightarrow X_2)$. By the same computation as in \eqref{eq:cv-endomorphism}, taking the natural choice $v^* = (0,1)$ of generator linearly independent of $v$, we have
\begin{equation}
H(\mathit{hom}_{Q_p^{\mathit{tw}}}(\CC,\CC)) \iso \RR[t,u]/(t^2-s_1^2).
\end{equation}
After lifting the idempotent endomorphism
\begin{equation} \label{eq:projection}
\half (1 + s_1^{-1} t) \in H^0(\mathit{hom}_{Q_p^{\mathit{tw}}}(\CC,\CC))
\end{equation}
to a homotopy idempotent, one associates to this a family $\MM$ of perfect modules, which is a direct summand of the Yoneda image of $\CC$. Use the generator $g_2$ from Addendum \ref{th:quasi-generators} to define a deformation field
\begin{equation} \label{eq:elliptic-deformation-field}
\gamma = -2\theta \otimes g_2 \in \mathit{HH}^1(\QQ_p, \Omega^1_\RR \otimes \QQ_p).
\end{equation}

\begin{lemma} \label{th:example-family}
$\MM$ follows $[\gamma]$ (in the sense of Definition \ref{th:deformation-field}).
\end{lemma}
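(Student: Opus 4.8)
The plan is to push everything down to the family of twisted complexes $\CC$, where the endomorphism algebra is described explicitly in Section~\ref{subsec:twisted}, and then to read off the statement for the idempotent summand $\MM$. Recall that $\MM$ is the summand of $\CC^{\mathit{yon}}$ cut out by the idempotent $e_+ = \half(1+s_1^{-1}t)$ of \eqref{eq:projection}. A pre-connection on $\CC$ induces one on $\CC^{\mathit{yon}}$ whose deformation cocycle is the Yoneda image of $\mathit{def}(\pabla_\CC)$ (the Addendum following \eqref{eq:multiplicative-connections}); combined with the compatibility of $\Gamma^{\mathit{tw}}$ and $\Gamma^{\mathit{mod}}$ with the Yoneda functor, and with the fact that both $\mathit{Def}(-)$ and $[\gamma^{\mathit{mod},0}(-)]$ are computed blockwise on a direct sum, both $\mathit{Def}(\MM)$ and $[\gamma^{\mathit{mod},0}]$ are obtained by applying the cohomology-level projection $x \mapsto e_+xe_+$ — on the bimodule $H^1(\mathit{hom}_{\QQ_p^{\mathit{tw}}}(\CC,\CC))$ over the ring $H^0(\mathit{hom}_{\QQ_p^{\mathit{tw}}}(\CC,\CC)) \iso \RR[t]/(t^2-s_1^2)$ — to $\mathit{Def}(\CC)$ and to $[\gamma^{\mathit{tw},0}(\CC)]$ respectively. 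It therefore suffices to prove
\[
e_+\,\mathit{Def}(\CC)\,e_+ \;=\; e_+\,[\gamma^{\mathit{tw},0}(\CC)]\,e_+ .
\]

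First I would compute the two inputs. Equip $\CC = X_1[1] \oplus X_2$ with the obvious pre-connection: the trivial connection $d$ on each line bundle $\FF^1 = \FF^2 = \RR$, and $\alpha_\CC = 0$. Then \eqref{eq:def-components} collapses to its first term, $\mathit{def}(\pabla_\CC) = -(d \otimes \mathit{id})(\delta_\CC^{21}) = -ds_2 \otimes w_2$, which under the identifications of Section~\ref{subsec:twisted} is exactly $-ds_2 \otimes u$ (taking $v^* = (0,1)$ as in \eqref{eq:endomorphism}); hence $\mathit{Def}(\CC) = -ds_2 \otimes [u]$. On the other hand $\theta = -\half s_1^{-1}ds_2$ gives $\gamma = -2\theta \otimes g_2 = s_1^{-1}ds_2 \otimes g_2$, so $\gamma^{\mathit{tw},0}(\CC) = s_1^{-1}ds_2 \otimes (g_2)^{\mathit{tw},0}$; with $v^* = (0,1)$ chosen so that $v \wedge v^*$ is the standard generator required in Addendum~\ref{th:hochschild-cohomology-for-cones}, that addendum's identity \eqref{eq:anti-sign} gives $(g_2)^{\mathit{tw},0} = -\mu^2_{\QQ_p^{\mathit{tw}}}(u,t)$, and \eqref{eq:qp-products} gives $[\mu^2_{\QQ_p^{\mathit{tw}}}(u,t)] = -\half[q]$, so $[\gamma^{\mathit{tw},0}(\CC)] = \half s_1^{-1}ds_2 \otimes [q]$.

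It remains to match the projections, i.e. to check $e_+[q]e_+ = -2s_1\,e_+[u]e_+$, a purely algebraic statement about the bimodule $H^1$ over $\RR[t]/(t^2-s_1^2)$. From $\mu^2_{\QQ_p^{\mathit{tw}}}(u,t) \htp -\half q$ we get $[q] = -2[u]\cdot[t]$ in the sense of the right module action, and since $t = s_1e_+ - s_1e_-$ with $e_+e_- = 0$ and $e_+^2 = e_+$ one has $te_+ = s_1e_+$, whence $e_+[q]e_+ = -2\,e_+[u]\,(te_+) = -2s_1\,e_+[u]e_+$. Plugging this in,
\[
e_+[\gamma^{\mathit{tw},0}(\CC)]e_+ = \tfrac{1}{2} s_1^{-1}ds_2 \otimes (-2s_1)\,e_+[u]e_+ = -ds_2 \otimes e_+[u]e_+ = e_+\,\mathit{Def}(\CC)\,e_+ ,
\]
as required. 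Note that the constants $-\half$ in $\theta$ and $-2$ in $\gamma$ are precisely what makes the factors $s_1^{-1}$, $\half$ and $-2s_1$ cancel; with any other normalization the statement would fail.

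The genuine obstacle is the reduction described in the first paragraph rather than any of these computations: one must justify carefully that the deformation class of the homotopy-idempotent summand $\MM = (\CC,\pp)^{\mathit{yon}}$ really is the $e_+(\cdot)e_+$-component of $\mathit{Def}(\CC^{\mathit{yon}})$ (and the same for $\gamma^{\mathit{mod},0}$), which amounts to threading the homotopy idempotent $\pp$ from Section~\ref{subsec:splittings} through the pre-connection formalism and keeping track of the resulting signs. Once that bookkeeping is in place, the displayed identity above completes the proof.
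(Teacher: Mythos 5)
Your proposal is correct and follows essentially the same path as the paper: compute $\mathit{def}(\pabla_\CC)$ from the trivial pre-connection and $\gamma^{\mathit{tw},0}$ via Addendum \ref{th:hochschild-cohomology-for-cones}, then apply the projection \eqref{eq:projection} and match, the only cosmetic difference being that you route the final comparison through $[q]=-2[u][t]$ and $te_+=s_1e_+$ where the paper writes out $e_+ \cdot ut = \half(s_1^{-1}t^2+t)u = e_+ \cdot s_1u$ directly. The caveat in your last paragraph is less of an obstacle than you fear: since $\mathit{Hom}$'s between the two idempotent summands of $\CC$ vanish (the paper's parenthetical remark), projecting on either side alone already gives the deformation class of $\MM$, so no delicate threading of the homotopy idempotent $\pp$ through the pre-connection formalism is needed at the cohomology level.
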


\begin{proof}
The deformation cocycle of $\CC$ can be determined by applying \eqref{eq:def-components} to the trivial pre-connection. On the other hand, $\gamma^{\mathit{tw},0}$ can be computed as in Addendum \ref{th:hochschild-cohomology-for-cones}. The result is
\begin{equation}
\begin{aligned}
& \mathit{def}(\pabla_\CC) = -\partial_{s_2}(\delta_\CC)  ds_2 = 2\theta \otimes s_1 \partial_{s_2}(\delta_\CC) = 2\theta \otimes s_1 u, \\
& \gamma^{\mathit{tw},0} = 2\theta \otimes \mu^2_{Q_p^{\mathit{tw}}}(u,t).
\end{aligned}
\end{equation}
The corresponding elements for $\MM$, at least on the cohomology level, can be computed by applying the projection \eqref{eq:projection} (it doesn't matter on which side, since there are no $\mathit{Hom}$s from one summand of $\CC$ to the other), which indeed yields the same result in both cases:
\begin{equation}
\half(1 + s_1^{-1} t) ut = \half (s_1^{-1} t^2 + t) u = \half (s_1 + t) u = \half (1 + s_1^{-1}t) s_1 u.
\end{equation}
\end{proof}

We could add any multiple of $g_1 + g_2$ to our deformation field and still obtain the same result, in view of \eqref{eq:anti-sign}.


\subsection{Elliptic curves\label{subsec:elliptic}}
The algebra $Q$ from Definition \ref{th:q-algebra} arises in the following algebro-geometric context. Take some $p \in \mathit{Sym}^4(V^\vee)$ which is {\em simple}, meaning that it has four distinct zeros. This gives rise to a double branched cover $\pi: Y_p \rightarrow \mathbb{P}(V)$, which is a smooth elliptic curve, embedded into the total space of the bundle $\OO_{\mathbb{P}(V)}(2)$. The sheaf $\pi_*\OO_{Y_p}$ decomposes into $\pm 1$ eigenspaces for the action of the covering transformation. These can be identified with
\begin{align}
& (\pi_* \OO_{Y_p})_{+1} \iso \OO_{\mathbb{P}(V)}, \\
& (\pi_* \OO_{Y_p})_{-1} \iso \OO_{\mathbb{P}(V)}(-2) \iso \Omega^1_{\mathbb{P}(V)} \otimes \Lambda^2(V), \label{eq:anti-invariant}
\end{align}
where the second part is obtained by taking functions linear on the fibres of $\OO_{\mathbb{P}(V)}(2)$ and restricting them to $Y_p$. If $E_1,E_2$ are locally free sheaves on $\mathbb{P}(V)$, we have canonical isomorphisms
\begin{equation} \label{eq:pushforward}
\begin{aligned}
\mathit{Ext}^*_{Y_p}(\pi^*E_1,& \pi^*E_2) \iso H^*(Y_p,\pi^*E_1^\vee \otimes \pi^*E_2) \\
& \iso H^*(Y_p,\pi^*(E_1^\vee \otimes E_2)) \\
& \iso H^*(\mathbb{P}(V),(E_1^\vee \otimes E_2) \otimes \pi_*\OO_{Y_p}) \\ &
\iso H^*(\mathbb{P}(V),E_1^\vee \otimes E_2) \oplus H^*(\mathbb{P}(V), E_1^\vee \otimes E_2 \otimes \Omega_{\mathbb{P}(V)}^1) \otimes \Lambda^2(V) \\ &
\iso \mathit{Ext}^*_{\mathbb{P}(V)}(E_1,E_2) \oplus \mathit{Ext}^{1-*}_{\mathbb{P}(V)}(E_2,E_1)^\vee \otimes \Lambda^2(V),
\end{aligned}
\end{equation}
where the last isomorphism uses Serre duality on $\mathbb{P}(V)$. Consider in particular $E_1 = \OO_{\mathbb{P}(V)}$, $E_2 = \OO_{\mathbb{P}(V)}(1) \otimes \Lambda^2(V)$, which has $\mathit{Hom}_{\mathbb{P}(V)}(E_1,E_2) = V^\vee \otimes \Lambda^2(V) \iso V$ by definition. The computation above (with $E_1$ and $E_2$ exchanged) shows that $\mathit{Ext}^1_{Y_p}(\pi^*E_2,\pi^*E_1) \iso V^\vee \otimes \Lambda^2(V) \iso V$ as well. Using this and similar arguments (compare \cite[Section 3c]{seidel-thomas99}) one sees that:

\begin{lemma} \label{th:q-geometric}
We have an isomorphism of graded algebras, $\mathit{Ext}^*_{Y_p}(\pi^*E_1 \oplus \pi^*E_2,\pi^*E_1 \oplus \pi^*E_2) \iso Q$ (if one thinks of $Q$ as defined in \eqref{eq:identifications}, the isomorphism is canonical). \qed
\end{lemma}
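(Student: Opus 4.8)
The plan is to transport the whole computation down to $\mathbb{P}(V)\cong\mathbb{P}^1$ along the finite flat double cover $\pi$. Since $\pi$ is affine, $R\pi_*=\pi_*$, and adjunction together with the projection formula give a natural isomorphism $\mathrm{RHom}_{Y_p}(\pi^*E,\pi^*F)\cong\mathrm{RHom}_{\mathbb{P}(V)}(E,F\otimes\mathcal{A})$ with $\mathcal{A}=\pi_*\OO_{Y_p}$, compatible with ring structures: the right-hand side is a ring under Yoneda composition on $\mathbb{P}(V)$ combined with the $\OO_{\mathbb{P}(V)}$-algebra structure of $\mathcal{A}$. By the discussion preceding the Lemma, $\mathcal{A}=\mathcal{A}_{+1}\oplus\mathcal{A}_{-1}$ with $\mathcal{A}_{+1}=\OO_{\mathbb{P}(V)}$ and $\mathcal{A}_{-1}\cong\Omega^1_{\mathbb{P}(V)}\otimes\Lambda^2 V\cong\OO_{\mathbb{P}(V)}(-2)$ (eigensheaves of the covering involution), and the only part of the multiplication of $\mathcal{A}$ involving $p$ is the map $\mathcal{A}_{-1}\otimes\mathcal{A}_{-1}\to\mathcal{A}_{+1}$, while $\mathcal{A}_{+1}\otimes\mathcal{A}_{-1}\to\mathcal{A}_{-1}$ and $\mathcal{A}_{+1}\otimes\mathcal{A}_{+1}\to\mathcal{A}_{+1}$ are canonical.

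First I would compute the four graded vector spaces $\mathit{Ext}^*_{\mathbb{P}(V)}(E_i,E_j\otimes\mathcal{A})$ for $i,j\in\{1,2\}$, exactly along the lines of \eqref{eq:pushforward}: split off the two summands of $\mathcal{A}$, use $H^*(\mathbb{P}^1,\OO(n))$ and Serre duality on $\mathbb{P}(V)$ (with $\omega_{\mathbb{P}(V)}=\Omega^1\cong\OO(-2)\otimes\Lambda^2 V^\vee$ by \eqref{eq:anti-invariant}, and the canonical residue isomorphism $H^1(\Omega^1)\cong R$). This yields $\mathit{Ext}^*(E_i,E_i\otimes\mathcal{A})\cong\Lambda^0 V\oplus\Lambda^2 V[-1]$, while $\mathit{Ext}^*(E_1,E_2\otimes\mathcal{A})\cong V$ is concentrated in degree $0$ and $\mathit{Ext}^*(E_2,E_1\otimes\mathcal{A})\cong V$ is concentrated in degree $1$, all as $GL(V)$-representations; this matches the underlying graded vector space of $Q$ under the identifications \eqref{eq:identifications}.

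Next, the algebra structure. The key observation is that every possibly nonzero product avoids the $p$-dependent piece $\mathcal{A}_{-1}\otimes\mathcal{A}_{-1}\to\mathcal{A}_{+1}$: all nonzero $\mathcal{A}_{+1}$-summands live in cohomological degree $0$, so a product landing in an $\mathcal{A}_{+1}$-summand must have both inputs in degree $0$, hence both in $\mathcal{A}_{+1}$, using only the canonical $\mathcal{A}_{+1}\otimes\mathcal{A}_{+1}\to\mathcal{A}_{+1}$. Hence the whole graded algebra is $GL(V)$-equivariant. Degrees alone then force the structure: the degree $1$ classes $q_i\in e_iQe_i$ square to zero and annihilate $e_iQe_j$ and $e_jQe_i$, so $e_iQe_i\cong R[q_i]/(q_i^2)$; and the two mixed products $e_1Qe_2\otimes e_2Qe_1\to e_1Qe_1$ and $e_2Qe_1\otimes e_1Qe_2\to e_2Qe_2$ land in the degree $1$ summand $\Lambda^2 V$, so by Schur's lemma each is a scalar multiple of the wedge $V\otimes V\to\Lambda^2 V$. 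These scalars are nonzero: composing, say, $e_1Qe_2\otimes e_2Qe_1\to e_1Qe_1$ with the trace $\Lambda^2 V\cong\mathit{Ext}^1(\pi^*E_1,\pi^*E_1)\to R$ is, up to the (trivial) twist by $\omega_{Y_p}\cong\OO_{Y_p}$, the Serre-duality pairing on the elliptic curve $Y_p$, which is perfect; symmetrically for the other product. Rescaling the chosen bases absorbs these constants, and one reads off precisely the presentation \eqref{eq:quadratic-relations}--\eqref{eq:wedge}: the two copies of $V$ are identified, $e_iQe_i=\Lambda^{\mathit{even}}V$, and the nontrivial products are the wedge. This produces the graded algebra isomorphism asserted.

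The main obstacle is bookkeeping rather than conceptual difficulty: keeping track of Serre-duality signs, of the $\Z/2$-equivariant structure on $\mathcal{A}$, and of the various canonical identifications ($V^\vee\otimes\Lambda^2 V\cong V$ for $\dim V=2$, $H^1(\mathbb{P}^1,\Omega^1)\cong R$), so that the normalization constants in the wedge products are genuinely controlled --- this is exactly what underlies the claim that, with $Q$ presented as in \eqref{eq:identifications}, the isomorphism is canonical. Simplicity of $p$ is used only to ensure that $Y_p$ is a smooth elliptic curve, so that $\mathrm{cd}(Y_p)=1$ and $\omega_{Y_p}\cong\OO_{Y_p}$; the computation is otherwise uniform in $p$, as it must be since the dependence on $p$ surfaces only in the higher $A_\infty$-products (compare Proposition \ref{th:classify-deformations}).
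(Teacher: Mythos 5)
Your approach is the same as the paper's: apply the projection formula to push the $\mathit{Ext}$ computation down to $\mathbb{P}(V)$ via the eigensheaf decomposition of $\pi_*\OO_{Y_p}$, which is exactly what \eqref{eq:pushforward} does. The paper leaves the verification of the multiplicative structure to ``similar arguments (compare \cite[Section 3c]{seidel-thomas99})'', whereas you flesh out the degree-counting and Schur's-lemma reasoning showing that no realized product touches the $p$-dependent piece $\mathcal{A}_{-1}\otimes\mathcal{A}_{-1}\to\mathcal{A}_{+1}$ (since $\mathrm{cd}(Y_p)=1$ and the $\mathcal{A}_{-1}$-summands all sit in degree~$1$), which is precisely the content the paper implicitly invokes.
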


Let $D^b\mathit{Coh}(Y_p)$ be a suitable dg enhancement of the standard bounded derived category of coherent sheaves on $Y_p$. This category is closed under shifts, mapping cones and direct summands (the last-mentioned fact follows from the characterization of its objects as compact objects in a larger category \cite[Theorem 3.1.1]{bondal-vandenbergh02}). Lemma \ref{th:q-geometric} says that the subcategory of $D^b\mathit{Coh}(Y_p)$ with objects $\pi^*E_1,\pi^*E_2$ is quasi-isomorphic to an $A_\infty$-deformation of $Q$, which by Lemma \ref{th:classify-deformations} can be chosen to be $Q_{\tilde{p}}$ for some polynomial $\tilde{p}$. We then have a cohomologically full and faithful $A_\infty$-functor from $Q_{\tilde{p}}$ to $D^b\mathit{Coh}(Y_p)$, taking $X_i$ to $\pi^*E_i$. Moreover, since the $\pi^*E_i$ are split-generators of $D^b\mathit{Coh}(Y_p)$, this functor extends to a quasi-equivalence
\begin{equation} \label{eq:quiver-embedding}
Q_{\tilde{p}}^{\mathit{perf}} \stackrel{\htp}{\longrightarrow} D^b\mathit{Coh}(Y_p).
\end{equation}
Unsurprisingly,

\begin{lemma} \label{th:p-p}
The polynomial $\tilde{p}$ in \eqref{eq:quiver-embedding} is a nonzero constant multiple of $p$.
\end{lemma}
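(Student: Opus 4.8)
The plan is to recover $\tilde p$ from the categorical data inside $D^b\mathit{Coh}(Y_p)$ using the characterization given by Lemma \ref{th:exactly-complete-triangle}, and then to identify the resulting polynomial with $p$ by a direct sheaf-theoretic computation. First I would fix a nonzero $v \in V \iso \mathit{Hom}_{H^0(D^b\mathit{Coh}(Y_p))}(\pi^*E_1,\pi^*E_2)$, complete it to an exact triangle $\pi^*E_1 \xrightarrow{v} \pi^*E_2 \to \tilde C_v \to \pi^*E_1[1]$, and invoke Lemma \ref{th:exactly-complete-triangle}: there is a unique degree $0$ endomorphism $\tilde t$ of $\tilde C_v$ whose triple composition back to $\pi^*E_1[1]$ recovers $v$ and with $\tilde t^2$ a scalar, and that scalar must equal $\tilde p(v)$ (via the quasi-equivalence \eqref{eq:quiver-embedding}). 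So it suffices to compute $\tilde t^2$ geometrically and show it equals a fixed nonzero constant times $p(v)$.

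The key geometric input is the identification of $\tilde C_v$. Thinking of $v$ as a nonzero section of $\OO_{\mathbb{P}(V)}(1)\otimes\Lambda^2 V$ vanishing at a point $[v^\perp]\in\mathbb{P}(V)$, the cone of $v: \pi^*\OO \to \pi^*(\OO(1)\otimes\Lambda^2 V)$ is the pushforward (up to a twist and shift) of the structure sheaf of the fibre $\pi^{-1}([v^\perp]) \subset Y_p$, i.e.\ of the length-two subscheme (or two reduced points, when $p(v)\ne 0$) lying over $[v^\perp]$. Then $\mathrm{End}(\tilde C_v)$ on cohomology is $H^0$ of the structure sheaf of that fibre, which is $R[t]/(t^2 - p(v))$ exactly as in \eqref{eq:cv-endomorphism}: the fibre splits into two points precisely when $p(v)\ne 0$, because that is the condition for $[v^\perp]$ not to be a branch point of $\pi: Y_p \to \mathbb{P}(V)$, and the branch locus of $\pi$ is cut out by $p$ by construction of $Y_p$. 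Tracking the normalization so that the triple composition \eqref{eq:recover-v} recovers $v$ pins down $\tilde t$ on the nose, and then $\tilde t^2 = p(v)$ up to the universal nonzero scalar coming from the various identifications in \eqref{eq:pushforward} and Lemma \ref{th:q-geometric}. Since both $\tilde p$ and $p$ are homogeneous quartics in $v$ agreeing up to a nonzero scalar for all $v$, this gives $\tilde p = (\mathrm{const})\cdot p$.

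The main obstacle I expect is bookkeeping of the twist and shift in the identification $\tilde C_v \simeq \pi_*\OO_{\pi^{-1}([v^\perp])}[\text{shift}]\otimes(\text{line})$ together with all the sign and scalar conventions (the Serre-duality isomorphism in \eqref{eq:pushforward}, the convention-dependent maps in the exact triangle \eqref{eq:archetypal-triangle}, the isomorphism in Lemma \ref{th:q-geometric}) — this is what forces the qualifier ``nonzero constant multiple'' rather than ``equal''. A cleaner way to organize this, and the route I would actually take in the write-up, is to avoid computing the constant altogether: observe that by Lemma \ref{th:cone-splitting} applied inside $Q_{\tilde p}^{\mathit{perf}} \simeq D^b\mathit{Coh}(Y_p)$, the cone $\tilde C_v$ splits into two summands iff $\tilde p(v)\ne 0$, while geometrically it splits iff $[v^\perp]$ avoids the branch locus of $\pi$, i.e.\ iff $p(v)\ne 0$. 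Hence $\tilde p$ and $p$, both simple quartics (up to scale) on $\mathbb{P}(V)$, have the same zero set, so they are proportional; proportionality with nonzero constant follows since $\tilde p \ne 0$ (as $Y_p$ is genuinely a double cover, so the deformation $Q_{\tilde p}$ is nontrivial). I would present this zero-locus argument as the proof, relegating the explicit-constant discussion to a remark.
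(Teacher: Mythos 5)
Your ``cleaner way'' --- identify $\tilde{C}_v$ with the structure sheaf of the scheme-theoretic fibre of $\pi$ over the zero of $v$, observe it splits into two summands iff $p(v)\neq 0$, and apply Lemma~\ref{th:cone-splitting} to conclude that the simple quartics $p$ and $\tilde p$ have identical zero loci in $\mathbb{P}(V)$, hence are proportional --- is precisely the paper's proof. The first part of your proposal (pinning down a normalized $\tilde t$ via Lemma~\ref{th:exactly-complete-triangle} and computing $\tilde t^2$ geometrically) is not wasted work, but it is the content of the \emph{next} lemma in the paper, which sharpens the statement to $\tilde p = p$.
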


\begin{proof}
Any $A_\infty$-functor take cones to cones, up to quasi-isomorphism. In particular, $C_v$ maps to the cone of the morphism $\pi^*E_1 \rightarrow \pi^*E_2$ corresponding to $v$. That cone, which we denote by $\tilde{C}_v$, is isomorphic to the structure sheaf of the scheme-theoretic fibre $\pi^{-1}([v])$; more canonically, it can be written as that structure sheaf tensored with the one-dimensional vector space $V/Rv$. If $p(v) \neq 0$, the fibre consists of two closed points, hence has a nontrivial idempotent endomorphism. On the other hand, there are four points for which $p(v) = 0$, and where the scheme-theoretic fibre is a single fat point. Lemma \ref{th:cone-splitting} then yields the desired result.
\end{proof}

We can refine this observation slightly. One can compute geometrically that
\begin{align}
& \mathit{Hom}_{Y_p}(\tilde{C}_v,\tilde{C}_v) \iso R \oplus (Rv)^{\otimes 2}\label{eq:geom-cone-end}, \\
& \mathit{Hom}_{Y_p}(\pi^*E_2,\tilde{C}_v) \iso R \oplus (Rv)^{\otimes 2} \label{eq:in-cone}, \\
& \mathit{Ext}^1_{Y_p}(\tilde{C}_v,\pi^*E_1) \iso (Rv)^{\vee} \oplus Rv \label{eq:out-cone}.
\end{align}
In the ring structure of \eqref{eq:geom-cone-end}, the first summand is generated by the identity endomorphism, and the square of  $v \otimes v$ in the second summand is exactly $p(v)$ times the identity. The action of $v \otimes v$ on \eqref{eq:in-cone} by left multiplication is given by $(1,0) \mapsto (0,v \otimes v)$ (and correspondingly $(0,v \otimes v) \mapsto (p(v),0)$, so as to satisfy the given relation). Both groups \eqref{eq:in-cone} and \eqref{eq:out-cone} contain canonical elements, which are parts of the obvious exact triangle involving $\tilde{C}_v$, and those are just the generators of the first summands (in the case of \eqref{eq:out-cone}, this is the generator of $(Rv)^\vee$ dual to $v \in Rv$). Finally, the composition
\begin{equation}
\mathit{Ext}^1_{Y_p}(\tilde{C}_v,\pi^*E_1) \otimes \mathit{Hom}_{Y_p}(\pi^*E_2,\tilde{C}_v) \longrightarrow \mathit{Ext}^1_{Y_p}(\pi^*E_2,\pi^*E_1)
\end{equation}
is given by the obvious maps $(Rv)^{\vee} \otimes (Rv)^{\otimes 2} \rightarrow Rv \subset V$, $Rv \otimes R \rightarrow Rv \subset V$. By putting together those facts, one sees that taking $\tilde{t} = v \otimes v$ exactly satisfies the assumptions of Lemma \ref{th:exactly-complete-triangle} (modulo tedious sign verifications, which we have omitted), and therefore that:

\begin{lemma}
The constant from Lemma \ref{th:p-p} is trivial, meaning that $p = \tilde{p}$. \qed
\end{lemma}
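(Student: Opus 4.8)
The plan is to run Lemma \ref{th:exactly-complete-triangle} through the quasi-equivalence \eqref{eq:quiver-embedding}, using $D^b\mathrm{Coh}(Y_p)$ (or rather its subcategory on $\pi^*E_1,\pi^*E_2$, with the identification $H(\tilde Q)\iso Q$ from Lemma \ref{th:q-geometric}) as the ambient category $\tilde Q^{\mathit{perf}}$. Fix a nonzero $v\in V\iso \mathrm{Hom}_{Y_p}(\pi^*E_1,\pi^*E_2)$ and let $\tilde C_v$ be the cone, so that $\tilde C_v$ is the structure sheaf of the scheme-theoretic fibre $\pi^{-1}([v])$ twisted by $V/Rv$, sitting in the canonical exact triangle \eqref{eq:tilde-exact-triangle}. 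Lemma \ref{th:exactly-complete-triangle}, read for $\tilde Q = Q_{\tilde p}$, asserts that there is a \emph{unique} degree-$0$ endomorphism $\tilde t$ of $\tilde C_v$ with the composition \eqref{eq:recover-v} equal to $v$ and $\tilde t^2$ a scalar, and that this scalar is $\tilde p(v)$.

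The body of the argument is then to exhibit a concrete candidate and check it meets the two requirements, using only the geometric computations \eqref{eq:geom-cone-end}--\eqref{eq:out-cone} recorded above. Take $\tilde t = v\otimes v$, the natural element of the second summand of $\mathrm{Hom}_{Y_p}(\tilde C_v,\tilde C_v)\iso R\oplus (Rv)^{\otimes 2}$. Its square is $p(v)$ times the identity by the ring structure on \eqref{eq:geom-cone-end}, so the "scalar" condition holds automatically. For the normalization \eqref{eq:recover-v}: precompose the canonical generator of $\mathrm{Hom}_{Y_p}(\pi^*E_2,\tilde C_v)$ (the first map of the triangle) with left multiplication by $v\otimes v$, which by the explicit action $(1,0)\mapsto (0,v\otimes v)$ lands in the second summand of \eqref{eq:in-cone}; then postcompose with the canonical generator of $\mathrm{Ext}^1_{Y_p}(\tilde C_v,\pi^*E_1)$ (the third map), which is the element of $(Rv)^\vee$ dual to $v$; under the composition map $(Rv)^\vee\otimes (Rv)^{\otimes 2}\to Rv\subset V$ this evaluates to $v$. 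Hence $v\otimes v$ satisfies both defining properties, so by the uniqueness clause it \emph{is} the endomorphism $\tilde t$ of Lemma \ref{th:exactly-complete-triangle}, whence $\tilde p(v) = \tilde t^2 = p(v)$. As this holds for all $v$, the quartic forms agree, $p = \tilde p$, so the constant in Lemma \ref{th:p-p} is $1$.

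The conceptual content is thus entirely in the preceding geometric discussion; the only real obstacle is sign bookkeeping. One must verify that the canonical maps in the exact triangle \eqref{eq:tilde-exact-triangle} for $\tilde C_v$ coincide — on the nose, not merely up to a scalar — with the generators of the first summands of \eqref{eq:in-cone} and \eqref{eq:out-cone}, and that the relevant composition pairing is literally evaluation of $(Rv)^\vee$ on $(Rv)^{\otimes 2}$ with no stray factor; otherwise one would only recover $p = \tilde p$ up to a constant, which is what Lemma \ref{th:p-p} already gives. I would dispatch these checks by tracking the sign conventions of \cite[Section 3]{seidel04} for cones consistently, starting from the archetypal triangle \eqref{eq:archetypal-triangle} and its geometric counterpart, exactly as in the proof of Lemma \ref{th:exactly-complete-triangle} itself.
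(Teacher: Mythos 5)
Your proposal is correct and follows essentially the same route as the paper: identify $\tilde t = v\otimes v$ from the geometric computations \eqref{eq:geom-cone-end}--\eqref{eq:out-cone}, verify it satisfies the two characterizing properties of Lemma \ref{th:exactly-complete-triangle}, and conclude $\tilde p(v) = \tilde t^2 = p(v)$. You also correctly pinpoint the residual sign/normalization checks needed to get equality on the nose rather than up to a scalar, which the paper likewise flags and omits.
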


\begin{remark}
This geometric interpretation also throws some light on Lemma \ref{th:generic}. In view of the derived invariance of Hochschild cohomology, one has
\begin{equation} \label{eq:hh-geometric-1}
\mathit{HH}^d(Q_p,Q_p) \iso \mathit{HH}^d(D^b\mathit{Coh}(Y_p)) \iso \mathit{HH}^d(Y_p) \iso \bigoplus_{i+j=d} H^i(Y_p,\Lambda^j TY_p),
\end{equation}
where $TY_p$ is the tangent bundle. In particular, $\mathit{HH}^1(Q_p,Q_p) \iso H^1(Y_p,\OO_{Y_p}) \oplus H^0(Y_p,T_{Y_p})$ is indeed two-dimensional.

We make a slight digression, whose aim is to explain our original computations of $\mathit{HH}^*(Q,Q)$ geometrically. One can associate to an arbitrary $p \in \mathit{Sym}^4(V^\vee)$ a subscheme $Y_p$ of the total space of $\OO_{\mathbb{P}(V)}(2)$, and Lemma \ref{th:q-geometric} still holds. So does \eqref{eq:quiver-embedding}, once one replaces the derived category of coherent sheaves with its subcategory of perfect complexes. In particular, we can set $p = 0$, in which case the ``double branched cover'' $Y_0$ is the first order infinitesimal neighbourhood of the zero-section in $\OO_{\mathbb{P}(V)}(2)$. Using the action of $R^\times$ by fibrewise rescaling, one can show that the resulting $A_\infty$-structure on $Q$ is formal (this is a well-known idea, in a sense going back to \cite{deligne-griffiths-morgan-sullivan75}, see \cite[Remark 7.6]{seidel-solomon10} and \cite{lekili-perutz11} for recent occurrences). Hence,
\begin{equation} \label{eq:hh-geometric}
\bigoplus_{i+j = d} \mathit{HH}^i(Q,Q[j]) \iso \mathit{HH}^d(Y_0),
\end{equation}
where the right hand side can be written as the hypercohomology of a complex of sheaves on $\mathbb{P}(V)$ (locally quasi-isomorphic to the Hochschild complex of the ring $R[t_1,t_2]/t_2^2$), making it easily amenable to computation. Moreover, the equivariant version of the same Hochschild cohomology recovers the bigrading on the left hand side of \eqref{eq:hh-geometric}.
\end{remark}

Identify $V = R^2$ with coordinates $(v_1,v_2)$. Consider the affine chart for the total space of $\OO_{\mathbb{P}(V)}(2)$ with coordinates $(s_1,s_2)$, which is such that $s_2 = v_2/v_1$ for the underlying point $[v_1:v_2] \in \mathbb{P}(V)$, and the section $s_1 = 1$ corresponds to the quadratic polynomial $v_1^2$. In this chart, $Y_p$ has equation $s_1^2 = p(1,s_2)$. The object $\tilde{C}_v$ constructed above, for $v = (1,s_2)$, is the structure sheaf of the ideal obtained by additionally setting $s_2$ to a specific value, and its endomorphism $\tilde{t}$ is multiplication by $s_1$. If $p(1,s_2) \neq 0$, the idempotent endomorphism $\frac{1}{2}(1 + s_1^{-1} \tilde{t})$ of $\tilde{C}_v$ singles out a direct summand, which is the structure sheaf of the point $(s_1,s_2)$. Applying Lemma \ref{th:example-family} to this, we get a perfect family of sheaves on $Y_p$ parametrized by the curve $\SS$ from \eqref{eq:punctured-rr}, which is itself an affine open part of $Y_p$, and such that the fibre of the family at $(s_1,s_2)$ is isomorphic to the structure sheaf of that point. This justifies calling it a ``tautological family''.

\begin{remark}
The canonical bundle of the total space of $\OO_{\mathbb{P}(V)}(2)$ is the pullback of $\OO_{\mathbb{P}(V)}(-2) \otimes \Omega^1_{\mathbb{P}(V)} \iso \OO_{\mathbb{P}(V)}(-4) \otimes \Lambda^2(V)^\vee$. Hence, fixing a symplectic form on the vector space $V$ singles out a two-form with poles exactly along $Y_p$, whose residue is then a nowhere vanishing one-form on $Y_p$. Returning to the identification $V = R^2$ and taking the symplectic form to be the standard form $dv_1 \wedge dv_2$, one finds that the restriction of the associated one-form to $\SS \subset Y_p$ is precisely \eqref{eq:punctured-rr-form}, since that satisfies $\theta \wedge d(s_1^2 - p(1,s_2)) = ds_1 \wedge ds_2$.
\end{remark}

\subsection{A universal construction\label{subsec:universal-bimodule}}
We will now give an alternative construction of the tautological family (and related ones). Any $N \in \mathit{Ob}\, D^b\mathit{Coh}(Y_p \times Y_p)$ defines a Fourier-Mukai functor $K_N$ from $D^b\mathit{Coh}(Y_p)$ to itself. Its action on objects is $K_N(E) = (q_2)_*(q_1^*E \otimes N)$, where $q_k: Y_p \times Y_p \rightarrow Y_p$ are the projections. To explain the interaction of this with the description \eqref{eq:quiver-embedding} of the derived category, we find it convenient to reverse directions of the arrows, which means to consider the pullback functor
\begin{equation} \label{eq:pullback-sheaf-1}
D^b\mathit{Coh}(Y_p) \longrightarrow Q_p^{\mathit{mod}}.
\end{equation}
The image of an object $F$ is a module $M$ with $H(M(X_i)) = \mathit{Hom}^*_{Y_p}(E_i,F)$. One shows easily that \eqref{eq:pullback-sheaf-1} is cohomologically full and faithful, and in fact a quasi-equivalence to the subcategory $Q_p^{\mathit{perf}}$ of perfect modules, which is inverse to \eqref{eq:quiver-embedding}. There is a similar pullback functor
\begin{equation}
D^b\mathit{Coh}(Y_p \times Y_p) \longrightarrow (Q_p,Q_p)^{\mathit{mod}}.
\end{equation}
This maps $N$ to a bimodule $P$ whose cohomology is $H(P(X_i,X_j)) = \mathit{Hom}^*_{Y_p \times Y_p}(E_j \boxtimes E_i^\vee,N) \iso \mathit{Hom}^*_{Y_p}(E_i,(q_2)_*(q_1^*E_j \otimes N))$. For instance, the structure sheaf of the diagonal maps to the diagonal bimodule. Note also that if we consider $P(\cdot,X_j)$ just as a right $Q_p$-module, it is quasi-isomorphic to the image of the sheaf $(q_2)_*(q_1^*E_j \otimes N)$ under \eqref{eq:pullback-sheaf-1}. This implies that $P$ is always right perfect. Finally, the following diagram is commutative up to homotopy:
\begin{equation} \label{eq:fm-versus-convolution}
\xymatrix{
\ar[d] D^b\mathit{Coh}(Y_p) \ar[rr]^-{K_N} && D^b\mathit{Coh}(Y_p) \ar[d] \\
Q_p^{\mathit{mod}} \ar[rr]^-{K_P} && Q_p^{\mathit{mod}}.
}
\end{equation}
In the top row of this, $K_N$ is the Fourier-Mukai functor, whereas on the bottom row we have the tensor product functor $K_P$. We will in fact only need to know commutativity of this on the level of quasi-isomorphism classes of objects, which is somewhat easier than the full statement.

The same observations hold for families. Let $\QQ_p$ be the constant family of $A_\infty$-structures over $\SS$ with fibre $Q_p$. There are functors
\begin{align}
\label{eq:family-modules} & D^b\mathit{Coh}(\SS \times Y_p) \longrightarrow \QQ_p^{\mathit{mod}}, \\
& D^b\mathit{Coh}(\SS \times Y_p \times Y_p) \longrightarrow (\QQ_p,\QQ_p)^{\mathit{mod}}, \label{eq:kernels-to-bimodules-2}
\end{align}
of which the first is an equivalence to the subcategory of perfect families, and the second one at least lands in the subcategory of right perfect families of bimodules.
Any object $\NN$ of $D^b\mathit{Coh}(\SS \times Y_p \times Y_p)$ defines a functor $K_\NN: D^b\mathit{Coh}(Y_p) \longrightarrow D^b\mathit{Coh}(\SS \times Y_p)$, which can be thought of as a family of Fourier-Mukai functors parametrized by $\SS$. If $\PP$ is the image of $\NN$ under \eqref{eq:kernels-to-bimodules-2}, we have a tensor product functor $K_\PP: Q_p^{\mathit{mod}} \longrightarrow \QQ_p^{\mathit{mod}}$, already considered (except for the notation) in Section \ref{subsec:existence}. The analogue of \eqref{eq:fm-versus-convolution} is
\begin{equation} \label{eq:fm-versus-convolution-2}
\xymatrix{
\ar[d] D^b\mathit{Coh}(Y_p) \ar[rr]^-{K_\NN} && D^b\mathit{Coh}(\SS \times Y_p) \ar[d] \\
Q_p^{\mathit{mod}} \ar[rr]^-{K_\PP} && \QQ_p^{\mathit{mod}},
}
\end{equation}
where the vertical arrows are \eqref{eq:family-modules}. So far the discussion has been essentially limited to abstract nonsense, but now we want to draw some consequences more specific to the case of elliptic curves.

Fix a point $s \in \SS \subset Y_p$, and give $Y_p$ its unique structure of an elliptic curve with $s$ as the neutral element. The graph of the addition morphism $\Sigma: Y_p \times Y_p \rightarrow Y_p$ restricts to a smooth subvariety $\{(s,y_1,y_2) \in \SS \times Y_p \times Y_p \, : \, y_2 = \Sigma(s,y_1)\}$. Let $\NN$ be the structure sheaf of that subvariety, and $\PP$ its image under \eqref{eq:kernels-to-bimodules-2}. This is a right perfect family of $Q_p$-bimodules parametrized by $\SS$, whose fibre at $s$ is quasi-isomorphic to the diagonal bimodule.

\begin{lemma} \label{th:fm-follows}
$\PP$ follows the deformation field $[\gamma]$ from \eqref{eq:elliptic-deformation-field}.
\end{lemma}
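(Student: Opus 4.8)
The plan is to compute the deformation class $\mathit{Def}(\PP)$ geometrically, and then match it with the image of $[\gamma]$ under \eqref{eq:bimodule-induced}. Since $\PP$ is obtained from the coherent sheaf $\NN$ through the functor \eqref{eq:kernels-to-bimodules-2}, which is compatible with the algebraic notion of connection, a pre-connection on $\NN$ that differentiates in the $\SS$-direction induces one on $\PP$, and the resulting deformation cocycle represents the relative (over $\SS$) Atiyah class of $\NN$. Equivalently, $\mathit{Def}(\PP)$ is the Kodaira--Spencer class of the family $\{\NN_{s'}\}_{s'\in\SS}$ of objects of $D^b\mathit{Coh}(Y_p\times Y_p)$, transported to $H^1(\mathit{hom}_{(\QQ_p,\QQ_p)^{\mathit{mod}}}(\PP,\Omega^1_\RR\otimes\PP))$ via \eqref{eq:kernels-to-bimodules-2} and the quasi-equivalence \eqref{eq:quiver-embedding}.

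Next I would evaluate that Kodaira--Spencer class. Each $\NN_{s'}$ is the structure sheaf of the graph $Z_{s'}\subset Y_p\times Y_p$ of translation by $s'$; this is a smooth divisor whose normal bundle is canonically $(\mathrm{pr}_1)^\ast TY_p|_{Z_{s'}}$, hence trivial because $Y_p$ is an elliptic curve. The family $\{Z_{s'}\}$ is just the translation orbit of the diagonal, so the Kodaira--Spencer map $T_{s'}\SS\to H^0(Z_{s'},N_{Z_{s'}/Y_p\times Y_p})\subset\mathit{Ext}^1_{Y_p\times Y_p}(\NN_{s'},\NN_{s'})$ sends $\partial_{s'}$ to the translation-invariant vector field it generates on $Y_p$. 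Because everything here is translation-equivariant, this shows globally over $\SS$ that $\mathit{Def}(\PP)$ is the image of $\theta\otimes\xi\in\Omega^1_\RR\otimes\mathit{HH}^1(Q_p,Q_p)=\mathit{HH}^1(\QQ_p,\Omega^1_\RR\otimes\QQ_p)$, where $\xi\in H^0(Y_p,TY_p)\subset\mathit{HH}^1(Q_p,Q_p)$ is the invariant vector field (using, as noted after Lemma \ref{th:example-family}, that $\theta$ of \eqref{eq:punctured-rr-form} is the restriction of the invariant differential $\bar\theta$). It then remains only to see that $\theta\otimes\xi$ and $-2\theta\otimes g_2$ have the same image, for which it is enough that $[g_2]$ spans the line $H^0(Y_p,TY_p)\subset\mathit{HH}^1(Q_p,Q_p)$, the factor $-2$ being a matter of normalization.

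For the latter, observe that the leading term $(g_2)^0$ of the cocycle from Addendum \ref{th:quasi-generators} is supported at the vertex $2$, so $[g_2]$ acts on the object $X_1$ by the zero class of $H^1(\mathit{hom}_{Q_p}(X_1,X_1))=\mathit{Ext}^1_{Y_p}(\pi^\ast E_1,\pi^\ast E_1)$. Since $\pi^\ast E_1=\OO_{Y_p}$ has vanishing Atiyah class, the map $\mathit{HH}^1(Q_p,Q_p)\iso\mathit{HH}^1(Y_p)\to\mathit{Ext}^1_{Y_p}(\OO_{Y_p},\OO_{Y_p})$ has kernel exactly $H^0(Y_p,TY_p)$, so $[g_2]\in H^0(Y_p,TY_p)$. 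On the other hand $[g_2]$ is nonzero there: by Addendum \ref{th:hochschild-cohomology-for-cones} together with \eqref{eq:anti-sign}--\eqref{eq:qp-products}, $[g_2]$ acts on the point-sheaves $\tilde C_v$ (which split over all of $\SS$, since $p(v)=s_1^2\neq0$ there) by the class $\tfrac12 q$, which is nonvanishing as $[v]$ ranges over $\SS$ by \eqref{eq:cv-endomorphism} — this is precisely the nonvanishing of the invariant vector field. Hence $[g_2]$ is a nonzero multiple of $\xi$, and the precise scalar is forced by compatibility with Lemma \ref{th:example-family}: tensoring $\PP$ with the perfect module representing $\OO_s$ returns the tautological family $\MM=K_\PP(\OO_s)$, and by \eqref{eq:def-of-a-tensor} this carries $\mathit{Def}(\PP)$ to $\mathit{Def}(\MM)$, which was shown there to be the image of $-2\theta\otimes g_2$. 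I expect the real obstacle to be the middle paragraph: identifying the Kodaira--Spencer class correctly inside $\mathit{HH}^1(Q_p,Q_p)$ — that is, tracking the transport through \eqref{eq:quiver-embedding}, \eqref{eq:pushforward}, and the normal-bundle computation — and extracting the right normalization, the manipulations of the first and third paragraphs being essentially formal.
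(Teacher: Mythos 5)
Your proposal reaches the right conclusion and shares the key comparison with Lemma \ref{th:example-family}, but it follows a genuinely different logical route from the paper, and the route you take places the burden on exactly the step you flag as "the real obstacle."

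The paper's proof proceeds in two stages. First it establishes abstractly that $\PP$ follows \emph{some} deformation field $[\tilde\gamma]\in\mathit{HH}^1(\QQ_p,\Omega^1_\RR\otimes\QQ_p)$: since each fibre $\PP_s$ is the graph of an autoequivalence, both maps in \eqref{eq:one-sided-deformation} are isomorphisms, hence so is \eqref{eq:bimodule-induced}. This step is what guarantees that $\mathit{Def}(\PP)$, which a priori lives in the larger space $H^1(\mathit{hom}_{(\QQ_p,\QQ_p)^{\mathit{mod}}}(\PP,\Omega^1_\RR\otimes\PP))$, is actually expressible in the deformation-field formalism at all. Second, it pins down $[\tilde\gamma]$ via its two leading components (using Addendum \ref{th:quasi-generators} to know these determine the class): translation-invariance of $E_1=\OO_{Y_p}$ forces $\tilde\gamma^0_{[1]}=0$, and the identification of $K_{\NN}(\OO_s)$ with the tautological family of Lemma \ref{th:example-family} forces $\tilde\gamma^0_{[2]}=-2\theta$. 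You instead try to compute $\mathit{Def}(\PP)$ directly as a Kodaira--Spencer class, observe geometrically that it is the invariant vector field, separately argue that $[g_2]$ spans $H^0(Y_p,TY_p)\subset\mathit{HH}^1(Y_p)$ by the vanishing of $\mathrm{At}(\OO_{Y_p})$, and then fix the scalar $-2$ by the same comparison with Lemma \ref{th:example-family}. Your third paragraph is correct and is a nice observation not made explicit in the paper, but it duplicates work the paper avoids: once one knows a priori that $\mathit{Def}(\PP)$ is the image of some $[\tilde\gamma]$, one never needs to identify the geometric location of $[g_2]$ inside $\mathit{HH}^1(Y_p)$ at all — determining the pair $(\tilde\gamma^0_{[1]},\tilde\gamma^0_{[2]})$ suffices. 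The genuine gap in your argument is the one you yourself identify: the claim in the middle paragraph that the Kodaira--Spencer class of $\{\NN_{s'}\}$ transports cleanly through \eqref{eq:kernels-to-bimodules-2}, \eqref{eq:quiver-embedding}, and the inverse of \eqref{eq:bimodule-induced} to $\theta\otimes\xi$ requires carefully matching the geometric notion of connection on $\NN$ with the categorical pre-connection on $\PP$, and also verifying that the normal-direction data lands in the one-sided (right) Hochschild image rather than the two-sided $\mathit{Ext}^1(\NN_{s'},\NN_{s'})$. The paper's indirect argument sidesteps this entirely, which is why it is shorter. If you carried your middle paragraph to completion you would have a correct alternative proof; as it stands, the implicit assumption that the KS class is automatically of the required form is precisely what the paper's first stage proves.
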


\begin{proof}
At any point $s \in \SS$, $\PP_s$ is the graph of an autoequivalence of $Q_p^{\mathit{perf}}$. This implies that the maps $\mathit{HH}^*(Q_p,Q_p) \longrightarrow H^*(\mathit{hom}_{(Q_p,Q_p)^{\mathit{mod}}}(\PP_s,\PP_s))$ considered in \eqref{eq:one-sided-deformation} are both isomorphisms. The same then holds in the entire family, which means that \eqref{eq:bimodule-induced} is an isomorphism. By definition, we have therefore shown that $\PP$ follows {\em some} deformation field $[\tilde{\gamma}] \in \mathit{HH}^1(\QQ_p, \Omega^1_\RR \otimes \QQ_p)$.

We will now argue indirectly based on Corollary \ref{th:universal-family} and the Hochschild cohomology computation in Lemma \ref{th:generic}. The sheaf $E_1 = \OO_{Y_p}$ is invariant under translations, which means that its Fourier-Mukai convolution with $\NN$ yields a constant family. By \eqref{eq:fm-versus-convolution-2}, this implies that $X_1 \otimes_{\QQ_p} \PP$ is a constant family, hence that the component $\tilde{\gamma}^0_{[1]} \in e_1\QQ_p e_1$ must be zero. If we now take the skyscraper sheaf at the point $s$ as our starting object, the outcome of $K_\NN$ is again a  ``tautological'' family, meaning that the fibre at any point of $\SS$ is isomorphic to the skyscraper sheaf at that point. Any two such families are isomorphic up to tensoring with line bundles over $\SS$. By comparing this with Lemma \ref{th:example-family}, one sees that $\tilde{\gamma}^0_{[2]} \in e_2\QQ_p e_2$ must be equal to $-2\theta$.
\end{proof}

\begin{corollary}
The category $Q_p$ has a nonempty set of periodic elements, in the sense of Definition \ref{th:per}. More specifically, if $[g_2] \in \mathit{HH}^1(Q_p,Q_p)$ is as in Addendum \ref{th:quasi-generators}, then $[-2g_2] \in \mathit{Per}(Q_p,\bar{\SS},\bar{\theta})$. Here, $\bar{\SS}$ is the smooth closure of $\SS$ (obviously isomorphic to $Y_p$), and $\bar\theta$ the extension of $\theta$ to that closure.
\end{corollary}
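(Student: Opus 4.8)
The plan is to deduce this directly from Corollary~\ref{th:universal-family}, applied to $A = Q_p$ (or rather its constant family $\QQ_p$ over the curve $\SS$ of \eqref{eq:punctured-rr}) with the deformation field $[\gamma] = -2\,\theta\otimes[g_2]\in\mathit{HH}^1(\QQ_p,\Omega^1_\RR\otimes\QQ_p)$ from \eqref{eq:elliptic-deformation-field}, which is $\theta\otimes[-2g_2]$ in the notation of Definition~\ref{th:per}. The input bimodule family demanded by the corollary is precisely the one produced in Section~\ref{subsec:universal-bimodule}: push the structure sheaf of the graph of the addition morphism $\Sigma$ through \eqref{eq:kernels-to-bimodules-2} to obtain the family $\PP$ of $Q_p$-bimodules over $\SS$. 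By the general remarks accompanying \eqref{eq:kernels-to-bimodules-2} this $\PP$ is right perfect; its fibre over the base point $s$ is the graph of $\Sigma(s,-)=\id$ and so is quasi-isomorphic to the diagonal bimodule; and, crucially, Lemma~\ref{th:fm-follows} asserts that $\PP$ follows $[\gamma]$. All hypotheses of Corollary~\ref{th:universal-family} are therefore met.

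Granting this, the corollary produces, for each $X\in\mathit{Ob}\,Q_p^{\mathit{perf}}$, a perfect family $\MM = X\otimes_{Q_p}\PP$ over $\SS$ which follows $[\gamma]$ and has $\MM_s\iso X$; and for any two objects $(X_0,X_1)$ the families $\MM_k = X_k\otimes_{Q_p}\PP$ satisfy
\[
H^0(\mathit{hom}_{\QQ_p^{\mathit{perf}}}(\MM_0,\MM_1))\;\iso\;\RR\otimes_R H^0(\mathit{hom}_{Q_p^{\mathit{perf}}}(X_0,X_1)),
\]
with the induced connection trivial for the relative connection obtained by tensoring the trivial connection on the constant family $X_k$ with a relative connection on $\PP$ (the sections $b\otimes e_\PP$ being covariantly constant). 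This is exactly the list of requirements in Definition~\ref{th:per}, once one takes $\bar{\SS}$ to be the smooth closure of $\SS$ --- which is $Y_p$, equipped with the elliptic curve structure whose neutral element is $s$ --- and $\bar{\theta}$ to be the extension of $\theta$ to that closure. Hence $[-2g_2]\in\mathit{Per}(Q_p,\bar{\SS},\bar{\theta})$, and in particular this set is nonempty.

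The argument is essentially formal, all the substantive work having already been carried out in Lemma~\ref{th:fm-follows} and Corollary~\ref{th:universal-family}. The only point that merits an explicit line of verification is that the curve $\SS$ supplied by the construction is of the shape demanded at the start of Section~\ref{subsec:flux}, namely a nonempty affine open subscheme of the elliptic curve $\bar{\SS}$; this is immediate from \eqref{eq:punctured-rr}, since that $\SS$ is by construction an affine open part of $Y_p$ (here $p$ is assumed simple, as in Section~\ref{subsec:elliptic}, so that $Y_p$ is an elliptic curve). I do not anticipate any genuine obstacle beyond this bookkeeping.
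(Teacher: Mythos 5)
Your proof matches the paper's exactly: the text following the Corollary simply declares it a direct consequence of Lemma~\ref{th:fm-follows} and Corollary~\ref{th:universal-family}, which is precisely the route you take. The intermediate verifications you spell out (right perfection of $\PP$, the diagonal fibre at $s$, the identification of $[\gamma]$ with $\theta\otimes[-2g_2]$, and the observation that $\SS$ is an affine open in the elliptic curve $Y_p$) are exactly the bookkeeping implicit in the paper's one-line justification.
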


This is a direct consequence of Lemma \ref{th:fm-follows} and Corollary \ref{th:universal-family}. Now recall that the category $Q_p^{\mathit{perf}} \htp D^b\mathit{Coh}(Y_p)$ carries an action of $\mathit{SL}_2(\bZ)$ (in a weak sense, and ignoring even shifts) \cite[Remark 3.15]{mukai81}. This acts on Hochschild cohomology and also maps families to families. From this, one gets the following:

\begin{corollary} \label{th:periodic-lattice}
The set $\mathit{Per}(Q_p,\bar{\SS},\bar{\theta}) \subset \mathit{HH}^1(Q_p,Q_p)$ contains $m_1[g_1] + m_2[g_2]$ for all $m_1 \in \bZ$, $m_2 \in m_1 + 2 \bZ$. \qed
\end{corollary}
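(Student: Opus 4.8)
The plan is to obtain the statement purely formally from two inputs already at hand. The first is the preceding corollary, which gives $[-2g_2]\in\mathit{Per}(Q_p,\bar\SS,\bar\theta)$. The second is the closure of the periodic set under two operations: multiplication by integers (observed right after Definition \ref{th:per}), and the weak $\mathit{SL}_2(\Z)$-action on $Q_p^{\mathit{perf}}\htp D^b\mathit{Coh}(Y_p)$. For the latter I would first spell out why this action preserves periodicity: an autoequivalence $\Phi$ of $Q_p^{\mathit{perf}}$ carries perfect families over a fixed $\SS$ to perfect families (this is the module-level functoriality of Section \ref{subsec:functor} applied to the constant family $\AA$; Assumption \ref{th:matching-deformations} is automatically satisfiable because $\Phi$ is a quasi-equivalence), it intertwines the deformation field $\theta\otimes[g]$ with $\theta\otimes\Phi_*[g]$, and it respects the triviality conditions \eqref{eq:triv-h0}. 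Since $\bar\SS$ and $\bar\theta$ are untouched, $[g]\in\mathit{Per}(Q_p,\bar\SS,\bar\theta)$ iff $\Phi_*[g]\in\mathit{Per}(Q_p,\bar\SS,\bar\theta)$, so $\mathit{Per}$ is invariant under the induced $\mathit{SL}_2(\Z)$-action on $\mathit{HH}^1(Q_p,Q_p)$.

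The substantive step is to identify that action. Using derived invariance of Hochschild cohomology together with \eqref{eq:quiver-embedding}, I would pass to $\mathit{HH}^1(Q_p,Q_p)\iso\mathit{HH}^1(Y_p)\iso H^1(Y_p,\OO_{Y_p})\oplus H^0(Y_p,T_{Y_p})$, which is two-dimensional as in Lemma \ref{th:generic}. On this space the generators of $\mathit{SL}_2(\Z)$ act — Fourier--Mukai with the Poincar\'e bundle swaps the two summands, and tensoring by a degree-one line bundle is a shear — so the action is the standard representation of $\mathit{SL}_2(\Z)$ on an integral lattice $\Gamma\subset\mathit{HH}^1(Q_p,Q_p)$. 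Then I would pin down $\Gamma$ in the basis $[g_1],[g_2]$: by Addendum \ref{th:quasi-generators} the leading term of $g_k$ generates $\mathit{Ext}^1(\pi^*E_k,\pi^*E_k)\iso H^1(\OO_{Y_p})$ and its other diagonal component vanishes, while the restriction of the $H^0(T_{Y_p})$-direction class to $\mathit{Ext}^1(\pi^*E_k,\pi^*E_k)$ is multiplication by $\deg(\pi^*E_k)$, with $\deg(\pi^*E_1)=0$ and $\deg(\pi^*E_2)=2$ (the latter because $\pi$ is a degree-two cover and $E_2=\OO_{\mathbb{P}(V)}(1)\otimes\Lambda^2V$). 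Comparing these restrictions forces $[g_2]$ to lie along the translation direction while $[g_1]$ lies along ``$-2$ times Picard plus translation'', from which $\Gamma=\{m_1[g_1]+m_2[g_2]:m_1\equiv m_2\ (\mathrm{mod}\ 2)\}$.

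To conclude, I would check that $[-2g_2]$, i.e. the point $(0,-2)$ of $\Gamma$, is \emph{primitive} in $\Gamma$: a factorization $(0,-2)=k\,(m_1,m_2)$ with $(m_1,m_2)\in\Gamma$ forces $k\mid2$, and $k=\pm2$ would give $(m_1,m_2)=(0,\mp1)\notin\Gamma$. Since $\mathit{SL}_2(\Z)$ is transitive on primitive vectors of $\Gamma$, the orbit of $[-2g_2]$ is exactly the set of primitive vectors of $\Gamma$; closing under multiplication by integers (including $0$, which gives the trivially periodic zero class) then sweeps out all of $\Gamma$, which is the asserted set $\{m_1[g_1]+m_2[g_2]:m_2\in m_1+2\Z\}$.

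The main obstacle is the explicit computation of the $\mathit{SL}_2(\Z)$-action and, inside it, the parity phenomenon — that the $\mathit{SL}_2(\Z)$-stable lattice is the index-two sublattice $\{m_1\equiv m_2\}$ rather than all of $\Z[g_1]\oplus\Z[g_2]$ — which is the only reason the statement is non-trivial and which traces back to the degree-two double cover $\pi\colon Y_p\to\mathbb{P}(V)$. Everything else is bookkeeping, the one point of care being that additivity of $\mathit{Per}$ must \emph{not} be used (see Remark \ref{th:downsides}); this is precisely why the argument runs through transitivity on primitive vectors instead of generating $\Gamma$ by sums.
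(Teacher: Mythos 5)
Your proposal is correct and follows the same route the paper sketches: it combines the already-established periodicity of $[-2g_2]$ with the invariance of $\mathit{Per}$ under multiplication by integers and under the $\mathit{SL}_2(\Z)$ action coming from derived autoequivalences of $D^b\mathit{Coh}(Y_p)$. The explicit identification of the $\mathit{SL}_2(\Z)$-stable lattice via the degrees of $\pi^*E_1$ and $\pi^*E_2$ (yielding the index-two condition $m_1\equiv m_2\bmod 2$), and the reduction to transitivity on primitive vectors, are exactly the computations the paper leaves implicit when it writes ``from this, one gets the following.''
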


\subsection{Theta functions}
For the remainder of this section, we work exclusively over the field $R$ from \eqref{eq:novikov-field}. Elliptic curves over fields like $R$ can be studied using methods of non-archimedean analytic geometry. The comparison with ordinary algebraic geometry over $R$ is provided by a suitable GAGA theorem (see \cite{papikian05,conrad08} for introductory accounts). We will borrow the intuition from there, but otherwise proceed by direct computation, avoiding abstract tools as much as possible.

Let $F$ be the ring of Laurent series over $R$ in one variable $t$, and which have ``infinite convergence radius'' (the non-archimedean analogue of holomorphic functions on $\bC^*$). Explicitly, this means that
\begin{equation} \label{eq:tate}
F = \left\{
\begin{aligned}
& f(t) = c_0 \hbar^{m_0} t^{n_0} + c_1 \hbar^{m_1} t^{n_1} + \cdots, \\
& c_k \in \bC, \;\; m_k \in \bR, \;\; n_k \in \bZ, \;\; \lim_{k \rightarrow \infty} m_k + A n_k = +\infty \text{ for any $A \in \bR$}
\end{aligned}
\right\}.
\end{equation}
We will be particularly interested in the theta-functions (a standard reference is \cite{mumford85}, but our notation is a little different)
\begin{equation} \label{eq:theta-functions}
\vartheta_{n,k}(t) = \sum_{i \in n\bZ+k} \hbar^{i^2/2n} t^i,
\end{equation}
where $n$ is a positive integer, and $k \in \bZ/n \bZ$. These functions satisfy
\begin{align}
\label{eq:periodicity}
& \vartheta_{n,k}(\hbar t) = \hbar^{-n/2} t^{-n} \vartheta_{n,k}(t) && \text{(periodicity)}, \\
\label{eq:fractional-period} & \vartheta_{n,k}(\hbar^{1/n} t) = \hbar^{-1/2n} t^{-1} \vartheta_{n,k+1}(t) && \text{(fractional periodicity)}, \\
& \vartheta_{n,k}(t^{-1}) = \vartheta_{n,n-k}(t) && \text{(symmetry)}. \label{eq:theta-symmetry}
\end{align}
The simplest example, $\vartheta_{1,1}(t) = \vartheta(t)$, is the common or garden Jacobi theta function. On the next level $n = 2$, one has two functions $\vartheta_{2,1}(t)$ and $\vartheta_{2,2}(t)$, which besides \eqref{eq:fractional-period} are related by the addition formula
\begin{equation} \label{eq:theta-factorization}
\vartheta_{2,1}(u) \vartheta_{2,1}(t) + \vartheta_{2,2}(u) \vartheta_{2,2}(t)  = \vartheta(u t)\vartheta(u^{-1} t).
\end{equation}
As a consequence of that formula, we have
\begin{equation} \label{eq:special-doubling}
\begin{aligned}
& \vartheta_{2,2}(\hbar^{1/2})\vartheta_{2,1}(t) - \vartheta_{2,1}(\hbar^{1/2})\vartheta_{2,2}(t) = -\hbar^{-1/4}\vartheta(-t)^2, \\
& \vartheta_{2,2}(-\hbar^{1/2})\vartheta_{2,1}(t) - \vartheta_{2,1}(-\hbar^{1/2})\vartheta_{2,2}(t) = \hbar^{-1/4}\vartheta(t)^2, \\
& \vartheta_{2,2}(1)\vartheta_{2,1}(t) - \vartheta_{2,1}(1)\vartheta_{2,2}(t) =
\hbar^{1/4} t \,\vartheta(-\hbar^{1/2}t)^2, \\
& \vartheta_{2,2}(-1)\vartheta_{2,1}(t) - \vartheta_{2,1}(-1)\vartheta_{2,2}(t) =
\hbar^{1/4}t \, \vartheta(\hbar^{1/2}t)^2.
\end{aligned}
\end{equation}
We will need the duplication formula \cite[p.\ 488]{whittaker-watson}
\begin{equation} \label{eq:duplication}
\begin{aligned}
\vartheta(t)\vartheta(-t)\vartheta(\hbar^{1/2}t)\vartheta(-\hbar^{1/2}t) & = \half \vartheta(1) \vartheta(-1) \vartheta(\hbar^{1/2}) \vartheta(-\hbar^{1/2}t^2) \\
& = \half \hbar^{-1/8}t^{-1}\, \vartheta(1)\vartheta(-1) \vartheta(\hbar^{1/2}) (\vartheta_{4,1}(t) - \vartheta_{4,3}(t)).
\end{aligned}
\end{equation}
We will also need an identity for the derivatives (in $t$-direction),
\begin{equation} \label{eq:derivative-identity}
\begin{aligned}
t(\vartheta_{2,2}'(t)\vartheta_{2,1}(t) - \vartheta_{2,1}'(t)\vartheta_{2,2}(t)) & = \vartheta_{4,1}(t)\vartheta_{4,3}'(1) + \vartheta_{4,3}(t)\vartheta_{4,1}'(1) \\
& = \vartheta_{4,3}'(1)(\vartheta_{4,1}(t) - \vartheta_{4,3}(t)).
\end{aligned}
\end{equation}

\begin{definition} \label{th:unit-torus-polynomial}
The {\em unit torus polynomial} $p \in R[v_1,v_2]$ is
\begin{align} \label{eq:torus-polynomial}
& \begin{aligned}
p(v_1,v_2) & = c\,(\vartheta_{2,2}(\hbar^{1/2})v_2 - \vartheta_{2,1}(\hbar^{1/2})v_1)(\vartheta_{2,2}(-\hbar^{1/2})v_2 - \vartheta_{2,1}(-\hbar^{1/2})v_1)\\ & \qquad \qquad \qquad \cdot (\vartheta_{2,2}(1)v_2 - \vartheta_{2,1}(1)v_1)(\vartheta_{2,2}(-1)v_2 - \vartheta_{2,1}(-1)v_1) \\
& = c\,(\vartheta_{2,2}(\hbar^{1/2})^2 v_2^2 - \vartheta_{2,1}(\hbar^{1/2})^2 v_1^2)(
\vartheta_{2,2}(1)^2 v_2^2 - \vartheta_{2,1}(1)^2 v_1^2),
\end{aligned}
\intertext{where}
& \begin{aligned}
c & = -\hbar^{1/4} \vartheta(1)^{-2}\vartheta(-1)^{-2}\vartheta(\hbar^{1/2})^{-2} \vartheta_{4,3}'(1)^2.
\end{aligned}
\end{align}
\end{definition}

One can associate to this polynomial a smooth elliptic curve over $R$, by the double branched cover method from Section \ref{subsec:elliptic}. For the most part, we will only look at the affine part $\SS = \mathit{Spec}(\RR)$ of that curve as in \eqref{eq:punctured-rr}, and equip that with the one-form $\theta$ from \eqref{eq:punctured-rr-form} (this extends to a regular one-form on the entire elliptic curve). We call this the {\em unit torus curve}. Points of $\SS$ have a transcendental parametrization by $u \in R^\times$, called the {\em theta parametrization}:
\begin{equation} \label{eq:theta-parametrization}
\begin{aligned}
s_2 & = \vartheta_{2,1}(u)\,\vartheta_{2,2}(u)^{-1}, \\
s_1 & = -\half u s_2'(u) = \half\, u \vartheta_{2,2}(u)^{-2}(\vartheta_{2,2}'(u)\vartheta_{2,1}(u) - \vartheta_{2,1}'(u)\vartheta_{2,2}(u)), \\
& = \half \vartheta_{2,2}(u)^{-2} \vartheta_{4,3}'(1) (\vartheta_{4,1}(u) - \vartheta_{4,3}(u)).
\end{aligned}
\end{equation}
The fact that these points satisfy the equation for $\SS$ follows from the relations between theta-functions listed above. Note that, due to the periodicity property, $u$ and $\hbar^k u$ ($k \in \bZ$) describe the same point (if we had set up the general machinery properly, this would yield the Tate uniformization of the closure of $\SS$, which of course is the original $Y_p$, as a quotient $R^\times/\hbar^\bZ$). To be precise, we have to exclude the values $u = \pm i \hbar^{k+1/2}$, which is where $\vartheta_{2,2}$ vanishes (these would be mapped to points at infinity) as well as $u = \pm \hbar^{k/2}$, which is where $\vartheta_{4,1}-\vartheta_{4,3}$ vanishes, see \eqref{eq:duplication} (those points would be mapped to the branch points which do not lie in $\SS$ by definition). The involution $u \mapsto \hbar^{1/2}u$ corresponds to $(s_1,s_2) \mapsto (-s_1s_2^{-2},s_2^{-1})$, whereas $u \mapsto u^{-1}$ corresponds to $(s_1,s_2) \mapsto (-s_1,s_2)$. Note also that in this parametrization, the one-form is $\theta = u^{-1} du$.

\begin{addendum} \label{th:double-cover}
Suppose that we change variables from $\hbar$ to $\hbar^2$ (an automorphism of the field $R$). Apply this to the coefficients of $p$ and denote the resulting polynomial by $\tilde{p}$, with its associated curve $\tilde{\SS}$ and one-form $\tilde{\theta}$. Our claim is that then, the projective closure of $\tilde{\SS}$ is an \'etale double cover of that of $\SS$, and that the given one-forms are compatible with the covering map. In fact, the covering transformation is the abovementioned involution $(\tilde{s}_1,\tilde{s}_2) \mapsto (-\tilde{s}_1\tilde{s}_2^{-2},\tilde{s}_2^{-1})$. This is obvious set-theoretically by comparing the theta-parametrizations, and one can derive from the abstract theory (or at least in principle, check by hand) that the resulting map is indeed algebraic.
\end{addendum}

\subsection{A nonarchimedean model\label{subsec:nonarchimedean}}
Nonarchimedean analytic geometry appears naturally in the context of mirror symmetry, as shown for torus fibrations in \cite{kontsevich-soibelman00}. Here, we will spell out a version of their construction for the case of elliptic curves. The ring $F$ from \eqref{eq:tate} comes with an action of $\bZ$, generated by the translation
\begin{equation} \label{eq:translation-action}
(Tf)(t) = f(\hbar t).
\end{equation}
We want to consider equivariant $T$-modules, or equivalently (right) modules over the semidirect product algebra $F \rtimes \bZ$ (whose generators are $f(t)$ and $\tau$, with relations $\tau f = T(f) \tau$). For any $d \in \bZ$ one has the equivariant module $F(d)$, which is $F$ itself with the twisted $\bZ$-action generated by
\begin{equation}
(T(d)f)(t) = t^d f(\hbar t).
\end{equation}
$\mathit{Hom}_{F \rtimes \bZ}(F(d_0),F(d_1))$ is isomorphic to the subspace of those elements of $F$ which are invariant under $T(d_1-d_0)$. Similarly, using the projective resolution (the map is left multiplication)
\begin{equation} \label{eq:f-resolution}
\tilde{F}(d) = \Big\{F \rtimes \bZ \xrightarrow{\mathit{id} - \tau t^{-d}} F \rtimes \bZ \Big\}
\end{equation}
of $F(d)$, one sees that $\mathit{Ext}^1_{F \rtimes \bZ}(F(d_0),F(d_1))$ is isomorphic to the space of coinvariants for $T(d_1-d_0)$. In fact, we have a subcomplex of the chain complex $\mathit{Hom}_{F \rtimes \bZ}(\tilde{F}(d_0),\tilde{F}(d_1))$ which is quasi-isomorphic to the whole thing, and that subcomplex is of the form
\begin{equation} \label{eq:t-subcomplex}
\Big\{F \xrightarrow{\mathit{id} - T(d_1-d_0)} F\Big\}
\end{equation}
(concentrated in degrees $0$ and $1$).

Consider $Z_1 = F$ and $Z_2 = F(2)$. Using the computational ideas which we have just explained, one easily shows that
\begin{equation} \label{eq:q-algebra-again}
\mathit{Ext}^*_{F \rtimes \bZ}(Z_1 \oplus Z_2, Z_1 \oplus Z_2) \iso Q
\end{equation}
is the graded algebra from Definition \ref{th:q-algebra}. Hence, the underlying cochain level algebraic structure is quasi-isomorphic to $Q_p$ for some $p$. One can use the subcomplexes \eqref{eq:t-subcomplex} to write down a relatively simple (if still infinite-dimensional) dga model. In principle, the Homological Perturbation Lemma could be applied to that, as in \cite{polishchuk09}, which would give a way of determining $p$ directly. However, there is also a more categorical approach, in parallel with that from Section \ref{subsec:elliptic}. By \eqref{eq:periodicity}, an explicit basis of $\mathit{Ext}^*_{F \rtimes \bZ}(Z_1,Z_2)$ is given by the functions
\begin{equation}
f_k(t) = \vartheta_{2,k}(\hbar^{-1/2}t) = t^{-1}\vartheta_{2,k+1}(t),
\end{equation}
($k = 1,2$). Given $u \in R^\times$, consider the linear combination
\begin{equation}
\vartheta_{2,2}(u)f_1 + \vartheta_{2,1}(u)f_2 : Z_1 \longrightarrow Z_2.
\end{equation}
This is always injective. If $u \notin \{\pm \hbar^{k/2}\, : \, k \in \bZ\}$, its cokernel splits into a direct sum of two nontrivial objects. This is a consequence of \eqref{eq:theta-factorization}, which shows that the map can be written as the product of two elements which generate distinct (and $\bZ$-invariant) principal ideals. One can also check that this fails for the remaining values of $u$, where the cokernel is indecomposable. By applying Lemma \ref{th:cone-splitting}, it then follows that the homogeneous polynomial $p$ relevant to our situation must vanish at the points $(\vartheta_{2,2}(\pm \hbar^{1/2}),\vartheta_{2,1}(\pm \hbar^{1/2}))$ and $(\vartheta_{2,2}(\pm 1),\vartheta_{2,1}(\pm 1))$. Hence, it must be of the form \eqref{eq:torus-polynomial} for some nonzero constant $c$. One can adjust the isomorphism \eqref{eq:q-algebra-again} to make that constant equal to $1$, which shows:

\begin{lemma} \label{th:nonarch}
The full subcategory of the (chain level) derived category of modules over $F \rtimes \bZ$ with objects $F(0)$, $F(2)$ is quasi-isomorphic to $Q_p$, where $p$ is the unit torus polynomial. \qed
\end{lemma}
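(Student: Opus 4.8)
The plan is to reuse the strategy of Section~\ref{subsec:elliptic}: reconstruct the deformation polynomial from the splitting behaviour of cones, and then pin down the remaining scalar. From \eqref{eq:q-algebra-again} the graded endomorphism algebra of $Z_1 \oplus Z_2$ in the derived category of $F \rtimes \Z$-modules is isomorphic to $Q$; hence, by the general classification in Proposition~\ref{th:classify-deformations}, the chain-level $A_\infty$-structure on this endomorphism algebra is quasi-isomorphic to $Q_{p'}$ for some $p' \in \mathit{Sym}^4(V^\vee)$, determined up to the rescaling $Q_{p'} \iso Q_{\gamma^2 p'}$. It remains to identify $p'$ with the unit torus polynomial of Definition~\ref{th:unit-torus-polynomial}.

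The main step is to determine, inside the model, for which $v \in V$ the cone $C_v$ splits. Fix the basis $f_1, f_2$ of $\mathit{Ext}^*_{F \rtimes \Z}(Z_1, Z_2) \iso V^\vee$ with $f_k(t) = \vartheta_{2,k}(\hbar^{-1/2}t)$, and for $u \in R^\times$ form the morphism $\phi_u = \vartheta_{2,2}(u) f_1 + \vartheta_{2,1}(u) f_2 \colon Z_1 \to Z_2$. Using the periodicity property \eqref{eq:periodicity} one checks that $\phi_u$ is injective as a map of $F \rtimes \Z$-modules, so its mapping cone is the cokernel, an ordinary module in degree zero. The addition formula \eqref{eq:theta-factorization} now exhibits $\phi_u$, after the substitution $t \mapsto \hbar^{-1/2} t$, as the product of two elements of $F$ each generating a $\Z$-invariant principal ideal. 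When $u \notin \{\pm \hbar^{k/2} : k \in \Z\}$ these two ideals are distinct and $\mathrm{coker}(\phi_u)$ decomposes as a direct sum of two nonzero $F \rtimes \Z$-modules; for $u \in \{\pm \hbar^{k/2}\}$ the two factors agree up to the $\Z$-action and $\mathrm{coker}(\phi_u)$ is indecomposable. By Lemma~\ref{th:cone-splitting}, this forces $p'(\vartheta_{2,2}(u), \vartheta_{2,1}(u))$ to vanish precisely at $u = \pm \hbar^{1/2}$ and $u = \pm 1$ modulo $\hbar^\Z$. Since $p'$ is a binary quartic form and these four values give four distinct points of $\mathbb{P}(V)$, $p'$ is a nonzero scalar multiple of the product of the corresponding four linear forms, that is, of the polynomial $p$ in \eqref{eq:torus-polynomial}. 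Finally, rescaling the isomorphism \eqref{eq:q-algebra-again} (equivalently, applying an $A_\infty$-isomorphism $Q_{p'} \iso Q_{\gamma^2 p'}$) absorbs the outstanding constant, giving $p' = p$.

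The hard part is the module-theoretic analysis of $\mathrm{coker}(\phi_u)$: one must know enough about the ideal structure of $F$ to see that the quotient by a principal ideal generated by a product of two ``single-zero'' theta-type elements is a nontrivial direct sum exactly when those two principal ideals differ, and indecomposable otherwise. This is the non-archimedean analytic content behind the computation: $F$ is the ring of global analytic functions on the uniformizing torus of the Tate elliptic curve, its nonzero elements factor according to their zero divisors, and a principal ideal generated by a function with a single zero orbit cuts out a skyscraper quotient. One could invoke a GAGA-type comparison for this, but proving it by hand amounts to careful bookkeeping of the valuations of the coefficients $c_k \hbar^{m_k} t^{n_k}$ in \eqref{eq:tate}. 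Everything else --- the identification of $\mathit{Ext}^*$ with $Q$, the reduction to $Q_{p'}$, the vanishing-locus argument, and the normalization of the constant --- is routine given the machinery already developed.
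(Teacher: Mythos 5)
Your proposal is correct and follows essentially the same route as the paper: invoke \eqref{eq:q-algebra-again} and Proposition~\ref{th:classify-deformations} to reduce to identifying a quartic form $p'$, then detect the zeros of $p'$ via the splitting behaviour of cokernels of $\phi_u = \vartheta_{2,2}(u)f_1 + \vartheta_{2,1}(u)f_2$ using the theta addition formula \eqref{eq:theta-factorization} and Lemma~\ref{th:cone-splitting}, and finally absorb the remaining scalar by rescaling. The closing paragraph of your proposal about the module-theoretic content of the cokernel computation is a fair gloss, at roughly the same level of detail as the paper, which likewise leaves the verification that distinct $\Z$-invariant principal ideals yield a direct-sum decomposition (and the indecomposability in the exceptional cases) to the reader.
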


%
%

\subsection{The two-torus\label{subsec:twotorus}}
Take the unit area (symplectic) two-torus
\begin{equation}
\begin{aligned}
& T = \bR^2/\bZ^2, \\
& \omega_T = dp \wedge dq
\end{aligned}
\end{equation}
in coordinates $(p,q)$; and equip it with the standard complex structure, as well as the nonzero holomorphic one-form $dz = dp + i\, dq$. The Fukaya category $\mathit{Fuk}(T)$ is a $\bZ$-graded, cohomological unitally $A_\infty$-category over the field $R$ from \eqref{eq:novikov-field}. This is related by mirror symmetry to the derived category of an elliptic curve over $R$, see \cite{polishchuk-zaslow98,abouzaid-smith09}. More directly, \cite{kontsevich-soibelman00} relates it to the ring $F \rtimes \bZ$ described above. One could use either of these theorems to derive the desired results, but we prefer to argue by direct geometric computation. This means that our exposition is a little ad hoc, which is hopefully forgivable in view of the relative simplicity of the target geometry (for more details of the approach to Fukaya categories used here, see Section \ref{subsec:fukaya} and the references therein).

Objects of $\mathit{Fuk}(T)$ are simple closed curves $L \subset T$, equipped with a grading with respect to the complex one-form $dz$ (this excludes contractible curves), and with a local coefficient system $\xi \rightarrow L$ whose fibre is $R^r$ for some $r$, and whose holonomy lies in the subgroup
\begin{multline} \label{eq:gl-subgroup}
\mathit{GL}_0(r,R) \stackrel{\mathrm{def}}{=} \{A = A_0 + A_1 \hbar^{m_1} + \cdots, \; \\ A_0 \in \mathit{GL}(r,\bC), \text{and $A_i \in \mathit{Mat}(r,\bC)$, $m_i > 0$ for $i \geq 1$}\} \subset \mathit{GL}(r,R).
\end{multline}

\begin{remark}
The grading of $L$ induces an orientation. In the general definition of the Fukaya category, one requires an additional choice of {\it Spin} structure on $L$. However, changing the {\it Spin} structure is the same as tensoring $\xi$ with a line bundle having holonomy $\{\pm 1\}$. Hence, it is enough to consider curves $L$ with the trivial {\it Spin} structure (that which is compatible with the trivialization of $\mathit{TL}$; equivalently, it is the one which is nontrivial in {\it Spin} bordism), which is why that structure does not appear explicitly in our formulation.
\end{remark}

The space of morphisms between two objects $(L_0,\xi_0)$ and $(L_1,\xi_1)$ is the Floer cochain space $\mathit{CF}^*(L_0,L_1)$. In the case where $L_0$ intersects $L_1$ transversally, one can set it up so that generators correspond bijectively to points $x \in L_0 \cap L_1$. More precisely, each such point has an absolute index $\mathrm{deg}(x) \in \bZ$, which depends on the gradings. This has the property that $(-1)^{\mathrm{deg}(x)+1}$ is the local intersection number. Then, $x$ contributes a copy of $\mathit{Hom}(\xi_{0,x},\xi_{1,x})$ to $\mathit{CF}^*(L_0,L_1)$ in degree $\mathrm{deg}(x)$.

\begin{remark}
The general theory dictates that the contribution of $x$ is $\mathit{Hom}(\xi_{0,x},\xi_{1,x}) \otimes o_x$, where $o_x$ is the orientation space, a one-dimensional $R$-vector space which can be identified with $R$ uniquely up to sign. However, in the specific case where the symplectic manifold is a surface, we know \cite[Equation (13.6)]{seidel04} that $o_x \iso R$ canonically if $\mathrm{deg}(x)$ is even, and $o_x \iso (TL_1)_x \otimes_\bR R$ otherwise. In the second case, we use the orientation of $L_1$ given by the grading to pick a preferred generator for $o_x$. Again, this is why orientation spaces do not appear explicitly here.
\end{remark}

\begin{figure}
\begin{center}
\begin{picture}(0,0)%
\includegraphics{abcd1.pstex}%
\end{picture}%
\setlength{\unitlength}{5920sp}%
\begingroup\makeatletter\ifx\SetFigFont\undefined%
\gdef\SetFigFont#1#2#3#4#5{%
  \reset@font\fontsize{#1}{#2pt}%
  \fontfamily{#3}\fontseries{#4}\fontshape{#5}%
  \selectfont}%
\fi\endgroup%
\begin{picture}(1227,1408)(1189,-4607)
\put(2301,-4561){\makebox(0,0)[lb]{\smash{{\SetFigFont{10}{12}{\rmdefault}{\mddefault}{\updefault}{\color[rgb]{0,0,0}$w_2,w_3$}%
}}}}
\put(1681,-4561){\makebox(0,0)[lb]{\smash{{\SetFigFont{10}{12}{\rmdefault}{\mddefault}{\updefault}{\color[rgb]{0,0,0}$w_1,w_4$}%
}}}}
\end{picture}%
\caption{\label{fig:abcd1}}
\end{center}
\end{figure}%
The first two objects relevant for our argument are
\begin{equation}
L_1 = \{q = 0\}, \quad L_2 = \{q = -2 p\},
\end{equation}
We equip them with gradings so that $\mathit{HF}^*(L_1,L_2)$ is concentrated in degree $0$, and so that the induced orientations are as shown in Figure \ref{fig:abcd1}. Both should carry trivial local systems. Write $w_1, -w_2 \in \mathit{CF}^0(L_1,L_2)$ for the generators coming from the two intersection points $(\half,0)$ and $(0,0)$. We also write $w_4, w_3 \in \mathit{CF}^1(L_2,L_1)$ for the generators coming from the same two points. By introducing a perturbed version of one of our two $L_k$ and counting triangles, one shows that each of the products
\begin{equation} \label{eq:w-product}
\begin{aligned}
& [w_4] \cdot [w_1] \in \mathit{HF}^1(L_1,L_1) \iso H^1(L_1;R), \\
& [w_1] \cdot [w_4] \in \mathit{HF}^1(L_2,L_2) \iso H^1(L_2;R), \\
& -[w_2] \cdot [w_3] \in \mathit{HF}^1(L_2,L_2) \iso H^1(L_2;R), \\
& -[w_3] \cdot [w_2] \in \mathit{HF}^1(L_1,L_1) \iso H^1(L_1;R)
\end{aligned}
\end{equation}
equals the generator of $H^1$ given by our choice of orientations. Hence:

\begin{lemma} \label{th:product}
The cohomology level product satisfies the relations \eqref{eq:quadratic-relations}. This means that as a graded algebra, $\bigoplus_{i,j=1}^2 \mathit{HF}^*(L_i,L_j) \iso Q$. \qed
\end{lemma}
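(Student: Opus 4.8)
Proof proposal.

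The plan is to produce an explicit isomorphism of graded $R$-algebras $\phi\colon Q\to E$, where $Q$ is presented as the path algebra of the quiver \eqref{eq:quiver} modulo the relations \eqref{eq:quadratic-relations}, $E=\bigoplus_{i,j=1}^2\mathit{HF}^*(L_i,L_j)$ carries the Fukaya product, and $\phi$ sends $e_i$ to the cohomological unit of $\mathit{HF}^*(L_i,L_i)$ and $w_1,w_2,w_3,w_4$ to the classes of the indicated Floer generators. First I would pin down $E$ as a graded vector space. Since $\pi_2(T)=0$ and $\pi_1(L_i)\hookrightarrow\pi_1(T)$ is injective, we have $\pi_2(T,L_i)=0$, so there are no nonconstant holomorphic disks with boundary on $L_i$ and hence $\mathit{HF}^*(L_i,L_i)\cong H^*(L_i;R)\cong R\oplus R[-1]$ — the standard ``diagonal'' computation, compare \cite[Section 13]{seidel04}. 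Next, $L_1\cap L_2=\{(0,0),(\tfrac12,0)\}$ has exactly two points; the homological intersection number of $(1,0)$ and $(1,-2)$ equals $-2$, which is even and has absolute value $2$, so all generators of $\mathit{CF}^*(L_1,L_2)$ lie in a single degree, the Floer differential vanishes, and by our grading choice that degree is $0$. Thus $\mathit{HF}^*(L_1,L_2)=\mathit{CF}^0(L_1,L_2)\cong R^2$ in degree $0$ (basis $w_1,-w_2$), and either by Floer–theoretic Poincar\'e duality $\mathit{HF}^*(L_2,L_1)\cong\mathit{HF}^{1-*}(L_1,L_2)^\vee$, or by the identity $\deg_{(L_2,L_1)}(x)+\deg_{(L_1,L_2)}(x)=1$ for intersection points, $\mathit{HF}^*(L_2,L_1)\cong R^2$ sits in degree $1$ (basis $w_3,w_4$), while all other bidegrees vanish. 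Comparing with $Q$, whose degree-$0$ part is $\langle e_1,e_2,w_1,w_2\rangle$ and whose degree-$1$ part is $\langle q_1=w_3w_2,\ q_2=w_1w_4,\ w_3,\ w_4\rangle$, we get $\dim_R E=8=\dim_R Q$, with each generator of $Q$ mapped by $\phi$ into the matching graded piece of $E$.

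Second, I would reduce to a finite product computation. To know that $\phi$ is a well-defined homomorphism of graded algebras it suffices to verify the four relations \eqref{eq:quadratic-relations} in $E$. Once that is done, $\phi$ is surjective as soon as the length-two classes $[w_3]\cdot[w_2]$ and $[w_1]\cdot[w_4]$ are nonzero, since these span $\mathit{HF}^1(L_1,L_1)$ and $\mathit{HF}^1(L_2,L_2)$ respectively while the remaining generators already account for the degree-$0$ pieces and the off-diagonal pieces; and a surjection between $R$-modules of the same finite $R$-dimension is an isomorphism.

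Third — the actual work — I would compute the relevant degree-$1$ Floer products by perturbing one of the two occurrences of the repeated Lagrangian to a transverse parallel copy, lifting to the universal cover, and counting the rigid immersed holomorphic triangles with the prescribed corners, while tracking signs through the orientation conventions of \cite{seidel04} (including the identification $o_x\cong(TL_1)_x\otimes_\R R$ for odd-degree generators noted before the figure). This should give $[w_4]\cdot[w_1]=[w_1]\cdot[w_4]=-[w_2]\cdot[w_3]=-[w_3]\cdot[w_2]$, each equal to the positive generator of the respective $H^1$, as recorded in \eqref{eq:w-product}, and no contributing triangle for $[w_3]\cdot[w_1]$ or $[w_4]\cdot[w_2]$, so these two products vanish. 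The relations then follow at once: $[w_3]\cdot[w_2]+[w_4]\cdot[w_1]=0$, $[w_1]\cdot[w_4]+[w_2]\cdot[w_3]=0$, $[w_3]\cdot[w_1]=0$, $[w_4]\cdot[w_2]=0$. Together with the reduction above this gives the desired isomorphism $\bigoplus_{i,j}\mathit{HF}^*(L_i,L_j)\cong Q$.

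The main obstacle is this last step: arranging the Hamiltonian perturbation, enumerating which lattice translates of the triangles are holomorphic and rigid, and getting the signs right, including confirming the two vanishing relations $w_3w_1=w_4w_2=0$. Everything preceding it is essentially bookkeeping about gradings, dimensions, and the presentation of $Q$.
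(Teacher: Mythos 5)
Your proposal is correct and takes essentially the same route as the paper: the paper likewise reduces the lemma to the triangle count establishing \eqref{eq:w-product}, after perturbing one of the two curves. You are somewhat more careful than the paper's terse treatment in two respects: you make the dimension-count and surjectivity reduction explicit, and you correctly flag that the vanishing relations $w_3w_1=w_4w_2=0$ require their own (degenerate) triangle count rather than following from \eqref{eq:w-product} alone.
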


Take any $u \in R^\times$, written as $u = \hbar^{m_0}a$ for some $m_0 \in \bR$ and $a \in \mathit{GL}_0(1,R)$. We associate to this another object of the Fukaya category, as follows. The underlying curve is
\begin{equation}
L_{3,u} = \{p = m_0\}.
\end{equation}
It comes equipped with the grading such that $\mathit{HF}^*(L_1,L_{3,u})$, and then also $\mathit{HF}^*(L_2,L_{3,u})$, is concentrated in degree $0$ (the induced orientation is in negative $q$-direction). The rank $1$ local system $\xi_u$ on $L_{3,u}$ should have holonomy $a$ when going around the curve in positive $q$-direction. Both chain level morphism spaces $\mathit{CF}^0(L_1,L_{3,u})$ and $\mathit{CF}^0(L_2,L_{3,u})$ are canonically isomorphic to the fibre of this local system at the unique intersection points, which are $(m_0,0)$ and $(m_0,-2m_0)$, respectively. We identify these with $R$ as follows:
\begin{equation} \label{eq:xi-trivializations}
\parbox{35em}{
Pick an arbitrary isomorphism $(\xi_u)_{(m_0,0)} \iso R$. Then, choose $(\xi_u)_{(m_0,-2m_0)} \iso R$ in such a way that parallel transport along the path $\{(m_0,-2m_0 t) \; : \; 0 \leq t \leq 1\}$ is multiplication with $\hbar^{m_0^2}$.
}
\end{equation}
Denote the resulting generators by $z_{1,u} \in \mathit{CF}^0(L_1,L_{3,u})$, $z_{2,u} \in \mathit{CF}^0(L_2,L_{3,u})$. A triangle count (compare \cite[Section 4]{polishchuk-zaslow98}) determines the product
\begin{equation} \label{eq:theta-product-1}
\begin{aligned}
& \mathit{HF}^0(L_2,L_{3,u}) \otimes \mathit{HF}^0(L_1,L_2) \longrightarrow \mathit{HF}^0(L_1,L_{3,u}), \\
& [z_{2,u}] \cdot [w_1] = \vartheta_{2,1}(u) [z_{1,u}], \\
& [z_{2,u}] \cdot [w_2] = -\vartheta_{2,2}(u) [z_{1,u}],
\end{aligned}
\end{equation}
where the negative sign comes from our choice of $w_2$, rather than from any geometric aspect of the computation.

The intersection point $(m_0,0)$ contributes a copy of $(\xi_u^\vee)_{(m_0,0)}$ to $\mathit{CF}^1(L_{3,u},L_1)$. Take $y_{1,u}$ to be the generator dual to $z_{1,u}$. In the same way, we define a generator $y_{2,u} \in \mathit{CF}^1(L_{3,u},L_2)$ dual to $z_{2,u}$. Then, the cohomology level products
\begin{equation}
\begin{aligned}
& [y_{1,u}] \cdot [z_{1,u}] \in \mathit{HF}^1(L_1,L_1) \iso H^1(L_1;R), \\
& [z_{1,u}] \cdot [y_{1,u}] \in \mathit{HF}^1(L_{3,u},L_{3,u}) \iso H^1(L_{3,u};R), \\
& [y_{2,u}] \cdot [z_{2,u}] \in \mathit{HF}^1(L_2,L_2) \iso H^1(L_2;R), \\
& [z_{2,u}] \cdot [y_{2,u}] \in \mathit{HF}^1(L_{3,u},L_{3,u}) \iso H^1(L_{3,u};R)
\end{aligned}
\end{equation}
each equal the generator of $H^1$ singled out by the given orientations. Using this, \eqref{eq:theta-product-1}, and the associativity of the product (on the cohomology level), one computes the following:
\begin{equation} \label{eq:cyclic-mu2}
\begin{aligned}
& \mathit{HF}^0(L_1,L_2) \otimes \mathit{HF}^1(L_{3,u},L_1) \longrightarrow \mathit{HF}^1(L_{3,u},L_2), \\
& [w_1] \cdot [y_{1,u}] = \vartheta_{2,1}(u) [y_{2,u}], \\
& [w_2] \cdot [y_{1,u}] = -\vartheta_{2,2}(u) [y_{2,u}],
\end{aligned}
\end{equation}
and
\begin{equation} \label{eq:more-products}
\begin{aligned}
& \mathit{HF}^1(L_{3,u},L_1) \otimes \mathit{HF}^0(L_2,L_{3,u}) \longrightarrow \mathit{HF}^1(L_2,L_1), \\
& [y_{1,u}] \cdot [z_{2,u}] = \vartheta_{2,2}(u)[w_3] + \vartheta_{2,1}(u)[w_4].
\end{aligned}
\end{equation}

From now on, assume that $u \notin \{\pm \hbar^{k/2} \, : \, k \in \bZ\}$. Then $L_{3,u}$ and $L_{3,u^{-1}}$ are mutually orthogonal objects (which means that the Floer cohomology from one to the other is zero). The computations above, together with \eqref{eq:theta-symmetry}, show that the composition of any two of the following morphisms vanishes:
\begin{equation} \label{eq:123-triangle}
\xymatrix{L_1 \ar[rrrr]^-{\frac{\vartheta_{2,2}(u)[w_1] + \vartheta_{2,1}(u)[w_2]}{\vartheta_{4,3}'(1)(\vartheta_{4,1}(u) - \vartheta_{4,3}(u))}} &&&& L_2 \ar[dll]^{\;\;\;\;\;\;\; ([z_{2,u}], [z_{2,u^{-1}}])} \\
&& L_{3,u} \oplus L_{3,u^{-1}} \ar[ull]_{[1]}^{([y_{1,u}], -[y_{1,u^{-1}}])\;\;\;\;\;\;\;} &&
}
\end{equation}

\begin{lemma} \label{th:exact-triangle}
The diagram \eqref{eq:123-triangle} is an exact triangle in the category $H^0(\mathit{Fuk}(T)^{tw})$.
\end{lemma}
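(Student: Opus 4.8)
The idea is to recognise \eqref{eq:123-triangle}, after a rotation, as the mapping cone triangle of the first morphism $f\colon L_1\to L_2$ in it. Write $c_u=\vartheta_{4,3}'(1)(\vartheta_{4,1}(u)-\vartheta_{4,3}(u))$ for the scalar dividing that morphism; by the duplication formula \eqref{eq:duplication} it is nonzero precisely because $u\notin\{\pm\hbar^{k/2}\}$, so the rescaling is harmless and $\mathit{Cone}(f)\simeq\mathit{Cone}(\vartheta_{2,2}(u)w_1+\vartheta_{2,1}(u)w_2)$, i.e.\ it is the object $\tilde{C}_v$ of Section~\ref{subsec:twisted} attached to $v=(\vartheta_{2,2}(u),\vartheta_{2,1}(u))\in V\cong\mathit{Hom}_{H^0(\mathit{Fuk}(T)^{\mathit{tw}})}(L_1,L_2)$ (cf.\ Lemma~\ref{th:product}). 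Granting that the $A_\infty$-structure carried by $L_1,L_2$ is $Q_p$ with $p$ the unit torus polynomial \eqref{eq:torus-polynomial} --- which follows from the product formulas \eqref{eq:theta-product-1} together with Lemma~\ref{th:cone-splitting}, exactly as in the nonarchimedean model of Section~\ref{subsec:nonarchimedean} --- one has $p(v)\neq0$ for $u\notin\{\pm\hbar^{k/2}\}$ (read off from \eqref{eq:torus-polynomial} and the theta parametrization \eqref{eq:theta-parametrization}). Hence Lemma~\ref{th:cone-splitting} gives a splitting $\tilde{C}_v\cong\tilde{C}_v^+\oplus\tilde{C}_v^-$ in $\mathit{Fuk}(T)^{\mathit{perf}}$ into two nonzero summands, and Lemma~\ref{th:exactly-complete-triangle} provides the canonical endomorphism $\tilde{t}$ with $\tilde{t}^2$ a multiple of the identity; its two eigenvalues distinguish the points $u$ and $u^{-1}$ via $s_1(u^{-1})=-s_1(u)$ (see \eqref{eq:theta-parametrization}), which fixes the labelling of the summands.

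Next I would write down the comparison morphism. By \eqref{eq:theta-product-1} one has $[z_{2,u}]\cdot f=0$ in $\mathit{HF}^0(L_1,L_{3,u})$, and by the symmetry relation \eqref{eq:theta-symmetry} also $[z_{2,u^{-1}}]\cdot f=0$; since $\mathit{CF}^*(L_1,L_{3,u})$ and $\mathit{CF}^*(L_1,L_{3,u^{-1}})$ sit in degree $0$ only (one transverse intersection point each), these products vanish already on cochains, so there are \emph{strict} cocycle morphisms $\phi_\pm\colon\tilde{C}_v\to L_{3,u^{\pm1}}$ whose unique nonzero matrix entry is $z_{2,u^{\pm1}}\colon L_2\to L_{3,u^{\pm1}}$. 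Set $\phi=(\phi_+,\phi_-)$. Then $\phi\circ\iota=g$ for the canonical map $\iota\colon L_2\to\tilde{C}_v$, and $\phi$ is the only morphism with this property since $\mathit{Hom}_{H^0(\mathit{Fuk}(T)^{\mathit{tw}})}(L_1[1],L_{3,u}\oplus L_{3,u^{-1}})=0$.

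It then remains to check that $\phi$ is an isomorphism and that $h\circ\phi$ is the connecting map $\partial$ of the cone triangle. For the latter: from $h\circ g=0$ and the cone exact sequences one gets $h\circ\phi=\lambda\,\partial$ for a scalar $\lambda$, and I would compute $\lambda$ from the product \eqref{eq:more-products} together with the derivative identity \eqref{eq:derivative-identity} --- keeping track of the sign convention of \eqref{eq:archetypal-triangle}, which is also what produces the minus sign in front of $[y_{1,u^{-1}}]$ in \eqref{eq:123-triangle} --- obtaining $\lambda=1$; this is exactly what the normalizing denominator $c_u$ was designed to achieve. For the former: applying $\mathit{HF}^*(X,-)$ to the triangle $\tilde{C}_v\xrightarrow{\phi}L_{3,u}\oplus L_{3,u^{-1}}\to\mathit{Cone}(\phi)\to\tilde{C}_v[1]$ for $X\in\{L_1,L_2,L_{3,u},L_{3,u^{-1}}\}$, one computes $\mathit{HF}^*(X,\tilde{C}_v)$ from the long exact sequence of the cone triangle of $f$ and $\mathit{HF}^*(X,L_{3,u}\oplus L_{3,u^{-1}})$ directly; the relevant composition-with-$f$ maps are in each case either the wedge map $a\mapsto a\wedge v$ (for $X=L_1,L_2$) or zero (for $X=L_{3,u^{\pm1}}$, by \eqref{eq:cyclic-mu2} and the same cancellation as above), so the dimensions agree and $\phi$ carries the surviving generators to generators. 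Thus $\mathit{Cone}(\phi)$ has vanishing morphisms from all four objects; since it lies in the triangulated subcategory they generate, it is zero, so $\phi$ is an isomorphism, and transporting the cone triangle of $f$ across $\phi$ gives precisely \eqref{eq:123-triangle}.

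The main obstacle is the bookkeeping in this last step: propagating the sign conventions of \cite{seidel04} through the cone and the idempotent splitting to pin down the normalization $\lambda=1$ and the exact form of the maps in \eqref{eq:123-triangle}, and establishing at the right moment the identification of the $A_\infty$-structure with $Q_p$ for the unit torus polynomial so that $p(v)\neq0$ is available on the nose in the allowed range of $u$. Conceptually nothing here is hard, but this is where the theta-function identities \eqref{eq:special-doubling}, \eqref{eq:duplication} and \eqref{eq:derivative-identity} get consumed.
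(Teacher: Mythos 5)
Your plan contains a circularity that the paper deliberately sidesteps. In the step ``Granting that the $A_\infty$-structure carried by $L_1,L_2$ is $Q_p$ with $p$ the unit torus polynomial \eqref{eq:torus-polynomial} --- which follows from the product formulas \eqref{eq:theta-product-1} together with Lemma~\ref{th:cone-splitting}, exactly as in the nonarchimedean model of Section~\ref{subsec:nonarchimedean}'', you are using a fact that in the paper's logical order sits \emph{downstream} of Lemma~\ref{th:exact-triangle}. The product formulas determine $\mu^2$, not $\mu^4$, and Lemma~\ref{th:cone-splitting} is a reconstruction statement: it recovers $p$ only once you already know which cones $C_v$ split. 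In $\mathit{Fuk}(T)$ that geometric input is precisely what Lemma~\ref{th:exact-triangle} supplies; the paper's chain of deduction is Lemma~\ref{th:exact-triangle} $\Rightarrow$ Lemma~\ref{th:p-p2} (whose proof begins ``The considerations above show that the cones $C_v$ split\dots'') $\Rightarrow$ Lemma~\ref{th:fukaya-torus}. The splitting you then invoke via $p(v)\neq 0$ therefore cannot be taken as given. (The nonarchimedean calculation of Section~\ref{subsec:nonarchimedean} lives in the category of $F\rtimes\Z$-modules, where the splitting is read off from the factorization \eqref{eq:theta-factorization}; transporting that conclusion to $\mathit{Fuk}(T)$ would require the very mirror equivalence that this sequence of lemmas is in the middle of establishing.)

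The good news is that the detour through $p$ is unnecessary, and the kernel of a correct argument is already in your write-up: the $\mu^3$/Massey computation you invoke to pin down $\lambda=1$ via \eqref{eq:derivative-identity} is exactly what produces the splitting directly. The paper's proof observes (as you do) that the compositions in \eqref{eq:123-triangle} already vanish on cochains since the differentials are trivial, so $(0,z_{2,u})$ and $(y_{1,u},0)$ are cocycle morphisms; it then uses \eqref{eq:mu3} and \eqref{eq:derivative-identity} to compute
$\mu^2_{\mathit{Fuk}(T)^{\mathit{tw}}}\bigl((0,z_{2,u}),(y_{1,u},0)\bigr) = \mu^3_{\mathit{Fuk}(T)}(z_{2,u},v,y_{1,u}) = e_{3,u}$
--- the normalizing denominator in \eqref{eq:123-triangle} is exactly what makes this land on the unit --- exhibiting $L_{3,u}\oplus L_{3,u^{-1}}$ as a \emph{retract} of $C_v$, with no appeal to $p$. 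A comparison of the two-dimensional degree-zero endomorphism rings (using \eqref{eq:cv-endomorphism} on one side) then upgrades the retraction to a quasi-isomorphism fitting into the triangle. Your long-exact-sequence bookkeeping is a serviceable variant of that last step, but your assertion that ``$\phi$ carries the surviving generators to generators'' is, for $X=L_{3,u^{\pm 1}}$, the retraction computation in disguise. If you lead with the explicit $\mu^3$ calculation and drop the appeal to the unit torus polynomial, you recover the paper's proof.
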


To prove this, we need one part of the higher order $A_\infty$-structure, namely:
\begin{equation} \label{eq:mu3}
\begin{aligned}
& \mu^3_{\mathit{Fuk}(T)}: \mathit{CF}^0(L_2,L_{3,u}) \otimes \mathit{CF}^0(L_1,L_2) \otimes \mathit{CF}^1(L_{3,u},L_1) \longrightarrow \mathit{CF}^0(L_{3,u},L_{3,u}), \\
& \mu^3_{\mathit{Fuk}(T)}(z_{2,u},w_1,y_{1,u}) = \left( - u \vartheta_{2,1}'(u)\, + b \, \vartheta_{2,1}(u) \right) e_{3,u}, \\
& \mu^3_{\mathit{Fuk}(T)}(z_{2,u},w_2,y_{1,u}) = \left( u \vartheta_{2,2}'(u) - b \, \vartheta_{2,2}(u) \right) \, e_{3,u}. \\
\end{aligned}
\end{equation}
Here, we assume that $\mathit{CF}^*(L_{3,u},L_{3,u})$ is minimal (has vanishing differential).
$e_{3,u} \in \mathit{CF}^0(L_{3,u},L_{3,u})$ represents the unit element. $\vartheta_{n,k}'(t)$ is the derivative of $\vartheta_{n,k}(t)$ with respect to the $t$ variable, and $b \in R$ is a constant depending on exactly how one defines $\mathit{CF}^*(L_{3,u},L_{3,u})$ and the $A_\infty$-multiplications involving it. 

\begin{remark}
It it maybe helpful to explain why, for general reasons, the ambiguity takes on the form described in \eqref{eq:mu3}. Different choices made in the setup of the Fukaya category yield maps related by 
\begin{equation} \label{eq:mu3-setup1}
\begin{aligned}
& \tilde{\mu}^3_{\mathit{Fuk}(T)}(z_{2,u},w_1,y_{1,u}) 
- \mu^3_{\mathit{Fuk}(T)}(z_{2,u},w_1,y_{1,u}) \\ 
& = \phi^2(\mu^2_{\mathit{Fuk}(T)}(z_{2,u},w_1),y_{1,u}) +
\phi^2(z_{2,u},\mu^2_{\mathit{Fuk}(T)}(w_1,y_{1,u})) \\
& = \vartheta_{2,1}(u) \big( \phi^2(z_{1,u},y_{1,u}) - \phi^2(z_{2,u}, y_{2,u}) \big),
\end{aligned}
\end{equation}
respectively
\begin{equation} \label{eq:mu3-setup2}
\begin{aligned}
& \tilde{\mu}^3_{\mathit{Fuk}(T)}(z_{2,u},w_2,y_{1,u}) 
- \mu^3_{\mathit{Fuk}(T)}(z_{2,u},w_2,y_{1,u}) \\ 
& = \phi^2(\mu^2_{\mathit{Fuk}(T)}(z_{2,u},w_2),y_{1,u}) +
\phi^2(z_{2,u},\mu^2_{\mathit{Fuk}(T)}(w_2,y_{1,u})) \\
& = -\vartheta_{2,2}(u) \big( \phi^2(z_{1,u},y_{1,u}) - \phi^2(z_{2,u}, y_{2,u}) \big).
\end{aligned}
\end{equation}
Here, $\phi^2$ are bilinear maps of degree $-1$, which appear as components of the $A_\infty$-isomorphism relating the Fukaya categories for the two choices of construction. These are unknown a priori, but crucially the same expressions involving them appear in \eqref{eq:mu3-setup1} and \eqref{eq:mu3-setup2}. To establish the connection with the notation in \eqref{eq:mu3}, one would write
\begin{equation}
 (\tilde{b} - b)\, e_{3,u} = \phi^2(z_{1,u},y_{1,u}) - \phi^2(z_{2,u}, y_{2,u}).
\end{equation}
\end{remark}

To check \eqref{eq:mu3} concretely, one can adopt a Morse-Bott approach (see Section \ref{subsec:clean} below for more explanations and references), in which generators of $\mathit{CF}^*(L_{3,u},L_{3,u})$ correspond to the unique minimum and maximum of a Morse function on $L_{3,u}$. Then \eqref{eq:mu3} is determined by counting triangles with sides on $(L_{3,u},L_1,L_2)$ and with an additional marked boundary point which goes through the minimum point. Figure \ref{fig:mu3} shows the universal cover $\tilde{T} = \bR^2$, with three triangles. If we choose the minimum to be the {\em white} dot, the resulting coefficient in $\mu^3_{\mathit{Fuk}(T)}(z_{2,u},w_1,y_{1,u})$ is
\begin{equation}
\cdots - \hbar^{1/4} u + \hbar^{1/4} u^{-1} + 3 \hbar^{9/4} u^{-3} + \cdots = -u\vartheta_{2,1}'(u).
\end{equation}
If we move the minimum to the {\em black} dot, the coefficient changes to
\begin{equation}
\cdots -2 \hbar^{1/4} u + 0 \hbar^{1/4} u^{-1} + 2 \hbar^{9/4} u^{-2} + \cdots = -u \vartheta_{2,1}'(u) - \vartheta_{2,1}(u). 
\end{equation}
This ambiguity is of the form described in \eqref{eq:mu3}. One can check that the other $\mu^3$ computation behaves compatibly with that.

\begin{figure}
\begin{center}
\begin{picture}(0,0)%
\includegraphics{mu3.pstex}%
\end{picture}%
\setlength{\unitlength}{2763sp}%
\begingroup\makeatletter\ifx\SetFigFont\undefined%
\gdef\SetFigFont#1#2#3#4#5{%
  \reset@font\fontsize{#1}{#2pt}%
  \fontfamily{#3}\fontseries{#4}\fontshape{#5}%
  \selectfont}%
\fi\endgroup%
\begin{picture}(2724,5274)(1789,-4948)
\put(2206,-2161){\makebox(0,0)[lb]{\smash{{\SetFigFont{10}{12.0}{\rmdefault}{\mddefault}{\updefault}{\color[rgb]{0,0,0}$(0,0)$}%
}}}}
\end{picture}%
\caption{\label{fig:mu3}}
\end{center}
\end{figure}%

\begin{proof}[Proof of Lemma \ref{th:exact-triangle}]
Because the underlying chain complexes have trivial differentials, one actually knows that the composition of any two maps in \eqref{eq:123-triangle} is zero on the chain level. Write $v \in \mathit{CF}^0(L_1,L_2)$ for the morphism appearing in \eqref{eq:123-triangle}. Then the maps
\begin{equation}
\begin{aligned}
& (0,z_{2,u}) \in \mathit{hom}_{\mathit{Fuk}(T)^{\mathit{tw}}}^0(C_v,L_{3,u}) = \mathit{CF}^{-1}(L_1,L_{3,u}) \oplus \mathit{CF}^0(L_2,L_{3,u}), \\
& (y_{1,u},0) \in \mathit{hom}_{\mathit{Fuk}(T)^{\mathit{tw}}}^0(L_{3,u},C_v) = \mathit{CF}^1(L_{3,u},L_1) \oplus \mathit{CF}^0(L_{3,u},L_2)
\end{aligned}
\end{equation}
are cocycles. From \eqref{eq:mu3} and \eqref{eq:derivative-identity} one sees that their composition in one direction is
\begin{equation}
\mu^2_{\mathit{Fuk}(T)^{\mathit{tw}}}((0,z_{2,u}),(y_{1,u},0)) =
\mu^3_{\mathit{Fuk}(T)}(z_{2,u},v,y_{1,u}) = e_{3,u}.
\end{equation}
The analogous properties hold for the maps
\begin{equation}
\begin{aligned}
& (0,z_{2,u^{-1}}) \in \mathit{hom}_{\mathit{Fuk}(T)^{\mathit{tw}}}^0(C_v,L_{3,u^{-1}}), \\
& (-y_{1,u^{-1}},0) \in \mathit{hom}_{\mathit{Fuk}(T)^{\mathit{tw}}}^0(L_{3,u^{-1}},C_v). \\
\end{aligned}
\end{equation}
This shows that $L_{3,u} \oplus L_{3,u^{-1}}$ is a direct summand of $C_v$, but then a comparison of the sizes of the endomorphism rings, with one side computed as in \eqref{eq:cv-endomorphism}, shows that the two must actually be quasi-isomorphic. Moreover, these quasi-isomorphisms fit in with \eqref{eq:123-triangle}.
\end{proof}

\begin{remark} \label{th:twist-sequence}
Suppose that $u = -\hbar^{m_0}$ for some $m_0 \notin \half \bZ$, which means that $L_{3,u^{\pm 1}} = L_{3,-\hbar^{\pm m_0}}$ can be thought of as curves equipped with the nontrivial {\em Spin} structure. From the well-known exact triangle associated to a Dehn twist, and the Hamiltonian isotopy
\begin{equation}
L_2 \htp \tau_{L_{3,-\hbar^{m_0}}}\tau_{L_{3,-\hbar^{-m_0}}}(L_1),
\end{equation}
one can derive the existence of a diagram involving the same objects as in \eqref{eq:123-triangle}. This can be generalized as follows. For $a \in \mathit{GL}_0(1,R)$, let $\theta_a: \mathit{Fuk}(T)^{\mathit{tw}} \rightarrow \mathit{Fuk}(T)^{\mathit{tw}}$ be the functor obtained by tensoring all objects with a flat $R$-line bundle on $M$ which has monodromy $a$ in $q$-direction (and trivial monodromy in $p$-direction). Supposing that $u = -a \hbar^{m_0} \notin \{\pm \hbar^{\bZ/2}\}$, we have quasi-isomorphisms in $\mathit{Fuk}(T)^{\mathit{tw}}$,
\begin{equation}
\begin{aligned}
L_2 
& \htp 
\theta_{a^{-1}}\, 
\tau_{L_{3,-\hbar^{m_0}}}\, 
\theta_{a^2}\,
\tau_{L_{3,-\hbar^{-m_0}}}(L_1), \\
& \htp \theta_{a^{-1}}\, \tau_{L_{3,-\hbar^{m_0}}}\, \theta_{a^2}(\mathit{Cone}(L_{3,-\hbar^{-m_0}}[-1] \longrightarrow L_1)) \\
& \htp \theta_{a^{-1}} \tau_{L_{3,-\hbar^{m_0}}}(\mathit{Cone}((L_{3,-a^{-2} \hbar^{-m_0}})[-1] \longrightarrow L_1)) \\
& \htp \theta_{a^{-1}}(\mathit{Cone}(L_{3,-\hbar^{m_0}} [-1] \oplus L_{3,-a^{-2} \hbar^{-m_0}}[-1] \longrightarrow L_1)) \\ & \htp \mathit{Cone}(L_{3,u}[-1] \oplus L_{3,u^{-1}}[-1] \rightarrow L_1).
\end{aligned}
\end{equation}
However, from this point of view it is not straightforward to write down explicitly the maps involved in the exact triangle, in particular the counterpart of the horizontal one in \eqref{eq:123-triangle} (which would be given by counting holomorphic sections of a Lefschetz fibration).
\end{remark}

\begin{remark}
There is another (more direct) geometric approach to Lemma \ref{th:exact-triangle}, where instead of Dehn twists, one uses the relation between forming cones in $\mathit{Fuk}(T)^{\mathit{tw}}$ and the connected sum of Lagrangian submanifolds intersecting at a point (for proofs of that relationship, see \cite{fooo-extra} and \cite[Section 6]{biran-cornea13}; for a discussion of the specific example relevant here, see \cite[Lecture 23]{auroux-lecture}. The extension to Lagrangian submanifolds with local systems is quite natural in this context).
\end{remark}

By Lemma \ref{th:product}, we know that $\mathit{Fuk}(T)$ induces an $A_\infty$-structure on $Q$, which is necessarily quasi-isomorphic to $Q_{\tilde{p}}$ for some $\tilde{p}$.

\begin{lemma} \label{th:p-p2}
$\tilde{p}$ is a nonzero constant multiple of the unit torus polynomial $p$.
\end{lemma}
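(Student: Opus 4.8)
The plan is to read off $\tilde p$ from the behaviour of mapping cones, exactly as in Lemmas~\ref{th:cone-splitting} and~\ref{th:exactly-complete-triangle}, and to compare the answer with the theta-parametrization of the unit torus curve, whose defining equation is literally ``$s_1^2 = p(1,s_2)$'' for $p$ the unit torus polynomial.

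\textbf{Step 1 (set-up).} By Lemma~\ref{th:product} the full $A_\infty$-subcategory of $\mathit{Fuk}(T)$ on $\{L_1,L_2\}$ has cohomology algebra $Q$, and since the isomorphism sends $[w_i]$ to $w_i$ the identifications \eqref{eq:identifications} are in force. Hence by Proposition~\ref{th:classify-deformations} this subcategory is quasi-isomorphic to $Q_{\tilde p}$ for a uniquely determined $\tilde p \in \mathit{Sym}^4(V^\vee)$. Fix $u \in R^\times$ outside the finite (mod $\hbar^\Z$) set $\{\pm i\hbar^{k+1/2}\}\cup\{\pm\hbar^{k/2}\}$, and let $v_u \in \mathit{HF}^0(L_1,L_2)\iso V$ be the morphism appearing in \eqref{eq:123-triangle}; concretely $v_u = D^{-1}(\vartheta_{2,2}(u),\vartheta_{2,1}(u))$ with $D = \vartheta_{4,3}'(1)(\vartheta_{4,1}(u)-\vartheta_{4,3}(u))$. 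By Lemma~\ref{th:exact-triangle} the cone $\tilde C_{v_u}$ is quasi-isomorphic, compatibly with the maps of \eqref{eq:123-triangle}, to $L_{3,u}\oplus L_{3,u^{-1}}$; as $u\notin\{\pm\hbar^{k/2}\}$ these summands are nonzero and mutually orthogonal, so $\tilde C_{v_u}$ splits and Lemma~\ref{th:cone-splitting} already gives $\tilde p(v_u)\neq 0$ (in particular $\tilde p\neq 0$).

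\textbf{Step 2 (evaluate $\tilde p$ on $v_u$).} Apply Lemma~\ref{th:exactly-complete-triangle}. By orthogonality and $\mathit{HF}^0(L_{3,w},L_{3,w})\iso R$, a degree $0$ endomorphism $\tilde t$ of $L_{3,u}\oplus L_{3,u^{-1}}$ is a pair of scalars $(\lambda,\mu)$. Requiring that the composition \eqref{eq:recover-v} equal $v_u$, and using the products \eqref{eq:more-products} together with the symmetry \eqref{eq:theta-symmetry} (which gives $\vartheta_{2,k}(u^{-1})=\vartheta_{2,k}(u)$ for $k=1,2$), forces $\lambda-\mu = D^{-1}$; requiring $\tilde t^2$ to be a scalar forces $\lambda^2=\mu^2$, hence $\mu=-\lambda$ and $\tilde t = \tfrac12 D^{-1}(1,-1)$. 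Thus $\tilde t^2 = \tfrac14 D^{-2}$, and Lemma~\ref{th:exactly-complete-triangle} gives $\tilde p(v_u)=\tfrac14 D^{-2}$, i.e.\ by homogeneity $\tilde p(\vartheta_{2,2}(u),\vartheta_{2,1}(u)) = \tfrac14 D^2$. On the other hand, in the theta-parametrization \eqref{eq:theta-parametrization} one has $s_2=\vartheta_{2,1}(u)/\vartheta_{2,2}(u)$ and $s_1=\tfrac12 D\,\vartheta_{2,2}(u)^{-2}$, so homogeneity of $p$ and the relation $s_1^2=p(1,s_2)$ give $p(\vartheta_{2,2}(u),\vartheta_{2,1}(u))=\vartheta_{2,2}(u)^4 s_1^2 = \tfrac14 D^2$ as well. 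Therefore $\tilde p-p$ vanishes at $[\vartheta_{2,2}(u):\vartheta_{2,1}(u)]$ for every admissible $u$; since $u\mapsto\vartheta_{2,1}(u)/\vartheta_{2,2}(u)$ is non-constant, $(\tilde p-p)(1,w)$ is a polynomial of degree $\le 4$ with infinitely many roots, so $\tilde p=p$ --- in particular $\tilde p$ is a nonzero constant multiple of the unit torus polynomial. (Getting the constant to be exactly $1$ is of course what motivated the normalization of $c$ in \eqref{eq:torus-polynomial}, but the lemma only needs it up to scale, so one may be cavalier about the remaining signs.)

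\textbf{Main obstacle.} The conceptual content is light; the work is in applying Lemma~\ref{th:exactly-complete-triangle} honestly to $\mathit{Fuk}(T)$. One must check that the algebra isomorphism of Lemma~\ref{th:product} is the canonical one of \eqref{eq:identifications}, that the quasi-isomorphism $\tilde C_{v_u}\htp L_{3,u}\oplus L_{3,u^{-1}}$ genuinely intertwines the structure maps of the triangle \eqref{eq:123-triangle} (so that ``the composition \eqref{eq:recover-v}'' may be computed via \eqref{eq:more-products}), and that the sign conventions for exact triangles underlying Lemma~\ref{th:exactly-complete-triangle} match those used in \eqref{eq:123-triangle}. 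This bookkeeping is precisely where an unnoticed sign would corrupt the identification of $\tilde t^2$ with $\tilde p(v_u)$, which is why it is prudent to claim only ``nonzero constant multiple''.
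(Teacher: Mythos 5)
Your argument is correct, but it is not the paper's proof of \emph{this} lemma; it is essentially the paper's proof of the subsequent, sharper Lemma~\ref{th:fukaya-torus} (that $\tilde p = p$ on the nose), specialized to yield the weaker conclusion. The paper deliberately splits the work in two. For the present lemma it gives a soft, qualitative argument: cone-splitting (Lemma~\ref{th:cone-splitting}) shows $\tilde p$ is nonzero away from the four points $(\vartheta_{2,2}(\pm 1),\vartheta_{2,1}(\pm 1))$ and $(\vartheta_{2,2}(\pm\hbar^{1/2}),\vartheta_{2,1}(\pm\hbar^{1/2}))$, and then two geometric symmetries of $\mathit{Fuk}(T)$ --- translation by $(1/2,0)$ and the flat-line-bundle tensor operation $\theta_{-1}$, each preserving $L_1$ and $L_2$ --- combined with an equivariant refinement of Proposition~\ref{th:classify-deformations}, force $\tilde p$ to be invariant under $(v_1,v_2)\mapsto(-v_2,-v_1)$ and $(v_1,v_2)\mapsto(-v_1,v_2)$. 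That invariance makes the vanishing orders at the four points equal, hence each simple, giving the claim with no explicit evaluations at all. Your route --- evaluate $\tilde p$ at the theta-parametrized points via the endomorphism $\tilde t$ of $L_{3,u}\oplus L_{3,u^{-1}}$ from \eqref{eq:geometric-t-tilde} and Lemma~\ref{th:exactly-complete-triangle}, then match against $s_1^2 = p(1,s_2)$ --- is exactly the content the paper defers to Lemma~\ref{th:fukaya-torus}, precisely because it requires the triple-product identity \eqref{eq:more-products} and the sign bookkeeping you flag under ``main obstacle.'' The paper's split isolates the part of the conclusion that is robust against such bookkeeping; your collapsed version is more efficient if one is willing to verify the normalization once, and in fact your computation already gives $\tilde p = p$ exactly, so the hedge to ``nonzero constant multiple'' in your final sentence is unnecessary.
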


\begin{proof}
The considerations above show that the cones $C_v$ split into orthogonal direct summands for all $v = (\vartheta_{2,2}(u),\vartheta_{2,1}(u)) \in V$, as long as $u \notin \pm h^{k/2}$. Hence $\tilde{p}$ is nonzero at all those points, by Lemma \ref{th:cone-splitting}.

We can use a symmetry trick to derive a little bit of additional information from the geometry. Consider translation by $(\half,0)$, which is a free symplectic involution of $T$ preserving $dz$. This maps each $L_i$ to itself, hence induces an action on $\mathit{HF}^*(L_i,L_j)$. The action on $\mathit{HF}^*(L_i,L_i) \iso H^*(L_i;R)$ is trivial, whereas that on $\mathit{HF}^*(L_1,L_2)$ maps $w_1 \rightarrow - w_2$ and vice versa. It is not hard to lift this to an action on $\mathit{Fuk}(T)$, and there is an equivariant analogue of Proposition \ref{th:classify-deformations}, which implies that $\tilde{p}$ must be invariant under $(v_1,v_2) \mapsto (-v_2,-v_1)$. Another automorphism of the Fukaya category is the tensor product operation $\theta_{-1}$ which already appeared in Remark \ref{th:twist-sequence}. This preserves both our $L_i$. The induced action on $\mathit{HF}^*(L_i,L_i)$ is trivial, whereas that on $\mathit{HF}^*(L_1,L_2)$ preserves one of the two generators, and reverses the sign of the other one (exactly which one this is depends on how one trivializes the restriction of the line bundle to our Lagrangian submanifolds). In the same way as before, this implies that $\tilde{p}$ is invariant under $(v_1,v_2) \mapsto (-v_1,v_2)$. As a consequence, its order of vanishing of at the four points $(\vartheta_{2,2}(\pm 1), \vartheta_{2,1}(\pm 1))$, $(\vartheta_{2,2}(\pm \hbar^{1/2}),\vartheta_{2,2}(\pm \hbar^{1/2}))$ must be the same. Since $\tilde{p}$ is nonzero everywhere else, it must have simple zeros at all the four points, which implies the desired result.
\end{proof}

\begin{lemma} \label{th:fukaya-torus}
The constant from Lemma \ref{th:p-p2} is trivial, meaning that $\tilde{p} = p$.
\end{lemma}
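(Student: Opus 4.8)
The strategy is to apply the sharpened criterion of Lemma~\ref{th:exactly-complete-triangle} to $\mathit{Fuk}(T)$, using the explicit exact triangle \eqref{eq:123-triangle}. Write $\tilde{Q}$ for the $A_\infty$-structure on $Q$ induced by $\mathit{Fuk}(T)$, so that $\tilde{Q} \htp Q_{\tilde{p}}$. Fix $u \in R^\times$ outside the finite set $\{\pm \hbar^{k/2}\}$, and let $v \in V \iso \mathit{Hom}_{H^0(\tilde{Q}^{\mathit{tw}})}(X_1,X_2)$ be the morphism $L_1 \to L_2$ appearing in \eqref{eq:123-triangle}, i.e.\ the scalar $\vartheta_{4,3}'(1)^{-1}(\vartheta_{4,1}(u) - \vartheta_{4,3}(u))^{-1}$ times the class $\vartheta_{2,2}(u)[w_1] + \vartheta_{2,1}(u)[w_2]$; under \eqref{eq:identifications} this is that scalar times the vector $(\vartheta_{2,2}(u),\vartheta_{2,1}(u))$. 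By Lemma~\ref{th:exact-triangle} the cone $\tilde{C}_v$ is quasi-isomorphic to $L_{3,u} \oplus L_{3,u^{-1}}$, with structural maps as displayed in \eqref{eq:123-triangle}.

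The first step is to pin down the endomorphism $\tilde{t}$ from Lemma~\ref{th:exactly-complete-triangle}. Since $L_{3,u}$ and $L_{3,u^{-1}}$ are mutually orthogonal and each has $\mathit{HF}^0 \iso R$, any degree-$0$ endomorphism of $\tilde{C}_v$ has the form $\tilde{t} = (\lambda e_{3,u}, \mu e_{3,u^{-1}})$. Composing the first and last maps of \eqref{eq:123-triangle} with $\tilde{t}$ and using \eqref{eq:more-products} together with the symmetry $\vartheta_{2,k}(u^{-1}) = \vartheta_{2,k}(u)$, the composition \eqref{eq:recover-v} becomes $(\lambda - \mu)\bigl(\vartheta_{2,2}(u)[w_3] + \vartheta_{2,1}(u)[w_4]\bigr)$, i.e.\ the vector $(\lambda - \mu)(\vartheta_{2,2}(u),\vartheta_{2,1}(u)) \in V$. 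Setting this equal to $v$ forces $\lambda - \mu = \vartheta_{4,3}'(1)^{-1}(\vartheta_{4,1}(u) - \vartheta_{4,3}(u))^{-1}$, while requiring $\tilde{t}^2 = (\lambda^2 e_{3,u}, \mu^2 e_{3,u^{-1}})$ to be a multiple of the identity forces $\lambda = -\mu$. Hence $\tilde{t}^2 = \lambda^2 \cdot \mathrm{id}$, and Lemma~\ref{th:exactly-complete-triangle} gives
\[
\tilde{p}(v) = \lambda^2 = \frac{1}{4\,\vartheta_{4,3}'(1)^2\,(\vartheta_{4,1}(u) - \vartheta_{4,3}(u))^2}.
\]

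The second step is to evaluate the unit torus polynomial of Definition~\ref{th:unit-torus-polynomial} at the same point. Plugging $(v_1,v_2) = (\vartheta_{2,2}(u),\vartheta_{2,1}(u))$ into its factored form, the four linear factors become, by the four identities \eqref{eq:special-doubling}, equal to $-\hbar^{-1/4}\vartheta(-u)^2$, $\hbar^{-1/4}\vartheta(u)^2$, $\hbar^{1/4}u\,\vartheta(-\hbar^{1/2}u)^2$ and $\hbar^{1/4}u\,\vartheta(\hbar^{1/2}u)^2$; their product is $-u^2\bigl(\vartheta(u)\vartheta(-u)\vartheta(\hbar^{1/2}u)\vartheta(-\hbar^{1/2}u)\bigr)^2$, which the duplication formula \eqref{eq:duplication} rewrites as $-\tfrac14\hbar^{-1/4}\vartheta(1)^2\vartheta(-1)^2\vartheta(\hbar^{1/2})^2(\vartheta_{4,1}(u) - \vartheta_{4,3}(u))^2$. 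Multiplying by the constant $c$ cancels the $\vartheta(1)$, $\vartheta(-1)$, $\vartheta(\hbar^{1/2})$ factors and the power of $\hbar$, giving $p(\vartheta_{2,2}(u),\vartheta_{2,1}(u)) = \tfrac14\vartheta_{4,3}'(1)^2(\vartheta_{4,1}(u) - \vartheta_{4,3}(u))^2$; since $p$ is homogeneous of degree $4$, rescaling by the scalar attached to $v$ produces exactly $p(v) = \tilde{p}(v)$. As $u$ ranges over admissible values, $v_2/v_1 = \vartheta_{2,1}(u)/\vartheta_{2,2}(u)$ takes infinitely many values, so the homogeneous quartics $p$ and $\tilde{p}$ agree on infinitely many lines; combined with Lemma~\ref{th:p-p2} (which already gives $\tilde{p} = c'p$) and one point where $p \neq 0$, this forces $c' = 1$. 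The only real obstacle is bookkeeping: one must nail down the precise normalizations and signs — the identification of $\mathit{Hom}(X_1,X_2)$ with $\mathit{Hom}(X_2,X_1[1])$ via \eqref{eq:identifications}, the sign in the connecting map $([y_{1,u}],-[y_{1,u^{-1}}])$ of \eqref{eq:123-triangle}, and the chain of theta identities — so that the two evaluations match on the nose rather than merely up to a scalar; this is the same sort of tedious verification deferred elsewhere in the section.
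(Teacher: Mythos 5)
Your proposal is correct and follows essentially the same route as the paper: apply Lemma \ref{th:exactly-complete-triangle} to the triangle \eqref{eq:123-triangle}, determine the endomorphism $\tilde{t}$ and compute $\tilde{t}^2$, then match against the unit torus polynomial. The only difference is cosmetic: where you verify the theta-function identity $p(\vartheta_{2,2}(u),\vartheta_{2,1}(u)) = \tfrac14 \vartheta_{4,3}'(1)^2(\vartheta_{4,1}(u)-\vartheta_{4,3}(u))^2$ directly from \eqref{eq:special-doubling} and \eqref{eq:duplication}, the paper instead observes that the computed value of $\tilde{p}(1,s_2)$ equals $s_1^2$ and appeals to the already-established theta parametrization \eqref{eq:theta-parametrization} of $\SS$ (where that same identity was implicitly verified), so no further theta arithmetic is needed.
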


\begin{proof}
Consider the degree zero endomorphism of $L_{3,u} \oplus L_{3,u^{-1}}$ given by
\begin{equation} \label{eq:geometric-t-tilde}
\tilde{t} = \frac{[e_{3,u}] \oplus [-e_{3,u^{-1}}]}{2 \vartheta_{4,3}'(1)(\vartheta_{4,1}(u) - \vartheta_{4,3}(u))} \in \mathit{HF}^0(L_{3,u},L_{3,u}) \oplus \mathit{HF}^0(L_{3,u^{-1}}, L_{3,u^{-1}}).
\end{equation}
Its square is clearly a multiple of the identity, and by \eqref{eq:more-products} we have
\begin{equation} \label{eq:dual-triple-product}
([y_{1,u}], [-y_{1,u^{-1}}]) \tilde{t} ([z_{2,u}],[z_{2,u^{-1}}]) = \frac{\vartheta_{2,2}(u)[w_3] + \vartheta_{2,1}(u)[w_4]}{\vartheta_{4,3}'(1)(\vartheta_{4,1}(u) - \vartheta_{4,3}(u))},
\end{equation}
which means that $\tilde{t}$ precisely satisfies the assumptions of Lemma \ref{th:exactly-complete-triangle}. As a result, we get the following information concerning $\tilde{p}$:
\begin{equation} \label{eq:determine-ptilde}
\begin{aligned}
& \tilde{p}\left (1,\frac{\vartheta_{2,1}(u)}{\vartheta_{2,2}(u)}\right) \\
& = \frac{\vartheta_{4,3}'(1)^4(\vartheta_{4,1}(u) - \vartheta_{4,3}(u))^4}{\vartheta_{2,2}(u)^4} \tilde{p}\left(\frac{\vartheta_{2,2}(u)}{\vartheta_{4,3}'(1)(\vartheta_{4,1}(u) - \vartheta_{4,3}(u))}, \frac{\vartheta_{2,1}(u)}{\vartheta_{4,3}'(1)(\vartheta_{4,1}(u) - \vartheta_{4,3}(u))}\right) \\
& = \frac{\vartheta_{4,3}'(1)^4(\vartheta_{4,1}(u) - \vartheta_{4,3}(u))^4}{\vartheta_{2,2}(u)^4} \, \frac{\tilde{t}^2}{[e_{3,u}] \oplus [e_{3,u^{-1}}]}
= \frac{\vartheta_{4,3}'(1)^2(\vartheta_{4,1}(u) - \vartheta_{4,3}(u))^2}{4 \vartheta_{2,2}(u)^4}.
\end{aligned}
\end{equation}
In terms of the parametrization \eqref{eq:theta-parametrization}, this shows that $s_1^2 = \tilde{p}(1,s_2)$, which implies that $\tilde{p} = p$.
\end{proof}

\begin{remark}
What does this say about the actual $A_\infty$-products in the Fukaya category? By definition, $\mu^4_{Q_p}(w_3,w_1,w_3,w_1)$ is given by the coefficient of $v_1^4$ in $p(v_1,v_2)$, which is
\begin{equation}
\begin{aligned}
& -\hbar^{1/4} \vartheta(1)^{-2}\vartheta(-1)^{-2}\vartheta(\hbar^{1/2})^{-2} \vartheta_{4,3}'(1)^2 \vartheta_{2,1}(\hbar^{1/2})^2 \vartheta_{2,1}(1)^2 \\
& = -\textstyle{\frac{1}{4}}\hbar^{1/4} \vartheta(1)^{-2}\vartheta(-1)^{-2}\vartheta(\hbar^{1/2})^2 \vartheta_{4,3}'(1)^2
= -\hbar^{1/2} - 4 \hbar^{3/2} + \cdots
\end{aligned}
\end{equation}
At least on the two leading orders we've written down, this agrees with the result of counting holomorphic squares with vertices on $(w_3,w_1,w_3,w_1)$ and which go through an additional generic marked point of $L_1$. However, a direct attempt to compute all of $\mu^4$ directly in $\mathit{Fuk}(T)$ is tricky, because the moduli spaces of constant holomorphic discs mapping to points of $L_1 \cap L_2$ are not regular. This difficulty is avoided in the approach we've chosen here.
\end{remark}

\begin{corollary} \label{th:whole-fukaya-torus}
$Q_p^{\mathit{perf}}$ is quasi-equivalent to $\mathit{Fuk}(T)^{\mathit{perf}}$.
\end{corollary}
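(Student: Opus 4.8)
The plan is to produce a cohomologically full and faithful $A_\infty$-functor $Q_p^{\mathit{perf}} \to \mathit{Fuk}(T)^{\mathit{perf}}$ and then to show that its image is all of $\mathit{Fuk}(T)^{\mathit{perf}}$, that is, that $L_1$ and $L_2$ split-generate $\mathit{Fuk}(T)$. The first half is essentially formal. By Lemmas \ref{th:product}, \ref{th:p-p2} and \ref{th:fukaya-torus}, the full subcategory of $\mathit{Fuk}(T)$ with objects $L_1, L_2$ is quasi-isomorphic to $Q_p$, with $p$ the unit torus polynomial. Hence there is a cohomologically full and faithful $A_\infty$-functor $Q_p \to \mathit{Fuk}(T)$ sending $X_i \mapsto L_i$, and exactly as in the discussion around \eqref{eq:quiver-embedding} it extends to a cohomologically full and faithful functor $Q_p^{\mathit{perf}} \to \mathit{Fuk}(T)^{\mathit{perf}}$ whose essential image is the split-closed triangulated subcategory $\DD \subseteq \mathit{Fuk}(T)^{\mathit{perf}}$ generated by $L_1$ and $L_2$. (Both $\DD$ and $Q_p^{\mathit{perf}}$ are well defined even though $\mathit{Fuk}(T)$ is only cohomologically unital, by the last part of Section \ref{sec:theory}.) It remains to prove $\DD = \mathit{Fuk}(T)^{\mathit{perf}}$.

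First I would bring the vertical circles into $\DD$. For $u \in R^\times$ with $u \notin \{\pm\hbar^{k/2} : k \in \Z\}$, Lemma \ref{th:exact-triangle} gives $\mathit{Cone}(v\colon L_1 \to L_2) \simeq L_{3,u} \oplus L_{3,u^{-1}}$ in $\mathit{Fuk}(T)^{\mathit{tw}}$, and since $L_{3,u}$ and $L_{3,u^{-1}}$ are mutually orthogonal, each is a direct summand of a twisted complex on $\{L_1, L_2\}$ and so lies in $\DD$; the remaining values of $u$, where one of the curves carries a nontrivial \emph{Spin} structure, are handled by the twisted-complex decomposition of Remark \ref{th:twist-sequence}. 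Thus $L_{3,u} \in \DD$ for all $u \in R^\times$. Consequently $\DD$ is preserved by the autoequivalences of $\mathit{Fuk}(T)^{\mathit{perf}}$ induced by the Dehn twists $\tau_{L_1}$ and $\tau_{L_{3,u}}$ and by their inverses: each of these twists fixes its own curve, and by the Dehn-twist exact triangle it sends any object of $\DD$ to the cone of a morphism between objects of $\DD$. The same holds for tensoring by a flat $R$-line bundle, as one sees by applying such a tensoring to the triangle \eqref{eq:123-triangle}: it moves $L_1$ and the vertical curves $L_{3,u}$ among objects already in $\DD$, hence moves $L_2$ there too.

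Now every object of $\mathit{Fuk}(T)$ is an essential simple closed curve equipped with a local system; since $R$ is algebraically closed, a local system is a direct sum of indecomposable ones, and an indecomposable local system on a circle is an iterated self-extension of a rank-one local system, which on the Fukaya side is an iterated mapping cone. So it suffices to place every pair (essential simple closed curve, rank-one local system) in $\DD$. The twists $\tau_{L_1}$ and $\tau_{L_{3,1}}$ generate the mapping class group $\mathit{SL}_2(\Z)$ of $T$, which acts transitively on primitive classes in $H_1(T)$; combining the resulting autoequivalences with tensoring by flat line bundles, which adjusts the rank-one local system on a curve of a fixed primitive class arbitrarily, one reaches every such pair starting from $L_{3,1} \in \DD$. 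Therefore $\DD = \mathit{Fuk}(T)^{\mathit{perf}}$, which is the assertion. (Under homological mirror symmetry this reflects the fact that the structure sheaf of a point together with two line bundles split-generate $D^b\mathit{Coh}$ of the elliptic curve, but we only use geometry of $T$ here.)

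The main obstacle is the split-generation claim of the last two paragraphs. Its ingredients — the Dehn-twist exact triangle, the fact that two Dehn twists generate $\mathit{SL}_2(\Z)$ and that $\mathit{SL}_2(\Z)$ acts transitively on primitive homology classes, the decomposition of a local system into iterated extensions of rank-one ones, and the behaviour of all of this under tensoring with flat line bundles — are standard for the two-torus, but assembling them carefully (in particular tracking local systems and the exceptional \emph{Spin}-structure values of $u$) is the only substantive part of the proof; the construction and full faithfulness of $Q_p^{\mathit{perf}} \to \mathit{Fuk}(T)^{\mathit{perf}}$, and the passage through orthogonal direct summands, are routine given Lemmas \ref{th:product}, \ref{th:p-p2}, \ref{th:fukaya-torus} and the formalism of Section \ref{sec:theory}.
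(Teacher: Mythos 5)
Your construction of the cohomologically full and faithful functor $Q_p^{\mathit{perf}} \to \mathit{Fuk}(T)^{\mathit{perf}}$ matches the paper's; the difference lies entirely in the split-generation step. The paper's proof is short precisely because it delegates split-generation to existing results: after noting that $L_1,L_2$ split-generate $L_{3,u}$ for generic $u$ via the cone argument, it invokes \cite[Corollary 5.8]{seidel04} together with tensor-product functors, and also points to the abstract generation criteria of Section \ref{subsec:split-generate} as in \cite{abouzaid-smith09}. You instead propose a direct argument through the Dehn-twist action and flat-line-bundle tensoring. Your framework observations --- that the split-closed subcategory $\DD$ is closed under cones, under Dehn twists along objects of $\DD$, and under $\theta_v$ --- are correct, but the argument as written has two genuine gaps.

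First, the excluded values of $u$ in Lemma \ref{th:exact-triangle} are $u\in\{\pm\hbar^{k/2}:k\in\Z\}$, i.e.\ $m_0\in\{0,\tfrac12\}$ and $a=\pm 1$; these are the four branch points of $Y_p\to\mathbb{P}(V)$. Remark \ref{th:twist-sequence} explicitly assumes $m_0\notin\tfrac12\Z$, so it covers none of the excluded values, and two of the four ($u=1$ and $u=\hbar^{1/2}$) carry the trivial, not nontrivial, {\em Spin} structure, contrary to what you write. In particular $L_{3,1}$, the object you use to seed the $\mathit{SL}_2(\Z)$ argument, is among the excluded ones and is not placed in $\DD$ by your chain of reasoning.

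Second, and more seriously, the transitivity claim in the last paragraph does not go through. Within a fixed primitive homology class, the simple objects form a two-parameter family (mirror to the $R$-points of $Y_p$), roughly position and holonomy. Dehn twists are compactly supported, so $\tau_L$ and $\tau_{L'}$ for symplectically isotopic circles $L,L'$ are Hamiltonian isotopic and act identically on $\mathit{Fuk}(T)^{\mathit{perf}}$: the twist autoequivalences permute slopes but provide no way to vary the position within a slope. Flat line bundles vary the holonomy but again not the position. So the orbit of a single $L_{3,u}$ under the operations you allow misses a one-parameter locus in each slope, and for vertical circles it misses precisely the four branch points. One can in fact reach the branch-point objects, but only by further cone constructions that are not simply autoequivalence images (on the mirror side one realizes $\OO_q$ at a branch point $q$ as the cone of $\OO(-q)\to\OO$, having first built $\OO(-q)$ from generic skyscrapers via Abel--Jacobi); this Beilinson-resolution type input is exactly what the split-generation theorem the paper cites packages. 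As written, the substantive part of the corollary is not established.
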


\begin{proof}
From Lemma \ref{th:fukaya-torus} we get a full and faithful functor $Q_p^{\mathit{perf}} \rightarrow \mathit{Fuk}(T)^{\mathit{perf}}$. The only additional fact needed is that the objects $L_1,L_2$ which are in the image of this functor split-generate the Fukaya category. They clearly split-generate the object $L_{3,u}$ for generic $u$, by the previous argument. On the other hand, one can use \cite[Corollary 5.8]{seidel04} (together with suitable tensor product functors) to show that $L_1,L_{3,u}$ split-generate the Fukaya category. For alternative approaches, see \cite{abouzaid-smith09} or the review in Section \ref{subsec:split-generate} below.
\end{proof}

The Fukaya category comes with a canonical {\em open-closed string map} (which has a long history going back to \cite{kontsevich94}, see also Section \ref{subsec:define-open-closed} below)
\begin{equation} \label{eq:open-closed-t2}
H^*(T;R) \longrightarrow \mathit{HH}^*(\mathit{Fuk}(T),\mathit{Fuk}(T)).
\end{equation}

\begin{corollary} \label{th:t2-family}
Let $\SS = \mathit{Spec}(\RR)$ be the affine curve associated to the unit torus polynomial $p$, and $\theta$ its standard one-form. There is a perfect family of modules over $\mathit{Fuk}(T)$ parametrized by $\SS$, which follows the image of
\begin{equation} \label{eq:explicit-current}
\theta \otimes [dq] \in H^0(\SS,\Omega^1_\SS) \otimes H^1(T;R)
\end{equation}
under the open-closed string map. The fibre of this family associated to a point $(s_1,s_2) \in \SS$ is isomorphic to $L_{3,u}$, where $u \in R^\times/\hbar^{\bZ}$ satisfies \eqref{eq:theta-parametrization}.
\end{corollary}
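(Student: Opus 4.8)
The idea is to transport the tautological family $\MM$ of Section~\ref{subsec:tautological} across the quasi-equivalence $Q_p^{\mathit{perf}} \htp \mathit{Fuk}(T)^{\mathit{perf}}$ of Corollary~\ref{th:whole-fukaya-torus}, then match up deformation fields and identify fibres. Recall from Lemma~\ref{th:fukaya-torus} and Corollary~\ref{th:whole-fukaya-torus} that sending $X_k \mapsto L_k$ defines a cohomologically full and faithful functor $G: Q_p \to \mathit{Fuk}(T)$ whose extension $G^{\mathit{perf}}: Q_p^{\mathit{perf}} \to \mathit{Fuk}(T)^{\mathit{perf}}$ is a quasi-equivalence; the same formulae define a functor on the constant families over $\SS$. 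Applying the family version of $G^{\mathit{perf}} = \KK_{\mathit{Fuk}(T)}$ to $\MM$ yields a perfect family of $\mathit{Fuk}(T)$-modules parametrized by $\SS$. Working at the level of perfect categories (where $G^{\mathit{perf}}$ is a genuine quasi-equivalence), the functoriality developed in Section~\ref{subsec:functor} shows that this family follows $G_*[\gamma]$, where $[\gamma] = -2\theta \otimes [g_2]$ is the deformation field of Lemma~\ref{th:example-family} and $[g_2] \in \mathit{HH}^1(Q_p,Q_p)$ is the class of Addendum~\ref{th:quasi-generators}.

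\textbf{Matching the deformation field.} It remains to show $G_*[\gamma]$ is the image of $\theta \otimes [dq]$ under the open-closed string map \eqref{eq:open-closed-t2}; equivalently, that the image of $[dq] \in H^1(T;R)$, transported back along $G$, is $-2[g_2] \in \mathit{HH}^1(Q_p,Q_p)$. The unit torus polynomial $p$ is simple (its four roots are the distinct branch points defining $\SS$), so no nonzero linear vector field annihilates $p$ and Lemma~\ref{th:generic} applies: $\mathit{HH}^1(Q_p,Q_p) \iso \Lambda^2(V) \oplus \Lambda^2(V)$ is two-dimensional and is detected by the length-zero term $(g^0_{[1]},g^0_{[2]}) \in (e_1Qe_1)^1 \oplus (e_2Qe_2)^1$ of a Hochschild cochain. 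Hence it is enough to compare leading terms. The length-zero part of the open-closed image of a class $\alpha \in H^1(T;R)$ at an object $L$ is the restriction $\alpha|_L \in \mathit{HF}^1(L,L) \iso H^1(L;R)$ (a standard property of the open-closed map; cf. Section~\ref{subsec:define-open-closed}). For $L_1 = \{q=0\}$ the one-form $dq$ pulls back to zero, so this vanishes; for $L_2 = \{q=-2p\}$ it is $-2$ times the positive generator of $H^1(L_2;R)$, once one checks the orientation convention of Figure~\ref{fig:abcd1}. By \eqref{eq:w-product}, that generator corresponds under \eqref{eq:identifications} to $q_2 = w_1w_4 \in (e_2Qe_2)^1$. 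Comparing with the leading term $(0,q_2)$ of $g_2$ from Addendum~\ref{th:quasi-generators}, the open-closed image of $[dq]$ is $-2[g_2]$, so the transported family indeed follows the image of $\theta \otimes [dq]$.

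\textbf{Identifying the fibre.} The fibre of the transported family at $(s_1,s_2) \in \SS$ is $G^{\mathit{perf}}$ applied to $\MM_{(s_1,s_2)}$, i.e.\ to the summand of $\CC = \mathit{Cone}((1,s_2): X_1 \to X_2)$ cut out by the idempotent $\tfrac12(1 + s_1^{-1}t)$ of \eqref{eq:projection}, where $t$ is the canonical endomorphism of Lemma~\ref{th:exactly-complete-triangle} with $t^2 = p(1,s_2) = s_1^2$. Since $G$ takes cones to cones, $G(\CC) \cong \mathit{Cone}(w_1 + s_2 w_2 : L_1 \to L_2)$, and $w_1 + s_2 w_2$ is a nonzero scalar multiple of the morphism appearing in \eqref{eq:123-triangle} for the value $u$ determined by \eqref{eq:theta-parametrization}; so Lemma~\ref{th:exact-triangle} identifies $G(\CC)$ with $L_{3,u} \oplus L_{3,u^{-1}}$ (the excluded values $u = \pm\hbar^{k/2}$ being exactly the branch points not in $\SS$). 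The endomorphism $G(t)$ is the canonical one of Lemma~\ref{th:exactly-complete-triangle} for this cone; comparing with the explicit endomorphism \eqref{eq:geometric-t-tilde} — which requires tracking the scalar $\lambda$ by which $w_1+s_2w_2$ and the normalized morphism of \eqref{eq:123-triangle} differ — and using the relation between $s_1$ and the theta-functions in \eqref{eq:theta-parametrization}, one finds $\tfrac12(1 + s_1^{-1}G(t))$ is precisely the projection onto the $L_{3,u}$ summand. Thus the fibre is quasi-isomorphic to $L_{3,u}$.

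\textbf{Main obstacle.} The substantive step is the second paragraph: pinning down the open-closed image of $[dq]$, which relies on the description of the leading term of the open-closed map (partly deferred to Section~\ref{subsec:define-open-closed}) together with a careful treatment of the orientation conventions of Figure~\ref{fig:abcd1}. The remaining arithmetic — the scalar bookkeeping in the third paragraph and the sign checks — is routine but must be carried out with care.
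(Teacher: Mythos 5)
Your proposal is correct and follows essentially the same route as the paper. Both proofs transport the tautological family $\MM$ across the quasi-equivalence of Corollary~\ref{th:whole-fukaya-torus}, match deformation fields by comparing the length-zero (constant) term of the open-closed image of $[dq]$ with the leading terms of $[g_1],[g_2]$ from Addendum~\ref{th:quasi-generators}, and identify the fibre by tracing the idempotent \eqref{eq:projection} through the rescaling used in \eqref{eq:determine-ptilde} back to the projection $[e_{3,u}]$. The one place where you spell out a step the paper leaves implicit — invoking Lemma~\ref{th:generic} to argue that the class in $\mathit{HH}^1(Q_p,Q_p)$ is determined by its leading term because the unit torus polynomial is simple — is a sound and worthwhile addition, but does not change the argument's structure.
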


\begin{proof}
The constant term of the open-closed string map, for any object $L$, is the standard map $H^*(T;R) \rightarrow H^*(L;R)$. We restrict this to the subcategory consisting of the objects $L_1,L_2$, so that it lands in $\mathit{HH}^*(Q_p,Q_p)$. With respect to the basis from Addendum \ref{th:quasi-generators}, $[dp] \in H^1(T;\bR)$ maps to $[g_1]+[g_2]$, and $[dq]$ to $-2[g_2]$. We take the family $\MM$ from Section \ref{subsec:tautological} and carry it over to the Fukaya category through the equivalence from Corollary \ref{th:whole-fukaya-torus}. By construction this follows the deformation field $[\gamma]$ from \eqref{eq:elliptic-deformation-field}, which is indeed the image of \eqref{eq:explicit-current}.

By definition, the object of this family associated to a point $(s_1,s_2)$ is the direct summand of $C_{(1,s_2)}$ associated to the projection $\half (e + s_1^{-1} t)$. Reversing the rescaling applied in \eqref{eq:determine-ptilde}, one finds that this is quasi-isomorphic to the direct sumand of $C_v$ associated to the projection
\begin{equation}
\half (e + \vartheta_{2,2}(u)^{-2} \vartheta_{4,3}'(1)^2 (\vartheta_{4,1}(u)-\vartheta_{4,3}(u))^2 s_1^{-1} t) = \half e + \vartheta_{4,3}'(1) (\vartheta_{4,1}(u) - \vartheta_{4,3}(u)) t.
\end{equation}
Under the isomorphism $C_{(\vartheta_{2,2}(u),\vartheta_{2,1}(u))} \iso L_{3,u} \oplus L_{3,u^{-1}}$, $t$ goes to the endomorphism $\tilde{t}$ from \eqref{eq:geometric-t-tilde}, so the corresponding projection is
\begin{equation}
\half ([e_{3,u}] \oplus [e_{3,u^{-1}}]) + \vartheta_{4,3}'(1) (\vartheta_{4,1}(u) - \vartheta_{4,3}(u)) \tilde{t} = [e_{3,u}],
\end{equation}
which indeed picks out the summand $L_{3,u}$.
\end{proof}

\section{Symplectic automorphisms\label{sec:automorphisms}}

The automorphism group of a symplectic manifold of dimension $\geq 4$ often has many connected components which map to the identity component of the diffeomorphism group (see for instance \cite{seidel97,seidel04b}). One way to detect this phenomenon is by using fixed point Floer cohomology. Through the connection with the Lagrangian Floer cohomology of graphs, this also provides interesting examples of Lagrangian submanifolds in products. In this section, we discuss both versions of Floer theory (with emphasis on computational methods that will be useful later in the paper), and then consider some specific examples of automorphisms obtained as compositions of Dehn twists.

We will work in a ``symplectic Calabi-Yau'' context, in which Floer cohomology groups are defined over the Novikov field $R$ \eqref{eq:novikov-field} and carry absolute $\bZ$-gradings. As a side-effect, this makes the definition of fixed point Floer cohomology technically simpler. We will impose additional restrictions on Lagrangian submanifolds, which rule out bubbling of holomorphic discs, hence permit a similar simplification to take place in the construction of Lagrangian Floer cohomology.

\subsection{Fixed point Floer cohomology\label{subsec:fixed-point-floer}}
Let $M^{2n}$ be a (connected) closed symplectic manifold, satisfying

\begin{assumption} \label{th:calabi-yau}
$c_1(M) = 0$. In fact, we want to fix a trivialization of the anticanonical line bundle $K_M^{-1} = \Lambda^n_\bC(TM)$ (for some compatible almost complex structure).
\end{assumption}

The choice of trivialization allows one to define the notion of graded symplectic automorphism \cite{seidel99}. 
%
Fixed point Floer cohomology \cite{dostoglou-salamon94,seidel97,seidel04b} associates to each graded symplectic automorphism $f$ a $\bZ$-graded $R$-vector space $\mathit{HF}^*(f)$, whose Euler characteristic is the Lefschetz number of $f$ (up to a sign which depends on the choice of grading).
This invariant comes with a rich structure of operations, among which we list the basic ones.

{\em The pair-of-pants product.} This is an associative multiplication
$\mathit{HF}^*(f_2) \otimes \mathit{HF}^*(f_1) \rightarrow \mathit{HF}^*(f_2f_1)$, which comes with a two-sided unit element in $\mathit{HF}^*(\mathit{id})$.
The standard example is $f_2 = f_1 = \mathit{id}$, where the canonical isomorphism \cite{piunikhin-salamon-schwarz94} $H^*(M;R) = \mathit{QH}^*(M) \iso \mathit{HF}^*(\mathit{id})$ identifies the pair-of-pants product with the small quantum product (in particular, the unit is the standard one in $H^0(M;R) = R$). As a consequence, any fixed point Floer cohomology group $\mathit{HF}^*(f)$ inherits the structure of a $\mathit{QH}^*(M)$-module \cite{floer88,schwarz96}.

{\em Duality.} There is a distinguished co-unit $\mathit{HF}^{2n}(\mathit{id}) \rightarrow R$ (in terms of the isomorphism with ordinary cohomology, it is the standard integration map). In combination with the pair-of-pants product, the co-unit gives rise to a nondegenerate pairing $\mathit{HF}^{2n-*}(f) \otimes \mathit{HF}^*(f^{-1}) \rightarrow R[-2n]$.

{\em Continuation elements.} Let $\{f_t\}$ be a Hamiltonian isotopy of graded symplectic automorphisms, with $f_0 = f$ and $f_1 = \mathit{id}$. This determines an element $I_{\{f_t\}} \in \mathit{HF}^*(f)$. The pair-of-pants product with such elements is used to prove Hamiltonian isotopy invariance of general fixed point Floer cohomology groups.

{\em Conjugation isomorphisms.} These are canonical isomorphisms $C_{f_2,f_1}: \mathit{HF}^*(f_1) \longrightarrow \mathit{HF}^*(f_2f_1f_2^{-1})$.
Besides their general functoriality properties, which say that $C_{f_3,f_2f_1f_2^{-1}}C_{f_2,f_1} = C_{f_3f_2,f_1}$ and $C_{\mathit{id},f} = \mathit{id}$, we have the self-conjugation identity
\begin{equation} \label{eq:self-conjugation}
C_{f,f} = \mathit{id}.
\end{equation}
This implies that $C_{f,f^m}$ generates an action of $\bZ/m$ on $\mathit{HF}^*(f^m)$.

\begin{remark} \label{th:tcft}
All these operations have chain map realizations on the level of the Floer complexes $\mathit{CF}^*(f)$, and the relations between them are given by appropriate chain homotopies. Here is a more systematic way to approach the formal description of the theory \cite{seidel97,seidel04b}. Write $\mathit{Aut}^{\mathrm{gr}}(M)$ for the group of graded symplectic automorphisms, equipped with the Hamiltonian topology (in which only Hamiltonian isotopies are continuous). Fixed point Floer cohomology can be viewed as a $(1+1)$-dimensional TCFT (topological conformal field theory) with target space $B\mathit{Aut}^{\mathrm{gr}}(M)$, which means a TCFT for surfaces equipped with graded Hamiltonian fibrations. In this framework, we view a symplectic automorphism as giving rise to its mapping torus
\begin{equation} \label{eq:ordinary-mapping-torus}
Z_f = \bR \times M\;\; / \;\; (t,x) \sim (t-1,f(x)),
\end{equation}
which is an $\mathit{Aut}^{\mathrm{gr}}(M)$-fibration over $S^1 = \bR/\bZ$; the TCFT associates to that fibration a chain complex, which is $\mathit{CF}^*(f)$. The fibrewise action of $f_2$ yields an isomorphism $Z_{f_1} \rightarrow Z_{f_2f_1f_2^{-1}}$, for which there is an associated chain map $c_{f_2,f_1}$ inducing the previously introduced conjugation maps. This for instance explains \eqref{eq:self-conjugation}: even though the fibrewise action of $f$ on $Z_f$ itself is nontrivial, it can be deformed continuously to the identity through rotations of the base, and this gives rise to a chain homotopy between $c_{f,f}$ and the identity. Similarly, $c_{f,f^m}$ is chain homotopic to the order $m$ automorphism
\begin{equation}
\begin{aligned}
& Z_{f^m} \longrightarrow Z_{f^m},
& (t,x) \longmapsto (t-\textstyle\frac1m,f(x)).
\end{aligned}
\end{equation}
\end{remark}

The actual definition of fixed point Floer cohomology is a mild generalization of the better-known Hamiltonian Floer cohomology. For simplicity, assume that $f$ has nondegenerate fixed points. The graded $R$-vector space $\mathit{CF}^*(f)$ is the direct sum of one-dimensional spaces $o_x \iso R$ associated to fixed points $x$. Each such point has an absolute Conley-Zehnder index $\mathrm{deg}(x) \in \bZ$, which determines the degree in which $o_x$ is placed. Take a family $J_f = (J_{f,t})$ of almost complex structures, parametrized by $t \in \bR$ and satisfying
\begin{equation} \label{eq:j-periodicity}
J_{f,t-1} = f_*(J_{f,t}).
\end{equation}
The differential $d: \mathit{CF}^*(f) \rightarrow \mathit{CF}^{*+1}(f)$ counts solutions of
\begin{equation} \label{eq:closed-floer}
\left\{
\begin{aligned}
& u: \bR \times \bR \longrightarrow M, \\
& u(s,t-1) = f(u(s,t)), \\
& \partial_s u + J_{f,t}(u)\,\partial_t u = 0
\end{aligned}
\right.
\end{equation}
asymptotic to fixed points as $s \rightarrow \pm\infty$, with powers $\hbar^{E(u)}$ given by their energies $E(u) = \int_{\bR \times [0,1]} u^*\omega_M$. The technical trick is to avoid bubbling off of holomorphic spheres, which can be done by a dimension-counting argument as in \cite{hofer-salamon95}. The outcome is independent of the choice of almost complex structure up to quasi-isomorphism. These quasi-isomorphisms are defined through continuation maps, and are ``essentially canonical'' (which means unique up to chain homotopies, which can be extended to a system of higher homotopies; this is what makes it possible to omit the almost complex structures from a formal description as in Remark \ref{th:tcft}). One special application is loop rotation. Suppose that we have chosen a family $J_{f,+}$ of almost complex structures as in \eqref{eq:j-periodicity}. Fix some constant $t_0 \in \bR$, and set $J_{f,-}(t) = J_{f,+}(t-t_0)$. If $u_+(s,t)$ is a solution of \eqref{eq:closed-floer} for $J_{f,+}$, then $u_-(s,t) = u_+(s,t-t_0)$ is a solution of the corresponding equation for $J_{f,-}$. This implies that the associated differentials $d_{\pm}$ agree. Note that on the other hand, we have a quasi-isomorphism $(\mathit{CF}^*(f),d_+) \rightarrow (\mathit{CF}^*(f),d_-)$ defined through continuation maps, which means solutions of
\begin{equation} \label{eq:cont-floer}
\left\{
\begin{aligned}
& u: \bR \times \bR \longrightarrow M, \\
& u(s,t-1) = f(u(s,t)), \\
& \partial_s u + J_{\mathrm{cont},s,t}(u)\,\partial_t u = 0
\end{aligned}
\right.
\end{equation}
where $J_{\mathrm{cont}} = (J_{\mathrm{cont},s,t})$ is a two-parameter family with the same periodicity \eqref{eq:j-periodicity} in $t$-direction, and such that $J_{\mathrm{cont},s,t} = J_{f,\pm,t}$ for $\pm s \gg 0$. There is a parametrized moduli problem which involves varying $t_0$ in an interval, and this yields a chain homotopy showing that the continuation map is homotopic to the previously defined isomorphism of Floer chain complexes. Suppose for instance that we take $t_0 = 1$, in which case $J_{f,-} = f_*J_{f,+}$. Then, the argument we have just outlined explains \eqref{eq:self-conjugation}.

Another part of the theory for which we'll need an explicit expression is the structure of $\mathit{HF}^*(f)$ as a module over $\mathit{QH}^*(M)$, sometimes called the {\em quantum cap product}. Fix a Morse function $h$ on $M$ (whenever we do that, we also tacitly choose a Riemannian metric, which is used to form $\nabla h)$, and let $\mathit{CM}^*(h)$ be the resulting Morse cochain complex. Suppose that we have fixed the almost complex structure $J_f$ defining the Floer differential. Then, choose a family $J_{\mathrm{cap}}$ in a similar way as for $J_{\mathrm{cont}}$, but where now the behaviour on both ends $s \rightarrow \pm\infty$ is given by $J_f$. Choose also a family $h_{\mathrm{cap},s}$, $s \in [0,\infty)$, of functions (with their associated metrics), such that $h_{\mathrm{cap},s} = h$ for $s \gg 0$. Then, consider pairs $(u_1,u_2)$ as follows:
\begin{equation} \label{eq:cap-floer}
\left\{
\begin{aligned}
& u_1: \bR \times \bR \longrightarrow M, \\
& u_1(s,t-1) = f(u_1(s,t)), \\
& \partial_s u_1 + J_{\mathrm{cap},s,t}(u_1)\,\partial_t u_1 = 0, \\
& u_2: [0,\infty) \longrightarrow M, \\
& du_2/ds + \nabla h_{\mathrm{cap},s}(u_2) = 0, \\
& u_1(0,0) = u_2(0).
\end{aligned}
\right.
\end{equation}
$u_1$ should be asymptotic to fixed points of $f$ at both ends, and $u_2$ is asymptotic to a critical point of $h$. A count of the number of solutions of \eqref{eq:cap-floer} yields a chain map representing the quantum cap product \cite{schwarz96}:
\begin{equation} \label{eq:quantum-cap}
\mathit{CM}^*(h) \otimes \mathit{CF}^*(f) \longrightarrow \mathit{CF}^*(f).
\end{equation}

\begin{remark}
It is in fact possible to choose $J_{\mathrm{cap},s,t} = J_{f,t}$ and $h_{\mathrm{cap},s} = h$, and this leads to the more familiar picture of ``cutting down moduli spaces''. However, the greater freedom allowed above is more natural, and also technically useful.
\end{remark}

\subsection{Clean intersections\label{subsec:clean}}
For $M$ as before (Assumption \ref{th:calabi-yau}), we will consider Lagrangian submanifolds with the following added properties and structure.

\begin{assumption} \label{th:adiscic}
Each Lagrangian submanifold $L$ is equipped with a grading (and hence an orientation), a {\it Spin} structure, as well as a local coefficient system with holonomy in \eqref{eq:gl-subgroup}. Moreover, it comes with a compatible almost complex structure $J_L$ with the following property. The subset of points of $L$ which lie either on a non-constant $J_L$-holomorphic sphere $\bC P^1 \rightarrow M$, or on the boundary of a non-constant $J_L$-holomorphic disc $(D,\partial D) \rightarrow (M,L)$, has dimension $\leq n-3$.
\end{assumption}

Here, by a subset of dimension $\leq k$, we mean one that is contained in the image of a smooth map from a (possibly noncompact and disconnected) manifold of dimension $\leq k$ to $L$. Note that the assumption on $J_L$-holomorphic spheres is a generic one (since the image of all such spheres is generically a subset of dimension $\leq 2n-4$ in $M$), but that on discs is not (assuming regularity, the boundary points of such discs would be of dimension $\leq n-2$ in $L$, while we require one more dimension). Hence, in order to check Assumption \ref{th:adiscic} in applications, one needs to know a $J_L$ for which the pseudo-holomorphic discs can be controlled very specifically. An exception to this is the low-dimensional situation $n \leq 2$, where the moduli spaces of pseudo-holomorphic discs are generically empty.

The Floer cohomology of two submanifolds satisfying Assumption \ref{th:adiscic} is fairly straightforward to define. To make later computations easier, we adopt a Morse-Bott approach \cite{pozniak, bourgeois02, biran-cornea09, biran-cornea09c, johns08, sheridan11}. A small amount of technicalities will be included, but without any attempt at completeness or full justification. Take $(L_0,L_1)$, each satisfying Assumption \ref{th:adiscic}, and which have clean intersection \cite{pozniak}. Choose a Morse function $h_{L_0,L_1}$ on $L_0 \cap L_1$. The Morse-Bott type Floer cochain complex is a modification of the Morse cochain space of that function, more precisely:
\begin{equation} \label{eq:hom-c}
\mathit{CF}^*(L_0,L_1) \stackrel{\mathrm{def}}{=} \bigoplus_C \mathit{CM}^{*-\mathrm{deg}(C)}(h_{L_0,L_1}|C;\mathit{Hom}(\xi_0,\xi_1)|C \otimes o_C),
\end{equation}
where the direct sum is over connected components $C \subset L_0 \cap L_1$; the dimension offset $\mathrm{deg}(C) \in \bZ$ is an absolute Maslov index, which depends on the gradings; $\xi_k$ are the given local systems on $L_k$, which we restrict to $L_0 \cap L_1$; and there is an additional local system $o_C \rightarrow C$ with holonomy $\pm 1$, which depends on the {\em Spin} structures. Choose also a family $J_{L_0,L_1} = (J_{L_0,L_1,t})$ of almost complex structures, parametrized by $t \in [0,1]$, and such that $J_{L_0,L_1,0} = J_{L_0}$, $J_{L_0,L_1,1} = J_{L_1}$. To define the Floer differential, one primarily considers holomorphic strips, which are non-constant solutions of
\begin{equation} \label{eq:strip-equation}
\left\{
\begin{aligned}
& u: \bR \times [0,1] \longrightarrow M, \\
& u(\bR \times \{0\}) \subset L_0, \;\; u(\bR \times \{1\}) \subset L_1, \\
& \partial_s u + J_{L_0,L_1,t}(u) \partial_t u = 0,
\end{aligned}
\right.
\end{equation}
which are asymptotic to points of $L_0 \cap L_1$ as $s \rightarrow \pm\infty$. However, these have to be combined with Morse theory in an appropriate way, which we now set out to describe (see the references above, especially \cite{biran-cornea09c}, for other accounts of this).

\begin{definition} \label{th:pearly-chain}
A {\em pearly chain} $T$ is a decorated graph of the following kind. First, the graph itself has only two-valent vertices, and two ends, one of which is singled out (and called the root, the other being the leaf). This determines an orientation of the graph (from the root to the leaf). Each edge $e$ is decorated with a closed interval $I_e \subset \bR$. This is unbounded below if and only if the edge contains the root, and unbounded above if and only if contains the other end. In the bounded case we allow the length to become zero, meaning that $I_e$ is a point (while still thinking of $e$ combinatorially as an edge). Finally, each vertex $v$ is equipped with the Riemann surface $S_v = \bR \times [0,1]$.
\end{definition}

Note that we allow one slightly exceptional case: namely, that $T$ has a single edge which is infinite in both directions, with $I_e = \bR$, and no vertices. It is convenient to associate to $T$ a topological space $\bar{S}_T$, obtained by compactifying each $S_v$ to $\bar{S}_v = S_v \cup \{s = \pm \infty\}$ (the closed unit disc), then identifying the added points with the endpoints of the intervals $I_e$ (compatibly with the orientations), and finally adding two more points at infinity to the ends of the non-compact intervals. The two points added in the last step will be denoted by $\bar{z}_0$ (corresponding to the root) and $\bar{z}_1$.

\begin{definition} \label{th:chain-perturbation}
A {\em perturbation datum} on a pearly chain is given by a family $h_e = (h_{e,s})$ of functions on $L_0 \cap L_1$ (with their associated choices of metrics), parametrized by $s \in I_e$, for each edge $e$, subject to the following additional conditions. If $e$ is the edge containing the root, then $h_{e,s} = h_{L_0,L_1}$ for $s \ll 0$. If $e$ contains the other end, $h_{e,s} = h_{L_0,L_1}$ for $s \gg 0$. In the exceptional case $I_e = \bR$, we ask that $h_{e,s} = h_{L_0,L_1}$ for all $s$.
\end{definition}

Given such a perturbation datum and a choice of critical points $x_0,x_1$ of $h_{L_0,L_1}$, one considers continuous maps $\bar{u}: \bar{S}_T \rightarrow M$ with $\bar{u}(\bar{z}_k) = x_k$, which satisfy the following equations. For any vertex $v$, the restriction of $\bar{u}$ to $S_v = \bR \times [0,1]$ yields a smooth non-constant map $u_v$ which solves \eqref{eq:strip-equation}. On the other hand, restriction to an interval $I_e$ yields
\begin{equation} \label{eq:u1-equation}
\left\{
\begin{aligned}
& u_e: I_e \longrightarrow L_0 \cap L_1, \\
& du_e/ds + \nabla h_{e,s}(u_e) = 0.
\end{aligned}
\right.
\end{equation}
Two maps which are related by a translation of the $u_v$ components are considered to be the same. Similarly, in the exceptional case $I_e = \bR$, we divide out by translation acting on $u_e$ (which is then also required to be non-constant).

To define the differential on $\mathit{CF}^*(L_0,L_1)$, one has to choose a perturbation datum on every pearly chain, depending smoothly on the lengths. There are additional consistency conditions beyond those in Definition \ref{th:chain-perturbation}, which appear in the limit when the length of some edge goes to infinity. We will not formulate these in detail (but see \cite{seidel04} for the general idea, and \cite{sheridan11} for a case closer to the one discussed here). One then considers the moduli space of solutions $\bar{u}$ of the equations above, varying over all pearly chains. For generic choice of perturbation data, a count of points in the zero-dimensional strata, with appropriate signs and energies, defines the coefficient of $(x_0,x_1)$ in the differential. Assumption \ref{th:adiscic} allows us to avoid bubbling of holomorphic discs.

\begin{remark}
One technical point deserves mention. Given connected components $C_0,C_1 \subset L_0 \cap L_1$, write $\MM(C_0,C_1)$ for the space of solutions of \eqref{eq:strip-equation} with limits in those components. Standard transversality theory shows that for generic choice of $J_{L_0,L_1}$, this is smooth of dimension $\mathrm{deg}(C_0) - \mathrm{deg}(C_1) + \mathrm{dim}(C_0) - 1$. In particular, for $C_0 = C_1$ the dimension is $\mathrm{dim}(C_0) - 1$, and by a further application of transversality theory one can achieve that the asymptotic evaluation map $\MM(C_0,C_0) \rightarrow C_0^2$ avoids the diagonal. This and similar arguments show that for generic choice of almost complex structures, the fibre products $\MM(C_0,C_1) \times_{C_1} \MM(C_1,C_2)$ are smooth of the expected dimension. In particular, for $C_0 = C_2$ that dimension is $\mathrm{dim}(C_0) - 2$, and one can again arrange that the asymptotic evaluation map $\MM(C_0,C_1) \times_{C_1} \MM(C_1,C_0) \rightarrow C_0^2$ avoids the diagonal. One can iterate that idea to higher fibre products. This is important since those products appear in our moduli spaces when the length of the intervals becomes zero (transversality for positive lengths is much simpler, since one can choose the families of functions $h_{e}$ essentially freely). Interested readers may want to consult \cite[Section 3.1.1]{biran-cornea09c}, where an argument in the same spirit is used to address the corresponding problem for monotone Lagrangian submanifolds.
\end{remark}


\begin{example} \label{th:graph}
Take a graded symplectic automorphism $f$ of $M$. Suppose that we have chosen a family $J_f$ as in \eqref{eq:j-periodicity}. Write $M^-$ for the same manifold but with the sign of the symplectic form reversed. The diagonal $\Delta \subset M^- \times M$ is Lagrangian, and admits a distinguished grading. Set $J_\Delta = (-J_{f,1/2}) \times J_{f,1/2}$. Then, holomorphic discs $(D,\partial D) \rightarrow (M^- \times M,\Delta)$ correspond bijectively to holomorphic spheres $\bC P^1 \rightarrow M$. Hence, the space of points of $\Delta$ lying on a non-constant disc is generically of dimension $\leq 2n-4$ (and the same holds for holomorphic spheres, for even more trivial reasons). Similarly, the graph $\Gamma = \{y = f(x)\}$ is a graded Lagrangian submanifold, which we can equip with $J_{\Gamma} = (-J_{f,1}) \times f_*(J_{f,1}) = (-J_{f,1}) \times J_{f,0}$. A holomorphic strip $(u_x,u_y): \bR \times [0,1] \longrightarrow M^- \times M$ which satisfies \eqref{eq:strip-equation} (with $L_0 = \Gamma$, $L_1 = \Delta$) for the family of almost complex structures
\begin{equation}
J_{\Gamma,\Delta,t} = (-J_{f,1-t/2}) \times J_{f,t/2}
\end{equation}
gives rise to a solution of \eqref{eq:closed-floer}. Namely, consider first
\begin{equation} \label{eq:roll-up}
\begin{aligned}
& u: \bR \times [0,1] \longrightarrow M, \\
& u(s,t) = \begin{cases} u_y(2s,2t) & t \leq 1/2, \\ u_x(2s,2-2t) & t \geq 1/2. \end{cases}
\end{aligned}
\end{equation}
This satisfies $\partial_s u + J_{f,t}(u)\partial_t u = 0$, and has the boundary periodicity condition $u(s,0) = f(u(s,1))$. By the removable singularity theorem, it extends to a solution of \eqref{eq:closed-floer}. The same machinery runs (a little more easily) in reverse, producing $(u_x,u_y)$ from $u$. Assuming that $M$ is {\em Spin} so as to make the Lagrangian submanifolds fit into our framework (one can avoid this assumption by being more careful about the role of relative {\em Spin} structures in Fukaya categories, see \cite{wehrheim-woodward11}), and taking $f$ to have nondegenerate fixed points for simplicity, it is then easy to show that $\mathit{HF}^*(\Gamma,\Delta) \iso \mathit{HF}^*(f)$.
\end{example}

\subsection{The $A_\infty$-structure\label{subsec:fukaya}}
We will now carry out the corresponding construction of the $A_\infty$-structure on Lagrangian Floer cochains. Fix, once and for all, a set of Lagrangian submanifolds in $M$. Each of them should satisfy Assumption \ref{th:adiscic}; and any two should have clean intersection. All Lagrangian submanifolds appearing in the following discussion are assumed to be taken from this set. We suppose that for any two $(L_0,L_1)$, the Floer complex $\mathit{CF}^*(L_0,L_1)$ with its differential $\mu^1_{\mathit{Fuk}(M)}$ has already been defined, which in particular means that functions $h_{L_0,L_1}$ and almost complex structures $J_{L_0,L_1}$ have been chosen. Again, we refer to the previously quoted literature, in particular \cite{sheridan11}, and additionally to \cite{fukaya-oh-ohta-ono08,seidel08b}.

\begin{definition}
Fix some $d \geq 1$. A {\em pearly tree} with $d$ leaves is a decorated graph of the following kind. The underlying graph $T$ is a ribbon tree with $(d+1)$ ends, one of which is singled out (and called the root, the others being the leaves). Moreover, it is assumed that all vertices $v$ of $T$ have valence $|v| \geq 2$. We orient the tree from the root to the leaves. Each edge $e$ is decorated with a closed interval $I_e \subset \bR$, with the same properties as in Definition \ref{th:pearly-chain}. Each vertex $|v|$ is decorated with a Riemann surface $S_v = D \setminus \{\bar{z}_{v,0},\dots,\bar{z}_{v,|v|-1}\}$, where $D$ is the closed unit disc and the $\bar{z}_{v,i}$ are cyclically ordered distinct boundary points.
\end{definition}

For $d = 1$ this reduces to a pearly chain, up to the irrelevant issue of choosing identifications between a two-punctured disc and $\bR \times [0,1]$. Note that for each vertex $v$, there is a preferred correspondence between ends of $S_v$ and edges adjacent to $v$, which is compatible with the cyclic ordering and assigns the point at infinity $\bar{z}_{v,0}$ to the edge oriented towards $v$ (we call this end of $S_v$ its negative end, and the others positive ends). Using that, we can construct a compact topological space $\bar{S}_T$, obtained by compactifying each $S_v$ to $\bar{S}_v = D$, identifying the endpoints of $I_e$ with the $\bar{z}_{v,k}$, and then adding points at infinity to the noncompact intervals. We denote by $\bar{z}_0,\dots,\bar{z}_d \in \bar{S}_T$ the points added in the last step, starting with the root and proceeding in the ordering given by a planar embedding of $T$.

Let's clear up few more book-keeping matters. Supposing that $T$ is embedded properly in $\bR^2$, we say that it is a {\em labeled pearly tree} if each component of $\bR^2 \setminus T$ comes with a Lagrangian submanifold (taken from our collection). One can in fact number these components by $\{0,\dots,d\}$, compatibly with the cyclic ordering and in such a way that the root separates the first and last component. Therefore, a labeling of $T$ just corresponds to a choice of Lagrangian submanifolds $(L_0,\dots,L_d)$. Suppose from now on that such a labeling has been fixed. For any edge $e$ we then have a pair $(L_{i_{e,0}},L_{i_{e,1}})$, corresponding to the components of $\bR^2 \setminus T$ lying to the left ($i_{e,1}$) and right ($i_{e,0}$) with respect to the orientation of $e$. By definition, $0 \leq i_{e,0} < i_{e,1} \leq d$. Similarly, for any vertex $v$, there is a canonical correspondence between the boundary components of $S_v$ and connected components of $\bR^2 \setminus T$ adjacent to $v$. If we label the boundary components by $\partial_0 S_v,\dots, \partial_{|v|-1} S_v$, so that the negative end separates the first and last one, then this leads to having associated Lagrangian submanifolds $(L_{i_{v,0}},\dots,L_{i_{v,|v|-1}})$ for $0 \leq i_{v,0} < \cdots < i_{v,|v|-1} \leq d$.

\begin{definition}
A {\em perturbation datum} on a labeled pearly tree consists of the following data. For each edge $e$, we want to have a family of functions $h_e = (h_{e,s})$ on $L_{i_{e,0}} \cap L_{i_{e,1}}$ parametrized by $s \in I_e$. If $I_e$ is noncompact, we ask that outside a compact subset, $h_{e,s} = h_{L_{i_{e,0}},L_{i_{e,1}}}$ is one of the previously chosen functions (and in the exceptional case $I_e = \bR$, we impose the same additional condition as in Definition \ref{th:chain-perturbation}).

Next, take a vertex of valence $|v| \geq 3$. We then want to choose strip-like ends on $S_v$, which means proper holomorphic embeddings
\begin{equation}
\begin{aligned}
& \epsilon_{v,0}: (-\infty,0] \times [0,1] \longrightarrow S_v, \\
& \epsilon_{v,1},\dots,\epsilon_{v,|v|-1}: [0,\infty) \times [0,1] \longrightarrow S_v
\end{aligned}
\end{equation}
giving preferred coordinates on its ends. Given those, we want to have a family $(J_{v,z})$ of compatible almost complex structures parametrized by $z \in S_v$. If $z \in \partial_k S_v$, then $J_{v,z} = J_{L_{i_{v,k}}}$ should be one of the structures that come from Assumption \ref{th:adiscic}. Moreover, on the strip-like ends
\begin{equation}
J_{v,\epsilon_{v,k}(s,t)} = \begin{cases} J_{L_{i_{v,0}},L_{i_{v,k}},t} & k = 0 \text{ and } s \ll 0, \\
J_{L_{i_{v,k-1}},L_{i_{v,k}},t} & k > 0 \text{ and } s \gg 0.
\end{cases}
\end{equation}
Additionally, we want to have a one-form $K_v$ on $S_v$ with values in the space $C^\infty(M,\bR)$ (which means a section of the pullback bundle $T^*S_v \rightarrow S_v \times M$), supported in a compact subset of the interior of $S_v$.
\end{definition}

Given such a perturbation datum and critical points $x_0 \in \mathit{Crit}(h_{L_0,L_d})$, $x_k \in \mathit{Crit}(h_{L_{k-1},L_k})$ ($1 \leq k \leq d$), we can define an associated moduli space. Points are (isomorphism classes of) continuous maps $\bar{u}: \bar{S}_T \rightarrow M$, with $\bar{u}(\bar{z}_k) = x_k$, which satisfy the following equations. Let $S_v$ be the Riemann surface associated to a vertex with $|v| \geq 3$. Restriction of $\bar{u}$ to that surface yields a smooth map
\begin{equation} \label{eq:generalized-u2-equation}
\left\{
\begin{aligned}
& u_v: S_v \longrightarrow M, \\
& u_v(\partial_k S_v) \subset L_{i_{v,k}} \text{ for $k = 0,\dots,|v|-1$}, \\
& (du_v - X_{v,z}(u_v)) \circ i = J_{v,z}(u_v) \circ (du_v - X_{v,z}(u_v)).
\end{aligned}
\right.
\end{equation}
Here, we have $K_{v,z}: TS_z \rightarrow C^\infty(M,\bR)$ and consider the associated Hamiltonian vector field, $X_{v,z}: TS_z \rightarrow C^\infty(M,TM)$, then evaluate that at the point $u(z)$. We extend that to $|v| = 2$ by identifying $S_v \iso \bR \times [0,1]$, and equipping that with $J_{v,s,t} = J_{L_{i_{v,0}},L_{i_{v,1}},t}$ as well as $K_v = 0$, which of course results in \eqref{eq:generalized-u2-equation} being an equation of the form \eqref{eq:strip-equation} (in that case we again exclude constant solutions). Next, let $I_e$ be the interval associated to an edge. Restriction of $\bar{u}$ to it yields a map
\begin{equation} \label{eq:generalized-u1-equation}
\left\{
\begin{aligned}
& u_e: I_e \longrightarrow L_{i_{e,0}} \cap L_{i_{e,1}}, \\
& du_e/ds + \nabla h_{e,s}(u_e) = 0.
\end{aligned}
\right.
\end{equation}
To define $\mu^d_{\mathit{Fuk}(M)}$, one has to choose perturbation data for all decorated pearly trees, depending smoothly on the moduli and lengths, and related by other consistency conditions.

\begin{remark} \label{th:constant-regularity}
The addition of an inhomogeneous term $X_v$ to \eqref{eq:generalized-u2-equation} is necessary to achieve transversality in general. Concretely, the problem with setting $K_v = 0$ is that then, constant maps at points of $L_{v,i_{v,0}} \cap \cdots \cap L_{v,i_{v,|v|-1}}$ would always be solutions, irrespective of the choice of $J_v$. The dimension of the moduli space of constant maps is $\mathrm{dim}(L_{i_{v,0}} \cap \cdots \cap L_{i_{v,|v|-1}}) + (|v|-3)$, whereas its expected dimension is $\mathrm{dim}(L_{i_{v,0}} \cap L_{i_{v,|v|-1}}) + \mathrm{deg}(L_{i_{v,0}} \cap L_{i_{v,|v|-1}}) - \mathrm{deg}(L_{i_{v,0}} \cap L_{i_{v,1}}) - \cdots - \mathrm{deg}(L_{i_{v,|v|-2}} \cap L_{i_{v,|v|-1}}) + (|v|-3)$. Here, the dimensions and degrees really refer to the connected components to which our constant map belongs. One can show that the moduli space is regular if and only if those two numbers agree.
\end{remark}

\begin{example} \label{th:single-l}
Consider a single $L$, and assume that there are no nonconstant $J_L$-holomorphic discs with boundary on $L$, and no non-constant $J_L$-holomorphic spheres intersecting $L$. To define the $A_\infty$-structure on $\mathit{CF}^*(L,L)$, one can take all the almost complex structures to be $J_L$, and all inhomogeneous terms to be zero. The only solutions of \eqref{eq:generalized-u2-equation} are constant maps at points of $L$, which are regular (Remark \ref{th:constant-regularity}). The only contribution to $0$-dimensional moduli spaces comes from trees $T$ with only trivalent vertices. Transversality can be achieved by varying the functions, which recovers a version of the picture in \cite{fukaya-oh98}. In particular, if $\xi$ is the local coefficient system on $L$, we have an isomorphism of rings
\begin{equation} \label{eq:hf-non-deformed}
\mathit{HF}^*(L,L) \iso H^*(L;\mathit{Hom}(\xi,\xi)).
\end{equation}
One can realize this more canonically by an open string analogue of the Piunikhin-Salamon-Schwarz map \cite{albers08}.
\end{example}


\subsection{The open-closed string map\label{subsec:define-open-closed}}
We now want to give a similar description of the open-closed string map
\begin{equation} \label{eq:open-closed}
 \mathit{QH}^*(M) \longrightarrow \mathit{HH}^*(\mathit{Fuk}(M),\mathit{Fuk}(M)),
\end{equation}
or at least the part that lands in the subcategory consisting of Lagrangian submanifolds in our fixed collection. Even though it would be possible (and maybe more in tune with our general developments) to represent $\mathit{QH}^*(M) = H^*(M;R)$ Morse-theoretically, we prefer the simpler picture that comes from thinking of cohomology classes as cycles. More specifically, fix a co-oriented submanifold $G \subset M$, and consider the Poincar\'e dual class $[G] \in H^*(M;\bZ)$.

\begin{definition}
A {\em pointed pearly tree} with $d \geq 0$ leaves is a decorated graph of the following kind. The underlying graph $T$ is a ribbon tree with $(d+1)$ ends, again with a distinguished root. Each edge $e$ is decorated with a closed interval $I_e \subset \bR$, and each vertex $|v|$ with a Riemann surface $S_v = D \setminus \{\bar{z}_{v,0},\dots,\bar{z}_{v,|v|-1}\}$, as before. The new ingredient is that for exactly one vertex $v_*$, the surface $S_{v_*}$ carries an additional interior marked point $z_*$. Moreover, this particular vertex can be univalent, whereas for all others the condition $|v| \geq 2$ still applies.
\end{definition}

We define labelings, and other book-keeping devices, as before.

\begin{definition}
A {\em perturbation datum} on a labeled pointed pearly tree consists of the following data. For each edge $e$, we want to have a family of functions $h_e = (h_{e,s})$ on $L_{i_{e,0}} \cap L_{i_{e,1}}$ as usual. Next, take a vertex, which either satisfies $|v| \geq 3$ or is equal to $v_*$. We then want to choose strip-like ends, a family $J_v$ of almost complex structures, and a one-form $K_v$ as before.
\end{definition}

Given this, we can define an associated moduli space which combines gradient flow lines and perturbed pseudo-holomorphic maps, where the component $u_{v_*}$ additionally satisfies $u_{v_*}(z_*) \in G$. Counting solutions of this moduli problem yields a Hochschild cocycle $g$ which represents the image of $[G]$ under the open-closed string map.

\begin{example} \label{th:take-a-single}
Take a single Lagrangian submanifold $L$ as in Example \ref{th:single-l}. Assume that $G$ is transverse to $L$, and consider only the part of $g$ involving only $L$, which is an element of the Hochschild complex of the $A_\infty$-algebra $\mathit{CF}^*(L,L)$. Again, one can take all almost complex structures equal to $J_L$, and all inhomogeneous terms to be zero, so $g$ can be expressed in purely Morse-theoretic terms. In particular, the linear part $g^0_L \in \mathit{CF}^*(L,L) = \mathit{CM}^*(h_{L,L})$ is just given by counting gradient half-lines which start in $G \cap L$. This is the Morse-theoretic representative for $[G]|L \in H^*(L;R)$. As another consequence of the same observation, if $L \cap G = \emptyset$, all the components $g^d_{L,\dots,L}$ vanish.
\end{example}

\begin{example} \label{th:totally-disjoint}
We can generalize the last-mentioned observation as follows. Suppose that $G$ has (real) codimension $1$. Suppose also that every Lagrangian submanifold $L$ in our collection satisfies the condition from Example \ref{th:single-l}, and additionally is disjoint from $G$. Then, for a suitable choice, $g$ is identically zero. Namely, given a pointed pearly tree with $d>0$ ends, one can forget $z_*$ and then collapse components if necessary, so as to obtain an ordinary pearly tree. This allows one to lift the perturbation data used to define the $A_\infty$-structures to pointed pearly trees, giving a picture whereby $g$ is obtained by cutting down moduli spaces by asking that the holomorphic discs should go through $G$. However, because of the codimension and intersection assumptions, this can never reduce the dimension to zero unless the moduli space is empty.
\end{example}

%

\subsection{Abelian coverings\label{subsec:pushdown}}
The following material is not new (compare \cite{seidel03b}) or difficult, but we will need the statements in a specific form for later reference. The geometric situation is that we have a finite covering of symplectic manifolds
\begin{equation}
z: \tilde{M} \longrightarrow M,
\end{equation}
with abelian covering group $\Gamma$. Both manifolds are supposed to come with trivializations of their anticanonical bundles, related in the obvious way. We consider Lagrangian submanifolds $\tilde{L} \subset \tilde{M}$, equipped with additional structures which turn them into objects of the Fukaya category, and also subject to the following conditions:
\begin{equation} \label{eq:covered-lagrangians}
\parbox{35em}{
$z|\tilde{L}$ is itself a covering (for some subgroup of $\Gamma$) of a Lagrangian submanifold $L = z(\tilde{L}) \subset M$. The grading of $\tilde{L}$ is then automatically lifted from a grading of $L$. We impose the additional requirements that the {\em Spin} structure on $\tilde{L}$ should be the lift of one on $L$, and the same for the almost complex structure $J_{\tilde{L}}$.
}
\end{equation}
As part of the data, $\tilde{L}$ carries a local system $\tilde{\xi}$, but we do not impose any additional conditions on that.
Let $\tilde{F} \subset \mathit{Fuk}(\tilde{M})$ be the full $A_\infty$-subcategory whose objects are Lagrangian submanifolds satisfying \eqref{eq:covered-lagrangians}. When defining the $A_\infty$-structure, one can similarly lift all choices of Morse functions, almost complex structures, and inhomogeneous terms from $M$. The result is that $\tilde{F}$ comes with a strict action of $\Gamma$ by covering transformations, as well as with a pushforward functor $Z: \tilde{F} \rightarrow \mathit{Fuk}(M)$, which takes $\tilde{L}$ to $Z(\tilde{L}) = L$ with the local coefficient system $\xi = z_*(\tilde{\xi})$. The behaviour of this functor can be fully described in terms of the $\Gamma$-action. We have
\begin{equation} \label{eq:pushdown-cf}
\mathit{CF}^*(Z(\tilde{L}_0),Z(\tilde{L}_1)) = \bigoplus_{\gamma \in \Gamma} \mathit{CF}^*(\tilde{L}_0,\gamma(\tilde{L}_1)),
\end{equation}
and the $A_\infty$-structure maps $\mu^d_{\mathit{Fuk}(M)}$ are direct sums of
\begin{equation}
\begin{aligned}
& \mathit{CF}^*(\tilde{L}_{d-1},\gamma_d(\tilde{L}_d)) \otimes \cdots \otimes \mathit{CF}^*(\tilde{L}_1,\gamma_2(\tilde{L}_2)) \otimes \mathit{CF}^*(\tilde{L}_0,\gamma_1(\tilde{L}_1)) \\ & \iso
\mathit{CF}^*(\gamma_1 \dots \gamma_{d-1}(\tilde{L}_{d-1}),\gamma_1 \dots \gamma_d(\tilde{L}_d)) \otimes
\cdots \otimes \mathit{CF}^*(\gamma_1(\tilde{L}_1), \gamma_1 \gamma_2(\tilde{L}_2))
\otimes \mathit{CF}^*(\tilde{L}_0, \gamma_1(\tilde{L}_1)) \\ &
\xrightarrow{\mu^d_{\tilde{F}}} \mathit{CF}^*(\tilde{L}_0, \gamma_1\cdots \gamma_d(\tilde{L}_d)).
\end{aligned}
\end{equation}

\begin{example}
Suppose that $\tilde{L} \rightarrow L$ is a covering with group $G \subset \Gamma$. There is an obvious isomorphism of local systems $\mathit{Hom}(z_*\tilde{\xi},z_*\tilde{\xi}) \iso z_*\tilde{\xi} \otimes_\bZ \bZ[G]$, hence $\mathit{HF}^*(Z(\tilde{L}),Z(\tilde{L})) \iso H^*(L;\mathit{Hom}(z_*\tilde{\xi},z_*\tilde{\xi})) \iso H^*(\tilde{L}; \tilde{\xi}) \otimes_\bZ \bZ[G]$. This describes the decomposition induced by \eqref{eq:pushdown-cf} (the summands for elements of $\Gamma \setminus G$ are zero).
\end{example}

This has implications for the open-closed string map as well. Let $\tilde{F}$ be the diagonal bimodule of $\mathit{Fuk}(\tilde{M})$. We can twist it by applying $\gamma \in \Gamma$ to the left (but not the right) actions, thereby obtaining another bimodule $\tilde{F}^\gamma$, which can be thought of as the graph of $\gamma^{-1}$. On the other hand, take the diagonal bimodule of $\mathit{Fuk}(M)$ and pull it back by $Z$ (on both sides) to get a bimodule over $\mathit{Fuk}(\tilde{M})$. As a consequence of the observations above, we have an isomorphism
\begin{equation}
Z^*\mathit{Fuk}(M) \iso \bigoplus_{\gamma \in \Gamma} \tilde{F}^\gamma,
\end{equation}
and the canonical bimodule map $\tilde{F} \rightarrow Z^*\mathit{Fuk}(M)$ is just the inclusion of the $\gamma = e$ summand. The open-closed string maps for $M$ and $\tilde{M}$ (the latter restricted to the category $\tilde{F}$) and the maps \eqref{eq:functoriality} associated to the functor $Z$ fit into a commutative diagram
\begin{equation} \label{eq:open-closed-pushforward}
\xymatrix{
\mathit{QH}^*(\tilde{M}) \ar[rr] && \mathit{HH}^*(\mathit{Fuk}(\tilde{M}),\mathit{Fuk}(\tilde{M})) \ar[d]^-{Z_*} \\
&& \mathit{HH}^*(\mathit{Fuk}(\tilde{M}), Z^*\mathit{Fuk}(M)) \\
\mathit{QH}^*(M) \ar[rr] \ar[uu]^-{z^*} && \mathit{HH}^*(\mathit{Fuk}(M),\mathit{Fuk}(M)), \ar[u]_-{Z^*}
}
\end{equation}
where the right column is as in \eqref{eq:functoriality} (with an unfortunate reversal of notation).

\begin{addendum} \label{th:pullback-functor}
There is also a functor in opposite direction, which is better-behaved since it is defined on the whole Fukaya category, $\mathit{Fuk}(M) \rightarrow \mathit{Fuk}(\tilde{M})$. It maps any object $L$ to its entire primage. As for morphisms, the Floer cochain complex $\mathit{CF}^*(z^{-1}(L_0),z^{-1}(L_1))$ comes with a natural action of $\Gamma$, and the invariant part is the image of $\mathit{CF}^*(L_0,L_1)$ under pullback.
\end{addendum}

\subsection{Split-generators\label{subsec:split-generate}}
Let $O \subset A^{\mathit{perf}}$ be a full subcategory. One says that the objects of $O$ {\em split-generate} $A^{\mathit{perf}}$ if the following holds: any object of $A^{\mathit{perf}}$ up to quasi-isomorphism can be constructed by starting with objects of $O$ and applying the following operations: shifts; mapping cones; and taking direct summands with respect to idempotent endomorphisms. We will quote two abstract split-generation criteria for Fukaya categories from the literature, the second stronger than the first. Both involve Hochschild cohomology and the open-closed string map. More precisely, given $O \subset \mathit{Fuk}(M)$, we consider
\begin{equation} \label{eq:open-closed-restrict}
\xymatrix{
\mathit{QH}^*(M) \ar[r] & \mathit{HH}^*(\mathit{Fuk}(M),\mathit{Fuk}(M)) \ar[r] & \mathit{HH}^*(O,O).
}
\end{equation}

\begin{theorem}[\protect{\cite[Theorem 7.2]{abouzaid-smith09}}] \label{th:split-generation-1}
Suppose that $O$ is smooth \cite[Definition 8.1.12]{kontsevich-soibelman00}, and that \eqref{eq:open-closed-restrict} is an isomorphism. Then the objects in $O$ split-generate $\mathit{Fuk}(M)^{\mathit{perf}}$.
\end{theorem}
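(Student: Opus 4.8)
This is \cite[Theorem 7.2]{abouzaid-smith09}; the following indicates their argument, in the form adapted to the present setting.

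\textbf{Reduction to an acyclicity statement.} Fix $L \in \Ob\,\mathit{Fuk}(M)^{\mathit{perf}}$ and consider the restriction $L^{\mathit{yon}}|_O$ of its Yoneda module to $O$; since $\mathit{Fuk}(M)$ is proper this is a \emph{proper} $O$-module. Smoothness of $O$ means that its diagonal bimodule is a homotopy retract of a finite twisted complex of ``free rank one'' bimodules of the form $\NN \mapsto \NN(X_i) \otimes \mathit{hom}_O(-,X_j)$ with $X_i,X_j \in \Ob\,O$. Tensoring $L^{\mathit{yon}}|_O$ over $O$ with such a resolution of the diagonal presents it as a retract of a finite twisted complex whose terms are $L^{\mathit{yon}}|_O(X_i) \otimes \mathit{hom}_O(-,X_j)$, each perfect because $L^{\mathit{yon}}|_O(X_i)$ has finite-dimensional total cohomology. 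Hence $L^{\mathit{yon}}|_O$ is a perfect $O$-module, so $L^{\mathit{yon}}|_O \htp \tilde{L}^{\mathit{yon}}|_O$ for some object $\tilde{L}$ obtained from objects of $O$ by iterated mapping cones and direct summands. Because $\tilde{L}$ is so built, the restriction map $\mathit{Hom}_{H^0(\mathit{Fuk}(M)^{\mathit{mod}})}(\tilde{L}^{\mathit{yon}},L^{\mathit{yon}}) \to \mathit{Hom}_{H^0(O^{\mathit{mod}})}(\tilde{L}^{\mathit{yon}}|_O,L^{\mathit{yon}}|_O)$ is an isomorphism (apply the Yoneda lemma termwise in $\tilde{L}$), so the $O$-module quasi-isomorphism lifts to a morphism $\phi: \tilde{L} \to L$ in $\mathit{Fuk}(M)^{\mathit{perf}}$. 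Set $C = \mathit{Cone}(\phi)$: then $\mathit{hom}_{\mathit{Fuk}(M)^{\mathit{perf}}}(X,C)$ is acyclic for all $X \in \Ob\,O$, and (by the Poincar\'e duality pairing on Floer cohomology, valid under Assumption \ref{th:calabi-yau}) so is $\mathit{hom}_{\mathit{Fuk}(M)^{\mathit{perf}}}(C,X)$. It remains to show $C \htp 0$, equivalently $e_C = 0 \in \mathit{HF}^0(C,C)$ (an object is zero precisely when its identity endomorphism is nullhomotopic).

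\textbf{A primitive of the unit.} For a smooth and proper $A_\infty$-category, the map \eqref{eq:open-closed-restrict} and the dual open--closed map $\mathcal{OC}: \mathit{HH}_*(O,O) \to \mathit{QH}^*(M)$ are interchanged under the Poincar\'e pairing on $\mathit{QH}^*(M)$ and the Mukai pairing on Hochschild (co)homology, the relevant comparison $\mathit{HH}_*(O,O) \to \mathit{HH}^*(O,O)^\vee$ being an isomorphism exactly by smoothness (in the Calabi--Yau case the dualizing bimodule is the diagonal, so no twist appears). Thus the hypothesis that \eqref{eq:open-closed-restrict} is an isomorphism forces $\mathcal{OC}$ to be one as well; in particular $1 \in \mathit{QH}^0(M)$ lies in its image, say $\mathcal{OC}(\eta) = 1$.

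\textbf{The Cardy relation kills $e_C$.} The open--closed string map \eqref{eq:open-closed} is a unital ring homomorphism, so the composite of \eqref{eq:open-closed} with evaluation at $C$,
\begin{equation*}
\mathit{QH}^*(M) \longrightarrow \mathit{HH}^*(\mathit{Fuk}(M),\mathit{Fuk}(M)) \longrightarrow \mathit{HF}^*(C,C),
\end{equation*}
sends $1$ to $e_C$. On the other hand, the Cardy relation --- a one-parameter moduli identity in which an interior puncture is degenerated onto the boundary --- shows that the further composite
\begin{equation*}
\mathit{HH}_*(O,O) \xrightarrow{\ \mathcal{OC}\ } \mathit{QH}^*(M) \longrightarrow \mathit{HF}^*(C,C)
\end{equation*}
factors through $\bigoplus_{X \in \Ob\,O} \mathit{HF}^*(C,X) \otimes \mathit{HF}^*(X,C)$, via pair-of-pants composition over the objects of $O$. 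Since $C$ is $O$-acyclic, $\mathit{HF}^*(X,C) = 0$ for every $X$, so this composite vanishes; evaluating on $\eta$ gives $e_C = 0$. Hence $C \htp 0$, i.e. $\phi$ is a quasi-isomorphism, so $L$ lies in the split-closed triangulated envelope of $O$; as $L$ was arbitrary, $O$ split-generates $\mathit{Fuk}(M)^{\mathit{perf}}$.

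\textbf{The main difficulty.} The homological-algebra portion (smoothness $\Rightarrow$ restricted Yoneda modules perfect; the reduction to vanishing of an $O$-acyclic object) is formal. The real content lies in the two Floer-theoretic inputs invoked above: the self-duality of $\mathit{HH}(O,O)$ intertwining the two string maps, and the Cardy relation, which expresses the composite of $\mathcal{OC}$, the map \eqref{eq:open-closed}, and evaluation at $C$ in terms of composition over $O$. Each rests on constructing and gluing the relevant moduli spaces of pseudoholomorphic discs and annuli with interior and boundary marked points, and analysing their codimension-one degenerations; the needed transversality and absence of disc/sphere bubbling are supplied by the dimension counts made available by Assumptions \ref{th:calabi-yau} and \ref{th:adiscic}. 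All of this is carried out in detail in \cite{abouzaid-smith09}, building on \cite{abouzaid10}.
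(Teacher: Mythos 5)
The paper does not prove this statement: it is quoted directly from the literature (the theorem header itself carries the citation \cite[Theorem 7.2]{abouzaid-smith09}), so there is no in-paper proof to compare against. Your reconstruction of the Abouzaid--Smith argument is essentially the correct one: reduce to showing the $O$-acyclic cone $C=\mathit{Cone}(\tilde L\to L)$ vanishes; use smoothness plus properness of $O$ to show the restricted Yoneda module $L^{\mathit{yon}}|_O$ is perfect and lifts; use the duality of $\mathcal{CO}$ and $\mathcal{OC}$ (valid under smoothness together with the weak Calabi--Yau structure coming from Assumption~\ref{th:calabi-yau}) to find $\eta$ with $\mathcal{OC}(\eta)=1$; and invoke the Cardy relation. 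One small inefficiency: since the Cardy factorization runs through $\bigoplus_{X\in\Ob\,O}\mathit{HF}^*(C,X)\otimes\mathit{HF}^*(X,C)$, the vanishing of $\mathit{HF}^*(X,C)$ (which you get directly from the construction of $C$) already kills the composite, so the Poincar\'e-duality step establishing $\mathit{HF}^*(C,X)=0$ is not needed.
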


The next result is an analogue of \cite{abouzaid10} for compact manifolds, to appear in \cite{abouzaid-fukaya-oh-ohta-ono11}; see also \cite[Section 13]{sheridan13} for the special case of monotone symplectic manifolds.

\begin{theorem}[Abouzaid-Fukaya-Oh-Ohta-Ono] \label{th:split-generation-2}
Suppose that there is a linear map $\mathit{HH}^{2n}(O,O) \rightarrow R$ whose composition with \eqref{eq:open-closed-restrict} yields the integration map $\mathit{QH}^{2n}(M) \rightarrow R$.
Then the objects in $O$ split-generate $\mathit{Fuk}(M)^{\mathit{perf}}$.
\end{theorem}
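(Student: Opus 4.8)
\textbf{Proof proposal for Theorem \ref{th:split-generation-2}.}

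The plan is to follow the standard ``split-generation via the open-closed map'' strategy, which here means reducing the statement to the criterion of Theorem \ref{th:split-generation-1} applied to a possibly enlarged subcategory, or else invoking the compact analogue of Abouzaid's generation criterion \cite{abouzaid10} directly. Since the theorem is explicitly attributed to work of Abouzaid--Fukaya--Oh--Ohta--Ono \cite{abouzaid-fukaya-oh-ohta-ono11}, the honest statement is that the proof is \emph{not} self-contained, and what I would write is an account of the argument's shape together with a precise pointer; I sketch that shape now. First I would recall the algebraic input: for any $A_\infty$-category $O$ one has the (co)unit of the adjunction between $O^{\mathit{perf}}$ and its ambient module category, and the obstruction to an object $L \in \mathit{Fuk}(M)^{\mathit{perf}}$ being split-generated by $O$ is detected by a canonical coevaluation/evaluation composition landing in $\mathit{HH}_*(O,O)$ (Hochschild \emph{homology}), paired against $\mathit{HH}^*(O,O)$. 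The key is that the open-closed string map $\mathit{QH}^*(M) \to \mathit{HH}^*(O,O)$ has a ``closed-open'' companion $\mathit{HH}_*(O,O) \to \mathit{QH}_*(M) \iso \mathit{QH}^{*+2n}(M)$, and that the composition of the relevant obstruction class with this companion map computes, up to the fundamental class, the honest Floer-theoretic count associated to $L$.

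The steps, in order, would be: (1) set up the coevaluation morphism $e_L \colon R \to \mathit{CC}_*(O,O)$ (the diagonal class seen through $O$) and the evaluation pairing with the Hochschild cochain $g$ representing the image of a point class under \eqref{eq:open-closed-restrict}; (2) invoke the hypothesis — existence of $\mathit{HH}^{2n}(O,O) \to R$ restricting to the integration map $\mathit{QH}^{2n}(M) \to R$ — to conclude that the composite pairing $R \to \mathit{CC}_*(O,O) \to R$ obtained this way is \emph{nonzero} (in fact computes $\int_M 1 \neq 0$ when paired with the unit, which is the content of the compatibility with integration); (3) deduce that the counit $\mathit{ev}_L \colon L \otimes_O {}^\vee\!L \to R$ of the adjunction is split, i.e. $L$ is a retract of an object built from $O$; (4) observe that retracts, cones and shifts of objects of $O$ are by definition in the split-closure, so $L$ lies in the subcategory split-generated by $O$. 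The nontriviality in step (2) is exactly where Theorem \ref{th:split-generation-2} improves on Theorem \ref{th:split-generation-1}: one does not need \eqref{eq:open-closed-restrict} to be an isomorphism, only that the top-degree part of the image of the open-closed map is ``large enough'' to detect the volume class, which by Poincar\'e duality on $\mathit{QH}^*(M)$ forces the whole unit to be hit after applying the algebraic machinery.

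The main obstacle is step (3): establishing that a nonvanishing of the numerical pairing actually forces the counit to split, rather than merely being nonzero on cohomology. This is precisely the substantive geometric--algebraic theorem of \cite{abouzaid10} (in the wrapped/open setting) and its compact counterpart in \cite{abouzaid-fukaya-oh-ohta-ono11}; it requires the construction of the relevant moduli spaces of holomorphic discs with two boundary punctures and one interior constraint realizing the coevaluation, together with a gluing/cobordism argument identifying the boundary of the relevant one-dimensional moduli space with the composition of coevaluation and the open-closed operation. In a paper like the present one I would not reproduce that argument: I would state it as a black box, check only that our hypotheses on $M$ (Assumption \ref{th:calabi-yau}, so that everything is $\Z$-graded over the Novikov field $R$, together with the disc-bubbling control of Assumption \ref{th:adiscic}) are exactly the ones under which the cited construction applies, and conclude. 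The only genuinely new verification left to the reader is bookkeeping: matching the sign and grading conventions of \cite{abouzaid-fukaya-oh-ohta-ono11} with those of \cite{seidel04} used throughout here, and checking that the ``linear map $\mathit{HH}^{2n}(O,O) \to R$'' in the hypothesis is the one that the cobordism argument naturally produces.
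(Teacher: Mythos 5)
The paper itself gives \emph{no} proof of Theorem \ref{th:split-generation-2}: it is stated purely as an imported result, attributed to the (at the time) unpublished work \cite{abouzaid-fukaya-oh-ohta-ono11}, and the only surrounding text is the remark that the hypothesis is equivalent to \eqref{eq:open-closed-restrict} being nonzero in degree $2n$ since $\mathit{QH}^{2n}(M)$ is one-dimensional. Your proposal correctly recognizes that this is the honest situation and takes the same approach — treating the compact Abouzaid generation criterion as a black box and pointing to the citation — while going somewhat beyond the paper by also sketching the shape of the underlying argument (coevaluation into $\mathit{HH}_*(O,O)$, pairing with the open-closed image, nondegeneracy via the integration map, splitting of the counit). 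That sketch is a fair description of the mechanism of \cite{abouzaid10} and its compact analogue, and your identification of step (3) — that nonvanishing of the numerical pairing forces the counit to split, via a gluing/cobordism argument on moduli of discs — as the genuinely hard input is the right assessment. So: you and the paper agree that no self-contained proof should be attempted here; you just supply a bit more context than the paper does.
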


Because $\mathit{QH}^{2n}(M)$ is one-dimensional, the condition in Theorem \ref{th:split-generation-2} is just that \eqref{eq:open-closed-restrict} should be nonzero in degree $2n$, but we prefer the formulation above, which extends to non-Calabi-Yau cases and more accurately reflects the core of the argument. Note that for a general $M$, there is no reason to suppose that the conditions of either theorem above would hold even for $O = \mathit{Fuk}(M)$ (no actual counterexamples are known, but there are suggestions coming from mirror symmetry for non-algebraic varieties). However, if they do hold for one set of split-generating objects, then the same is true for any other such set.
%

\begin{example}
Take for instance the two-torus $T$. As discussed in \cite{abouzaid-smith09}, two curves intersecting in a point satisfy the criterion of Theorem \ref{th:split-generation-1}, hence split-generate the Fukaya category. As already pointed out in Remark \ref{th:whole-fukaya-torus}, it then follows that the same holds for the two curves from Figure \ref{fig:abcd1}.
\end{example}

\begin{example} \label{th:quartic}
Let $K \subset \bC P^3$ be a smooth quartic surface, equipped with the restriction of the Fubini-Study form. Classical Picard-Lefschetz theory shows that the orthogonal complement $[\omega_K]^\perp \subset H_2(K;\bQ)$ is spanned by Lagrangian spheres. For completeness, we describe the argument briefly: let $(K_z)_{z \in \bC \cup \{\infty\}}$ be a Lefschetz pencil of quartic surfaces, with $K_\infty = K$. We have
\begin{equation}
H_*(\bC P^3 \setminus K;\bQ) \iso H^{6-*}(\bC P^3,K;\bQ) \iso 
\begin{cases} 
\bQ & \ast = 0, \\
[\omega_K]^\perp & \ast = 3, \\
0 & \text{in other degrees.}
\end{cases}
\end{equation}
The base of the pencil, $C = K_0 \cap K_\infty$, is a smooth Riemann surface of genus $33$ representing a multiple of $[\omega_K]$, and we have
\begin{equation}
H_*(K \setminus C) \iso H^{4-*}(K,C) \iso \begin{cases}
\bQ & \ast = 0, \\
[\omega_K]^\perp \oplus H_1(C;\bQ) & \ast = 2, \\
0 & \text{in other degrees.}
\end{cases}
\end{equation}
In particular, the image of $H_2(K \setminus C;\bQ) \rightarrow H_2(K;\bQ)$ is $[\omega_K]^\perp$. Up to homotopy equivalence, $\bC P^3 \setminus K$ is obtained from $K \setminus C$ by attaching $3$-handles along a collection of Lagrangian spheres in $K \setminus C$ (a basis of vanishing cycles of the pencil: there are $108$ of them, as an Euler characteristic computation shows). The fact that $H_2(\bC P^3 \setminus K;\bQ) = 0$ shows that the homology classes of these spheres span all of $H_2(K \setminus B;\bQ)$. Hence, in $K$ the same spheres span $[\omega_K]^\perp$. Choose Lagrangian spheres $(L_1,\dots,L_{21})$ whose homology classes form a basis for $[\omega_K]^\perp$.

Homological mirror symmetry \cite{seidel03b} says that $\mathit{Fuk}(K)^{\mathit{perf}}$ is quasi-equivalent to $D^b\mathit{Coh}(X)$, where the mirror $X$ is a smooth $K3$ surface over $R$. We have $\mathit{HH}^4(X,X) \iso H^2(X,K_X^{-1}) \iso R$, and the product $\mathit{HH}^2(X,X)^{\otimes 2} \rightarrow \mathit{HH}^4(X,X)$ is a nondegenerate quadratic form (nondegeneracy is a consequence of Serre duality, thinking of $\mathit{HH}^*(X,X) \iso \mathit{Ext}^*_{X \times X}({\mathcal O}_\Delta,{\mathcal O}_\Delta)$ as the endomorphism ring of the diagonal). The Hochschild-Kostant-Rosenberg theorem implies that
\begin{equation}
\mathrm{dim}\, \mathit{HH}^2(X, X) = \mathrm{dim}\, \big(
H^2(X,{\mathcal O}_X) \oplus H^1(X,TX) \oplus H^0(X,K_X^{-1}) \big) = 22 = \mathrm{dim}\, H_2(K).
\end{equation}
By combining these two facts, one sees that any isotropic subspace of $\mathit{HH}^2(X,X)$ is at most of dimension $11$. The same must then hold for the Hochschild cohomology of $\mathit{Fuk}(K)$. 

Take the collection of spheres introduced above, and consider 
\begin{equation} \label{eq:combined-restriction}
H^2(K;R) = \mathit{QH}^2(K) \longrightarrow \mathit{HH}^2(\mathit{Fuk}(K),\mathit{Fuk}(K)) \longrightarrow \bigoplus_i \mathit{HF}^2(L_i,L_i) \iso \bigoplus_i H^2(L_i;R),
\end{equation}
where the first arrow is the open-closed string map, and the second one the standard map from Hochschild cohomology to the endomorphism ring of any object. Because of the absence of holomorphic discs, \eqref{eq:combined-restriction} just consists of the ordinary restriction maps on cohomology, hence is surjective (compare Example \ref{th:take-a-single}). This shows that the open-closed string map in degree $2$ is of rank $21$ or $22$; therefore, its image is not an isotropic subspace. Since the open-closed string map is a ring homomorphism, it follows that $\mathit{QH}^4(K) \rightarrow \mathit{HH}^4(\mathit{Fuk}(K),\mathit{Fuk}(K))$ must be nonzero, hence an isomorphism. This implies that for any set of split-generating objects, the requirement of Theorem \ref{th:split-generation-2} is satisfied. 

In fact, our previous argument shows that the kernel of the open-closed string map in degree $2$ is either zero or else spanned by $[\omega_K]$. But the second case is impossible since $[\omega_K]^2$ is nontrivial, so the open-closed string map must be an isomorphism. Since $D^b\mathit{Coh}(X)$ is smooth, so is $\mathit{Fuk}(K)$, and the assumption of Theorem \ref{th:split-generation-1} holds as well, allowing one to avoid Theorem \ref{th:split-generation-2}.
\end{example}

\begin{example} \label{th:product-mirror}
Again following \cite{abouzaid-smith09}, we point out that this strategy extends well to products. For instance, take $M = T \times K$ to be the product of the two-torus and the quartic surface. Consider the $A_\infty$-subcategory $O \subset \mathit{Fuk}(M)$ of objects which are themselves products. It turns out that $O$ is quasi-isomorphic to the $A_\infty$-tensor product $\mathit{Fuk}(T) \otimes \mathit{Fuk}(K)$. We will not enter into a full discussion of this fact here, but there are several strategies of proof:
\begin{itemize} \itemsep1em
\item A direct proof, which involves deforming the diagonals for the associahedra to the boundary (matching the definition of the tensor product of $A_\infty$-structures, see \cite{loday-diag} and references therein).
\item Using quilted Floer cohomology \cite{ww}, one can define an $A_\infty$-functor
\begin{equation} \label{eq:yoneda-type}
\mathit{Fuk}(M) \longrightarrow (\mathit{Fuk}(T^-), \mathit{Fuk}(K))^{\mathit{mod}}.
\end{equation}
One compares this to the image of the (algebraically defined) Yoneda-type embedding
\begin{equation} \label{eq:yoneda-type-2}
\mathit{Fuk}(T) \otimes \mathit{Fuk}(K) \iso
\mathit{Fuk}(T^-)^{\mathit{opp}} \otimes \mathit{Fuk}(K) \longrightarrow
(\mathit{Fuk}(T^-), \mathit{Fuk}(K))^{\mathit{mod}}.
\end{equation}
The outcome is that the restriction of \eqref{eq:yoneda-type} to $O$ is a cohomologically full and faithful $A_\infty$-functor, whose image is quasi-equivalent to $\mathit{Fuk}(T) \otimes \mathit{Fuk}(K)$.
\item The version closest to \cite{abouzaid-smith09} would replace \eqref{eq:yoneda-type} with
\begin{equation} \label{eq:yoneda-type-3}
\mathit{Fuk}(M) \longrightarrow \mathit{fun}(\mathit{Fuk}(T^-),\mathit{Fuk}^\#(K))
\end{equation}
where $\mathit{fun}(\cdot,\cdot)$ is the $A_\infty$-category of $A_\infty$-functors, and $\mathit{Fuk}^\#(K)$ the extended Fukaya category \cite{mww}. The rest of the argument would be structured as before. If one wishes, one can avoid $A_\infty$-tensor products, and instead work with suitable full $A_\infty$-subcategories of the categories on the right hand sides of \eqref{eq:yoneda-type} or \eqref{eq:yoneda-type-3} (which are quasi-equivalent to the $A_\infty$-tensor product; the difference is purely one of language).
\end{itemize}

As a consequence of this computation, $O$ is again smooth; moreover, the associated open-closed string map is an isomorphism, which shows that $O$ split-generates $\mathit{Fuk}(M)$. This can be used to prove homological mirror symmetry for the product (in the parallel case of $T \times T$, this is the main result of \cite{abouzaid-smith09}).
\end{example}

\subsection{Products of Dehn twists}
We will now concentrate on constructing specific examples of automorphisms $f$ where the $\bZ/m$-action on $\mathit{HF}^*(f^m)$ is nontrivial. The symplectic manifold $M$ should still satisfy Assumption \ref{th:calabi-yau}. For technical simplicity, we assume that it is a four-manifold ($n = 2$; higher-dimensional generalizations would have to use more advanced methods, as in \cite{oh10}). Take Lagrangian spheres $L_1,\dots,L_r \subset M$, equipped with an arbitrary choice of grading, the unique {\em Spin} structure, and the trivial local coefficient system. Because of our dimensional restriction, generically chosen almost complex structures then satisfy Assumption \ref{th:calabi-yau}. Consider the composition of Dehn twists
\begin{equation}
f = \tau_{L_1} \tau_{L_2} \cdots \tau_{L_r},
\end{equation}
which is naturally a graded symplectic automorphism of $M$.

\begin{proposition}[Perutz]
There is a spectral sequence converging to $\mathit{HF}^*(f)$, with
\begin{equation} \label{eq:e1fixed}
E_1^{p*} = \begin{cases}
\mathit{QH}^*(M) \iso H^*(M;R) & p = 0, \\
\bigoplus_i \mathit{HF}^*(L_i,L_i) \iso \bigoplus_i H^*(L_i;R) & p = 1, \\
\bigoplus_{i_1 > \dots > i_p} \mathit{HF}^*(L_{i_p},L_{i_1}) \otimes \mathit{HF}^*(L_{i_{p-1}},L_{i_p}) \otimes \cdots \\ \qquad \qquad \qquad \qquad \qquad \cdots \otimes \mathit{HF}^*(L_{i_1},L_{i_2})[n(p-1)] & 1 < p \leq r, \\
0 & \text{otherwise.}
\end{cases}
\end{equation}
\end{proposition}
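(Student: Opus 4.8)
The plan is to build this spectral sequence by realizing $\mathit{HF}^*(f)$ as the cohomology of a twisted complex whose pieces are the Floer complexes appearing in \eqref{eq:e1fixed}, and then taking the length filtration. The geometric input is a description of the fixed point Floer complex of a composition of Dehn twists in terms of iterated mapping cones. Concretely, one uses the standard exact triangle associated with a single Dehn twist: for a Lagrangian sphere $L$, the graph $\Gamma_{\tau_L}$ is quasi-isomorphic (in $\mathit{Fuk}(M^-\times M)^{\mathit{tw}}$, or more efficiently in the fixed point TCFT of Remark \ref{th:tcft}) to the mapping cone of an evaluation-type morphism $\Delta \to L\times L$, twisted by the graph of the previous automorphism. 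Iterating this $r$ times expresses $\Gamma_f$ as a twisted complex built from $\Delta$ and the products $L_i\times L_i$, pulled through the successive graphs. Passing to Floer cohomology with $\Delta$ (equivalently, to fixed point Floer cohomology via Example \ref{th:graph}), and using the Künneth-type identification $\mathit{HF}^*(\Delta,L_{i_a}\times L_{i_b}) \iso \mathit{HF}^*(L_{i_b},L_{i_a})$ together with $\mathit{HF}^*(\Delta,\Delta)\iso \mathit{QH}^*(M)$, produces a complex whose associated graded is exactly \eqref{eq:e1fixed}, with the degree shifts $n(p-1)$ coming from the cone construction.

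The steps I would carry out, in order, are: (1) recall the Dehn twist exact triangle and phrase it at the level of graphs/mapping tori, so that $\Gamma_{\tau_{L_r}\cdots\tau_{L_1}}$ becomes an iterated cone; the key point is that each stage introduces one new summand $L_i \times L_i$ (contributing the factor $\mathit{HF}^*(L_{i_p},L_{i_1})\otimes\cdots$ after composing the connecting morphisms of nested cones) and the combinatorics of which subsets $i_1 > \cdots > i_p$ occur matches the multi-index sum in \eqref{eq:e1fixed}. (2) Filter this twisted complex by the number of $L_i\times L_i$ factors used (the "length" filtration of the iterated cone), which is a finite, bounded filtration since $p$ ranges over $0,\dots,r$. (3) Identify the $E_0$ page with the direct sum of the shifted tensor products of Floer complexes, so that $E_1$ is obtained by taking cohomology of each summand separately — this gives \eqref{eq:e1fixed} since the internal differential on each $L_i\times L_i$-type summand is the Floer differential, whose cohomology is the stated tensor product (using that the $L_i$ are spheres, so $\mathit{HF}^*(L_i,L_i)\iso H^*(L_i;R)$ as in Example \ref{th:single-l}). (4) Check convergence: the filtration has finitely many steps, so the spectral sequence converges to $\mathit{HF}^*(f)$ in the usual way.

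The main obstacle will be step (1): making precise the claim that the composition of Dehn twists, as an object of the appropriate category (graphs in $M^-\times M$, or the fixed point TCFT), is genuinely an iterated mapping cone with the $E_1$ page as its associated graded, and that the combinatorial bookkeeping of the connecting maps under nested cones reproduces exactly the tensor factors $\mathit{HF}^*(L_{i_p},L_{i_1})\otimes \mathit{HF}^*(L_{i_{p-1}},L_{i_p})\otimes\cdots\otimes \mathit{HF}^*(L_{i_1},L_{i_2})$ with shift $n(p-1)$. This requires either working carefully with relative {\em Spin} structures and orientations in the product $M^-\times M$ (or invoking \cite{wehrheim-woodward11} to bypass the {\em Spin} assumption), and a clean statement of the Dehn twist triangle in families — hence the attribution to Perutz, whose work supplies exactly this kind of surgery/mapping-cone description of Floer complexes of Dehn twist compositions. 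Once that geometric input is in place, steps (2)–(4) are the routine homological algebra of the length filtration on a finite twisted complex, of the same flavour as the spectral sequence arguments already used for \eqref{eq:length-sequence} and in Lemma \ref{th:acyclic-modules}.
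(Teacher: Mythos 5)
The paper does not supply a proof of this proposition: it is a cited result, and the only discussion offered afterwards is that Perutz's (unpublished) work produces an explicit chain complex for $\mathit{HF}^*(f)$ built from the Fukaya $A_\infty$-structure and the open-closed string map, which yields the spectral sequence after an appropriate filtration (with an alternative construction attributed to Ma'u). Your reconstruction --- iterating the Dehn twist exact triangle at the level of graphs in $M^-\times M$ starting from the diagonal, obtaining a twisted complex with $2^r$ pieces indexed by subsets of $\{1,\dots,r\}$, and filtering by cone length --- is a sound geometric route to the same algebraic complex, and it produces the $E_1$ page of \eqref{eq:e1fixed} via the identifications $\mathit{HF}^*(\Delta,\Delta)\cong\mathit{QH}^*(M)$ and $\mathit{HF}^*(\Delta, L_j\times L_i)\cong\mathit{HF}^*(L_i,L_j)$ that you invoke. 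You should, however, make explicit the two ingredients the paper singles out: the $d_1$ differential from the $p=0$ column $\mathit{QH}^*(M)$ to the $p=1$ column $\bigoplus_i\mathit{HF}^*(L_i,L_i)$ is the open-closed string map \eqref{eq:open-closed} restricted to the subcategory on $\{L_1,\dots,L_r\}$, and the $A_\infty$-structure on that subcategory governs the differentials among the higher columns; naming these is what makes the spectral sequence actually computable. Your own caveat about step (1) is the right one --- the degree shift $n(p-1)$, the strictly decreasing index condition $i_1>\cdots>i_p$, and the signs all have to fall out of the orientation and degree bookkeeping of the nested cones, and you have asserted this rather than checked it; the relative {\it Spin} structure issue in $M^-\times M$ can indeed be bypassed following \cite{wehrheim-woodward11}, as you note.
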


In fact, Perutz's work \cite{perutz10} yields an explicit chain complex which computes $\mathit{HF}^*(f)$ in terms of the Fukaya $A_\infty$-structure and open-closed string map. An appropriate filtration of that chain complex then gives rise to \eqref{eq:e1fixed} (for which an alternative approach is due to Ma'u \cite{mau11}).
One can rewrite the nontrivial columns $E_1^{p*}$, $p > 0$, in a way that highlights the cyclic symmetry:
\begin{equation} \label{eq:cyclic-e1}
E_1^{p*} = \Big( \bigoplus_{i_1, \dots, i_p} \mathit{HF}^*(L_{i_p},L_{i_1}) \otimes \mathit{HF}^*(L_{i_{p-1}},L_{i_p}) \otimes \cdots \otimes \mathit{HF}^*(L_{i_1},L_{i_2}) \Big)^{\bZ/p} [n(p-1)],
\end{equation}
where the direct sum is over cyclically decreasingly ordered $p$-tuples, and $\bZ/p$ acts by cyclically permuting these $p$-tuples (with additional signs). This point of view is particularly convenient for considering conjugation invariance. Namely, the spectral sequences computing $\mathit{HF}^*(f)$ and $\mathit{HF}^*(\tau_{L_r} f \tau_{L_r}^{-1})$ are related by an automorphism, which converges to $C_{\tau_{L_r},f}$. On the $E_1$ level, that automorphism is just the obvious relation between the expressions \eqref{eq:cyclic-e1}.

The class of examples of interest to us is where $r = 2m$ and $L_1 = L_3 = \cdots = L_{2m-1}$, $L_2 = L_4 = \cdots = L_{2m}$, so that $f = (\tau_{L_1}\tau_{L_2})^m$. Additionally, we ask that:

\begin{assumption} \label{th:spread}
$\mathit{HF}^*(L_1,L_2)$ is concentrated in degrees $[n/2-k,n/2+k]$ for some $k \geq n/2$. Moreover, both $\mathit{HF}^{n/2-k}(L_1,L_2)$ and $\mathit{HF}^{n/2+k}(L_1,L_2)$ are nonzero, and at least one of those spaces has dimension $>1$.
\end{assumption}

The argument outlined above shows how the conjugation isomorphisms \begin{equation}
\begin{aligned}
& C_{\tau_{L_2},f}: \mathit{HF}^*(f) \longrightarrow \mathit{HF}^*(\tau_{L_2} f \tau_{L_2}^{-1}), \\
& C_{\tau_{L_1},\tau_{L_2} f \tau_{L_2}^{-1}}: \mathit{HF}^*(\tau_{L_2} f \tau_{L_2}^{-1}) \longrightarrow \mathit{HF}^*(\tau_{L_1} \tau_{L_2} f \tau_{L_2}^{-1} \tau_{L_1}^{-1}) = \mathit{HF}^*(f)
\end{aligned}
\end{equation}
act on the $E_1$ pages of the respective spectral sequences. The composition of these two isomorphisms defines the standard $\bZ/m$-action. In particular, this acts on the last column
\begin{equation}
\begin{aligned}
E_1^{2m,*} & = \mathit{HF}^*(L_1,L_{2m}) \otimes \mathit{HF}^*(L_2,L_1) \otimes \cdots \otimes \mathit{HF}^*(L_{2m},L_{2m-1})[n(2m-1)] \\
& \iso \big(\mathit{HF}^*(L_1,L_2) \otimes \mathit{HF}^*(L_2,L_1)\big)^{\otimes m}[n(2m-1)]
\end{aligned}
\end{equation}
by cyclically permuting the $m$ tensor factors, up to (degree-dependent) signs. The signs could in principle be determined by a more careful argument, but they turn out to be irrelevant for our purpose.

\begin{lemma} \label{th:regular-representation}
$\mathit{HF}^{2m(1-k-n/2)+n}((\tau_{L_1}\tau_{L_2})^m)$ contains a copy of the regular representation of $\bZ/m$, for any $m \geq 1$.
\end{lemma}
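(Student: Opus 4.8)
The plan is to locate the regular representation inside the lowest-degree nontrivial column of the Perutz spectral sequence and argue that it survives to $E_\infty$ purely for degree reasons. First I would identify the degree $2m(1-k-n/2)+n$ with the bottom degree of the last column $E_1^{2m,*}$. By Assumption \ref{th:spread}, $\mathit{HF}^*(L_1,L_2)$ is concentrated in $[n/2-k,n/2+k]$, so $\mathit{HF}^*(L_1,L_2)^{\otimes 2m}$ is concentrated in $[2m(n/2-k), 2m(n/2+k)]$, and after the shift $[n(2m-1)]$ in \eqref{eq:cyclic-e1} the bottom of $E_1^{2m,*}$ sits in degree $2m(n/2-k) - n(2m-1) = n - 2mk - nm + n \cdot \ldots$; a short computation gives exactly $2m(1-k-n/2)+n$. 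The bottom graded piece is $\big(\mathit{HF}^{n/2-k}(L_1,L_2)^{\otimes 2}\big)^{\otimes m}$, i.e. $V^{\otimes 2m}$ where $V = \mathit{HF}^{n/2-k}(L_1,L_2) \neq 0$; this is nonzero, and in fact (invoking the $>1$ clause of Assumption \ref{th:spread}, applied at whichever of the two extreme degrees has dimension $>1$, and relabeling if it is the top one) has dimension $\geq 2$.

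Second, I would pin down the $\Z/m$-action on this bottom piece. As explained in the excerpt, the composite of the two conjugation isomorphisms $C_{\tau_{L_1},\tau_{L_2}f\tau_{L_2}^{-1}} \circ C_{\tau_{L_2},f}$ acts on the last column by cyclically permuting the $m$ tensor factors $\mathit{HF}^*(L_1,L_2)^{\otimes 2}$, up to a degree-dependent sign. On the bottom graded piece $V^{\otimes 2m}$, all $2m$ factors have the same degree $n/2-k$, so the sign is a single global scalar $\epsilon = \pm 1$, independent of the summand. Hence the generator of $\Z/m$ acts on $V^{\otimes 2m}$ by $\epsilon$ times the cyclic permutation $\rho$ of the $m$ blocks of size $2$. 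Since $V$ has a subspace $Rv$ with $v$ and some $w$ linearly independent (dimension $\geq 2$), consider the $2m$-tensors built from $\{v,w\}$: the subspace of $V^{\otimes 2m}$ they span carries the $\rho$-action, and one can extract from it a free $\Z[\Z/m]$-submodule — e.g. the cyclic span of $w \otimes w \otimes v \otimes \cdots \otimes v$ (one distinguished block, the rest equal) is free of rank one over $R[\Z/m]$ because the $m$ cyclic rotations are linearly independent. Twisting by the scalar $\epsilon$ changes the generator but not the isomorphism type, so $E_1^{2m, d_0}$ (with $d_0$ the target degree) contains a copy of the regular representation $R[\Z/m]$.

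Third, I must promote this from $E_1$ to $\mathit{HF}^*$. The column $E_1^{2m,*}$ is the last one, so no differentials leave it; the only issue is incoming differentials $d_r: E_r^{2m-r,*} \to E_r^{2m,*}$. Here I would use the degree bookkeeping: every class in $E_1^{2m-r,*}$ lives in a tensor product of $2m-r$ Floer groups $\mathit{HF}^*(L_{i_j},L_{i_{j+1}})$, each of which (by Assumption \ref{th:spread}, since the $L_i$ alternate between two spheres) is supported in degrees $\geq n/2-k$, with an appropriate shift; working out the minimal possible degree of $E_1^{p,*}$ as a function of $p$ shows that the bottom degree $d_0$ of the last column is strictly below the bottom degree of every earlier column $E_1^{p,*}$ with $0 < p < 2m$, and also below the degree-$0$ column contribution $H^*(M;R)$ in the relevant range (one checks $d_0 < 0$ here using $k \geq n/2 \geq 1$ and $m \geq 1$, ruling out hits from $p=0$). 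Therefore no $d_r$ can map into the bottom graded piece of $E_1^{2m,*}$, so the regular-representation summand survives to $E_\infty^{2m,d_0}$. Finally, since the conjugation-induced automorphism of the whole spectral sequence converges to the $\Z/m$-action on $\mathit{HF}^*(f)$, and the associated graded of $\mathit{HF}^{d_0}(f)$ contains $R[\Z/m]$ as a subrepresentation, and $R[\Z/m]$ is free hence projective over the group algebra (char $R = 0$, so $R[\Z/m]$ is semisimple), this summand splits off and $\mathit{HF}^{d_0}(f)$ itself contains a copy of the regular representation. This gives the claim with $d_0 = 2m(1-k-n/2)+n$.

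The main obstacle I anticipate is the degree estimate in the third step: one must verify carefully that the target degree $d_0$ of the last column lies strictly below the support of all other columns (including, crucially, the quantum cohomology column $p=0$, which is not constrained by Assumption \ref{th:spread} and could in principle sit in low negative degrees — though $H^*(M;R)$ is supported in $[0,2n]$, so this is really a matter of checking $d_0 < 0$). Getting the shifts in \eqref{eq:cyclic-e1} exactly right, and confirming that the single global sign $\epsilon$ really is block-independent on the bottom graded piece (so that the action is genuinely scalar-times-cyclic rather than something subtler), are the places where a careless computation would go wrong; everything else is formal representation theory over a characteristic-zero field.
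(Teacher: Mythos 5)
Your proposal follows the same overall strategy as the paper: isolate the bottom graded piece of the last column $E_1^{2m,*}$, exhibit a free $R[\Z/m]$-submodule there, show it survives by degree isolation, and then lift from the associated graded using semisimplicity. Two details are handled more carelessly than in the paper's proof, and both are in exactly the places you flagged as risky.

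First, the bottom graded piece of the last column is $\big(\mathit{HF}^{n/2-k}(L_1,L_2) \otimes \mathit{HF}^{n/2-k}(L_2,L_1)\big)^{\otimes m}$, not $V^{\otimes 2m}$ with $V = \mathit{HF}^{n/2-k}(L_1,L_2)$: the $2m$ factors in $E_1^{2m,*}$ alternate between $\mathit{HF}^*(L_1,L_2)$ and $\mathit{HF}^*(L_2,L_1)$, and by Poincar\'e duality $\mathit{HF}^{n/2-k}(L_2,L_1) \iso \mathit{HF}^{n/2+k}(L_1,L_2)^\vee$, which sits at the \emph{opposite} extreme of Assumption \ref{th:spread} and may have a different dimension from $V$. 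If $\dim V = 1$ but $\dim \mathit{HF}^{n/2+k}(L_1,L_2) > 1$ (allowed by the assumption), your element $w \otimes w \otimes v \otimes \cdots \otimes v$ does not exist, and the ``relabeling'' aside does not cleanly repair this because the issue is inside a single two-factor block, not a choice of which end to call $L_1$. The paper avoids the case split by choosing $a_1, a_2$ linearly independent directly in the block $\mathit{HF}^{n/2-k}(L_1,L_2) \otimes \mathit{HF}^{n/2-k}(L_2,L_1)$, which is automatically $\geq 2$-dimensional, and taking $a_1 \otimes a_2^{\otimes(m-1)}$.

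Second, your claim that $d_0 = 2m(1-k-n/2)+n < 0$ is false in the borderline case $n=2$, $k=1$, $m=1$, where $d_0 = 0$ --- and this is exactly the case that arises for the quartic surface. What you actually need is that no earlier column $p < 2m$ contains anything in total degree $d_0 - 1$, because the spectral sequence differentials raise total degree by $1$; that condition is $d_0 - 1 < 0$ for $p=0$ and $d_0 - 1 < 1$ for $p=1$, both of which do hold. The paper's phrasing implicitly uses this $+1$ shift. Your final step (semisimplicity of $R[\Z/m]$ implies the regular representation splits off the associated graded) is a correct and worthwhile explication of a point the paper leaves implicit.
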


\begin{proof}
The $E_1^{0*}$ column contributes only in total degrees $\geq 0$, the $E_1^{1*}$ column in total degrees $\geq 1$, and the $E_1^{p*}$ ($1 < p < 2m$) columns in total degrees $\geq p(1-k-n/2) + n$. This means that the following piece, whose total degree is $2m(1-k-n/2) + n$, survives to $E_\infty$:
\begin{equation} \label{eq:m-tensor}
(\mathit{HF}^{n/2-k}(L_1,L_2) \otimes \mathit{HF}^{n/2-k}(L_2,L_1))^{\otimes m} \subset E_1^{2m,2m(-k-n/2) + n}.
\end{equation}
By assumption, $\mathit{HF}^{n/2-k}(L_1,L_2) \otimes \mathit{HF}^{n/2-k}(L_2,L_1)$ is at least two-dimensional. If $a_1,a_2$ are linearly independent elements in it, then $a_1 \otimes a_2^{\otimes m-1}$ and its images under the $\bZ/m$-action are all linearly independent elements of \eqref{eq:m-tensor}, which proves the claim.
\end{proof}

%
%

\subsection{The quartic surface}
Before continuing on to our concrete example, we have to agree on criteria for a symplectic automorphism to be trivial from a topological viewpoint.

\begin{definition} \label{th:interesting}
Let $f$ be a symplectic automorphism of a closed symplectic manifold $M$. We say that $f$ is {\em undistinguishable from the identity by topological means} if there is an isotopy from the identity to $f$ inside the diffeomorphism group, with the following additional properties. First, by starting with $Df$ and deforming it along the isotopy, we get an automorphism of the symplectic vector bundle $TM$ (taking each fibre to itself), and we ask that this should be homotopic to the identity in the group of such automorphisms. Secondly, integrating $\omega_M$ along the isotopy yields a flux-type class in $H^1(M;\bR)$, and we also require that this should vanish.
\end{definition}

For the rest of this discussion, we concentrate on the case of smooth quartic surface, as in Example \ref{th:quartic}. There are quartic surfaces with a rational (Kleinian or du Val) singularity of type $(A_3)$.
By smoothing out such a singularity and using Moser's theorem, we see that $K$ contains an $(A_3)$ chain of Lagrangian spheres, which we denote by $(V_1,V_2,V_3)$. Consider the spheres
\begin{equation}
\begin{aligned}
& L_1 = \tau_{V_1}\tau_{V_3}(V_2) = \tau_{V_3}\tau_{V_1}(V_2), \\
& L_2 = \tau_{V_1}^{-1}\tau_{V_3}^{-1}(V_2) = \tau_{V_3}^{-1}\tau_{V_1}^{-1}(V_2).
\end{aligned}
\end{equation}
Figure \ref{fig:quiver1} shows a schematic picture of these Lagrangian submanifolds, in the style of \cite{khovanov-seidel98}.
\begin{figure}
\begin{centering}
\begin{picture}(0,0)%
\includegraphics{quiver1.pstex}%
\end{picture}%
\setlength{\unitlength}{3947sp}%
\begingroup\makeatletter\ifx\SetFigFont\undefined%
\gdef\SetFigFont#1#2#3#4#5{%
  \reset@font\fontsize{#1}{#2pt}%
  \fontfamily{#3}\fontseries{#4}\fontshape{#5}%
  \selectfont}%
\fi\endgroup%
\begin{picture}(1896,1611)(1753,-4175)
\put(3001,-3286){\makebox(0,0)[lb]{\smash{{\SetFigFont{10}{12.0}{\rmdefault}{\mddefault}{\updefault}{\color[rgb]{0,0,0}$L_2$}%
}}}}
\put(3151,-2761){\makebox(0,0)[lb]{\smash{{\SetFigFont{10}{12.0}{\rmdefault}{\mddefault}{\updefault}{\color[rgb]{0,0,0}$V_3$}%
}}}}
\put(2551,-2761){\makebox(0,0)[lb]{\smash{{\SetFigFont{10}{12.0}{\rmdefault}{\mddefault}{\updefault}{\color[rgb]{0,0,0}$V_2$}%
}}}}
\put(1951,-2761){\makebox(0,0)[lb]{\smash{{\SetFigFont{10}{12.0}{\rmdefault}{\mddefault}{\updefault}{\color[rgb]{0,0,0}$V_1$}%
}}}}
\put(3001,-4111){\makebox(0,0)[lb]{\smash{{\SetFigFont{10}{12.0}{\rmdefault}{\mddefault}{\updefault}{\color[rgb]{0,0,0}$L_1$}%
}}}}
\end{picture}%
\caption{\label{fig:quiver1}}
\end{centering}
\end{figure}

\begin{lemma} \label{th:2-twists}
$\tau_{L_1}\tau_{L_2}$ is indistinguishable from the identity by topological means.
\end{lemma}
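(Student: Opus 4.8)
The plan is to exhibit an explicit isotopy from the identity to $\tau_{L_1}\tau_{L_2}$ in the diffeomorphism group, and then verify the two numerical/homotopical conditions of Definition \ref{th:interesting} separately. The starting point is the well-known fact that Dehn twists along a chain of Lagrangian spheres satisfy braid-type relations; in particular, if $(V_1,V_2,V_3)$ is an $(A_3)$ chain, then $\tau_{L_1} = \tau_{V_1}\tau_{V_3}\tau_{V_2}\tau_{V_3}^{-1}\tau_{V_1}^{-1}$ (since a Dehn twist along $\phi(V)$ equals $\phi\tau_V\phi^{-1}$, with $\phi = \tau_{V_1}\tau_{V_3}$ and similarly for $L_2$ with $\phi^{-1}$). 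Substituting these and cancelling $\tau_{V_3},\tau_{V_1}$ commute (their spheres are disjoint), one finds that $\tau_{L_1}\tau_{L_2}$ is conjugate-up-to-isotopy, or even equal in $\pi_0\mathrm{Diff}$, to a product of the form $\tau_{V_1}\tau_{V_3}\tau_{V_2}\tau_{V_2}^{-1}\tau_{V_3}^{-1}\tau_{V_1}^{-1}$ after an appropriate rearrangement --- i.e. the composite is smoothly isotopic to the identity. The cleanest way to see this is that $L_1$ and $L_2$ are related by the symplectomorphism $(\tau_{V_1}\tau_{V_3})^2$ applied to $V_2$, or (better for this argument) that $L_2 = g(L_1)$ where $g = (\tau_{V_1}\tau_{V_3})^{-2}$, so $\tau_{L_2} = g\,\tau_{L_1}\,g^{-1}$; hence $\tau_{L_1}\tau_{L_2} = \tau_{L_1} g \tau_{L_1} g^{-1}$, and one checks via the $(A_3)$ braid relations that this is smoothly (indeed symplectically, up to Hamiltonian isotopy) trivial. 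So the first step is to nail down the precise algebraic identity among the $\tau_{V_i}$ that trivializes the product smoothly, using only that $V_1,V_3$ are disjoint and $V_2$ meets each once.

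The second step is the symplectic-vector-bundle condition. A Dehn twist $\tau_V$ along a Lagrangian sphere has a canonical description near $V$ as the antipodal-type map on the disc cotangent bundle, and the derivative $D\tau_V$, deformed along the defining isotopy, gives a bundle automorphism of $TM$ supported near $V$ which is classified by an element of $\pi_2$ or $\pi_3$ of the relevant structure group; the point is that for a \emph{two}-dimensional sphere in a \emph{four}-manifold this automorphism is the standard ``Dehn twist on $T^*S^2$'' which is known to be trivial in $\pi_0$ of the symplectic gauge group over $M$ once one is allowed to deform it over the whole manifold (this is the standard computation, e.g. the clutching function for $\tau_{S^2}$ on $T^*S^2$ is nullhomotopic in $\mathrm{Sp}(4)$ relative to the behaviour at infinity). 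Since both $\tau_{L_1}$ and $\tau_{L_2}$ are conjugates of such standard twists, their contributions to the bundle automorphism cancel; concretely, writing $\tau_{L_1}\tau_{L_2}$ via the $V_i$ and using multiplicativity of the bundle-automorphism class, one gets a sum of terms indexed by the $\tau_{V_i}$'s which cancel in pairs. I would carry this out by the clutching-function / homotopy-group computation just indicated, invoking $c_1(M)=0$ and the fixed trivialization of $K_M^{-1}$ from Assumption \ref{th:calabi-yau} to make sense of the relevant homotopy classes.

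The third step is the flux condition: the flux of the isotopy lives in $H^1(M;\R)$, and $H^1(K;\R)=0$ since $K$ is a $K3$ surface (simply connected). Hence that obstruction vanishes trivially, and nothing needs to be checked. Collecting the three steps gives Definition \ref{th:interesting}.

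The main obstacle, I expect, is the second step: carefully setting up the deformed derivative $D(\tau_{L_1}\tau_{L_2})$ as a symplectic bundle automorphism and showing it is nullhomotopic. This is where one must be honest about the relative homotopy class of the local model of a Dehn twist and how it extends over $M$, and about why the conjugations by $\tau_{V_1}\tau_{V_3}$ (which are themselves nontrivial symplectomorphisms) do not spoil the cancellation. The smooth-triviality in step one is essentially combinatorial (braid relations for $(A_3)$ chains, which are standard), and step three is vacuous; so all the genuine content sits in controlling the bundle automorphism, and I would expect to quote or adapt the standard facts about the local model of a generalized Dehn twist on $T^*S^2$ and its clutching function in $\mathrm{Sp}(4,\R)$.
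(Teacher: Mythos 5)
Your step 1 and step 3 are broadly on target, but step 2 — which you correctly identify as the crux — is a genuine gap, and the way you propose to fill it doesn't work even in outline.

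On step 1: the smooth isotopy is indeed there, but the paper gets it by a cleaner route than your braid-relation cancellation. Writing $\tau_{L_1} = \tau_{V_1}\tau_{V_3}\tau_{V_2}\tau_{V_3}^{-1}\tau_{V_1}^{-1}$ and $\tau_{L_2}^{-1} = \tau_{V_1}^{-1}\tau_{V_3}^{-1}\tau_{V_2}^{-1}\tau_{V_3}\tau_{V_1}$, and using that for a Lagrangian two-sphere $\tau_V$ and $\tau_V^{-1}$ are smoothly isotopic (replace each $\tau_{V_i}$ by its isotopic inverse), one sees directly that $\tau_{L_1}$ and $\tau_{L_2}^{-1}$ are smoothly isotopic, hence $\tau_{L_1}\tau_{L_2}$ is smoothly isotopic to the identity. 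Your attempted direct cancellation using braid relations is not visibly an identity in $\pi_0\mathrm{Diff}$ and you would have to work considerably harder than you indicate.

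On step 2: your proposed ``clutching-function / homotopy-group'' computation is conceptually off. The individual Dehn twists $\tau_{V_i}$ and $\tau_{L_j}$ are not smoothly isotopic to the identity (they act nontrivially on $H_2$), so they do not individually carry a bundle-automorphism class in the sense of Definition \ref{th:interesting}; there is no multiplicativity that would let you ``cancel in pairs'', and the claim that the clutching function of the model twist on $T^*S^2$ is nullhomotopic in $\mathrm{Sp}(4)$ relative to infinity is unsupported and, for the single twist, not the right object to consider. The paper's proof instead invokes a \emph{fragility} statement drawn from the simultaneous resolution of the $(A_3)$ singularity (\cite{khovanov-seidel98, seidel04b}): there is a deformation $(\omega_{K,r})$ of the symplectic form with $\omega_{K,0} = \omega_K$, and a smooth family $f_r$ with $f_0 = \tau_{L_1}\tau_{L_2}$ and $f_r^*\omega_{K,r} = \omega_{K,r}$, such that for every $r>0$ the map $f_r$ is isotopic to the identity through $\omega_{K,r}$-symplectomorphisms. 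Such a symplectic isotopy hands you the bundle-automorphism nullhomotopy for free (symplectomorphisms have symplectic derivatives), and continuity in $r$ transports the conclusion back to $r = 0$. This global deformation-theoretic input is precisely why the paper chooses $L_1, L_2$ to be the two specific conjugates of $V_2$ in the $(A_3)$ chain; a naive local computation at the spheres, of the kind you sketch, would not see this structure and is not a viable substitute. You need to bring in the simultaneous-resolution / fragility argument, or some equivalent global mechanism, to make step 2 go through.

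Step 3, the flux condition, is as you say vacuous because $H^1(K;\R) = 0$.
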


\begin{proof}
For any Lagrangian two-sphere, the Dehn twist and its inverse are isotopic as diffeomorphisms, hence so are $\tau_{L_1} = \tau_{V_1}\tau_{V_3}\tau_{V_2}\tau_{V_3}^{-1}\tau_{V_1}^{-1}$ and $\tau_{L_2}^{-1} = \tau_{V_1}^{-1}\tau_{V_3}^{-1}\tau_{V_2}^{-1}\tau_{V_3}\tau_{V_1}$. From an analysis of the simultaneous resolution of the $(A_3)$ singularity as in \cite{khovanov-seidel98, seidel04b}, one obtains the following stronger {\em fragility} statement. Take a closed two-form $\beta$ such that $\int_{V_k} \beta \neq 0$ for $k = 1,2,3,$. Then, there is a family of diffeomorphisms $f_r$, defined for small $r \geq 0$ and starting with $f_0 = f$, such that:
\begin{itemize}
\item $f_r$ preserves $\omega_K + r\beta$;
\item for any $r>0$, $f_r$ is isotopic to the identity in the symplectic automorphism group of $(K,\omega_K + r\beta)$. Moreover, the isotopies can be chosen to depend smoothly on $r$.
\end{itemize}
Using such an isotopy from $f_r$ to the identity, one immediately shows that the second part of Definition \ref{th:interesting} holds, and the third part is trivial since $H^1(K;\bR) = 0$.
\end{proof}

A straightforward computation using the relation between Dehn twists and algebraic twists \cite[Corollary 17.17]{seidel04} shows that (for suitable choices of gradings)
\begin{equation}
\mathit{HF}^*(L_1,L_2) = \begin{cases} R^2 & * = 0, \\ R & * = 1,2, \\ 0 & \text{otherwise.} \end{cases}
\end{equation}
This yields a concrete example where Lemma \ref{th:regular-representation} applies. In particular, one sees that $\tau_{L_1}\tau_{L_2}$ has infinite order up to symplectic isotopy (a known result, compare \cite{seidel-thomas99}).

\section{Symplectic mapping tori\label{sec:mapping-tori}}

The symplectic mapping torus construction provides a way of obtaining interesting examples of symplectic manifolds from automorphisms. A complete description of the Fukaya categories of symplectic mapping tori is beyond the aim of this paper (but see Section \ref{subsec:algebraic-model} for some conjectural discussion). Instead, we focus on a particular class of mapping tori, and consider only the most obvious Lagrangian submanifolds, which are fibered over circles in the (two-torus) base. For those submanifolds, ad hoc methods parallel to those in Section \ref{subsec:elliptic} are sufficient to carry out the necessary Floer cohomology computations. Under suitable additional assumptions, this will allow us to show that the Lagrangian isotopy obtained by moving the circle around the base can be encoded into a perfect family.

Concretely, the starting point for our considerations will always be a symplectic $K3$ surface $K$, by which we mean a closed symplectic four-manifold diffeomorphic to a $K3$ surface. Recall that this is simply connected, admits a perfect Morse function (one without critical points of index $1$ or $3$) \cite{harer-kas-kirby}, and is {\em Spin}. The symplectic structure necessarily has $c_1(K) = 0$ \cite{taubes94}, and we choose a trivialization of the anticanonical line bundle in the unique homotopy class. We also suppose that a symplectic automorphism $f \in \mathit{Aut}(K)$ is given which has nondegenerate fixed points as well as nondegenerate $2$-periodic points, and which is indistinguishable from the identity by topological means (Definition \ref{th:interesting}). As part of the last-mentioned condition, $f$ is isotopic to the identity in $\mathit{Diff}(K)$, and we fix such an isotopy, as well as a grading of $f$. 


\subsection{Basic geometry\label{subsec:basic-mapping-torus}}
Consider $K^- \times K$, where the sign of the symplectic form is reversed on the first factor, as in Example \ref{th:graph}. The symplectic mapping torus of $f \times f \in \mathit{Aut}(K^- \times K)$, which we denote by $E = E_f$, is
\begin{equation} \label{eq:symplectic-mapping-torus}
\begin{aligned}
& E = \bR \times \bR \times K \times K\;\; / \;\; (p,q,x,y) \sim (p,q-1,x,y) \sim (p-1,q,f(x),f(y)), \\
& \omega_E = dp \wedge dq - \omega_K(x) + \omega_K(y).
\end{aligned}
\end{equation}
By definition, projection $\pi: E \rightarrow T = \bR^2/\bZ^2$ is a locally trivial Hamiltonian fibration, with monodromy $f \times f$ in $p$-direction, and trivial monodromy in $q$-direction. Our assumptions on $f$ ensure that $E$ is diffeomorphic to $T \times K \times K$, in a way which is compatible with the homotopy classes of almost complex structures, and which maps $[\omega_E]$ to $[dp \wedge dq] \times 1 + 1 \times [\omega_{K^- \times K}] \in H^2(T \times K \times K;\bR)$. Moreover, the grading of $f$ yields a trivialization of the anticanonical line bundle of $E$.

\begin{remark}
As a symplectic fibration over a surface, $E$ is an object of the TCFT with target space $K^- \times K$ discussed in Remark \ref{th:tcft}. Hence, there is an associated numerical invariant (a priori an element of $R$, but which will actually turn out to be an integer) counting its pseudo-holomorphic sections. This can be computed in two different ways. On one hand, in terms of \eqref{eq:ordinary-mapping-torus}, $E$ is obtained by gluing together the two boundary components of $Z_{f \times f} \times [0,1]$ in the trivial way, which means that the numerical invariant is the Euler characteristic of $\mathit{HF}^*(f \times f)$. On the other hand, one can think it as $[0,1] \times S^1 \times K^- \times K$ with both ends glued together using a twist by $f \times f$, in which case the numerical invariant is the supertrace of the action of $f \times f$ on $H^*(K^- \times K;R)$. Both ways of course yield the same result, namely the square of the Lefschetz fixed point number of $f$.
\end{remark}

Consider the following Lagrangian submanifolds in $E$:
\begin{equation} \label{eq:lagrangians}
\begin{aligned}
& \Delta_1 = \{q = 0, \;\; y = x\}, \\
& \Delta_2 = \{q = -2p, \;\; y = x\}, \\
& \Delta_{3,u} = \{p = m_0, \;\; y = x \}. \\
\end{aligned}
\end{equation}
In the last line, the parameter $u \in R^\times$ is written as $u = \hbar^{m_0}a$ with $a \in \mathit{GL}_0(1,R)$. All Lagrangian submanifolds in \eqref{eq:lagrangians} fibre over loops in $T$, with fibre $K$. These fibrations are actually trivial (tautologically so for $\Delta_{3,u}$, and by using the isotopy $f \htp \mathit{id}$ in the other cases). In particular, we can choose the product of the trivial {\em Spin} structures on the underlying loop and the unique {\em Spin} structure on $K$. Moreover, our Lagrangian submanifolds also admit gradings (we make a particular choice of gradings, which will become clear in the Floer cohomology formulas below). On $\Delta_{3,u}$ we use the local system $\xi_u$, pulled back from the underlying loop $\{m_0\} \times S^1 \subset T$, which has fibre $R$ and holonomy $a$ in positive $q$-direction. The other two submanifolds carry trivial local systems. The last ingredient needed in order to turn them into objects of the Fukaya category is a choice of almost complex structures as in Assumption \ref{th:adiscic}. Choose a one-parameter family $(J_{f,t})$ of almost complex structures on $K$ as in \eqref{eq:j-periodicity}, with the additional (generic, for dimension reasons) property that there are no non-constant $J_{f,t}$-holomorphic spheres for any $t$. We will use the same almost complex structure on $E$ for all three Lagrangian submanifolds \eqref{eq:lagrangians}:
\begin{equation} \label{eq:j-delta}
(J_\Delta)_{p,q,x,y} = i \times (-J_{f,p+1/2,x}) \times J_{f,p+1/2,y}.
\end{equation}
Projection to $T$ is $(J_\Delta,i)$-holomorphic, hence there are no non-constant $J_\Delta$-holomorphic spheres. Similarly,

\begin{lemma} \label{th:project}
There are no non-constant $J_\Delta$-holomorphic discs with boundary on any one of the submanifolds \eqref{eq:lagrangians}.
\end{lemma}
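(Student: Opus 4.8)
The plan is to use the projection $\pi: E \to T$ to reduce the statement to the corresponding triviality of holomorphic discs and spheres on the $K$-factors. The key point is that $J_\Delta$ from \eqref{eq:j-delta} is chosen so that $\pi$ is $(J_\Delta,i)$-holomorphic; hence for any $J_\Delta$-holomorphic disc $w: (D,\partial D) \to (E, \Delta)$ (where $\Delta$ is any one of the three submanifolds in \eqref{eq:lagrangians}), the composition $\pi \circ w: (D, \partial D) \to (T, \ell)$ is an $i$-holomorphic disc with boundary on the corresponding loop $\ell \subset T$ (either $\{q=0\}$, $\{q=-2p\}$, or $\{p=m_0\}$). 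Each such loop is a non-contractible simple closed geodesic on the flat torus, and there are no non-constant holomorphic discs with boundary on it (this is elementary: lift to the universal cover $\R^2$, where the loop lifts to a straight line, and a bounded holomorphic function on $D$ with boundary values in a line is constant by the maximum principle applied to a suitable linear functional). Therefore $\pi \circ w$ is constant, i.e. $w$ maps into a single fibre $\pi^{-1}(\mathrm{pt}) \cong K^- \times K$.

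First I would make the fibre identification precise: over the point of $T$ in question, the fibre is symplectomorphic to $K^- \times K$ with the complex structure $(-J_{f,p+1/2}) \times J_{f,p+1/2}$, and $\Delta$ restricts there to the graph of a suitable power of $f$ composed with the identification used to trivialize the fibration — in all three cases the restriction is (a translate of) the diagonal $\Delta_{K} = \{y = x\} \subset K^- \times K$, or more precisely the graph $\{y = f^k(x)\}$ for some $k \in \Z$, which is Lagrangian and isotopic to the diagonal. So $w$ becomes a holomorphic disc in $K^- \times K$ with boundary on such a graph. Next, following the mechanism in Example \ref{th:graph} (the ``rolling up'' construction \eqref{eq:roll-up}), a holomorphic disc with boundary on the graph of $f^k$ for the split complex structure $(-J) \times J$ corresponds to a $J$-holomorphic sphere $\C P^1 \to K$. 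Since $(J_{f,t})$ was chosen generically so that there are no non-constant $J_{f,t}$-holomorphic spheres for any $t$, this sphere is constant, hence $w$ is constant. The same argument, run with the sphere case directly, shows there are no non-constant $J_\Delta$-holomorphic spheres in $E$ (this was already noted in the text just before the lemma, and also follows because $\pi$ is holomorphic and $T$ carries no non-constant holomorphic spheres).

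The remaining bookkeeping is: (i) checking that the boundary condition really is a graph of a power of $f$ — this is where one uses that the trivialization of the fibration $\Delta_i \to \ell_i$ is built from the chosen isotopy $f \simeq \mathrm{id}$, so on a single fibre the Lagrangian is literally a graph; (ii) handling the disc whose boundary loop wraps the base loop $\ell$ several times, which is automatically excluded since $\pi \circ w$ must be constant; and (iii) the removable-singularity/finite-energy points in the reduction of Example \ref{th:graph}, which are standard. None of these presents a genuine difficulty. The main obstacle, such as it is, is simply being careful that the split form of $J_\Delta$ on each fibre matches the hypothesis on $(J_{f,t})$ at the correct value of the $p$-coordinate, namely $p = m_0$ or the relevant value along $\{q=0\}$ and $\{q=-2p\}$; since $J_\Delta$ at fibre coordinate $p$ uses $J_{f,p+1/2}$, and the genericity hypothesis is imposed for \emph{all} $t$, this matching is automatic. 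Hence no non-constant $J_\Delta$-holomorphic disc with boundary on any of the $\Delta_i$ exists, proving the lemma; in particular the $\Delta_i$ satisfy Assumption \ref{th:adiscic} with the empty ``bad'' subset.
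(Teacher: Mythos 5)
Your approach is the same as the paper's: project to $T$ via the $(J_\Delta,i)$-holomorphic map $\pi$, conclude that the disc lives in a fibre, identify the boundary condition there, and then double the fibre disc to a sphere in $K$ which must be constant by genericity of $(J_{f,t})$. The paper's proof is exactly this, stated in three sentences.

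There is, however, one concrete error worth flagging, even though it does not end up wrecking the argument. You write that in a fibre the restriction of $\Delta_i$ is ``(a translate of) the diagonal $\{y=x\}$, or more precisely the graph $\{y=f^k(x)\}$ for some $k\in\Z$'', and you cite the isotopy $f\simeq\mathrm{id}$ as what makes the boundary condition a graph. This is backwards, and it would actually matter: the doubling trick from Example~\ref{th:graph} only converts a disc with boundary on the \emph{diagonal} into a $J$-holomorphic sphere. A disc with boundary on the graph of $f^k$ with $k\neq 0$ doubles, after the removable-singularity step, to a solution of the fixed-point Floer equation \eqref{eq:closed-floer} for $f^k$ --- a strip with twisted periodicity $u(s,t-1)=f^k(u(s,t))$, not a sphere --- and such solutions can perfectly well be non-constant (they are the generators and differentials of $\mathit{HF}^*(f^k)$). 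In fact, no isotopy or trivialization enters at all: every one of the three submanifolds in \eqref{eq:lagrangians} is defined by the literal equation $y=x$, and this equation is preserved by the gluing $(p,q,x,y)\sim(p-1,q,f(x),f(y))$ because $f\times f$ preserves the diagonal. So the restriction to every fibre is exactly $\{y=x\}\subset K^-\times K$, no $k$ and no translate. With that correction the rest of your argument is the paper's proof, and the final conclusion (no non-constant discs, so Assumption~\ref{th:adiscic} holds with the empty bad locus) is right.
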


\begin{proof}
By projecting to $T$, one sees that any disc must be contained in a fibre. There, it is a map $(D,\partial D) \rightarrow K \times K$ which is holomorphic for $(-J_{f,p+1/2}) \times J_{f,p+1/2}$, and has boundary on the diagonal. By the doubling trick already mentioned in Example \ref{th:graph}, such discs correspond to $J_{f,p+1/2}$-holomorphic spheres in $K$.
\end{proof}

As a consequence, the Floer cohomology of each of our submanifolds with itself is canonically isomorphic to its ordinary cohomology. The other Floer cohomology groups are:
\begin{align}
& \mathit{HF}^*(\Delta_1,\Delta_2) \iso H^*(K;R) \oplus H^*(K;R), \label{eq:two-points} \\
& \mathit{HF}^*(\Delta_2,\Delta_{3,u}) \iso (\xi_u)_{(m_0,-2m_0)} \otimes H^*(K;R), \\
& \mathit{HF}^*(\Delta_1,\Delta_{3,u}) \iso (\xi_u)_{(m_0,0)} \otimes H^*(K;R). \label{eq:one-point-hf}
\end{align}
Dually, one can write
\begin{align}
& \mathit{HF}^*(\Delta_2,\Delta_1) \iso H^*(K;R)[-1] \oplus H^*(K;R)[-1], \label{eq:dual-two-point} \\
& \mathit{HF}^*(\Delta_{3,u},\Delta_2) \iso (\xi_u)_{(m_0,-2m_0)}^\vee \otimes H^*(K;R)[-1], \\
& \mathit{HF}^*(\Delta_{3,u},\Delta_1) \iso (\xi_u)_{(m_0,0)}^\vee \otimes H^*(K;R)[-1]. \label{eq:dual-point}
\end{align}
The proofs of these isomorphisms are straightforward in the Morse-Bott formalism from Section \ref{subsec:clean}. For instance, consider \eqref{eq:two-points}. One takes 
\begin{equation} \label{eq:constant-family}
J_{\Delta_1,\Delta_2,t} = J_\Delta
\end{equation}
to be the constant family, in which case the same argument as in Lemma \ref{th:project} shows that there are no non-constant holomorphic strips, immediately reducing the situation to ordinary Morse theory on $\Delta_1 \cap \Delta_2$ (to be precise, one has to check that the local coefficient system $o_C$ on each component $C \subset \Delta_1 \cap \Delta_2$ is trivial; but that is clear since locally near $C$, the geometry splits as a product of base and fibre). The remaining isomorphisms are proved in exactly the same way.

\begin{remark} \label{th:independent-2}
Generalizing \eqref{eq:j-delta}, consider almost complex structures on $E$ of the form
\begin{equation} \label{eq:j-delta-theta}
(J_{\Delta,\theta})_{p,q,x,y} = i \times (-J_{f,\theta(p,q),x}) \times J_{f,\theta(p,q),y},
\end{equation}
where $\theta: \bR \times S^1 \rightarrow \bR$ satisfies $\theta(p+1,q) = \theta(p,q)+1$. If we take any one of the Lagrangian submanifolds introduced above and equip it with some $J_\theta$, it becomes an object of $\mathit{Fuk}(E)$. Different choices of $\theta$ lead to canonically quasi-isomorphic objects: one sees this by constructing Piunikhin-Salamon-Schwarz (PSS) elements in Floer cohomology \cite[Lemma 8.11]{seidel03b}. The key ingredient is the fact that the space of functions $\theta$ parametrizing the almost complex structure \eqref{eq:j-delta-theta} is connected.

For concreteness, consider the pair $(\Delta_1,\Delta_2)$ and, instead of \eqref{eq:constant-family}, equip it with the constant family of almost complex structures 
\begin{equation} \label{eq:constant-j-theta}
J_{\Delta_1,\Delta_2,\theta, t} = J_{\Delta,\theta}.
\end{equation}
This is leads to a Floer cohomology group which we temporarily denote by $\mathit{HF}^*(\Delta_1,\Delta_2)_\theta$. There are no nontrivial holomorphic strips for \eqref{eq:constant-j-theta}, hence the Morse-Bott approach yields an isomorphism parallel to \eqref{eq:two-points}:
\begin{equation}
\mathit{HF}^*(\Delta_1,\Delta_2)_\theta \iso H^*(K;R) \oplus H^*(K;R). \label{eq:two-points-theta}
\end{equation}
On the other hand, the previous PSS argument yields a canonical isomorphism $\mathit{HF}^*(\Delta_1,\Delta_2)_\theta \iso \mathit{HF}^*(\Delta_1,\Delta_2)$. Moreover, a parametrized moduli space argument shows that this isomorphism, \eqref{eq:two-points}, and \eqref{eq:two-points-theta} form a commutative diagram. In a little less precise language, one can summarize this by saying that the isomorphism \eqref{eq:two-points-theta} is independent of $\theta$. The same applies to the other Floer cohomology groups computed above.
\end{remark}

Some of the products on Floer cohomology are also elementary, meaning that they involve no actual count of nontrivial holomorphic maps. For instance, consider
\begin{equation} \label{eq:trivial-products}
\begin{aligned}
& \mathit{HF}^*(\Delta_2,\Delta_2) \otimes \mathit{HF}^*(\Delta_1,\Delta_2) \longrightarrow \mathit{HF}^*(\Delta_1,\Delta_2), \\
& \mathit{HF}^*(\Delta_1,\Delta_1) \otimes \mathit{HF}^*(\Delta_1,\Delta_2) \longrightarrow \mathit{HF}^*(\Delta_1,\Delta_2).
\end{aligned}
\end{equation}
Each of these turns out to be the action of $H^*(\Delta_k;R)$ on $H^*(\Delta_1 \cap \Delta_2;R)$ by restriction and cup-product. The proof again uses a constant family of almost complex structures equal to $J_\Delta$, a suitable choice of Morse functions on $\Delta_k$, and the arguments from Lemma \ref{th:project}; we omit the details. Note that by the cyclic symmetry of the product, this also determines
\begin{align}
& \mathit{HF}^*(\Delta_1,\Delta_1) \otimes \mathit{HF}^*(\Delta_2,\Delta_1) \longrightarrow \mathit{HF}^*(\Delta_2,\Delta_1), \\
& \mathit{HF}^*(\Delta_2,\Delta_1) \otimes \mathit{HF}^*(\Delta_2,\Delta_2) \longrightarrow \mathit{HF}^*(\Delta_2,\Delta_1), \\
& \mathit{HF}^*(\Delta_2,\Delta_1) \otimes \mathit{HF}^*(\Delta_1,\Delta_2) \longrightarrow \mathit{HF}^*(\Delta_1,\Delta_1), \\
& \mathit{HF}^*(\Delta_1,\Delta_2) \otimes \mathit{HF}^*(\Delta_2,\Delta_1) \longrightarrow \mathit{HF}^*(\Delta_2,\Delta_2).
\end{align}

We conclude this preliminary discussion by introducing low-degree generators analogous to those in Section \ref{subsec:twotorus}, namely
\begin{equation}
\begin{aligned} 
& \mathit{HF}^0(\Delta_1,\Delta_2) = R \cdot [w_1] \oplus R \cdot [w_2], \\
& \mathit{HF}^1(\Delta_2,\Delta_1) = R \cdot [w_3] \oplus R \cdot [w_4], \\
& \mathit{HF}^0(\Delta_1,\Delta_{3,u}) = R \cdot [z_{1,u}], \\
& \mathit{HF}^1(\Delta_{3,u},\Delta_1) = R \cdot [y_{1,u}], \\
& \mathit{HF}^0(\Delta_2,\Delta_{3,u}) = R \cdot [z_{2,u}], \\
& \mathit{HF}^1(\Delta_{3,u},\Delta_2) = R \cdot [y_{2,u}]
\end{aligned}
\end{equation}
(the notation $[w_1]$ indicates the cohomology class for some underlying choice of cocycle $w_1$, and we've inserted the dots to avoid confusion with polynomial rings). As before, $[w_1]$ and $[-w_2]$ correspond to the class $1 \in H^0(K;R)$ under \eqref{eq:two-points}, for the components lying over $(\half,0)$ and $(0,0)$, respectively; $[z_{1,u}]$ and $[z_{2,u}]$ are defined using \eqref{eq:xi-trivializations}; and the other generators are fixed in such a way that the products
\begin{equation} \label{eq:delta-dual-generators}
\begin{aligned}
& [w_4] \cdot [w_1] \in \mathit{HF}^1(\Delta_1,\Delta_1), \\
& [w_1] \cdot [w_4] \in \mathit{HF}^1(\Delta_2,\Delta_2), \\
& -[w_2] \cdot [w_3] \in \mathit{HF}^1(\Delta_2,\Delta_2), \\
& -[w_3] \cdot [w_2] \in \mathit{HF}^1(\Delta_1,\Delta_1), \\
\end{aligned}
\qquad
\begin{aligned}
& [y_{1,u}] \cdot [z_{1,u}] \in \mathit{HF}^1(\Delta_1,\Delta_1), \\
& [z_{1,u}] \cdot [y_{1,u}] \in \mathit{HF}^1(\Delta_{3,u},\Delta_{3,u}), \\
& [y_{2,u}] \cdot [z_{2,u}] \in \mathit{HF}^1(\Delta_2,\Delta_2), \\
& [z_{2,u}] \cdot [y_{2,u}] \in \mathit{HF}^1(\Delta_{3,u},\Delta_{3,u})
\end{aligned}
\end{equation}
all yield the generator of $H^1(S^1 \times K;R) \iso H^1(S^1;R)$ obtained by orienting the underlying loops in $T^2$ as in Section \ref{subsec:twotorus}. Moreover, from our computation of \eqref{eq:trivial-products} it follows that $[w_1] \cdot [w_3]$, $[w_3] \cdot [w_1]$, $[w_2] \cdot [w_4]$ and $[w_4] \cdot [w_2]$ vanish. Hence,

\begin{lemma} \label{th:q-cohomologically}
The subspace of $\bigoplus_{i,j=1}^2 \mathit{HF}^*(\Delta_i,\Delta_j)$ consisting of elements of degree $\leq 1$ is a subalgebra, and in fact isomorphic to the algebra $Q$ from Definition \ref{th:q-algebra}. \qed
\end{lemma}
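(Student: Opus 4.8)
The plan is to identify the low-degree part of $\bigoplus_{i,j=1}^2 \mathit{HF}^*(\Delta_i,\Delta_j)$ with $Q$ by exhibiting an explicit basis and checking that the products match the defining relations \eqref{eq:quadratic-relations}. First I would record that, by the isomorphisms \eqref{eq:two-points}--\eqref{eq:dual-point} restricted to degrees $\leq 1$, the relevant graded vector space has exactly the right dimensions: $\mathit{HF}^0(\Delta_i,\Delta_i) = R\cdot e_{\Delta_i}$, $\mathit{HF}^1(\Delta_i,\Delta_i) \iso H^1(S^1 \times K;R) \iso R$ (using that $K$ is simply connected, so only the $S^1$-factor contributes in degree $1$), $\mathit{HF}^0(\Delta_1,\Delta_2) = R\cdot[w_1] \oplus R\cdot[w_2]$, $\mathit{HF}^0(\Delta_2,\Delta_1) = 0$, $\mathit{HF}^1(\Delta_2,\Delta_1) = R\cdot[w_3]\oplus R\cdot[w_4]$, and $\mathit{HF}^1(\Delta_1,\Delta_2) = 0$. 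This matches the path algebra of the quiver \eqref{eq:quiver} truncated in path length $\leq 1$, together with the two length-$2$ classes $q_i \in \mathit{HF}^1(\Delta_i,\Delta_i)$.

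Next I would verify closure under the product. Any product of two elements of degree $\leq 1$ either lands in degree $\leq 1$, or lands in degree $2$; in the latter case (degree $1$ times degree $1$) the target is $\mathit{HF}^2(\Delta_i,\Delta_j)$. For $i = j$ this is $H^2(S^1\times K;R)$, which is nonzero, but the only products reaching degree $2$ from the specified generators would be $(\text{degree }1)\cdot(\text{degree }1)$ within the endomorphism algebra, and the degree-$1$ endomorphism space is one-dimensional spanned by $q_i$; I would check $q_i \cdot q_i = 0$ (it has degree $2$, and $q_i$ is pulled back from $S^1$, where $q^2 = 0$ in $H^*(S^1)$ — more precisely this follows from the computation of \eqref{eq:trivial-products} as restriction-and-cup-product, since $q_i$ restricts to the top class of $S^1$ and its self-cup-product vanishes). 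Similarly the mixed products $[w_k]\cdot[w_l]$ producing degree $2$ in $\mathit{HF}^*(\Delta_i,\Delta_i)$ are handled by the stated vanishing $[w_1]\cdot[w_3] = [w_3]\cdot[w_1] = [w_2]\cdot[w_4] = [w_4]\cdot[w_2] = 0$, which follows from our computation of \eqref{eq:trivial-products}. Hence the degree $\leq 1$ part is a subalgebra.

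Then I would check the three relations in \eqref{eq:quadratic-relations} hold on the nose. This is where the normalization of the generators in \eqref{eq:delta-dual-generators} does the work: the four products $[w_4]\cdot[w_1]$, $[w_1]\cdot[w_4]$, $-[w_2]\cdot[w_3]$, $-[w_3]\cdot[w_2]$ are all declared to equal the standard generator of $H^1$ of the underlying loop (the same $q_i$), so $w_3w_1 = w_4w_2 = 0$ comes from the vanishing products above, while $w_3w_2 + w_4w_1 = 0$ and $w_1w_4 + w_2w_3 = 0$ follow by matching both terms to $\mp q_i$ with the signs dictated by the $-[w_2]$, $-[w_3]$ conventions. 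The argument is essentially identical to Lemma \ref{th:product} in the two-torus case, with the extra $K$-factor contributing nothing in the degrees that matter because $K$ is $1$-connected and the relevant cup products factor through $H^*(S^1)$.

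\textbf{Main obstacle.} The only genuinely non-formal input is that the four ``dual generator'' products in the second column of \eqref{eq:delta-dual-generators} and the analogous products involving $\Delta_{3,u}$ all equal the same $H^1$-generator, and that the cross terms vanish — but these were already established in the paragraph computing \eqref{eq:trivial-products} via the constant-family argument and Lemma \ref{th:project}, so here it is bookkeeping. The subtlety worth a sentence is the sign matching for $w_3w_2 + w_4w_1 = 0$: one must track the convention-induced sign in $-[w_2]$, $-[w_3]$ together with the $A_\infty$ sign conventions of \cite[Section 13]{seidel04}, exactly as in the proof of Lemma \ref{th:product}; I would simply invoke that parallel rather than redo it.
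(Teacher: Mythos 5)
Your proposal is correct and follows essentially the same route the paper intends: the lemma is stated with no separate proof precisely because it is a direct assembly of \eqref{eq:two-points}--\eqref{eq:dual-point}, the normalizations in \eqref{eq:delta-dual-generators}, and the vanishing of the cross products, with simple-connectedness of $K$ forcing the degree bounds. One small misstatement: the products $[w_1]\cdot[w_3]$, $[w_3]\cdot[w_1]$, $[w_2]\cdot[w_4]$, $[w_4]\cdot[w_2]$ have degree $1$ (a degree-$0$ generator times a degree-$1$ generator), not degree $2$, so their vanishing is needed for the match with the relations $w_3w_1 = w_4w_2 = 0$ (and the implicit $w_1w_3 = w_2w_4 = 0$ from \eqref{eq:identifications}) rather than for the subalgebra closure; the genuinely degree-$2$ products such as $q_i \cdot q_i$ and $\mu^2(w_k,q_2)$, $\mu^2(q_1,w_k)$ all land in groups that either vanish by inspection of $H^*(S^1\times K)$ or are zero by \eqref{eq:dual-two-point}, which is the content the closure step actually needs.
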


\subsection{Counting triangles\label{subsec:flat-triangles}}
In parallel with our original discussion of the two-torus, we will also need to determine parts of the $A_\infty$-structure which do involve counting holomorphic curves. First of all, we need the counterpart of \eqref{eq:theta-product-1}, which computes the product
\begin{equation} \label{eq:delta-product}
\begin{aligned}
& \mathit{HF}^0(\Delta_2, \Delta_{3,u}) \otimes \mathit{HF}^0(\Delta_1,\Delta_2) \longrightarrow \mathit{HF}^0(\Delta_1, \Delta_{3,u}), \\
& [z_{2,u}] \cdot [w_1] = \vartheta_{2,1}(u) [z_{1,u}], \\
& [z_{2,u}] \cdot [w_2] = -\vartheta_{2,2}(u)[z_{1,u}].
\end{aligned}
\end{equation}
One can use associativity and \eqref{eq:delta-dual-generators} to derive two more products from this, namely
\begin{equation} \label{eq:delta-product-2}
\begin{aligned}
& \mathit{HF}^0(\Delta_1,\Delta_2) \otimes \mathit{HF}^1(\Delta_{3,u},\Delta_1) \longrightarrow \mathit{HF}^1(\Delta_{3,u},\Delta_2), \\
& [w_1] \cdot [y_{1,u}] = \vartheta_{2,1}(u) [y_{2,u}], \\
& [w_2] \cdot [y_{1,u}] = -\vartheta_{2,2}(u) [y_{2,u}],
\end{aligned}
\end{equation}
and
\begin{equation} \label{eq:delta-product-3}
\begin{aligned}
& \mathit{HF}^1(\Delta_{3,u},\Delta_1) \otimes \mathit{HF}^0(\Delta_2,\Delta_{3,u}) \longrightarrow \mathit{HF}^1(\Delta_2,\Delta_1), \\
& [y_{1,u}] \cdot [z_{2,u}] = \vartheta_{2,2}(u)[w_3] + \vartheta_{2,1}(u)[w_4],
\end{aligned}
\end{equation}
which are the analogues of \eqref{eq:cyclic-mu2} and \eqref{eq:more-products}, respectively. Next consider
\begin{equation}
x = \vartheta_{2,2}(u)[w_1] + \vartheta_{2,1}(u)[w_2] \in \mathit{HF}^0(\Delta_1,\Delta_2).
\end{equation}
We know from \eqref{eq:delta-product} that $[z_{2,u}] \cdot x = 0$, and from \eqref{eq:delta-product-2} that $x \cdot [y_{1,u}] = 0$. Therefore one can form the Massey product $\langle [z_{2,u}], x, [y_{1,u}] \rangle$ (see \cite[Remark 1.2]{seidel04} for the sign conventions in effect here). Generally speaking, such a product takes values in the quotient of $\mathit{HF}^0(\Delta_{3,u},\Delta_{3,u})$ by the two subspaces $[z_{2,u}] \cdot \mathit{HF}^0(\Delta_{3,u},\Delta_2)$ and $\mathit{HF}^{-1}(\Delta_1,\Delta_{3,u}) \cdot [y_{1,u}]$, but both vanish in our case, leading to a strictly well-defined Massey product, which we will show to be the following multiple of the identity class $[e_{3,u}]$:
\begin{equation} \label{eq:delta-massey}
\langle [z_{2,u}], x, [y_{1,u}] \rangle = u(\vartheta_{2,2}'(u)\vartheta_{2,1}(u) - \vartheta_{2,1}'(u)\vartheta_{2,2}(u)) [e_{3,u}].
\end{equation}


To simplify the computation, one can arrange things so that that different homotopy classes of holomorphic triangles can be counted separately. Let's introduce a new formal variable $\epsilon$, which means that we use a version $R_\epsilon$ of \eqref{eq:novikov-field} where the coefficients $c_k$ are allowed to lie in $\bC[\epsilon]$. Mark a point $\ast \in T^2$ such that the fibre $\pi^{-1}(\ast)$ is disjoint from the Lagrangian submanifolds under consideration. We can then construct a version of the Fukaya category relative to that fibre, where the $\epsilon^r$ term in $\mu^d$ comes from pseudo-holomorphic maps which have intersection number $r$ with $\pi^{-1}(\ast)$. In order for this to be always $\geq 0$, the almost complex structures have to be such that $\pi^{-1}(\ast)$ is an almost complex submanifold, and the inhomogeneous terms should vanish quadratically near that fibre; both assumptions are unproblematic as far as transversality is concerned. The outcome is an $A_\infty$-category over $R_\epsilon$, whose specialization to $\epsilon = 1$ recovers the relevant part of the Fukaya category. Note that the choices we have used to define Floer cohomology groups already satisfy those assumptions; hence, \eqref{eq:two-points}--\eqref{eq:dual-point} remain valid for the larger coefficient field. The same applies to more generally to the almost complex structures \eqref{eq:j-delta-theta}.
\begin{figure}
\begin{centering}
\includegraphics{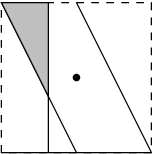}
\caption{\label{fig:abcd-more}}
\end{centering}
\end{figure}

Suppose for concreteness that $-1/2 < m_0 < 1/2$ (the remaining case can be dealt with in a similar way), and choose $\ast = (1/2,1/2)$. To define the product \eqref{eq:delta-product} one should choose a family of almost complex structures $(J_z)$, where the parameter $z$ is in the three-punctured disc, as well as an inhomogeneous term. Suppose that we set the inhomogeneous term to zero, and use a family almost complex structures in the class \eqref{eq:j-delta-theta}, which is locally constant outside a compact subset of the parameter space $z$. In addition, we want our family to have the following property: if we trivialize the part of $\pi$ lying over the triangle in Figure \ref{fig:abcd-more}, then in that trivialization $J_z = i \times (-J_{f,\theta_\ast}) \times J_{f,\theta_\ast}$, where $\theta_\ast$ is constant (independent of $z$ and of the point $(p,q)$ in the triangle).

Let's consider only the $\epsilon^0$ term of (the $R_\epsilon$-linear counterpart of) the second line in \eqref{eq:delta-product}. Because $\pi$ is pseudo-holomorphic, all contributions come from pseudo-holomorphic maps which project to the triangle from Figure \ref{fig:abcd-more}. Those correspond bijectively to $(-J_{f,\theta_\ast}) \times J_{f,\theta_\ast}$-holomorphic maps from a three-punctured disc to $K \times K$, with boundary on the diagonal. But by the same argument as in Lemma \ref{th:project}, such maps are necessarily constant. This determines the moduli space, and simultaneously shows that it is regular, yielding a contribution of $-1 = -\hbar^0\epsilon^0$ to the product (the sign is a consequence of the convention used when defining $w_2$). This contribution then remains the same under small perturbations of the auxiliary choices , so the potential lack of regularity of the higher $\epsilon^k$ spaces is not an issue. 

On the face of it, this argument would seem to fail in general, since it relied on the fact that the triangle in $T$ was embedded in order to construct the desired almost complex structure. However, one can reduce the computation for any power $\epsilon^r$ to the same kind of situation, by passing to a sufficiently large finite cover of $E$ (which depends on $r$) and using the additional freedom to change the almost complex structure there, breaking its symmetry under the covering group. Formally, this means we are using an $\epsilon$-enhanced version of the pullback from Addendum \ref{th:pullback-functor}, but remaining on the cohomology level. This allows us to easily compute the following products in the $\epsilon$-enhanced framework:
\begin{equation} \label{eq:epsilon-product}
\begin{aligned}
& [z_{2,u}] \cdot [w_1] = \epsilon^{-1/4} \vartheta_{2,1}(u)_{\hbar \mapsto \hbar\epsilon} \, [z_{1,u}], \\
& [z_{2,u}] \cdot [w_2] = -\vartheta_{2,2}(u)_{\hbar \mapsto \hbar\epsilon}\, [z_{1,u}],
\end{aligned}
\end{equation}
where $\hbar \mapsto \hbar\epsilon$ is a substitution of variables, and dividing by $\epsilon^{1/4}$ in the first line keeps the powers of $\epsilon$ integral. On the other hand, setting $\epsilon=1$ recovers our previous situation by definition, which concludes our proof of \eqref{eq:delta-product}. The strategy for \eqref{eq:delta-massey} is similar. One first passes to the $\epsilon$-analogue $\langle [z_{2,u}], \vartheta_{2,2}(u)_{\hbar \mapsto \hbar\epsilon} [w_1] + \epsilon^{-1/4}\vartheta_{2,1}(u)_{\hbar \mapsto \hbar\epsilon}[w_2], [y_{1,u}] \rangle$, which is well-defined as a consequence of \eqref{eq:epsilon-product}. For any power $\epsilon^r$, only finitely many homotopy classes can contribute, and after passing to a suitable finite cover one again has only constant maps in fibre direction, which means that the answer is the same as for the two-torus itself, where one can derive it from \eqref{eq:mu3}.

Suppose now that $u \notin \{\pm \hbar^{k/2} \, : \, k \in \bZ\}$. The computations above imply that the following is an exact triangle in $H^0(\mathit{Fuk}(E)^{\mathit{tw}})$:
\begin{equation}
\label{eq:m-triangle}
\xymatrix{\Delta_1 \ar[rrrr]^-{\frac{\vartheta_{2,2}(u)[w_1] + \vartheta_{2,1}(u)[w_2]}{\vartheta_{4,3}'(1)(\vartheta_{4,1}(u) - \vartheta_{4,3}(u))}} &&&& \Delta_2 \ar[dll]^{\;\;\;\;\;\;\; ([z_{2,u}], [z_{2,u^{-1}}])} \\
&& \Delta_{3,u} \oplus \Delta_{3,u^{-1}} \ar[ull]_{[1]}^{([y_{1,u}], -[y_{1,u^{-1}}])\;\;\;\;\;\;\;} &&
}
\end{equation}
To spell this out a little more, one first proceeds as in Lemma \ref{th:exact-triangle} to show that $\Delta_{3,u} \oplus \Delta_{3,u^{-1}}$ is a direct summand of the mapping cone of the horizontal map in \eqref{eq:m-triangle}. Temporarily denote that cone by $\tilde{C}$. There is a spectral sequence converging to $H^*(\mathit{hom}_{\mathit{Fuk}(E)^{\mathit{tw}}}(\tilde{C},\tilde{C}))$, whose starting page is
\begin{equation} \label{eq:cone-spectral-sequence}
E_1^{pq} = \begin{cases} \mathit{HF}^q(\Delta_2,\Delta_1) & p = -1, \\
\mathit{HF}^q(\Delta_1,\Delta_1) \oplus \mathit{HF}^q(\Delta_2,\Delta_2) & p = 0, \\
\mathit{HF}^q(\Delta_1,\Delta_2) & p = 1, \\
0 & \text{otherwise.}
\end{cases}
\end{equation}
The differential $d_1: E_1^{pq} \rightarrow E_1^{p+1,q}$ is given by multiplying by the morphism used to form the cone, with suitable signs. In particular, the subspace of elements in \eqref{eq:cone-spectral-sequence} of total degree $p+q = 0$ is four-dimensional; and the differentials $E_1^{-1,1} \rightarrow E_1^{0,1}$, $E_1^{0,0} \rightarrow E_1^{1,0}$ are both nonzero. This shows that $H^0(\mathit{hom}_{\mathit{Fuk}(E)^{\mathit{tw}}}(\tilde{C},\tilde{C}))$ is of dimension $\leq 2$, which implies that $\tilde{C}$ must be quasi-isomorphic to $\Delta_{3,u} \oplus \Delta_{3,u^{-1}}$.


\begin{lemma} \label{th:degree1}
For a suitable choice of auxiliary data, the differential on $\bigoplus_{i,j=1}^2 \mathit{CF}^*(\Delta_i,\Delta_j)$ vanishes, and moreover, the subspace of elements of degree $\leq 1$ is an $A_\infty$-subalgebra.
\end{lemma}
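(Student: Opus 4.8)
\textbf{Proof proposal for Lemma \ref{th:degree1}.}

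The plan is to choose auxiliary data (Morse functions, almost complex structures, inhomogeneous perturbation terms, and the various perturbation data on pearly trees) so that all the relevant moduli spaces reduce to ones that live entirely in fibre direction over $T$, where everything is constant. Concretely, throughout the Floer-theoretic construction for the four objects $\Delta_1, \Delta_2$ we restrict to almost complex structures on $E$ for which the projection $\pi\colon E \to T$ is pseudo-holomorphic, and we take all inhomogeneous one-form terms $K_v$ to vanish. As in Lemma \ref{th:project} and the proof of \eqref{eq:two-points}, this forces every contributing pseudo-holomorphic strip, triangle, or pearly-tree component to project to a constant or to a polygon in $T$ with vertices at the relevant intersection points; and since $\Delta_1, \Delta_2$ both fibre over the \emph{same} loops $\{q=0\}$ and $\{q=-2p\}$ as the corresponding objects $L_1, L_2$ on the two-torus, the holomorphic polygon side of the count is identical to the $\mathit{Fuk}(T)$ case, while the fibre side, by the doubling trick of Example \ref{th:graph} together with the absence of non-constant $J_{f,t}$-holomorphic spheres, contributes only constant discs into $K\times K$ with boundary on the diagonal.

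First I would record the vanishing of the differential: for $\mathit{CF}^*(\Delta_i,\Delta_i)$ this is immediate from Example \ref{th:single-l} and Lemma \ref{th:project} once we pick perfect Morse functions on $\Delta_i \cong S^1\times K$ (which exist, since $K$ admits a perfect Morse function and so does $S^1$ in the obvious Morse-Bott sense), arranging the generators to be concentrated so that $\mu^1$ vanishes by degree reasons on the product cohomology. For $\mathit{CF}^*(\Delta_1,\Delta_2)$ and $\mathit{CF}^*(\Delta_2,\Delta_1)$ the underlying intersection is clean along two disjoint copies of $K$, and choosing a perfect Morse function on each copy again gives trivial $\mu^1$ by the same argument as in \eqref{eq:two-points}; the only subtlety is checking that the orientation local systems $o_C$ are trivial, which holds because near each component the geometry splits as base$\times$fibre and both factors are orientable. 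Second, I would observe that the direct sum $\bigoplus_{i,j}\mathit{CF}^{\leq 1}(\Delta_i,\Delta_j)$ is closed under all $\mu^d$: any output of degree $\leq 1$ with inputs of degree $\leq 1$ can only receive contributions that, by the degree/energy bookkeeping, match exactly those already present in the subalgebra $Q\subset \bigoplus \mathit{HF}^*(L_i,L_j)$ of Lemma \ref{th:q-cohomologically} (and more precisely, the cochain-level analogue for a chosen ``small'' model), because higher-degree cochain generators, being pulled back from higher cohomology of the fibre $K$, cannot appear as outputs without violating the constancy of the fibre-direction map. Since the total degree of $\mu^d(a_d,\dots,a_1)$ is $|a_1|+\cdots+|a_d|+2-d$, an output landing in degree $\leq 1$ from degree-$\leq 1$ inputs further constrains which classes can occur, and one checks the list closes up.

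The main obstacle is transversality: to make the above reduction rigorous one cannot literally take the split product almost complex structure and zero perturbations everywhere, since (as Remark \ref{th:constant-regularity} explains) constant maps at points of multiple intersections are typically not regular, and the constant fibre-direction discs into $K\times K$ through the diagonal also sit in moduli spaces of the wrong dimension. The way around this, already used in Section \ref{subsec:flat-triangles}, is to keep the projection to $T$ pseudo-holomorphic but allow the almost complex structure and perturbation data to vary genuinely in the fibre direction away from a neighbourhood of the relevant polygons, and --- where even that is insufficient, because a polygon in $T$ fails to be embedded --- to pass to a sufficiently large finite abelian cover of $E$ (as in Addendum \ref{th:pullback-functor}), break the symmetry there, and descend. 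Carrying this through for all pearly trees simultaneously, with the consistency conditions of Section \ref{subsec:fukaya}, is the technically demanding part, but it is entirely parallel to the arguments already deployed for \eqref{eq:delta-product} and \eqref{eq:delta-massey}; I would cite those constructions and indicate that the same recipe, applied uniformly, produces auxiliary data realizing both claims of the lemma.
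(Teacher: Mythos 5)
Your proposal takes a genuinely different route from the paper, and it has a gap in the most delicate place. The paper's proof is considerably more elementary: it makes no use of $\pi$-fibred almost complex structures, constant fibre-direction discs, or the covering trick. Instead it is an almost purely Morse-theoretic argument using two separate devices, one for the diagonal entries and one for the off-diagonal ones.

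For the off-diagonal piece, your ``degree/energy bookkeeping'' remark is on the right track, and the paper indeed does a pure degree count once minimal Morse functions on $\Delta_1 \cap \Delta_2$ have been chosen: if some input has degree $0$, the output already has degree $\leq 1$; if all inputs have degree $1$, then since $\mathit{CF}^1(\Delta_1,\Delta_2) = 0$ (as $\mathit{HF}^*(\Delta_1,\Delta_2) \iso H^*(K;R)^{\oplus 2}$ is concentrated in even degrees), the sequence $(i_0,\dots,i_d)$ is forced to be $(2,\dots,2,1,\dots,1)$, and the degree-$2$ output lands in $\mathit{CF}^2(\Delta_2,\Delta_1)$, which vanishes by \eqref{eq:dual-two-point} together with minimality. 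You do not spell this out, but it is consistent with what you sketch.

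The real gap is the diagonal case, which you never squarely address. On $\Delta_i \iso S^1 \times K$ the chain group $\mathit{CF}^2(\Delta_i,\Delta_i) \iso H^2(S^1 \times K;R) \supset H^2(K;R) \neq 0$ is \emph{not} zero, and $\mu^d(a,\dots,a)$ for the degree-$1$ generator $a$ has degree exactly $2$, so nothing in your argument rules out that these products land in $H^2(K)$. Invoking ``constancy of the fibre-direction map'' does not do the job: even if all pearly-tree contributions are constant in fibre direction, the Morse $A_\infty$-structure on $H^*(S^1 \times K)$ is a priori allowed to produce degree-$2$ outputs from degree-$1$ inputs (this is the same phenomenon that gives nonzero $\mu^4$ on the two-torus). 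The paper's key idea here is a barrier trick on the Morse function $h_{\Delta_1,\Delta_1}(p,x) = h_{S^1}(p) + (\text{small const}) \cdot h_K(x)$: place the minimum of $h_{S^1}$ at $p_0 = 1/4$ and the maximum at $p_1 = 3/4$, impose the sign conditions $\partial_p h_{\Delta_1,\Delta_1} > 0$ along $\{1/2\} \times K$ and $< 0$ along $\{0\} \times K$, and insist that \emph{all} the auxiliary families of Morse functions in the pearly-tree data preserve these inequalities (an open condition, hence harmless for transversality). Then any $\mu^d(a,\dots,a)$, with $a$ located at $\{3/4\} \times K$, outputs a combination of critical points also at $\{3/4\} \times K$; but because $h_K$ is a perfect Morse function on $K$ (no index-$1$ critical points), there are no index-$2$ critical points in $\{3/4\} \times K$, so the products vanish identically. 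This specific positioning of the index-$1$ generator at the ``far'' maximum, and the invariance of the barrier under all perturbations, is what your proposal is missing; the fibre-direction reduction and finite covers you emphasize are the machinery of Section \ref{subsec:flat-triangles}, which is indeed needed there for \eqref{eq:delta-product} and \eqref{eq:delta-massey} but not for the present lemma.
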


\begin{proof}
Choose a perfect Morse function $h_K$ on $K$. Choose also a perfect Morse function $h_{S^1}$ on the circle $S^1 = \bR/\bZ$, with minimum at $p_0 = 1/4$ and maximum at $p_1 = 3/4$. When defining $\mathit{CF}^*(\Delta_1,\Delta_1)$ in the Morse-Bott formalism from Section \ref{subsec:clean}, take the Morse function on $\Delta_1 \iso S^1 \times K$ given by
\begin{equation}
h_{\Delta_1,\Delta_1}(p,x) = h_{S^1}(p) + \mathrm{constant} \cdot h_K(x),
\end{equation}
where the constant is small and positive (and correspondingly, we choose the product Riemannian metric). The differential is obviously trivial. Moreover, we have that
\begin{equation} \label{eq:barrier}
\begin{aligned}
& \partial_p h_{\Delta_1,\Delta_1} > 0 && \text{along $\{1/2\} \times K$,} \\
& \partial_p h_{\Delta_1,\Delta_1} < 0 && \text{along $\{0\} \times K$.}
\end{aligned}
\end{equation}
When defining the higher order $A_\infty$-structure on $\mathit{CF}^*(\Delta_1,\Delta_1) = \mathit{CM}^*(h_{\Delta_1,\Delta_1})$, we proceed as in Remark \ref{th:single-l}, but take care that all the auxiliary families of Morse functions appearing in the process still satisfy \eqref{eq:barrier} (this is an open condition, hence does not stand in the way of transversality arguments). As a result, if $a_1,\dots,a_d$ are generators corresponding to critical points, at least one of which lies in $\{3/4\} \times K$, then $\mu^d_{\mathit{Fuk}(E)}(a_d,\dots,a_1)$ must be a linear combination of critical points also lying in $\{3/4\} \times K$. In particular, take $a$ to be the unique generator corresponding to a critical point of index $1$, which is the minimum of $h_K$ placed in $\{3/4\} \times K$. Then $\mu^d_{\mathit{Fuk}(E)}(a,\dots,a)$ is a linear combination of critical points of index $2$, which moreover lie in $\{3/4\} \times K$. But there are no such points, hence we have proved that these particular $A_\infty$-products must vanish. Of course, all these considerations can be applied to $\Delta_2$ as well.

Similarly, when defining $\mathit{CF}^*(\Delta_1,\Delta_2)$ and $\mathit{CF}^*(\Delta_2,\Delta_1)$, we choose minimal Morse functions, so that $\mu^1 = 0$. Because of this minimality property, the product $\mu^2$ is determined entirely by Lemma \ref{th:q-cohomologically}, which shows in particular that the product of any two elements of degree $1$ vanishes. Consider a higher product of elements of degree $\leq 1$,
\begin{equation} \label{eq:01-product}
\mu^d_{\mathit{Fuk}(E)}: \mathit{CF}^{\leq 1}(\Delta_{i_{d-1}},\Delta_{i_d}) \otimes \cdots \otimes
\mathit{CF}^{\leq 1}(\Delta_{i_0},\Delta_{i_1}) \longrightarrow \mathit{CF}^{\leq 2}(\Delta_{i_0},\Delta_{i_d})
\end{equation}
for some $d > 2$ and $i_0,\dots,i_d \in \{1,2\}$, and where the $i_k$ are not all equal (since that case has been dealt with before). If one of the inputs has degree $0$, the output automatically has degree $\leq 1$. The only remaining case is when all the inputs have degree $1$, which forces $(i_0,\dots,i_d) = (2,\dots,2,1,\dots,1)$, but then the output would lie in $\mathit{CF}^2(\Delta_2,\Delta_1)$, which vanishes by minimality and \eqref{eq:dual-two-point}.
\end{proof}

We have now obtained an embedding $G^1: Q \rightarrow \mathit{Fuk}(E)$ which is compatible with the multiplication $\mu^2$, and which therefore extends to an $A_\infty$-functor $G: Q_{\tilde{p}} \rightarrow \mathit{Fuk}(E)$ for some a priori unknown polynomial $\tilde{p}$.

\begin{lemma}
$\tilde{p} = p$ is the unit torus polynomial.
\end{lemma}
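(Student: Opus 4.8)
The plan is to run the same three-step argument used for the two-torus in Lemmas \ref{th:p-p2} and \ref{th:fukaya-torus}, since all the Floer-cohomology inputs needed have already been assembled in Section \ref{subsec:flat-triangles}, and they coincide formally with their two-torus counterparts: \eqref{eq:delta-product}, \eqref{eq:delta-product-2}, \eqref{eq:delta-product-3} and \eqref{eq:delta-massey} match \eqref{eq:theta-product-1}, \eqref{eq:cyclic-mu2}, \eqref{eq:more-products} and \eqref{eq:mu3}.

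First I would show that $\tilde{p}$ can vanish only at the four points $(\vartheta_{2,2}(u),\vartheta_{2,1}(u))$ with $u\in\{\pm\hbar^{k/2} : k\in\Z\}$. For every other $u$, the exact triangle \eqref{eq:m-triangle} exhibits $C_v$, with $v=(\vartheta_{2,2}(u),\vartheta_{2,1}(u))$, as quasi-isomorphic to $\Delta_{3,u}\oplus\Delta_{3,u^{-1}}$, hence as carrying a nontrivial idempotent endomorphism; by Lemma \ref{th:cone-splitting} this forces $\tilde{p}(v)\neq 0$. So $\tilde{p}$ is nonzero away from the four exceptional points.

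Second, I would pin down the orders of vanishing at those four points by a symmetry argument, as in Lemma \ref{th:p-p2}. Translation by $(\half,0)$ on the base $T$ lifts to a symplectic automorphism of $E$ commuting with $\pi$, preserving each $\Delta_i$, acting trivially on $\mathit{HF}^*(\Delta_i,\Delta_i)\iso H^*(\Delta_i;R)$ and interchanging $w_1$ with $-w_2$ in $\mathit{HF}^0(\Delta_1,\Delta_2)$. Likewise the tensoring operation $\theta_{-1}$ from Remark \ref{th:twist-sequence} (by a flat $R$-line bundle on $E$ pulled back from $T$ with monodromy $-1$ in $q$-direction) preserves $\Delta_1,\Delta_2$, acts trivially on their self-Floer cohomology, and reverses the sign of one of the two generators of $\mathit{HF}^0(\Delta_1,\Delta_2)$. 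The equivariant form of Proposition \ref{th:classify-deformations} then shows that $\tilde{p}$ is invariant under $(v_1,v_2)\mapsto(-v_2,-v_1)$ and under $(v_1,v_2)\mapsto(-v_1,v_2)$, so its order of vanishing at the four points $(\vartheta_{2,2}(\pm 1),\vartheta_{2,1}(\pm 1))$ and $(\vartheta_{2,2}(\pm\hbar^{1/2}),\vartheta_{2,1}(\pm\hbar^{1/2}))$ is the same; since $\tilde{p}$ is homogeneous of degree $4$ and nonzero elsewhere, that order must be $1$ at each, whence $\tilde{p}=c\,p$ for some $c\in R^\times$.

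Third, I would determine $c$ exactly by applying Lemma \ref{th:exactly-complete-triangle} to the triangle \eqref{eq:m-triangle}, exactly as in the computation \eqref{eq:determine-ptilde}. Take $\tilde{C}_v=\Delta_{3,u}\oplus\Delta_{3,u^{-1}}$ and the degree-zero endomorphism $\tilde{t}=([e_{3,u}]\oplus[-e_{3,u^{-1}}])/(2\vartheta_{4,3}'(1)(\vartheta_{4,1}(u)-\vartheta_{4,3}(u)))$; its square is a scalar, \eqref{eq:delta-product-3} shows that the composition \eqref{eq:recover-v} reproduces the morphism $v$ appearing in \eqref{eq:m-triangle}, and \eqref{eq:delta-massey} together with \eqref{eq:derivative-identity} identifies $\tilde{t}^2$ with $\tilde{p}(1,s_2)$ under the theta parametrization \eqref{eq:theta-parametrization}. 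This yields $s_1^2=\tilde{p}(1,s_2)$ on $\SS$, which forces $\tilde{p}=p$. The only step requiring genuine (if routine) care is checking that the translation and line-bundle symmetries lift to $E$ and act on the low-degree generators as stated; once that is in place, the rest is a transcription of the two-torus argument.
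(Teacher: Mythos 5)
Your third step is, in essence, the paper's proof; steps one and two are valid but not needed. The paper discards the analogue of Lemma \ref{th:p-p2} altogether and goes directly to Lemma \ref{th:exactly-complete-triangle}: since that Lemma pins down $\tilde{t}^2 = \tilde{p}(v)$ exactly (not merely up to scalar), the identity $s_1^2 = \tilde{p}(1,s_2)$ obtained via \eqref{eq:determine-ptilde} already determines $\tilde{p}$ as a polynomial, so the ``vanishing locus plus symmetry'' preamble that was useful for the two-torus is superfluous here.

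What your proposal does not flag, and what the paper calls ``the only new ingredient,'' is the isomorphism \eqref{eq:g-isomorphism}. Lemma \ref{th:exactly-complete-triangle} is a statement about $H^0(\mathit{hom}_{Q_{\tilde p}^{\mathit{tw}}}(C_v,C_v))$, whereas the candidate endomorphism $\tilde{t}$ from \eqref{eq:geometric-t-tilde} lives geometrically in $\mathit{HF}^0(\Delta_{3,u},\Delta_{3,u}) \oplus \mathit{HF}^0(\Delta_{3,u^{-1}},\Delta_{3,u^{-1}})$. To apply the Lemma you must transport $\tilde{t}$ back to $C_v$, which requires knowing that $G$ identifies the two endomorphism rings; this is precisely what the paper extracts from the earlier analysis of the cone spectral sequence \eqref{eq:cone-spectral-sequence}, together with the fact that $A_\infty$-functors preserve exact triangles (giving the commutative diagram \eqref{fukaya-m-triangle}). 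Your closing remark singles out the lifting of the translation and line-bundle symmetries as the genuine new step, but those are straightforward; the nontrivial bridge is \eqref{eq:g-isomorphism}. Also, a small misattribution: it is \eqref{eq:delta-product-3} and the explicit scalar in \eqref{eq:geometric-t-tilde} that identify $\tilde{t}$ and its square; the Massey product \eqref{eq:delta-massey} was used upstream to establish the exact triangle \eqref{eq:m-triangle} and plays no direct role in the present proof.
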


\begin{proof}
Fix some $u \notin \{\pm h^{k/2} \, : \, k \in \bZ\}$, and consider the morphism $v \in e_2 Q e_1 = \mathit{hom}_{Q_{\tilde{p}}}(X_1,X_2)$ given by the same formula as the horizontal arrow in \eqref{eq:m-triangle}. Denote by $C_v$ the cone of that morphism in $Q_{\tilde{p}}^{\mathit{tw}}$. Because $A_\infty$-functors preserve exact triangles, we have a commutative diagram in the category $H^0(\mathit{Fuk}(E)^{\mathit{tw}})$,
\begin{equation} \label{fukaya-m-triangle}
\xymatrix{
\cdots \ar[r] & G(X_2) \ar@{=}[d] \ar[r] & G^{\mathit{tw}}(C_v) \ar[r] \ar@{-->}[d]^{\iso} & G(X_1)[1] \ar@{=}[d] \ar[r] & \cdots \\ \cdots \ar[r] & \Delta_2 \ar[r] & \Delta_{3,u} \oplus \Delta_{3,u^{-1}} \ar[r] & \Delta_1[1] \ar[r] & \cdots
}
\end{equation}
where the top row is the obvious exact triangle, the bottom one is \eqref{eq:m-triangle}, and the dotted isomorphism is the only new ingredient. It follows from our previous analysis of \eqref{eq:cone-spectral-sequence} that $G$ induces an isomorphism
\begin{equation} \label{eq:g-isomorphism}
H^0(\mathit{hom}_{Q_{\tilde{p}}^{\mathit{tw}}}(C_v,C_v)) \longrightarrow \mathit{HF}^0(\Delta_{3,u},\Delta_{3,u}) \oplus \mathit{HF}^0(\Delta_{3,u^{-1}},\Delta_{3,u^{-1}}) = R^2.
\end{equation}
Consider the endomorphism $\tilde{t}$ of $\Delta_{3,u} \oplus \Delta_{3,u^{-1}}$ given by the same linear combination of identity elements as in \eqref{eq:geometric-t-tilde}. Using \eqref{eq:delta-product-3} one sees that this satisfies the analogue of \eqref{eq:dual-triple-product}. Hence, its preimage under \eqref{eq:g-isomorphism} satisfies the criteria from Lemma \ref{th:exactly-complete-triangle}, which shows that $\tilde{p}(v_1,v_2) = p(v_1,v_2)$, exactly as in Lemma \ref{th:fukaya-torus}.
\end{proof}

We conclude this discussion by looking at the maps \eqref{eq:functoriality} on Hochschild cohomology induced by $G$, and how they relate to the open-closed string map.

\begin{lemma} \label{th:g-push}
The map
\begin{equation} \label{eq:g-push}
H(G_*): \mathit{HH}^*(Q_p,Q_p) \longrightarrow \mathit{HH}^*(Q_p,G^*\mathit{Fuk}(E))
\end{equation}
is an isomorphism in degrees $\leq 1$.
\end{lemma}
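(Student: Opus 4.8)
The plan is to analyze the map $G_*$ degree by degree, using the functoriality framework from \eqref{eq:functoriality} together with the structure of the Hochschild complex in low degrees. Recall that $G_* : \mathit{CC}(Q_p,Q_p) \to \mathit{CC}(Q_p, G^*\mathit{Fuk}(E))$ is a chain map, and since $G$ is cohomologically full and faithful in degrees $\leq 1$ (by Lemma \ref{th:degree1}, the first-order map $G^1$ is an isomorphism onto a subalgebra, hence induces an isomorphism on $H^{\leq 1}$ of all relevant morphism spaces), one expects $G_*$ to be a quasi-isomorphism in low degrees. The subtlety is that full faithfulness of $G$ only holds in a range, so I cannot simply invoke the standard fact that $G_*$ is a quasi-isomorphism for cohomologically full and faithful functors; I must make the degree restriction explicit.

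First I would reduce to a computation with the $A_\infty$-bimodule $G^*\mathit{Fuk}(E)$, whose underlying spaces are $(G^*\mathit{Fuk}(E))(X_i,X_j) = \mathit{hom}_{\mathit{Fuk}(E)}(G(X_i),G(X_j)) = \mathit{CF}^*(\Delta_i,\Delta_j)$. By Lemma \ref{th:degree1} we may choose auxiliary data so that the differential on $\bigoplus \mathit{CF}^*(\Delta_i,\Delta_j)$ vanishes, and we already know from Lemma \ref{th:q-cohomologically} that $\bigoplus_{*\leq 1} \mathit{HF}^*(\Delta_i,\Delta_j) \iso Q$. Thus the truncation $\tau_{\leq 1}(G^*\mathit{Fuk}(E))$ is, as an $A_\infty$-bimodule over $Q_p$, literally $Q$ with an $A_\infty$-structure, and the bimodule map $Q_p \to G^*\mathit{Fuk}(E)$ is an isomorphism onto this truncation in internal degrees $\leq 1$. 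The Hochschild complex $\mathit{CC}(Q_p, Q_p)$ in cohomological degree $d$ involves the bimodule values $\mathit{hom}_{Q_p}(X_{i_0},X_{i_d})$, which are concentrated in internal degrees $0$ and $1$; so a Hochschild cochain of degree $\leq 1$ only sees the truncation $\tau_{\leq 1}(G^*\mathit{Fuk}(E)) = Q$. The point is that $G_*$ restricted to the degree $\leq 1$ part of $\mathit{CC}(Q_p,Q_p)$ factors through $\mathit{CC}(Q_p, \tau_{\leq 1}(G^*\mathit{Fuk}(E)))$, and on that subcomplex it is an isomorphism of complexes because the bimodule map is.

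The key step is therefore the following bookkeeping claim: a Hochschild cocycle $g$ of degree $d \leq 1$ has all its components $g^e$ landing in $\mathit{hom}_{Q_p}^{|g|-e}(X_{i_0},X_{i_e})$, which is nonzero only when $0 \leq |g|-e \leq 1$, i.e.\ $e \in \{d-1, d\}$; hence only a bounded amount of the bimodule matters, and only its part in internal degrees $0,1$ (which is where $Q$ lives). Then one checks that $G_*$ induces an isomorphism on cohomology in these degrees, simply because the truncated bimodule map $Q_p \to \tau_{\leq 1}(G^*\mathit{Fuk}(E))$ is an honest isomorphism of $A_\infty$-bimodules after our choice of auxiliary data. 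One must be slightly careful at the boundary $d=1$: a degree $2$ coboundary could a priori involve the degree $2$ part of the bimodule $\mathit{CF}^2(\Delta_2,\Delta_1)$, but by \eqref{eq:dual-two-point} and minimality that space vanishes (as used in the proof of Lemma \ref{th:degree1}), so no new coboundaries are introduced and the isomorphism on $H^1$ is unaffected.

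The main obstacle I anticipate is making the ``truncation'' argument precise: $\tau_{\leq 1}$ of an $A_\infty$-bimodule is not literally a bimodule (the $A_\infty$-structure maps can output into higher degrees), so one cannot naively truncate. The correct formulation is that the relevant piece of the Hochschild \emph{complex} only pairs with the degree $\leq 1$ part of the bimodule, and that the comparison map $G$, which by Lemma \ref{th:degree1} is a strict isomorphism on these spaces for a good choice of data, identifies the two Hochschild complexes in the relevant range of cohomological degrees. Spelling this out requires care with the Hochschild differential's two terms (one using $\mu_A$, one using the bimodule structure maps $\mu_P^{s|1|t}$), verifying that the outputs stay in the controlled range, and that is where the bulk of the (routine but fiddly) work lies. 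I would present it as: (1) choose auxiliary data as in Lemma \ref{th:degree1}; (2) observe $G^1$ is a strict isomorphism $Q \to \tau_{\leq 1}\bigl(\bigoplus\mathit{CF}^*(\Delta_i,\Delta_j)\bigr)$ of graded vector spaces compatible with $\mu^2$; (3) deduce $G_*$ is injective on degree $\leq 1$ Hochschild cochains and surjective onto the degree $\leq 1$ Hochschild cochains of $G^*\mathit{Fuk}(E)$ up to the image of the differential; (4) conclude the induced map is an isomorphism on $\mathit{HH}^{\leq 1}$, using the vanishing of $\mathit{CF}^2(\Delta_2,\Delta_1)$ to handle degree $1$.
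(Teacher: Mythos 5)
Your overall strategy --- replace $Q_p$ by its quasi-isomorphic image $\tilde{Q}\subset\mathit{Fuk}(E)$ of Lemma \ref{th:degree1}, and show that degree $\leq 1$ Hochschild cochains with values in $\mathit{Fuk}(E)$ only hit the degree $\leq 1$ part of the Floer complexes, which is $\tilde{Q}$ --- matches the paper's approach and does work. But the degree bookkeeping in your ``key step'' is wrong. You claim the components $g^e$ of a degree $d\leq 1$ cochain land in the degree $|g|-e$ summand of the hom-space, hence vanish unless $e\in\{d-1,d\}$. This misreads the grading convention: $g^e$ takes values in the \emph{shift} $\mathit{hom}_A(X_0,X_e)[|g|-e]$, and the actual output degree of $g^e(a_e,\dots,a_1)$ is $\sum_i|a_i|+|g|-e$, which depends on the inputs. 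A Hochschild $1$-cocycle typically has nonzero components $g^e$ for arbitrarily large $e$ (this is the case for the generators $g_1,g_2$ of Addendum \ref{th:quasi-generators}). The simpler correct argument is: since each $a_i\in\mathit{hom}_{Q_p}(X_{i-1},X_i)$ has degree $\leq 1$, one has $\sum_i|a_i|\leq e$, so the output degree is $\leq |g|\leq 1$ --- meaning \emph{all} components $g^e$, not just two, land in the degree $\leq 1$ subspace $\tilde{Q}$.

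With this corrected, one gets $\mathit{CC}^{\leq 1}(\tilde{Q},\tilde{Q})=\mathit{CC}^{\leq 1}(\tilde{Q},\mathit{Fuk}(E))$ as graded vector spaces, and the $A_\infty$-subalgebra property from Lemma \ref{th:degree1} guarantees that the Hochschild differentials agree under this identification, with $\mathit{CC}^2(\tilde{Q},\tilde{Q})\hookrightarrow\mathit{CC}^2(\tilde{Q},\mathit{Fuk}(E))$ still injective --- exactly what is needed to conclude the isomorphism on $\mathit{HH}^{\leq 1}$. Your instinct to bring in the vanishing of $\mathit{CF}^2(\Delta_2,\Delta_1)$ is correct in spirit, but that input is already subsumed in Lemma \ref{th:degree1} (it is what makes $\tilde{Q}$ a subalgebra); it does not arise as a separate issue about degree $2$ coboundaries in the Hochschild computation.
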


\begin{proof}
It is convenient to replace $Q_p$ by its quasi-isomorphic image in $\mathit{Fuk}(E)$ described in Lemma \ref{th:degree1}. Denoting that by $\tilde{Q}$, what we then have to look at is the effect of the inclusion $\tilde{Q} \hookrightarrow \mathit{Fuk}(E)$. Any Hochschild cochain $g \in \mathit{CC}^{\leq 1}(\tilde{Q},\mathit{Fuk}(E))$ necessarily takes values in the subspace of morphisms of degree $\leq 1$, which is precisely $\tilde{Q}$, so we have $\mathit{CC}^{\leq 1}(\tilde{Q},\tilde{Q}) = \mathit{CC}^{\leq 1}(\tilde{Q},\mathit{Fuk}(E))$. This fails in degree $2$, but at least we have an injective map of cochains there, which is precisely what's needed to prove the desired statement.
\end{proof}

Recall that $\mathit{HH}^1(Q_p,Q_p)$ is two-dimensional, with generators $[g_1]$, $[g_2]$ which were (partially) described in Addendum \ref{th:quasi-generators}.

\begin{lemma} \label{th:g-pull}
The composition
\begin{equation} \label{eq:g-pull}
\mathit{QH}^*(E) \longrightarrow \mathit{HH}^*(\mathit{Fuk}(E),\mathit{Fuk}(E))
\xrightarrow{H(G^*)} \mathit{HH}^*(Q_p,G^*\mathit{Fuk}(E))
\end{equation}
sends $[dp] \in H^1(E;\bR)$ to $H(G_*)([g_1] + [g_2])$, and $[dq]$ to $H(G_*)(-2[g_2])$.
\end{lemma}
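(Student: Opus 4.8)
\textbf{Proof proposal for Lemma \ref{th:g-pull}.}

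The plan is to reduce the computation to the two-torus $T$, where the analogous statement was already established in the proof of Corollary \ref{th:t2-family}. The key geometric fact is that all three Lagrangian submanifolds $\Delta_i \subset E$ fibre over the loops $L_i \subset T$ which appeared in Section \ref{subsec:twotorus}, and that the open-closed string map is natural with respect to this fibred structure. First I would record the relevant piece of naturality: the projection $\pi: E \rightarrow T$ gives a class $\pi^*[dp], \pi^*[dq] \in H^1(E;\R)$ — indeed by construction $dp$ and $dq$ on $E$ \emph{are} pulled back from $T$ — and one wants to compare the open-closed string map of $E$ applied to these classes with the open-closed string map of $T$ applied to $[dp], [dq] \in H^1(T;\R)$. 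The cleanest way to see the compatibility is to represent $[dp]$ (resp. $[dq]$) by a cycle $G \subset T$ of the form $\{p = \mathrm{const}\}$ (resp. $\{q = \mathrm{const}\}$) chosen disjoint from the loops $L_1, L_2$, and correspondingly $\pi^{-1}(G) \subset E$, which is disjoint from $\Delta_1, \Delta_2$. Then in the pointed-pearly-tree moduli problem defining the open-closed cocycle (Section \ref{subsec:define-open-closed}), the constraint $u_{v_*}(z_*) \in \pi^{-1}(G)$ is equivalent, after projecting to $T$ by $\pi$, to the constraint that the projected configuration passes through $G$; and since the Lagrangians $\Delta_i$ project to $L_i$ and (by Lemma \ref{th:project} and the product argument in Lemma \ref{th:degree1}) the relevant holomorphic curves are constant in the fibre direction over the region where the cycle sits, the count for $E$ restricted to $\tilde{Q} = G(Q_p)$ agrees with the count for $T$ restricted to the subcategory spanned by $L_1, L_2$. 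This identifies the composition \eqref{eq:g-pull} with $H(G_*)$ applied to the open-closed string map of $T$ landing in $\mathit{HH}^*(Q_p, Q_p)$.

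Next I would invoke the computation already carried out inside the proof of Corollary \ref{th:t2-family}: with respect to the basis $[g_1], [g_2]$ of $\mathit{HH}^1(Q_p, Q_p)$ from Addendum \ref{th:quasi-generators}, the open-closed string map of $T$ sends $[dp]$ to $[g_1] + [g_2]$ and $[dq]$ to $-2[g_2]$. Concretely, this in turn reduces (via the constant-term of the open-closed map, which for an object $L$ is the classical restriction $H^*(T;R) \to H^*(L;R)$, together with the explicit description of $(g_k)^0_{[k]}$ as the generator $q_k \in \Lambda^2 V$) to the elementary linear-algebra identification of how $[dp]$ and $[dq]$ restrict to $H^1(L_1;R)$ and $H^1(L_2;R)$: $[dp]$ restricts to the chosen generator on both $L_1$ and $L_2$, while $[dq]$ restricts to $0$ on $L_1 = \{q=0\}$ and to $-2$ times the generator on $L_2 = \{q = -2p\}$ (the factor $-2$ being the slope, with the sign fixed by the orientations in Figure \ref{fig:abcd1}). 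Matching these against $(g_1)^0_{[1]}, (g_2)^0_{[2]}$ and using that $g_k^1$ can be taken to vanish gives the stated coefficients. Combining this with the naturality from the first paragraph yields that \eqref{eq:g-pull} sends $[dp]$ to $H(G_*)([g_1]+[g_2])$ and $[dq]$ to $H(G_*)(-2[g_2])$, as claimed.

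The main obstacle is the naturality step, i.e.\ making precise that the open-closed string map of $E$, restricted to $\tilde{Q}$ and evaluated on the pulled-back classes $\pi^*[dp], \pi^*[dq]$, is computed by the ``same'' moduli count as for $T$. One has to be careful that this is genuinely a cohomology-level statement, independent of the auxiliary perturbation data, so that one is free to use the product-type almost complex structures and inhomogeneous terms for which projection to $T$ is holomorphic and the fibre components of all contributing curves are constant (as in Lemmas \ref{th:project} and \ref{th:degree1}); the argument from Section \ref{subsec:flat-triangles}, including the trick of passing to finite covers to break symmetry and achieve transversality for higher powers, is exactly what is needed to carry this through rigorously. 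Once one is willing to grant, as in the analogous Floer-product computations of Section \ref{subsec:flat-triangles}, that such a choice is admissible, the remainder is the routine linear algebra indicated above. I would present the argument by first stating the naturality lemma (open-closed commutes with $\pi^*$ on the relevant subcategories), then quoting the $T$-computation, and finally assembling the two; the degree restriction to $* \leq 1$ is harmless here since we only evaluate on degree-$1$ classes and Lemma \ref{th:g-push} tells us $H(G_*)$ is an isomorphism there, so the answer is unambiguous.
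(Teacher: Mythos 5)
Your reduction to constant terms is the right computational core, but you build it inside an unnecessary and under-justified geometric naturality step. The paper's actual proof never compares moduli spaces in $E$ and $T$: it works entirely within $E$, using the commutative diagram \eqref{eq:open-closed-g} whose vertical arrows are constant-term projections. The left vertical, $\mathit{HH}^1(Q_p,Q_p) \to e_1Q_pe_1 \oplus e_2Q_pe_2$, is an isomorphism by Addendum \ref{th:quasi-generators}; the middle vertical is then also an isomorphism by Lemma \ref{th:g-push}; and the composition of the open-closed map with the diagonal arrow is simply the classical restriction $\mathit{QH}^*(E) \to H^*(\Delta_1;R) \oplus H^*(\Delta_2;R)$, because that is what the constant term of the open-closed cocycle always is. The rest is diagram chasing: $[dp]$ and $[dq]$ restrict to $(\text{generator},\text{generator})$ and $(0,-2\cdot\text{generator})$ on $H^1(\Delta_1)\oplus H^1(\Delta_2)$ respectively, and these are the constant terms of $[g_1]+[g_2]$ and $-2[g_2]$. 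No naturality of the open-closed string map with respect to $\pi$ is ever used or required.

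The weak point of your proposal is precisely the naturality claim. As stated, it would assert that the full Hochschild cocycle on $E$ (restricted to $\tilde{Q}$ and valued in $\mathit{Fuk}(E)$) coincides with the one on $T$ after applying $G_*$, which is a statement about moduli spaces at all orders $d$ in the pearly-tree count. The product-$J$ and finite-covering tricks of Section \ref{subsec:flat-triangles} handle products and Massey products, but verifying agreement of an entire Hochschild cocycle this way requires considerably more care than you indicate, and is in any case overkill. Ironically, the observation you append at the very end — that degree-$1$ Hochschild cohomology is determined by constant terms via Addendum \ref{th:quasi-generators} and Lemma \ref{th:g-push} — is exactly what makes the hard naturality step unnecessary; if you invoke it first, the whole comparison with $T$ reduces to the linear algebra you already carry out.
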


\begin{proof}
By construction, we have a commutative diagram
\begin{equation} \label{eq:open-closed-g}
\xymatrix{
\mathit{HH}^*(Q_p,Q_p) \ar[r]^{G_*} \ar[d] & \ar[d] \mathit{HH}^*(Q_p,\mathit{Fuk}(E)) & \ar[l]_{G^*}
\mathit{HH}^*(\mathit{Fuk}(E),\mathit{Fuk}(E)) \ar[dl] \\
e_1Q_pe_1 \oplus
e_2Q_pe_2
\ar[r]^-{H(G)} &
\mathit{HF}^*(\Delta_1,\Delta_1)
\oplus
\mathit{HF}^*(\Delta_2,\Delta_2)
&
}
\end{equation}
The vertical arrow on the left is an isomorphism in degree $1$ (by Addendum \ref{th:quasi-generators}), and hence so is the one in the middle (using Lemma \ref{th:g-push}). On the other hand, composition of the open-closed string map with the diagonal arrow in \eqref{eq:open-closed-g} just yields the ordinary restriction map $\mathit{QH}^*(E) \rightarrow H^*(\Delta_1;R) \oplus H^*(\Delta_2;R)$. The rest is diagram-chasing.
\end{proof}

We now consider the analogue of Corollary \ref{th:t2-family}. Let $\SS = \mathit{Spec}(\RR)$ be the affine curve associated to the unit torus polynomial $p$, and $\theta$ its standard one-form. The image of
\begin{equation} \label{eq:explicit-current-2}
\theta \otimes [dq] \in H^0(\SS,\Omega^1_\SS) \otimes \mathit{QH}^1(E)
\end{equation}
under the open-closed string map is a deformation field, which we denote by $[\gamma]$, for the constant family $\Fuk(E)$ of Fukaya categories over $\SS$.

\begin{corollary} \label{th:m-family}
There is a perfect family of modules $\DD_3$ which follows $[\gamma]$, and whose fibre at a point $(s_1,s_2) \in \SS$ is isomorphic to $\Delta_{3,u}$, where $u \in R^\times/\hbar^{\bZ}$ satisfies \eqref{eq:theta-parametrization}.
\end{corollary}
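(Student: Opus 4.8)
\textbf{Proof strategy for Corollary \ref{th:m-family}.}

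The plan is to follow the same route as in the two-torus case (Corollary \ref{th:t2-family}), transporting the tautological family $\MM$ from Section \ref{subsec:tautological} across the functor $G: Q_p \to \Fuk(E)$ rather than across an equivalence. First I would invoke the functoriality of families developed in Section \ref{subsec:functor}. The relevant input is Assumption \ref{th:matching-deformations}: we need a deformation field $[\gamma_Q] = \theta \otimes [g]$ for the constant family $\QQ_p$ and a deformation field $[\gamma] = \theta \otimes [\text{image of }dq]$ for $\Fuk(E)$ such that $G^*(\gamma) - G_*(\gamma_Q)$ is a coboundary in $\mathit{CC}^1(\QQ_p, \Omega^1_\RR \otimes G^*\Fuk(E))$. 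By Lemma \ref{th:g-pull}, the open-closed image of $[dq]$ pulls back under $H(G^*)$ to $H(G_*)(-2[g_2])$, so taking $[g] = -2[g_2]$ the two sides agree on cohomology; since both lie in degree $1$ and $H(G_*)$ is an \emph{isomorphism} in degree $\leq 1$ by Lemma \ref{th:g-push}, the classes $[G^*(\gamma)]$ and $[G_*(\gamma_Q)]$ coincide, hence their difference is exact and Assumption \ref{th:matching-deformations} is verified (after choosing a cochain-level primitive $\beta$).

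Next I would take the perfect family $\MM$ over $\SS$ constructed in Section \ref{subsec:tautological}, which by Lemma \ref{th:example-family} follows the deformation field $[\gamma_Q] = -2\theta \otimes g_2$ from \eqref{eq:elliptic-deformation-field}, and set $\DD_3 = \GG^{\mathit{perf}}(\MM) = \MM \otimes_{\QQ_p} G^*\Fuk(E)$ as in Section \ref{subsec:functor}. The general discussion there — specifically \eqref{eq:image-relative-connection-for-modules} and \eqref{eq:strict-functoriality-of-connections} — shows that $\DD_3$ is again a perfect family (the functor $\GG^{\mathit{perf}} = \KK_{\Fuk(E)}$ preserves perfect families because the bimodule $\Fuk(E)$ is right perfect) and that it follows $[\gamma]$. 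It remains to identify the fibres. Restriction to a point $(s_1,s_2) \in \SS$ commutes with the tensor-product construction, so the fibre $\DD_{3,(s_1,s_2)}$ is $G^{\mathit{perf}}$ applied to the fibre $\MM_{(s_1,s_2)}$; by the fibrewise description in Section \ref{subsec:tautological}, $\MM_{(s_1,s_2)}$ is the direct summand of $C_{(1,s_2)}$ in $Q_p^{\mathit{perf}}$ cut out by the projection $\tfrac12(1 + s_1^{-1}t)$. Applying $G$ and repeating verbatim the rescaling computation at the end of the proof of Corollary \ref{th:t2-family} — now using the exact triangle \eqref{eq:m-triangle} and the endomorphism $\tilde t$ from \eqref{eq:geometric-t-tilde} with its triple product \eqref{eq:delta-product-3} in place of the torus versions — shows that $G^{\mathit{perf}}(C_{(1,s_2)})$ is quasi-isomorphic to $\Delta_{3,u} \oplus \Delta_{3,u^{-1}}$ and that the image of the projection picks out precisely the summand $\Delta_{3,u}$, where $u$ satisfies \eqref{eq:theta-parametrization}.

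The main obstacle is the verification of Assumption \ref{th:matching-deformations} at the cochain (not just cohomology) level: one needs an actual primitive $\beta$, and one must be sure that the open-closed string map for $E$ and the functoriality maps $G_*$, $G^*$ for Hochschild cohomology are compatible in the strong sense pictured in \eqref{eq:open-closed-g}. This is exactly what Lemmas \ref{th:g-push} and \ref{th:g-pull} are designed to supply, so the real content has already been isolated; what remains is bookkeeping. A minor secondary point is that $G$ is only an $A_\infty$-functor, not an equivalence, so unlike in the torus case we cannot conclude any \emph{uniqueness} of $\DD_3$ here — but the statement of the Corollary only asserts existence together with the identification of fibres, so this causes no difficulty.
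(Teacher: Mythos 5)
Your proposal is correct and follows the same route as the paper: push the tautological family $\MM$ from Section~\ref{subsec:tautological} forward along $G$, verify that the image follows $[\gamma]$ via Assumption~\ref{th:matching-deformations} with Lemma~\ref{th:g-pull} supplying the required coincidence of Hochschild classes, and identify the fibres using the exact triangle \eqref{eq:m-triangle} and the rescaling computation from Corollary~\ref{th:t2-family}. The one superfluous step is the invocation of Lemma~\ref{th:g-push}: Lemma~\ref{th:g-pull} already asserts directly that $H(G^*)$ applied to the open-closed image of $[dq]$ equals $H(G_*)(-2[g_2])$, which is exactly the equality $[G^*(\tilde\gamma)] = [G_*(\gamma)]$ demanded by Assumption~\ref{th:matching-deformations}, with no further argument needed.
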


\begin{proof}
Take the family from Corollary \ref{th:t2-family} and map it to $\mathit{Fuk}(E)$ using $G$. As a consequence of the general discussion of functoriality in Section \ref{subsec:functor}, the image family indeed follows \eqref{eq:explicit-current-2} (the equality of Hochschild cohomology classes required in Assumption \ref{th:matching-deformations} comes from Lemma \ref{th:g-pull}). By construction, the object of the family at any point is a direct summand of a mapping cone. The triangle \eqref{eq:m-triangle} identifies that mapping cone with $\Delta_{3,u} \oplus \Delta_{3,u^{-1}}$, and one can follow the same computation as in the case of the two-torus to show that the summand picked out by the projection is indeed $\Delta_{3,u}$.
\end{proof}

\subsection{More Lagrangian submanifolds}
$E$ admits a (graded) symplectic automorphism $F$ which is trivial on the base $T$, and equals $\mathit{id} \times f$ in each fibre (this makes sense since it commutes with $f \times f$). By applying that automorphism to our given Lagrangian submanifolds, we get another collection
\begin{equation} \label{eq:lagrangians-2}
\begin{aligned}
& \Gamma_1 = F(\Delta_1) = \{q = 0, \;\; y = f(x)\}, \\
& \Gamma_2 = F(\Delta_2) = \{q = -2p, \;\; y = f(x)\}, \\
& \Gamma_{3,u} = F(\Delta_{3,u}) = \{p = m_0, \;\; y = f(x)\}, \\
\end{aligned}
\end{equation}
These come with induced gradings and {\em Spin} structures. We equip each of these Lagrangian submanifolds with the almost complex structure $J_{\Gamma,p,q,x,y} = i \times (-J_{f,p+1,x}) \times J_{f,p,y}$. Since that is the image under $F$ of $i \times (-J_{f,p+1,x}) \times J_{f,p+1,y}$, the previous computations of Floer cohomology and its product structure carry over to \eqref{eq:lagrangians-2}. We will also need to know how \eqref{eq:lagrangians-2} and \eqref{eq:lagrangians} interact. Unsurprisingly, the answers involve the fixed point Floer cohomology of $f$:
\begin{align}
& \mathit{HF}^*(\Gamma_1,\Delta_1) \iso H^*(S^1;R) \otimes \mathit{HF}^*(f), \label{eq:circle-gd} \\
& \mathit{HF}^*(\Gamma_1,\Delta_2) \iso \mathit{HF}^*(f) \oplus \mathit{HF}^*(f), \\
& \mathit{HF}^*(\Gamma_1,\Delta_{3,u}) \iso (\xi_u)_{(m_0,0)} \otimes \mathit{HF}^*(f), \label{eq:point-gd} \\
& \mathit{HF}^*(\Gamma_2,\Delta_1) \iso \mathit{HF}^*(f)[-1] \oplus \mathit{HF}^*(f)[-1], \\
& \mathit{HF}^*(\Gamma_2,\Delta_2) \iso H^*(S^1;R) \otimes \mathit{HF}^*(f), \label{eq:circle-gd-2} \\
& \mathit{HF}^*(\Gamma_2,\Delta_3) \iso (\xi_u)_{(m_0,-2m_0)} \otimes \mathit{HF}^*(f), \\
& \mathit{HF}^*(\Gamma_{3,u},\Delta_2) \iso (\xi_u)_{(m_0,-2m_0)}^\vee \otimes \mathit{HF}^*(f)[-1], \\
& \mathit{HF}^*(\Gamma_{3,u},\Delta_1) \iso (\xi_u)_{(m_0,0)}^\vee \otimes \mathit{HF}^*(f)[-1], \\
& \mathit{HF}^*(\Gamma_{3,u},\Delta_{3,u}) \iso H^*(S^1;R) \otimes \mathit{HF}^*(f). \label{eq:infty-infty}
\end{align}
For each of these, we adopt a variant of the strategy in Example \ref{th:graph}, which means that we use the family of almost complex structures on $E$ given by
\begin{equation} \label{eq:j-gamma-delta-2}
(J_{\Gamma,\Delta,t})_{p,q,x,y} = i \times (-J_{f,p+1-t/2,x}) \times J_{f,p+t/2,y}.
\end{equation}
In all cases listed above, pseudo-holomorphic strips must be contained in a fibre. We take such a strip $(u_x,u_y): \bR \times [0,1] \rightarrow \pi^{-1}(p,q) \iso K \times K$ and transform it to a map $u$ as in \eqref{eq:roll-up}, which then satisfies $u(s,t-1) = f(u(s,t))$ and $\partial_s u + J_{f,p+t}(u)\partial_t u = 0$. This makes the isomorphisms above obvious, with \eqref{eq:circle-gd} and \eqref{eq:circle-gd-2} requiring a little thought (the case of \eqref{eq:infty-infty} is much simpler, since $\pi$ has trivial monodromy in $q$-direction). Let's consider briefly the first of the two. The intersection $\Gamma_1 \cap \Delta_1$ consists of a circle $C_x$ for each fixed point $x$ of $f$. The Maslov index of $C_x$ equals the Conley-Zehnder index of $x$. The local coefficient system $o_{C_x}$ has fibre $o_x$, and its monodromy is given by the natural action of $Df_x$ on $o_x$. It is a nontrivial observation, but one which is well-known as part of the mechanism underlying \eqref{eq:self-conjugation}, that this action is trivial. If we then choose $h_{\Gamma_1,\Delta_1}$ to be the same Morse function $h_{S^1}$ on each circle, we get an isomorphism of graded vector spaces
\begin{equation} \label{eq:cone-product}
\mathit{CF}^*(\Gamma_1,\Delta_1) \iso \mathit{CM}^*(h_{S^1}) \otimes \mathit{CF}^*(f).
\end{equation}
Using the previous observation about pseudo-holomorphic strips, it is not hard to see that this is compatible with the Floer differential. The other case \eqref{eq:circle-gd-2} is parallel.

\begin{remark}
Another way to see where the potential difficulty in \eqref{eq:circle-gd} lies is to consider for a moment a more general family of Lagrangian submanifolds fibred over the same base circle, namely
\begin{equation}
\Gamma_1^m = \{q = 0, \;\; y = f^m(x)\}
\end{equation}
for some $m \geq 1$. The intersection points of $\Gamma_1^m \cap \Delta_1$ in each fibre $\pi^{-1}(p,0)$ correspond to fixed points of $f^m$ (which we assume to be nondegenerate). However, this correspondence depends on $p \in \bR$, rather than only on its image in $\bR/\bZ$: as we move around the circle, there is nontrivial monodromy which acts by $f$ on the set of these points. Moreover, even for points which are fixed by $f^p$ for some $p|m$, the induced action on $o_x$ (where $x$ is considered as an $m$-periodic point) can be nontrivial; this is the same phenomenon as the ``bad orbits'' in Symplectic Field Theory. Finally, while the moduli spaces of holomorphic strips fibre over $S^1 \times \{0\}$, that fibration can also be nontrivial. In fact, what one gets is a chain homotopy
\begin{equation}
\mathit{CF}^*(\Gamma_1^m,\Delta_1) \htp \mathit{Cone}(\mathit{id} - c_{f,f^m}: \mathit{CF}^*(f^m) \longrightarrow \mathit{CF}^*(f^m)),
\end{equation}
where $c_{f,f^m}$ is the chain map underlying the generator of the $\bZ/m$-action on $\mathit{HF}^*(f^m)$.
\end{remark}

We will also need to know two related products, namely
\begin{align}
& \mathit{HF}^*(\Delta_1,\Delta_{3,u}) \otimes \mathit{HF}^*(\Gamma_1,\Delta_1) \longrightarrow \mathit{HF}^*(\Gamma_1,\Delta_{3,u}), \label{eq:d-product-1} \\
& \mathit{HF}^*(\Gamma_1,\Delta_{3,u}) \otimes \mathit{HF}^*(\Gamma_{3,u},\Gamma_1) \longrightarrow \mathit{HF}^*(\Gamma_{3,u},\Delta_{3,u}). \label{eq:d-product-2}
\end{align}
After using \eqref{eq:one-point-hf}, the analogue of \eqref{eq:dual-point} for the $\Gamma$ Lagrangian submanifolds, as well as \eqref{eq:circle-gd}, \eqref{eq:point-gd}, \eqref{eq:infty-infty}, and cancelling the $\xi_u$ factors, these maps can be written as
\begin{align}
& H^*(K;R) \otimes H^*(S^1;R) \otimes \mathit{HF}^*(f) \longrightarrow \mathit{HF}^*(f), \label{eq:s1-product-1} \\
& \mathit{HF}^*(f)  \otimes H^*(K;R)[-1] \longrightarrow H^*(S^1;R) \otimes \mathit{HF}^*(f). \label{eq:s1-product-2}
\end{align}

\begin{lemma} \label{th:2-products}
The first map \eqref{eq:s1-product-1} vanishes on the $H^1(S^1;R)$ summand, and on the $H^0(S^1;R)$ summand it reproduces the quantum cap module structure of $\mathit{HF}^*(f)$. The second map \eqref{eq:s1-product-2} takes values in the $H^1(S^1;R)$ summand, and again reproduces the quantum cap structure.
\end{lemma}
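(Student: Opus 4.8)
\textbf{Proof proposal for Lemma \ref{th:2-products}.}

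The plan is to reduce both products to the fixed-point Floer theory setup of Section \ref{subsec:fixed-point-floer} by the same fibrewise-rolling-up trick used to establish \eqref{eq:circle-gd} and the isomorphism \eqref{eq:cone-product}, and then to identify the resulting operations with the quantum cap product \eqref{eq:quantum-cap}. First I would set up suitable auxiliary data. For \eqref{eq:d-product-1} the three Lagrangians are $\Gamma_1,\Delta_1,\Delta_{3,u}$; choose the almost complex structures on $E$ of the form $i \times (-J_{f,\cdot}) \times J_{f,\cdot}$ as in \eqref{eq:j-gamma-delta-2} (adapted to a triangle rather than a strip), so that projection to $T$ is pseudo-holomorphic and every holomorphic triangle is contained in a single fibre $\pi^{-1}(p,q) \iso K^- \times K$. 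Inside the fibre, a triangle with boundary on the diagonal (for the $\Delta$-factors) and on the graph $\{y=f(x)\}$ (for the $\Gamma_1$-factor) rolls up, exactly as in \eqref{eq:roll-up} and Example \ref{th:graph}, to a solution of the closed Floer equation \eqref{eq:closed-floer} for $f$ with one extra incidence condition coming from the $H^*(K;R)$ input — i.e. a solution of the quantum cap problem \eqref{eq:cap-floer}, where the Morse-theoretic half-line records the $H^*(K;R)$ class (pushed forward from $K$ to $E$ via the fibre inclusion). This is what makes \eqref{eq:s1-product-1} land in $\mathit{HF}^*(f)$; crucially the $S^1$-direction plays no role in the fibre equation, so on the $H^1(S^1;R)$ summand (represented by a point of $S^1 \times K$ sitting over the maximum of $h_{S^1}$) one can arrange, as in the proof of Lemma \ref{th:degree1} via the barrier condition \eqref{eq:barrier}, that no rigid configuration exists, forcing vanishing there; on the $H^0(S^1;R)$ summand one gets precisely the quantum cap count.

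Next I would treat \eqref{eq:d-product-2} analogously. Here the Lagrangians are $\Gamma_{3,u},\Gamma_1,\Delta_{3,u}$, and the relevant output space $\mathit{HF}^*(\Gamma_{3,u},\Delta_{3,u}) \iso H^*(S^1;R) \otimes \mathit{HF}^*(f)$ by \eqref{eq:infty-infty}. The same fibrewise reasoning shows any holomorphic triangle lies in a fibre, rolls up, and contributes to a quantum cap type count for $f$; the extra $H^*(K;R)[-1]$ input is the Poincaré-dual incidence class in $K$. The point is that the $S^1$-component of the moduli space is now a genuine (one-dimensional) family fibred over $S^1 \subset T$ — exactly the phenomenon flagged in the remark after \eqref{eq:cone-product} — and the asymptotic evaluation of that family at the marked boundary point records which generator of $\mathit{CM}^*(h_{S^1})$ appears. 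Following that family around the base, one sees the output is concentrated in the $H^1(S^1;R) = \mathit{CM}^1(h_{S^1})$ summand (the fundamental class of the circle, not its point class), and the fibre count is again the quantum cap product. I would carry this out by choosing $h_{\Gamma_{3,u},\Delta_{3,u}}$ to be a fixed Morse function $h_{S^1}$ on each circle component, as in \eqref{eq:cone-product}, so that the full moduli space is a product of the $S^1$-sweep with the fibrewise quantum-cap moduli space, and then reading off the degree bookkeeping.

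The main obstacle will be the bookkeeping that underlies the triviality of the $Df_x$-action on the orientation lines $o_x$ and the associated sign and grading verifications — the same ``bad orbit''/self-conjugation mechanism already invoked for \eqref{eq:circle-gd}, \eqref{eq:self-conjugation}, and \eqref{eq:cone-product}. Concretely, one must check that after rolling up, the parametrized moduli space over $S^1$ is an honest product (no holonomy twisting the $\mathit{HF}^*(f)$-coefficients, and the coefficient system $o_{C_x}$ trivial), so that the two summands $H^0(S^1;R)$ and $H^1(S^1;R)$ decouple cleanly and the counts match \eqref{eq:quantum-cap} on the nose rather than up to an unknown correction. I expect this to be routine given the machinery already developed (and the explicit barrier trick of Lemma \ref{th:degree1} to kill the unwanted summand in \eqref{eq:s1-product-1}), but it is the step requiring the most care; everything else is a direct transcription of the rolling-up correspondence of Example \ref{th:graph} combined with the definition \eqref{eq:cap-floer} of the quantum cap product.
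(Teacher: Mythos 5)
Your proposal follows the paper's main line: make $\pi: E \to T$ pseudo-holomorphic, so every holomorphic component of a pearly tree is trapped in the fibre over $(0,0)$; roll such discs up fibrewise as in Example \ref{th:graph}, which identifies them with solutions of the quantum cap problem \eqref{eq:cap-floer}. That reduction is exactly what the paper does (it even uses the same two-parameter family of $J$'s built from a function $\psi$ periodic in $t$). Two points differ, one of them a gap.

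On the vanishing of the $H^1(S^1;R)$ summand, you appeal to the barrier condition \eqref{eq:barrier} of Lemma \ref{th:degree1}. That condition was designed for $h_{\Delta_1,\Delta_1}$ on $\Delta_1 \cap \Delta_1$, not for $h_{\Gamma_1,\Delta_1}$ on the circles $\Gamma_1 \cap \Delta_1$; more importantly, it is unnecessary machinery here. Since all holomorphic components are confined to the fibre over $(0,0)$, the $h_{\Gamma_1,\Delta_1}$-leaf of a pearly tree connects $(0,0)$ to a critical point by negative gradient flow, and no such half-flow line can converge to a maximum unless $(0,0)$ is the maximum itself. The paper just places the minimum of $h_{S^1}$ at $(0,0)$, forcing the flow line to be constant at the minimum; the $H^1$ input is excluded automatically. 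Your barrier argument can be made to work, but it replaces a one-line observation by a dynamical constraint that first has to be re-established for the relevant Morse function.

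The genuine gap is that you do not rule out contributions from more complicated pearly trees. For $\mu^2$ the simplest tree has a single trivalent vertex, but degenerate configurations with extra two-valent vertices can in principle contribute. The paper handles these explicitly: insertions on the $(\Delta_1,\Delta_{3,u})$ edge are Floer differentials, which vanish because non-constant strips there would roll up to non-constant $J_{f,1/2}$-holomorphic spheres in $K$ (excluded by hypothesis); insertions on the $(\Gamma_1,\Delta_1)$ edge are killed by taking constant families of Morse functions on the finite-length edges, so the gradient flow lines are constant and the edge lengths become free parameters, incompatible with rigidity. Nothing in your proposal addresses either case. Conversely, the orientation/self-conjugation issue you flag as the main obstacle is real but is already absorbed into the identification \eqref{eq:cone-product}; the paper's proof of the present Lemma does not need to revisit it, and that is not where the effort lies.
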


\begin{proof}
To keep the notation simple, we consider only the first product \eqref{eq:s1-product-1} and the case $u = 1$ (so $m_0 = 0$ and $\xi_u$ is trivial). Define $\mathit{CF}^*(\Gamma_1,\Delta_1)$ as in \eqref{eq:cone-product}, taking care that $h_{S^1}$ has a single minimum at $(0,0)$ (and a single maximum elsewhere), and using the family of almost complex structures $J_{\Gamma,\Delta}$ from \eqref{eq:j-gamma-delta-2}. Next, the intersection $\Delta_1 \cap \Delta_{3,u} \iso K$ is the diagonal in the fibre at $(0,0)$. We choose a Morse function $h_{\Delta_1,\Delta_{3,u}} = h_K$ on $K$ in order to define $\mathit{CF}^*(\Delta_1,\Delta_{3,u})$, and use the constant family of almost complex structures $J_\Delta$. Finally, the intersection $\Gamma_1 \cap \Delta_{3,u}$ is transverse, and we use the same family $J_{\Gamma,\Delta}$ for it as before. To form the quantum cap product \eqref{eq:quantum-cap}, we use the same Morse function $h_K$, as well as a two-parameter family of almost complex structures of the form
\begin{equation}
J_{\mathrm{cap},s,t} = J_{f,\psi(s,t)},
\end{equation}
where $\psi: \bR^2 \rightarrow \bR$ is a function satisfying $\psi(s,t+1) = \psi(s,t)+1$ for all $(s,t)$, $\psi(s,t) = t$ for $|s| \gg 0$, and $\psi(s,t) = 0$ for $(s,t)$ close to $(0,1/2)$. This leaves enough freedom to achieve the required transversality properties.

The pearly trees that can, in principle, contribute to the product are shown in Figure \ref{fig:pearly-sequence}. Consider for a moment the simplest such tree, which has just one trivalent vertex. The Riemann surface associated to that vertex can be written as $S_v = (\bR \times [0,1]) \setminus \{(0,1)\}$. We choose the perturbation datum on $S_v$ to have trivial inhomogeneous term, and the following family $J_v$ of almost complex structures:
\begin{equation}
(J_{v,s,t})_{p,q,x,y} = i \times (-J_{f,p+\psi(s/2,1-t/2),x}) \times J_{f,p+\psi(s/2,t/2),y}.
\end{equation}
Importantly, near $(s,t) = (0,1)$ this reduces to $J_\Delta$. Solutions of the associated equation \eqref{eq:generalized-u2-equation} are all contained in the fibre over $(0,0)$. Moreover, in analogy with \eqref{eq:roll-up}, they correspond bijectively to maps $u_v: \bR^2 \rightarrow K$ solving the pseudo-holomorphic part of \eqref{eq:cap-floer}. Choose the families of Morse functions on the two semi-infinite edges of our pearly tree to be constant equal to $h_{\Gamma_1,\Delta_1}$ and $h_{\Delta_1,\Delta_{3,u}}$, respectively. The associated gradient (half-)flow line of $h_{\Gamma_1,\Delta_1}$ must necessarily be constant, whereas the other one yields the Morse-theoretic part of \eqref{eq:cap-floer}. Regularity is easy to check.

It remains to exclude contributions from more complicated pearly trees. The maps associated to the two-valent vertices in the upper branch of Figure \ref{fig:pearly-sequence} are Floer differentials for the pair $(\Delta_1,\Delta_{3,u})$, but as we have seen before there are none, since they would correspond to non-constant $J_{f,1/2}$-holomorphic spheres in $K$. On the lower branch we equip all the finite length edges with the same constant family of Morse functions. But then, all the associated gradient flow lines are necessarily constant, which means that the length of the edge is a free parameter. After a necessary but easy regularity consideration, it follows that this cannot occur in zero-dimensional moduli spaces.
\end{proof}

\begin{figure}
\begin{center}
\begin{picture}(0,0)%
\includegraphics{product1.pstex}%
\end{picture}%
\setlength{\unitlength}{3947sp}%
\begingroup\makeatletter\ifx\SetFigFont\undefined%
\gdef\SetFigFont#1#2#3#4#5{%
  \reset@font\fontsize{#1}{#2pt}%
  \fontfamily{#3}\fontseries{#4}\fontshape{#5}%
  \selectfont}%
\fi\endgroup%
\begin{picture}(3252,2311)(1111,-6050)
\put(1966,-5161){\makebox(0,0)[lb]{\smash{{\SetFigFont{10}{12.0}{\rmdefault}{\mddefault}{\updefault}{\color[rgb]{0,0,0}$\Delta_1$}%
}}}}
\put(3376,-4136){\makebox(0,0)[lb]{\smash{{\SetFigFont{10}{12.0}{\rmdefault}{\mddefault}{\updefault}{\color[rgb]{0,0,0}$h_{\Delta_1,\Delta_{3,u}}$}%
}}}}
\put(1576,-5986){\makebox(0,0)[lb]{\smash{{\SetFigFont{10}{12.0}{\rmdefault}{\mddefault}{\updefault}{\color[rgb]{0,0,0}$\Gamma_1$}%
}}}}
\put(3376,-5986){\makebox(0,0)[lb]{\smash{{\SetFigFont{10}{12.0}{\rmdefault}{\mddefault}{\updefault}{\color[rgb]{0,0,0}$\Gamma_1$}%
}}}}
\put(2326,-5311){\makebox(0,0)[lb]{\smash{{\SetFigFont{10}{12.0}{\rmdefault}{\mddefault}{\updefault}{\color[rgb]{0,0,0}$h_{\Gamma_1,\Delta_1}$}%
}}}}
\put(3976,-5311){\makebox(0,0)[lb]{\smash{{\SetFigFont{10}{12.0}{\rmdefault}{\mddefault}{\updefault}{\color[rgb]{0,0,0}$h_{\Gamma_1,\Delta_1}$}%
}}}}
\put(2776,-3886){\makebox(0,0)[lb]{\smash{{\SetFigFont{10}{12.0}{\rmdefault}{\mddefault}{\updefault}{\color[rgb]{0,0,0}$\Delta_{3,u}$}%
}}}}
\put(2776,-4786){\makebox(0,0)[lb]{\smash{{\SetFigFont{10}{12.0}{\rmdefault}{\mddefault}{\updefault}{\color[rgb]{0,0,0}$\Delta_1$}%
}}}}
\put(3376,-5086){\makebox(0,0)[lb]{\smash{{\SetFigFont{10}{12.0}{\rmdefault}{\mddefault}{\updefault}{\color[rgb]{0,0,0}$\Delta_1$}%
}}}}
\put(1126,-5161){\makebox(0,0)[lb]{\smash{{\SetFigFont{10}{12.0}{\rmdefault}{\mddefault}{\updefault}{\color[rgb]{0,0,0}$\Delta_{3,u}$}%
}}}}
\put(1201,-4486){\makebox(0,0)[lb]{\smash{{\SetFigFont{10}{12.0}{\rmdefault}{\mddefault}{\updefault}{\color[rgb]{0,0,0}$h_{\Delta_1,\Delta_{3,u}}$}%
}}}}
\end{picture}%
\caption{\label{fig:pearly-sequence}}
\end{center}%
\end{figure}%

\subsection{More families\label{subsec:overview}}
We now return to the situation from Corollary \ref{th:m-family}. By exactly the same argument (or otherwise by using the functoriality under $F$), one has a family $\GG_3$ with fibres $\Gamma_{3,u}$, and which otherwise has the same properties as $\DD_3$.

\begin{lemma} \label{th:constant-families}
The constant families $\DD_1 = \RR \otimes_R \Delta_1$, $\GG_1 = \RR \otimes_R \Gamma_1$ also follow $[\gamma]$. Moreover, one can choose relative connections on them in such a way that the induced connection on
\begin{equation}
H^*(\mathit{hom}_{\Fuk(E)}(\GG_1,\DD_1)) \iso \RR \otimes_R \mathit{HF}^*(\Gamma_1,\Delta_1)
\end{equation}
is trivial.
\end{lemma}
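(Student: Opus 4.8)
The plan is to reduce the statement to the corresponding assertion for the two-torus. The key observation is that $\Delta_1$ and $\Gamma_1$ both lie in the image of the $A_\infty$-functor $G \colon Q_p \to \Fuk(E)$ of the previous subsection: indeed $\Delta_1 = G(X_1)$, and $\Gamma_1$ is quasi-isomorphic to $G$ applied to an object of $Q_p^{\mathit{tw}}$. The reason is that $\Gamma_1 = F(\Delta_1)$, and on the subcategory spanned by the $\Delta_i$ the automorphism $F$ acts in a way that, after passing to cohomology, only sees the algebra $Q$ and the polynomial $p$; more concretely, the exact triangle technology (as in \eqref{eq:m-triangle}) together with Lemma \ref{th:degree1} lets one realise $\Gamma_1$, up to quasi-isomorphism, as a twisted complex built out of $\Delta_1$ and $\Delta_2$, hence as $G^{\mathit{tw}}$ of a twisted complex over $Q_p$. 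Alternatively, and perhaps more cleanly, one works directly over the two-torus $T$: under the equivalence $Q_p^{\mathit{perf}} \simeq \Fuk(T)^{\mathit{perf}}$ of Corollary \ref{th:whole-fukaya-torus}, the object $\Gamma_1$ corresponds to the same line $L_1 = \{q=0\}$ (since $F$ is trivial on the base and $\Gamma_1$ fibres over the same circle), so the pair $(\Gamma_1, \Delta_1)$ is the pushforward under $G$ of a pair of constant families over $T$ whose underlying objects are both $L_1$.

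First I would make this precise: choose, over the curve $\SS$, the \emph{constant} families $\mathcal{L}_1 = \RR \otimes_R L_1$ in $\QQ_p^{\mathit{perf}}$ and equip them with the trivial relative connection relative to the deformation field $\theta \otimes [dq]$ on $\Fuk(T)$. Because $[dq]$ maps to $-2[g_2]$ and the leading-order term $(g_2)^0_{[1]}$ is supported on $X_2$ while $(g_2)^0_{[k]}$ vanishes for $k=1$ (Addendum \ref{th:quasi-generators}), the constant family with underlying object $L_1$ (built from $X_1$ only, as the cone $C_v$ is, or as $L_1$ itself is an object of $Q_p$) genuinely follows $[\gamma]$: its deformation cocycle can be taken to be zero, and $\gamma^{\mathit{tw},0}$ restricted to this object is a coboundary by inspection of \eqref{eq:explicit-gamma}. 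Then I would apply the functoriality machinery of Section \ref{subsec:functor}: under Assumption \ref{th:matching-deformations}, which holds here by Lemma \ref{th:g-pull} (the relevant $\beta$ exists since $H(G_*)$ is an isomorphism in degrees $\le 1$ by Lemma \ref{th:g-push}), the image $\GG^{\mathit{perf}}$ of a family following $[\gamma]$ on $\Fuk(T)$ follows the pushed-forward deformation field on $\Fuk(E)$, which by Lemma \ref{th:g-pull} is exactly the image of $\theta \otimes [dq]$ under the open-closed map, i.e.\ our $[\gamma]$. This gives that $\GG^{\mathit{perf}}(\mathcal{L}_1)$, with the pushed-forward relative connection \eqref{eq:image-relative-connection-for-modules}, follows $[\gamma]$; and since $\mathcal{L}_1$ is a constant family, so is its image, which is $\GG_1$ respectively $\DD_1$ up to quasi-isomorphism.

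Finally, for the triviality of the induced connection on $H^*(\mathit{hom}_{\Fuk(E)}(\GG_1,\DD_1))$: by \eqref{eq:strict-functoriality-of-connections} the induced connection on the morphism space between the two image families is (strictly!) the $G^{\mathit{perf}}$-image of the induced connection on $\mathit{hom}_{\QQ_p^{\mathit{perf}}}(\mathcal{L}_1,\mathcal{L}_1)$. But the latter is the trivial connection on $\RR \otimes_R \mathit{Hom}_{H^0(Q_p)}(L_1,L_1) \iso \RR \otimes_R H^*(L_1;R)$ — this is exactly the content of the ``covariantly constant section $[b \otimes e_\PP]$'' argument in the proof of Corollary \ref{th:universal-family}, applied with the (constant) diagonal-type bimodule, since both families are constant and the relative connections were chosen to be the trivial ones. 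Under the identification $H^*(\mathit{hom}_{\Fuk(E)}(\GG_1,\DD_1)) \iso \RR \otimes_R \mathit{HF}^*(\Gamma_1,\Delta_1)$, which is \eqref{eq:circle-gd}, this is precisely the trivial connection claimed.

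The main obstacle I anticipate is the first step — pinning down that $\Gamma_1$ really is, canonically enough, the pushforward of a constant family over $T$ (or $Q_p$), including the compatibility of the chosen relative connection with that identification. The subtlety is that $\Gamma_1 = F(\Delta_1)$ is \emph{a priori} only an object of $\Fuk(E)$, not manifestly in the image of $G$; one has to invoke the exact-triangle description and Lemma \ref{th:degree1} to present it as $G^{\mathit{tw}}$ of an explicit twisted complex over $Q_p$, and then check that the relative connection produced by \eqref{eq:image-relative-connection-for-modules} agrees on the nose (not just up to coboundary) with the one needed to get a \emph{strictly} trivial induced connection — though in fact, since we only need triviality up to change of trivialisation, the coboundary ambiguity discussed after \eqref{eq:modify-connection} is harmless here. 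Everything else is a direct application of the functoriality package of Section \ref{subsec:functor} together with Corollary \ref{th:universal-family}.
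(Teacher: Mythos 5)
Your proposal has a fatal gap at the first step: the claim that $\Gamma_1$ lies in the image of the functor $G^{\mathit{tw}}$ (or $G^{\mathit{perf}}$). This is false in general. The whole point of Section \ref{sec:mapping-tori} is that $\mathit{HF}^*(\Gamma_1,\Delta_1) \iso H^*(S^1;R) \otimes \mathit{HF}^*(f)$ (this is \eqref{eq:circle-gd}), which genuinely depends on $f$, whereas the image of $G^{\mathit{perf}}$ is quasi-equivalent to $Q_p^{\mathit{perf}} \simeq D^b\mathit{Coh}(Y_p)$, a fixed category independent of $f$. If $\Gamma_1$ were quasi-isomorphic to some $G^{\mathit{perf}}$-image, its Floer cohomology against $\Delta_1 = G(X_1)$ would be constrained by the combinatorics of the elliptic curve $Y_p$ and could not encode arbitrary $\mathit{HF}^*(f)$. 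Your appeal to the ``exact triangle technology'' of \eqref{eq:m-triangle} does not help: those triangles only relate $\Delta_1, \Delta_2, \Delta_{3,u}$, and never involve $\Gamma_1$. The alternative phrasing (``$\Gamma_1$ corresponds to the same line $L_1$'') would even force $\Gamma_1 \iso \Delta_1$, which is certainly wrong for $f \neq \mathit{id}$. You do flag this as the main obstacle at the end, but it is not a technical gap to be filled in — the reduction simply does not work.

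The paper's own proof is short and orthogonal to what you tried. It observes that $[dq]$ is Poincar\'e dual to the hypersurface $\{q = 1/2\} \subset E$, which is disjoint from both $\Delta_1 = \{q = 0,\, y = x\}$ and $\Gamma_1 = \{q = 0,\, y = f(x)\}$. By Example \ref{th:totally-disjoint}, the open-closed image of $[dq]$ restricted to the full subcategory with just these two objects can be represented by the identically zero Hochschild cochain. Since $[\gamma] = \theta \otimes [g]$ vanishes on this subcategory, the constant families with trivial pre-connections (i.e.\ zero $\alpha_\XX$ and the canonical connections on $\RR$) have vanishing deformation cocycle, hence follow $[\gamma]$ trivially, and the induced connection on the morphism complex is the obvious constant one. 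This is not a reduction to the torus at all but a direct geometric vanishing; there is no need to present $\Gamma_1$ as a module over $Q_p$, and indeed one should not try to, since it is not one.
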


\begin{proof}
The class $[dq]$ is dual to the hypersurface $\{q = 1/2\}$, which is disjoint from both $\Delta_1$ and $\Gamma_1$. As an instance of Example \ref{th:totally-disjoint}, it follows that $[g]$ vanishes on the subcategory with these two objects. If we then choose trivial relative connections, the result is obviously true.
\end{proof}

Without changing the notation, we will now apply the Yoneda embedding and consider $\DD_1$ and $\GG_1$ as objects of $\Fuk(E)^{\mathit{perf}}$. Lemma \ref{th:constant-families} still holds in this context.

\begin{corollary} \label{th:construct-family}
Fix a one-dimensional subspace $B_0 \subset \mathit{HF}^{d-1}(f)$. Then, for suitable choices of relative connections, there is a line bundle $\BB \subset H^0(\mathit{hom}_{\Fuk(E)^{\mathit{perf}}}(\GG_3,\DD_3[d]))$ invariant under the induced connection, whose restriction to any fibre agrees with $B = H^1(S^1;R) \otimes B_0$ under the isomorphism \eqref{eq:infty-infty}.
\end{corollary}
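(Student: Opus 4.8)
The plan is to build the desired line bundle $\BB$ inside the family of morphism spaces by transporting a specific cohomology class along the deformation field, using the constant families $\DD_1,\GG_1$ as intermediaries. First I would note that by Lemma \ref{th:its-projective}, each of the cohomology modules $H^*(\mathit{hom}_{\Fuk(E)^{\mathit{perf}}}(\DD_1,\DD_3))$, $H^*(\mathit{hom}_{\Fuk(E)^{\mathit{perf}}}(\GG_1,\DD_3))$, $H^*(\mathit{hom}_{\Fuk(E)^{\mathit{perf}}}(\GG_3,\GG_1))$, and $H^*(\mathit{hom}_{\Fuk(E)^{\mathit{perf}}}(\GG_3,\DD_3))$ is finitely generated and projective over $\RR$, hence a vector bundle on $\SS$, and carries a connection induced by relative connections on the four families involved (all of which follow $[\gamma]$ by Corollaries \ref{th:m-family}, the analogous statement for $\GG_3$, and Lemma \ref{th:constant-families}). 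The connections on these bundles act as derivations with respect to the composition maps, by \eqref{eq:multiplicative-connections}.

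Next I would identify, at a single fibre, the class to be transported. At the point $(s_1,s_2)$ corresponding to the parameter value $u=1$ (or more conveniently a generic $u_0 \notin \{\pm\hbar^{k/2}\}$, moved to our basepoint using the translation structure of the elliptic curve as in the proof of Lemma \ref{th:contradiction}), the fibres of $\DD_3$ and $\GG_3$ are $\Delta_{3,u_0}$ and $\Gamma_{3,u_0}$. Using the identifications \eqref{eq:circle-gd}, \eqref{eq:point-gd}, \eqref{eq:infty-infty}, together with the products computed in Lemma \ref{th:2-products}, I would realize $B = H^1(S^1;R)\otimes B_0 \subset \mathit{HF}^d(\Gamma_{3,u_0},\Delta_{3,u_0})$ as the image of the composition map
\begin{equation*}
\mathit{HF}^*(\Delta_1,\Delta_{3,u_0}) \otimes \mathit{HF}^*(\Gamma_{3,u_0},\Gamma_1) \otimes \mathit{HF}^*(\Gamma_1,\Delta_1) \longrightarrow \mathit{HF}^d(\Gamma_{3,u_0},\Delta_{3,u_0})
\end{equation*}
applied to (the class dual to the point $\otimes$) a fixed generator of $B_0$ (viewed appropriately in $\mathit{HF}^*(\Gamma_1,\Delta_1) \iso H^*(S^1;R)\otimes\mathit{HF}^*(f)$) $\otimes$ (the fundamental class of $K$). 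The point of Lemma \ref{th:2-products} is exactly that this triple product lands in, and spans, the $H^1(S^1;R)$ summand and reproduces the quantum cap structure, so it genuinely produces $B$ and not something smaller.

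Then I would globalize: since $\DD_1$ and $\GG_1$ are constant families carrying trivial connections (Lemma \ref{th:constant-families}), the constant sections of $H^*(\mathit{hom}(\GG_1,\DD_1))$ are covariantly constant; choosing relative connections on $\DD_3$ and $\GG_3$ we obtain connections on $H^*(\mathit{hom}(\DD_1,\DD_3))$ and $H^*(\mathit{hom}(\GG_3,\GG_1))$, and Lemma \ref{th:rational-section} (or just invariance under the connection of the image of a composition of connection-compatible maps, as in the proof of the first Claim in Section \ref{subsec:unique}) shows that the image of the triple composition is a subbundle invariant under the induced connection on $H^0(\mathit{hom}(\GG_3,\DD_3[d]))$. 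Because at the chosen fibre this image is the one-dimensional space $B$, and subbundles of a vector bundle with connection that are invariant under the connection have locally constant rank, the image is a line bundle $\BB$ on a (possibly shrunk) affine neighbourhood, with the required fibre. The main obstacle I anticipate is precisely the verification that the triple product at the basepoint equals $B$ rather than degenerating to zero or jumping in dimension: this requires pinning down the $S^1$-direction bookkeeping in \eqref{eq:circle-gd}--\eqref{eq:infty-infty} and checking that the quantum cap product by the fundamental class of $K$ is the identity, so that $B_0$ survives intact; once that is in hand, the connection-invariance and constancy-of-rank arguments are routine applications of the machinery of Sections \ref{subsec:families}--\ref{subsec:unique}.
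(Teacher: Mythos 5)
Your proposal is correct and follows essentially the same route as the paper: the triple composition $\GG_3 \to \GG_1 \to \DD_1 \to \DD_3$ through the two constant families, the trivial connections from Lemma \ref{th:constant-families}, compatibility of the cohomology-level connections with the product as in \eqref{eq:multiplicative-connections}, the identification of the fibrewise output via Lemma \ref{th:2-products}, and the automatic projectivity/line-bundle conclusion from invariance under the connection. A couple of small cosmetic slips: the element you feed in from $\mathit{HF}^*(\Delta_1,\Delta_{3,u_0})$ (and from $\mathit{HF}^*(\Gamma_{3,u_0},\Gamma_1)$) is the generator corresponding to the identity class $1 \in H^0(K;R)$ (i.e.\ the unit of the quantum cap module structure), not the fundamental class of $K$ or a point class, and your tensor ordering is scrambled relative to the composition path — but these do not affect the argument; once you invoke Lemma \ref{th:2-products} correctly the verification is exactly the one the paper carries out.
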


\begin{proof}
Consider the double product (composition of two ordinary products in the Fukaya category; the ordering is irrelevant by associativity)
\begin{equation} \label{eq:left-right-product}
\begin{aligned}
& H^0(\mathit{hom}_{\Fuk(E)^{\mathit{perf}}}(\DD_1,\DD_3)) \otimes H^*(\mathit{hom}_{\Fuk(E)^{\mathit{perf}}}(\GG_1,\DD_1)) \otimes H^1(\mathit{hom}_{\Fuk(E)^{\mathit{perf}}}(\GG_3,\GG_1)) \\ & \qquad \qquad \longrightarrow H^*(\mathit{hom}_{\Fuk(E)^{\mathit{perf}}}(\GG_3,\DD_3))[1].
\end{aligned}
\end{equation}
We choose relative connections on $\GG_1$ and $\DD_1$ as in Lemma \ref{th:constant-families}. Consider the subbundle $\RR \otimes H^0(S^1;R) \otimes B_0$ of $H^*(\mathit{hom}_{\Fuk(E)^{\mathit{perf}}}(\GG_1,\DD_1)) \iso \RR \otimes H^*(S^1) \otimes \mathit{HF}^*(f)$, which is of course preserved by the connection. The leftmost and rightmost factors on the LHS of \eqref{eq:left-right-product} are line bundles. Hence, the image of our subbundle under \eqref{eq:left-right-product} yields a subbundle $\BB \subset H^d(\mathit{hom}_{\Fuk(E)^{\mathit{perf}}}(\GG_3,\DD_3))$ which, because of the compatibility of the product with the connections \eqref{eq:multiplicative-connections}, is itself preserved by the connection. At any point of $\SS$, \eqref{eq:left-right-product} can be written as a map
\begin{equation}
H^*(S^1;R) \otimes \mathit{HF}^*(f) \longrightarrow H^*(S^1;R) \otimes \mathit{HF}^*(f)[1].
\end{equation}
From our computation of \eqref{eq:d-product-1} and \eqref{eq:d-product-2}, we know that this is the identity on $\mathit{HF}^*(f)$ times the cup-product with a nonzero class in $H^1(S^1;R)$. This shows that $\BB$ has the desired property.
\end{proof}

This allows one to apply parallel transport at least to a certain part of $\mathit{HF}^d(\Gamma_{3,u},\Delta_{3,u})$ (probably, the same holds for the entire Floer group, but we will not consider this point here). The next issue is uniqueness, which can be dealt with by using Proposition \ref{th:uniqueness-2} based on the following observation:

\begin{lemma} \label{th:assumption-satisfied}
Any one-dimensional subspace $B \subset \mathit{HF}^0(\Gamma_{3,u}, \Delta_{3,u}[d]) = \mathit{HF}^d(\Gamma_{3,u},\Delta_{3,u})$ satisfies Assumption \ref{th:augmented-plus}.
\end{lemma}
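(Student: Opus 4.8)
The plan is to verify Assumption \ref{th:augmented-plus} directly from the Floer-cohomology computations in Sections \ref{subsec:basic-mapping-torus}--\ref{subsec:overview}. Recall that the two objects under consideration are $M_0 = \Gamma_{3,u}$ and $M_1 = \Delta_{3,u}[d]$. First I would check that each satisfies Assumption \ref{th:augmented}: by Lemma \ref{th:project} there are no non-constant holomorphic discs, so $\mathit{HF}^*(\Delta_{3,u},\Delta_{3,u}) \iso H^*(S^1 \times K;R)$ and $\mathit{HF}^*(\Gamma_{3,u},\Gamma_{3,u}) \iso H^*(S^1 \times K;R)$ as rings, using Example \ref{th:single-l}. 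The degree-zero part $H^0(S^1 \times K;R) \iso R$ is just the ground field, so the endomorphism ring in degree $0$ is $R$ itself, which is trivially commutative with nilpotent ideal of codimension $1$ (in fact the nilpotent ideal is zero). Shifting $\Delta_{3,u}$ by $d$ does not change its degree-zero endomorphisms, so both $M_k$ satisfy Assumption \ref{th:augmented}.

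Next I would address the condition \eqref{eq:swap-sides}, that the two multiplication maps
\[
B \otimes \mathit{Hom}_{H^0(A^{\mathit{perf}})}(M_0,M_0) \longrightarrow \mathit{Hom}_{H^0(A^{\mathit{perf}})}(M_0,M_1), \quad
\mathit{Hom}_{H^0(A^{\mathit{perf}})}(M_1,M_1) \otimes B \longrightarrow \mathit{Hom}_{H^0(A^{\mathit{perf}})}(M_0,M_1)
\]
have the same image. Since $\mathit{Hom}_{H^0}(M_0,M_0) = \mathit{Hom}_{H^0}(M_1,M_1) = R$, both maps are simply scalar multiplication on the line $B$, so their images both equal $B$. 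This makes \eqref{eq:swap-sides} automatic. The only genuine content of the lemma, therefore, is the verification of Assumption \ref{th:augmented} for the shifted object, which reduces to the observation that passing from $\Delta_{3,u}$ to $\Delta_{3,u}[d]$ only relocates cohomology in the grading and leaves $\mathit{Hom}_{H^0}$ unchanged.

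I expect the main (and essentially only) obstacle to be bookkeeping rather than conceptual: one must confirm that the relevant morphism spaces really are one-dimensional in the appropriate degrees — that is, that $\mathit{HF}^0(\Gamma_{3,u},\Delta_{3,u}[d])$ has a well-defined notion of ``degree zero'' and that the endomorphism rings are honestly $R$ and not something larger because of some subtlety in the Morse-Bott model. This is handled by \eqref{eq:infty-infty} together with the explicit computation that $\mathit{HF}^*(\Gamma_{3,u},\Gamma_{3,u}) \iso H^*(S^1;R) \otimes \mathit{HF}^*(f)$, whose degree-zero part is $H^0(S^1;R) \otimes \mathit{HF}^0(f) \iso R$ (using that $\mathit{HF}^0(f)$ contains the unit and, in the relevant degree, nothing else contributes). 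Once these dimension counts are in place, the lemma follows immediately, as indicated above.
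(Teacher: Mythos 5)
Your argument is correct for the lemma as literally stated, but it diverges from the paper's approach in a way that matters for what comes later. You observe that in the $\Z$-graded setting, $\mathit{Hom}_{H^0(A^{\mathit{perf}})}(M_k,M_k) \iso H^0(S^1 \times K;R) = R$, so the two maps in \eqref{eq:swap-sides} are both just scalar multiplications on the line $B$ and trivially share the same image. That is a valid, and shorter, proof. The paper, however, deliberately avoids relying on this triviality: its proof identifies the left and right module actions of $\mathit{QH}^*(E)$ on $\mathit{HF}^*(\Gamma_{3,u},\Delta_{3,u})$ via the open-closed string map, using that the restriction maps $\mathit{QH}^*(E)\to H^*(\Delta_{3,u};R)$ and $\mathit{QH}^*(E)\to H^*(\Gamma_{3,u};R)$ are surjective and coincide. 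The reason for going to this trouble is spelled out in Addendum \ref{th:sneaky}: the argument must survive reducing the grading of the Fukaya category to $\Z/2$, which is what actually happens in Section \ref{subsec:final} (see the proof of Proposition \ref{th:1}). In $\Z/2$-grading, $\mathit{Hom}_{H^0}(M_k,M_k)$ becomes the full even part of $H^*(S^1\times K;R)$, which is far from one-dimensional, and your ``both maps are scalar multiplication'' step collapses. So your proof, while valid for the statement in isolation, would leave a gap at the point where the lemma is actually invoked. One small further inaccuracy in your last paragraph: you write $\mathit{HF}^*(\Gamma_{3,u},\Gamma_{3,u}) \iso H^*(S^1;R)\otimes \mathit{HF}^*(f)$, but that is the computation of $\mathit{HF}^*(\Gamma_{3,u},\Delta_{3,u})$ (equation \eqref{eq:infty-infty}); the self-Floer cohomology is $H^*(S^1\times K;R)$ by Lemma \ref{th:project} and Example \ref{th:single-l}. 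The degree-zero conclusion ($\iso R$) is still right, but via the ordinary cohomology of $S^1\times K$, not via $\mathit{HF}^0(f)$.
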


\begin{proof}
Assumption \ref{th:augmented} for each object is obvious, since $\mathit{HF}^*(\Delta_{3,u},\Delta_{3,u}) \iso H^*(\Delta_{3,u};R)$ as a ring, and the same for $\Gamma_{3,u}$. The products in \eqref{eq:swap-sides} are part of the Floer product structure
\begin{equation} \label{eq:2-products-2}
\begin{aligned}
& \mathit{HF}^*(\Gamma_{3,u},\Delta_{3,u}) \otimes \mathit{HF}^*(\Gamma_{3,u},\Gamma_{3,u}) \longrightarrow
\mathit{HF}^*(\Gamma_{3,u},\Delta_{3,u}), \\
& \mathit{HF}^*(\Delta_{3,u},\Delta_{3,u}) \otimes \mathit{HF}^*(\Gamma_{3,u},\Delta_{3,u}) \longrightarrow
\mathit{HF}^*(\Gamma_{3,u},\Delta_{3,u}).
\end{aligned}
\end{equation}
One can determine this explicitly in the manner of Lemma \ref{th:2-products}, but we prefer to take a shortcut which bypasses computation. Namely, as part of the open-closed string map we have a map $\mathit{QH}^*(E) \rightarrow \mathit{HF}^*(\Delta_{3,u},\Delta_{3,u})$, which in this case agrees with the ordinary restriction map (in particular is surjective; compare Examples \ref{th:take-a-single} and \ref{th:quartic}), and the same for $\Gamma_{3,u}$. In fact, by assumption on $f$ our two Lagrangian submanifolds are diffeomorphic, and the restriction maps are the same. One combines this restriction map with \eqref{eq:2-products-2} to yield both a left and a right action of $\mathit{QH}^*(E)$ on $\mathit{HF}^*(\Gamma_{3,u},\Delta_{3,u})$. One can prove geometrically as in \cite[Figure 1]{seidel-solomon10} (or alternatively, derive from the fact that this is part of a map landing in Hochschild cohomology) that these two actions coincide up to Koszul signs. This leads directly to the required property.
\end{proof}

\begin{addendum} \label{th:sneaky}
The reader may have noticed that, in view of Assumption \ref{th:augmented-plus} as originally stated, we only needed to prove the required properties for the degree $0$ parts of $\mathit{HF}^*(\Delta_{3,u},\Delta_{3,u})$ and $\mathit{HF}^*(\Gamma_{3,u},\Gamma_{3,u})$, which is much easier. The real point of the argument above, which will become relevant only later, is that it still yields the desired result if we reduce the grading of the Fukaya category to $\bZ/2$.
\end{addendum}

\subsection{A double covering trick\label{subsec:covering-trick}}
Let $z: \tilde{E} \rightarrow E$ be the double cover associated to $(1,0) \in H^1(T;\bZ/2) \iso H^1(E;\bZ/2)$. Concretely,
\begin{equation} \label{eq:symplectic-mapping-torus-2}
\tilde{E} = \bR \times \bR \times K \times K\;\; / \;\; (p,q,x,y) \sim (p,q-1,x,y) \sim (p-2,q,f^2(x),f^2(y)), \\
\end{equation}
with the symplectic form $\omega_{\tilde{E}}$ pulled back from $E$. This is the mapping torus of $f^2 \times f^2$, except that the area of the base $T$ has been multiplied by $2$. Fukaya category computations for $\tilde{E}$ largely follow those for $E$, so we will only summarize the results. We have Lagrangian submanifolds $\tilde\Delta_1,\tilde\Delta_2,\tilde\Delta_{3,u}$ (fibrewise equal to the diagonal) and $\tilde\Gamma_1,\tilde\Gamma_2,\tilde\Gamma_{3,u}$ (fibrewise equal to the graph of $f^2$) defined analogously to \eqref{eq:lagrangians}, \eqref{eq:lagrangians-2}. To clarify, $\tilde\Delta_2$ is now fibered over the path $\{q=-p\}$ in $\tilde{T}$, hence does not project to $\Delta_2$ (and the same holds for $\tilde\Gamma_2$). On the other hand, for $u = \hbar^{m_0}a$ we still take $\Delta_{3,u}$ to be fibered over $\{p = m_0\}$ (and correspondingly for $\tilde{\Gamma}_{3,u}$). 

As in \eqref{eq:infty-infty} there are canonical isomorphisms
\begin{equation} \label{eq:infty-infty-lifted}
\mathit{HF}^*(\tilde\Gamma_{3,u},\tilde\Delta_{3,u}) \iso H^*(S^1;R) \otimes \mathit{HF}^*(f^2).
\end{equation}
Recall from Section \ref{subsec:pushdown} that $z$ gives rise to a functor $Z$, defined on a full subcategory $\tilde{F} \subset \mathit{Fuk}(\tilde{E})$ (that contains all the Lagrangian submanifolds occurring in our discussion), and which lands in $\mathit{Fuk}(E)$. In particular, $Z(\tilde{\Delta}_{3,u}) = \Delta_{3,u}$, whereas $Z(\tilde{\Gamma}_{3,u})$ is the analogue of $\Gamma_{3,u}$ defined using the graph of $f^2$ in each fibre. Our functor gives an isomorphism
\begin{equation} \label{eq:pushforward-iso}
\mathit{HF}^*(Z(\tilde{\Gamma}_{3,u}), Z(\tilde\Delta_{3,u})) \iso \mathit{HF}^*(\tilde{\Gamma}_{3,u},\tilde{\Delta}_{3,u}) \iso H^*(S^1;R) \otimes \mathit{HF}^*(f^2).
\end{equation}
Note that $Z(\tilde{\Gamma}_{3,u})$ and $Z(\tilde\Delta_{3,u})$ only depend on the class of $u$ in $R^\times/\hbar^\bZ$. However:

\begin{lemma} \label{th:c-change}
Passing from $u$ to $\hbar u$ changes the second isomorphism in \eqref{eq:pushforward-iso} by composition with the involution $C_{f,f^2}$.
\end{lemma}

\begin{proof}
Consider the diagram of isomorphisms
\begin{equation}
\xymatrix{
& 
\mathit{HF}^*(Z(\tilde{\Gamma}_{3,u}),Z(\tilde{\Delta}_{3,u}))
& \\
\ar[ur]^-{Z}
\mathit{HF}^*(\tilde{\Gamma}_{3,u},\tilde{\Delta}_{3,u}) \ar[rr] \ar[d]
&&
\ar[ul]_-{Z}
\mathit{HF}^*(\tilde{\Gamma}_{3,\hbar u},\tilde{\Delta}_{3,\hbar u}) \ar[d]
\\
H^*(S^1;R) \otimes \mathit{HF}^*(\Gamma,\Delta) \ar[rr] \ar[d]
&&
H^*(S^1;R) \otimes \mathit{HF}^*(\Gamma,\Delta) \ar[d]
\\
H^*(S^1;R) \otimes \mathit{HF}^*(f^2) \ar[rr]^-{C_{f,f^2}}
&&
H^*(S^1;R) \otimes \mathit{HF}^*(f^2)
}
\end{equation}
The top horizontal arrow is the action of the covering transformation for $z: \tilde{E} \rightarrow E$,
\begin{equation} \label{eq:covering-transformation}
(p,q,x,y) \longmapsto (p-1,q,f(x),f(y)). 
\end{equation}
The commutativity of the top triangle follows from the definition of $Z$. The top $\downarrow$s, on the left and right, are isomorphisms \eqref{eq:pushforward-iso}. The middle horizontal arrow is the identity on $H^*(S^1;R)$, combined with the action of $f \times f$ on Lagrangian Floer cohomology in $K \times K$. The commutativity of the square in the middle of the triangle then follows by comparing \eqref{eq:covering-transformation} and \eqref{eq:pushforward-iso}. The bottom $\downarrow$s, on the left and right, are the isomorphisms between Lagrangian Floer cohomology and fixed point Floer cohomology from Example \ref{th:graph}. Inspection of that isomorphism shows that the bottom square in the diagram commutes.
\end{proof}

To take into account the difference in the areas of the base $T$, we take the square unit polynomial $p$ and make a substitution $\hbar \mapsto \hbar^2$. This yields a new polynomial $\tilde{p}$ and associated algebraic curve $\tilde{\SS} = \mathit{Spec}(\tilde{\RR})$, with its one-form $\tilde{\theta}$. In fact, we had already considered these in Addendum \ref{th:double-cover}, where it was pointed out that (after removing finitely many points) $\tilde{\SS}$ is an \'etale double cover of $\SS$, and $\tilde{\theta}$ the pullback of $\theta$. We consider the parametrization of the set of points of $\tilde{\SS}$ by $u \in R^\times/\hbar^{2\bZ}$ which under the covering map induces \eqref{eq:theta-parametrization}. With these slight modifications, the previous argument goes through, yielding perfect families $\tilde{\DD}_3$ and $\tilde{\GG}_3$ over $\tilde{F} \subset \mathit{Fuk}(\tilde{E})$ which follow the image of $\tilde{\theta} \otimes [dq]$ under the open-closed string map, and whose fibres at any point $u$ are isomorphic to $\tilde\Delta_{3,u}$ and $\tilde{\Gamma}_{3,u}$, respectively. We now use \eqref{eq:open-closed-pushforward}, as well as the discussion of functoriality from Section \ref{subsec:functor}, to push these families down to $E$. The outcome are perfect families $Z(\tilde\DD_3)$ and $Z(\tilde{\GG}_3)$ over $\mathit{Fuk}(E)$ which follow $\theta \otimes [dq]$, and whose fibres at $u$ are isomorphic to $Z(\tilde{\Delta}_{3,u})$ and $Z(\tilde\Gamma_{3,u})$.

\begin{assumption} \label{th:both-signs}
For some $d \in \bZ$, $C_{f,f^2}: \mathit{HF}^{d-1}(f^2) \rightarrow \mathit{HF}^{d-1}(f^2)$ is not $\pm \mathit{Id}$.
\end{assumption}

Supposing from now on that this is the case, we can choose a one-dimensional subspace $B_0 \subset \mathit{HF}^{d-1}(f^2)$ which is not preserved by $C_{f,f^2}$. Let's temporarily go back to $\tilde{E}$. The analogue of Lemma \ref{th:construct-family} says that there is a line bundle $\tilde\BB \subset H^0(\mathit{hom}_{\Fuk(\tilde{E})^{\mathit{perf}}}(\tilde\GG_3,\tilde\DD_3[d]))$ invariant under the induced connection on that space, whose restriction to any fibre agrees with $B = H^1(S^1;R) \otimes B_0$ under the isomorphism \eqref{eq:infty-infty-lifted}. Applying $Z$ to this, and using the compatibility of induced connections with functors shown in \eqref{eq:strict-functoriality-of-connections}, we find that the image line bundle $Z(\tilde\BB)$ is still invariant under the induced connection.

\begin{lemma} \label{th:non-iso}
Take two points $\tilde{s}_\pm \in \tilde{\SS}$ corresponding to $u$ and $\hbar u$ (for any $u$ such that both make sense, which means excluding the finitely many branch points). Then, the triples
\begin{equation}
(Z(\tilde\GG_3)_{\tilde{s}_\pm}, Z(\tilde\DD_3[d])_{\tilde{s}_\pm}, Z(\tilde\BB)_{\tilde{s}_\pm})
\end{equation}
are not mutually isomorphic in $H^0(\mathit{Fuk}(E)^{\mathit{perf}})$.
\end{lemma}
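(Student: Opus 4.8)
The plan is to show that an isomorphism of these triples would force $C_{f,f^2}$ to preserve $B_0$, contradicting the choice of $B_0$. First I would unwind the definitions: by construction $Z(\tilde\GG_3)_{\tilde s_\pm}$ is $Z(\tilde\Gamma_{3,u})$ (independent of which of the two lifts we take, since the two points project to the same point of $\SS$), and likewise $Z(\tilde\DD_3[d])_{\tilde s_\pm} = Z(\tilde\Delta_{3,u})[d]$. So the underlying objects are literally the same for $\tilde s_+$ and $\tilde s_-$; the only thing that could differ is the subspace $Z(\tilde\BB)_{\tilde s_\pm} \subset \mathit{HF}^0(Z(\tilde\Gamma_{3,u}),Z(\tilde\Delta_{3,u})[d]) = \mathit{HF}^d(Z(\tilde\Gamma_{3,u}),Z(\tilde\Delta_{3,u}))$.

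Next I would identify these two subspaces explicitly. By the analogue of Lemma \ref{th:construct-family} for $\tilde E$, the fibre $\tilde\BB_{\tilde s_\pm}$ is $H^1(S^1;R)\otimes B_0$ under the canonical isomorphism \eqref{eq:infty-infty-lifted}, $\mathit{HF}^*(\tilde\Gamma_{3,u'},\tilde\Delta_{3,u'}) \iso H^*(S^1;R)\otimes\mathit{HF}^*(f^2)$, where $u' = u$ for $\tilde s_+$ and $u' = \hbar u$ for $\tilde s_-$. Applying the pushforward functor $Z$ transports this through the isomorphism \eqref{eq:pushforward-iso}. The key input is the remark following \eqref{eq:pushforward-iso}: going from $u$ to $\hbar u$ changes the identification $\mathit{HF}^*(Z(\tilde\Gamma_{3,u}),Z(\tilde\Delta_{3,u})) \iso H^*(S^1;R)\otimes\mathit{HF}^*(f^2)$ by $\mathrm{id}_{H^*(S^1)}\otimes C_{f,f^2}$. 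Hence, measured inside the \emph{fixed} vector space $\mathit{HF}^d(Z(\tilde\Gamma_{3,u}),Z(\tilde\Delta_{3,u}))$, we have $Z(\tilde\BB)_{\tilde s_+} = H^1(S^1;R)\otimes B_0$ while $Z(\tilde\BB)_{\tilde s_-} = H^1(S^1;R)\otimes C_{f,f^2}(B_0)$ (or its image under the relevant identification). Since $B_0$ was chosen not to be preserved by $C_{f,f^2}$, these two lines in $\mathit{HF}^{d-1}(f^2)$ are distinct, so the two subspaces of $\mathit{HF}^d(Z(\tilde\Gamma_{3,u}),Z(\tilde\Delta_{3,u}))$ are genuinely different.

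Finally I would rule out that an automorphism of the pair $(Z(\tilde\Gamma_{3,u}), Z(\tilde\Delta_{3,u})[d])$ could carry one subspace to the other. An isomorphism of triples consists of automorphisms of $Z(\tilde\Gamma_{3,u})$ and of $Z(\tilde\Delta_{3,u})[d]$ in $H^0(\Fuk(E)^{\mathit{perf}})$ intertwining the two one-dimensional subspaces. But by Lemma \ref{th:assumption-satisfied} (applied with $f$ replaced by $f^2$, noting that $Z(\tilde\Delta_{3,u})$ and $Z(\tilde\Gamma_{3,u})$ are exactly the $\Delta_{3,u}$ and $\Gamma_{3,u}$-type objects for the automorphism $f^2$, which by hypothesis on $f$ is still indistinguishable from the identity by topological means), the endomorphism rings of both objects are commutative with nilpotent ideal of codimension one, and left and right multiplication of these rings on $\mathit{HF}^d(Z(\tilde\Gamma_{3,u}),Z(\tilde\Delta_{3,u}))$ agree up to sign; an invertible endomorphism differs from a scalar by a nilpotent, so acting by automorphisms on a given line $B\subset \mathit{HF}^d$ can only change it within the coset $B + \mathit{(nilpotent\ part)}\cdot B$, and in particular cannot move $H^1(S^1;R)\otimes B_0$ to $H^1(S^1;R)\otimes C_{f,f^2}(B_0)$ when the underlying lines in $\mathit{HF}^{d-1}(f^2)$ differ. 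This is the same mechanism as in the last step of the proof of Proposition \ref{th:uniqueness-2}. I expect the main obstacle to be the careful bookkeeping in the previous paragraph: pinning down precisely how $Z$ and the $\hbar$-shift interact with the $H^*(S^1)$-factor in \eqref{eq:pushforward-iso}, and confirming that the discrepancy really does land in $\mathit{HF}^{d-1}(f^2)$ as the line $C_{f,f^2}(B_0)$ rather than being absorbed by some ambiguity; everything else is a direct citation of Lemma \ref{th:assumption-satisfied}, the remark after \eqref{eq:pushforward-iso}, and \eqref{eq:strict-functoriality-of-connections}.
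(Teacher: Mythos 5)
Your proposal is correct and follows the same strategy as the paper's: identify the two triples as having the same objects but with subspaces related by the $C_{f,f^2}$ twist (via the remark after \eqref{eq:pushforward-iso}), then rule out that automorphisms of the two objects could carry one subspace to the other. The one difference is in that last step. The paper's proof is shorter: it observes that $\mathit{HF}^0(z(\tilde\Gamma_{3,u}),z(\tilde\Gamma_{3,u})) \iso H^0(S^1\times K;R) = R$ and likewise for $z(\tilde\Delta_{3,u})$, so the automorphisms $\gamma,\delta$ are forced to be scalars outright and $\delta B\gamma = B \neq C_{f,f^2}(B)$. Your more elaborate route — invoking Lemma~\ref{th:assumption-satisfied} to get commutativity with codimension-one nilpotent ideal, then noting that an invertible element is a unit plus nilpotent and cannot change the line — is exactly the argument the paper records separately as Addendum~\ref{th:not-iso-even-without-grading} for the $\Z/2$-graded case. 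So your version is valid (and slightly more general, covering both gradings at once), but in the $\Z$-graded setting it is more machinery than needed. One small gap in your wording: the step ``cannot move $H^1(S^1;R)\otimes B_0$ to $H^1(S^1;R)\otimes C_{f,f^2}(B_0)$'' relies not just on the nilpotent structure but specifically on $B$ and $C_{f,f^2}(B)$ being concentrated in the \emph{same} single cohomological degree (so the nilpotent contributions, which strictly raise degree, cannot supply the discrepancy); the Addendum makes this explicit, whereas you leave it implicit.
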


\begin{proof}
At $\tilde{s}_+$, the relevant triple consists of the objects $z(\tilde{\Gamma}_{3,u})$ and $z(\tilde{\Delta}_{3,u})[d]$ together with the subspace of $\mathit{HF}^d(z(\tilde{\Gamma}_{3,u}),z(\tilde{\Delta}_{3,u}))$ corresponding to $B$ under the isomorphism \eqref{eq:infty-infty-lifted}. The same holds at $\tilde{s}_-$ but where the isomorphism is twisted by $C_{f,f^2}$, as a consequence of Lemma \ref{th:c-change}. Hence, our statement reduces to the following:

\begin{claim*}
There do not exist invertible elements
\begin{equation}
\begin{aligned}
& \gamma \in \mathit{HF}^0(z(\tilde{\Gamma}_{3,u}),z(\tilde{\Gamma}_{3,u})) \iso H^0(z(\tilde{\Gamma}_{3,u});R) \iso H^0(S^1 \times K;R), \\
& \delta \in \mathit{HF}^0(z(\tilde{\Delta}_{3,u}),z(\tilde{\Delta}_{3,u})) \iso H^0(z(\tilde{\Delta}_{3,u});R) \iso H^0(S^1 \times K;R),
\end{aligned}
\end{equation}
satisfying $\delta B \gamma = C_{f,f^2}(B)$.
\end{claim*}

But that is obvious because both $\mathit{HF}^0$ groups only contain multiples of the identity.
\end{proof}

\begin{addendum} \label{th:not-iso-even-without-grading}
The Claim above, and therefore Lemma \ref{th:non-iso}, continues to hold even if we allow $\delta$ and $\gamma$ to have additional terms of higher even degree. This is because the subspace $B$ itself is concentrated in a single degree.
\end{addendum}

Both points $\tilde{s}_\pm \in \tilde{\SS}$ map to the same point $s \in \SS$. This, together with the analogue of Lemma \ref{th:assumption-satisfied}, triggers Lemma \ref{th:contradiction}, which shows that:

\begin{corollary} \label{th:not-periodic}
If Assumption \ref{th:both-signs} is satisfied, the image of $[dq]$ in $\mathit{HH}^1(\mathit{Fuk}(E),\mathit{Fuk}(E))$ is not a periodic element (for the elliptic curve with one-form obtained as the closure of $\SS$ and $\theta$). \qed
\end{corollary}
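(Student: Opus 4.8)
The proof will be a direct invocation of Lemma \ref{th:contradiction}, with all the data assembled in Sections \ref{subsec:overview} and \ref{subsec:covering-trick}. First I would fix the dictionary with the abstract setup preceding that lemma: the Hochschild class is $[g]$, the image of $[dq] \in \mathit{QH}^1(E)$ under the open-closed string map; the pair $(\bar\SS,\bar\theta)$ is the smooth closure of the affine unit torus curve $\SS$ together with the extension of the one-form $\theta$ from \eqref{eq:punctured-rr-form}; the morphism $\tilde\SS \to \SS$ with pullback one-form $\tilde\theta$ is the \'etale double cover produced by the substitution $\hbar \mapsto \hbar^2$, as in Addendum \ref{th:double-cover}; the families $\tilde\MM_0,\tilde\MM_1$ on $\tilde\SS$ are the pushed-down families $Z(\tilde\GG_3)$ and $Z(\tilde\DD_3[d])$ over $\mathit{Fuk}(E)$, which follow $\tilde\theta \otimes [g]$ by their construction via \eqref{eq:open-closed-pushforward} and the functoriality of Section \ref{subsec:functor}; the invariant line bundle is $Z(\tilde\BB)$, whose invariance under the induced connection is the content of \eqref{eq:strict-functoriality-of-connections}; and the two points $\tilde s_\pm \in \tilde\SS$ over the common image $s \in \SS$ are the ones parametrized by $u$ and $\hbar u$.

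With this dictionary, only two hypotheses of Lemma \ref{th:contradiction} remain to be checked, and both are already available. The first, that $(Z(\tilde\Gamma_{3,u}),Z(\tilde\Delta_{3,u})[d],B)$ satisfies Assumption \ref{th:augmented-plus}, is the ``analogue of Lemma \ref{th:assumption-satisfied}'': one repeats that proof for the objects $Z(\tilde\Delta_{3,u}) = \Delta_{3,u}$ and $Z(\tilde\Gamma_{3,u})$ (the Lagrangian fibered over a circle with fibre the graph of $f^2$), using that the two are diffeomorphic --- because $f^2$, like $f$, is isotopic to the identity --- that their Floer cohomology agrees with ordinary cohomology as a ring, and that they carry the same surjective restriction map from $\mathit{QH}^*(E)$, which forces the left and right actions in \eqref{eq:swap-sides} to coincide up to Koszul signs. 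The second, that the triples at $\tilde s_+$ and $\tilde s_-$ are not isomorphic in $H^0(\mathit{Fuk}(E)^{\mathit{perf}})$, is Lemma \ref{th:non-iso}, which in turn uses Assumption \ref{th:both-signs} through the choice of $B_0 \subset \mathit{HF}^{d-1}(f^2)$ not preserved by $C_{f,f^2}$. Lemma \ref{th:contradiction} then applies and delivers exactly the statement: $[g]$ is not periodic for $\bar\SS$ and $\bar\theta$.

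I do not expect any genuine difficulty here, since the content lies entirely in the preceding lemmas and what remains is bookkeeping. The point requiring the most care is to run the verification of Assumption \ref{th:augmented-plus} for the pushed-down object $Z(\tilde\Gamma_{3,u})$ rather than for $\Gamma_{3,u}$ itself, and --- as flagged in Addendum \ref{th:sneaky} --- to note that this verification, and Lemma \ref{th:non-iso} via Addendum \ref{th:not-iso-even-without-grading}, survive the reduction of the grading to $\Z/2$, which holds because the subspace $B$ is concentrated in a single degree. The mismatch of open subsets and base points that formally intervenes is already absorbed inside Lemma \ref{th:contradiction} by translating along the group structure of $\bar\SS$, so no extra work is needed in the corollary itself.
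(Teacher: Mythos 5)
Your proposal is correct and follows the paper's own (very brief) argument essentially verbatim, simply spelling out the dictionary between the data assembled in Section \ref{subsec:covering-trick} and the hypotheses of Lemma \ref{th:contradiction}, and correctly identifying Lemma \ref{th:non-iso} and the analogue of Lemma \ref{th:assumption-satisfied} as the two hypotheses to be checked. The one point worth flagging is that the $\Z/2$-grading caveats (Addenda \ref{th:sneaky} and \ref{th:not-iso-even-without-grading}) are not actually needed here, since the Fukaya category $\mathit{Fuk}(E)$ in this section is $\Z$-graded; those addenda become relevant only when the argument is re-run in the $\Z/2$-graded context of Section \ref{subsec:final}.
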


Let's have a brief ``straight man'' conterpart of the previous discussion, concerning the case where the symplectic automorphism is the identity, giving rise to the trivial mapping torus $E^{\mathit{triv}} = T \times K^- \times K$. Arguing as in Example \ref{th:product-mirror}, one finds that there are quasi-equivalences
\begin{equation}
\begin{aligned}
\mathit{Fuk}(E^{\mathit{triv}})^{\mathit{perf}} & \iso (\mathit{Fuk}(T) \otimes \mathit{Fuk}(K^-) \otimes \mathit{Fuk}(K))^{\mathit{perf}} \\
& \iso (D^b\mathit{Coh}(Y_p) \otimes D^b\mathit{Coh}(X) \otimes D^b\mathit{Coh}(X))^{\mathit{perf}} \\
& \iso D^b\mathit{Coh}(Y_p \times X \times X),
\end{aligned}
\end{equation}
where $Y_p$ and $X$ are the mirrors of $T$ and $K$, respectively (the fact that one of the copies of $K$ has reversed sign of the symplectic form does not affect the statement, since $\omega_K$ and $-\omega_K$ are related by an involution, as one can see by taking $K$ a real quartic). One can construct a family of bimodules exactly as in Section \ref{subsec:universal-bimodule}, and use that to derive the following analogue of Corollary \ref{th:periodic-lattice}:
\begin{equation} \label{eq:lattice-2}
m_1[g_1] + m_2[g_2] \in \mathit{Per}(D^b\mathit{Coh}(Y_p \times X \times X),\bar{\SS},\bar\theta) \text{ for $m_1 \in \bZ$, $m_2 \in m_1 + 2\bZ$},
\end{equation}
where $[g_1]$, $[g_2]$ are the classes pulled back from $\mathit{HH}^*(Y_p,Y_p) \iso \mathit{HH}^*(Q_p,Q_p)$. Under mirror symmetry, the generators $[g_1]+[g_2]$ and $2[g_2]$ of the lattice in \eqref{eq:lattice-2} correspond to $[dp]$ and $[-dq]$ (compare Lemma \ref{th:g-pull}), hence:

\begin{corollary} \label{th:yes-periodic}
Any element in the image of $H^1(E^{\mathit{triv}};\bZ) \iso \bZ^2 \rightarrow \mathit{HH}^1(\mathit{Fuk}(E^{\mathit{triv}}),\mathit{Fuk}(E^{\mathit{triv}}))$ is periodic (for the same elliptic curve as in Corollary \ref{th:not-periodic}). \qed
\end{corollary}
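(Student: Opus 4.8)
The plan is to derive Corollary \ref{th:yes-periodic} as a straightforward consequence of the mirror-symmetry identification of $\mathit{Fuk}(E^{\mathit{triv}})^{\mathit{perf}}$ with $D^b\mathit{Coh}(Y_p \times X \times X)$ together with the lattice statement \eqref{eq:lattice-2}. First I would record that, since $E^{\mathit{triv}} = T \times K^- \times K$, the K\"unneth decomposition gives $H^1(E^{\mathit{triv}};\Z) \iso H^1(T;\Z) \iso \Z^2$, generated by $[dp]$ and $[dq]$. The open-closed string map sends these classes into $\mathit{HH}^1(\mathit{Fuk}(E^{\mathit{triv}}),\mathit{Fuk}(E^{\mathit{triv}}))$, and I would identify this subspace with the image of $\mathit{HH}^1(T;R) \iso \mathit{HH}^1(Q_p,Q_p)$ under the inclusion of the tensor factor $\mathit{Fuk}(T)$, compatibly with the mirror equivalences; this is exactly the setup under which \eqref{eq:lattice-2} was stated, and it records that $[g_1]+[g_2]$ and $2[g_2]$ are the images of $[dp]$ and $-[dq]$ respectively (invoking Lemma \ref{th:g-pull} for the precise correspondence with the basis from Addendum \ref{th:quasi-generators}).

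The key step is then purely formal: by \eqref{eq:lattice-2}, the set $\mathit{Per}(D^b\mathit{Coh}(Y_p \times X \times X),\bar{\SS},\bar\theta)$ contains $m_1[g_1] + m_2[g_2]$ for all $m_1 \in \Z$ and $m_2 \in m_1 + 2\Z$. Writing $a = [g_1]+[g_2]$ (mirror to $[dp]$) and $b = 2[g_2]$ (mirror to $-[dq]$), an arbitrary element of the image of $H^1(E^{\mathit{triv}};\Z) \iso \Z^2$ is $k_1 a + k_2 b = k_1[g_1] + (k_1 + 2k_2)[g_2]$ with $k_1, k_2 \in \Z$, and since $k_1 + 2k_2 \equiv k_1 \pmod 2$ this indeed lies in the lattice described by \eqref{eq:lattice-2}. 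Thus every integral class is periodic for $D^b\mathit{Coh}(Y_p \times X \times X)$, and transporting this statement across the quasi-equivalence with $\mathit{Fuk}(E^{\mathit{triv}})^{\mathit{perf}}$ — which is legitimate because, as noted after Definition \ref{th:per}, the set of periodic classes is an invariant of the perfect category up to quasi-isomorphism — gives the claim. It is worth noting that the closure of $\SS$ with its one-form $\theta$ is the same elliptic curve $(\bar{\SS},\bar\theta)$ appearing in Corollary \ref{th:not-periodic}, so the two corollaries are stated for literally the same auxiliary data; this is the point that makes them genuinely comparable.

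The only real work is in justifying \eqref{eq:lattice-2} itself, which I would carry out in parallel with the argument already given for the two-torus in Section \ref{subsec:universal-bimodule}: one constructs a right perfect family of $(D^b\mathit{Coh}(Y_p \times X \times X))$-bimodules from the graph of the addition morphism $\Sigma$ on the $Y_p$-factor (crossed with the identity on $X \times X$), checks that it follows the deformation field $[\gamma]$ which is the image of $\theta \otimes [g_2]$, invokes Corollary \ref{th:universal-family} to produce the required families of modules with trivial morphism bundles and connections, and finally applies the weak $\mathit{SL}_2(\Z)$-action (on the $Y_p$-factor, via \cite[Remark 3.15]{mukai81}) to spread periodicity of $[-2g_2]$ over the whole lattice, exactly as in the passage from the corollary after Lemma \ref{th:fm-follows} to Corollary \ref{th:periodic-lattice}. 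The main obstacle, if any, is bookkeeping: verifying that the mirror equivalences and the $A_\infty$-tensor-product decomposition are compatible with the open-closed string maps in the required degree, so that the image of $H^1(E^{\mathit{triv}};\Z)$ really does land in the span of the pulled-back classes $[g_1]$ and $[g_2]$; but this compatibility is precisely what is asserted in the split-generation discussion of Example \ref{th:product-mirror} and in the sentence preceding \eqref{eq:lattice-2}, so no new geometric input is needed.
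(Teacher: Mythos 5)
Your proof follows exactly the route taken in the paper: identify $\mathit{Fuk}(E^{\mathit{triv}})^{\mathit{perf}}$ with $D^b\mathit{Coh}(Y_p \times X \times X)$ via the tensor-product/mirror equivalences of Example \ref{th:product-mirror}, build the family of bimodules from the addition morphism on the $Y_p$-factor to establish the lattice statement \eqref{eq:lattice-2}, use the $\mathit{SL}_2(\Z)$ action to spread periodicity, and match generators $[g_1]+[g_2] \leftrightarrow [dp]$, $2[g_2] \leftrightarrow [-dq]$ as indicated by the analogue of Lemma \ref{th:g-pull}. Your explicit check that $(k_1,k_2) \mapsto (k_1, k_1+2k_2)$ surjects onto $\{(m_1,m_2): m_2 \equiv m_1 \bmod 2\}$ is a small but welcome addition that the paper leaves implicit.
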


As a consequence, we see that if $f$ satisfies Assumption \ref{th:both-signs}, then $E$ is not symplectically isomorphic to $E^{\mathit{triv}}$. Of course, this is by no means the most direct argument available (see the Introduction), but it has the advantage of belonging to the general framework of Fukaya categories.

\subsection{An algebraic viewpoint\label{subsec:algebraic-model}}
Let $A$ be a proper $A_\infty$-category over $R$, together with a functor $G: A \rightarrow A$. The naive {\em mapping torus category} $A^{\mathit{torus}}$ is defined as follows. Objects are of the form $X(d)$, where $X$ is an object of $A$ strictly fixed by $G$, meaning that $G(X) = X$, and $d \in \bZ$ an integer. The definition of the morphism space comes from \eqref{eq:t-subcomplex}:
\begin{equation} \label{eq:t-category}
\mathit{hom}_{A^{\mathit{torus}}}(X_0(d_0),X_1(d_1)) = \mathit{hom}_A(X_0,X_1) \otimes F \oplus
\mathit{hom}_A(X_0,X_1) \otimes F[-1],
\end{equation}
where $F$ is as in \eqref{eq:tate}, and the tensor product is over $R$. It may be more intuitive to (arbitrarily) choose a basis and write $\mathit{hom}_A(X_0,X_1) = C(X_0,X_1) \otimes_\bC R$. Then, elements of $\mathit{hom}_A(X_0,X_1) \otimes F$ can be thought of as series $a(t) = c_0 \hbar^{m_0} t^{n_0} + \cdots$, with the same convergence condition as in \eqref{eq:tate}, but coefficients $c_k \in C(X_0,X_1)$.

\begin{remark}
Because of the definition as a tensor product, we have the additional condition that for any $a(t)$, the coefficients $c_k$ which occur may span only a finite-dimensional subspace of $C(X_0,X_1)$. This is somewhat unnatural in terms of the topological nature of the ring $F$. However, if $A$ is strictly proper (has finite-dimensional morphism spaces), this point is obviously irrelevant, and of course any proper $A_\infty$-category is quasi-isomorphic to a strictly proper one.
\end{remark}

Elements of \eqref{eq:t-category} can be written as pairs $(a(t),b(t))$, where $|b(t)| = |a(t)| - 1$. The differential is
\begin{equation} \label{eq:mu1-torus}
\mu^1_{A^{\mathit{torus}}}(a(t), b(t)) = \big(\mu^1_A(a(t)), \mu^1_A(b(t)) + (-1)^{|a|-1} a(t) + (-1)^{|a|} t^{d_1-d_0} G^1(a(\hbar t)) \big).
\end{equation}

\begin{example} \label{th:multicones}
Consider a single object $X$ fixed by $G$. There is an obvious long exact sequence
\begin{equation} \label{eq:t-sequence}
\cdots H(\mathit{hom}_{A^{\mathit{torus}}}(X(d),X(d))) \rightarrow H(\mathit{hom}_A(X,X)) \otimes F \xrightarrow{\mathit{id} - H(G^1) \otimes T} H(\mathit{hom}_A(X,X)) \otimes F \cdots
\end{equation}
where $T$ is as in \eqref{eq:translation-action}. If we restrict the second map in \eqref{eq:t-sequence} to series in $t$ with vanishing constant term (in $t$), it is actually an isomorphism. Hence, we have the simpler long exact sequence
\begin{equation}
\cdots H(\mathit{hom}_{A^{\mathit{torus}}}(X(d),X(d))) \rightarrow H(\mathit{hom}_A(X,X)) \xrightarrow{\mathit{id}-H(G^1)} H(\mathit{hom}_A(X,X)) \cdots
\end{equation}
\end{example}

The composition of $(a_k(t),b_k(t)) \in \mathit{hom}_{A^{\mathit{torus}}}(X_{k-1}(d_{k-1}),X_k(d_k))$ ($k = 1,2$) is given by
\begin{equation}
\begin{aligned}
& \mu^2_{A^{\mathit{torus}}}((a_2(t),b_2(t)), (a_1(t),b_1(t))) = \big(\mu^2_A(a_2(t),a_1(t)),
(-1)^{|a_2|-1} \mu^2_A(a_2(t),b_1(t)) \\ & \qquad + \mu^2_A(b_2(t),t^{d_1-d_0} G^1(a_1(\hbar t))) +
(-1)^{|a_2|+|a_1|} t^{d_2-d_0} G^2(a_2(\hbar t),a_1(\hbar t)) \big); \\
\end{aligned}
\end{equation}
and similarly for the higher order structure maps.

\begin{lemma} \label{th:t-model}
Suppose that $X \in \mathit{Ob}\, A$ is fixed by $G$. Let $Q_p$ be the $A_\infty$-category associated to the unit torus polynomial. Then there is an $A_\infty$-functor $Q_p \rightarrow A^{\mathit{torus}}$ which maps the two objects of $Q_p$ to $X(0)$ and $X(2)$, respectively.
\end{lemma}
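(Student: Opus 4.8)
The plan is to imitate, in the abstract algebraic setting, the geometric argument that was used to identify $Q_p$ inside the Fukaya categories of $T$ and of the mapping torus $E$ (Lemmas \ref{th:p-p2}, \ref{th:fukaya-torus}, \ref{th:degree1}). First I would compute the relevant low-degree morphism spaces in $A^{\mathit{torus}}$ between the two distinguished objects $X(0)$ and $X(2)$. Since $X$ is fixed by $G$ and $G^1$ acts on $H(\mathit{hom}_A(X,X))$, Example \ref{th:multicones} (applied to the two pairs $(X(0),X(2))$ and $(X(2),X(0))$, using the analogue of \eqref{eq:t-sequence} with the map $\mathit{id} - t^{\pm 2}\,H(G^1)\otimes T$) reduces everything to computing (co)invariants of $T(\pm 2)$ acting on $F$. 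As in the discussion around \eqref{eq:q-algebra-again}, the invariant and coinvariant spaces are spanned by theta functions: $\mathit{Hom}_{A^{\mathit{torus}}}^0(X(0),X(2))$ is two-dimensional with a basis given by (the classes of) $\vartheta_{2,k}(\hbar^{-1/2}t)$-type series tensored with $e_X$, and correspondingly for the other direction. I would set $w_1,w_2 \in \mathit{Hom}^0(X(0),X(2))$ and $w_3,w_4 \in \mathit{Hom}^1(X(2),X(0))$ to be the images of these basis vectors, so that the cohomology-level algebra on $\bigoplus_{i,j}\mathit{Hom}^{\le 1}(X(i),X(j))$ is identified with $Q$. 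The products $w_iw_j$ and $w_jw_i$ are forced by the ``$H^1 = H^1(S^1)$'' mechanism exactly as in Lemma \ref{th:product}: each nonzero composition lands in a one-dimensional $H^1$ and must be the generator, while the ``parallel'' compositions $w_1w_3$, etc., vanish because in the exact sequence of Example \ref{th:multicones} the relevant class is killed. This gives a graded algebra isomorphism $H(\text{subcategory}) \iso Q$.

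Next I would show this lifts to an $A_\infty$-functor. By Proposition \ref{th:classify-deformations}, the full subcategory of $A^{\mathit{torus}}$ on $X(0),X(2)$ (restricted, if one likes, to degrees making it into an honest two-object deformation of $Q$ — one may need to pass to a minimal model first, or truncate, invoking Remark \ref{th:unit-h-unit} and the cohomological-unitality apparatus from the last part of Section \ref{sec:theory}) is quasi-isomorphic to $Q_{\tilde p}$ for some quartic $\tilde p$, and there is a cohomologically full and faithful $A_\infty$-functor $Q_{\tilde p}\to A^{\mathit{torus}}$. It remains to prove $\tilde p = p$. This is where I would reuse Lemma \ref{th:exactly-complete-triangle}: for each $u\notin\{\pm\hbar^{k/2}\}$ form the morphism $v_u = \vartheta_{2,2}(u)w_1+\vartheta_{2,1}(u)w_2$ and its cone $C_{v_u}$ in $(A^{\mathit{torus}})^{\mathit{tw}}$. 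The key computation is the analogue of \eqref{eq:theta-factorization}: because $v_u$, read inside $F\rtimes\Z$-module language, is the product of two elements generating distinct principal $T$-invariant ideals (this is precisely \eqref{eq:theta-factorization}), the cone splits into two orthogonal summands whenever $p(\vartheta_{2,2}(u),\vartheta_{2,1}(u))\ne 0$, and is indecomposable at the excluded values. By Lemma \ref{th:cone-splitting} this pins down the zero locus of $\tilde p$ to be exactly that of $p$, so $\tilde p = c\,p$. To remove the constant $c$ I would, as in Lemma \ref{th:fukaya-torus}, exhibit on $C_{v_u}\iso X(d)_+\oplus X(d)_-$ an explicit degree-zero endomorphism $\tilde t$ (the analogue of \eqref{eq:geometric-t-tilde}, built from the identity morphisms of the two summands with coefficient $\tfrac12\vartheta_{4,3}'(1)^{-1}(\vartheta_{4,1}(u)-\vartheta_{4,3}(u))^{-1}$), check using the Massey/product identities \eqref{eq:special-doubling}, \eqref{eq:duplication}, \eqref{eq:derivative-identity} that $\tilde t$ satisfies the hypotheses of Lemma \ref{th:exactly-complete-triangle}, and conclude $\tilde t^2 = \tilde p(v_u)$; comparing with $s_1^2 = p(1,s_2)$ via the theta parametrization \eqref{eq:theta-parametrization} forces $c=1$.

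The main obstacle I anticipate is the bookkeeping needed to make ``the full subcategory on $X(0),X(2)$ is a deformation of $Q$'' literally fit the hypotheses of Proposition \ref{th:classify-deformations}: $A^{\mathit{torus}}$ as defined has huge (infinite-dimensional over $R$) morphism spaces and is only cohomologically unital, so one must either minimalize (which requires being careful that $X$ is genuinely $G$-fixed and that the theta-function computations survive passage to a minimal model) or argue with the cohomologically-unital version of the theory sketched at the end of Section \ref{sec:theory}. A second, more technical point is computing enough of the higher $A_\infty$-products on $A^{\mathit{torus}}$ to verify the hypotheses of Lemma \ref{th:exactly-complete-triangle} — here the explicit formula for $\mu^2_{A^{\mathit{torus}}}$ (and its higher analogues) in terms of $G^1,G^2,\dots$ and the substitution $t\mapsto\hbar t$ makes the relevant Massey product computable in closed form, but it produces exactly the same theta-function identities already recorded, so no genuinely new analytic input is required; the work is in matching signs and conventions. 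Everything else — identifying invariants/coinvariants with theta functions, the cone-splitting criterion, the final $c=1$ normalization — is a direct transcription of Sections \ref{subsec:nonarchimedean} and \ref{subsec:twotorus}, with $F\rtimes\Z$ replaced by the ``$G$-twisted'' structure \eqref{eq:mu1-torus}.
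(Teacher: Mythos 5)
Your approach has a genuine gap. You write that the relevant morphism spaces in $A^{\mathit{torus}}$ between $X(0)$ and $X(2)$ are two-dimensional in each of the degrees $0$ and $1$, and that the degree $\leq 1$ part of $\bigoplus_{i,j}\mathit{Hom}^*(X(i),X(j))$ is a subalgebra isomorphic to $Q$. But this is not true for a general $A$: by (the analogue of) \eqref{eq:t-sequence}, $H(\mathit{hom}_{A^{\mathit{torus}}}(X(0),X(2)))$ is built from the (co)invariants of $H(G^1) \otimes T(2)$ acting on $H(\mathit{hom}_A(X,X)) \otimes F$, and if $H(\mathit{hom}_A(X,X))$ is bigger than $R$ (as it will be for the intended Fukaya-categorical applications) these groups are correspondingly bigger, and $G^1$ may act nontrivially on them. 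The lemma does not claim that the full subcategory on $X(0),X(2)$ is quasi-isomorphic to $Q_p$; it only asks for an $A_\infty$-functor $Q_p \to A^{\mathit{torus}}$, which is a much weaker (and, unlike your claim, true) statement. Your programme of invoking Proposition \ref{th:classify-deformations} to identify a deformation parameter $\tilde p$, and then using cone-splitting and Lemma \ref{th:exactly-complete-triangle} to show $\tilde p = p$, is addressing the wrong question and would in general break at the very first step, the computation of $H^*(\mathit{hom}_{A^{\mathit{torus}}}(X(i),X(j)))$.

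The paper's proof sidesteps this entirely by a factorization: view $R$ as the one-object $A_\infty$-category, and note that since $G$ fixes $X$ there is a strictly unital functor $R \to A$ sending the unique object to $X$, compatible with the trivial self-functor of $R$ and with $G$ on $A$. Functoriality of the mapping-torus construction then gives $R^{\mathit{torus}} \to A^{\mathit{torus}}$. The full subcategory of $R^{\mathit{torus}}$ on $Z(0),Z(2)$ is, by inspection of \eqref{eq:mu1-torus} and \eqref{eq:t-subcomplex}, precisely the dg model for $F\rtimes\Z$-modules on $F(0),F(2)$ from Section \ref{subsec:nonarchimedean}; Lemma \ref{th:nonarch} identifies this with $Q_p$, and composition gives the required functor $Q_p \to A^{\mathit{torus}}$. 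All the theta-function work you want to redo is already packaged inside Lemma \ref{th:nonarch}, and the reduction to that special case is what makes the lemma hold with no hypotheses on $\mathit{hom}_A(X,X)$.
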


\begin{proof}
After replacing our original category and functor by quasi-isomorphic ones, one can assume that both are strictly unital (with the functor still acting in the same way on objects). Think of $R$ itself as an $A_\infty$-category with a single object $Z$. The embedding $R \rightarrow A$ mapping $Z$ to $X$ induces one $R^{\mathit{torus}} \rightarrow A^{\mathit{torus}}$.
Consider the full subcategory of $R^{\mathit{torus}}$ with objects $Z(0)$, $Z(2)$. This is a dg model (actually the one mentioned in Section \ref{subsec:nonarchimedean}) for the full subcategory of the derived category of modules over $F \rtimes \bZ$ with objects $F(0)$, $F(2)$. Lemma \ref{th:nonarch} then completes the proof.
\end{proof}

The proposed correspondence with geometry goes as follows. If $A$ is the Fukaya category of some compact manifold, and $G$ is given by the action of a (graded) symplectic automorphism, then $A^{\mathit{torus}}$ should conjecturally be quasi-isomorphic to a full subcategory of the Fukaya category of the associated symplectic mapping torus. Objects $X(d)$ correspond to Lagrangian submanifolds in the mapping torus obtained by taking an invariant Lagrangian submanifold in the fibre and moving it along a line in the base which goes through $(0,0)$ and has slope $-d$. Looking back to our previous discussion, $\Delta_1$ and $\Delta_2$ from \eqref{eq:lagrangians} as well as $\Gamma_1$ and $\Gamma_2$ from \eqref{eq:lagrangians-2} are of this type. Example \ref{th:multicones} shows that the algebraic model correctly computes the self-Floer cohomology. Lemma \ref{th:t-model} would be the algebraic counterpart of the Fukaya category computations in Section \ref{subsec:flat-triangles}.

\begin{remark}
The framework introduced above is naive, since it asks for strictly fixed objects. To make it more flexible, one could consider $A_\infty$-modules over $A \otimes F$ which are equivariant with respect to $G \otimes T$ (this would also allow one to include objects corresponding to Lagrangian submanifolds such as $\Delta_{3,u}$ and $\Gamma_{3,u}$). 
\end{remark}

\section{Blowing up\label{sec:blowup}}

The topic of this section, namely the behaviour of Fukaya categories under blowups, is of interest from many perspectives, among which our intended application plays only a minor role. Interested readers are referred to \cite{smith10}, from which we have stolen as much as we could (concretely, Sections \ref{subsec:toy}--\ref{subsec:correspondence} follow \cite[Section 4.5]{smith10} closely). On a technical level, we will freely use and combine a wide range of results, notably: the full-fledged construction of Fukaya categories \cite{fooo}, and split-generation results in that context \cite{abouzaid-fukaya-oh-ohta-ono11}; degeneration techniques \cite{ionel-parker04,li-ruan98}; Lagrangian correspondences \cite{wehrheim-woodward06,mww}; and the $h$-principle \cite{gromov86}. Necessarily, the exposition can't be self-contained to any extent.

Generally speaking, the passage to $\bZ/2$-graded Fukaya categories and the introduction of bounding cochains allows us to include many more objects than in the approach from Section \ref{sec:automorphisms} (see Remark \ref{th:compare} for a precise statement of the relationship). There will be a temporary departure from this framework in Section \ref{subsec:toy}, when we consider the toy model of blowing up a point in flat space (which happens to be monotone, allowing us to retreat to a simpler version of Floer theory).

\subsection{Fukaya categories}
Fix a closed symplectic manifold $M$. Let $R_{\geq 0} \subset R$ be the subalgebra of formal series involving only nonnegative powers of $\hbar$. This comes with a homomorphism $R_{\geq 0} \rightarrow \bC$ extracting the constant term, and we write $R_{>0}$ for its kernel. For any $\lambda \in R_{>0}$ there is an associated Fukaya category $\mathit{Fuk}(M)_{\lambda}$, which is a proper $\bZ/2$-graded $A_\infty$-category. We will give an impressionistic sketch of the construction, which is due to \cite{fooo} (see \cite{fukaya-oh06,fukaya11} for more thorough expository accounts).

One first associates to $M$ a {\em filtered curved $A_\infty$-category} $\mathit{FO}(M)$. Objects of $\mathit{FO}(M)$ are Lagrangian submanifolds $L \subset M$ equipped with a {\em Spin} structure and a local coefficient system $\xi$ with structure group $\mathit{GL}_r(\bC)$, for some $r$. The morphism space between any two objects is a finitely generated free $\bZ/2$-graded module over $R_{\geq 0}$. If we consider a single object, then
\begin{equation} \label{eq:fuk-endo}
\mathit{hom}_{\mathit{FO}(M)}(L,L) \otimes_{R_{\geq 0}} \bC
\end{equation}
is an $A_\infty$-algebra (the curvature vanishes since it has no $\hbar^0$ term) quasi-isomorphic to the standard one underlying the cohomology with local coefficients $H^*(L; \mathit{Hom}(\xi,\xi))$. For simplicity we will assume that $\mathit{FO}(M)$ is strictly unital.

\begin{remark}
The finite-dimensionality of morphism spaces is convenient for expository reasons, since it allows one to worry less about convergence and completeness (in the $\hbar$-adic topology). A Morse theory model as in Section \ref{subsec:clean} naturally yields finite-dimensional morphism spaces. On the other hand, one can start with an infinite-dimensional space of cochains (like the singular cochains used in \cite{fooo}) and then obtain finite-dimensional models a posteriori by applying a version of the Homological Perturbation Lemma \cite[Theorem W]{fooo}. Strict units are not an a priori feature of either approach, but can be added by first introducing a homotopy unit through additional moduli spaces \cite[Section 7.3]{fooo}, and then constructing a strict unit from that \cite[Section 3.3]{fooo}. We should point out that from a more abstract viewpoint, unitality is not really the crucial ingredient (see Remark \ref{th:general-hh} below).
\end{remark}

Objects of $\mathit{Fuk}(M)_{\lambda}$ are {\em weakly unobstructed} Lagrangian submanifolds. By this we mean objects of $\mathit{FO}(M)$ together with a {\em bounding cochain} $\alpha \in \mathit{hom}_{\mathit{FO}(M)}^1(L,L)$, which vanishes if we tensor with $\bC$, and which satisfies the following inhomogeneous Maurer-Cartan equation:
\begin{equation} \label{eq:inhomogeneous-mc}
\mu^0_{\mathit{FO}(M)} + \mu^1_{\mathit{FO}(M)}(\alpha) + \mu^2_{\mathit{FO(M)}}(\alpha,\alpha) + \cdots = \lambda e_L \in \mathit{hom}^0_{\mathit{FO}(M)}(L,L)
\end{equation}
(in the terminology of \cite[Section 3.6]{fooo}, this would be a ``weak bounding cochain''). The morphism spaces, also called Floer cochain groups following the traditional terminology, are defined by
\begin{equation} \label{eq:cf-corrected}
\mathit{CF}^*(L_0,L_1) = \mathit{hom}_{\mathit{Fuk}(M)_\lambda}(L_0,L_1) = \mathit{hom}_{\mathit{FO}(M)}(L_0,L_1) \otimes_{R_{\geq 0}} R.
\end{equation}
The $A_\infty$-structure is obtained by deforming that on $\mathit{FO}(M)$, as in the construction of twisted complexes. For instance, the differential on \eqref{eq:cf-corrected} is
\begin{equation} \label{eq:deformed-mu1}
\begin{aligned}
\mu^1_{\mathit{Fuk}(M)_{\lambda}}(a) & = \mu^1_{\mathit{FO}(M)}(a) + \mu^2_{\mathit{FO}(M)}(\alpha_1,a) + \mu^2_{\mathit{FO}(M)}(a,\alpha_0) + \mu^3_{\mathit{FO}(M)}(\alpha_1,\alpha_1,a) \\ & \qquad + \mu^3_{\mathit{FO}(M)}(\alpha_1,a,\alpha_0) + \mu^3_{\mathit{FO}(M)}(a,\alpha_0,\alpha_0) + \cdots
\end{aligned}
\end{equation}

\begin{remark} \label{th:compare}
Consider the situation where $c_1(M) = 0$ and $\lambda = 0$. Let $L$ be a Lagrangian submanifold satisfying Assumption \ref{th:adiscic}. Then the $A_\infty$-structure on $\mathit{hom}_{\mathit{FO}(M)}(L,L)$ is a trivial deformation of that on \eqref{eq:fuk-endo} (the technical details of this are subtle, but our applications only really involve the simpler case when $J_L$ has no holomorphic spheres or discs). If we restrict  \eqref{eq:inhomogeneous-mc} to $\alpha$ which have degree $1$, it reduces to a version of the Maurer-Cartan equation governing the deformation theory of the local coefficient system $\xi$ \cite{goldman-millson88}. Hence, any deformation of $\xi$ to a local coefficient system with structure group \eqref{eq:gl-subgroup} gives rise to a solution (unique up to gauge equivalence), hence produces an object of $\mathit{Fuk}(M)_0$. These observations have the following noteworthy consequence. Let $\mathit{Fuk}(M)$ be the Fukaya category according to the more restricted definition used in Section \ref{sec:automorphisms}. Then, after reducing the grading of $\mathit{Fuk}(M)$ to $\bZ/2$, there is a cohomologically full and faithful $A_\infty$-functor $\mathit{Fuk}(M) \longrightarrow \mathit{Fuk}(M)_0$.
\end{remark}

Even though $\mathit{FO}(M)$ has curvature, its Hochschild cohomology is still well-defined as usual, and we have a canonical open-closed string map
\begin{equation} \label{eq:pre-open-closed}
H^*(M;R_{\geq 0}) \longrightarrow \mathit{HH}^*(\mathit{FO}(M),\mathit{FO}(M)),
\end{equation}
From this one derives, in a way similar to \eqref{eq:twisted-gamma}, maps
\begin{equation} \label{eq:lambda-open-closed}
\mathit{QH}^*(M) \longrightarrow \mathit{HH}^*(\mathit{Fuk}(M)_{\lambda}, \mathit{Fuk}(M)_{\lambda}).
\end{equation}
Concretely, fix some class in $H^*(M;R_{\geq 0})$, and let $g_{\mathit{FO}} \in \mathit{CC}^*(\mathit{FO}(M),\mathit{FO}(M))$ be a Hochschild cocycle representing its image under \eqref{eq:pre-open-closed} (this cochain is defined by a generalization of the procedure sketched in Section \ref{subsec:define-open-closed}). Then, the image of the same class under \eqref{eq:lambda-open-closed} is represented by a cochain $g$ whose constant term is
\begin{equation}
g^0 = g^0_{\mathit{FO}} + g^1_{\mathit{FO}}(\alpha) + g^2_{\mathit{FO}}(\alpha,\alpha) + \cdots \in CF^*(L,L).
\end{equation}

\begin{remark} \label{th:general-hh}
For \eqref{eq:deformed-mu1} to square to zero, we do not really need the fact that the left hand side of \eqref{eq:inhomogeneous-mc} is a multiple of the unit, but only that it is strictly central. More generally, one can associate a Fukaya category to any $\lambda \in H^{\mathit{even}}(M;R^{>0})$ (called ``bulk deformations'' in \cite[Section 3.8]{fooo}; the special case of $H^0$ corresponds to the previously discussed construction). We will not pursue this further.
\end{remark}

\begin{remark} \label{th:auroux}
There are partial results about what kinds of Lagrangian submanifolds can occur as objects of $\mathit{Fuk}(M)_{\lambda}$ for different values of $\lambda$. Let's temporarily restrict to a symplectic manifold $M$ which is monotone (and monotone Lagrangian submanifolds $L$, with trivial bounding cochains $\alpha = 0$). This allows one to work over $\bC$ instead of $R$ (and therefore to take $\lambda \in \bC$). In this context, Auroux, Kontsevich and the author \cite[Theorem 6.1]{auroux07} showed that $c_1(M)-\lambda \cdot 1 \in \mathit{QH}^*(M) = H^*(M;\bC)$ maps to zero in $\mathit{HF}^*(L,L)$ under the open-closed string map. Therefore, 
\begin{equation} \label{eq:aks}
\mathit{Fuk}(M)_\lambda = 0 \quad \text{unless $\lambda$ is an eigenvalue of quantum multiplication with $c_1(M)$.} 
\end{equation}
There is a consequence of this, which is weaker but of independent interest. Take the dual open-closed string map $\mathit{HF}^*(L,L) \longrightarrow \mathit{QH}^{*+n}(M)$, which is such that the composition
\begin{equation}
\mathit{QH}^*(M) \longrightarrow \mathit{HF}^*(L,L) \longrightarrow \mathit{QH}^{*+n}(M)
\end{equation}
is small quantum multiplication with the Poincar{\'e} dual class $\mathit{PD}([L]) \in \mathit{QH}^n(M)$. The previous statement implies that
\begin{equation} \label{eq:c1-sum}
\sum_A \langle c_1(M), \mathit{PD}([L]), x \rangle_A^M = \lambda \int_M \mathit{PD}([L]) \cup x
\end{equation}
for all $x$ (here, $\langle x_1,\dots, x_n \rangle_A^M$ is our notation for the $n$-point genus zero Gromov-Witten invariant: $A \in H_2(M;\bZ)$, and the $x_i$ are cohomology classes). One can separate the left hand side of \eqref{eq:c1-sum} into pieces which each sum over classes $A$ with $c_1(M)(A) = k$ a fixed integer. Then, for degree reasons, only the $k = 1$ summand can be nontrivial. Using the divisor axiom and the fact that $c_1(M) \cup \mathit{PD}([L]) = 0$, one can therefore rewrite \eqref{eq:c1-sum} as
\begin{equation} \label{eq:phi-1}
\sum_A \langle \mathit{PD}([L]), x \rangle_A^M = \lambda \int_M \mathit{PD}([L]) \cup x.
\end{equation}
Equivalently, the two-point Gromov-Witten invariants define an endomorphism $\Phi$ of $\mathit{QH}^*(M)$ by Poincar{\'e} duality, and then \eqref{eq:phi-1} says that
\begin{equation} \label{eq:poincare-dual-ev}
\Phi(\mathit{PD}([L])) = \lambda\, \mathit{PD}([L]).
\end{equation}
Note that \eqref{eq:quantum-phi}, unlike \eqref{eq:aks}, imposes a restriction on $\lambda$ only if $L$ is nontrivial in homology. This limitation can't be removed: even in a simple example such as $M = \bC P^1 \times \bC P^1$, the map $\Phi$ is degree-decreasing and hence nilpotent, while the quantum product with $c_1(M)$ has eigenvalues $\{\pm 4,0\}$. Indeed, $\mathit{Fuk}(M)_0$ contains the antidiagonal, whose homology class is nonzero, but $\mathit{Fuk}(M)_{\pm 4}$ are also nontrivial, and contain nullhomologous Lagrangian tori.

It should be mentioned that \eqref{eq:poincare-dual-ev} is known to generalize. Namely, given a general (not necessarily monotone) $M$, consider the endomorphism $\Phi$ of $\mathit{QH}^*(M) = H^*(M;R)$ defined by
\begin{equation} \label{eq:quantum-phi}
\int_M \Phi(x_1) \cup x_2 = \sum_A \hbar^{\omega_M(A)} \langle x_1,x_2 \rangle_A^M.
\end{equation}
Then, a special case of \cite[Theorem 3.8.11]{fooo} says that if $L$ is an object of $\mathit{Fuk}(M)_\lambda$, the analogue of \eqref{eq:poincare-dual-ev} holds for \eqref{eq:quantum-phi}. However, the corresponding generalisation of \eqref{eq:aks} is unknown. Rather than trying to address that question, we'll allow arbitrary $\lambda$ and then formally cut down the resulting category.
\end{remark}

\subsection{Projections}
Let $A$ be an $A_\infty$-category ($\bZ/2$-graded and cohomologically unital), and $Q_+ \in \mathit{HH}^0(A,A)$ a Hochschild cohomology class which is idempotent with respect to the natural ring structure. We want to {\em project} the category $A$ accordingly, which will give rise to a new category $A_+$. The simplest way to go about that is as follows. $Q_+$ determines, for any $X \in \mathit{Ob}\,A$, an idempotent endomorphism $Q_+^0 \in H^0(\mathit{hom}_A(X,X))$. After lifting that to a homotopy idempotent $p_+$ (compare Section \ref{subsec:splittings}), one gets a (perfect) module $(X,p_+)^{\mathit{yon}}$. Choose one such $p_+$ for each $X$, and define $A_+$ to be the full $A_\infty$-subcategory of the module category of $A$ with objects $(X,p_+)^{\mathit{yon}}$.

Here is another approach, which turns out to be equivalent but yields additional properties. Let $Q_-$ be the complementary idempotent. Fix modules $(X,p_+)^{\mathit{yon}}$ and their complementary counterparts $(X,p_-)^{\mathit{yon}}$, and consider those as well as the standard Yoneda images $X^{\mathit{yon}}$. Let $A_{\pm}$ be the full subcategory of the module category of $A$ containing all those objects, and then pass to the quotient category $A_{\pm}/A_-$ in the sense of \cite{drinfeld02}, in which all the objects $(X,p_-)^{\mathit{yon}}$ become quasi-isomorphic to zero. This comes with a canonical quotient functor $A_{\pm} \rightarrow A_{\pm}/A_-$.

\begin{lemma} \label{th:projection-as-quotient}
The quotient functor restricts to a quasi-equivalence
\begin{equation} \label{eq:ppm}
A_+ \longrightarrow A_{\pm}/A_-.
\end{equation}
\end{lemma}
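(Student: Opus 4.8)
The plan is to compare the two objects $(X,p_+)^{\mathit{yon}}$ and $X^{\mathit{yon}}$ inside $A_\pm/A_-$, showing that the quotient functor sends the former to the latter's ``plus part'' in a way that is both full and faithful on cohomology. First I would record the basic mechanism: for each $X$ the homotopy idempotents $p_+$ and $p_-$ fit into a strict decomposition at the level of Yoneda modules, $X^{\mathit{yon}} \htp (X,p_+)^{\mathit{yon}} \oplus (X,p_-)^{\mathit{yon}}$ in $H^0(A^{\mathit{mod}})$ (this is exactly the content of the construction in Section \ref{subsec:splittings}, together with the fact that $Q_+ + Q_- = e$ in $\mathit{HH}^0(A,A)$, so that $p_+^1 + p_-^1$ represents the identity). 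Hence in $A_\pm$ we have an exact triangle relating $X^{\mathit{yon}}$, $(X,p_+)^{\mathit{yon}}$ and $(X,p_-)^{\mathit{yon}}$, and passing to the quotient $A_\pm/A_-$ kills the third term, so that the canonical map $(X,p_+)^{\mathit{yon}} \to X^{\mathit{yon}}$ becomes an isomorphism in $A_\pm/A_-$. In particular the quotient functor restricted to $A_+$ lands in the full subcategory of $A_\pm/A_-$ generated by the images of the $X^{\mathit{yon}}$, and on objects \eqref{eq:ppm} is essentially surjective onto that subcategory.

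The heart of the argument is then the computation of morphism spaces. For objects $X_0, X_1$, I would compute $H\big(\mathit{hom}_{A_\pm/A_-}((X_0,p_+)^{\mathit{yon}}, (X_1,p_+)^{\mathit{yon}})\big)$ using the description of morphisms in a Drinfeld quotient: a cohomology class is represented by a ``roof'' through objects of $A_-$, but since $(X_0,p_+)^{\mathit{yon}}$ has no nonzero maps to objects of $A_-$ up to homotopy (left multiplication by $[p_-^1]$ annihilates it), and dually $(X_1,p_+)^{\mathit{yon}}$ receives none, the quotient localization introduces nothing new. More precisely, the standard computation (e.g. the telescope/cone model for $A_\pm/A_-$) shows that $\mathit{hom}_{A_\pm/A_-}(Y_0,Y_1)$ is quasi-isomorphic to $\mathit{hom}_{A_\pm}(Y_0,Y_1)$ whenever $Y_0$ is right-orthogonal and $Y_1$ left-orthogonal to $A_-$, which holds here because $[p_+^1]$ and $[p_-^1]$ are complementary idempotents: $H(\mathit{hom}_A((X_i,p_-)^{\mathit{yon}}, (X_j,p_+)^{\mathit{yon}})) = [p_+^1] \cdot H(\mathit{hom}_A(X_i,X_j)) \cdot [p_-^1] = 0$, and similarly on the other side. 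Therefore
\begin{equation}
H\big(\mathit{hom}_{A_\pm/A_-}((X_0,p_+)^{\mathit{yon}}, (X_1,p_+)^{\mathit{yon}})\big) \iso [p_+^1] \cdot H(\mathit{hom}_A(X_0,X_1)) \cdot [p_+^1] \iso H\big(\mathit{hom}_{A_+}((X_0,p_+)^{\mathit{yon}}, (X_1,p_+)^{\mathit{yon}})\big),
\end{equation}
and this identification is induced by the quotient functor, which proves that \eqref{eq:ppm} is cohomologically full and faithful.

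I expect the main obstacle to be bookkeeping with the chosen model of the Drinfeld quotient rather than any conceptual difficulty: one has to be careful that the quotient $A_\pm/A_-$ is formed so that morphism complexes are genuinely computed by the mapping-cone/telescope construction of \cite{drinfeld02}, and that the orthogonality statements are phrased at the chain level (vanishing up to homotopy) with explicit nullhomotopies, so that the spectral sequence or filtration argument computing $H(\mathit{hom}_{A_\pm/A_-})$ degenerates as claimed. A secondary point to check is that the homotopy idempotents $p_\pm$ can be chosen compatibly (the construction in Section \ref{subsec:splittings} is inductive and the two choices interact), but since we only need the resulting modules up to quasi-isomorphism, any choices will do. Once these technicalities are in place, essential surjectivity (first paragraph) plus cohomological full faithfulness (second paragraph) give the quasi-equivalence.
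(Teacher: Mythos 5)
Your proposal is correct and takes essentially the same route as the paper: the direct sum decomposition $X^{\mathit{yon}} \htp (X,p_+)^{\mathit{yon}} \oplus (X,p_-)^{\mathit{yon}}$ gives essential surjectivity, and the two-sided cohomological orthogonality between the $+$ and $-$ summands gives full faithfulness. The only difference is that where you unpack the telescope/cone model of the Drinfeld quotient to see why orthogonality yields a quasi-isomorphism on morphism complexes, the paper simply cites \cite[Theorem 1.6.2(ii)]{drinfeld02} as a black box.
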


\begin{proof}
Since $X^{\mathit{yon}} \iso (X,p_+)^{\mathit{yon}} \oplus (X,p_-)^{\mathit{yon}}$ by construction, the objects $X^{\mathit{yon}}$ and $X^{\mathit{yon}}_+$ become quasi-isomorphic in the quotient, so \eqref{eq:ppm} is essentially onto. Note that for any two objects $X_0,X_1$ we have
\begin{equation}
\begin{aligned}
& H(\mathit{hom}_{A_\pm}((X_0,p_{0,+})^{\mathit{yon}},(X_1,p_{1,-})^{\mathit{yon}})) = 0, \\
& H(\mathit{hom}_{A_\pm}((X_0,p_{0,-})^{\mathit{yon}},(X_1,p_{1,+})^{\mathit{yon}})) = 0.
\end{aligned}
\end{equation}
By general nonsense \cite[Theorem 1.6.2(ii)]{drinfeld02}, this implies that \eqref{eq:ppm} is cohomologically full and faithful.
\end{proof}

So far, it may not have been evident why we have included the Yoneda images themselves in $A_\pm$. The point is that we can combine the Yoneda embedding and the quotient functor to get a canonical functor
$A \rightarrow A_\pm/A_-$. From the proof of Lemma \ref{th:projection-as-quotient} it follows that on the cohomological level, this is projection to the part of the morphism space singled out by $Q_+$:
\begin{equation}
H(\mathit{hom}_{A_{\pm}/A_-}(X_0^{\mathit{yon}},X_1^{\mathit{yon}})) \iso Q_+^0 H(\mathit{hom}_A(X_0,X_1)) =
H(\mathit{hom}_A(X_0,X_1)) Q_+^0.
\end{equation}
Here is an even simpler way to describe the resulting situation. Let $\tilde{A}$ be the category with the same objects as $A$, but where
\begin{equation}
\mathit{hom}_{\tilde{A}}(X_0,X_1) = \mathit{hom}_{A_\pm/A_-}(X_0^{\mathit{yon}},X_1^{\mathit{yon}}) \oplus \mathit{hom}_{A_\pm/A_+}(X_0^{\mathit{yon}},X_1^{\mathit{yon}}),
\end{equation}
with the obvious choice of $A_\infty$-structure. Then, the functor $A \rightarrow A_\pm/A_-$ and its analogue for the complementary quotient combine to yield a quasi-isomorphism $A \rightarrow \tilde{A}$. If we allow ourselves to replace $A$ by $\tilde{A}$, the situation is that we have a category whose morphism spaces are split into two parts compatibly with all compositions, and the projection just throws away one of those parts. In particular, there are canonical isomorphism
\begin{equation}
\mathit{HH}^*(A,A) \iso \mathit{HH}^*(\tilde{A},\tilde{A}) \iso
\mathit{HH}^*(A_+,A_+) \oplus \mathit{HH}^*(A_-,A_-).
\end{equation}

%

We will now apply this to Fukaya categories, with modified notation. Take an idempotent $q \in \mathit{QH}^0(M)$. Its image under \eqref{eq:lambda-open-closed} is an idempotent in Hochschild cohomology, and we denote the outcome of the resulting projection by $\mathit{Fuk}(M)_{\lambda,q}$. The following result is a modified version of Theorem \ref{th:split-generation-2}, and can be proved by adapting arguments from \cite{abouzaid-fukaya-oh-ohta-ono11} (for a more detailed exposition in the monotone case, see \cite[Corollary 3.7]{sheridan13}).

\begin{theorem} \label{th:split-generation-3}
Let $O \subset \mathit{Fuk}(M)_{\lambda,q}$ be a full $A_\infty$-subcategory. Suppose that there is a  linear map $\mathit{HH}^0(O,O) \rightarrow R$ such that the following diagram commutes:
\begin{equation}
\xymatrix{\mathit{QH}^0(M)q \ar[rr] \ar[dr]_{\int_M} && \mathit{HH}^0(O,O) \ar[dl] \\
& R &}
\end{equation}
Then the objects in $O$ split-generate $\mathit{Fuk}(M)^{\mathit{perf}}_{\lambda,q}$. \qed
\end{theorem}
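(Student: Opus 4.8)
The plan is to reduce Theorem \ref{th:split-generation-3} to Theorem \ref{th:split-generation-2} via the description of the projected category as a quotient (Lemma \ref{th:projection-as-quotient}), together with the observation that the open-closed map is compatible with projection. The point is that $\mathit{Fuk}(M)_{\lambda,q}$ is, up to quasi-equivalence, simply a direct factor of $\mathit{Fuk}(M)_\lambda$ in the strong sense described after Lemma \ref{th:projection-as-quotient}: replacing $A = \mathit{Fuk}(M)_\lambda$ by the quasi-isomorphic $\tilde A$ whose morphism spaces split as $\mathit{hom}_{A_+}\oplus\mathit{hom}_{A_-}$ compatibly with all compositions, we get $\mathit{HH}^*(A,A)\iso \mathit{HH}^*(A_+,A_+)\oplus\mathit{HH}^*(A_-,A_-)$, and the open-closed map $\mathit{QH}^*(M)\to\mathit{HH}^*(\mathit{Fuk}(M)_\lambda,\mathit{Fuk}(M)_\lambda)$ decomposes accordingly because the idempotent $q$ was used to define the projection in the first place. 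So the composite $\mathit{QH}^0(M)q\to\mathit{HH}^0(\mathit{Fuk}(M)_{\lambda,q},\mathit{Fuk}(M)_{\lambda,q})$ is just the $A_+$-component of the original open-closed map restricted to $\mathit{QH}^0(M)q$, and restriction to a full subcategory $O\subset \mathit{Fuk}(M)_{\lambda,q}$ behaves as expected.

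Concretely, the steps I would carry out are: (1) Recall/set up the splitting $\mathit{Fuk}(M)_\lambda\simeq \tilde A = \mathit{Fuk}(M)_{\lambda,q}\oplus\mathit{Fuk}(M)_{\lambda,1-q}$ from the homological-algebra discussion, and note that under the induced splitting of Hochschild cohomology the open-closed map \eqref{eq:lambda-open-closed} carries $\mathit{QH}^*(M)q$ into the $\mathit{Fuk}(M)_{\lambda,q}$-summand and $\mathit{QH}^*(M)(1-q)$ into the other; this is because the projection was \emph{defined} by the idempotent image of $q$, so by the uniqueness of idempotent lifts (the Remark after the Lemma) the component of the open-closed map lands where it should. (2) Given $O\subset\mathit{Fuk}(M)_{\lambda,q}$ and the hypothesised trace $\mathit{HH}^0(O,O)\to R$ with $\int_M|_{\mathit{QH}^0(M)q}$ factoring through it, extend this data to the ambient category: observe that split-generation of $\mathit{Fuk}(M)^{\mathit{perf}}_{\lambda,q}$ by $O$ is equivalent to split-generation of the relevant summand of $\mathit{Fuk}(M)^{\mathit{perf}}_\lambda$. (3) Invoke the compact analogue of Abouzaid's generation criterion (the version \cite{abouzaid-fukaya-oh-ohta-ono11} behind Theorem \ref{th:split-generation-2}), but applied to the \emph{deformed, projected} setting: the argument of \cite{abouzaid10, abouzaid-fukaya-oh-ohta-ono11} produces, from a subcategory $O$ together with a map $\mathit{HH}^*(O,O)\to R$ through which the (projected, $q$-twisted) open-closed map composed with integration factors in top degree, a bimodule-level equivalence showing every object of the category is split-built from $O$. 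The deformation by a bounding cochain and by $\lambda$, and the projection by $q$, do not affect the formal structure of that argument — they only change which $A_\infty$-category one feeds in — so one re-runs it verbatim. Here I would cite \cite{abouzaid-fukaya-oh-ohta-ono11} for the claim that their machinery applies in the curved/bulk-deformed/projected generality, exactly as the statement of the theorem already signals (``can be proved by adapting arguments from \cite{abouzaid-fukaya-oh-ohta-ono11}'').

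The main obstacle, and the place where genuine work (as opposed to bookkeeping) is hidden, is step (3): verifying that the generation criterion of \cite{abouzaid10,abouzaid-fukaya-oh-ohta-ono11} continues to hold after (a) passing to $\Z/2$-grading, (b) deforming by weak bounding cochains to build $\mathit{Fuk}(M)_\lambda$, and (c) performing the abstract projection by the idempotent $q$. Points (a) and (b) are essentially built into the foundational framework of \cite{fooo} and the companion paper \cite{abouzaid-fukaya-oh-ohta-ono11}; point (c) is purely homological-algebraic, since projection is an exact operation realised by the quotient construction of \cite{drinfeld02} as in Lemma \ref{th:projection-as-quotient}, and the open-closed map together with its key property (the coproduct/duality structure used in Abouzaid's argument) descends to a direct summand of the category. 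The one subtlety worth flagging is that the Calabi-Yau/Poincar\'e-duality input to the generation argument must be compatible with the idempotent decomposition — i.e. the pairing on $\mathit{HH}$ restricts to each summand — which follows from the fact that $q$ and $1-q$ are orthogonal idempotents for the quantum product and the open-closed map is a ring map. Granting that, the theorem follows: $O$ split-generates $\mathit{Fuk}(M)^{\mathit{perf}}_{\lambda,q}$.
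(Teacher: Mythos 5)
The paper gives no proof of this theorem at all: it is stated with a \qed, preceded only by the remark that it ``is a modified version of Theorem \ref{th:split-generation-2}, and can be proved by adapting arguments from \cite{abouzaid-fukaya-oh-ohta-ono11}.'' Your proposal is a correct and useful unpacking of exactly that deferral — the reduction via the splitting $\tilde A \simeq A_+ \oplus A_-$ from Lemma \ref{th:projection-as-quotient}, the compatibility of the open-closed map with the idempotent decomposition, and the appeal to the compact generation criterion in the curved/$\Z/2$-graded setting — so it takes essentially the same approach as the paper, just spelled out.
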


\subsection{A toy model\label{subsec:toy}}
Consider $\bC P^r$ blown up at a point as a toric symplectic (and K{\"a}hler) manifold. We denote this by $\bar{B}^{\mathit{toy}}$. It is characterized symplectically by its moment polytope, which is
\begin{equation}
\big\{ t \in \bR^r \,:\, t_1, \dots, t_r \geq 0, \; \delta \leq t_1 + \cdots + t_r \leq \epsilon \big\} \label{eq:polytope}
\end{equation}
for some $0 < \delta < \epsilon$. The preimage of $\{t_1 + \cdots + t_r = \epsilon\}$ under the moment map is the hyperplane at infinity, denoted by $H^{\mathit{toy}}$. The preimage of $\{t_1 + \cdots + t_r = \delta\}$ is the exceptional divisor, denoted by $D^{\mathit{toy}}$. In particular,
\begin{equation} \label{eq:c1-toy-compactified}
c_1(\bar{B}^{\mathit{toy}}) = -(r-1)\mathit{PD}([D^{\mathit{toy}}]) + (r+1)\mathit{PD}([H^{\mathit{toy}}]).
\end{equation}
Write $B^{\mathit{toy}} = \bar{B}^{\mathit{toy}} \setminus H^{\mathit{toy}}$. This is non-compact, but because the divisor we remove has positive normal bundle, doing pseudo-holomorphic curve theory in its complement is unproblematic. We denote by $Z \in H_2(B^{\mathit{toy}})$ the class of a line lying in $D^{\mathit{toy}}$, and by $u = -\mathit{PD}([D^{\mathit{toy}}]) \in H^2_{\mathit{cpt}}(B^{\mathit{toy}})$ the negative Poincar{\'e} dual of the exceptional divisor. With this sign convention,
\begin{align}
& u(Z) = 1, \label{eq:negative-pd} \\
& c_1(B_{\mathit{toy}}) = (r-1) u, \label{eq:c1-toy} \\
& [\omega_{B^{\mathit{toy}}}] = 2\pi\delta \, u, \label{eq:omega-toy} \\
& u^r = -\mathit{PD}([\mathit{point}]), \label{eq:toy-gitler}
\end{align}
where in \eqref{eq:c1-toy} and \eqref{eq:omega-toy} we've mapped $u$ to $H^2(B^{\mathit{toy}})$. It follows that $B^{\mathit{toy}}$ is monotone:
\begin{equation} \label{eq:monotonicity}
c_1(B^{\mathit{toy}}) = \textstyle\frac{(r-1)}{2\pi\delta} [\omega_{B^{\mathit{toy}}}] \in H^2(B^{\mathit{toy}};\bR).
\end{equation}
We will take advantage of the resulting possible simplification and omit the formal parameter $\hbar$, which means working with quantum cohomology and Floer theory over $\bC$.

Let's first look at the small quantum product. $B^{\mathit{toy}}$ is a line bundle over $D^{\mathit{toy}}$, and the fibres are Poincar\'e dual to $u^{r-1} \in H^{2r-2}(B^{\mathit{toy}})$, as one can see from \eqref{eq:toy-gitler}. The contribution of lines lying inside the exceptional divisor is
\begin{equation}
\begin{aligned}
\textstyle \int_{B^{\mathit{toy}}} (u^{r-1} \ast_Z u) \, u^{r-1} & = \langle u^{r-1}, u, u^{r-1} \rangle_{Z}^{B^{\mathit{toy}}} \\ & = \langle u^{r-1}, u^{r-1} \rangle_{Z}^{B^{\mathit{toy}}} = 1.
\end{aligned}
\end{equation}
Comparison with \eqref{eq:toy-gitler} yields
\begin{equation} \label{eq:toy-quantum-gitler}
u^{r-1} \ast_Z u = - u.
\end{equation}
In principle, the multiples $dZ$, $d>1$, could also contribute to the quantum product. But the virtual dimension of the associated moduli space of three-pointed holomorphic spheres is $2r + (2r-2) d = (2+2d)r - 2d$, while the image of the evaluation map is contained in a subspace $(\bC P^{r-1})^3$ of smaller dimension $6r-6$. Hence the virtual fundamental cycle maps to zero under evaluation.

Suppose from now on that $\epsilon > \frac{r}{r-1}\delta$, and consider the Lagrangian torus $C^{\mathit{toy}} \subset B^{\mathit{toy}}$ which is the fibre of the moment map over the point $(\delta/(r-1),\dots,\delta/(r-1))$ in \eqref{eq:polytope}. For another description, recall that $B^{\mathit{toy}} \setminus D^{\mathit{toy}}$ is $U(1)^r$-equivariantly symplectically isomorphic to the open subset
\begin{equation} \label{eq:annulus}
\{ z \in \bC^r \,:\, \delta < \half \| z\|^2 < \epsilon \}
\end{equation}
with the standard (constant) symplectic form. In this isomorphism, $C^{\mathit{toy}}$ corresponds to the Clifford torus with all radii equal to $\sqrt{2\delta/(r-1)}$. We can use areas of discs in \eqref{eq:annulus} to show that $C^{\mathit{toy}}$ is monotone as well, meaning that the analogue of \eqref{eq:monotonicity} holds in $H^2(B^{\mathit{toy}},C^{\mathit{toy}};\bR)$.

Following \cite{cho-oh02} (see \cite{hori-vafa00} for physics motivation, \cite[\S 4]{auroux07} for an exposition, and \cite{fukaya-oh-ohta-ono10} for generalizations), it is convenient to formulate results about the Floer cohomology of $C^{\mathit{toy}}$ in terms of the superpotential
\begin{equation}
\begin{aligned}
& W: H^1(C^{\mathit{toy}};\bC^*) \longrightarrow \bC, \\
& z \longmapsto \sum_A n_A \, z^{\partial A}.
\end{aligned}
\end{equation}
Here, the sum is over $A \in H_2(B^{\mathit{toy}}, C^{\mathit{toy}})$; $n_A \in \bZ$ is the number of pseudo-holomorphic discs in class $A$ going through a generic point of $C^{\mathit{toy}}$; and $z^{\partial A} \in \bC^*$ is the pairing of the cohomology class $z$ with the boundary homology class $\partial A \in H_1(C^{\mathit{toy}})$. To fix the signs, we equip $C^{\mathit{toy}}$ with the trivial {\em Spin} structure (the one compatible with the rotation-invariant framing; or equivalently, the unique one which is invariant under the action of $\mathit{SL}_r(\bZ)$). The domain of $W$ should be thought of as the moduli space of flat $\bC^*$-bundles on $C^{\mathit{toy}}$, and the superpotential is obtained by counting discs weighted with their boundary holonomy. Equip $C^{\mathit{toy}}$ with the bundle $\xi$ corresponding to some point $z$, and consider the spectral sequence \cite{oh96} going from $H^*(C^{\mathit{toy}};\bC)$ to $\mathit{HF}^*(C^\mathit{toy}, C^\mathit{toy})$. Part of the differential of this sequence is the map
\begin{equation} \label{eq:d1-oh}
\begin{aligned}
& H^1(C^{\mathit{toy}};\bC) \longrightarrow H^0(C^{\mathit{toy}};\bC) = \bC, \\
& w \longmapsto dW_z(zw) = \sum_A n_A \, w(\partial A) z^{\partial A}.
\end{aligned}
\end{equation}
Because the spectral sequence is multiplicative \cite{buhovsky10}, there are only two possible outcomes. Either \eqref{eq:d1-oh} is nonzero, in which case the Floer cohomology vanishes. Or else it is zero, in which case the spectral sequence degenerates. From now on let's focus exclusively on the second case, which happens exactly when $z$ is a critical point of $W$. In that situation, we have an isomorphism $H^*(C^{\mathit{toy}};\bC) \iso \mathit{HF}^*(C^{\mathit{toy}},C^{\mathit{toy}})$, which is canonical up to composition with automorphisms of $H^*(C^{\mathit{toy}};\bC)$ of the form $\mathit{Id} + R$, where $R$ decreases degrees by at least $2$. In particular, the degree $0$ and $1$ parts
\begin{align}
\label{eq:h0-inclusion}
& H^0(C^{\mathit{toy}};\bC) \iso \mathit{HF}^0(C^{\mathit{toy}},C^{\mathit{toy}}), \\
\label{eq:h1-inclusion}
& H^1(C^{\mathit{toy}};\bC) \iso \mathit{HF}^1(C^{\mathit{toy}},C^{\mathit{toy}}),
\end{align}
are strictly canonical. The multiplicative nature of the spectral sequence ensures that \eqref{eq:h0-inclusion} yields the unit $e$ in Floer cohomology, and \eqref{eq:h1-inclusion} generates Floer cohomology as a ring. The relations between these generators are determined by the Hessian of $W$ at $z$:
\begin{equation} \label{eq:mult-relation}
w \cdot w = (D^2W)_z(zw,zw)\, e = \sum_A n_A w(\partial A)^2 z^{\partial A}\, e.
\end{equation}
Being a toric fibre, $C^{\mathit{toy}}$ is contractible in $B^{\mathit{toy}}$, hence the restriction map in ordinary cohomology vanishes. However, the specialization of the open-closed string map $\mathit{H}^*(B^{\mathit{toy}};\bC) \longrightarrow \mathit{HF}^*(C^{\mathit{toy}},C^{\mathit{toy}})$ has quantum corrections \cite{fukaya-oh-ohta-ono11}. In degree $2$, the outcome is as follows:
\begin{equation} \label{eq:toy-open-closed}
\begin{aligned}
& H^2(B^{\mathit{toy}};\bC) \longrightarrow \mathit{HF}^0(C^{\mathit{toy}},C^{\mathit{toy}}), \\
& v \longmapsto \sum_A n_A \, v(A) z^{\partial A} e,
\end{aligned}
\end{equation}
where we have chosen an arbitrary lift of $v$ to $H^2(B^{\mathit{toy}},C^{\mathit{toy}})$ to define the pairings $v(A)$ (vanishing of \eqref{eq:d1-oh} ensures that the choice of lift doesn't matter).

It remains to spell out the data in our particular case, again following \cite{cho-oh02}. Thinking of $C^{\mathit{toy}}$ as a torus orbit yields an identification $H^1(C^{\mathit{toy}}) \iso \bZ^r$. The numbers $n_A$ are $+1$ for unit vectors $A = (0,\dots,1,\dots,0)$ as well as for $A = (1,\dots,1)$, and vanish otherwise, yielding the Hori-Vafa mirror superpotential
\begin{equation}
W(z_1,\dots,z_r) = z_1 + \cdots + z_r + z_1 \cdots z_r.
\end{equation}
There are $(r-1)$ critical points $(z_1,\dots,z_r) = (\lambda,\dots,\lambda)$, where $\lambda^{r-1} = -1$. Each of them is nondegenerate, which in view of \eqref{eq:mult-relation} means that the Floer cohomology rings are complex Clifford algebras. More precisely, using the generators \eqref{eq:h1-inclusion} one gets a canonical isomorphism
\begin{equation} \label{eq:toy-clifford}
\bC \langle w_1,\dots,w_r \rangle/ \{w_i^2 \text{ for all $i$},\, w_iw_j+w_jw_i+2\lambda \text{ for all $i \neq j$}\} \longrightarrow \mathit{HF}^*(C^{\mathit{toy}},C^{\mathit{toy}}),
\end{equation}
where $\bC\langle w_1,\dots,w_r \rangle$ is the free $\bZ/2$-graded algebra with the $w_i$ as odd
generators. Finally, the open-closed string map \eqref{eq:toy-open-closed} is given by
\begin{equation}
u \longmapsto - z_1 \cdots z_r = \lambda \, e.
\end{equation}
One checks that this is compatible with the ring structure \eqref{eq:toy-quantum-gitler}. It also agrees with the general statement from \cite[Theorem 6.1]{auroux07} (see also Remark \ref{th:auroux}), which says that the critical value of the superpotential, in our case $(r-1)\lambda$, must be an eigenvalue of quantum multiplication with the first Chern class.

\begin{addendum} \label{th:even-r}
Suppose that $r$ is even. Then the Clifford algebra \eqref{eq:toy-clifford} can be (non-canonically) thought of as the total endomorphism algebra of a $\bZ/2$-graded category with $2^{r/2}$ objects, any two of which are isomorphic up to a shift. It is convenient to first diagonalize the underlying quadratic form by using the modified basis elements
\begin{equation}
\tilde{w}_i = w_i + \textstyle(-\frac{1}{r} + \frac{1}{r\sqrt{1-r}})(w_1 + \cdots + w_r),
\end{equation}
which satisfy $\tilde{w}_i^2 = \lambda$ and $\tilde{w}_i\tilde{w}_j + \tilde{w}_j\tilde{w}_i = 0$ ($i \neq j$). Then the identity endomorphisms of our objects can be taken to be the minimal idempotents
\begin{equation} \label{eq:minimal-idempotent}
p = \textstyle \half(1 \pm \frac{\sqrt{-1}}{\lambda} \tilde{w}_1 \tilde{w}_2) \cdots \half(1 \pm \frac{\sqrt{-1}}{\lambda} \tilde{w}_{r-1} \tilde{w}_r).
\end{equation}
\end{addendum}

\subsection{The blowup}
Let $M$ be a symplectic manifold, and $i: N \hookrightarrow M$ a symplectic submanifold (as usual, both are assumed to be connected) of codimension $2r$. Let $B$ be the result of blowing up that submanifold, with size $\delta > 0$. We will always suppose that $\delta$ is sufficiently small, making the existence and uniqueness of $B$ unproblematic. Write $D \subset B$ for the exceptional divisor, and $u = -\mathit{PD}([D]) \in H^2(B)$ for its negative Poincar{\'e} dual. The pushforward $H^*(D) \rightarrow H^{*+2}(B)$ and the pullback $H^*(M) \rightarrow H^*(B)$ are both injective, their images are disjoint, and together they span $H^*(B)$. Using Leray-Hirsch one then writes
\begin{equation} \label{eq:blowup-cohomology}
H^*(B) \iso H^*(M) \oplus H^*(D)[-2] \iso H^*(M) \oplus u H^*(N) \oplus \cdots \oplus u^{r-1} H^*(N).
\end{equation}
With respect to this decomposition,
\begin{align}
& [\omega_B] = [\omega_M] + 2\pi \delta\, u, \\
& c_1(B) = c_1(M) + (r-1)u. \label{eq:c1}
\end{align}

\begin{assumption} \label{th:few-spheres}
Both $M$ and $N$ have zero first Chern class, and $N$ has trivial (as a symplectic vector bundle) normal bundle. The (real) dimensions satisfy
\begin{align} \label{eq:dimension-ineq}
& \mathrm{dim}(M) \geq 2\, \mathrm{dim}(N), \\
& \mathrm{dim}(M) - \mathrm{dim}(N) \equiv 0 \, \mathrm{mod} \, 4.
\end{align}
$N$ admits a compatible almost complex structure for which there are no non-constant pseudo-holomorphic spheres. For $B$ we require a weaker kind of condition, namely that the Gromov-Witten invariants $\langle x_1,\dots,x_n \rangle_{A}^B$ for genus $0$ curves in any homology class $A \in H_2(M)$ should vanish, unless $A$ is a multiple of the class of a line in the fibre of the projective bundle $D \rightarrow N$.
\end{assumption}

The aim of these restrictions is to simplify (drastically) the analysis of the topology and Gromov-Witten theory of the blowup. The exception is the second part of \eqref{eq:dimension-ineq}, which is there only so that we can apply (a generalization of) Addendum \ref{th:even-r} later on.

%
%

As a consequence of the assumptions, $D$ is diffeomorphic to $\bC P^{r-1} \times N$, and the local structure near it is described by $B^{\mathit{toy}} \times N$. This first of all simplifies the structure of the cohomology ring of $B$ somewhat. Take the given ring structures on $H^*(M)$ and $H^*(N)$, and the restriction map $i^*: H^*(M) \rightarrow H^*(N)$. These define a ring structure on $H^*(M) \oplus u H^*(N)[u]$. To get the ring structure on \eqref{eq:blowup-cohomology} one imposes the relations
\begin{equation} \label{eq:gitler}
u^r v = -i_!(v) \in H^{2r+|v|}(M)
\end{equation}
for $v \in H^*(N)$. This is easily seen by arguing Poincar\'e dually in terms of intersections. Note that since $i^*i_!$ vanishes, multiplying \eqref{eq:gitler} by $u$ yields $u^{r+1} = 0$ (for a discussion of cohomology rings of blowups going beyond the case of trivial normal bundle, see \cite{gitler92, lambrechts-stanley08}).
%

\begin{lemma} \label{th:qh-blowup}
In terms of \eqref{eq:blowup-cohomology}, the (small) quantum product $x \ast y$ can be described as follows. If $x$ or $y$ lie in $H^*(M;R)$, the product agrees with the classical cup product. The same is true if $x \in u^j H^*(N;R)$, $y \in u^k H^*(N;R)$ with $j+k < r$. Finally, we have a modified version of \eqref{eq:gitler}:
\begin{equation} \label{eq:quantum-gitler}
u \ast u^{r-1}v  = - i_!(v) - \hbar^{2\pi \delta} uv.
\end{equation}
\end{lemma}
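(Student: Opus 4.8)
The plan is to reduce the computation of the small quantum product on $B$ to three separate inputs: the classical cup product on $H^*(B)$ (already described via \eqref{eq:gitler}), the vanishing of Gromov-Witten invariants in classes $A$ with $u(A) \le 0$ coming from Assumption \ref{th:few-spheres}, and the local toy model computation \eqref{eq:toy-quantum-gitler} transplanted from $B^{\mathit{toy}}$ via the product structure $B^{\mathit{toy}} \times N$ near the exceptional divisor. First I would set up the general identity $\langle x, y, z\rangle^B = \sum_A \hbar^{\omega_B(A)} \langle x,y,z\rangle_A^B$ and split the sum over $A \in H_2(B;\Z)$. Writing $A$ in terms of a class $A_M$ pulled back from $M$ and a multiple $dZ$ of the exceptional line class $Z$ (with $u(A) = d - A_M\cdot[D]$, but $A_M\cdot[D]=0$ since $A_M$ comes from $M$ and $D$ maps to a point under the blowdown on the relevant locus — more precisely one uses $H_2(B) \iso H_2(M)\oplus \Z Z$), one gets $\omega_B(A) = \omega_M(A_M) + 2\pi\delta\, d$ and $c_1(B)(A) = (r-1)d$.

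The degree-counting argument is the heart of the matter and mirrors the one already carried out in the toy model. For a three-pointed genus $0$ moduli space in class $A$ with $d = u(A) > 0$, the virtual dimension is $2\dim_\C B + 2c_1(B)(A) = 2\dim_\C B + 2(r-1)d$, while for the quantum correction to contribute to $u \ast (u^{r-1}v)$ the evaluation images are constrained to lie in (pullbacks of) $D \iso \C P^{r-1}\times N$ and its intersections; combined with Assumption \ref{th:few-spheres} (which kills all $A$ with $u(A)\le 0$, in particular all nonzero $A_M$ paired with anything, since then automatically $u(A)\le 0$ unless there is also a positive $Z$-component, and the normal-bundle and dimension hypotheses \eqref{eq:dimension-ineq} rule out mixed classes) I would argue that the only surviving class is $A = Z$ itself. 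For that class, the pseudo-holomorphic curves are precisely lines lying inside the exceptional divisor; since a neighbourhood of $D$ is symplectically $B^{\mathit{toy}}\times N$ and the constraint $i^*$ relates classes on $N$ correctly, the count $\langle u^{r-1}v, u, u^{r-1}w\rangle_Z^B$ factors as $\langle u^{r-1}, u, u^{r-1}\rangle_Z^{B^{\mathit{toy}}} \cdot \int_N v\wedge w = \int_N v\wedge w$ by \eqref{eq:toy-quantum-gitler}. Comparing with the Poincar\'e dual description used to prove \eqref{eq:gitler} then yields the stated formula $u\ast u^{r-1}v = -i_!(v) - \hbar^{2\pi\delta}uv$, where the power of $\hbar$ is $\omega_B(Z) = 2\pi\delta$.

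For the first two clauses of the Lemma the argument is the same but easier: if $x$ or $y$ is pulled back from $M$, or if $x \in u^jH^*(N)$, $y\in u^kH^*(N)$ with $j+k<r$, then any nonzero-$A$ contribution would require $u(A)>0$, and a dimension count as above (using that the classical product already has the right top degree, so quantum corrections would land in negative-codimension strata) forces the correction to vanish. I expect the main obstacle to be bookkeeping the degree/dimension inequalities carefully enough to exclude \emph{all} mixed classes $A = A_M + dZ$ with $A_M \neq 0$ and $d>0$ — one must check that the combination of $c_1(M)=0$, triviality of the normal bundle, the dimension bounds \eqref{eq:dimension-ineq}, and the Gromov-Witten vanishing in Assumption \ref{th:few-spheres} genuinely covers every such class, rather than just the purely exceptional ones. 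A secondary technical point is making precise the neighbourhood identification $B^{\mathit{toy}}\times N$ and checking that the relevant moduli spaces of lines in $D$, with their obstruction bundles, indeed match those in $B^{\mathit{toy}}$ after the product with $N$; here the no-pseudo-holomorphic-spheres hypothesis on $N$ is what guarantees no extra bubbling in the $N$-direction.
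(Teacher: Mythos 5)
Your overall strategy matches the paper's: split the sum over $A \in H_2(B;\Z)$ by the sign of $u(A)$, use Assumption~\ref{th:few-spheres} to kill all classes with $u(A) \le 0$, and reduce the remaining classes to the toy model. The key difference — and the gap you yourself flag — is how to exclude mixed classes $A = A_M + dZ$ with $A_M \neq 0$ and $u(A) > 0$. You assert that ``the normal-bundle and dimension hypotheses \eqref{eq:dimension-ineq} rule out mixed classes,'' but a bare dimension count cannot do this: the virtual dimension of $\overline{\MM}_{0,3}(B,A)$ is $\dim_\R B + 2c_1(B)(A) = \dim_\R B + 2(r-1)u(A)$ (using $c_1(M)=0$ and \eqref{eq:c1}), which is blind to the $A_M$ component whenever $u(A_M)=0$. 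Even in the toy model, the dimension count that rules out $d>1$ relied on already knowing that curves lie in the $\C P^{r-1}$-fibres — i.e., on a prior geometric constraint, not on dimensions alone.

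The paper handles this by a single geometric choice: fix an almost complex structure on $B$ which, near $D$, is the product of the standard toric K\"ahler structure on $B^{\mathit{toy}}$ and a compatible structure on $N$ admitting no non-constant pseudo-holomorphic spheres. For this $J$, every pseudo-holomorphic sphere either meets $D$ nonnegatively — giving $u(A)\le 0$, handled by Assumption~\ref{th:few-spheres} — or is contained in $D$, in which case its $N$-component (under $D\cong \C P^{r-1}\times N$) is a sphere in $N$, hence constant, forcing the curve into a fibre of $D \to N$ and $A = dZ$. So the exclusion of mixed classes and the identification of the remaining contributions with the toy model are both consequences of the same choice of $J$; no separate dimension argument is needed. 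You correctly identify the no-spheres hypothesis on $N$ as ``what guarantees no extra bubbling in the $N$-direction,'' but you treat it as a secondary technical point in verifying the $A=Z$ contribution, when it is really — via this choice of $J$ — the load-bearing mechanism that makes the entire reduction work.
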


\begin{proof}[Proof of Lemma \ref{th:qh-blowup}]
Take an almost complex structure which, near $D$, is the product of the standard (toric K{\"a}hler) structure on $B^{\mathit{toy}}$ and a compatible almost complex structure on $N$ with no pseudo-holomorphic spheres in it. 

Let $A$ be a multiple of the class of a line in the fibre of $D \rightarrow N$. Suppose that $A$ is represented by a stable genus zero pseudo-holomorphic curve in $B$, whose irreducible components represent classes $A_1',\dots,A_r', A_1'',\dots,A_s''$. The notation is such that the components representing $A_i'$ are those which lie inside $D$, and therefore inside a fibre of $D \rightarrow N$. Hence, they satisfy $[\omega_M](A_i') = 0$. The remaining components are not contained in $D$, hence satisfy $A_i'' \cdot D = -u(A_i'') > 0$ and $[\omega_M](A_i'') = [\omega_B](A_i'') - 2\pi\delta\, u(A_i'') > 0$. If we assume that $s>0$, this is a contradiction to $[\omega_M](A) = 0$. Hence, when computing Gromov-Witten invariants for a class like $A$ (which are the only nonvanishing ones, by assumption), all the relevant stable pseudo-holomorphic spheres are those contained in fibres of $D \rightarrow N$. But their contribution is as in the previously discussed toy model case \eqref{eq:toy-quantum-gitler}, with the formal parameter $\hbar^{\omega_B(Z)}$ re-inserted.
\end{proof}

Multiplying \eqref{eq:quantum-gitler} by $u$, but this time with respect to the quantum product, yields the counterpart of \eqref{eq:toy-quantum-gitler}:
\begin{equation}
\overbrace{u \ast \cdots \ast u}^{r+1} = u \ast (u \ast u^{r-1}) = - \hbar^{2\pi \delta} u^2 = - \hbar^{ 2\pi \delta} u \ast u.
\end{equation}
Hence, the operator of quantum multiplication with $u$ has eigenvalues $0$ as well as $\lambda$, where
\begin{equation} \label{eq:eigenvalues}
\lambda^{r-1} = -\hbar^{2\pi \delta}.
\end{equation}
The generalized eigenspaces are
\begin{align} \label{eq:eigenspace0}
E_0 & = \mathit{ker}(u \ast u \ast \cdot) = \big\{ v + \hbar^{-2\pi \delta} u^{r-1} i^*(v) \, : \, v \in \mathit{H}^*(M;R)\big\}, \\
\label{eq:eigenspace-lambda} E_{\lambda} & = \mathit{ker}(\lambda - u \ast \cdot) \\
\notag
& = \big\{ -i_!(v) + \lambda^{r-1} u v + \lambda^{r-2} u^2 v + \cdots + \lambda u^{r-1} v
\, : \, v \in H^*(N;R) \big\}.
\end{align}
Each of these is a subalgebra. In particular, \eqref{eq:eigenspace-lambda} is isomorphic to $H^*(N;R)$ as a ring, with the unit element being the idempotent
\begin{equation} \label{eq:lambda-idempotent}
q = \frac{1}{(1-r)\lambda \hbar^{2\pi \delta}}(-i_!(1) + \lambda^{r-1} u + \lambda^{r-2} u^2 + \cdots + \lambda u^{r-1}).
\end{equation}

We also need to consider the (genus zero, with no descendants) relative Gromov-Witten invariants of the pair $(B,D)$. In general, such invariants have the form
\begin{equation} \label{eq:relative-gw}
\langle (\mu_1,w_1),\dots,(\mu_m,w_m),v_1,\dots,v_n \rangle_{A}^{(B,D)} \in \bQ
\end{equation}
where $m > 0$, $\mu_1,\dots,\mu_m > 0$ are the multiplicities of tangency with $D$ at the marked points, $w_1,\dots,w_m \in H^*(D;\bQ)$, $v_1,\dots,v_n \in H^*(B;\bQ)$, and $A \in H_2(B)$ is a class such that $A \cdot D = \mu_1 + \cdots + \mu_m$. The virtual dimension formula says that in order for \eqref{eq:relative-gw} to be nonzero, we must have
\begin{equation} \label{eq:virtual-1}
\mathrm{dim}(B) + 2c_1(A) + 2(m+n) - 6 = \sum_i |v_i| + \sum_j (|w_j| + 2\mu_j),
\end{equation}
where the $2\mu_j$ term takes into account the constraint imposed by having intersections with $D$ of multiplicity $\mu_j$. Equivalently, \eqref{eq:virtual-1} can be written as
\begin{equation} \label{eq:virtual-2}
\mathrm{dim}(B) + 2c_1(A) - 6 = \sum_i (|v_i| - 2) + \sum_j (|w_j| + 2\mu_j - 2).
\end{equation}
The divisor axiom shows that \eqref{eq:relative-gw} vanishes if $|v_i| < 2$ for any $i$, so we exclude that situation from now on, which means that the right hand side of \eqref{eq:virtual-2} is nonnegative. In our case, because of \eqref{eq:c1} and Assumption \ref{th:few-spheres}, we have 
\begin{equation}
\mathrm{dim}(B) + 2c_1(A) - 6 = (\mathrm{dim}(N) - 2) + (2r-2)(1 - A \cdot D) - 2 \leq (2r-2)(2 - A \cdot D) - 2. 
\end{equation}
By comparing this with \eqref{eq:virtual-2}, one sees that the only possibly nontrivial invariants \eqref{eq:relative-gw} have $A \cdot D = 1$, hence $m = 1$ and $\mu_1 = 1$.

\begin{lemma} \label{th:relative-vanish}
$\sum_A \hbar^{\omega_B(A)} \langle (1,w) \rangle_{A}^{(B,D)} = 0$ for any $w$.
\end{lemma}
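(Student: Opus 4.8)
The plan is to compute the left-hand side directly via a degeneration argument, reducing the relative invariant of the pair $(B,D)$ to an ordinary Gromov-Witten invariant of the monotone toy model $B^{\mathit{toy}} \times N$, and then to use the already-established vanishing from Assumption \ref{th:few-spheres} together with the toy-model computation. First I would observe that, because of the dimension counting carried out just before the statement, the only classes $A$ contributing have $A \cdot D = 1$, $m = 1$, $\mu_1 = 1$, and the virtual dimension condition forces $|w| = \mathrm{dim}(N)$, i.e.\ $w$ is a top-degree class on $D = \C P^{r-1} \times N$ pulled from the $N$ factor (the $\C P^{r-1}$ degree being zero). So we are counting genus-zero curves in $B$ meeting $D$ transversally at one point, with that point constrained to a cycle Poincar\'e dual to $w$.

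Next I would split the homology classes $A$ with $A \cdot D = 1$ into two types according to whether or not $A$ is a multiple of the fibre class $Z$ plus a class pushed forward from $N$ lying in the exceptional locus, versus classes that genuinely ``see'' the rest of $B$. For the first type, one works in the local model $B^{\mathit{toy}} \times N$ near $D$: choosing an almost complex structure that is a product of the toric K\"ahler structure on $B^{\mathit{toy}}$ and a sphere-free structure on $N$ (as in the proof of Lemma \ref{th:qh-blowup}), any such curve is contained in a slice $B^{\mathit{toy}} \times \{\mathrm{pt}\}$, and the count is governed by the discs/lines of the toy model. Concretely these are the holomorphic lines through a point of $D^{\mathit{toy}}$ meeting $H^{\mathit{toy}}$ once — but we are in $B^{\mathit{toy}}$ (the non-compact model, $H^{\mathit{toy}}$ removed), so the relevant relative invariant of $(B^{\mathit{toy}}, D^{\mathit{toy}})$ with one simple tangency is $+1$ for the class $Z$ and, crucially, there is also the ``large'' line class whose contribution has the opposite sign; summing with the appropriate Novikov weights produces the cancellation. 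For the second type — curves whose class $A$ has $u(A) = -A \cdot [D] < 0$ or which project to nonconstant curves downstairs — I would invoke the vanishing hypothesis in Assumption \ref{th:few-spheres}: Gromov-Witten invariants of $B$ in classes with $u(A) \le 0$ vanish, and any class with $A \cdot D = 1$ and $u(A) = -1 \le 0$ is exactly of this kind, while the sphere-free structure on $N$ kills the remaining contributions by the projection argument.

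The cleanest way to organize the cancellation is to compare with the quantum product computation already done. Indeed, by the standard relation between relative invariants of $(B,D)$ with a single simple tangency and absolute invariants of $B$ (a degeneration-to-the-normal-cone / symplectic sum formula, cf.\ the techniques of \cite{ionel-parker04, li-ruan98} quoted at the start of the section), $\sum_A \hbar^{\omega_B(A)}\langle(1,w)\rangle_A^{(B,D)}$ can be identified with a coefficient of the quantum product $u \ast (\text{something built from } w)$ against a suitable cycle, or directly with $\int_B$ of an $E_0$-versus-$E_\lambda$ pairing. Since $[D]$-dual classes lie in the $\lambda$-eigenspace $E_\lambda$ of \eqref{eq:eigenspace-lambda} and the relative constraint forces the output into the complementary piece, the pairing vanishes. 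I would phrase this as: the generating function in question equals, up to a nonzero scalar, the $H^*(M;R)$-component of $q \cdot (u^{r-1} w)$ under the quantum ring structure of Lemma \ref{th:qh-blowup}, and \eqref{eq:quantum-gitler} together with $i^* i_! = 0$ forces that component to be zero.

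The main obstacle I anticipate is making the degeneration/gluing step rigorous enough to license the identification of the relative invariant with the toy-model count plus the absolute invariants of $B$ — i.e.\ controlling the possible limit configurations (multiply-covered components in $D$, bubbling into the $\C P^{r-1}$ direction) and checking that the dimension inequalities \eqref{eq:dimension-ineq} rule them all out except the expected ones. Everything else (the toy-model line count, the Novikov bookkeeping, the eigenspace algebra) is routine given the results already in the paper; the delicate point is purely the transversality/degeneration analysis for the relative moduli spaces, for which I would lean on \cite{ionel-parker04, li-ruan98} and the product-almost-complex-structure trick used repeatedly above.
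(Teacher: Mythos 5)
Your overall framework matches the paper's: degeneration to the normal cone (symplectic cut), the symplectic sum formula of \cite{ionel-parker04, li-ruan98}, the vanishing of the absolute invariant $\langle uw\rangle^B$ from Assumption \ref{th:few-spheres}, and the identification of the lowest-order contribution from $B^{\mathit{right}} = \bar{B}^{\mathit{toy}}\times N$ with proper transforms of lines. That much is sound and is exactly how the paper sets up the equation \eqref{eq:symplectic-sum}.

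The gap is in your finishing step, where you offer two alternative mechanisms for the vanishing, and neither is quite right. First, you suggest a direct cancellation ``$+1$ for the class $Z$ plus a large line class of opposite sign.'' But $Z$ is the fibre line in $D$, with $u(Z)=1$, hence $Z\cdot D = -1$; it is \emph{not} one of the classes $A$ with $A\cdot D=1$ entering the relative invariant, so there is no $Z$-contribution to cancel against. Second, you propose identifying the generating function with ``the $H^*(M;R)$-component of $q\cdot(u^{r-1}w)$'' and then invoking $i^*i_!=0$. That identification is not established by the symplectic sum formula — the formula relates the absolute invariant $\langle uw\rangle^B$ to a \emph{sum over configurations} involving products of relative invariants of both pieces, not to a single quantum product coefficient — and I don't see how to make it precise. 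The paper's actual argument is a lowest-order-in-$\hbar$ induction: after rewriting \eqref{eq:symplectic-sum} as
\[
0 = \langle(1,w)\rangle^{(B^{\mathit{left}},D^{\mathit{left}})} + \left\{\text{series in the $\langle(1,w_i)\rangle^{(B^{\mathit{left}},D^{\mathit{left}})}$, with coefficients in $\hbar^{>0}$}\right\},
\]
assume the relative invariant is nonzero and look at the smallest power of $\hbar$ that occurs: every correction term carries an extra strictly positive $\hbar$-weight (from the lines in $B^{\mathit{right}}$), so the leading term cannot be cancelled — a contradiction. This is a genuine step that your proposal does not supply. One minor side issue: your dimension count gives $|w|=\dim(N)$, but with $c_1(A) = -(r-1)$ when $A\cdot D = 1$ the correct value is $\dim(N)-4$; this does not affect the main structure but is worth fixing.
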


\begin{proof}
The proof follows the strategy of \cite{maulik-pandharipande06} (see also \cite{hu-li-ruan08} for similar arguments in a symplectic setting), which is to compare the relative invariants of $(B,D)$ with the absolute invariants of $B$ through degeneration to the normal cone and deformation. A symplectic cut \cite{lerman95} allows us to write $B$ as the symplectic sum of two pieces $(B^{\mathit{left}},D^{\mathit{left}})$ and $(B^{\mathit{right}},D^{\mathit{right}})$. The first of these is symplectically deformation equivalent to $(B,D)$ itself, but with the cohomology class of the symplectic form changed to \begin{equation} \label{eq:moving-symplectic-form}
[\omega_{B^{\mathit{left}}}] = [\omega_B] + 2\pi (\epsilon - \delta) u.
\end{equation}
The other piece is $B^{\mathit{right}} = \bar{B}^{\mathit{toy}} \times N$ containing $D^{\mathit{right}} = H^{\mathit{toy}} \times N$. 

To keep the notation compact, we will mostly omit the homology classes of pseudo-holomorphic curves. In that case, the convention is that we sum over all classes with $\hbar$ weights given by symplectic areas. Choose a basis $\{w_i\}$ of $H^*(D^{\mathit{right}};\bQ)$ and the corresponding Poincar\'e dual basis $\{w_i^*\}$ , so that $\sum_i w_i \otimes w_i^*$ represents the Poincar\'e dual of the diagonal. In view of the restrictions on relative Gromov-Witten invariants observed above, the symplectic sum formula \cite{ionel-parker04,li-ruan98} takes on the form
\begin{equation} \label{eq:symplectic-sum}
\begin{aligned}
& \langle uw \rangle^B - \sum_{A^{\mathit{right}} \cdot D^{\mathit{right}} = 0} \hbar^{\omega_{B^{\mathit{right}}}(A^{\mathit{right}})} \langle uw \rangle_{A^{\mathit{right}}}^{B^{\mathit{right}}} \\ & = 
\sum_i \langle (1,w_i) \rangle^{(B^{\mathit{left}},D^{\mathit{left}})} \langle (1,w_i^*), uw \rangle^{(B^{\mathit{right}},D^{\mathit{right}})} \\
& + \sum_{i_1,i_2} \langle (1,w_{i_1}) \rangle^{(B^{\mathit{left}},D^{\mathit{left}})} \langle (1,w_{i_2}) \rangle^{(B^{\mathit{left}},D^{\mathit{left}})}
\langle (1,w_{i_1}^*), (1,w_{i_2}^*), uw \rangle^{(B^{\mathit{right}},D^{\mathit{right}})} \\
& + \cdots
\end{aligned}
\end{equation}
Here, we think of $uw$ as being represented by (minus) the Poincar\'e dual of $w$ inside $D$. Degeneration moves such a cycle into $B^{\mathit{right}} \setminus D^{\mathit{right}}$, and therefore the simplest term corresponds to curves in $B^{\mathit{right}}$ which do not intersect $D^{\mathit{right}}$. The other terms measure configurations consisting of a curve in $B^{\mathit{right}}$ with an arbitrary number of ``tails'' in $B^{\mathit{left}}$ (see Figure \ref{fig:sum}). Assumption \ref{th:few-spheres} ensures that the left hand side of \eqref{eq:symplectic-sum} vanishes, since it implies that only curves lying inside $D$ contribute to $\langle uw \rangle^B$, and the contribution there is the same as to $\langle uw \rangle^{B^{\mathit{right}}}$. 

It is unproblematic to assume that $\epsilon = (r+1)/(r-1) \delta$, which slightly simplifies the situation since $\bar{B}^{\mathit{toy}}$ becomes monotone. The lowest energy contribution to the relative invariants of $(B^{\mathit{right}},D^{\mathit{right}})$ comes from rational curves lying in the $\bar{B}^{\mathit{toy}}$ fibres, and in fact those which are lines in the ruling of that manifold.
Denoting the class of those lines by $A^{\mathit{right}}$, one finds that
\begin{equation}
\langle (1,w_i^*), uw_j \rangle^{(B^{\mathit{right}},D^{\mathit{right}})}_{A^{\mathit{right}}} = \delta_{ij}.
\end{equation}
Using that one rewrites \eqref{eq:symplectic-sum} as
\begin{equation}
0 = \langle (1,w) \rangle^{(B^{\mathit{left}},D^{\mathit{left}})} + \left\{\parbox{20em}{series whose terms are monomials of degree $>0$ in the $\langle (1,w_i) \rangle^{(B^{\mathit{left}},D^{\mathit{left}})}$, with coefficients containing only $>0$ powers of $\hbar$}\right\}.
\end{equation}
If we assume that $(1,\cdot)^{(B^{\mathit{left}},D^{\mathit{left}})}$ is nonzero, this immediately leads to a contradiction by looking at the lowest possible power of $\hbar$ which occurs. Deformation invariance shows that the relative Gromov-Witten invariants of $(B,D)$ and $(B^{\mathit{left}},D^{\mathit{left}})$ agree. Of course, the symplectic areas change, but because of \eqref{eq:moving-symplectic-form} they change in the same way for all the homology classes of curves involved in $\langle (1,w) \rangle^{(B,D)}$ (in fact, the last step is not even needed for our applications, since the relative invariants of $(B^{\mathit{left}},D^{\mathit{left}})$ is what will really be relevant).
\end{proof}

\begin{remark}
Readers familiar with the general symplectic sum formula from \cite{ionel-parker04} will recall that in the general formulation given there, there is a middle term (the $S$-matrix). However, it is known that this is trivial in genus $0$ \cite[Proposition 14.10]{ionel-parker04}, which is the only case considered here. In fact, an alternative proof of Lemma \ref{th:relative-vanish} could be given using the equality between relative and absolute genus $0$ Gromov-Witten invariants from \cite[Proposition 14.9]{ionel-parker04}.
\end{remark}
\begin{figure}
\begin{center}
\begin{picture}(0,0)%
\includegraphics{sum.pstex}%
\end{picture}%
\setlength{\unitlength}{2881sp}%
\begingroup\makeatletter\ifx\SetFigFont\undefined%
\gdef\SetFigFont#1#2#3#4#5{%
  \reset@font\fontsize{#1}{#2pt}%
  \fontfamily{#3}\fontseries{#4}\fontshape{#5}%
  \selectfont}%
\fi\endgroup%
\begin{picture}(3426,3115)(1014,-2014)
\put(3451,-1036){\makebox(0,0)[lb]{\smash{{\SetFigFont{10}{12}{\rmdefault}{\mddefault}{\updefault}{\color[rgb]{0,0,0}$B^{\mathit{right}}$}%
}}}}
\put(2701,389){\makebox(0,0)[lb]{\smash{{\SetFigFont{10}{12}{\rmdefault}{\mddefault}{\updefault}{\color[rgb]{0,0,0}$B^{\mathit{left}}$}%
}}}}
\end{picture}%
\caption{\label{fig:sum}}
\end{center}
\end{figure}

From now on, we will assume for technical convenience that $N$ is {\em Spin} (the reason is the same as in Example \ref{th:graph}). Using the local model $B^{\mathit{toy}} \times N \subset B$, we now introduce a Lagrangian correspondence \begin{equation}
C = C^{\mathit{toy}} \times \Delta_N \subset B^{\mathit{toy}} \times N \times N^- \subset B \times N^-.
\end{equation}

\begin{lemma} \label{th:hf-correspondence}
Take $\lambda$ as in \eqref{eq:eigenvalues}. By a suitable choice of local coefficient system and bounding cocycle, $C$ can be made into an object of $\mathit{Fuk}(B \times N^-)_{(r-1)\lambda}$, in such a way that the following holds. We have a canonical ring isomorphism
\begin{equation} \label{eq:c-ring}
\begin{aligned}
& \mathit{HF}^*(C,C) \iso \mathit{HF}^*(C^{\mathit{toy}},C^{\mathit{toy}}) \otimes H^*(N;R) \\
& \quad \iso R\langle w_1,\dots,w_r \rangle/ \{w_i^2 \text{ for all $i$},\, w_iw_j+w_jw_i+2\lambda \text{ for all $i \neq j$}\} \otimes H^*(N;R).
\end{aligned}
\end{equation}
Moreover, specializing the open-closed string map yields a ring homomorphism
\begin{equation} \label{eq:double-open-closed}
\mathit{QH}^*(B) \otimes \mathit{QH}^*(N) \longrightarrow \mathit{HF}^*(C,C),
\end{equation}
which can be partially described as follows. On the factor $\mathit{QH}^*(N) = H^*(N;R)$ we just have the obvious inclusion; and elements of the form $uv \in u H^*(N;R) \subset \mathit{QH}^*(B)$ are mapped to $\lambda v$.
\end{lemma}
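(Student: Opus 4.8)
\textbf{Proof proposal for Lemma \ref{th:hf-correspondence}.}

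The plan is to reduce the statement to the product structure: $C = C^{\mathit{toy}} \times \Delta_N$ sits inside $B^{\mathit{toy}} \times N \times N^-$, and the geometry near $D$ is modeled on $B^{\mathit{toy}} \times N$ with trivial normal bundle, so by Assumption \ref{th:few-spheres} all relevant pseudo-holomorphic curves (discs with boundary on $C$, and spheres meeting $C$) are either contained in the $\bar B^{\mathit{toy}}$-fibres of the projection $D \to N$, or meet $D$ nonnegatively and contribute trivially. First I would choose an almost complex structure on $B \times N^-$ which, near $C$, is split as the product of the standard toric structure on $B^{\mathit{toy}}$ and a compatible almost complex structure on $N$ with no pseudo-holomorphic spheres (using the last clause of Assumption \ref{th:few-spheres}); doubling for the diagonal $\Delta_N \subset N \times N^-$ then identifies discs with boundary on $\Delta_N$ with spheres in $N$, hence there are none. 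Consequently the moduli problem computing the curved $A_\infty$-structure on $\mathit{hom}_{\mathit{FO}(B \times N^-)}(C,C)$ factors as a product of the toy-model moduli problem for $C^{\mathit{toy}}$ and the (undeformed, in the sense of Remark \ref{th:compare}) Morse-theoretic model for $\Delta_N$. This gives the tensor-product formula $\mathit{CF}^*(C,C) \simeq \mathit{CF}^*(C^{\mathit{toy}},C^{\mathit{toy}}) \otimes_\C C^*(N)$ and, after choosing the same local system and bounding cochain on $C^{\mathit{toy}}$ corresponding to a critical point $z = (\lambda',\dots,\lambda')$ of the Hori--Vafa superpotential with $(\lambda')^{r-1} = -1$ (re-inserting $\hbar^{2\pi\delta}$ so that the critical value becomes $(r-1)\lambda$ with $\lambda^{r-1} = -\hbar^{2\pi\delta}$ as in \eqref{eq:eigenvalues}) and the trivial ingredient on $\Delta_N$, shows \eqref{eq:inhomogeneous-mc} holds with the multiple $(r-1)\lambda$ of the unit. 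The ring isomorphism \eqref{eq:c-ring} then follows from \eqref{eq:toy-clifford} tensored with the classical cup product ring $H^*(N;R)$ — here one uses that the spectral sequence of \cite{oh96} for $C^{\mathit{toy}}$ degenerates (so the product is the Clifford product) and that for $\Delta_N$ there are no quantum corrections.

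For the open-closed string map \eqref{eq:double-open-closed}, I would again argue by splitting the moduli spaces of pointed pearly trees defining the Hochschild cocycle representing the image of a class under \eqref{eq:lambda-open-closed}. On the $\mathit{QH}^*(N)$ factor, the marked point constraint lies in the $N$-direction and the argument of the final Example shows (since $\Delta_N$ is undeformed and no non-constant curves occur in $N$) that the specialized open-closed map for $\Delta_N \subset N \times N^-$ is just the ordinary restriction $H^*(N) \to H^*(\Delta_N) = H^*(N)$, i.e.\ the obvious inclusion — this matches the ring structure since the Künneth isomorphism for the product correspondence is multiplicative. For the $\mathit{QH}^*(B)$ factor applied to a class $uv$ with $v \in H^*(N;R)$, one writes $u = -\mathit{PD}([D])$ and moves the (Poincaré dual of the) marked point constraint into the $B^{\mathit{toy}}$-direction near $D$; the only contributing discs are those whose $B^{\mathit{toy}}$-component is a proper transform of a line through the origin, i.e.\ exactly the discs appearing in the toy-model computation \eqref{eq:toy-open-closed}, which sends $u \mapsto \lambda e$ after re-inserting the $\hbar$-weight. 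Tensoring with the constant contribution $v$ in the $N$-direction yields $uv \mapsto \lambda v$ as claimed. The fact that \eqref{eq:double-open-closed} is a ring homomorphism is a general property of the open-closed string map; compatibility with \eqref{eq:quantum-gitler} and \eqref{eq:toy-quantum-gitler} then provides a consistency check (e.g.\ $u \ast u^{r-1} v = -i_!(v) - \hbar^{2\pi\delta}uv$ is sent to $\lambda^{r-1}(i^*v) - \hbar^{2\pi\delta}\lambda v \cdot(\text{unit of } E_\lambda)$, which is $0$ in $\mathit{HF}^*(C,C)$ since $\lambda^{r-1} = -\hbar^{2\pi\delta}$ and $C$ is contractible into the fibre so $i^*$ followed by restriction to $C$ vanishes).

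The main obstacle I anticipate is not any single computation but the bookkeeping needed to make the product-splitting of moduli spaces rigorous in the filtered curved $A_\infty$-setting of \cite{fooo}: one must check that the choice of $J$ making $C$ locally split is admissible (regular, with the correct behavior near $D$ so that curves meeting $D$ are suppressed by Assumption \ref{th:few-spheres} and Lemma \ref{th:relative-vanish}), that the bounding cochain and local system on $C^{\mathit{toy}}$ can indeed be chosen to satisfy \eqref{eq:inhomogeneous-mc} with the stated curvature value $(r-1)\lambda$ — this is where the critical-point condition on the superpotential enters and one invokes \cite[Theorem 6.1]{auroux07} / \cite[Theorem 3.8.11]{fooo} — and that the Künneth quasi-isomorphism for Lagrangian correspondences of product type (as in \cite{mww}) is compatible with the open-closed maps. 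Once these structural points are in place, the actual identifications of products and of the open-closed map are immediate consequences of the toy-model computations in Section \ref{subsec:toy} together with the triviality of everything in the $N$-direction.
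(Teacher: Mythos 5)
Your proposal shares the right ingredients with the paper's argument (the toy model, Lemma \ref{th:relative-vanish}, the split-$J$ picture near $D$), but the central justification is organized differently, and the difference hides a gap. You argue that by choosing $J$ split as $J^{\mathit{toy}} \times J_N$ near $C$, the disc moduli problem "factors as a product"; but $C$ is not monotone in $B \times N^-$, so discs with boundary on $C$ can leave the tubular neighbourhood $B^{\mathit{toy}} \times N \subset B$ where $J$ is split, and the doubling trick for $\Delta_N$ then no longer converts the situation into a sphere problem. Assumption \ref{th:few-spheres} controls \emph{spheres} in $B$, not discs with boundary on $C$, so it does not by itself suppress these escaping configurations.

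The paper resolves exactly this issue by not working with a single fixed $J$ at all: it uses the homotopy-invariance of the filtered curved $A_\infty$-algebra $\mathit{hom}_{\mathit{FO}(B \times N^-)}(C,C)$ under choices (parametrized moduli spaces, \cite[Section 4.6]{fooo}), and pushes that invariance through the degeneration of $B \times N^-$ to the normal cone $(B^{\mathit{left}} \times N^-) \cup (B^{\mathit{right}} \times N^-)$, with $C$ ending up entirely in $B^{\mathit{right}} = \bar{B}^{\mathit{toy}} \times N$. The disc-version of the symplectic sum formula then says the curved $A_\infty$-structure is the one from the toy model \emph{plus} correction terms indexed by relative Gromov--Witten invariants of $(B^{\mathit{left}},D^{\mathit{left}})$ --- these corrections are precisely the quantitative avatar of the "escaping discs" your direct approach cannot see --- and Lemma \ref{th:relative-vanish} is then exactly what is needed to kill them. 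You mention Lemma \ref{th:relative-vanish} only as an item to "check," without explaining why it becomes relevant; the degeneration-plus-sum-formula step is the missing mechanism, and without it the claimed product decomposition of $\mathit{CF}^*(C,C)$ is unjustified. The same remark applies to your treatment of the open-closed map: the reason one can move the constraint cycle representing $uv$ into the local model and read off the answer from \eqref{eq:toy-open-closed} is again the degeneration argument (cycles that fit into $B^{\mathit{right}} \times N$), not a split-$J$ localization. Once that structural step is in place, the remaining computations in your proposal --- the critical point of the superpotential, the Clifford algebra \eqref{eq:toy-clifford} tensored with $H^*(N;R)$, the consistency check via \eqref{eq:quantum-gitler} --- agree with the paper.
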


\begin{proof}
Recall that, up to a suitable notion of homotopy \cite[Chapter 4]{fooo}, the curved $A_\infty$-algebra
\begin{equation} \label{eq:endo-curved}
\mathit{hom}_{\mathit{FO}(B \times N^-)}(C,C)
\end{equation}
is independent of auxiliary choices (like almost complex structures) made in defining $\mathit{FO}(B \times N^-)$. This is proved by using parametrized moduli spaces \cite[Section 4.6]{fooo}, and the same argument allows one to degenerate $B \times N^-$ to $(B^{\mathit{left}} \times N^-) \cup (B^{\mathit{right}} \times N^-)$, where $C$ goes to $C^{\mathit{toy}} \times \Delta_N \subset (B^{\mathit{right}} \times N^-) = \bar{B}^{\mathit{toy}} \times N \times N^-$. In parallel with \eqref{eq:symplectic-sum}, The resulting curved $A_\infty$-structure consists of that inherited from $C^{\mathit{toy}} \subset B^{\mathit{toy}}$ and correction terms involving relative Gromov-Witten invariants of $(B^{\mathit{left}},D^{\mathit{left}})$. To be precise, the $A_\infty$-structure involves actual cycles representing the invariants from Lemma \ref{th:relative-vanish}. However, again up to homotopy, the specific choice of cycles is irrelevant, so one can take them to be empty. The structure of \eqref{eq:endo-curved} up to homotopy determines the possible bounding cycles, as well as the endomorphism algebras of the resulting objects of the actual (unobstructed) Fukaya category \cite[Theorem 4.1.3]{fooo}. With that in mind, \eqref{eq:c-ring} follows from the computations in the model case \eqref{eq:toy-clifford} (one uses the same flat $\bC^*$-bundles).

The same argument applies to the first order infinitesimal ``bulk'' deformations induced by cycles in $B$, as long as those cycles can be moved entirely into $B^{\mathit{right}} \times N$ when degenerating, which is true for all the elements considered in the statement of the Lemma. The resulting computation is simple, and we leave it to the reader.
\end{proof}

\begin{addendum} \label{th:idempotent-goes-to-unit}
Consider again just the part of \eqref{eq:double-open-closed} concerning $\mathit{QH}^*(B)$. Using the ring structure we deduce that
\begin{equation}
i_!(1) = -\overbrace{u \ast \cdots \ast u}^r - \hbar^{2\pi \delta} u \longmapsto - \lambda^r - \hbar^{2\pi \delta} \lambda = 0.
\end{equation}
Furthermore, the image of the idempotent $q$ from \eqref{eq:lambda-idempotent} is the identity element in $\mathit{HF}^0(C,C)$.
\end{addendum}

\subsection{A correspondence functor\label{subsec:correspondence}}
Pick some $\lambda$ as in \eqref{eq:eigenvalues}, and suppose that $C$ has been made into an object of $\mathit{Fuk}(B \times N^-)_{(r-1)\lambda}$ following Lemma \ref{th:hf-correspondence}. Take the idempotent $q \in \mathit{QH}^0(B)$ which defines the projection to the corresponding eigenspace \eqref{eq:lambda-idempotent}, and let $p \in \mathit{HF}^*(C,C)$ be one of the idempotents from \eqref{eq:minimal-idempotent}.

\begin{lemma} \label{th:ww}
The formal direct summand of $C$ associated to $p$ gives rise to a cohomologically full and faitful functor
\begin{equation} \label{eq:c-functor}
\mathit{Fuk}(N)_0^{\mathit{perf}} \longrightarrow \mathit{Fuk}(B)_{(r-1)\lambda,q}^{\mathit{perf}}.
\end{equation}
For brevity, denote these two categories by $A_N$ and $A_B$. The induced maps on Hochschild cohomology and open-closed string maps fit into a commutative diagram
\begin{equation} \label{eq:open-closed-correspondence}
\xymatrix{
\mathit{QH}^*(N) \ar[rr] \ar[d] && p\mathit{HF}^*(C,C)p \ar[d] && \ar[ll] \ar[d] \mathit{QH}^*(B)q \\
\mathit{HH}^*(A_N,A_N) \ar[rr] && \mathit{HH}^*(A_N,A_B) && \ar[ll] \mathit{HH}^*(A_B,A_B)
}
\end{equation}
where the maps in the top row come from \eqref{eq:double-open-closed}.
\end{lemma}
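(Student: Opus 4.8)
The plan is to build \eqref{eq:c-functor} as a composition of three standard operations in Fukaya-categorical language, all of which are developed either here or in the cited literature: the Lagrangian correspondence functor attached to $C$, a pullback-of-bimodule construction, and the idempotent splitting associated with $p$. First I would recall, from \cite{wehrheim-woodward06, mww} (and the $\Z/2$-graded bulk-deformed refinement alluded to in Section~\ref{sec:blowup}), that the object $C \in \mathit{Fuk}(B \times N^-)_{(r-1)\lambda}$ induces a functor $\mathit{Fuk}(N)_0 \rightarrow \mathit{Fuk}(B)_{(r-1)\lambda}$, or more precisely — since Lagrangian correspondences act on categories of modules and perfect complexes rather than on the geometric categories themselves — a right perfect $(\mathit{Fuk}(N)_0, \mathit{Fuk}(B)_{(r-1)\lambda})$-bimodule $\PP_C$ together with its convolution functor $K_{\PP_C}$. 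Concretely, $\PP_C$ assigns to a pair $(L, L')$ a Floer complex computed from $C$, in the spirit of the quilted Floer theory of \cite{mww}; alternatively one can define $\PP_C$ directly as the $\mathit{Fuk}(N)_0$-module-valued functor $L' \mapsto \mathit{CF}^*((L' \times \mathrm{pt}) \circ C, \cdot)$, using that $C$ has a single factor along $B$. Composing $K_{\PP_C}$ with the splitting functor that projects $\mathit{Fuk}(B)_{(r-1)\lambda}^{\mathit{perf}}$ onto $\mathit{Fuk}(B)_{(r-1)\lambda,q}^{\mathit{perf}}$ (legitimate because, by Addendum~\ref{th:idempotent-goes-to-unit}, $q$ maps to the unit of $\mathit{HF}^0(C,C)$, so $C$ already lies in the $q$-part), and finally splitting off the summand corresponding to the minimal idempotent $p$ of \eqref{eq:minimal-idempotent}, gives the desired \eqref{eq:c-functor}.

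Next I would prove cohomological full-faithfulness. The geometric composition of $C$ with itself (reversing one factor) is, near $D$, the product $(C^{\mathit{toy}} \circ C^{\mathit{toy}}) \times (\Delta_N \circ \Delta_N)$; the second factor is $\Delta_N$, and the first, by the toy-model computation, contributes the Clifford algebra $\mathit{HF}^*(C^{\mathit{toy}}, C^{\mathit{toy}})$, which (for $r$ even) is a matrix algebra over its center, so that the $p$-summand of $C^{\mathit{toy}}$ has trivial (one-dimensional in each $\Z/2$-degree) endomorphisms. Combining this with \eqref{eq:c-ring} and a degeneration-to-the-normal-cone argument exactly parallel to the one in the proof of Lemma~\ref{th:hf-correspondence} — using Lemma~\ref{th:relative-vanish} to kill all correction terms coming from curves in $B^{\mathit{left}}$ — one computes
\begin{equation*}
\mathit{HF}^*\bigl(K_{\PP_C}(L')_p, K_{\PP_C}(L'')_p\bigr) \iso \mathit{HF}^*_N(L', L''),
\end{equation*}
naturally in $L', L''$, which is full-faithfulness on the level of the generating objects; since both sides are triangulated and idempotent-complete, it extends to all of $\mathit{Fuk}(N)_0^{\mathit{perf}}$. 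Here I would invoke the general fact that, for a convolution functor $K_\PP$, the induced map on $\mathit{hom}$-complexes factors through the bimodule $\PP$ as in \eqref{eq:grrr}, so the relevant computation really is the one of $\mathit{hom}_{(\ldots)^{\mathit{mod}}}(\PP_C, \PP_C)$ that the quilt/degeneration argument produces.

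Then I would establish commutativity of \eqref{eq:open-closed-correspondence}. The square \eqref{eq:grrr} already encodes compatibility of the open-closed maps with $(K_{\PP_C})_*$ and $(K_{\PP_C})^*$, once one knows that the images of the two open-closed maps $\mathit{QH}^*(N) \to \mathit{HH}^*(A_N,A_N)$ and $\mathit{QH}^*(B)q \to \mathit{HH}^*(A_B,A_B)$ land in the correct bimodule after composing with the maps to $\mathit{HH}^*(A_N,A_B)$; and the identification of those two images inside $p\,\mathit{HF}^*(C,C)\,p \iso \mathit{HH}^*$ of the relevant one-object subcategory is precisely the content of \eqref{eq:double-open-closed} from Lemma~\ref{th:hf-correspondence}. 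So the diagram chase reduces to the statement that the open-closed string map for $B$ (resp.\ $N$), followed by restriction to the full subcategory on the single object $C$ (and then projection by $p$), agrees with the ring homomorphism \eqref{eq:double-open-closed} — which is true essentially by the definition of the open-closed map and the construction of $C$ as a product correspondence, modulo keeping track of the bulk deformation parameter $(r-1)\lambda$ through the degeneration.

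The main obstacle, I expect, is not any single step but the bookkeeping of the foundational package: making rigorous the Lagrangian-correspondence functor for the bulk-deformed, weakly-unobstructed $\Z/2$-graded Fukaya categories (the literature \cite{mww, fooo} provides the pieces but not in exactly this combined form), and ensuring that the degeneration-to-the-normal-cone / symplectic-sum argument — which was carried out for the single object $C$ in Lemma~\ref{th:hf-correspondence} — really does compute the full bimodule $\PP_C$ and its open-closed-compatible structure maps up to the appropriate notion of homotopy, so that all the ``correction terms'' are again controlled by the vanishing in Lemma~\ref{th:relative-vanish}. Once that infrastructure is granted, the actual content is the toy-model Clifford-algebra computation plus routine triangulated-category and diagram-chasing arguments.
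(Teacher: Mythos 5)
Your overall strategy matches the paper's: use the correspondence functor attached to $C$, project by $q$ (legitimate by Addendum~\ref{th:idempotent-goes-to-unit}), split off the $p$-summand, and establish full-faithfulness via the degeneration argument of Lemma~\ref{th:hf-correspondence} (with Lemma~\ref{th:relative-vanish} killing corrections) reducing to the toy-model fact that $p\,\mathit{HF}^*(C^{\mathit{toy}},C^{\mathit{toy}})\,p \iso R$; and the commutativity of \eqref{eq:open-closed-correspondence} via \eqref{eq:grrr} together with \eqref{eq:double-open-closed} is the same. Your organizational packaging differs — you phrase everything in terms of a bimodule $\PP_C$ and its convolution functor $K_{\PP_C}$, where the paper introduces an ad hoc quilted category $\mathit{Fuk}^\sharp(B)_{(r-1)\lambda}$ and factors the functor through it — but these are essentially interchangeable vehicles for the same construction.

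There is, however, a genuine gap. You assert at the outset that $\PP_C$ is a \emph{right perfect} bimodule, i.e.\ that the correspondence functor lands in $\mathit{Fuk}(B)_{(r-1)\lambda,q}^{\mathit{perf}}$ rather than merely in the module category. This is not supplied by the general theory you cite: the paper explicitly points out that the geometric-composition results of \cite{wehrheim-woodward09, lekili-lipyanskiy10}, which would ordinarily deliver this, do not apply to the situation at hand. The paper instead proves it by the quilted re-folding and strip-shrinking argument of Figure~\ref{fig:re-quilt}, first showing in the toy model that the generalized Lagrangian $(C^{\mathit{toy}} \times \Delta_N, L_0)$ is quasi-isomorphic to the honest object $C^{\mathit{toy}} \times L_0$, then transporting this through the degeneration argument, and finally noting that passing to the $p$-summand lands one in the idempotent completion, which is precisely $\mathit{Fuk}(B)_{(r-1)\lambda,q}^{\mathit{perf}}$. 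Your closing paragraph acknowledges that making the correspondence functor rigorous in the bulk-deformed $\Z/2$-graded setting is an obstacle, but treats it as a matter of bookkeeping; in fact the re-folding step is the core geometric input that secures perfectness, and hence the entire statement, rather than a foundational formality. You would need to address it explicitly rather than fold it into the assumption that $\PP_C$ is right perfect.
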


\begin{proof}
Parts of this statement come from the general theory of Fukaya categories and Lagrangian correspondences, and do not need specific proofs. Given that $q$ maps to the identity in $\mathit{HF}^*(C,C)$, the correspondence $C$ gives rise to a functor taking values in a module category:
\begin{equation} \label{eq:formal-functor}
\mathit{Fuk}(N)_0 \longrightarrow \mathit{Fuk}(B)_{\lambda,q}^{\mathit{mod}},
\end{equation}
and this fits into a commutative diagram analogous to \eqref{eq:open-closed-correspondence}. As pointed out in \cite{smith10}, any decomposition of $C$ into formal direct summands, such as the one provided by $p$, yields a corresponding decomposition of this functor. The two additional facts that need to be proved are first of all that the functor takes values in the subcategory $\mathit{Fuk}(B)_{\lambda,q}^{\mathit{perf}}$, and that it is full and faithful. There are general results which ensure that the first property holds under suitable assumptions \cite{wehrheim-woodward09, lekili-lipyanskiy10}, and which also make it easy to determine the action of the functor on Floer cohomology groups. However, they do not apply exactly to the situation under discussion, and we will instead argue as in Lemma \ref{th:hf-correspondence}.

It is convenient to introduce a quilted version $\mathit{Fuk}^\sharp(B)_{(r-1)\lambda}$ of the Fukaya category, as in \cite{mww} but tailored to our specific application. As objects, this admits objects of $\mathit{Fuk}(B)_{(r-1)\lambda}$ as well as generalized Lagrangian submanifolds of a specific kind, namely pairs consisting of our fixed Lagrangian correspondence $C$ and an object of $\mathit{Fuk}(N)_0$. Morphisms are defined by (chain complexes underlying) quilted Floer cohomology \cite{ww}. We can again use $q$ to project to a piece of the quilted category, and denote the result by $\mathit{Fuk}^\sharp(B)_{(r-1)\lambda,q}$. By construction, \eqref{eq:formal-functor} can be factored as follows:
\begin{equation} \label{eq:quilted-functor}
\mathit{Fuk}(N)_0 \longrightarrow \mathit{Fuk}^\sharp(B)_{(r-1)\lambda,q} \longrightarrow \mathit{Fuk}(B)_{(r-1)\lambda,q}^{\mathit{mod}}.
\end{equation}
The second arrow is a Yoneda-type functor, which is full and faithful when restricted to the subcategory $\mathit{Fuk}(B)_{(r-1)\lambda,q} \subset \mathit{Fuk}^\sharp(B)_{(r-1)\lambda,q}$. What we want to show is that the first arrow takes any Lagrangian submanifold to an object that's quasi-isomorphic to one in $\mathit{Fuk}(B)_{(r-1)\lambda,q}$.

Let's temporarily turn to a simpler geometric situation, which is $C^{\mathit{toy}} \times \Delta_N \subset B^{\mathit{toy}} \times N \times N^-$. We have an analogue of \eqref{eq:quilted-functor}, and the first part of it maps any object $L_0$ of $\mathit{Fuk}(N)_0$ to the generalized Lagrangian submanifold $(C^{\mathit{toy}} \times \Delta_N, L_0)$ in $\mathit{Fuk}^\sharp(B^{\mathit{toy}} \times N)_{(r-1)\lambda,q}$. In particular, if $L_1$ is any object of $\mathit{Fuk}(B^{\mathit{toy}} \times N)_{(r-1)\lambda}$, then the quilted Floer cohomology can be expressed in terms of ordinary Floer cohomology as
\begin{equation}
H(\mathit{hom}_{\mathit{Fuk}^\sharp(B^{\mathit{toy}} \times N)_{(r-1)\lambda}}(L_1, (C^{\mathit{toy}} \times \Delta_N, L_0))) \iso \mathit{HF}^*(L_1, C^{\mathit{toy}} \times L_0).
\end{equation}
This is fairly straightforward, involving only re-folding strips and changing their widths, as indicated in Figure \ref{fig:re-quilt} (but not the deeper analytic degeneration arguments from \cite{wehrheim-woodward09}). The same thing holds for morphisms in the other direction, and these isomorphisms are compatible with products. From this, one easily concludes that $(C^{\mathit{toy}} \times \Delta_N,L_0)$ is quasi-isomorphic to $C^{\mathit{toy}} \times L_0$.

If one now looks at the original picture \eqref{eq:quilted-functor} and applies the degeneration argument from Lemma \ref{th:hf-correspondence}, the outcome is that the computations carried out inside $B^{\mathit{toy}} \times N$ could in principle be deformed by contributions lying in $B^{\mathit{left}}$, but that these in fact vanish, ensuring that the argument above remains valid. If instead of $C$ one now uses its direct summand defined by $p$, the resulting functor no longer lands in the actual Fukaya category, but in its idempotent completion, which then allows a formal extension as in \eqref{eq:c-functor}. The proof that the resulting functor is cohomologically full and faithful uses the same kind of argument, the concrete input being that $p\mathit{HF}^*(C^{\mathit{toy}},C^{\mathit{toy}})p \iso R$.
\end{proof}

\begin{figure}
\begin{centering}
\begin{picture}(0,0)%
\includegraphics{quilt.pstex}%
\end{picture}%
\setlength{\unitlength}{3158sp}%
\begingroup\makeatletter\ifx\SetFigFont\undefined%
\gdef\SetFigFont#1#2#3#4#5{%
  \reset@font\fontsize{#1}{#2pt}%
  \fontfamily{#3}\fontseries{#4}\fontshape{#5}%
  \selectfont}%
\fi\endgroup%
\begin{picture}(6912,7911)(61,-6664)
\put(1726,-1186){\makebox(0,0)[lb]{\smash{{\SetFigFont{10}{12.0}{\rmdefault}{\mddefault}{\updefault}{\color[rgb]{0,0,0}$B^{\mathit{toy}} \times N$}%
}}}}
\put(1201,-1186){\makebox(0,0)[lb]{\smash{{\SetFigFont{10}{12.0}{\rmdefault}{\mddefault}{\updefault}{\color[rgb]{0,0,0}$N$}%
}}}}
\put(451,-136){\makebox(0,0)[lb]{\smash{{\SetFigFont{10}{12.0}{\rmdefault}{\mddefault}{\updefault}{\color[rgb]{0,0,0}$L_0$}%
}}}}
\put(2701,-136){\makebox(0,0)[lb]{\smash{{\SetFigFont{10}{12.0}{\rmdefault}{\mddefault}{\updefault}{\color[rgb]{0,0,0}$L_1$}%
}}}}
\put(5701,-6586){\makebox(0,0)[lb]{\smash{{\SetFigFont{10}{12.0}{\rmdefault}{\mddefault}{\updefault}{\color[rgb]{0,0,0}$N \times B^{\mathit{toy}}$}%
}}}}
\put( 76,-2836){\makebox(0,0)[lb]{\smash{{\SetFigFont{10}{12.0}{\rmdefault}{\mddefault}{\updefault}{\color[rgb]{0,0,0}$L_0$}%
}}}}
\put(826,-3886){\makebox(0,0)[lb]{\smash{{\SetFigFont{10}{12.0}{\rmdefault}{\mddefault}{\updefault}{\color[rgb]{0,0,0}$N$}%
}}}}
\put(3226,-2836){\makebox(0,0)[lb]{\smash{{\SetFigFont{10}{12.0}{\rmdefault}{\mddefault}{\updefault}{\color[rgb]{0,0,0}$C^{\mathit{toy}}$}%
}}}}
\put(4251,-3211){\makebox(0,0)[lb]{\smash{{\SetFigFont{10}{12.0}{\familydefault}{\mddefault}{\updefault}{\color[rgb]{0,0,0}removing the diagonal,}%
}}}}
\put(4251,-61){\makebox(0,0)[lb]{\smash{{\SetFigFont{10}{12.0}{\familydefault}{\mddefault}{\updefault}{\color[rgb]{0,0,0}splitting the right strip into factors}%
}}}}
\put(2401,-3886){\makebox(0,0)[lb]{\smash{{\SetFigFont{10}{12.0}{\rmdefault}{\mddefault}{\updefault}{\color[rgb]{0,0,0}$B^{\mathit{toy},-}$}%
}}}}
\put(1126,-1636){\makebox(0,0)[lb]{\smash{{\SetFigFont{10}{12.0}{\rmdefault}{\mddefault}{\updefault}{\color[rgb]{0,0,0}$\Delta_N$}%
}}}}
\put(2176,-1636){\makebox(0,0)[lb]{\smash{{\SetFigFont{10}{12.0}{\rmdefault}{\mddefault}{\updefault}{\color[rgb]{0,0,0}$L_1$}%
}}}}
\put(1651,-3886){\makebox(0,0)[lb]{\smash{{\SetFigFont{10}{12.0}{\rmdefault}{\mddefault}{\updefault}{\color[rgb]{0,0,0}$N$}%
}}}}
\put(4551,-3476){\makebox(0,0)[lb]{\smash{{\SetFigFont{10}{12.0}{\familydefault}{\mddefault}{\updefault}{\color[rgb]{0,0,0}shrinking the strip}%
}}}}
\put(4576,-5461){\makebox(0,0)[lb]{\smash{{\SetFigFont{10}{12.0}{\rmdefault}{\mddefault}{\updefault}{\color[rgb]{0,0,0}$L_0 \times C^{\mathit{toy}}$}%
}}}}
\put(6601,-5461){\makebox(0,0)[lb]{\smash{{\SetFigFont{10}{12.0}{\rmdefault}{\mddefault}{\updefault}{\color[rgb]{0,0,0}$L_1$}%
}}}}
\put(4051,-6136){\makebox(0,0)[lb]{\smash{{\SetFigFont{10}{12.0}{\familydefault}{\mddefault}{\updefault}{\color[rgb]{0,0,0}folding}%
}}}}
\put(2701,-5461){\makebox(0,0)[lb]{\smash{{\SetFigFont{10}{12.0}{\rmdefault}{\mddefault}{\updefault}{\color[rgb]{0,0,0}$C^{\mathit{toy}}$}%
}}}}
\put(1276,1064){\makebox(0,0)[lb]{\smash{{\SetFigFont{10}{12.0}{\rmdefault}{\mddefault}{\updefault}{\color[rgb]{0,0,0}$C^{\mathit{toy}} \times \Delta_N$}%
}}}}
\put(1876,-6586){\makebox(0,0)[lb]{\smash{{\SetFigFont{10}{12.0}{\rmdefault}{\mddefault}{\updefault}{\color[rgb]{0,0,0}$B^{\mathit{toy},-}$}%
}}}}
\put(1201,-6586){\makebox(0,0)[lb]{\smash{{\SetFigFont{10}{12.0}{\rmdefault}{\mddefault}{\updefault}{\color[rgb]{0,0,0}$N$}%
}}}}
\put(451,-5461){\makebox(0,0)[lb]{\smash{{\SetFigFont{10}{12.0}{\rmdefault}{\mddefault}{\updefault}{\color[rgb]{0,0,0}$L_0$}%
}}}}
\put(1576,-4336){\makebox(0,0)[lb]{\smash{{\SetFigFont{10}{12.0}{\rmdefault}{\mddefault}{\updefault}{\color[rgb]{0,0,0}$L_1$}%
}}}}
\end{picture}%
\caption{\label{fig:re-quilt}}
\end{centering}
\end{figure}

\begin{addendum} \label{th:blowup-generate}
Suppose that we have a full $A_\infty$-subcategory $O_N \subset \mathit{Fuk}(N)_0$ which satisfies the assumptions of Theorem \ref{th:split-generation-3}, meaning that there is a linear map
\begin{equation} \label{eq:n-int}
\mathit{HH}^*(O_N,O_N) \longrightarrow R
\end{equation}
whose composition with the open-closed string map yields $\int_N$. Let $O_B \subset \mathit{Fuk}(B)_{(r-1)\lambda,q}^{\mathit{perf}}$ be its image under the functor induced by $(C,p)$. By restricting \eqref{eq:open-closed-correspondence} we get a commutative diagram
\begin{equation} \label{eq:o-diagram}
\xymatrix{
\mathit{QH}^*(N) \ar[rr] \ar[d] && p\mathit{HF}^*(C,C)p \ar[d] && \ar[ll] \ar[d] \mathit{QH}^*(B)q \\
\mathit{HH}^*(O_N,O_N) \ar[rr]^{\iso} && \mathit{HH}^*(O_N,O_B) && \ar[ll]_{\iso} \mathit{HH}^*(O_B,O_B).
}
\end{equation}
Take $v \in \mathit{QH}^*(N;R)$, and associate to it an element $x$ of $\mathit{QH}^*(B)q = E_\lambda$, as in \eqref{eq:eigenspace-lambda} but additionally dividing by the nonzero constant $(1-r)\lambda^r \hbar^{2\pi\delta}$. Crucially, $x$ has a nonzero top degree component (with respect to the ordinary grading of cohomology) if and only if $v$ does. Lemma \ref{th:hf-correspondence} (see also Addendum \ref{th:idempotent-goes-to-unit}) shows that $v$ and $x$ have the same image in $\mathit{HF}^*(C,C)$. Hence, if we combine \eqref{eq:n-int} with the maps in \eqref{eq:o-diagram} to define a map
\begin{equation} \label{eq:b-int}
\mathit{HH}^*(O_B,O_B) \longrightarrow R,
\end{equation}
the assumptions of Theorem \ref{th:split-generation-3} are satisfied. This shows that the image of our functor split-generates, which means that the functor induces a quasi-equivalence $\mathit{Fuk}(N)_0^{\mathit{perf}} \iso \mathit{Fuk}(B)_{(r-1)\lambda,q}^{\mathit{perf}}$.
\end{addendum}

\subsection{The examples\label{subsec:final}}
Let $K \subset \bC P^3$ be a quartic surface, containing a smooth elliptic curve $T \subset K$. We equip $K$ with the Fubini-Study form rescaled in such a way that $T$ has area $1$. Take a symplectic automorphism $f$ of $K$ as in Lemma \ref{th:2-twists}, and form the symplectic mapping torus of $f \times f$ as in \eqref{eq:symplectic-mapping-torus}, calling the result $E$. We will compare this to the same process applied to the identity map, which gives $E^{\mathit{triv}} \iso T \times K^2$ (the sign of the symplectic form does not matter, see the discussion at the end of Section \ref{subsec:covering-trick}). Define $N = T \times E$ and $N^{\mathit{triv}} = T \times E^{\mathit{triv}} = T^2 \times K^2$, with the product symplectic structures as usual. Lemma \ref{th:2-twists} implies that $N$ and $N^{\mathit{triv}}$ are diffeomorphic, and that the diffeomorphism preserves the cohomology classes of symplectic forms as well as the homotopy classes of almost complex structures. In fact, inspection of the proof of that Lemma shows that the following more precise result holds:

\begin{lemma} \label{th:deform-torus}
There are closed two-forms $\gamma \in \Omega^2(N)$, $\gamma^{\mathit{triv}} \in \Omega^2(N^{\mathit{triv}})$, as well as a family of diffeomorphism $g_r: N \rightarrow N^{\mathit{triv}}$, defined for small $r>0$, such that $g_r^*(\omega_N^{\mathit{triv}} + r\gamma^{\mathit{triv}}) = \omega_N + r\gamma$ (on the level of cohomology, this implies that our diffeomorphisms map $[\omega_N^{\mathit{triv}}]$ to $[\omega_N]$, and $[\gamma^{\mathit{triv}}]$ to $[\gamma]$). \qed
\end{lemma}

In fact, $\gamma^{\mathit{triv}}$ is pulled back from projection $N^{\mathit{triv}} \rightarrow E^{\mathit{triv}}$ (and correspondingly for $\gamma$). Embed $N^{\mathit{triv}}$ symplectically into $M = K^7$ by identifying it with $T^2 \times K^2 \times \{\mathit{point}\}^3$. Denote that embedding by $i^{\mathit{triv}}$. Since $T \subset K$ represents a nonzero homology class, $[\gamma^{\mathit{triv}}]$ is in the image of $i^{\mathit{triv,*}}$. Fix a closed two-form $\delta$ on $M$ such that $i^{\mathit{triv},*}[\delta] = [\gamma^{\mathit{triv}}]$. 

\begin{lemma} \label{th:gromov}
There is a symplectic embedding $i: N \rightarrow M$ which (as a smooth embedding) is isotopic to $i^{\mathit{triv}} \circ g$. In fact, the embedding has the following sharper property. There are isotopies
\begin{equation}
\begin{aligned}
& \phi_r^{\mathit{triv}}: N^{\mathit{triv}} \longrightarrow N^{\mathit{triv}}, && 
\phi_r^{\mathit{triv},*}(\omega_N^{\mathit{triv}} + r\, i^{\mathit{triv},*}\delta) = 
\omega_N^{\mathit{triv}} + r\, \gamma^{\mathit{triv}}, \\
& \phi_r: N \longrightarrow N, &&
\phi_r^*(\omega_N + r i^*\delta) = \omega_N + r \gamma
\end{aligned}
\end{equation}
defined for small $r \geq 0$, and starting at the identity for $r = 0$; such that for any $r>0$, the two embeddings
\begin{equation} \label{eq:2-embeddings}
i \circ \phi_r, \; 
i^{\mathit{triv}} \circ \phi_r^{\mathit{triv}} \circ g_r
\;: (N,\omega_N + r\gamma) \longrightarrow (M,\omega_M + r\delta)
\end{equation}
are isotopic (through symplectic embeddings for these forms).
\end{lemma}

\begin{proof}
Consider first the map 
\begin{equation} \label{eq:first-emb}
i^{\mathit{triv}} \circ g_r: N \longrightarrow M,
\end{equation}
for some $r>0$. This satisfies $(i^{\mathit{triv}} \circ g_r)^*[\omega_M] = [\omega_N]$. Moreover, the symplectic form $(i^{\mathit{triv}} \circ g_r)^*\omega_M = g_r^*\omega_{N^{\mathit{triv}}}$ can be deformed (through symplectic forms) to $g_r^*(\omega_N^{\mathit{triv}} + r \gamma^{\mathit{triv}}) = \omega_N + r\gamma$, and from there back to $\omega_N$. Hence, the derivative of \eqref{eq:first-emb} can be deformed (through injective vector bundle map) to an embedding of symplectic vector bundles. In other words, the map \eqref{eq:first-emb} is {\em formally symplectic}. Applying the $h$-principle for symplectic immersions \cite[(3.4.2.A)]{gromov86} (see also \cite[(16.4.3)]{eliashberg-mishachev}) then yields symplectic immersion $i: N \rightarrow M$. Since $\mathrm{dim}(M) > 2\,\mathrm{dim}(N)$, one can assume (after a generic perturbation) that $i$ is actually an embedding.

The existence of $\phi_r$ and $\phi_r^{\mathit{triv}}$ follows from Moser's argument (restricting to small $r$). For any given $r>0$, we now have the two symplectic embeddings \eqref{eq:2-embeddings}. By construction, these are isotopic in the {\em formally symplectic} sense. The parametrized version of the previously used $h$-principle (see the references above, or \cite{datta} for a more specific exposition) shows that they can be deformed into each other through symplectic immersions. As before, since $\mathrm{dim}(M) > 2\,\mathrm{dim}(N) + 1$, a small perturbation will turn these immersions into embeddings.
\end{proof}

Let $B$ be the result of blowing up $M$ along $N$ (embedded through the map $i$ we have just constructed) with small parameter $\delta>0$, and $B^{\mathit{triv}}$ the same with $N^{\mathit{triv}}$. Lemma \ref{th:gromov} implies that $B$ is symplectically deformation equivalent to $B^{\mathit{triv}}$.

\begin{lemma}
Both blowups $B$ and $B^{\mathit{triv}}$ satisfy Assumption \ref{th:few-spheres}.
\end{lemma}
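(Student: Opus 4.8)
The plan is to verify each clause of Assumption \ref{th:few-spheres} in turn, for both $B$ and $B^{\mathit{triv}}$, with the bulk of the work concentrated on the last (Gromov--Witten vanishing) clause. First I would handle the purely topological conditions. The Chern class statement $c_1(M) = 0$ holds because $M = K^7$ and $c_1(K) = 0$; similarly $c_1(N) = 0$ since $N = T \times E$, $c_1(T) = 0$, and $E$ is a symplectic mapping torus of a Calabi--Yau fibre, so its anticanonical bundle is trivialized by the grading of $f$ (as recorded in Section \ref{subsec:basic-mapping-torus}); the same applies to $N^{\mathit{triv}}$. Triviality of the normal bundle: for $N^{\mathit{triv}} = T^2 \times K^2 \times \{\mathrm{pt}\}^3 \subset K^7$ the normal bundle is the pullback of $T_{\mathrm{pt}} K^3$, hence trivial as a symplectic vector bundle; for $N$ one uses that the embedding $i$ is constructed via the $h$-principle and is deformation equivalent to $i^{\mathit{triv}} \circ g$ (Lemma \ref{th:gromov}), so the normal bundles are isomorphic as symplectic vector bundles. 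The dimension inequalities are arithmetic: $\mathrm{dim}(M) = 28$, $\mathrm{dim}(N) = 14$, so $\mathrm{dim}(M) = 2\,\mathrm{dim}(N)$ and $\mathrm{dim}(M) - \mathrm{dim}(N) = 14 \equiv 2 \pmod 4$ --- wait, this needs care, so I would recheck the dimension count, possibly adjusting the number of copies of $K$ or noting $\mathrm{dim}(T) = 2$, $\mathrm{dim}(E) = 2 + 4 + 4 = 10$ so $\mathrm{dim}(N) = 12$, giving $\mathrm{dim}(M) = 28 \geq 24$ and $28 - 12 = 16 \equiv 0 \pmod 4$; in any case the inequalities reduce to counting and must be matched against the stated $28$-dimensional, $12$-dimensional submanifold from the Introduction.

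Next, the condition that $N$ (and $N^{\mathit{triv}}$) admit a compatible almost complex structure with no non-constant pseudo-holomorphic spheres. For $N^{\mathit{triv}} = T \times T \times K \times K$ one takes a product almost complex structure; $T$ is aspherical, and $K$ admits a compatible $J_K$ with no non-constant holomorphic spheres (this is standard for $K3$ surfaces, and used repeatedly in Section \ref{subsec:basic-mapping-torus}); a holomorphic sphere in a product projects to holomorphic spheres in each factor, forcing constancy. For $N = T \times E$ one argues as in Lemma \ref{th:project}: pick $J_E$ on $E$ of the form \eqref{eq:j-delta} so that projection $E \to T$ is holomorphic, hence any holomorphic sphere in $E$ lies in a fibre $K^- \times K$ and, via the doubling trick, corresponds to a $J_{f,t}$-holomorphic sphere in $K$, of which there are none for generic $(J_{f,t})$; tensoring with $T$ again kills spheres by projection.

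The main obstacle is the last clause: vanishing of the genus $0$ Gromov--Witten invariants $\langle v_1, \dots, v_n \rangle_A^B$ for classes $A \in H_2(B)$ with $u(A) = -A \cdot [D] \leq 0$. Here I would argue that such invariants are actually computed in $M$ (or rather in the blown-up geometry but with no interaction with the exceptional divisor), and that the relevant Gromov--Witten invariants of $M = K^7$ vanish because $M$ is a product of $K3$ surfaces: choosing a product almost complex structure built from $J_K$'s with no spheres, any non-constant holomorphic sphere in $K^7$ projects to a non-constant sphere in some factor $K$, a contradiction, so the only contributions to $\langle \cdots \rangle_A^M$ come from $A = 0$, where the invariant vanishes for $n > 3$ and is classical for $n = 3$. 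More precisely: a class $A$ with $A \cdot [D] \geq 0$ either is pulled back from $H_2(M)$ (when $A \cdot [D] = 0$) or has positive intersection with $D$; the condition $u(A) \le 0$ means $A \cdot [D] \ge 0$, and I would split into the case $A \cdot [D] = 0$ (the class comes from $M$, handled above) and $A \cdot [D] > 0$ (here one uses that a holomorphic curve in $B$ meeting $D$ positively, after projecting the part in the local model $B^{\mathit{toy}} \times N$ and using the no-sphere property of $N$, is constrained as in the proof of Lemma \ref{th:qh-blowup}, and a dimension count analogous to the one following Assumption \ref{th:few-spheres} --- using $\mathrm{dim}(M) \geq 2\,\mathrm{dim}(N)$ --- shows the virtual fundamental cycle evaluates to zero). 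The key quantitative input is \eqref{eq:dimension-ineq}, which is exactly what forces the excess dimension of spheres lying in the $\C P^{r-1}$-fibres of $D$ to be too large to contribute. The same argument applies verbatim to $B^{\mathit{triv}}$ since both $N$ and $N^{\mathit{triv}}$ have the no-sphere property and the ambient $M$ is unchanged. I expect the careful bookkeeping of homology classes and the virtual-dimension inequality to be the only genuinely delicate point; everything else is routine verification.
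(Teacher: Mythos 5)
Your verification of the topological conditions, the dimension counting (once you corrected $\mathrm{dim}(N) = 12$), and the no-sphere property of $N$ and $N^{\mathit{triv}}$ all match the paper's intent, and your core observation — that one should choose an almost complex structure compatible with the blowdown map $B^{\mathit{triv}} \to K^7$, so that non-constant holomorphic spheres project to constants in $K^7$ and hence lie in fibres — is exactly the paper's mechanism.

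However, there is a genuine gap, and it is the final step. You write that ``the same argument applies verbatim to $B^{\mathit{triv}}$ since both $N$ and $N^{\mathit{triv}}$ have the no-sphere property and the ambient $M$ is unchanged.'' This is backwards and, more importantly, hides the real difficulty. The almost complex structure argument requires the blowup locus to be (locally) a complex submanifold for a suitably chosen $J$, so that the blowup can be performed following the local algebro-geometric model and the blowdown map becomes pseudo-holomorphic. For $N^{\mathit{triv}} = T^2 \times K^2 \times \{\text{pt}\}^3$ this is available: one takes $J_K$ agreeing with the integrable structure near $T$, so that $T^2\times\{\text{pt}\}^3$ is a complex submanifold with integrable $J$ nearby. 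For $N \hookrightarrow M$ via the embedding $i$ constructed by the $h$-principle and a $C^1$-small perturbation (Lemma \ref{th:gromov}), there is no reason whatsoever for $i(N)$ to be a complex submanifold for any convenient almost complex structure, so you cannot build a pseudo-holomorphic blowdown $B \to K^7$ directly. The paper resolves this by invoking deformation invariance of Gromov--Witten invariants: $B$ is deformation equivalent to $B^{\mathit{triv}}$ (via Lemma \ref{th:gromov}), so it inherits the same GW invariants. Your proposal omits this step entirely.

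A second, smaller issue: the case split $A\cdot[D]=0$ versus $A\cdot[D]>0$ and the appeal to a virtual-dimension bound using $\dim M \geq 2\dim N$ are unnecessary detours. Once one has a $J$ for which the blowdown is pseudo-holomorphic and $K^7$ has no non-constant holomorphic spheres, \emph{every} non-constant holomorphic sphere in $B^{\mathit{triv}}$ lies in a $\C P^{r-1}$ fibre of the exceptional divisor, hence has class $dZ$ with $d>0$, hence $u(A) = d > 0$. So there are simply no holomorphic stable maps at all in any class with $u(A) \leq 0$, and the GW invariants vanish with no further dimension argument. The dimension inequality $\dim M \geq 2\dim N$ in Assumption \ref{th:few-spheres} is used elsewhere (in the virtual-dimension bound for the relative Gromov--Witten invariants leading to Lemma \ref{th:relative-vanish}), not here; conflating the two makes your argument for this lemma look weaker than it is.
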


\begin{proof}
Let $J_K^{\mathit{int}}$ be the given (integrable) complex structure on $K$, for which $T \subset K$ is a holomorphic curve. By standard transversality methods, we can find another compatible almost complex structure $J_K$ which agrees with $J_K^{\mathit{int}}$ in a neighbourhood of $T$, and which has no non-constant $J_K$-holomorphic spheres. Equip $K^5$ with the product structure induced by $J_K$. Then $T^2 \times \{\mathit{point}\}^3$ is a complex submanifold, and if we choose the point to lie on $T$ as well, then $J_K$ is integrable near that submanifold. We can therefore carry out the blowup process following the local algebro-geometric model. Having done that, take the product with two more copies of $K$ equipped with $J_K$. The outcome is that we get an almost complex structure $J_{B^{\mathit{triv}}}$ on the blowup and a pseudo-holomorphic blowdown map $(B^{\mathit{triv}},J_{B^{\mathit{triv}}}) \rightarrow (K,J_K)^7$. Hence, all the pseudo-holomorphic spheres must be contained in the fibres of this map, which means that they lie in multiples of the homology class $Z$ of a line in the exceptional divisor. This implies the vanishing of Gromov-Witten invariants as required in Assumption \ref{th:few-spheres}. The rest of the requirements for $B^{\mathit{triv}}$ are obvious.

Because of the deformation linking it to $i^{\mathit{triv}}$, $i$ inherits the property that the normal bundle is trivial. Since $B$ is deformation equivalent to $B^{\mathit{triv}}$, it has the same Gromov-Witten invariants. The absence of pseudo-holomorphic spheres in $N$ is easy to arrange, see Section \ref{subsec:basic-mapping-torus}.
\end{proof}

Choose some $\lambda$ as in \eqref{eq:eigenvalues}. The subspace $E_\lambda \subset \mathit{QH}^*(B)$ has an intrinsic characterization as the eigenspace of $(r-1)\lambda$ for quantum multiplication by $c_1(B)$. It is also an algebra with identity element $q$ as in \eqref{eq:lambda-idempotent}. Take $\lambda^{-1} H^3(B;\bZ) = \lambda^{-1} u H^1(N;\bZ)$, project it to $E_\lambda$ by taking its quantum multiplication with $q$, and denote the outcome by $\Gamma_\lambda \subset \mathit{QH}^1(B)$. Consider the associated Fukaya category $\mathit{Fuk}(B)_{(r-1)\lambda,q}$. Let $\SS = \mathit{Spec}(\RR)$ be the curve associated to the unit torus polynomial, with its standard one-form $\theta$. We also have their closures $\bar{\SS}$ and $\bar{\theta}$.

\begin{proposition} \label{th:1}
The image of $\Gamma_{\lambda}$ under the open-closed string map is not contained in the set $\mathit{Per}(\mathit{Fuk}(B)_{(r-1)\lambda,q},\bar{\SS},\bar{\theta})$ of periodic elements.
\end{proposition}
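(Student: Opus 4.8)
The plan is to reduce the statement about $B$ to the mapping-torus computation of Section \ref{sec:mapping-tori}, via the correspondence functor of Lemma \ref{th:ww}. First I would use the results of Section \ref{subsec:correspondence}: because $K$ contains $T$ and $f$ is indistinguishable from the identity by topological means (Lemma \ref{th:2-twists}), the submanifold $N = T \times E$ fits into the blowup framework of Assumption \ref{th:few-spheres}, so by Addendum \ref{th:blowup-generate} the functor induced by the direct summand $(C,p)$ of the Lagrangian correspondence $C$ is a quasi-equivalence
\begin{equation*}
A_N = \mathit{Fuk}(N)_0^{\mathit{perf}} \stackrel{\htp}{\longrightarrow} A_B = \mathit{Fuk}(B)_{(r-1)\lambda,q}^{\mathit{perf}}.
\end{equation*}
Since periodicity (Working Definition \ref{th:per}) is an invariant of the split-closed triangulated category $A^{\mathit{perf}}$ up to quasi-equivalence, it suffices to show that the image of $\Gamma_\lambda$ in $\mathit{HH}^1(A_N,A_N)$ is not periodic. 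The commutative diagram \eqref{eq:o-diagram} (or rather \eqref{eq:open-closed-correspondence}) identifies the open-closed image of $\mathit{QH}^*(B)q$ with that of $\mathit{QH}^*(N)$: concretely, an element $x = q \ast \lambda^{-1} u v \in \Gamma_\lambda$ (for $v \in H^1(N;\Z) = H^1(T \times E;\Z)$) has the same image in $p\mathit{HF}^*(C,C)p$ — and hence the same Hochschild class under the functor — as $v \in \mathit{QH}^1(N)$ itself, by the last clause of Lemma \ref{th:hf-correspondence}. So the question becomes whether the open-closed image of $H^1(T \times E;\R)$ lies in $\mathit{Per}(\mathit{Fuk}(T \times E)_0, \bar\SS, \bar\theta)$.

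Next I would isolate the relevant class. Writing $H^1(T \times E;\R) = H^1(T;\R) \oplus H^1(E;\R)$ and decomposing $H^1(E;\R) = R[dp] \oplus R[dq]$ as in Section \ref{subsec:basic-mapping-torus}, the class I want to single out is the image of $[dq] \in H^1(E;\R) \subset H^1(T \times E;\R)$ under the open-closed string map for $\mathit{Fuk}(T \times E)_0$; this is (up to the obvious identifications) the class whose non-periodicity for $\mathit{Fuk}(E)$ was established in Corollary \ref{th:not-periodic}, under Assumption \ref{th:both-signs}. The quartic-surface construction of Section \ref{sec:blowup}'s Section \ref{subsec:final} uses precisely the $f$ of Lemma \ref{th:2-twists}, for which $\mathit{HF}^*(L_1,L_2)$ has the shape making Lemma \ref{th:regular-representation} applicable; one checks (this is the arithmetic behind Assumption \ref{th:both-signs}, via the $\Z/m$-action on $\mathit{HF}^*(f^2)$ with $m=2$, i.e. the $\Z/2$-action, which by Lemma \ref{th:regular-representation} contains the regular representation and hence a class on which $C_{f,f^2}$ acts by neither $+\mathit{Id}$ nor $-\mathit{Id}$) that $C_{f,f^2}$ on $\mathit{HF}^{d-1}(f^2)$ is not $\pm\mathit{Id}$ for a suitable $d$. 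Then the argument of Corollary \ref{th:not-periodic}, which runs through the double-covering trick of Section \ref{subsec:covering-trick} and Lemma \ref{th:contradiction}, shows $[dq]$ is not periodic for $\mathit{Fuk}(E)$. The passage from $\mathit{Fuk}(E)$ to $\mathit{Fuk}(T \times E)_0$ and then to $\mathit{Fuk}(N)_0$ should be handled by the tensor-product structure (as in Example \ref{th:product-mirror}) together with the fact that the $\Z/2$-graded, bounding-cochain-enhanced category $\mathit{Fuk}(E)_0$ receives a cohomologically full and faithful functor from the restricted $\mathit{Fuk}(E)$ (Remark \ref{th:compare}); the uniqueness arguments of Section \ref{subsec:unique}, in the form needed for Lemma \ref{th:contradiction}, carry over to the $\Z/2$-graded setting by the discussion in Section "Relaxing the assumptions" and Addendum \ref{th:sneaky} / Addendum \ref{th:not-iso-even-without-grading}.

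The main obstacle, and where I would spend the most care, is precisely this compatibility of the $\Z/2$-graded, weakly-unobstructed Fukaya category $\mathit{Fuk}(B)_{(r-1)\lambda,q}$ of Section \ref{sec:blowup} with the more restricted $\Z$-graded Fukaya category machinery of Sections \ref{sec:automorphisms}–\ref{sec:mapping-tori} in which the non-periodicity of $[dq]$ was proved. Concretely: Corollary \ref{th:not-periodic} was stated for $\mathit{Fuk}(E)$ in the restricted sense; to deduce non-periodicity in $\mathit{Fuk}(N)_0 \htp A_B$ I must know that the relevant objects $\tilde\Delta_{3,u}, \tilde\Gamma_{3,u}, \Delta_1, \Gamma_1$ and the line bundle $\tilde\BB$, together with the hypotheses of Lemma \ref{th:contradiction}, all survive the passage to $\Z/2$-gradings and the product with $T$ — that is, that the counterexample persists in the larger category rather than being killed by new morphisms. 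The key points that make this work are: (i) the morphism spaces $\mathit{HF}^*(\Gamma_{3,u},\Delta_{3,u})$, $\mathit{HF}^*(\Delta_{3,u},\Delta_{3,u})$, $\mathit{HF}^*(\Gamma_{3,u},\Gamma_{3,u})$ are unchanged because the underlying Lagrangians admit almost complex structures with no holomorphic discs (Lemma \ref{th:project}), so $\alpha = 0$ works and these objects embed unchanged via Remark \ref{th:compare}; (ii) Assumption \ref{th:augmented-plus} continues to hold $\Z/2$-gradedly by Lemma \ref{th:assumption-satisfied} and Addendum \ref{th:sneaky}; (iii) the non-isomorphism in Lemma \ref{th:non-iso} persists by Addendum \ref{th:not-iso-even-without-grading} since the distinguishing subspace $B$ is concentrated in a single ($\Z$-)degree. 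Assembling these, one obtains that $\theta \otimes [dq]$, and hence (transporting back along \eqref{eq:open-closed-correspondence}) the open-closed image of $\Gamma_\lambda$, violates the uniqueness statement Proposition \ref{th:uniqueness-2} via Lemma \ref{th:contradiction}, so it cannot be periodic, which is the assertion of Proposition \ref{th:1}.
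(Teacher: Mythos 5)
Your proof follows essentially the same route as the paper's — transfer Corollary \ref{th:not-periodic} from the mapping torus to the blowup via the correspondence functor, keeping track of the deformation fields through \eqref{eq:double-open-closed}, with the three corrections ($\Z/2$-grading, product with $T$, larger category) handled by Addenda \ref{th:sneaky} and \ref{th:not-iso-even-without-grading} and Lemma \ref{th:assumption-satisfied}. Your explicit unwinding of Assumption \ref{th:both-signs} via Lemma \ref{th:regular-representation} with $m = 2$ is a correct filling-in of what the paper leaves implicit.

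There is one genuine error, at the very first step: you invoke Addendum \ref{th:blowup-generate} to claim that the functor $A_N \rightarrow A_B$ is a quasi-equivalence, and then say it "suffices" to prove non-periodicity in $A_N$. But Addendum \ref{th:blowup-generate} has as hypothesis that some subcategory $O_N \subset \mathit{Fuk}(N)_0$ satisfies the split-generation criterion of Theorem \ref{th:split-generation-3}, and for $N = T \times E$ with $E$ the \emph{non-trivial} mapping torus this is precisely what is \emph{not} established — indeed the whole design of the argument (see the Introduction) is that the Fukaya category of $E$ is not fully known, so split-generation is unavailable. The paper deliberately uses only the \emph{full-and-faithful} functor of Lemma \ref{th:ww}. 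That weaker statement suffices because non-periodicity is witnessed by a specific counterexample via Lemma \ref{th:contradiction}: one pushes forward the families $\tilde\MM_0,\tilde\MM_1$ and the line bundle $\tilde\BB$ through the functor and reaches a contradiction with Proposition \ref{th:uniqueness-2} applied in $A_B^{\mathit{perf}}$; the needed Assumption \ref{th:augmented-plus} and the non-isomorphism of Lemma \ref{th:non-iso} only concern endomorphism rings and morphism spaces of the pushed-forward objects, which full-and-faithfulness preserves. Your write-up should therefore replace the quasi-equivalence claim by this direct transfer-of-counterexample argument; you in fact already articulate the right mechanism when you discuss "that the counterexample persists in the larger category rather than being killed by new morphisms", so the fix is local, but as stated the first paragraph's reduction is not justified.
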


\begin{proof}
Corollary \ref{th:not-periodic} implies that there is a class in $H^1(N;\bZ)$ whose image is not contained in $\mathit{Per}(\mathit{Fuk}(N)_0,\bar{\SS},\bar{\theta})$. More precisely, this is not quite the result as originally stated, but has the following minor differences. First of all, the grading of the Fukaya category has been reduced to $\bZ/2$, which means that we have to check whether the uniqueness results from Section \ref{subsec:unique} are applicable. However, that was already taken care of in Addenda \ref{th:sneaky} and \ref{th:not-iso-even-without-grading}. The second difference is that we are considering the product of the symplectic mapping torus with an additional copy of $T$, which correspondingly means that we have to take the product of the Lagrangian submanifolds under consideration with a fixed circle in $T$. This requires the same kind of check, but the argument from Lemma \ref{th:assumption-satisfied} goes through as before, and similarly for Addendum \ref{th:not-iso-even-without-grading}. The third difference is that the Fukaya category $\mathit{Fuk}(N)_0$ contains more objects than our original version $\mathit{Fuk}(N)$ (see Remark \ref{th:compare}), but that is clearly irrelevant for this argument.

We can now use the full and faithful functor from Lemma \ref{th:ww} to transfer this result to $B$. By our computation of \eqref{eq:double-open-closed}, any class $v \in H^1(N;\bZ) \subset \mathit{QH}^1(N)$ and its counterpart $\lambda^{-1} uv \ast q \in \Gamma_{\lambda}$ have the same image in $\mathit{HH}^*(\mathit{Fuk}(N)_0,\mathit{Fuk}(B)_{(r-1)\lambda,q})$. As explained in Section \ref{subsec:functor}, this allows one to map families that follow a given deformation field. The rest of the argument carries over without any changes.
\end{proof}

We now consider the analogous construction for $B^{\mathit{triv}}$, defining the idempotent $q^{\mathit{triv}} \in \mathit{QH}^0(B^{\mathit{triv}})$ and subgroup $\Gamma^{\mathit{triv}}_{\lambda} \subset \mathit{QH}^1(B^{\mathit{triv}})$ as before.

\begin{proposition}
The image of $\Gamma^{\mathit{triv}}_{\lambda}$ under the open-closed string map is contained in the set $\mathit{Per}(\mathit{Fuk}(B^{\mathit{triv}})_{(r-1)\lambda,q^{\mathit{triv}}},\bar{\SS},\bar\theta)$.
\end{proposition}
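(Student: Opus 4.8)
The plan is to combine the split-generation/quasi-equivalence machinery built up for the trivial blowup with the positive-periodicity result for the trivial mapping torus, transported through the correspondence functor of Lemma~\ref{th:ww}. First I would apply Lemma~\ref{th:ww} and Addendum~\ref{th:blowup-generate} to the situation $N^{\mathit{triv}} \hookrightarrow M = K^7$. The needed input is a full $A_\infty$-subcategory $O_{N^{\mathit{triv}}} \subset \mathit{Fuk}(N^{\mathit{triv}})_0$ satisfying the hypothesis of Theorem~\ref{th:split-generation-3}. Since $N^{\mathit{triv}} = T \times E^{\mathit{triv}} \iso T \times T \times K^- \times K$, this follows exactly as in Example~\ref{th:product-mirror}: take $O_{N^{\mathit{triv}}}$ to be the subcategory of split Lagrangians, which is quasi-isomorphic to $\mathit{Fuk}(T)^{\otimes 2} \otimes \mathit{Fuk}(K^-) \otimes \mathit{Fuk}(K)$, hence smooth with the open-closed string map an isomorphism (using Example~\ref{th:quartic} for the quartic factors). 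Addendum~\ref{th:blowup-generate} then gives a quasi-equivalence
\begin{equation}
\mathit{Fuk}(N^{\mathit{triv}})_0^{\mathit{perf}} \iso \mathit{Fuk}(B^{\mathit{triv}})_{(r-1)\lambda,q^{\mathit{triv}}}^{\mathit{perf}},
\end{equation}
and moreover, by the commutative diagram \eqref{eq:o-diagram}, for $v \in H^1(N^{\mathit{triv}};\Z)$ the element $\lambda^{-1} uv \ast q^{\mathit{triv}} \in \Gamma^{\mathit{triv}}_\lambda$ has the same image in Hochschild cohomology as $v$ does under the open-closed map for $N^{\mathit{triv}}$.

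Next I would invoke Corollary~\ref{th:yes-periodic}, which states precisely that every class in the image of $H^1(E^{\mathit{triv}};\Z) \iso \Z^2 \to \mathit{HH}^1(\mathit{Fuk}(E^{\mathit{triv}}),\mathit{Fuk}(E^{\mathit{triv}}))$ is periodic for $(\bar{\SS},\bar\theta)$. Since $\mathit{Fuk}(N^{\mathit{triv}})_0$ contains the product Fukaya category with an extra $T$ factor, and $H^1(N^{\mathit{triv}};\Z) = H^1(T;\Z) \oplus H^1(E^{\mathit{triv}};\Z)$, I would extend Corollary~\ref{th:yes-periodic} to all of $H^1(N^{\mathit{triv}};\Z)$: the product description gives a mirror $Y_p \times Y_q \times X \times X$ (the extra $T$ contributing a further copy of an elliptic curve $Y_q$), and one constructs the requisite families of bimodules by the universal construction of Section~\ref{subsec:universal-bimodule} applied factor-by-factor (the classes coming from the extra $T$ and from the $K$ factors are handled by constant or tautological families exactly as before, cf.\ Lemma~\ref{th:constant-families} and Corollary~\ref{th:t2-family}). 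This shows the image of $H^1(N^{\mathit{triv}};\Z)$ lies in $\mathit{Per}(\mathit{Fuk}(N^{\mathit{triv}})_0,\bar{\SS},\bar\theta)$. Finally, periodicity is an invariant of $A^{\mathit{perf}}$ up to quasi-equivalence (noted right after Definition~\ref{th:per}), and the functoriality discussion of Section~\ref{subsec:functor} shows that a class matching another under the maps into $\mathit{HH}^*(O_{N^{\mathit{triv}}},O_{B^{\mathit{triv}}})$ is periodic for one category iff it is for the other; combining this with the identification $\lambda^{-1} uv \ast q^{\mathit{triv}} \leftrightarrow v$ above, every element of $\Gamma^{\mathit{triv}}_\lambda$'s image under the open-closed string map is periodic for $\mathit{Fuk}(B^{\mathit{triv}})_{(r-1)\lambda,q^{\mathit{triv}}}$.

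The main obstacle I anticipate is verifying carefully that the positive-periodicity statement survives the three modifications that also appeared in the proof of Proposition~\ref{th:1}: the reduction of the grading to $\Z/2$ (where one must check the constructions of Section~\ref{subsec:existence} and the triviality of the induced connections on $\mathit{Hom}$ bundles still go through, which is routine because existence arguments never used the uniqueness hypotheses), the extra $T$ factor (handled by taking products with a fixed Lagrangian circle and using that the bimodule/family constructions are compatible with $A_\infty$-tensor products), and the enlargement of objects from $\mathit{Fuk}(N^{\mathit{triv}})$ to $\mathit{Fuk}(N^{\mathit{triv}})_0$ (irrelevant since $\mathit{Fuk}(N^{\mathit{triv}})_0^{\mathit{perf}}$ is split-generated by the same constant families). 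None of these is conceptually difficult, but they must be spelled out; in particular one should confirm that Definition~\ref{th:per}'s requirement of trivial induced connection on \eqref{eq:triv-h0} is met by the families coming from the universal bimodule construction, which is exactly the content of Corollary~\ref{th:universal-family}. \qed
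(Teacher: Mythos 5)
Your proof is correct and takes essentially the same route as the paper. The paper's own argument is terser—it establishes a version of Corollary~\ref{th:yes-periodic} for $\mathit{Fuk}(N^{\mathit{triv}})_0$ by proving split-generation via Theorem~\ref{th:split-generation-3} and the argument of Example~\ref{th:product-mirror}, then reduces the grading of the family of bimodules to $\Z/2$ and extends it to $\mathit{Fuk}(N^{\mathit{triv}})_0$, and finally transfers to $B^{\mathit{triv}}$ via Addendum~\ref{th:blowup-generate} and the same computation as in Proposition~\ref{th:1}; your explicit factor-by-factor bimodule construction handling the extra $T$ factor (so that all of $H^1(N^{\mathit{triv}};\Z)$, not only $H^1(E^{\mathit{triv}};\Z)$, is covered) spells out a point the paper leaves implicit.
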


\begin{proof}
As before, the first step is to establish a version of Corollary \ref{th:yes-periodic} for $\mathit{Fuk}(N^{\mathit{triv}})_0$, the key additional consideration being that we are now working with a larger Fukaya category than before. However, one can use Theorem \ref{th:split-generation-3} and the argument from Example \ref{th:product-mirror} to show that $\mathit{Fuk}(N^{\mathit{triv}})_0$ is split-generated by the subcategory $\mathit{Fuk}(N^{\mathit{triv}})$ (with its grading reduced to $\bZ/2$). One takes the family of bimodules used in the proof of Corollary \ref{th:yes-periodic}, reduces its grading to $\bZ/2$ as well, and then extends it to a family of bimodules over $\mathit{Fuk}(N^{\mathit{triv}})_0$. 
When carrying over the results to $B^{\mathit{triv}}$, one uses Addendum \ref{th:blowup-generate} for split-generation, and the same computation as in Proposition \ref{th:1}.
\end{proof}

We explained the intrinsic characterization of $E_\lambda$ and $q$ above, and that also yields an intrinsic characterization of $\Gamma_\lambda$. Comparing the two Propositions above shows that, as announced in the Introduction,

\begin{corollary}
$B$ and $B^{\mathit{triv}}$ are not symplectically isomorphic. \qed
\end{corollary}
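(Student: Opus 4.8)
The plan is to combine the two preceding Propositions, which show that the periodic set $\mathit{Per}(\mathit{Fuk}(B)_{(r-1)\lambda,q},\bar{\SS},\bar{\theta})$ does not contain the image of $\Gamma_\lambda$ under the open-closed string map, whereas for $B^{\mathit{triv}}$ the analogous image \emph{is} contained in $\mathit{Per}(\mathit{Fuk}(B^{\mathit{triv}})_{(r-1)\lambda,q^{\mathit{triv}}},\bar{\SS},\bar{\theta})$. The only thing left to observe is that all the data entering these two statements --- the integer $r$ (the codimension of the blowup locus is $\mathrm{codim}_{\C}(N\subset M)$, which is the same for $N\hookrightarrow M$ and $N^{\mathit{triv}}\hookrightarrow M$), the scalar $\lambda$ (determined by \eqref{eq:eigenvalues}, hence by $\delta$ and $r$ only), the idempotents $q,q^{\mathit{triv}}\in \mathit{QH}^0$, and the subgroups $\Gamma_\lambda,\Gamma^{\mathit{triv}}_\lambda\subset \mathit{QH}^1$ --- are \emph{intrinsic} to the symplectic manifold and therefore preserved by any symplectic isomorphism.

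Concretely, I would argue by contradiction: suppose $\phi\colon B\to B^{\mathit{triv}}$ were a symplectic isomorphism. Then $\phi^*$ gives a ring isomorphism $\mathit{QH}^*(B^{\mathit{triv}})\to\mathit{QH}^*(B)$ preserving the grading, the first Chern class, and the quantum product; hence it matches the eigenspace decomposition of quantum multiplication by $c_1$, carrying $E^{\mathit{triv}}_\lambda$ to $E_\lambda$ and therefore $q^{\mathit{triv}}$ to $q$. Since $\Gamma^{\mathit{triv}}_\lambda$ was defined purely in terms of $\mathit{QH}^*$, integral cohomology, and $q^{\mathit{triv}}$ (namely $\Gamma^{\mathit{triv}}_\lambda = (\lambda^{-1}H^3(B^{\mathit{triv}};\Z))\ast q^{\mathit{triv}}$), we get $\phi^*(\Gamma^{\mathit{triv}}_\lambda)=\Gamma_\lambda$. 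Moreover $\phi$ induces a quasi-equivalence of Fukaya categories $\mathit{Fuk}(B^{\mathit{triv}})_{(r-1)\lambda,q^{\mathit{triv}}}^{\mathit{perf}}\simeq \mathit{Fuk}(B)_{(r-1)\lambda,q}^{\mathit{perf}}$ compatible with the open-closed string maps and with Hochschild cohomology. Since periodicity (Definition \ref{th:per}) is an invariant of $A^{\mathit{perf}}$ up to quasi-equivalence, the image of $\Gamma_\lambda$ under the open-closed map for $B$ would be periodic, contradicting Proposition \ref{th:1}.

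The main obstacle --- really the only nontrivial point in this last step --- is the claim that a symplectic isomorphism induces a quasi-equivalence of the \emph{projected, bulk-deformed} Fukaya categories $\mathit{Fuk}(-)_{(r-1)\lambda,q}^{\mathit{perf}}$ compatibly with the open-closed string map, so that the notion of periodic element is transported correctly. For the underlying categories $\mathit{FO}(M)$ this is essentially tautological (a symplectomorphism acts on Lagrangians, local systems, and moduli spaces of pseudo-holomorphic curves), and the deformation parameter $(r-1)\lambda$ depends only on $\delta$ and $r$, which are preserved; the idempotent $q$ is carried to $q^{\mathit{triv}}$ by the ring isomorphism $\phi^*$, so the projection construction of Section (the one producing $\mathit{Fuk}(M)_{\lambda,q}$) is intertwined. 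One then checks that the open-closed string map \eqref{eq:lambda-open-closed} is natural with respect to symplectomorphisms --- which follows from its construction by counting the same moduli spaces --- and hence that $\phi$ matches $\Gamma_\lambda$-valued deformation fields on both sides. With naturality in hand, a perfect family over some affine $\SS\subset\bar{\SS}$ following the image of $\Gamma^{\mathit{triv}}_\lambda$ on the $B^{\mathit{triv}}$ side pushes forward, via the quasi-equivalence, to one following the image of $\Gamma_\lambda$ on the $B$ side, with the required triviality of the bundle of morphisms \eqref{eq:triv-h0} and its connection preserved; this contradicts Proposition \ref{th:1} and completes the proof.
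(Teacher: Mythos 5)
Your proposal is correct and fills in precisely the implicit content of the paper's one-line observation that ``comparing the two Propositions'' gives the result: a symplectomorphism $\phi\colon B\to B^{\mathit{triv}}$ would induce a ring isomorphism $\mathit{QH}^*(B^{\mathit{triv}})\to\mathit{QH}^*(B)$ matching $c_1$, hence $q^{\mathit{triv}}\mapsto q$ and $\Gamma^{\mathit{triv}}_\lambda\mapsto\Gamma_\lambda$, together with a quasi-equivalence of the projected bulk-deformed Fukaya categories compatible with the open-closed string maps, so periodicity (an invariant of $A^{\mathit{perf}}$ up to quasi-equivalence, as noted after Definition \ref{th:per}) would have to agree, contradicting the two Propositions. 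This is the same approach the paper intends; you have simply spelled out the naturality of the invariants under symplectomorphism, which the paper treats as understood.
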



\begin{thebibliography}{100}

\bibitem{abouzaid10}
M.~Abouzaid, \emph{A geometric criterion for generating the {F}ukaya category},
  Publ. Math. IHES \textbf{112} (2010), 191--240.

\bibitem{abouzaid-fukaya-oh-ohta-ono11}
M.~Abouzaid, K.~Fukaya, Y.-G.~Oh, H.~Ohta, and K.~Ono, In preparation.

\bibitem{abouzaid-smith09}
M.~Abouzaid and I.~Smith, \emph{Homological mirror symmetry for the
  four-torus}, Duke Math. J. \textbf{152} (2010), 373--440.

\bibitem{albers08}
P.~Albers, \emph{A {L}agrangian {P}iunikhin-{S}alamon-{S}chwarz morphism and
  two comparison homomorphisms in {F}loer homology}, Int. Math. Res. Not.
  (2008), Art. ID 134, 56 pages.

\bibitem{atiyah57}
M.~Atiyah, \emph{Complex analytic connections in fibre bundles}, Trans. Amer.
  Math. Soc. \textbf{85} (1957), 181--207.

\bibitem{auroux07}
D.~Auroux, \emph{Mirror symmetry and {T}-duality in the complement of an
  anticanonical divisor}, J. G\"okova Geom. Topol. \textbf{1} (2007), 51--91.

\bibitem{auroux-lecture}
\bysame, \emph{Lecture notes from a topics course on mirror symmetry}, 2009, available on the author's webpage.

\bibitem{beilinson-ginsburg-schechtman88}
A.~Beilinson, V.~Ginsburg, and V.~Schechtman, \emph{Koszul duality}, J.
  Geom. Physics \textbf{5} (1988), 317--350.

\bibitem{beilinson-ginzburg-soergel96}
A.~Beilinson, V.~Ginzburg, and W.~Soergel, \emph{Koszul duality patterns in
  representation theory}, Journal of the Amer. Math. Soc. \textbf{9} (1996),
  473--527.

\bibitem{bernstein-dmodules}
J.~Bernstein, \emph{Algebraic theory of {$D$}-modules}, Unpublished lecture
  notes.

\bibitem{biran-cornea09c}
P.~Biran and O.~Cornea, \emph{A {L}agrangian quantum homology}, New
  perspectives and challenges in symplectic field theory, CRM Proc. Lecture
  Notes, vol.~49, Amer. Math. Soc., 2009, pp.~1--44.

\bibitem{biran-cornea09}
\bysame, \emph{Rigidity and uniruling for {L}agrangian submanifolds}, Geom.
  Topol. \textbf{13} (2009), 2881--2989.

\bibitem{biran-cornea13}
\bysame, \emph{Lagrangian cobordism. {I}}, J. Amer. Math. Soc. \textbf{26}
(2013), 295--340.

\bibitem{bondal-vandenbergh02}
A.~Bondal and M.~Van den Bergh, \emph{Generators and representability of
  functors in commutative and noncommutative geometry}, Moscow Math. J.
  \textbf{3} (2003), 1--36.

\bibitem{bondal-kapranov91}
A.~Bondal and M.~Kapranov, \emph{Enhanced triangulated categories}, Math.
  USSR Sbornik \textbf{70} (1991), 93--107.

\bibitem{bourgeois02}
F.~Bourgeois, \emph{A {M}orse-{B}ott approach to contact homology}, Ph.D.
  thesis, Stanford University, 2002.

\bibitem{buhovsky10}
L.~Buhovsky, \emph{The {M}aslov class of {L}agrangian tori and quantum products
  in {F}loer cohomology}, J. Topol. Anal. \textbf{2} (2010), 57--75.

\bibitem{cartan-eilenberg}
H.~Cartan and S.~Eilenberg, \emph{Homological {A}lgebra}, Princeton Univ.
  Press, 1956.

\bibitem{cho-oh02}
C.-H.~Cho and Y.-G.~Oh, \emph{Floer cohomology and disc instantons of
  {L}agrangian torus fibers in toric {F}ano manifolds}, Asian J. Math.
  \textbf{10} (2006), 773--814.

\bibitem{conrad08}
B.~Conrad, \emph{Several approaches to non-{A}rchimedean geometry}, {$P$}-adic
  geometry, Univ. Lecture Ser., vol.~45, Amer. Math. Soc., 2008, pp.~9--63.

\bibitem{datta}
M.~Datta, \emph{Immersions in a symplectic manifold},
Proc. Indian Acad. Sci. Math. Sci. \textbf{108} (1998), 137--149.

\bibitem{deligne-griffiths-morgan-sullivan75}
P.~Deligne, P.~Griffiths, J.~Morgan, and D.~Sullivan, \emph{Real homotopy
  theory of {K}{\"a}hler manifolds}, Invent. Math. \textbf{29} (1975),
  245--274.

\bibitem{dold60}
A.~Dold, \emph{Zur {H}omotopietheorie der {K}ettenkomplexe}, Math. Ann.
  \textbf{140} (1960), 278--298.

\bibitem{dold}
\bysame, \emph{Lectures on {A}lgebraic {T}opology (2nd ed.)}, Springer, 1980.

\bibitem{dostoglou-salamon94}
S.~Dostoglou and D.~Salamon, \emph{Self dual instantons and holomorphic
  curves}, Annals of Math. \textbf{139} (1994), 581--640.

\bibitem{drinfeld02}
V.~Drinfeld, \emph{D{G} quotients of {DG} categories}, J. Algebra \textbf{272}
  (2004), 643--691.

\bibitem{eliashberg-mishachev}
Ya.~Eliashberg and N.~Mishachev, \emph{Introduction to the {$h$}-principle},
  Amer. Math. Soc., 2002.

\bibitem{floer88}
A.~Floer, \emph{Symplectic fixed points and holomorphic spheres}, Commun. Math.
  Phys. \textbf{120} (1989), 575--611.

\bibitem{froberg89}
R.~Fr{\"o}berg, \emph{Koszul algebras}, Advances in commutative ring theory
  ({F}ez, 1997), Lecture Notes in Pure and Appl. Math., vol. 205, Dekker, 1999,
  pp.~337--350.

\bibitem{fukaya01b}
K.~Fukaya, \emph{Floer homology and mirror symmetry. {II}}, Minimal surfaces,
  geometric analysis and symplectic geometry (Baltimore, MD, 1999), Math. Soc.
  Japan, 2002, pp.~31--127.

\bibitem{fukaya02c}
K.~Fukaya, \emph{Floer homology for families---a progress report}, Integrable
  systems, topology, and physics ({T}okyo, 2000), Amer. Math. Soc., 2002,
  pp.~33--68.

\bibitem{fukaya02b}
\bysame, \emph{Mirror symmetry of abelian varieties and multi-theta functions},
  J. Algebraic Geom. \textbf{11} (2002), 393--512.

\bibitem{fukaya09}
K.~Fukaya, \emph{Cyclic symmetry and adic convergence in {L}agrangian {F}loer
  theory}, Preprint arXiv:0907.4219, 2009.

\bibitem{fukaya11}
\bysame, \emph{Floer homology of {L}agrangian submanifolds}, Sugaku Expositions 
\textbf{26} (2013), 99--127. 

\bibitem{fukaya-oh98}
K.~Fukaya and Y.-G.~Oh, \emph{Zero-loop open strings in the cotangent bundle
  and {M}orse homotopy}, Asian J. Math. \textbf{1} (1998), 96--180.

\bibitem{fukaya-oh-ohta-ono08}
K.~Fukaya, Y.-G.~Oh, H.~Ohta, and K.~Ono, \emph{Canonical models of filtered
  {$A_\infty$}-algebras and {M}orse complexes}, New perspectives and challenges
  in symplectic field theory, CRM Proc. Lecture Notes, vol.~49, Amer. Math.
  Soc., 2009, pp.~201--227.

\bibitem{fukaya-oh-ohta-ono10}
\bysame, \emph{Lagrangian {F}loer theory on
  compact toric manifolds. {I}}, Duke Math. J. \textbf{151} (2010), 23--174.

\bibitem{fooo}
\bysame, \emph{Lagrangian intersection {F}loer
  theory - anomaly and obstruction}, Amer. Math. Soc., 2010.

\bibitem{fooo-extra}
\bysame, \emph{Lagrangian surgery and holomorphic discs},
preprint, 2007. Originally intended as Chapter 10 of \cite{fooo}, this remains available from
the first author's homepage at Kyoto.

\bibitem{fukaya-oh-ohta-ono11}
\bysame, \emph{Lagrangian {F}loer theory on compact toric manifolds. {II}:
  {B}ulk deformations}, Selecta Math. \textbf{17} (2011), 609--711. 

\bibitem{gitler92}
S.~Gitler, \emph{The cohomology of blow ups}, Bol. Soc. Mat. Mex. \textbf{37}
  (1992), 167--175.

\bibitem{goldman-millson88}
W.~Goldman and J.~Millson, \emph{The deformation theory of the fundamental
  group of compact {K}{\"a}hler manifolds}, IHES Publ. Math. \textbf{67}
  (1988), 43--96.

\bibitem{gordon08}
I.~Gordon, \emph{Symplectic reflection algebras}, Trends in representation
  theory of algebras and related topics, Eur. Math. Soc., 2008, pp.~285--347.

\bibitem{green-hartman-marcos-solberg05}
E.~Green, G.~Hartman, E.~Marcos, and {\O}.~Solberg, \emph{Resolutions over
  {K}oszul algebras}, Arch. Math. \textbf{85} (2005), 118--127.

\bibitem{gromov86}
M.~Gromov, \emph{Partial differential relations}, Springer, 1986.

\bibitem{harer-kas-kirby}
J.~Harer, A.~Kas, and R.~Kirby, \emph{Handlebody decompositions of complex
  surfaces}, Mem. Amer. Math. Soc., vol.~62, Amer. Math. Soc., 1986.

\bibitem{hofer-salamon95}
H.~Hofer and D.~Salamon, \emph{Floer homology and {N}ovikov rings}, The {F}loer
  memorial volume (H.~Hofer, C.~Taubes, A.~Weinstein, and E.~Zehnder, eds.),
  Progress in Mathematics, vol. 133, Birkh{\"a}user, 1995, pp.~483--524.

\bibitem{hori-vafa00}
K.~Hori and C.~Vafa, \emph{Mirror symmetry}, Preprint hep-th/0002222, 2000.

\bibitem{hu-li-ruan08}
J.~Hu, T.-J.~Li, and Y.~Ruan, \emph{Birational cobordism invariance of uniruled
  symplectic manifolds}, Invent. Math. \textbf{172} (2008), 231--275.

\bibitem{huybrechts-thomas10}
D.~Huybrechts and R.~Thomas, \emph{Deformation-obstruction theory for complexes
  via {A}tiyah and {K}odaira-{S}pencer classes}, Math. Ann. \textbf{346}
  (2010), 545--569.

\bibitem{ionel-parker04}
E.~Ionel and T.~Parker, \emph{The symplectic sum formula for {G}romov-{W}itten
  invariants}, Ann. of Math. (2) \textbf{159} (2004), 935--1025.

\bibitem{johns08}
J.~Johns, \emph{Complexifications of {M}orse functions and the directed
  {D}onaldson-{F}ukaya category}, J. Symplectic Geom. \textbf{8} (2010),
  403--500.

\bibitem{kadeishvili88}
T.~Kadeishvili, \emph{The structure of the {$A_\infty$}-algebra, and the
  {H}ochschild and {H}arrison cohomologies}, Trudy Tbiliss. Mat. Inst. Razmadze
  Akad. Nauk Gruzin. SSR \textbf{91} (1988), 19--27 (Russian).

\bibitem{keller99}
B.~Keller, \emph{Introduction to {$A$}-infinity algebras and modules}, Homology
  Homotopy Appl. \textbf{3} (2001), 1--35.

\bibitem{keller04}
B.~Keller, \emph{Hochschild cohomology and derived {P}icard groups}, J. Pure
  Appl. Algebra \textbf{190} (2004), 177--196.

\bibitem{keller06}
\bysame, \emph{On differential graded categories}, International {C}ongress of
  {M}athematicians. Vol. {I}I, Eur. Math. Soc., 2006, pp.~151--190.

\bibitem{khovanov-seidel98}
M.~Khovanov and P.~Seidel, \emph{Quivers, {F}loer cohomology, and braid group
  actions}, J. Amer. Math. Soc. \textbf{15} (2002), 203--271.

\bibitem{kontsevich94}
M.~Kontsevich, \emph{Homological algebra of mirror symmetry}, Proceedings of
  the International Congress of Mathematicians (Z{\"u}rich, 1994),
  Birkh{\"a}user, 1995, pp.~120--139.

\bibitem{kontsevich-soibelman00}
M.~Kontsevich and Y.~Soibelman, \emph{Homological mirror symmetry and torus
  fibrations}, Symplectic geometry and mirror symmetry, World Scientific, 2001,
  pp.~203--263.

\bibitem{krause05}
H.~Krause, \emph{The stable derived category of a {N}oetherian scheme}, Compos.
  Math. \textbf{141} (2005), 1128--1162.

\bibitem{lalonde-mcduff-polterovich97}
F.~Lalonde, D.~McDuff, and L.~Polterovich, \emph{Topological rigidity of
  {H}amiltonian loops and quantum homology}, Invent. Math. \textbf{135} (1999),
  369--385.

\bibitem{lambrechts-stanley08}
P.~Lambrechts and D.~Stanley, \emph{The rational homotopy type of a blow-up in
  the stable case}, Geom. Topol. \textbf{12} (2008), 1921--1993.

\bibitem{lefevre}
K.~Lefevre, \emph{Sur les {$A_\infty$}-cat\'egories}, Ph.D. thesis,
  Universit\'e Paris 7, 2002.

\bibitem{lekili-lipyanskiy10}
Y.~Lekili and M.~Lipyanskiy, \emph{Geometric composition in quilted {F}loer
  theory}, Adv. Math. \textbf{236} (2013), 1--23. 

\bibitem{lekili-perutz11}
Y.~Lekili and T.~Perutz, \emph{Fukaya categories of the torus and {D}ehn
  surgery}, Proc. Natl. Acad. Sci. USA \textbf{108} (2011), 8106--8113. 

\bibitem{lerman95}
E.~Lerman, \emph{Symplectic cuts}, Math. Res. Lett. \textbf{2} (1995),
  247--258.

\bibitem{li-ruan98}
A.-M.~Li and Y.~Ruan, \emph{Symplectic surgery and {G}romov-{W}itten invariants
  of {C}alabi-{Y}au 3-folds}, Invent. Math. \textbf{145} (2001), 151--218.

\bibitem{loday-diag}
J.-L. Loday, {\emph The diagonal of the {S}tasheff polytope}, Higher structures in geometry and physics, Progress Math. vol. 287, Birkh\"auser (2011), pp. 269--292.

\bibitem{markarian09}
N.~Markarian, \emph{The {A}tiyah class, {H}ochschild cohomology and the
  {R}iemann-{R}och theorem}, J. London Math. Soc. \textbf{79} (2009),
  129--143.

\bibitem{mau11}
S.~Ma'u, \emph{Quilted {F}loer modules}, Conference talk, recording available
  at \url{http://media.scgp.stonybrook.edu/video/video.php?f=20110518_3_qtp.mp4},
  2011.

\bibitem{mww}
S.~Ma'u, K.~Wehrheim, and C.~Woodward, \emph{A-infinity functors for {L}agrangian correspondences}, manuscript available on the second and third authors' homepages.

\bibitem{maulik-pandharipande06}
D.~Maulik and R.~Pandharipande, \emph{A topological view of {G}romov-{W}itten
  theory}, Topology \textbf{45} (2006), 887--918.

\bibitem{mcduff87}
D.~McDuff, \emph{Examples of symplectic structures}, Invent. Math. \textbf{89}
  (1987), 13--36.

\bibitem{salamon-mcduff-intro}
D.~McDuff and D.~Salamon. \emph{Introduction to symplectic topology}, 2nd ed., Oxford Univ. Press, 1998.

\bibitem{mukai81}
S.~Mukai, \emph{Duality between {$D(X)$} and {$D(\hat{X})$} with its
  application to {P}icard sheaves}, Nagoya J. Math. \textbf{81} (1981),
  153--175.

\bibitem{mumford85}
D.~Mumford, \emph{Tata lectures on theta. {I}}, Progress in Mathematics,
  vol.~28, Birkh\"auser, 1983.

\bibitem{oh96}
Y.-G.~Oh, \emph{Floer cohomology, spectral sequences, and the {M}aslov class of
  {L}agrangian embeddings}, Int. Math. Res. Notices (1996), 305--346.

\bibitem{oh10}
\bysame, \emph{Seidel's long exact sequence on {C}alabi-{Y}au manifolds},
Kyoto J. Math. \textbf{51} (2011), 687--765.

\bibitem{fukaya-oh06}
Y.-G.~Oh and K.~Fukaya, \emph{Floer homology in symplectic geometry and in
  mirror symmetry}, International {C}ongress of {M}athematicians. {V}ol. {II},
  Eur. Math. Soc., Z\"urich, 2006, pp.~879--905.

\bibitem{ono06}
K.~Ono, \emph{Floer-{N}ovikov cohomology and the flux conjecture}, Geom. Funct.
  Anal. \textbf{16} (2006), 981--1020.

\bibitem{papikian05}
M.~Papikian, \emph{Rigid-analytic geometry and the uniformization of abelian
  varieties}, Snowbird lectures in algebraic geometry, Contemp. Math., vol.
  388, Amer. Math. Soc., 2005, pp.~145--160.

\bibitem{perutz10}
T.~Perutz, Talk at the {AIM} workshop {\em Cyclic homology and symplectic
  topology}, 2009.

\bibitem{piunikhin-salamon-schwarz94}
S.~Piunikhin, D.~Salamon, and M.~Schwarz, \emph{Symplectic {F}loer-{D}onaldson
  theory and quantum cohomology}, Contact and symplectic geometry (C.~B.
  Thomas, ed.), Cambridge Univ. Press, 1996, pp.~171--200.

\bibitem{polishchuk09}
A.~Polishchuk, \emph{{A}-infinity algebra of an elliptic curve and {E}isenstein
  series}. Comm. Math. Phys. \textbf{301} (2011), 709--722. 

\bibitem{polishchuk-zaslow98}
A.~Polishchuk and E.~Zaslow, \emph{Categorical mirror symmetry: the elliptic
  curve}, Adv. Theor. Math. Phys. \textbf{2} (1998), 443--470.

\bibitem{pozniak}
M.~Po{\'z}niak, \emph{Floer homology, {N}ovikov rings and clean intersections},
  Northern California Symplectic Geometry Seminar, Amer. Math. Soc., 1999,
  pp.~119--181.

\bibitem{priddy70}
S.~Priddy, \emph{Koszul resolutions}, Transactions of the Amer. Math. Soc.
  \textbf{152} (1970), 39--60.

\bibitem{schwarz96}
M.~Schwarz, \emph{A quantum cup-length estimate for symplectic fixed points},
  Invent. Math. \textbf{133} (1998), 353--397.

\bibitem{seidel97}
P.~Seidel, \emph{Floer homology and the symplectic isotopy problem}, Ph.D.
  thesis, Oxford Univ., 1997. Available on the author's homepage.

\bibitem{seidel99}
\bysame, \emph{Graded {L}agrangian submanifolds}, Bull. Soc. Math. France
  \textbf{128} (2000), 103--146.

\bibitem{seidel03b}
\bysame, \emph{Homological mirror symmetry for the quartic surface}, Memoirs
 of the Amer. Math. Soc., to appear.

\bibitem{seidel04}
\bysame, \emph{{F}ukaya categories and {P}icard-{L}efschetz theory}, European
  Math. Soc., 2008.

\bibitem{seidel04b}
\bysame, \emph{Lectures on four-dimensional {D}ehn twists}, Symplectic
  4-manifolds and algebraic surfaces, Lecture Notes in Math., vol. 1938,
  Springer, 2008, pp.~231--267.

\bibitem{seidel08b}
\bysame, \emph{Homological mirror symmetry for the genus two curve}, J.
  Algebraic Geom. \textbf{20} (2011), 727--769.

\bibitem{seidel02b}
\bysame, \emph{Braids and symplectic four-manifolds with abelian fundamental
  group}, Turkish J. Math. \textbf{26} (G\"ok\"ova Geometry and Topology
  Conference Proceedings Special Volume, 2002), 93--100.

\bibitem{seidel-thomas99}
P.~Seidel and R.~Thomas, \emph{Braid group actions on derived categories of
  coherent sheaves}, Duke Math. J. \textbf{108} (2001), 37--108.

\bibitem{sheridan11}
N.~Sheridan, \emph{On the homological mirror symmetry conjecture for pairs of
  pants}, J. Diff. Geom. \textbf{89} (2011), 271--367.

\bibitem{sheridan13}
N.~Sheridan, \emph{On the Fukaya category of a Fano hypersurface in projective space}.
Preprint ArXiv:1306.4143, 2013.

\bibitem{smith10}
I.~Smith, \emph{Floer cohomology and pencils of quadrics}, Invent. Math. \textbf{189} (2012), 149--250. 

\bibitem{seidel-solomon10}
P.~Seidel and J.~Solomon, \emph{Symplectic cohomology and $q$-intersection
  numbers},  Geom. Funct. Anal. \textbf{22} (2012), 443--477.

\bibitem{spaltenstein88}
N.~Spaltenstein, \emph{Resolutions of unbounded complexes}, Compositio Math.
  \textbf{65} (1988), 121--154.

\bibitem{taubes94}
C.~Taubes, \emph{The {S}eiberg-{W}itten invariants and symplectic forms}, Math.
  Research Letters \textbf{1} (1994), 809--822.

\bibitem{toen-vaquie07}
B.~To{\"e}n and M.~Vaqui{\'e}, \emph{Moduli of objects in dg-categories}, Ann.
  Sci. \'Ecole Norm. Sup. \textbf{40} (2007), 387--444.

\bibitem{wehrheim-woodward11}
K.~Wehrheim and C.~Woodward, \emph{Orientations for pseudoholomorphic quilts},
  Manuscript, available on the first author's homepage.

\bibitem{ww}
\bysame, \emph{Quilted {F}loer cohomology}, Geom. Topol. \textbf{14} (2010), 833--902.

\bibitem{wehrheim-woodward06}
\bysame, \emph{Functoriality for {L}agrangian correspondences in {F}loer
  theory}, Quantum Topol. \textbf{1} (2010), 129--170.

\bibitem{wehrheim-woodward09}
\bysame, \emph{Floer cohomology and geometric composition of {L}agrangian
  correspondences}, Adv. Math. \textbf{230} (2012), 177--228.

\bibitem{whittaker-watson}
E.~Whittaker and G.~Watson, \emph{A course of modern analysis}, 4th ed.,
  Cambridge Univ. Press, 1927.

\bibitem{yekutieli99}
A.~Yekutieli, \emph{Dualizing complexes, {M}orita equivalence and the derived
  {P}icard group of a ring}, J. London Math. Soc. (2) \textbf{60} (1999),
  723--746.

\bibitem{yekutieli04}
\bysame, \emph{The derived {P}icard group is a locally algebraic group},
  Algebr. Represent. Theory \textbf{7} (2004), 53--57.

\end{thebibliography}
\providecommand{\bysame}{\leavevmode\hbox to3em{\hrulefill}\thinspace}
\providecommand{\MR}{\relax\ifhmode\unskip\space\fi MR }
\providecommand{\MRhref}[2]{%
  \href{http://www.ams.org/mathscinet-getitem?mr=#1}{#2}
}
\providecommand{\href}[2]{#2}

\end{document}